\newcommand{\tomemail}{\href{mailto:tom.bachmann@zoho.com}{tom.bachmann@zoho.com}}
\newtheorem{proposition}{Proposition}
\newtheorem{corollary}[proposition]{Corollary}
\newtheorem{lemma}[proposition]{Lemma}
\newtheorem{theorem}[proposition]{Theorem}
\newtheorem*{conjecture*}{Conjecture}
\newtheorem*{theorem*}{Theorem}
\newtheorem*{corollary*}{Corollary}
\newtheorem*{proposition*}{Proposition}
\newtheorem*{lemma*}{Lemma}
\theoremstyle{definition}
\newtheorem{definition}[proposition]{Definition}
\newtheorem*{definition*}{Definition}
\newtheorem*{construction*}{Construction}
\theoremstyle{remark}
\newtheorem{remark}[proposition]{Remark}
\newtheorem*{remark*}{Remark}
\newtheorem{example}[proposition]{Example}
\newtheorem*{example*}{Example}
\newcommand{\id}{\operatorname{id}}
\newcommand{\Z}{\mathbb{Z}}
\def\C{\mathbb C}
\newcommand{\Q}{\mathbb{Q}}
\newcommand{\F}{\mathbb{F}}
\let\scr=\mathcal
\let\bb=\mathbb
\newcommand{\Gm}{{\mathbb{G}_m}}
\newcommand{\Gmp}[1]{{\mathbb{G}_m^{\wedge #1}}}
\def\A{\bb A}
\def\P{\bb P}
\def\R{\bb R}
\newcommand{\1}{\mathbbm{1}}
\newcommand{\eff}{{\text{eff}}}
\newcommand{\veff}{{\text{veff}}}
\newcommand{\SH}{\mathcal{SH}}
\DeclareMathOperator*{\colim}{colim}
\let\lim=\relax
\DeclareMathOperator*{\lim}{lim}
\def\Map{\mathrm{Map}}
\def\map{\mathrm{map}}
\def\CAlg{\mathrm{CAlg}}
\def\CMon{\mathrm{CMon}}
\def\PSh{\mathcal{P}}
\def\Cat{\mathcal{C}\mathrm{at}{}}
\def\Spc{\mathcal{S}\mathrm{pc}{}}
\def\Fin{\cat F\mathrm{in}}
\def\Fun{\mathrm{Fun}}
\def\red{\mathrm{red}}
\newcommand{\Spec}{\mathrm{Spec}}
\newcommand{\gp}{\mathrm{gp}}
\newcommand{\wequi}{\simeq}
\newcommand{\Mod}{\text{-}\mathcal{M}\mathrm{od}}
\def\adj{\leftrightarrows}
\DeclareRobustCommand{\ul}{\underline}
\newcommand{\heart}{\heartsuit}
\newcommand{\Hom}{\operatorname{Hom}}
\newcommand{\iHom}{\ul{\operatorname{Hom}}}
\def\op{\mathrm{op}}
\let\cat=\mathrm
\def\Sm{{\cat{S}\mathrm{m}}}
\def\Sch{\cat{S}\mathrm{ch}{}}
\def\Nis{\mathrm{Nis}}
\def\Zar{\mathrm{Zar}}
\def\mot{\mathrm{mot}}
\newcommand{\ret}{{r\acute{e}t}}
\newcommand{\lra}[1]{\langle #1 \rangle}
\def\ph{\mathord-}
\numberwithin{proposition}{section}
\numberwithin{equation}{section}
\renewcommand{\todo}[1]{}
\newcommand{\NB}[1]{}
\newcommand{\discuss}[1]{}
\newcommand{\NB}[1]{\todo[color=gray!40]{#1}}
\newcommand{\discuss}[1]{\todo[color=green]{#1}}
\newcommand{\fpsr}[1]{\llbracket #1 \rrbracket}
\newcommand{\Ab}{\mathrm{Ab}}
\newcommand{\Ext}{\mathrm{Ext}}
\newcommand{\Vect}{\mathrm{Vect}}
\newcommand{\Bil}{\mathrm{Bil}}
\newcommand{\Grpd}{\mathcal{G}\mathrm{rpd}}
\newcommand{\Sper}{\mathrm{Sper}}
\newcommand{\Sym}{\mathrm{Sym}}
\newcommand{\KO}{\mathrm{KO}}
\newcommand{\KSp}{\mathrm{KSp}}
\newcommand{\ko}{\mathrm{ko}}
\newcommand{\ku}{\mathrm{ku}}
\newcommand{\KU}{\mathrm{KU}}
\newcommand{\ksp}{\mathrm{ksp}}
\newcommand{\KGL}{\mathrm{KGL}}
\newcommand{\kgl}{\mathrm{kgl}}
\newcommand{\kw}{\mathrm{kw}}
\newcommand{\KW}{\mathrm{KW}}
\newcommand{\HW}{\mathrm{HW}}
\newcommand{\HZ}{\mathrm{H}\Z}
\newcommand{\MSL}{\mathrm{MSL}}
\newcommand{\MSO}{\mathrm{MSO}}
\newcommand{\MSp}{\mathrm{MSp}}
\newcommand{\MU}{\mathrm{MU}}
\newcommand{\BSU}{\mathrm{BSU}}
\newcommand{\BU}{\mathrm{BU}}
\newcommand{\BSp}{\mathrm{BSp}}
\newcommand{\SL}{\mathrm{SL}}
\newcommand{\Sp}{\mathrm{Sp}}
\newcommand{\MGL}{\mathrm{MGL}}
\newcommand{\jo}{\mathrm{j}_o}
\newcommand{\W}{\mathrm{W}}
\newcommand{\I}{\mathrm{I}}
\newcommand{\GW}{\mathrm{GW}}
\newcommand{\K}{\mathrm{K}}
\newcommand{\HP}{\mathbb{HP}}
\newcommand{\Gr}{\mathrm{Gr}}
\newcommand{\HGr}{\mathrm{HGr}}
\newcommand{\SGr}{\mathrm{SGr}}
\newcommand{\RP}{\mathbb{RP}}
\newcommand{\GL}{\mathrm{GL}}
\newcommand{\Perf}{\mathrm{Perf}}
\newcommand{\Sq}{\mathrm{Sq}}
\newcommand{\gr}{\mathrm{gr}}
\newcommand{\Th}{\mathrm{Th}}
\newcommand{\Pic}{\mathrm{Pic}}
\newcommand{\limone}{\mathrm{lim}^1}
\def\h{\mathrm h}
\def\im{\mathrm{im}}
\newcommand{\comp}{{{\kern -.5pt}\wedge}}
\newcommand{\vcd}{\mathrm{vcd}}
\newcommand{\cd}{\mathrm{cd}}
\newcommand{\scomp}{\mathrm{sc}}
\newcommand{\cof}{\mathrm{cof}}
\newcommand{\fib}{\mathrm{fib}}
\newcommand{\dual}{*}
\newcommand{\rk}{\mathrm{rk}}
\newcommand{\topsup}{\mathrm{top}}
\newcommand{\chara}{\mathrm{char}}
\newcommand{\Snaith}{\mathrm{S}}
\newcommand{\OSnaith}{{h\mathrm{S}}}
\newcommand{\Riou}{\mathrm{Riou}}
\newcommand{\FH}{\mathrm{FH}}
\newcommand{\geo}{\mathrm{geo}}
\newcommand{\adamsphi}{\varphi}
\newcommand{\adamspsi}{\psi}
\newtheorem{warning}[proposition]{Warning}
\title{\texorpdfstring{$\eta$}{eta}-periodic motivic stable homotopy theory over fields}
\date{\today}
\author{Tom Bachmann}
\address{Department of Mathematics, Massachusetts Institute of Technology,
Cambridge, MA, USA}
\email{\tomemail}
\author{Michael J. Hopkins}
\begin{document}

\maketitle

\begin{abstract}
Over any field of characteristic $\ne 2$, we establish a 2-term resolution of the $\eta$-periodic, $2$-local motivic sphere spectrum by shifts of the connective $2$-local Witt $K$-theory spectrum.
This is curiously similar to the resolution of the $K(1)$-local sphere in classical stable homotopy theory.
As applications we determine the $\eta$-periodized motivic stable stems and the $\eta$-periodized algebraic symplectic and $\SL$-cobordism groups.
Along the way we construct Adams operations on the motivic spectrum representing Hermitian $K$-theory and establish new completeness results for certain motivic spectra over fields of finite virtual $2$-cohomological dimension.
In an appendix, we supply a new proof of the homotopy fixed point theorem for the Hermitian $K$-theory of fields.
\end{abstract}

\tableofcontents

\section{Introduction} \label{sec:intro}

\subsection{Inverting the motivic Hopf map}
Let $k$ be a field.
We are interested in studying the motivic stable homotopy category $\SH(k)$ \cite[\S5]{morel-trieste}.
Recall that this category is obtained from the category $\Sm_k$ of smooth $k$-varieties by (1) freely adjoining (homotopy) colimits, (2) enforcing Nisnevich descent, (3) making the affine line $\A^1$ contractible, (4) passing to pointed objects and (5) making the operation $\wedge \P^1$ invertible: \[ \SH(k) = L_{\A^1,\Nis} \PSh(\Sm_k)_*[(\P^1)^{-1}]. \]

Of particular relevance for us is the \emph{motivic Hopf} map.
Recall that the geometric Hopf map is the morphism of pointed varieties \begin{equation} \label{eq:geometric-hopf} \A^2 \setminus 0 \to \P^1 \end{equation} obtained from the construction of $\P^1$ as a quotient.
There are $\A^1$-equivalences \cite[\S3 2.15--2.20]{A1-homotopy-theory} \[ \P^1 \wequi S^1 \wedge \Gm \quad\text{and}\quad \A^2 \setminus 0 \wequi \P^1 \wedge \Gm; \] here $\Gm := (\A^1 \setminus 0, 1)$.
It follows in particular that all of these objects become invertible in $\SH(k)$ and so are ``spheres''.
We may thus desuspend \eqref{eq:geometric-hopf} by $\P^1$ to obtain the motivic Hopf map \[ \eta: \Gm \to \1 \in \SH(k); \] here we write $\1$ for the sphere spectrum.
This is completely analogous to the classical stable Hopf map $\eta^\topsup: S^1 \to \1 \in \SH$.
However, while classically we have $\eta^4 = 0$, in $\SH(k)$ no power of $\eta$ is null-homotopic; this is a theorem of Morel \cite[Corollary 6.4.5]{morel2004motivic-pi0}.
In slightly more sophisticated language, denote by \[ \SH(k)[\eta^{-1}] \subset \SH(k) \] the subcategory of \emph{$\eta$-periodic spectra}; in other words given $E \in \SH(k)$ we have $E \in \SH(k)[\eta^{-1}]$ if and only if \[ \Gm \wedge E \xrightarrow{\eta \wedge \id} \1 \wedge E \] is an equivalence.
The inclusion $\SH(k)[\eta^{-1}] \subset \SH(k)$ admits a left adjoint, given by the \emph{$\eta$-periodization} \[ E \mapsto E[\eta^{-1}] = \colim(E \xrightarrow{\eta} \Gmp{-1} \wedge E \xrightarrow{\eta} \dots). \]
Morel's result shows that $\SH(k)[\eta^{-1}]$ is not the terminal category; in fact he proves that \[ \pi_0(\1[\eta^{-1}]) \wequi \W(k) \ne 0, \] where $\W(k)$ is the \emph{Witt ring of symmetric bilinear forms}.

\subsection{Main results}
Our aim is to study the category $\SH(k)[\eta^{-1}]$; for example we would like to determine the higher homotopy groups $\pi_*(\1[\eta^{-1}])$ for $*>0$.
It turns out (see \S\ref{subsec:rho}) that the category $\SH(k)[1/\eta, 1/2]$ is quite easy to understand.
For many purposes it thus suffices to study the category $\SH(k)[\eta^{-1}]_{(2)} \subset \SH(k)[\eta^{-1}]$ of those $\eta$-periodic spectra $E$ such that $E \xrightarrow{p} E$ is an equivalence for all odd integers $p$.
Our main result is the following.
Here \[ \kw = \KO[\eta^{-1}]_{\ge 0} \] is the spectrum of \emph{connective Balmer--Witt $K$-theory}; see \S\ref{subsec:spectra} for a more detailed definition.

\begin{theorem}[see Theorem \ref{thm:main}] \label{thm:intro-main}
Let $k$ be a field of characteristic $\ne 2$.
In $\SH(k)$ there exists a fiber sequence \[ \1[\eta^{-1}]_{(2)} \to \kw_{(2)} \xrightarrow{\adamsphi} \Sigma^4 \kw_{(2)}. \]
Here $\adamsphi$ is the unique (up to homotopy) map making the following diagram commute
\begin{equation*}
\begin{tikzcd}
 & \Sigma^4 \kw_{(2)} \ar[d, "\beta"] \\
\kw_{(2)} \ar[r, "\adamspsi^3-1"] \ar[ru, dotted, "\adamsphi"]  & \kw_{(2)}
\end{tikzcd},
\end{equation*}
where $\adamspsi^3$ is the third Adams operation and $\beta$ is the Bott element.
\end{theorem}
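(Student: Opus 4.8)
The plan is to build the fiber sequence by first constructing the Adams operation $\adamspsi^3$ on $\kw_{(2)}$ and analyzing its effect, then identifying the fiber of $\adamspsi^3-1$ (after the Bott-periodicity twist) with $\1[\eta^{-1}]_{(2)}$. The starting point is the connective Hermitian $K$-theory spectrum $\KO$ and its $\eta$-periodization $\KW = \KO[\eta^{-1}]$, with $\kw = \KW_{\ge 0}$; these spectra are constructed and their basic properties recorded in \S\ref{subsec:spectra}. I would first construct Adams operations $\adamspsi^k$ on $\KO$ for $k$ odd — this is one of the stated contributions of the paper — by the usual recipe: Hermitian $K$-theory is an $\mathrm{E}_\infty$-ring receiving a map from the group-completed symmetric monoidal category of vector bundles with quadratic form, and $\adamspsi^k$ is induced by $k$-th Adams operations on that category (equivalently, via the $\lambda$-ring structure, or via the splitting principle and power operations). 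After $\eta$-periodization and $2$-localization one obtains $\adamspsi^3$ on $\kw_{(2)}$.

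Next I would pin down the homotopy groups: since $\W(k)$-based computations give $\pi_*\kw_{(2)}$ concentrated in degrees $\equiv 0 \pmod 4$ with value (a $2$-local form of) $\W(k)$, and $\adamspsi^3$ acts on $\pi_{4n}$ by multiplication by $3^{2n}$ (the weight-$2n$ eigenvalue, coming from $\adamspsi^3(\beta) = 3^2\beta$ on the Bott class in degree $4$). Therefore $\adamspsi^3 - 1$ acts on $\pi_{4n}\kw_{(2)}$ by $3^{2n}-1$, which for $n\ge 1$ is a $2$-adic unit times $8n/\gcd$ — more precisely $v_2(3^{2n}-1) = v_2(n) + 3$, so it is injective with cokernel $\Z/2^{v_2(n)+3}$-torsion-controlled, and on $\pi_0$ it is zero. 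The map $\adamsphi$ is then defined, as in the statement, by dividing $\adamspsi^3-1$ by the Bott element $\beta \in \pi_4\kw_{(2)}$ (which is multiplication-by-$\beta$, an equivalence after inverting $\beta$ but here we only need that $\adamspsi^3 - 1$ factors through $\beta$, i.e. lands in the image of $\beta\colon \Sigma^4\kw_{(2)}\to\kw_{(2)}$ — this holds because $3^{2n}-1$ is divisible by an appropriate power and, more structurally, because $\adamspsi^3-1$ kills $\pi_0$ so factors through the $1$-connective cover $\Sigma^4\kw_{(2)} \xrightarrow{\beta}\kw_{(2)}$ after reindexing); uniqueness up to homotopy follows since $[\kw_{(2)},\Sigma^3\kw_{(2)}]$ vanishes, which is a connectivity/degree-reason computation from $\pi_*\kw_{(2)}$ being concentrated in degrees divisible by $4$.

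It then remains to identify $\fib(\adamsphi)$ with $\1[\eta^{-1}]_{(2)}$. The map $\1[\eta^{-1}]_{(2)} \to \kw_{(2)}$ is the unit (after periodization and localization). On $\pi_0$ this is the isomorphism $\W(k)_{(2)}\xrightarrow{\sim}\pi_0\kw_{(2)}$ by Morel's theorem; so the unit is $0$-connective and induces an iso on $\pi_0$, hence factors through the fiber of $\adamsphi$ (since $\adamsphi$ is zero on $\pi_0$), giving $\1[\eta^{-1}]_{(2)}\to\fib(\adamsphi)$. To show this is an equivalence I would use the completeness results the paper advertises: over a field of finite virtual $2$-cohomological dimension one has that $\1[\eta^{-1}]_{(2)}$ and $\kw_{(2)}$ are suitably complete (e.g. $\rho$-complete, or complete along the relevant descent tower), and then check the equivalence after a faithful base change or after passing to residue fields / real closures where the computation reduces to the known classical statement — the $\eta$-periodic sphere over $\R$ (or a real closed field) is built from $\KO[\eta^{-1}]$ exactly by the $\adamspsi^3-1$ resolution, mirroring the $K(1)$-local sphere. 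Concretely, one compares homotopy sheaves: both sides have homotopy (Nisnevich, or even with respect to the real-étale topology after inverting $\rho$) sheaves that are computed by $\W$ and Milnor–Witt data, and the five lemma on the long exact sequences of homotopy sheaves finishes it. The general-field case then follows by a continuity/limit argument (writing $k$ as a filtered colimit of its finitely generated, hence finite-vcd-well-behaved, subfields, or by Bachmann's rigidity/real-étale comparison which reduces $\SH(k)[1/\eta,1/2]$-type statements to $\vcd<\infty$).

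The main obstacle I expect is two-fold: (i) constructing $\adamspsi^3$ on the \emph{spectrum} $\KO$ (not just on homotopy groups) with enough structure — at least as a map of spectra, ideally of ring spectra or modules — so that the factorization through $\beta$ and the uniqueness statement make sense; the honest construction needs a good model for Hermitian $K$-theory as a motivic $\mathrm{E}_\infty$-ring with operations, and controlling its behavior under $\eta$-inversion. And (ii) the identification of the fiber with the sphere in the form stated over an \emph{arbitrary} field of characteristic $\ne 2$ — the completeness/reduction machinery is exactly where the "new completeness results for motivic spectra over fields of finite virtual $2$-cohomological dimension" mentioned in the abstract must be deployed, and making the reduction to the real-closed (or finite-vcd) case airtight, including the continuity argument for general fields, is the technical heart of the proof.
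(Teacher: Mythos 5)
Your construction of $\adamsphi$ is in the right spirit and roughly matches Corollary~\ref{cor:construct-phi}: factor $\adamspsi^3-1$ through $\beta$ using $\adamspsi^3(1)=1$, and appeal to connectivity for uniqueness. However, your uniqueness argument is not correct as stated: you cannot conclude $[\kw_{(2)},\Sigma^3\kw_{(2)}]=0$ merely because $\pi_*\kw_{(2)}$ is concentrated in degrees $\equiv 0\pmod 4$ (the mapping spectrum between two unbounded spectra is not controlled by their individual homotopy groups in this way). The paper's argument is more delicate: it uses the $1$-connectedness of the unit $\1[\eta^{-1}]\to\kw$ (Lemma~\ref{lemm:unit-connectivity}) to replace $\kw$ in the source by $\1[\eta^{-1}]$ whenever the target is $\le 0$, which is what actually makes both existence and uniqueness go through.

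The serious gap is in the identification of $\fib(\adamsphi)$ with $\1[\eta^{-1}]_{(2)}$. You propose to compare homotopy sheaves via the five lemma (after reducing to finite $\vcd_2$ or to real closures). This is circular: the homotopy sheaves $\ul{\pi}_*\1[\eta^{-1}]_{(2)}$ are precisely what this theorem computes, and there is no independent computation of them to feed into a five-lemma argument. Your $2$-adic reasoning about $\nu_2(3^{2n}-1)=\nu_2(n)+3$ only describes the action of $\adamsphi$ on $\pi_*\kw_{(2)}$, and $\adamsphi$ is \emph{not} an isomorphism on positive $\pi_*\kw_{(2)}$ (the whole point is that the fiber has interesting nonzero homotopy coming from the cokernel). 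The real-closed reduction handles the rational (equivalently $\rho$-inverted) part, but cannot see the $2$-complete part where the actual content lives. The paper avoids the circularity by a different reduction: the extension-of-scalars functor $\SH(k)[\eta^{-1}]_{(2)}\to\HW_{(2)}\Mod$ is conservative on bounded-below objects, so it suffices to check that the canonical map $\HW_{(2)}\to F\wedge\HW$ is an equivalence. Here $F\wedge\HW$ is genuinely computable (unlike $F$ itself) because $\pi_*(\kw\wedge\HW_{(2)})$ is computable; but this is a substantial computation — it uses $\HZ/2_{**}\ko$ and an $\eta$-Bockstein spectral sequence for $\ko\wedge\ul{K}^W$ (\S\ref{sec:HW-kw}), together with the completeness theorem (Theorem~\ref{thm:eta-complete}) to relate the $\eta$-complete and $2$-complete answers, which is where the hypothesis $\vcd_2<\infty$ enters. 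Finally, the crucial step that $\adamsphi$ acts as an isomorphism on $\pi_{4i}(\kw\wedge\HW_{(2)})$ for $i>0$ is established by base-changing to a quadratically closed field (using locality of $\W(k)_{(2)}$) and exploiting the $\MSL$ cobordism spectrum: the identity $\adamsphi^{\circ i}(e_{2i})=1$ in $\kw_*\MSL$ (Lemma~\ref{lemm:phi-MSL}) is what anchors the nondegeneracy. Nothing like this appears in your sketch, and without it there is no way to control the effect of $\adamsphi$ on the relevant homology.

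Your suggestion that the Adams operations be built geometrically via the $\lambda$-ring structure is a reasonable alternative to the paper's Snaith/homotopy-fixed-point construction (the paper acknowledges it would be preferable and notes that Fasel--Haution subsequently carried it out), so that part is not a gap — but it is only the easier half of the problem.
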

It is well-known (see \S\ref{subsec:KW} for a recollection) that $\pi_* \kw = \W(k)[\beta]$.
On the other hand the construction of well-behaved Adams operations on $\kw$ is one of the major technical challenges of our work (see \S\ref{sec:adams}).
We observe that this result is curiously similar to the resolution of the $K(1)$-local sphere in classical stable homotopy theory \cite[Lemma 8.7]{ravenel1984localization}; the reader is invited to speculate on the significance of this observation.
\begin{remark}
As an immediate consequence, if (for example) $k=\C$ then $\beta$ lifts to $\pi_4 \1[\eta^{-1}]_{(2)}$.
Hornbostel has shown \cite[Theorem 3.2]{hornbostel2018some} that in this case \[ \pi_*(\cof(\1[\eta^{-1}]_{(2)} \to \kw_{(2)})[\beta^{-1}]) \wequi \pi_* \Sigma^4\kw_{(2)}[\beta^{-1}]. \]
Ormsby--Röndigs \cite[p. 13]{ormsby2019homotopy} construct (still over $\C$) a map $\Sigma^3 \kw \to \1[\eta^{-1}]_{(2)}$ inducing \[ \pi_*(\cof(\1[\eta^{-1}]_{(2)} \to \kw_{(2)})) \wequi \pi_* \Sigma^4 \kw_{(2)}. \]
These are weak forms of our main result.
In fact Ormsby--Röndigs \cite[Conjecture 4.11]{ormsby2019homotopy} explicitly ask a question equivalent to our main theorem.
\end{remark}

The following are some consequences which can be obtained by more or less immediate computation.
\begin{corollary}[see Theorems \ref{thm:stable-stems}, \ref{thm:MSp-htpy} and \ref{thm:homotopy-msl}]
Let $k$ be a field of characteristic $\ne 2$.
\begin{enumerate}
\item \[ \pi_*(\1_k[\eta^{-1}]_{(2)}) \wequi \begin{cases} \W(k)_{(2)} & *=0 \\ \mathrm{coker}(8n: \W(k)_{(2)} \to \W(k)_{(2)}) & *=4n-1>0 \\ \ker(8n: \W_{(2)} \to \W_{(2)}) & *=4n>0 \\ 0 & \text{else} \end{cases} \]
\item $\pi_* \MSp[\eta^{-1}] \wequi \W(k)[y_1, y_2, \dots]$, where $|y_i| = 2i$
\item $\pi_* \MSL[\eta^{-1}] \wequi \W(k)[y_2, y_4, \dots]$
\end{enumerate}
Here $\MSp$ and $\MSL$ denote the algebraic symplectic and special linear cobordism spectra \cite{panin2010algebraic}, respectively.
\end{corollary}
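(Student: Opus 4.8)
The plan is to deduce the corollary from the fiber sequence of Theorem~\ref{thm:intro-main} by taking homotopy groups and running the long exact sequence, together with the known computation $\pi_* \kw = \W(k)[\beta]$ with $|\beta| = 4$. The key input that makes this work is an explicit understanding of the map $\adamsphi$ on homotopy groups. Since $\adamsphi$ fits (after applying $\beta$) into the identity with $\adamspsi^3 - 1$, and since one expects the Adams operation $\adamspsi^3$ to act on $\pi_{4n}\kw \iso \W(k)$ (the copy generated by $\beta^n$) by multiplication by $3^{2n} = 9^n$ — as it does classically on $\pi_{2n}\KU$ and as Bott periodicity in the Witt-theoretic setting should force — the map $\adamsphi\colon \pi_{4n}\kw \to \pi_{4n}\kw$ (note the $\Sigma^4$ shift lowers degree by $4$, and $\beta$ raises it back) becomes multiplication by $9^n - 1$ on $\W(k)$. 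One then observes $9^n - 1 = (3^n-1)(3^n+1)$, and after $2$-localization the odd part is a unit while the $2$-adic valuation of $9^n - 1$ equals that of $8n$; hence multiplication by $9^n - 1$ and multiplication by $8n$ have the same kernel and cokernel on $\W(k)_{(2)}$. I would isolate this valuation computation ($v_2(9^n-1) = 3 + v_2(n)$, by lifting the exponent) as a short lemma.

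With this in hand, the long exact sequence associated to $\1[\eta^{-1}]_{(2)} \to \kw_{(2)} \xrightarrow{\adamsphi} \Sigma^4\kw_{(2)}$ breaks into short pieces in each degree. In degree $*$, one has $\pi_* \1[\eta^{-1}]_{(2)}$ sitting in an exact sequence with $\ker(\adamsphi\colon \pi_* \kw_{(2)} \to \pi_{*-4}\kw_{(2)})$ and $\mathrm{coker}(\adamsphi\colon \pi_{*+4}\kw_{(2)} \to \pi_*\kw_{(2)})$. Since $\pi_*\kw_{(2)}$ is $\W(k)_{(2)}$ in degrees $\equiv 0 \pmod 4$, $\ge 0$, and zero otherwise, the kernel term contributes only in degrees $4n \ge 0$ and the cokernel term only in degrees $4n-4 \ge 0$, i.e. $*\ge -4$; combining, $\pi_* \1[\eta^{-1}]_{(2)}$ can be nonzero only for $* \equiv 0, 3 \pmod 4$. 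For $* = 0$: $\adamsphi$ in the relevant degree is multiplication by $9^0 - 1 = 0$, so the cokernel contributes $\pi_4\kw_{(2)} \to \pi_0\kw_{(2)}$, i.e. by zero, and one recovers $\pi_0 \1[\eta^{-1}]_{(2)} \iso \W(k)_{(2)}$ — consistent with Morel's $\pi_0(\1[\eta^{-1}]) = \W(k)$. For $* = 4n - 1 > 0$: only the cokernel term $\mathrm{coker}(\adamsphi\colon \pi_{4n}\kw_{(2)} \to \pi_{4n-4}\kw_{(2)})$ survives (the kernel term lives in degree $4n-1 \not\equiv 0$, hence vanishes), giving $\mathrm{coker}(9^n - 1)$ which I identify with $\mathrm{coker}(8n)$. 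For $* = 4n > 0$: only $\ker(\adamsphi\colon \pi_{4n}\kw_{(2)} \to \pi_{4n-4}\kw_{(2)}) = \ker(9^n-1) = \ker(8n)$. This yields part~(1).

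For parts~(2) and~(3) the plan is different in character: $\MSp[\eta^{-1}]$ and $\MSL[\eta^{-1}]$ are module spectra (in fact, by the Thom isomorphism and the structure of symplectically/$\SL$-oriented theories, they should be built from $\1[\eta^{-1}]$ via a cell structure coming from the cell structure of $\BSp$ and $\BSL$). I would argue that after $\eta$-periodization these cobordism spectra become free modules over $\1[\eta^{-1}]$ — or more precisely, that the Hurewicz-type map to $\kw[\eta^{-1}]$-homology, combined with the known ring structure $\pi_*\MSp[\eta^{-1}] \otimes \W(k) \cdots$, collapses. The cleanest route is: the $\eta$-inverted $\MSp$ receives Pontryagin--Thom classes, and one knows $\pi_*\MSp \to \W(k)[y_i]$ rationally/after suitable localization; the $2$-local $\eta$-periodic statement follows because the higher homotopy of $\1[\eta^{-1}]_{(2)}$ from part~(1) is all torsion killed by the polynomial generators' freeness — here I would invoke that $\kw_* = \W(k)[\beta]$ is concentrated in degrees divisible by $4$ and that the relevant Atiyah--Hirzebruch / cell spectral sequence degenerates. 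This is where I expect the real work to be: parts~(2) and~(3) are not purely formal consequences of the fiber sequence but require identifying the $\eta$-periodized Thom spectra as cellular $\1[\eta^{-1}]$-modules with explicitly understood cells, i.e. a motivic analogue of the splitting $\MSp[\eta^{-1}] \simeq \bigvee \Sigma^{?}\1[\eta^{-1}]$; the fiber sequence then feeds in only to name the resulting homotopy groups. I would structure the proof so that (1) is an immediate corollary of Theorem~\ref{thm:intro-main} plus the valuation lemma, while (2) and (3) are separated off and proven via the geometry of $\BSp$, $\BSL$ and the projective bundle / symplectic bundle formula in $\eta$-periodic $\SH(k)$, with the point $0$ homotopy being $\W(k)$ by part~(1) in degree $0$.
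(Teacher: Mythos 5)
Part (1) of your proposal is essentially the paper's argument: the long exact sequence of the fiber sequence of Theorem \ref{thm:main}, the formula $\adamsphi(\beta^n)=(9^n-1)\beta^{n-1}$ (Example \ref{ex:phi-beta}), and the valuation identity $\nu_2(9^n-1)=\nu_2(8n)$ (Lemma \ref{lemm:im-j-congruence}) are exactly what is used, and your degree bookkeeping for which of $\ker$ and $\mathrm{coker}$ contributes in degrees $4n$ and $4n-1$ is correct.

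For parts (2) and (3) there is a genuine gap, and the central idea you propose is false. You suggest that $\MSp[\eta^{-1}]$ splits as a wedge $\bigvee \Sigma^{?}\1[\eta^{-1}]$ of (even) suspensions of the $\eta$-periodic sphere, with the fiber sequence then only ``naming'' the homotopy groups. But such a splitting is impossible: it would force $\pi_{4n-1}\MSp[\eta^{-1}]_{(2)}$ to contain a copy of $\pi_{4n-1}\1[\eta^{-1}]_{(2)}\iso\mathrm{coker}(8n:\W(k)_{(2)}\to\W(k)_{(2)})\ne 0$ coming from the bottom cell, contradicting the answer $\W(k)[y_1,y_2,\dots]$, which is concentrated in even degrees. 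The torsion in $\pi_*\1[\eta^{-1}]_{(2)}$ is not ``killed by freeness''; it has to be cancelled by differentials/extensions in whatever cell filtration you use, and this cancellation is precisely the content that must be proved. The paper's actual mechanism is different: smash the resolution with $\MSp$ to get $\MSp[\eta^{-1}]_{(2)}\to\kw\wedge\MSp_{(2)}\xrightarrow{\adamsphi}\Sigma^4\kw\wedge\MSp_{(2)}$, compute $\kw_*\MSp\iso\kw_*[e_1,e_2,\dots]$ by dualizing the known cohomology of quaternionic Grassmannians (Theorem \ref{thm:cobordism-MSL-summary}), determine the action of $\adamspsi^3$, hence of $\adamsphi$, on the generators $e_i$ (Proposition \ref{prop:adams-cobordism-summary}), and prove that $\adamsphi$ is \emph{surjective} on $\kw_*\MSp_{(2)}$ with kernel a polynomial algebra (Lemma \ref{lemm:MSp-htpy-quad-closed}). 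That last step is the real work and rests on a nontrivial algebraic identification of $\ker(\adamsphi)$ with $H_*(\BSU)\subset H_*(\BU)$ (Lemma \ref{lemm:comuputation}); none of this is visible in your sketch. Finally, the corollary asserts the answer over $\W(k)$ integrally, not just $2$-locally, so one still needs the odd-primary input $r_\R(\MSp)\wequi\MU$, $r_\R(\MSL)\wequi\MSO$ (Lemma \ref{lemm:cobordism-realisation}) and an argument gluing the $2$-local and $2$-inverted generators (in the paper, reduction to $k=\Q$ and triviality of $\Pic(\W(\Q))$); your proposal gestures at the rational comparison but supplies no gluing step.
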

Result (1) above (i.e. the computation of $\pi_*(\1[\eta^{-1}])$) has attracted substantial attention before; see \S\ref{subsec:stable-stems} for a review of previous work.
To the best of our knowledge, results (2) and (3) are new in all cases.

As another application, we obtain the following cellularity results.
\begin{corollary}[see Proposition \ref{prop:cellularity}]
Let $k$ have exponential characteristic $e \ne 2$.
Then the spectra $\ko[1/e], H\tilde\Z[1/e] \in \SH(k)$ are cellular.
\end{corollary}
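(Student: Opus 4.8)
The plan is to deduce all the asserted cellularity statements --- including that of $\kgl[1/e]$, which is needed along the way --- from the slice filtration, by way of the following closure principle: \emph{if $E \in \SH(k)$ is effective, connective for the homotopy $t$-structure, its slice tower converges (so that $E \xrightarrow{\sim} \lim_q s_{\le q}E$, where $s_{\le q}E := \cof(f_{q+1}E \to E)$), and every slice $s_q E$ is cellular, then $E$ is cellular.} The cellular spectra $\SH(k)^\cell$ form a localizing subcategory of $\SH(k)$, closed under colimits, extensions and retracts, so the inclusion $\SH(k)^\cell \hookrightarrow \SH(k)$ admits a right adjoint, the cellularization $c$; the principle will also use that the slice filtration restricts to $\SH(k)^\cell$, i.e. that $f_q$ and $s_q$ preserve cellularity.

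Granting the principle, the deduction is quick. The spectrum $\HZ$ is cellular over perfect fields (Dugger--Isaksen), and $\HZ[1/e]$ is cellular over an arbitrary field (via the comparison with Spitzweck's motivic cohomology spectrum, Hoyois); hence $\HZ/2[1/e] = \cof(2\colon \HZ[1/e] \to \HZ[1/e])$ and all Tate twists of both are cellular. One has $s_q\kgl = \Sigma^{2q,q}\HZ$ (Voevodsky), and the slices $s_q\ko$ and $s_q H\tilde\Z$ are finite wedges of Tate twists and shifts of $\HZ$ and $\HZ/2$ (Röndigs--Spitzweck--Østvær); so after inverting $e$ all slices of $\kgl$, $\ko$ and $H\tilde\Z$ are cellular. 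Since these spectra are effective and connective, and their slice towers converge over a field with $e$ inverted (Levine's slice convergence theorem; Röndigs--Spitzweck--Østvær), the principle gives cellularity of $\kgl[1/e]$, $\ko[1/e]$ and $H\tilde\Z[1/e]$, as claimed.

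The real content is the closure principle. Being a right adjoint, $c$ preserves the inverse limit along the slice tower, so $c(E) \simeq \lim_q c(s_{\le q}E)$ computed in $\SH(k)^\cell$; and $c(s_{\le q}E) \simeq s_{\le q}E$ because $s_{\le q}E$ is an iterated extension of the cellular slices $s_0 E, \dots, s_q E$. Thus $c(E)$ is a cellular spectrum carrying a convergent slice tower with the same slices as $E$, so the counit $c(E) \to E$ has fibre $F$ with $s_q F \simeq 0$ for all $q$; since $F$ is connective (both $c(E)$ and $E$ are), convergence of its slice tower forces $F \simeq 0$, and $E$ is cellular.

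The main obstacle is making this last paragraph airtight. One must establish cleanly that $f_q$ and $s_q$ preserve cellularity and commute with $c$ --- plausible, but it amounts to showing that the slice filtration is ``visible'' to $\SH(k)^\cell$, so that the comparison between the limit $\lim_q s_{\le q}E$ formed in $\SH(k)^\cell$ and the one formed in $\SH(k)$ is an equivalence --- and, more seriously, that the slice towers of $\ko$ and $H\tilde\Z$ converge over a field of \emph{arbitrary} exponential characteristic $e \ne 2$; this is precisely the point at which $e$-primary torsion must be controlled, and hence where the hypothesis $1/e$ enters. Once these are in place the conclusion is formal. As a reassuring sanity check, cellularity of the $2$-inverted part is elementary: after inverting $2$ the fundamental square of Hermitian $K$-theory degenerates (the relevant Tate construction vanishing), exhibiting $\ko[1/2e]$ as a retract of a finite sum of copies of $\kgl[1/2e]$, and likewise $\kw[1/2e]$ as a retract of $\kgl[1/2e]$.
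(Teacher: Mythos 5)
There is a genuine gap, and it sits exactly where you flagged "the main obstacle": the slice towers of $\ko$ and $H\tilde\Z$ do \emph{not} converge over a general field of characteristic $\ne 2$, and inverting $e$ does not repair this. The failure of convergence is not an $e$-primary phenomenon but a $2$-primary/Witt-ring one: by Röndigs--Østvær, the slice completion map on $\pi_{4*}\KW$ is the $\I$-adic completion $\W(k)\to\W(k)_\I^\comp$, so already over $k=\R$ (characteristic $0$, so $e=1$) one has $\scomp(\KW)\ne\KW$ since $\W(\R)=\Z\ne\Z_2$; the same obstruction propagates to $\kw$ and $\ko$, whose $\eta$-periodic part sees the uncompleted Witt ring. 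This is why the paper's completeness statements (Proposition \ref{prop:slice-complete}, Lemma \ref{lemm:KO-slice-complete}) require $\vcd_2(k)<\infty$ \emph{and} hold only after $(2,\rho)$-completion. Even granting convergence, your closure principle is not yet a proof: cellular spectra are not closed under inverse limits, and the cellularization argument as written is circular --- the chain $c(E)\simeq\lim^{\cell}_q c(s_{\le q}E)\simeq c(\lim_q s_{\le q}E)\simeq c(E)$ returns you to where you started, and the claims that $c(E)$ has the same slices as $E$ and that the fibre of $c(E)\to E$ has a convergent slice tower are exactly what would need to be proved. Finally, your sanity check contains a false assertion: $\kw[1/2e]$ cannot be a retract of a sum of copies of $\kgl[1/2e]$, since $\eta$ acts as zero on the latter but invertibly and nontrivially on the former; the minus part $\kw[1/2]$ is precisely the piece \emph{not} handled by the degeneration of the fundamental square.

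The paper's route is entirely different and is designed to circumvent these problems. It uses the fracture principle (Lemma \ref{lemm:cellular-pp}) that $E$ is cellular iff $E/\eta$ and $E[1/\eta]$ are, reducing $\ko$ to $\kgl$ (cellular as $\MGL[1/e]/(x_2,x_3,\dots)$) and to $\kw$, and $H\tilde\Z$ to $\HZ$, $\HZ/2$ and $f_0(\ul K^W)$, which in turn reduces to $\HW=\kw/\beta$. The genuinely hard case is $\kw$: as the paper remarks, there is in general no reason for a connective cover (or homotopy-sheaf truncation) of a cellular spectrum to be cellular, and the soft slice-theoretic approach cannot see this. Cellularity of $\kw$ is instead a \emph{consequence of the main theorem}, via the presentations $\MSL[\eta^{-1}]\wequi\MSp[\eta^{-1}]/(y_1,y_3,\dots)$ and $\kw\wequi\MSL[\eta^{-1}]/(y_4,y_6,\dots)$ (Corollaries \ref{cor:MSp-MSL} and \ref{cor:MSL-kw}), starting from the cellular spectrum $\MSp$. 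Any correct proof has to supply an input of this strength for the $\eta$-periodic part.
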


See \S\ref{sec:applications} for further applications, among them the determination of the $\HW_{(2)}$-cooperations and $\kw_{(2)}$-operations.

\subsection{Proof sketch}
To orient the reader, we provide a rapid sketch of our proof of Theorem \ref{thm:intro-main}.
Granting the construction of $\adamsphi$, we proceed as follows.
Write $F$ for the fiber of $\adamsphi$.
By a connectivity argument, it suffices to show that the canonical map $\HW_{(2)} \xrightarrow{\alpha} F \wedge \HW$ is an equivalence, where $\HW = \1[\eta^{-1}]_{\le 0}$ is the $\eta$-periodic Eilenberg-MacLane spectrum.
By base change, it suffices to prove the result for prime fields; in particular we may assume that $\vcd_2(k) < \infty$.
It suffices to show that $\alpha[1/2]$ and $\alpha/2$ are equivalences, and since we are working over an arbitrary field (of $\vcd_2 < \infty$ and characteristic $\ne 2$) it suffices to check that we have an isomorphism on homotopy groups.
The homotopy groups of $\kw$ are given by $\W(k)[\beta]$, where $\W(k)$ is the Witt ring and $\beta \in \pi_4 \kw$ is the Bott element.

For $\alpha[1/2]$ we are dealing with a rational problem; in particular $\HW \otimes \Q \wequi \1[\eta^{-1}] \otimes \Q$ and so $\pi_* (\HW \wedge \kw) \otimes \Q \wequi \W(k)[\beta] \otimes \Q$.
It follows that for $n>0$ we have $\adamsphi(\beta^n) = a_n\beta^{n-1}$ for some $a_n \in \W(k) \otimes \Q$, which we need to show is a unit.
One of the basic properties of the construction of $\adamsphi$ (related to \eqref{eq:adams-formula} below) is that $\adamsphi(\beta^n) = (9^n-1) \beta^{n-1}$, so $\alpha[1/2]$ is indeed an equivalence.

The case of $\alpha/2$ is more difficult.
We may prove the result for the $2$-adic completion $\alpha_2^\comp$ instead.
Note that under our assumption that $\vcd_2(k) < \infty$ we have $\W(k)_2^\comp \wequi \W(k)_\I^\comp$, where $\I \subset \W(k)$ is the fundamental ideal.
Our major intermediate result is as follows.
\begin{lemma}[see Theorem \ref{thm:HW-kw-summary}] \label{lemm:intro-HW-kw}
We have \[ \pi_*((\kw \wedge \HW)_2^\comp) \wequi \begin{cases} \W(k)_\I^\comp & 0 \le * \equiv 0 \pmod{4} \\ 0 &\text{else} \end{cases}; \] moreover if $x_i \in \pi_{4i}((\kw \wedge \HW)_2^\comp)$ is a generator (of the free $\W(k)_\I^\comp$-module $\pi_{4i}((\kw \wedge \HW)_2^\comp)$) then so is its base change to any larger field (of finite $\vcd_2$).
\end{lemma}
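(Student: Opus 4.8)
The plan is to reduce the statement to a $2$-adic homotopy-group computation over fields of finite $\vcd_2$, carry it out by a spectral-sequence argument whose differentials are controlled by the Adams operations and the homotopy fixed point theorem, and then pass to arbitrary fields by continuity.

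\emph{Step 1: reformulation.} Since $\pi_* \kw = \W(k)[\beta]$ and $\beta$ is a non-zero-divisor, the $0$-truncation map $\kw \to \tau_{\le 0}\kw$ exhibits $\HW \simeq \kw/\beta := \cof(\Sigma^4 \kw \xrightarrow{\beta} \kw)$: it factors through the cofiber because $[\Sigma^4 \kw, \HW] = 0$ by connectivity (a $t$-structure orthogonality), and the resulting map is an isomorphism on homotopy, hence an equivalence. Therefore $\kw \wedge \HW \simeq (\kw \wedge \kw)/\beta$ is an $\HW$-module, and once we know its homotopy groups are free (which will be part of the output) it splits into shifts of $\HW$; in any case the theorem amounts to computing $\pi_*((\kw \wedge \HW)_2^\comp)$ as a graded $\W(k)$-module, which I would first do for $k$ of characteristic $\ne 2$ and finite $\vcd_2$. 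For such $k$, $\W(k)_2^\comp \simeq \W(k)_\I^\comp$, and Voevodsky's theorem (the Milnor conjecture on quadratic forms) identifies $\gr_\I \W(k)$ with $\bigoplus_{n \ge 0} H^n(k,\Z/2)$, i.e.\ with mod-$2$ Galois cohomology.

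\emph{Step 2: the $2$-adic computation.} Over a separably closed base $\bar k$ of characteristic $\ne 2$ one has $\W(\bar k) = \Z/2 = \W(\bar k)_\I^\comp$, and the predicted answer collapses to $\Z/2$ in each degree $\ge 0$ divisible by $4$; I would establish this base case directly, using that $\KW = \kw[\beta^{-1}]$ is two-adically governed by the $C_2$-Tate construction on $\KGL$ (the homotopy fixed point theorem), which turns it into a question in algebraic $K$-theory. For general $k$ of finite $\vcd_2$ one then runs a spectral sequence associated to the $\I$-adic — equivalently, by finite $\vcd_2$, the $2$-adic/étale-descent — filtration of $\kw \wedge \HW$, whose input is, via the Milnor conjecture and the base case, the mod-$2$ Galois cohomology of $k$; its convergence rests on the new completeness (étale-hyperdescent) results established earlier for these spectra over finite-$\vcd_2$ fields. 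The crux — and the step I expect to be the main obstacle — is to prove that this spectral sequence degenerates, so that in every degree $\ge 0$ divisible by $4$ it reassembles, with filtration equal to the $\I$-adic filtration, to a single copy of $\W(k)_\I^\comp$, and vanishes in all other degrees. The necessary differentials are forced by (i) the Adams operations $\adamspsi^\ell$ on $\kw$ constructed in \S\ref{sec:adams}, whose action on $\pi_*\kw = \W(k)[\beta]$ is explicitly known (for instance $\adamsphi(\beta^n) = (9^n-1)\beta^{n-1}$, as in the proof sketch) and natural in $k$, and (ii) the homotopy fixed point theorem, which as in the base case reduces the remaining ambiguities to algebraic $K$-theory. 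Freeness over $\W(k)_\I^\comp$ is then automatic, since the surviving graded pieces are free over $\W(k)/\I = \F_2$; and a generator $x_i \in \pi_{4i}((\kw \wedge \HW)_2^\comp)$ is precisely a class whose image in the lowest ($n = 0$) filtration quotient generates $\F_2$.

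\emph{Step 3: arbitrary fields and base change.} For an arbitrary field $k$ of characteristic $\ne 2$, write $k = \colim_\alpha k_\alpha$ with the $k_\alpha$ finitely generated, hence of finite $\vcd_2$; by continuity of $\SH(-)$, together with the uniform torsion bounds that finite $\vcd_2$ provides, the colimit commutes with $2$-completion in the relevant range and the computation of Step 2 persists over $k$. Finally, for an inclusion $k \subset k'$ with $k'$ of finite $\vcd_2$ and a generator $x_i$ over $k$, base change along the local ring map $\W(k)_\I^\comp \to \W(k')_\I^\comp$ sends $x_i$ to $u\, x_i'$ for some $u \in \W(k')_\I^\comp$; since $x_i$ and $x_i'$ both reduce to generators of $\F_2$ modulo $\I$, we get $u \equiv 1 \pmod{\I}$, so $u$ is a unit and the base change of $x_i$ is again a generator. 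Assembling the three steps proves the theorem.
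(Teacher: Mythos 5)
Your plan diverges from the paper's actual proof at the crucial step, and the places where it diverges are precisely the places where you wave your hands rather than give an argument.

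The paper never touches a Galois descent spectral sequence, the homotopy fixed point theorem, or Adams operations in the proof of Theorem~\ref{thm:HW-kw-summary}. Instead it exploits the cofiber sequence $\Sigma^{1,1}\ul{K}^W \xrightarrow{\eta} \ul{K}^W \to \ul{k}^M$ together with the identification $\ul{k}^M \simeq (\HZ/2)/\tau$, so that $\ko \wedge \ul{k}^M$ is computed from the \emph{motivic dual Steenrod algebra} and Ananyevskiy's description of $\HZ/2_{**}\ko$ as a subalgebra of $\HZ/2_{**}\HZ/2$. One then runs the $\eta$-Bockstein spectral sequence for $\ko\wedge\ul{K}^W$; collapse at $E_2$ comes from a stem/filtration degree count (all $E_2$-generators in positive filtration live in stems $\equiv 0 \pmod 4$), not from any external operations. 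Finally the completeness result of \S\ref{sec:completeness} (together with $\Sigma^{2,1}\ul{K}^W$ being very effective) is what lets one replace $\eta$-completion by $2$-completion and invert $\eta$ to land on $\kw\wedge\HW$. Your proposal is missing this entire mechanism.

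There are two concrete gaps. First, you assert the base case over a separably closed field by saying the homotopy fixed point theorem ``turns it into a question in algebraic $K$-theory.'' It does not: $\HW$-homology (equivalently $\ul{K}^W$-homology) is not a $K$-theoretic quantity, and $\KGL^{hC_2}$ tells you nothing about $\kw\wedge\HW$. There is no route from the HFPT to $\pi_*((\kw\wedge\HW)_2^\comp)$, even over $\bar k$; some input from the motivic (dual) Steenrod algebra or an equivalent cohomological computation is unavoidable. Second, you flag the degeneration of your descent spectral sequence as ``the main obstacle'' and then claim the differentials are ``forced by the Adams operations.'' This is an assertion, not an argument: over $\F_2$-coefficients eigenvalue-splitting arguments with Adams operations are unavailable, and you give no mechanism by which $\adamspsi^\ell$ would kill a differential. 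In the paper the collapse is a sparsity argument on the $E_2$-page of a \emph{different} spectral sequence. Without a worked-out $E_2$ and a degree-based or operation-based vanishing criterion, Step 2 does not constitute a proof. (Step 3's extension to fields of infinite $\vcd_2$ is also delicate and not needed: the statement is meant for finite $\vcd_2$, as the hypothesis $\W(k)_\I^\comp\simeq\W(k)_2^\comp$ already signals.)
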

Consequently for $n>0$ we have $\adamsphi(x_n) = b_nx_{n-1}$ for some $b_n \in \W(k)_\I^\comp$, which we need to show is a unit.
Since $\W(k)_\I^\comp$ is a local ring with residue field $\F_2$ independent of $k$, and the generators are compatible with base change, we may extend $k$ arbitrarily.
We may thus assume that $k$ is quadratically closed, so that $\W(k) = \F_2$.

We now employ the motivic special linear cobordism spectrum $\MSL$.
Since $\kw$ is $\SL$-oriented and $\eta$-periodic, by work of Ananyveskiy \cite{ananyevskiy2015special} (see also \S\ref{sec:cobordism}) we have \[ \pi_*(\kw \wedge \MSL) \wequi \kw_*[e_2, e_4, \dots], \] with $|e_{2i}| = 4i$.
In the case when $\W(k) = \F_2$, we have partial information on the action of $\adamsphi$ on $\kw_* \MSL$.
\begin{lemma}[see Lemma \ref{lemm:phi-MSL}]
Suppose that $\W(k) = \F_2$.
Then $\adamsphi^{\circ i}(e_{2i}) = 1$.
\end{lemma}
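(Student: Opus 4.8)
The plan is to reinterpret $\adamsphi$ on $\kw\wedge\MSL$ as an $\F_2[\beta]$-linear operator on the polynomial ring $\pi_*(\kw\wedge\MSL)\cong\F_2[\beta,e_2,e_4,\dots]$, to pin down its effect on the generators via the $\SL$-orientation, and then to run the iteration by a filtration argument that collapses everything onto a single mod-$2$ coefficient. Since $\adamspsi^3$ is a ring map with $\adamspsi^3(\beta)=9\beta$ and $9\equiv 1\pmod 2$, the operator $\Psi:=(\adamspsi^3\wedge\id)_*$ is an $\F_2[\beta]$-algebra endomorphism of $\F_2[\beta,e_2,e_4,\dots]$, and the defining relation $\beta\circ\adamsphi=\adamspsi^3-1$ exhibits $\Phi:=(\adamsphi\wedge\id)_*$ as the unique $\F_2[\beta]$-linear operator with $\beta\Phi=\Psi-\id$ (unique because multiplication by $\beta$ is injective). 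In particular $\Psi(x)-x$ is $\beta$-divisible for all $x$, and $\Phi(1)=0$, $\Phi(\beta x)=\beta\Phi(x)$; since $\Phi$ lowers degree by $4$ we have $\adamsphi^{\circ i}(e_{2i})\in\pi_0(\kw\wedge\MSL)=\F_2$.

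Next I would pin down $\Psi$ on the generators. By Ananyevskiy's computation of $\kw_*\MSL$ (recalled in \S\ref{sec:cobordism}) one may take the $e_{2i}$ to be the images in $\kw_*\MSL$ of generators of the free $\kw_*$-module $\kw_*(\HP^\infty)$, $\HP^\infty=\BSL_2$, under the canonical (Thomified) maps. The operation $\adamspsi^3\wedge\id$ commutes with these maps and acts $\kw_*$-semilinearly — hence, over $\F_2$, $\F_2[\beta]$-linearly — on the relevant free $\kw_*$-module, so $\adamspsi^3(e_{2i})$ is again linear in the $e$'s, say
\[ \adamspsi^3(e_{2i}) \;=\; e_{2i}+\sum_{m=0}^{i-1}\gamma_{i,m}\,\beta^{\,i-m}\,e_{2m}, \qquad e_0:=1,\quad \gamma_{i,m}\in\F_2, \]
the diagonal coefficient being forced to equal $1$ by the $\beta$-divisibility of $\Psi-\id$. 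The formula for $\adamspsi^3$ on a Thom class (it is the associated cannibalistic class times the Thom class), together with the behaviour of $\adamspsi^3$ on the Euler class of the tautological rank-$2$ bundle — both encoded in \eqref{eq:adams-formula} — shows, over $\F_2$, that the subdiagonal coefficient $\gamma_{i,i-1}$ equals a fixed $\bar r\in\F_2$ independent of $i$, namely the mod-$2$ reduction of the first nontrivial coefficient of that cannibalistic class. Consequently $\Phi(e_{2i})=\sum_{m=0}^{i-1}\gamma_{i,m}\,\beta^{\,i-1-m}\,e_{2m}$ with $\gamma_{i,i-1}=\bar r$.

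Finally, expanding $\adamsphi^{\circ i}(e_{2i})$ by $\F_2[\beta]$-linearity of $\Phi$ and using $\Phi(1)=0$, the surviving contributions come from strictly decreasing chains $i=m_0>m_1>\dots>m_i=0$ reaching $e_0=1$ only at the last step; there is exactly one such chain, $m_j=i-j$, and the power of $\beta$ attached along it is $\beta^0$ (the exponent at each step being $m_j-1-m_{j+1}=0$). Hence
\[ \adamsphi^{\circ i}(e_{2i})\;=\;\Bigl(\prod_{l=1}^{i}\gamma_{l,l-1}\Bigr)\cdot 1\;=\;\bar r^{\,i}\ \in\ \F_2, \]
so it remains only to see that $\bar r=1$: that the relevant coefficient of the mod-$2$ cannibalistic class of a rank-$2$ $\SL$-bundle is odd — the analogue of the classical $K$-theory computation $\rho^3(L)=1+L+L^2\equiv 1+c_1(L)+c_1(L)^2\pmod 2$ — which is read off from the construction of the Adams operations in \S\ref{sec:adams}.

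The main obstacle is exactly this last, non-formal, point: extracting from \eqref{eq:adams-formula} the precise mod-$2$ value of the subdiagonal coefficient and confirming that it is a unit of $\F_2$, and, relatedly, making sure the chosen generators $e_{2i}$ genuinely transform $\F_2[\beta]$-linearly (no decomposables intervene). Everything else is formal manipulation of the $\F_2[\beta]$-linear operators $\Psi$ and $\Phi$.
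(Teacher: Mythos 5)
Your plan is in the right spirit but more elaborate than necessary, and the two issues you flag at the end are genuine and point toward why the paper takes a shorter route.

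On the linearity assumption: you need not worry about decomposables appearing in $\adamspsi^3(e_{2i})$, but the reason is not obvious and should be said. The classes $e_{j}$ are the images of the $\kw_*$-module basis elements $\beta_{j+1}\in\kw_*\HP^\infty$ under the Thomified map $\Sigma^{-4,-2}\Sigma^\infty_+\HP^\infty\to\MSp\to\MSL$, and $\adamspsi^3$ commutes with this map. Since $\kw_*\HP^\infty$ is a \emph{free} $\kw_*$-module on the $\beta_j$, $\adamspsi^3(\beta_j)$ is a $\kw_*$-linear combination of the $\beta_k$, and pushing forward gives that $\adamspsi^3(e_{2i})$ is $\kw_*$-linear in the $e_k$. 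That said, the paper deliberately does not invoke this: Proposition \ref{prop:adams-cobordism-summary} only records $\adamspsi^3(e_{2i})\in e_{2i}+\beta e_{2i-2}+\beta^2\kw_*\MSL$, which allows decomposables in the $\beta^2$-tail.

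On the subdiagonal coefficient: your heuristic via cannibalistic classes is not how the paper proceeds, and it is also slightly misleading. The actual input is the formula $\adamspsi^3(b)=b(1+\beta b^2)$ in $\kw^2(\HP^\infty)$ over $\F_2$, proved in \S\ref{sec:cobordism} by comparing with $\KGL$ and using that $\det(V)$ is trivial for a symplectic bundle. Dualizing (and using that $\adamspsi^3$ acts trivially on $\kw_*$ so Lemma \ref{lemm:kronecker-adams} applies cleanly) gives $\adamspsi^3(\beta_j)=\beta_j+(j-2)\beta\,\beta_{j-2}+O(\beta^2)$, so the coefficient is $(j-2)\bmod 2$ and depends on the \emph{parity} of $j$. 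It happens to be $1$ for the relevant classes only because $e_{2i}$ is the image of $\beta_{2i+1}$, which has odd index. So it is not ``a fixed $\bar r$ reading off a coefficient of a cannibalistic class'' in any tautological sense; it depends on a genuine computation and a parity coincidence.

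Finally, the chain-counting is overkill, and avoiding it removes any dependence on linearity. The paper's argument: from $\adamspsi^3(e_{2i})\in e_{2i}+\beta e_{2i-2}+\beta^2\kw_*\MSL$ and \eqref{eq:adamsphi-compute} one gets $\adamsphi(e_{2i})\in e_{2i-2}+\beta\kw_*\MSL$. Since $\adamsphi$ commutes with $\beta$ by \eqref{eq:beta-phi-commute} (using $9\equiv 1$ in $\W(k)=\F_2$), iterating gives $\adamsphi^{\circ i}(e_{2i})=1+\beta a_i$ for some $a_i\in\kw_{-4}(\MSL)$. But $\kw\wedge\MSL$ is connective, so $\kw_{-4}(\MSL)=0$ and $\adamsphi^{\circ i}(e_{2i})=1$. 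This is shorter, it is insensitive to whether $\adamspsi^3$ is $\kw_*$-linear on the $e$'s, and it isolates the single nontrivial numerical input (the subdiagonal coefficient is $1$) from the purely formal degree argument.
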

Now consider the canonical ring map $\gamma: \pi_*(\kw \wedge \MSL) \to \pi_*((\kw \wedge \HW)_2^\comp)$.
We get \[ \adamsphi^{\circ i}\gamma(e_{2i}) = \gamma(\adamsphi^{\circ i}(e_{2i})) = \gamma(1) = 1 \ne 0. \]
This implies that $\adamsphi(\gamma(e_{2i})) \ne 0$, and so $b_i \ne 0$ as needed.

\subsection{Organization}
We begin in \S\ref{sec:preliminaries} by recalling well-known results and setting out notation.
It can probably be skipped and referred to only as needed.

\S\S\ref{sec:adams}--\ref{sec:HW-kw} contain preparations for the proof of our main theorem.
They all begin with a subsection called ``summary'', in which the main results of that section are listed.
Other sections will only refer to the results stated in the summary subsections.
The reader willing to take on trust the results stated in the summary subsection may thus immediately skip to the next section.

In \S\ref{sec:adams} we construct Adams operations for the motivic ring spectrum $\KO$, and we prove the important formula \begin{equation}\label{eq:adams-formula} \adamspsi^n(\beta) = n^2n^2_\epsilon \beta. \end{equation}
We construct the Adams operations as $\scr E_\infty$-ring maps \[ \adamspsi^n: \KO[1/n] \to \KO[1/n] \in \SH(S), \] for $n$ odd.
Our construction begins by making the Adams operations on $\KGL$ into $C_2$-equivariant maps of $\scr E_\infty$-rings, by using the motivic Snaith theorem \cite{gepner2009motivic,spitzweck2009bott}\footnote{This result was proved independently by Gepner--Snaith and Spitzweck--{\O}stv{\ae}r. Our terminology reflects the analogy with the classical result, which was established by Snaith \cite{snaith1979algebraic}.}.
Then we take $2$-adically completed homotopy fixed points which, by the homotopy fixed point theorem for Hermitian $K$-theory, gives us operations on $\KO_2^\comp$.\footnote{At least under some finiteness assumptions, which we ignore for the purposes of this introduction.}
By means of a fracture square, we combine this with a more naive operation on $\KO[1/2n]$ to yield the Adams operation on $\KO[1/n]$.
Since our definition is rather indirect, establishing \eqref{eq:adams-formula} is a fairly delicate problem.
Our proof eventually boils down to discreteness of the space of $\scr E_\infty$-endomorphisms of the classical spectrum $\ku$, which is a well-known consequence of Goerss--Hopkins obstruction theory \cite{goerss2004moduli}.\footnote{In fact we only need discreteness of the space of $\scr E_1$-endomorphisms of $\KU$, which was established in unpublished work of McLure.}

In \S\ref{sec:cobordism} we collect some results about the motivic cobordism spectra $\MSL$ and $\MSp$.
Firstly we dualize the well-known computations of the $A$-cohomology of $\MSL$ and $\MSp$ if $A$ is $\SL$-oriented and $\eta$-periodic, to obtain the $A$-homology of $\MSL$ and $\MSp$.
Secondly, under the additional assumption that $\W(k) = \F_2$, we obtain some information about the action of $\adamspsi^3$ on $\kw_* \MSL$ and $\kw_*\MSp$.

In \S\ref{sec:completeness} we establish some new completeness results.
Specifically, if $\vcd_2(k) < \infty$ and $E \in \SH(k)^\veff$, then we show that $E_2^\comp$ is $\eta$-complete.
This is deduced from an improved version of Levine's slice convergence theorem, which was recently established \cite[\S5]{bachmann-bott}.

We leverage this in \S\ref{sec:HW-kw} to compute $\pi_*((\kw \wedge \HW)_2^\comp)$, i.e. prove Lemma \ref{lemm:intro-HW-kw}.
This employs the equivalences \[ (\kw \wedge \HW)_2^\comp \wequi (\ko \wedge \ul{K}^W)[\eta^{-1}]_2^\comp \wequi (\ko \wedge \ul{K}^W)_2^\comp[\eta^{-1}]_2^\comp \wequi (\ko \wedge \ul{K}^W)_{\eta,2}^\comp[\eta^{-1}]_2^\comp. \]
Here the most important (and nontrivial) step is the last one, which uses our new completeness result (and the fact that $\Sigma^{2,1}\ul{K}^W \in \SH(k)^\veff$, which was essentially established in \cite{bachmann-very-effective}).
Using that \[ \ul{K}^W/\eta \wequi \ul{k}^M \wequi (\HZ/2)/\tau, \] we compute $\pi_{**}(\ko \wedge \ul{K}^W)_{\eta}^\comp$ by employing a Bockstein spectral sequence, together with the known computation of $\HZ/2_{**}\ko$ \cite{ananyevskiy2017very}.

With all this preparation out of the way, in \S\ref{sec:main} we prove our main result, following essentially the argument sketched above.
Finally in \S\ref{sec:applications} we establish various applications.

In appendix \S\ref{sec:htpy-fixed} we provide an alternative proof of the homotopy fixed point theorem for Hermitian $K$-theory (which was first established by Hu--Kriz--Ormsby \cite{hu2011homotopy}).
It utilizes the improved version of Levine's slice convergence theorem mentioned above, together with the computation by R{\"o}ndigs--{\O}stv{\ae}r of the slice spectral sequence for $\KW$ \cite{rondigs2013slices}.

\subsection{Acknowledgements}
It is our pleasure to thank Robert Burklund and Zhouli Xu for extensive discussions about the homotopy groups of $\MSL[\eta^{-1}]$ over $\C$.
We would further like to thank Alexey Ananyevskiy, Marc Hoyois, Dan Isaksen, Tyler Lawson, Denis Nardin, Oliver Röndigs and Dylan Wilson for helpful comments.

\subsection{Conventions}
All our statements directly or indirectly involving hermitian $K$-theory require the assumption that $2$ is invertible in the base scheme.
We may omit specifying this explicitly to avoid tedious repetition.

Given a map of spectra $\alpha: E \to F$, we denote by $\alpha$ also the induced map $\pi_* E \to \pi_* F$.

We denote the category of motivic spectra over a scheme $S$ by $\SH(S)$, and assume basic familiarity with its construction and properties.
See e.g. \cite[\S\S2.2,4.1]{bachmann-norms}.
We view this as a presentably symmetric monoidal, stable $\infty$-category \cite{lurie-ha,lurie-htt}, and we assume some familiarity with the theory of such categories (in particular in \S\ref{sec:adams}).

\subsection{Table of notation}\NB{more reasonable order?} \label{subsec:table-of-notation}
\begin{longtable}{lll}
$\nu_2$ & $2$-adic valuation \\
$\vcd_2(k)$ & $2$-étale cohomological dimension of $k[\sqrt{-1}]$ \\
$A[p^n]$ & $p^n$-torsion in abelian group $A$ \\
$A_p^\comp$ & classical $p$-completion of abelian group \\
$L_p^\comp A$ & derived $p$-completion of abelian group & \S\ref{subsec:derived-completion} \\
$E_p^\comp$ & $p$-completion of spectrum & \S\ref{subsec:completion} \\
$E[1/n], E_{(p)}$ & localization of spectrum & \S\ref{subsec:completion} \\
$\1$ & motivic sphere spectrum \\
$\KO, \KGL$ & hermitian and algebraic $K$-theory motivic spectra & \S\ref{subsec:KO-KGL} \\
$\KW, \kw, \ko$ & variants of the above & \S\ref{subsec:spectra} \\
$\KO^\topsup$ & classical orthogonal $K$-theory spectrum \\
$\K^\circ$ & rank $0$ summand of $K$-theory space \\
$\beta, \beta_\KGL$ & Bott element in $\KO, \KGL$ & \S\ref{subsec:KO-KGL} \\
$\adamspsi^n$ & Adams operation & \S\ref{sec:adams} \\
$\adamspsi^n_\Snaith, \adamspsi^n_\OSnaith$ & Adams operations & \S\ref{subsec:GS-htpy} \\
$\adamsphi$ & modified Adams operation & \S\ref{sec:main} \\
$\ul{K}^M$ & homotopy module of (unramified) Milnor $K$-theory \\
$\ul{k}^M$ & homotopy module of mod $2$ (unramified) Milnor $K$-theory \\
$\ul{K}^W$ & homotopy module of (unramified) Witt $K$-theory & \S\ref{subsec:f0HW} \\
$\HZ$ & motivic cohomology spectrum & \S\ref{subsec:steenrod} \\
$\tau$ & Bott element in $\HZ/2$ \\
$\HW$ & $\eta$-periodic Witt cohomology spectrum & \S\ref{subsec:KW} \\
$\MSL, \MSp$ & algebraic $\SL$-, $\Sp$-cobordism motivic spectra & \S\ref{sec:cobordism} \\
$\HP^\infty$ & infinite quaternionic projective space \\
$\HGr, \SGr, \Gr$ & quaternionic, special linear, and ordinary Grassmannians & \S\ref{subsec:real-realization} \\
$\boxplus$ & external product of vector bundles (on a product of varieties) \\
$\Th(V)$ & Thom space of a vector bundle, $V/V \setminus 0$ \\
$\eta$ & motivic Hopf map & \S\ref{sec:intro} \\
$\rho$ & standard endomorphisms of motivic sphere spectrum & \S\ref{subsec:rho} \\
$n_\epsilon$ & Milnor-Witt integer $1 + \lra{-1} + \dots + \lra{\pm 1}$ \\
$\W, \I$ & Witt ring and fundamental ideal & \S\ref{subsec:witt-groups} \\
$\ul{W}, \ul{I}^n$ & sheaf of Witt rings and fundamental ideals \\
$E[1/2]^+, E[1/2]^-$ & plus and minus part of a $2$-periodic spectrum & \S\ref{subsub:plus-minus} \\
$E^+, E^-$ & generalized plus and minus part & \S\ref{subsec:2-fracture} \\
$\pi_{i, j}(E), \pi_i(E)_j$ & bigraded homotopy groups & \S\ref{subsec:t-structure} \\
$\ul{\pi}_{i, j}(E), \ul{\pi}_i(E)_j$ & bigraded homotopy sheaves & \S\ref{subsec:t-structure} \\
$E_{\ge 0}, E_{\le 0}$ & truncations in the homotopy $t$-structure & \S\ref{subsec:t-structure} \\
$S^{p, q}$ & motivic sphere $S^{p-q} \wedge \Gmp{q}$ \\
$\Sigma^{p, q}$ & bigraded suspension (smashing with $S^{p,q}$) \\
$\SH(S)$ & category of motivic spectra over $S$ \\
$\SH(k)^\veff$ & category of very effective spectra & \S\ref{subsec:very-effective} \\
$\Spc(S)$ & category of motivic spaces over $S$ \\
$f_i$ & effective cover functor & \cite{voevodsky-slice-filtration} \\
$s_i$ & slice functor & \cite{voevodsky-slice-filtration} \\
$\tilde f_i$ & very effective cover functor & \cite{bachmann-very-effective} \\
$\tilde s_i$ & generalized slice functor & \cite{bachmann-very-effective} \\
$r_\R$ & real realization functor & \S\ref{subsec:rho} \\
$\PSh(\scr C)$ & category of presheaves of spaces on $\scr C$ \\
$\h\scr C$ & homotopy $1$-category of $\infty$-category \\
$\Map(\ph, \ph)$ & mapping space in an $\infty$-category \\
$\map(\ph, \ph)$ & mapping spectrum in a stable $\infty$-category $\scr C$ \\
$[\ph,\ph]$ & homotopy classes of maps, i.e. $\pi_0\Map(\ph,\ph)$ \\
$\Spc$ & category of spaces \\
$\SH$ & category of spectra \\
$\Grpd$ & category of $\infty$-groupoids (so $\Grpd \wequi \Spc$) \\
$\Sch, \Sch_S$ & category of qcqs schemes (over $S$) \\
$\Sm_S$ & category of smooth, qcqs schemes over $S$ \\
$\Vect_S$ & $1$-groupoid of vector bundles on $S$ \\
$\Perf_S$ & category of perfect complexes on $S$
\end{longtable}

\section{Preliminaries and Recollections} \label{sec:preliminaries}
In this section we collect various well-known results which will be used throughout the sequel.
About half of them are specific to motivic homotopy theory (bigraded homotopy sheaves \S\ref{subsec:t-structure}, completion and localization of motivic spectra \S\ref{subsec:completion}, very effective spectra \S\ref{subsec:very-effective}, and the relationship between inverting $\rho$ and real realization \S\ref{subsec:rho}), whereas the other half is about more general algebra and homotopy theory (filtered modules \S\ref{subsec:filtered}, derived completion of abelian groups \S\ref{subsec:derived-completion}, and Witt groups \S\ref{subsec:witt-groups}).
We encourage the reader to skip this section and refer back as needed.

\subsection{Filtered modules} \label{subsec:filtered}
\subsubsection{}
We follow \cite[\S2]{boardman1999conditionally}.
Thus by a filtered abelian group $G$ we mean a family of subgroups \[ G \supset \cdots \supset F^s G \supset F^{s+1} G \supset \cdots. \]
We denote by \[ \gr^n(G) = F^nG/F^{n+1}G \] the associated graded.
\begin{definition}[\cite{boardman1999conditionally}, Proposition 2.2]
Let $F^\bullet G$ be a filtered abelian group.
\begin{enumerate}
\item The filtration is \emph{exhaustive} if $G = \colim_s F^sG$.
\item The filtration is \emph{Hausdorff} if $0 = \lim_s F^s G$.
\item The filtration is \emph{complete} if $0 = \limone_s F^s G$.
\end{enumerate}
\end{definition}
More generally, we may be working with ((bi)graded) filtered modules over a ((bi)graded) filtered ring.
Since the forgetful functor from modules to abelian groups preserves limits and colimits, there is no ambiguity in the definition of exhaustive/Hausdorff/complete.
When working with graded objects this is no longer the case, and the definitions have to be applied degreewise.

\subsubsection{}
\begin{lemma}[\cite{boardman1999conditionally}, Theorem 2.6] \label{lemm:filtration-comparison}
Let $\alpha: G \to G'$ be a morphism of filtered groups.
Assume that both filtrations are Hausdorff and exhaustive, and that the filtration on $G$ is complete.
Then $\alpha$ is an isomorphism of filtered groups (i.e. $\alpha$ induces $G \wequi G'$ and $F^s G \wequi F^s G'$) if and only if $\gr^\bullet(\alpha)$ an isomorphism.
\end{lemma}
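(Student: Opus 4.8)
This is a standard convergence-of-filtrations argument, and the plan is to reduce everything to the five-term exact sequences relating a filtered group to its associated graded. First I would observe that $\alpha$ respects filtrations by hypothesis, so it induces maps on every $F^sG \to F^sG'$ and on the quotients $G/F^sG \to G'/F^sG'$; the ``only if'' direction is trivial, so the content is the ``if'' direction. Assume then that $\gr^\bullet(\alpha)$ is an isomorphism in every degree.

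The key step is to prove by descending induction (or rather a limit argument over $s$) that $\alpha$ induces an isomorphism $G/F^sG \xrightarrow{\ \wequi\ } G'/F^sG'$ for every $s$. For a fixed pair $s < t$ one has a map of short exact sequences
\begin{equation*}
\begin{tikzcd}
0 \ar[r] & F^sG/F^tG \ar[r] \ar[d, "\alpha"] & G/F^tG \ar[r] \ar[d, "\alpha"] & G/F^sG \ar[r] \ar[d, "\alpha"] & 0 \\
0 \ar[r] & F^sG'/F^tG' \ar[r] & G'/F^tG' \ar[r] & G'/F^sG' \ar[r] & 0
\end{tikzcd},
\end{equation*}
and since the associated graded pieces $\gr^n$ for $s \le n < t$ are finite in number, a further filtration argument plus the five lemma shows $\alpha$ is an isomorphism on $F^sG/F^tG$; hence if $\alpha$ is known to be iso on $G/F^sG$ it is iso on $G/F^tG$. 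Starting from $s$ large and using exhaustiveness of both filtrations ($G = \colim_s G/F^sG$, and likewise for $G'$) this would propagate to an isomorphism $G \xrightarrow{\wequi} G'$ — but one must be careful that exhaustiveness is the statement $G = \colim_s F^sG$, so I would instead run the comparison on the tower $\{G/F^sG\}_s$ directly, whose inverse limit sees the completion.

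The real input — and the step I expect to be the main obstacle — is controlling the two towers $\{F^sG\}_s$ and $\{F^sG'\}_s$ at the level of $\lim$ and $\limone$. Completeness of the filtration on $G$ says $\limone_s F^sG = 0$, and Hausdorffness says $\lim_s F^sG = 0$; combined with the $\gr$-isomorphism these force, via the $\limone$ six-term sequence applied to $0 \to F^sG \to G \to G/F^sG \to 0$, that $G \wequi \lim_s G/F^sG$, i.e. $G$ is the completion of the tower of its quotients. For $G'$ we only know the filtration is Hausdorff and exhaustive, so a priori $G' \hookrightarrow \lim_s G'/F^sG'$ need not be surjective; the point is that the map of towers $\{G/F^sG\} \to \{G'/F^sG'\}$ is a levelwise isomorphism by the previous paragraph, hence induces isomorphisms on both $\lim$ and $\limone$, so $\lim_s G'/F^sG' \wequi \lim_s G/F^sG \wequi G$ and under this identification the natural map $G' \to \lim_s G'/F^sG'$ becomes a map $G' \to G$ which one checks is inverse to $\alpha$. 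Finally, that $\alpha$ restricts to isomorphisms $F^sG \wequi F^sG'$ then follows formally from the snake lemma applied to the map of short exact sequences $0 \to F^sG \to G \to G/F^sG \to 0$. Throughout I would simply cite \cite[Theorem 2.6]{boardman1999conditionally} for the precise bookkeeping rather than reprove it, since this is exactly their statement; the proposal above is the shape of their argument.
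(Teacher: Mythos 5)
Your argument is correct, and the paper itself does not reprove this lemma — it simply cites \cite[Theorem 2.6]{boardman1999conditionally}, exactly as you do at the end; your sketch faithfully reconstructs the standard $\lim$/$\limone$ argument behind Boardman's proof. The only loose ends are cosmetic: the displayed short exact sequence is not the one you actually induct on (you want $0 \to F^uG/F^tG \to F^{u-1}G/F^tG \to \gr^{u-1}G \to 0$ for $s < u \le t$, by induction on $t-s$), and ``starting from $s$ large'' should be $s \to -\infty$ since exhaustiveness means $F^sG$ grows as $s$ decreases — but you catch both of these and the rest of the argument (levelwise iso on the towers $\{G/F^sG\}$, completeness giving $G \cong \lim_s G/F^sG$, Hausdorffness giving $G' \hookrightarrow \lim_s G'/F^sG'$, then the snake lemma to recover $F^sG \cong F^sG'$) is exactly right.
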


\begin{corollary} \label{corr:filtered-free-lifting}
Let $R$ be a graded filtered ring.
Assume that $R$ is concentrated in non-negative degrees and that the filtration is exhaustive, Hausdorff and complete.

\begin{enumerate}
\item Let $M$ be a graded filtered $R$-module, such that the filtration is exhaustive and Hausdorff.
  If $\gr^\bullet(M)$ is a free (bigraded) $\gr^\bullet(R)$-module with only finitely many generators in external degrees (i.e. degree coming from the grading on $M$) $\le n$ for every $n$, then $M$ is a free $R$-module on corresponding generators.
\item Let $A$ be a graded filtered $R$-algebra, such that the filtration is exhaustive and Hausdorff.
  If $\gr^\bullet(A)$ is a polynomial $\gr^\bullet(R)$-algebra on generators in positive external degrees, only finitely many of which lie in any degree, then $A$ is a polynomial $R$-algebra on corresponding generators.
\end{enumerate}
\end{corollary}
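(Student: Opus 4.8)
The plan is to derive both statements from Lemma \ref{lemm:filtration-comparison} by building an explicit comparison map from a free (resp.\ polynomial) $R$-object and checking that it becomes an isomorphism on associated gradeds. For part (1), pick homogeneous elements $\bar m_\alpha \in \gr^\bullet(M)$ forming a free basis of $\gr^\bullet(R)$, indexed so that only finitely many have external degree $\le n$. Since the filtration on $M$ is exhaustive, each $\bar m_\alpha$, living in some $\gr^{d}(M) = F^d M / F^{d+1}M$, lifts to an element $m_\alpha \in F^d M$. Let $L = \bigoplus_\alpha R\cdot e_\alpha$ be the free graded $R$-module on generators $e_\alpha$ placed in the external degree of $m_\alpha$, and give $L$ the tensor-product filtration $F^s L = \sum_{i+j = s} F^i R \cdot (\text{part of } e_\alpha \text{ in ext.\ degree} \ge \text{something})$ — more precisely, $L$ carries the filtration in which $F^s L$ is spanned by $r e_\alpha$ with $r \in F^{s - |e_\alpha|_{\mathrm{filt}}} R$; here each $e_\alpha$ is assigned filtration degree $0$ (or the filtration degree of $m_\alpha$, whichever is cleaner). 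The finiteness hypothesis guarantees this filtration is exhaustive and Hausdorff, and completeness of $R$ together with the finiteness in each degree gives completeness of $L$ (a product of finitely many copies of the complete $R$ in each external degree, hence complete). Define $\phi: L \to M$ by $e_\alpha \mapsto m_\alpha$; it is filtered by construction, and on associated gradeds $\gr^\bullet(\phi)$ sends $\bar e_\alpha$ to $\bar m_\alpha$, hence is exactly the chosen basis isomorphism $\bigoplus \gr^\bullet(R) \to \gr^\bullet(M)$. By Lemma \ref{lemm:filtration-comparison}, $\phi$ is an isomorphism, so $M$ is free on the $m_\alpha$.

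For part (2), run the same strategy with a polynomial algebra in place of a free module. Choose homogeneous polynomial generators $\bar a_i \in \gr^\bullet(A)$ in positive external degrees, finitely many in each degree, lift each to $a_i \in A$ in the corresponding filtration stage, and let $P = R[x_i]$ be the polynomial $R$-algebra with $x_i$ in the external degree of $a_i$. Filter $P$ by declaring each $x_i$ to have filtration degree $0$ and extending multiplicatively and additively (so $F^s P$ is generated by monomials $r\cdot x^I$ with $r \in F^s R$). Because the generators lie in positive external degree and only finitely many in each degree, every external degree of $P$ is a finite direct sum of copies of $R$, so the filtration on $P$ is exhaustive, Hausdorff, and (using completeness of $R$) complete. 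The $R$-algebra map $P \to A$ sending $x_i \mapsto a_i$ is filtered, and on associated gradeds it is the map $\gr^\bullet(R)[x_i] \to \gr^\bullet(A)$ sending $x_i$ to $\bar a_i$, which is an isomorphism by hypothesis. Lemma \ref{lemm:filtration-comparison} again upgrades this to an isomorphism $P \wequi A$ of filtered $R$-algebras, proving $A$ is polynomial on the $a_i$.

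The one point requiring care — and the step I expect to be the main obstacle — is verifying that the comparison objects $L$ and $P$ are \emph{complete} in their natural filtrations, since Lemma \ref{lemm:filtration-comparison} demands completeness on the source. An arbitrary direct sum of complete filtered groups need not be complete (the $\limone$ can be nonzero), so this is exactly where the finiteness hypotheses earn their keep: in each fixed external degree $n$, both $L_n$ and $P_n$ are \emph{finite} direct sums of (shifted copies of) the graded pieces of $R$, and a finite direct sum of complete filtered groups is complete because $\limone$ commutes with finite products. One must also confirm that the tensor/polynomial filtration on $L$ and $P$ has associated graded equal to the free/polynomial object over $\gr^\bullet(R)$ — this is a standard compatibility, but it should be stated, perhaps by noting that in each external degree the filtration is a finite concatenation of copies of the filtration on $R$. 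With those bookkeeping points dispatched, both parts follow formally.
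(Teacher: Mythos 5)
Your overall strategy is the same as the paper's for part (1): lift basis elements, build a free filtered $R$-module on generators in matching degrees, observe completeness degreewise from the finiteness hypothesis, and invoke Lemma \ref{lemm:filtration-comparison}. You correctly identify the one subtle point (completeness of the comparison object) and resolve it exactly as the paper does, via finiteness in each external degree.

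One genuine slip: the filtration degree assigned to the generators $e_\alpha$ is not a matter of taste. You write ``here each $e_\alpha$ is assigned filtration degree $0$ (or the filtration degree of $m_\alpha$, whichever is cleaner)'' — but only the second choice works. If $\bar m_\alpha$ lives in $\gr^{s_\alpha}(M)$ with $s_\alpha\ne0$, then placing $e_\alpha$ in filtration $0$ makes $\gr^\bullet(\phi)$ send $\bar e_\alpha\in\gr^0(L)$ to the class of $m_\alpha$ in $\gr^0(M)$, which is zero when $s_\alpha>0$; the map on associated graded would fail to be the desired isomorphism. The paper's notation $R[t_i]\{s_i\}$ encodes precisely the choice you must make: the generator sits in filtration $s_i$. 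Your general formula $r\in F^{s-|e_\alpha|_{\mathrm{filt}}}R$ is correct provided $|e_\alpha|_{\mathrm{filt}}=s_\alpha$. The same applies in part (2), where ``declaring each $x_i$ to have filtration degree $0$'' is wrong in general — each $x_i$ should carry the filtration degree of $\bar a_i$.

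For part (2) you take a somewhat different route from the paper: you build the filtered polynomial $R$-algebra $P$ directly, verify its completeness, and apply Lemma \ref{lemm:filtration-comparison} to the filtered algebra map $P\to A$. This requires the extra (true, but nontrivial in general) verification that $\gr^\bullet(P)\cong\gr^\bullet(R)[\bar x_i]$, which you flag but do not carry out. The paper instead reduces (2) to (1): monomials in the $\bar y_i$ form a $\gr^\bullet(R)$-module basis of $\gr^\bullet(A)$ with only finitely many in each external degree (using positivity of the generator degrees), so (1) gives that $A$ is the free $R$-module on monomials in the $y_i$; the canonical algebra map $R[y_i]\to A$ is then an isomorphism because the underlying module map is. This reduction sidesteps the associated-graded-of-a-filtered-polynomial-ring compatibility entirely. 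Either route is legitimate, but the paper's is cleaner.
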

\begin{proof}
(1) Choose generators $\{\bar x_i\}_{i \in I}$ of $\gr^\bullet(M)$ in bidegree $(s_i,t_i)$, where $t_i$ corresponds to the original grading on $M$ and $s_i$ to the filtration.
Lift them to $x_i \in F^{s_i} M_{t_i}$.
Consider \[ M' = \bigoplus_i R[t_i]\{s_i\}, \] where $R[t_i]\{s_i\}$ denotes a free $R$-module on a generator in degree $t_i$ and filtration $s_i$.
There is thus a canonical map $M' \to M$ inducing an isomorphism on $\gr^\bullet$.
It remains to show that $M'$ is complete, exhaustive and Hausdorff.
By assumption, for each $n$, \[ M'_n = \bigoplus_{i: t_i \le n} R_{n-t_i}\{s_i\} \] is a finite sum of complete exhaustive and Hausdorff filtered abelian groups, and so has the same properties.

(2) Choose polynomial generators $\{\bar y_i\}_{i \in I}$ of $\gr^\bullet(A)$ in bidegree $(s_i, t_i)$.
Lift them to $y_i \in F^{s_i} A_{t_i}$.
Then monomials in the $\bar y_i$ are module generators of $\gr^\bullet(A)$ and our assumption implies that there are only finitely many monomials in degrees $\le n$ for any $n$.
Thus (1) applies and $A$ is the free $R$-module on monomials in the $y_i$.
Let $A'$ be the polynomial $R$-algebra on generators $\{y_i\}$.
There is a canonical ring map $A' \to A$, and it is an isomorphism since the underlying module map is.
\end{proof}

\begin{lemma} \label{lemm:filtered-surjection}
Let $\alpha: G \to G'$ be a morphism of filtered (possibly graded) groups.
Suppose that the filtration on $G'$ is exhaustive and Hausdorff, and the filtration on $G$ is complete.
Suppose furthermore that $\gr^\bullet(\alpha)$ is surjective.
Then $\alpha$ is surjective, and also each $F^p\alpha$ is surjective.

Moreover in this situation \[ \gr^\bullet \ker(\alpha) \wequi \ker(\gr^\bullet(\alpha)). \]
\end{lemma}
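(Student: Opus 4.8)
The plan is to prove that every $F^p\alpha$ is surjective by a successive-approximation argument in the spirit of the proof of Lemma~\ref{lemm:filtration-comparison} in \cite{boardman1999conditionally}, and then to deduce the statement about kernels formally. Since in the graded case all the hypotheses and the conclusion are checked in each external degree separately (as remarked above), I may assume throughout that $G$ and $G'$ are ungraded filtered abelian groups.

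First I record the basic lifting step, which is the only place the surjectivity of $\gr^\bullet(\alpha)$ enters: given $y \in F^qG'$, lift the class of $y$ in $\gr^q(G')$ along the surjection $\gr^q(\alpha)$ and lift the result to $F^qG$, obtaining $x \in F^qG$ with $y - \alpha(x) \in F^{q+1}G'$. Iterating, starting from $y \in F^pG'$, I build elements $x_{p+i} \in F^{p+i}G$ with $y - \alpha(x_p + \dots + x_{p+i}) \in F^{p+i+1}G'$ for all $i \ge 0$. Writing $z_i = x_p + \dots + x_{p+i} \in F^pG$, we have $z_i - z_{i-1} \in F^{p+i}G$, so the classes of the $z_i$ assemble (restricting to the cofinal indices $s \ge p$) into an element of $\lim_s G/F^sG$. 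Because the filtration on $G$ is complete, the natural map $G \to \lim_s G/F^sG$ is surjective: this is immediate from the $\limone$-exact sequence $0 \to \lim_s F^sG \to G \to \lim_s G/F^sG \to \limone_s F^sG \to 0$ attached to the short exact sequence of towers $0 \to \{F^sG\} \to \{G\} \to \{G/F^sG\} \to 0$. A lift $z \in G$ then satisfies $z \equiv z_i \pmod{F^{p+i+1}G}$ for all $i$; taking $i=0$ gives $z \in F^pG$, and $y - \alpha(z) \in \bigcap_s F^sG' = \lim_s F^sG' = 0$ since the filtration on $G'$ is Hausdorff. Thus $F^p\alpha$ is surjective, and $\alpha$ itself is surjective because $G' = \bigcup_s F^sG'$ by exhaustiveness.

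For the final assertion, equip $\ker(\alpha)$ with the induced filtration $F^s\ker(\alpha) = \ker(\alpha) \cap F^sG$ and consider the evident natural map $\gr^s\ker(\alpha) \to \ker(\gr^s\alpha) \subseteq \gr^sG$ sending the class of $x \in \ker(\alpha)\cap F^sG$ to the class of $x$ in $\gr^sG$. It is injective: if $x$ dies in $\gr^sG$ then $x \in F^{s+1}G$, so it already dies in $\gr^s\ker(\alpha)$. For surjectivity, represent a class in $\ker(\gr^s\alpha)$ by $x \in F^sG$ with $\alpha(x) \in F^{s+1}G'$; by the surjectivity of $F^{s+1}\alpha$ just proved there is $w \in F^{s+1}G$ with $\alpha(w) = \alpha(x)$, and then $x - w \in \ker(\alpha)\cap F^sG$ maps to the class of $x-w$, which equals the class of $x$ since $w \in F^{s+1}G$.

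The only genuinely delicate point is the convergence of the successive approximations, i.e. correctly extracting the limit element $z$ from completeness via the $\limone$-exact sequence; everything else is bookkeeping with filtrations.
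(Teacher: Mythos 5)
Your proof is correct and follows essentially the same route as the paper: the same successive-approximation argument for surjectivity of $F^p\alpha$ (you simply make the "convergence" precise via the $\limone$-exact sequence, where the paper speaks informally of the series $\sum_i y_i$ converging), and the kernel identification is the content of the paper's snake-lemma argument unwound by hand.
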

\begin{proof}
By applying the argument degreewise, we may assume that we are in the ungraded situation.

First observe that if $x \in F^N G'$ then there exists $y_0 \in F^N G$ with $\alpha(y_0) \in x + F^{N+1}G$; this is just surjectivity of $\gr^N(\alpha)$.
Applying this to $x - \alpha(y_0) \in F^{N+1}G'$ we obtain $y_1 \in F^{N+2} G$ with $\alpha(y_0 + y_1) \in x + F^{N+2}G'$.
Iterating we find $y_i \in F^{N+i}G$ (for all $i \ge 0$) with $\alpha(y_0 + \dots + y_n) \in x + F^{N+n+1}G'$ (for any $n\ge 0$).
By completeness of $G$, the series $\sum_i y_i$ converges to (a possibly non-unique element) $y \in F^NG$ with $\alpha(y) - x \in F^{N+n+1} G'$ for every $n$; hence by Hausdorffness of $G'$ we get $\alpha(y) = x$.
Thus $F^N\alpha$ is surjective.

Since $F^\bullet G'$ is exhaustive, every $x \in G'$ satisfies $x \in F^N G'$ for some $N$; hence the above argument shows that $x$ is in the image of $\alpha$ and hence $\alpha$ is surjective.

For the claim about kernels, apply the snake lemma \cite[Lemma 1.3.2]{weibel-hom-alg} to the diagram of exact sequences
\begin{equation*}
\begin{CD}
0 @>>> F^{p+1} \ker(\alpha) @>>> F^{p+1} G @>>> F^{p+1} G' @>>> 0 \\
@.      @VVV                    @VVV               @VVV \\
0 @>>> F^{p} \ker(\alpha) @>>> F^{p} G @>>> F^{p} G' @>>> 0
\end{CD}
\end{equation*}
to get the exact sequence \[ 0 = \ker(F^{p+1} G' \to F^p G') \to \gr^{p} \ker(\alpha) \to \gr^pG \xrightarrow{\gr^p \alpha} \gr^p G' \to 0. \]
This is the desired result.
\end{proof}

\begin{lemma} \label{lemm:gr-injective}
Let $\alpha: G \to G'$ be a morphisms of filtered (possibly graded) groups.
Assume that $G$ is Hausdorff and exhaustive and $\gr^\bullet(\alpha)$ is injective.
Then $\alpha$ is injective.
\end{lemma}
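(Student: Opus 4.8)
The plan is to imitate the bookkeeping of Lemma~\ref{lemm:filtered-surjection}, but for injectivity the argument turns out to be more elementary and requires no snake lemma. As there, by applying everything degreewise we may assume we are in the ungraded situation, so that $G \supset \cdots \supset F^sG \supset F^{s+1}G \supset \cdots$ with $\bigcup_s F^sG = G$ (exhaustive) and $\bigcap_s F^sG = 0$ (Hausdorff).

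The core is a one-step approximation lemma: if $x \in \ker(\alpha)$ and $x \in F^sG$ for some $s$, then already $x \in F^{s+1}G$. Indeed, the class $\bar x \in \gr^s(G) = F^sG/F^{s+1}G$ maps under $\gr^s(\alpha)$ to the class of $\alpha(x) = 0$ in $\gr^s(G')$, so $\gr^s(\alpha)(\bar x) = 0$; injectivity of $\gr^s(\alpha)$ then forces $\bar x = 0$, i.e.\ $x \in F^{s+1}G$. Now take any $x \in \ker(\alpha)$. Since the filtration on $G$ is exhaustive, $x \in F^NG$ for some integer $N$; iterating the approximation step yields $x \in F^sG$ for every $s \ge N$, hence (trivially also for $s < N$) $x \in \bigcap_s F^sG$. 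By Hausdorffness $\bigcap_s F^sG = \lim_s F^sG = 0$, so $x = 0$ and $\alpha$ is injective.

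I do not anticipate any genuine obstacle. The only points to keep straight are that ``exhaustive'' is what supplies a finite starting index $N$ for the iteration while ``Hausdorff'' is what lets us conclude the eventual intersection is trivial, and — in the graded case — that all of the hypotheses are imposed degreewise, which is precisely what legitimizes the reduction to the ungraded statement. (Note that no hypotheses on the filtration of $G'$ are needed; one only uses that $\alpha$ is filtration-preserving and that each $\gr^s(\alpha)$ is injective.)
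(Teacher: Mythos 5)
Your proof is correct and uses the same ingredients as the paper's: degreewise reduction, exhaustiveness to get a starting level, injectivity of $\gr^\bullet(\alpha)$ at each level, and Hausdorffness to conclude. The only cosmetic difference is that you argue directly (iterating to push an element of $\ker(\alpha)$ down to $\bigcap_s F^sG = 0$), whereas the paper takes the contrapositive and locates the unique level $n$ with $x \in F^nG \setminus F^{n+1}G$ in one step before applying $\gr^n(\alpha)$.
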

\begin{proof}
By applying the argument degreewise, we may assume that we are in the ungraded situation.
Let $0 \ne x \in G$.
By Hausdorffness and exhaustiveness there exists $n$ with $x \in F^n G \setminus F^{n+1} G$, whence $0 \ne [x] \in \gr^n G$.
It follows that $\alpha(x) \in F^n G'$ and $0 \ne [\alpha(x)] = \alpha([x]) \in \gr^n G'$, and so $\alpha(x) \ne 0$.
\end{proof}

\subsubsection{}
Given filtered $R$-modules $M, M'$, we can put a filtration on $M \otimes M'$ by \[ F^i(M \otimes M') = \sum_{a+b=i} (F^aM)(F^bM') \subset M \otimes M'. \]
\begin{lemma} \label{lemm:filtered-tensor} \NB{derived version is more natural; version for filtered spectra is \cite[Proposition 3.2.1]{lurie2015rotation}}
Let $R=k$ be a field.
Then \[ \gr^\bullet(M \otimes M') \wequi \gr^\bullet(M) \otimes \gr^\bullet(M'). \]
\end{lemma}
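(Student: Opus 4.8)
The plan is to reduce to the standard fact that the associated graded of a tensor product of filtered vector spaces is the tensor product of the associated gradeds, using that everything is free (hence flat) over the field $k$. First I would pick a basis of each $\gr^\bullet(M)$ and lift it to elements of $M$, thereby producing a \emph{splitting} of the filtration on $M$: since $k$ is a field, $M_n$ is a $k$-vector space for each external degree $n$, and one can choose for each $i$ a complement, so that $M \iso \bigoplus_i k[t_i]\{s_i\}$ as a filtered $k$-module, where as in Corollary~\ref{corr:filtered-free-lifting} the symbol $k[t_i]\{s_i\}$ denotes a $1$-dimensional space in external degree $t_i$ placed in filtration exactly $s_i$ (i.e.\ $F^s$ of it is all of it for $s \le s_i$ and $0$ for $s > s_i$). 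Here one has to be a little careful: the filtration need not be finite or even Hausdorff a priori, so I would treat the ``infinitely deep'' part separately — write $F^\infty M = \bigcap_s F^s M$; this is again a $k$-subspace, and $M / F^\infty M$ carries a Hausdorff filtration whose associated graded agrees with that of $M$. Choosing a basis of $F^\infty M$ and lifting a basis of each $\gr^n(M/F^\infty M)$ gives the desired direct sum decomposition, with the $F^\infty M$ summand placed in filtration $+\infty$ (contributing nothing to $\gr^\bullet$). Do the same for $M'$.

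Next I would observe that tensor product of filtered $k$-modules commutes with direct sums in each variable (this is immediate from the definition $F^i(M\otimes M') = \sum_{a+b=i}(F^aM)(F^bM')$ together with exactness of $\otimes_k$), so it suffices to check the claim on the building blocks: $k[t]\{s\} \otimes k[t']\{s'\} \iso k[t+t']\{s+s'\}$ as filtered modules, which is a trivial direct computation, and the $F^\infty$ contributions tensor to $F^\infty$ contributions and so are killed by $\gr^\bullet$ on both sides. Summing back up yields $\gr^\bullet(M\otimes M') \iso \bigoplus_{i,j} k[t_i+t'_j]\{s_i+s'_j\} \iso \gr^\bullet(M) \otimes \gr^\bullet(M')$, and one checks this isomorphism is the canonical comparison map (it is induced by the evident maps $F^aM/F^{a+1}M \otimes F^bM'/F^{b+1}M' \to F^{a+b}(M\otimes M')/F^{a+b+1}(M\otimes M')$), so it is natural and independent of the chosen bases.

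The main obstacle is bookkeeping around non-Hausdorff and non-exhaustive filtrations: if one wants the statement with no hypotheses on $M, M'$ beyond being filtered $k$-modules, the cleanest route is to isolate $F^\infty M$ as above and also note that the direct-sum decomposition produced this way is automatically exhaustive on the part of positive-and-bounded filtration but that elements of $M$ with ``no smallest filtration'' (if $\bigcup_s F^s M \subsetneq M$, which cannot happen for $s$ bounded below but the indexing set is $\Z$) must be handled by allowing generators in filtration $-\infty$ as well; since nothing in the intended applications needs this generality one could alternatively just add the hypothesis that the filtrations are Hausdorff and exhaustive and skip the $\pm\infty$ discussion entirely. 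Either way the mathematical content is the splitting principle for filtered vector spaces plus exactness of $\otimes_k$; no homotopy theory is involved, which is why the lemma is stated only over a field (the derived version over a general base, referenced in the marginal note, requires genuinely different input).
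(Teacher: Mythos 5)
Your proof is correct, and the underlying idea — split each of $M, M'$ into one-dimensional filtered pieces and use that tensor product over a field commutes with direct sums — is the same as the paper's. The paper organizes it differently: rather than choosing a full (possibly infinite) basis and splitting all of $M$ at once, it first reduces to the exhaustive case, then reduces to finite-dimensional $M, M'$ by writing them as filtered colimits of finite-dimensional subspaces (observing that $\gr^\bullet$ and $\otimes$ both commute with filtered colimits), and then applies Corollary~\ref{corr:filtered-free-lifting}(1) to split a finite-dimensional filtered vector space as $\bigoplus_\alpha k(i_\alpha)$. Your infinitary splitting buys you the option of avoiding the colimit step, at the cost of the bookkeeping you mention. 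The two routes are essentially the same length and difficulty.

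Your extra care about the non-Hausdorff tail $F^\infty M$ is in fact warranted and addresses a point the paper glosses over: the paper's proof reduces to the exhaustive case but then invokes Corollary~\ref{corr:filtered-free-lifting}(1), which also requires Hausdorffness; a finite-dimensional subspace of an exhaustive filtered module need not be Hausdorff (e.g.\ $F^a V = V$ for all $a$), and in that case one cannot write it as $\bigoplus_\alpha k(i_\alpha)$. Your observation that the $F^\infty$ part contributes nothing to either side of the isomorphism is exactly the patch needed. Two small quibbles: you write ``exactness of $\otimes_k$'' where what is really used is flatness (to identify $(F^aM)(F^bM')$ with the honest tensor product $F^aM \otimes F^bM'$ inside $M \otimes M'$); and your discussion of the non-exhaustive case via ``generators in filtration $-\infty$'' is heavier than needed — as the paper does, one can simply replace $M$ by $\bigcup_a F^a M$ at the outset, since $F^a(\bigcup_a F^a M) = F^a M$ and hence neither $\gr^\bullet(M)$ nor $\gr^\bullet(M \otimes M')$ changes.
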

\begin{proof}\todo{reference?!}
We may assume that $M, N$ are exhaustively filtered.
If $M=k(i)$ and $M'=k(j)$ (i.e. $F^a M = k$ if $a \le i$ and $F^a M = 0$ else), then $\gr^\bullet(M) = k[i]$ and $M \otimes M' = k(i+j)$; the result in this case follows.
If $M$, $N$ are finite dimensional we can write $M = \bigoplus_{\alpha \in I} k(i_\alpha)$ (e.g. use Corollary \ref{corr:filtered-free-lifting}(1)) and similarly for $M'$.
The result follows since $\otimes$ and $\gr^p$ commute with sums.
In general write $M, N$ as filtered colimits of their finite dimensional subspaces; the result follows since $\otimes$ and $\gr^\bullet$ commute with filtered colimits.
\end{proof}

\subsubsection{}
Given any (graded) ring $R$ and (homogeneous) ideal $J$, we can give $R$ the filtration by powers of $J$, i.e. $F^n R = J^n$.
In particular if $S$ is a (possibly infinite) set of variables (possibly with some assigned degrees) and $A$ is a base ring, we can consider the polynomial ring $R=A[S]$ or the exterior algebra $R=\Lambda_A[S]$ (which is graded if the variables are), and filter it by powers of the \emph{augmentation ideal} $\ker(R \to A)$.\NB{If $\chara(A) \ne 2$ then $\Lambda_A[S]$ is not commutative...}
In this case $\gr^1(R)$ is called the \emph{indecomposable quotient}.
\begin{lemma} \label{lemm:indecomposables}
\begin{enumerate}
\item Let $S, A$ as above.
  The natural maps $A \to \gr^0$ and $(\gr^1)^{\otimes n} \to \gr^n$ induce canonical isomorphisms \[ \gr^n(A[S]) \wequi \Sym^n_A(\gr^1(S)) \quad\text{and}\quad \gr^n(\Lambda_A[S]) \wequi \Lambda^n_A(\gr^1(S)). \]
  Here $\gr^1(A[S])$ is a free $A$-module on a basis in bijection with $S$.

\item Let $\alpha: R_1 \to R_2$ be a morphism of graded $A$-algebras, where $R_i$ are either both polynomial or both exterior algebras, on generators in positive degrees.
  If $\gr^1(\alpha): \gr^1(R_1) \to \gr^1(R_2)$ is surjective (respectively split injective, e.g. injective and $A$ a field, respectively an isomorphism) then so is $\alpha$.
\end{enumerate}
\end{lemma}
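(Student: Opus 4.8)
The plan is to deduce (2) from (1), so I would begin with (1), identifying $R$ with the symmetric algebra $\Sym_A(V)$, respectively the exterior algebra $\Lambda_A(V)$, on the free $A$-module $V$ with basis $S$ (graded by the assigned degrees of the variables). This identification equips $R$ with a second, ``word-length'' grading $R=\bigoplus_{d\ge 0}R^{[d]}$ with $R^{[d]}=\Sym^d_A(V)$ (resp.\ $\Lambda^d_A(V)$), unrelated to the assigned degrees. The augmentation ideal is $J=\bigoplus_{d\ge 1}R^{[d]}$, and since any product of $n$ elements of $J$ has word-length $\ge n$ while the $n$-fold multiplication $R^{[1]}\otimes\cdots\otimes R^{[1]}\to R^{[n]}$ is surjective, one gets $J^n=\bigoplus_{d\ge n}R^{[d]}$. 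Hence $\gr^n(R)=J^n/J^{n+1}=R^{[n]}=\Sym^n_A(V)$ (resp.\ $\Lambda^n_A(V)$); in particular $\gr^1(R)=V$ is free on $S$, and the map $(\gr^1)^{\otimes n}\to\gr^n$ is the canonical surjection $V^{\otimes n}\to\Sym^n_A(V)$ (resp.\ $\to\Lambda^n_A(V)$), which manifestly induces an isomorphism $\Sym^n_A(\gr^1)\xrightarrow{\sim}\gr^n$ (resp.\ $\Lambda^n_A(\gr^1)\xrightarrow{\sim}\gr^n$). This gives all of (1).

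For (2), since the generators of $R_1$ and $R_2$ lie in positive assigned degrees, the augmentation ideal $J_i$ is exactly the positive-degree part of $R_i$; as $\alpha$ preserves the assigned grading it carries $J_1$ into $J_2$, so $\alpha$ is a morphism of filtered rings, and moreover within any fixed assigned degree $m$ the $J_i$-adic filtration on the component $(R_i)_m$ vanishes in filtration degrees $>m$, hence is a \emph{finite} filtration (in particular exhaustive, Hausdorff and complete). By (1) the graded ring $\gr^\bullet(R_i)$ is the free symmetric (resp.\ exterior) $A$-algebra on its degree-one part $\gr^1(R_i)$, so $\gr^\bullet(\alpha)$ is the induced map $\Sym_A(\gr^1(\alpha))$ (resp.\ $\Lambda_A(\gr^1(\alpha))$); as $\Sym_A$ and $\Lambda_A$ preserve surjections, split injections and isomorphisms, $\gr^\bullet(\alpha)$ inherits whichever property $\gr^1(\alpha)$ has. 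Restricting to a fixed assigned degree $m$ and applying, degree by degree, Lemma~\ref{lemm:filtered-surjection} (surjective case), Lemma~\ref{lemm:gr-injective} (injectivity), and Lemma~\ref{lemm:filtration-comparison} (isomorphism case) to $\alpha\colon(R_1)_m\to(R_2)_m$ — whose hypotheses hold because of the finiteness just noted — shows that $\alpha$ is correspondingly surjective, injective, or an isomorphism.

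To upgrade ``injective'' to ``split injective'' I would not use a filtration lemma but construct a one-sided inverse directly: fix a retraction $r$ of $\gr^1(\alpha)$, and for each generator $z_j$ of $R_2$ (of assigned degree $e_j$) choose a lift $w_j\in(R_1)_{e_j}\cap J_1$ of $r([z_j])$, then let $\rho\colon R_2\to R_1$ be the graded $A$-algebra map with $\rho(z_j)=w_j$ (legitimate for polynomial algebras, and for exterior algebras once the $w_j$ are chosen square-zero, which is automatic when $2=0$ in $A$ or all generators have odd degree). Since $\{[z_j]\}$ is a basis of $\gr^1(R_2)$ by (1), one has $\gr^1(\rho)=r$ and hence $\gr^1(\rho\circ\alpha)=r\circ\gr^1(\alpha)=\id$; by the isomorphism case already proved $\rho\circ\alpha$ is an isomorphism, so $(\rho\circ\alpha)^{-1}\circ\rho$ retracts $\alpha$. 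I do not expect a substantive obstacle: the whole argument is the computation of $\gr^\bullet$ in (1) together with the observation that in each fixed assigned degree the augmentation filtration is finite, which lets the filtered-group lemmas of \S\ref{subsec:filtered} apply verbatim. The only points that require genuine care are checking those hypotheses degreewise (boundedness of the filtration in each degree being the key input) and, in the exterior case, the well-definedness of $\rho$ — which is why I would restrict to odd-degree generators or to rings in which $2=0$, the setting in which this lemma is actually used.
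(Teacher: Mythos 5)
Your argument for (1) and the main body of (2) is the same as the paper's: give $R$ the word-length grading, observe $J^n=R_{\ge n}$ so $\gr^n R = R^{[n]}$, use that $\Sym^n$ and $\Lambda^n$ preserve surjections, split injections and isomorphisms, note that the filtration is degreewise finite hence complete/Hausdorff/exhaustive, and then invoke the filtered-module lemmas of \S\ref{subsec:filtered} (the paper combines Lemmas~\ref{lemm:gr-injective} and \ref{lemm:filtered-surjection} for the iso case where you cite Lemma~\ref{lemm:filtration-comparison}; either works).

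The one place you go beyond the paper is the final paragraph, where you build an explicit retraction of $\alpha$. The paper's proof does \emph{not} claim $\alpha$ is split injective: it downgrades to ``$\gr^\bullet(\alpha)$ is injective'' and only concludes injectivity of $\alpha$ via Lemma~\ref{lemm:gr-injective} (the hypothesis ``split injective'' is there purely so that $\Sym^n$, $\Lambda^n$ preserve it). Your construction of $\rho$ would genuinely strengthen the conclusion, but as you yourself flag, it has a gap in the exterior case: you need the lifts $w_j$ to square to zero, and you can only ensure this under extra hypotheses (odd-degree generators, or $2=0$). Since the lemma is only ever applied in the isomorphism case in this paper, that extra argument is harmless but unnecessary; if you keep it, the caveat should be stated as part of the lemma rather than tucked into the proof, or the conclusion should simply be weakened to ``injective'' as the paper's proof does.
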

\begin{proof}
(1) Let $R = A[S]$ or $R=\Lambda_A[S]$, respectively.
We can give $R$ the grading where all variables have degree $1$; then $J^n = R_{\ge n}$.
This implies that $\gr^n(R) \wequi R_n$.
All claims are checked easily.

(2) If $\beta: M \to N$ is a surjective (respectively split injective, respectively bijective) morphism of $A$-modules, then so are $\Sym^n(\beta)$ and $\Lambda^n(\beta)$\NB{Injectivity needs some assumptions. For $\Lambda^n$, actually injections of free modules are preserved (embed into tensor power). What about symmetric case?}.
Hence under our assumption $\gr^\bullet(\alpha)$ is surjective (respectively injective, respectively an isomorphism) by (1).
The filtrations are complete, exhaustive and Hausdorff for degree reasons.
The claim thus follows from Lemmas \ref{lemm:gr-injective} and \ref{lemm:filtered-surjection}.
\end{proof}

\subsection{Derived completion of abelian groups} \label{subsec:derived-completion}
For an abelian group $A$, we write \[ L_p^\comp A = \lim_n \cof(A \xrightarrow{p^n} A) \in D(\Ab) \] for the derived $p$-completion.
Then there is a short exact sequence \[ 0 \to \limone A[p^n] \to \pi_0 L_p^\wedge A \to A_p^\wedge := \lim_n A/p^n \to 0. \]
Moreover\NB{also $\pi_0 L_p^\wedge A \wequi \Ext(\Z/p^\infty, A)$ and $\pi_1 L_p^\comp A \wequi \Hom(\Z/p^\infty, A)$} \[ \pi_1 L_p^\comp A \wequi \lim A[p^n] \text{ and } \pi_i L_p^\comp A = 0 \text{ for } i \ne 0,1. \]
See \cite[Tag 0BKG]{stacks-project}.

\begin{lemma} \label{lemm:derived-p-comp}
If the $p$-torsion in $A$ is of bounded order, then $A_p^\comp \wequi L_p^\comp A$ (so in particular $\pi_1 L_p^\comp A = 0$).
\end{lemma}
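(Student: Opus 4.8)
The plan is to reduce the statement to the vanishing of the limit and derived limit of a single tower of $p$-torsion subgroups. By the short exact sequence $0 \to \limone_n A[p^n] \to \pi_0 L_p^\comp A \to A_p^\comp \to 0$ and the identifications $\pi_1 L_p^\comp A \wequi \lim_n A[p^n]$, $\pi_i L_p^\comp A = 0$ for $i \ne 0,1$ recorded just above, it suffices to show that $\lim_n A[p^n] = 0$ and $\limone_n A[p^n] = 0$; these two vanishings immediately force $\pi_1 L_p^\comp A = 0$ and $\pi_0 L_p^\comp A \wequi A_p^\comp$, i.e. that $L_p^\comp A$ is discrete and agrees with the classical $p$-completion. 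Here the relevant tower is $\dots \to A[p^{n+1}] \to A[p^n] \to \dots$, whose transition maps are \emph{multiplication by $p$}: this is how the maps $\cof(A \xrightarrow{p^{n+1}} A) \to \cof(A \xrightarrow{p^n} A)$ act on $\pi_1$, coming from the factorization $p^{n+1} = p \cdot p^n$.

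First I would make the boundedness hypothesis explicit: there is an integer $N$ with $p^N a = 0$ for every $p$-power torsion element $a \in A$, equivalently $A[p^n] = A[p^N]$ (as subgroups of $A$) for all $n \ge N$. Next, the composite of any $N$ consecutive transition maps $A[p^{n+N}] \to A[p^n]$ is multiplication by $p^N$; for $n \ge N$ both source and target equal $A[p^N]$, and multiplication by $p^N$ is the zero map on $A[p^N]$. Hence the tower $(A[p^n])_n$ is pro-zero: its transition maps are eventually nilpotent and its images are eventually $0$.

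Finally, a pro-zero tower is in particular Mittag--Leffler, so $\limone_n A[p^n] = 0$; and any compatible system $(a_n) \in \lim_n A[p^n]$ has $a_n$ equal to the image of $a_{n+N}$ under a zero map for $n \ge N$, forcing $a_n = 0$ for $n \ge N$ and then $a_n = 0$ for all $n$ by compatibility, so $\lim_n A[p^n] = 0$. This yields $L_p^\comp A \wequi A_p^\comp$. There is no genuinely hard step here; the only point deserving care is correctly identifying the transition maps of the tuple $(A[p^n])_n$ as multiplication by $p$ (not as the inclusions $A[p^n] \hookrightarrow A[p^{n+1}]$), which is exactly what makes the bounded-exponent hypothesis render them eventually zero.
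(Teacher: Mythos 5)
Your proof is correct and takes essentially the same route as the paper's: both reduce to showing $\lim_n A[p^n] = 0 = \limone_n A[p^n]$, identify the transition maps in that tower as multiplication by $p$, and then use the bounded-exponent hypothesis to conclude that a sufficiently high composite of transition maps vanishes. You spell out the pro-zero/Mittag--Leffler step a bit more explicitly where the paper simply invokes cofinality of the tail, but the argument is the same.
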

\begin{proof}
We need to prove that $\lim A[p^n] = 0 = \limone A[p^n]$.
By assumption the sequence of sets $A[p^n]$ is eventually stationary, and a sufficiently high composite of the transition maps is zero (since the transition maps are given by multiplication by $p$).
The result follows since (derived) limits can be computed by restriction to final subcategories of the indexing category.
\end{proof}

\subsection{Witt groups} \label{subsec:witt-groups}
For a ring $A$, we denote by $\GW(A)$ (respectively $\W(A)$) its Grothendieck--Witt ring  (respectively its Witt ring) \cite[\S\S I.4, I.5]{knebusch-bilinear}.
We write $\I(A) \subset \W(A)$ for the fundamental ideal, i.e. the kernel of the rank homomorphism \cite[\S I.7]{knebusch-bilinear}
\begin{lemma} \label{lemm:witt-injection}
Let $A$ be a Dedekind domain with $Pic(A) = 0$.
Write $K$ for the fraction field of $A$.
Then in the following commutative diagram (in which $\GW(\ph) \to \Z$ is the rank map), all maps are injective
\begin{equation*}
\begin{CD}
\GW(A) @>>> \GW(K) \\
@VVV        @VVV   \\
\W(A) \times \Z @>>> \W(K) \times \Z.
\end{CD}
\end{equation*}
\end{lemma}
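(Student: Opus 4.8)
The plan is to reduce everything to two well-known facts: that $\GW$ of a ring splits as $\W \times_{\Z/2} \Z$ via rank (so the square in the statement is in fact a pullback once we know injectivity of the bottom map), and that $\W(A) \to \W(K)$ is injective for $A$ a Dedekind domain with trivial Picard group — this is a classical result (e.g. via the Milnor--Husemoller exact sequence relating $\W(A)$, $\W(K)$ and the residue Witt groups of the closed points, where the relevant transfer map is injective precisely because $\Pic(A)=0$ lets one choose uniformizers globally, or more simply because $A$ is a regular ring of dimension $1$ so purity holds). Granting these, I would argue as follows.

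First I would establish injectivity of the bottom horizontal map $\W(A) \times \Z \to \W(K) \times \Z$. The $\Z$-factor maps by the identity, so this is immediate from injectivity of $\W(A) \to \W(K)$, which I would cite from \cite{knebusch-bilinear} (or the standard reference on Witt groups of Dedekind domains). Next I would establish injectivity of the left vertical map $\GW(A) \to \W(A) \times \Z$. Over any ring where $2$ is a non-zero-divisor — in particular over a Dedekind domain of characteristic $\ne 2$, and the characteristic $2$ case can be handled separately or excluded by the running conventions — the pair (Witt class, rank) is a faithful invariant of a class in $\GW$: the kernel of $\GW(A) \to \W(A)$ is the ideal generated by the hyperbolic form $h$, and $\Z \cdot h \to \GW(A)$ is injective, detected by rank. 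Concretely, $\GW(A) \cong \W(A) \times_{\Z/2} \Z$ as rings, and the map to $\W(A)\times\Z$ is the inclusion of this fiber product into the product, which is injective. Then injectivity of the top map $\GW(A) \to \GW(K)$ follows formally: it is the composite $\GW(A) \hookrightarrow \W(A)\times\Z \to \W(K) \times \Z$, and the latter map equals the composite $\W(A)\times\Z \leftarrow \GW(A) \to \GW(K) \to \W(K)\times\Z$ by commutativity... more directly, since $\GW(A) \to \W(A)\times\Z$ is injective and factors as $\GW(A) \to \GW(K) \to \W(K)\times\Z$ precomposed appropriately — I would simply note that the diagram commutes and the composite $\GW(A) \to \W(K)\times\Z$ (either way around the square) is injective, forcing $\GW(A)\to\GW(K)$ to be injective. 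Finally the right vertical map $\GW(K)\to\W(K)\times\Z$ is injective by the same argument as the left one, applied to the field $K$.

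The main obstacle is locating and correctly invoking the injectivity of $\W(A) \to \W(K)$ for a Dedekind domain with trivial Picard group; everything else is formal bookkeeping with the rank and hyperbolic forms. I expect that the cleanest route is the Milnor--Husemoller/Knebusch exact sequence
\[
0 \to \W(A) \to \W(K) \xrightarrow{\partial} \bigoplus_{\mathfrak{p}} \W(A/\mathfrak{p}),
\]
valid when $A$ is a Dedekind domain, whose exactness on the left is exactly the statement we need; the hypothesis $\Pic(A)=0$ is what guarantees the second residue maps are well-defined without twisting (or, in some formulations, is not even needed for left-exactness, only for describing the cokernel). One should double-check whether the characteristic-$2$ case needs to be excluded for the "$\GW$ detected by Witt class and rank" step — if so, invoking the paper's standing convention that $2$ is invertible settles it, and in any case the intended application is to rings arising over fields of characteristic $\ne 2$.
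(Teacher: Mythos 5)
Your overall strategy matches the paper's almost exactly: reduce to injectivity of $\W(A) \to \W(K)$ (cited from Milnor--Husemoller for any Dedekind domain, no $\Pic$ hypothesis needed there) plus injectivity of $\GW(A) \to \W(A) \times \Z$, and deduce the rest by diagram chasing. However, the latter step is where you assert the key structural fact rather than prove it, and it is precisely there that $\Pic(A) = 0$ enters.

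You write that ``the kernel of $\GW(A) \to \W(A)$ is the ideal generated by the hyperbolic form $h$'' and then state $\GW(A) \cong \W(A) \times_{\Z/2} \Z$. This is true for a field, but for a general commutative ring the kernel of $\GW(A) \to \W(A)$ is generated by all metabolic forms $H(V)$ for $V$ a vector bundle, not just by $H(\mathcal{O}) = h$. The reason this collapses to $\Z h$ here is that the hyperbolic map factors through $K_0(A)$, and $K_0(A) \cong \Z \oplus \Pic(A) = \Z$ by the Dedekind hypothesis and $\Pic(A) = 0$, so $H$ factors through the rank map and its image is $\Z h$. The paper spells out exactly this chain (kernel generated by $H(V)$, $H$ factors through $K_0$, $K_0(A) = \Z$), which is the content that makes the pullback description valid over $A$ rather than over a field. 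Your proposal uses the hypothesis $\Pic(A) = 0$ only to worry (needlessly) about $\W(A) \hookrightarrow \W(K)$, whereas its real job is to control $K_0(A)$. So the proposal would need this one paragraph filled in to be complete; with it, the argument is the same as the paper's.
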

\begin{proof}
Since $\W(A) \hookrightarrow \W(K)$ \cite[Corollary IV.3.3]{milnor1973symmetric}, it suffices to show that $\GW(A) \hookrightarrow \W(A) \times \Z$.
By \cite[\S I.5, Proposition]{knebusch-bilinear} the kernel of $\GW(A) \to \W(A)$ is generated by ``metabolic spaces'' of the form $H(V)$ for $V$ a vector bundle on $A$.
By \cite[\S I.4, Proposition 2]{knebusch-bilinear} the map $H: \Vect(A) \to \GW(A)$ factors through $K_0(A)$, and so it suffices to show that $K_0(A) \xrightarrow{2\rk} \Z$ is injective.
But $K_0(A) \wequi \Z \oplus Pic(A)$ \cite[Corollary 1.11]{milnor1971introduction}, whence the claim.
\end{proof}

\begin{lemma} \label{lemm:witt-torsion}
Let $A$ be a Dedekind domain with $\vcd_2(Frac(A)) < \infty$ and $Pic(A) = 0$.
Then all torsion in $\GW(A)$ and $\W(A)$ is $2$-primary, and of bounded order.\NB{Bounded by $2^{\vcd_2(k)}$.}
\end{lemma}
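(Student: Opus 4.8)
The plan is to reduce everything to the fraction field $K=\operatorname{Frac}(A)$ (which has characteristic $\ne 2$ under our standing convention) and there to pin down the torsion of $\W(K)$ by means of the fundamental ideal filtration together with the finiteness of $\vcd_2$.

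First I would dispose of the reductions. By Lemma~\ref{lemm:witt-injection} the torsion subgroup of $\GW(A)$ embeds into that of $\W(A)\times\Z$, hence --- as $\Z$ is torsion free --- into the torsion subgroup of $\W(A)$, which embeds into the torsion subgroup $\W(K)_{\mathrm{tors}}$ of $\W(K)$. So it suffices to show that $\W(K)_{\mathrm{tors}}$ is annihilated by $2^{d+1}$, where $d=\vcd_2(K)=\cd_2(K(\sqrt{-1}))<\infty$; this delivers $2$-primariness and bounded order at one blow.

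For that, the key point will be that $\I^{d+1}(K)$ is torsion free. To see it I would set $K'=K(\sqrt{-1})$, so that $H^m_\et(K',\Z/2)=0$ for $m>d$, and feed this into the Arason exact sequence for the quadratic extension $K'/K$ (whose connecting maps are cup product with the class of $-1$ in $H^1_\et(K,\Z/2)$, interspersed with restriction and corestriction): this shows that cup product with $(-1)$ is an isomorphism $H^m_\et(K,\Z/2)\xrightarrow{\ \sim\ }H^{m+1}_\et(K,\Z/2)$ for every $m>d$. Now invoke the Milnor conjecture on quadratic forms (Orlov--Vishik--Voevodsky, with Voevodsky's norm residue theorem): the invariants $e_n$ assemble into a natural ring isomorphism $\gr_\I\W(K)\cong\bigoplus_n H^n_\et(K,\Z/2)$ carrying the class of $2=\langle\langle-1\rangle\rangle=\langle 1,1\rangle\in\I(K)$ to that of $-1$; and recall the Arason--Pfister Hauptsatz $\bigcap_n\I^n(K)=0$. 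Then a nonzero $\phi\in\I^{d+1}(K)$ lies in $\I^j(K)\setminus\I^{j+1}(K)$ for some $j\ge d+1$ and has nonzero class $e_j(\phi)\in H^j_\et(K,\Z/2)$; multiplicativity of the $e_n$ together with the isomorphisms above forces $2^i\phi$ to have class $(-1)^i\cup e_j(\phi)\ne 0$ in $H^{j+i}_\et(K,\Z/2)$, so $2^i\phi\ne 0$ for all $i$ and $\phi$ is not torsion. Finally, since multiplication by $2\in\I(K)$ carries $\I^k(K)$ into $\I^{k+1}(K)$, we get $2^{d+1}\W(K)_{\mathrm{tors}}\subseteq\I^{d+1}(K)\cap\W(K)_{\mathrm{tors}}=0$, as desired.

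The one genuinely deep ingredient is the Milnor conjecture, which is what converts the elementary Galois-cohomological fact that $(-1)\cup(\ph)$ is invertible in degrees $>\vcd_2(K)$ into the statement that $\gr_\I\W(K)$ has no torsion in those degrees; everything else is bookkeeping with the $\I$-adic filtration. (One could instead simply quote from the quadratic forms literature that $\I^n(K)$ is torsion free for $n>\vcd_2(K)$, which is precisely the crux above.)
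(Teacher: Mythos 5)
Your reduction to the fraction field via Lemma~\ref{lemm:witt-injection} is exactly the paper's; where you differ is that the paper then simply cites \cite[Lemma 29]{bachmann-gwtimes} for the field case, whereas you supply the argument: the Arason sequence for $K(\sqrt{-1})/K$ shows cup product with $(-1)$ is an isomorphism $H^m_\et(K,\Z/2)\to H^{m+1}_\et(K,\Z/2)$ for $m>d$, the Milnor conjecture translates this into nonvanishing of the leading symbol of $2^i\phi$ for $\phi\in\I^{j}\setminus\I^{j+1}$ with $j>d$, and $2^{d+1}\W(K)\subset\I^{d+1}(K)$ closes the loop. This is correct and in fact gives the sharp exponent $2^{d+1}$; note the paper's marginal \texttt{NB} claims $2^{\vcd_2(k)}$, which is off by one --- $\W(\Q_2)$ has a class of order $8$ while $\vcd_2(\Q_2)=2$.

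There is one small leap to flag: you infer ``$\phi$ is not torsion'' from ``$2^i\phi\ne 0$ for all $i$''. On its own that only excludes $2$-power torsion, and $2$-primariness of $\W(K)_{\mathrm{tors}}$ is part of what the lemma asserts, so one cannot invoke it silently. The fix is already latent in your setup: an odd integer $m$ acts as the identity on $\gr^\bullet_\I\W(K)\cong\bigoplus_n H^n_\et(K,\Z/2)$ (being $\equiv 1\pmod 2$), so for $n=2^a m$ the element $n\phi$ has the same nonzero leading symbol as $2^a\phi$, whence $n\phi\ne 0$. Inserting that sentence makes $\I^{d+1}(K)$ genuinely torsion-free, and then $2^{d+1}\W(K)_{\mathrm{tors}}=0$ gives both the bound and $2$-primariness at once. (Alternatively, quote Pfister's theorem that $\W(K)_{\mathrm{tors}}$ is $2$-primary and keep your argument solely for the bound.)
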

\begin{proof}
It follows from Lemma \ref{lemm:witt-injection} that it suffices to establish the claims about $\W(k)$, where $k$ is a field.
For this case see e.g. \cite[Lemma 29]{bachmann-gwtimes}.
\end{proof}

\begin{corollary} \label{lemm:GW-inj}
Assumptions as in Lemma \ref{lemm:witt-torsion}.
The map $\GW(A) \to \GW(A)_2^\comp$ is injective, and similarly for $\W(A)$.
\end{corollary}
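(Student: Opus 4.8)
The plan is to reduce the statement for the ring $A$ to the known torsion bounds established in Lemma~\ref{lemm:witt-torsion}, and then invoke the elementary fact that a map from an abelian group to its classical $p$-completion is injective precisely when the group has no infinitely $p$-divisible elements — which for a group whose $p$-torsion is of bounded order follows once one controls the torsion-free quotient. More concretely, the kernel of $\GW(A) \to \GW(A)_2^\comp = \lim_n \GW(A)/2^n$ is $\bigcap_n 2^n\GW(A)$, so the task is exactly to show that this intersection vanishes (and similarly for $\W$).

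First I would recall from Lemma~\ref{lemm:witt-torsion} that all torsion in $\GW(A)$ and $\W(A)$ is $2$-primary of bounded order, say killed by $2^N$. Next, I would observe that $\GW(A)$ (resp.\ $\W(A)$) is a finitely generated $\Z$-module in each relevant situation is \emph{not} automatic, so instead I would argue directly with the intersection $\bigcap_n 2^n M$ for $M = \GW(A)$ or $\W(A)$. Pick $x \in \bigcap_n 2^n M$. Write $M_{\mathrm{tors}}$ for the torsion subgroup and $\bar M = M/M_{\mathrm{tors}}$, which is torsion-free. The image $\bar x$ of $x$ lies in $\bigcap_n 2^n \bar M$; for a torsion-free group this intersection need not vanish in general, but here the key extra input is that $\bar M$ embeds into a finite-rank free module — indeed by Lemma~\ref{lemm:witt-injection}, $\GW(A) \hookrightarrow \GW(K) \hookrightarrow \W(K)\times\Z$, and $\W(K)/\W(K)_{\mathrm{tors}}$ embeds (via total signature) into a product of copies of $\Z$ indexed by orderings, hence is torsion-free and $2$-adically separated. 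So $\bar x = 0$, i.e.\ $x \in M_{\mathrm{tors}}$. But then $x$ is killed by $2^N$, and since $x \in 2^{N}M$ we may write $x = 2^N y$; iterating, $x \in 2^{kN}M$ for all $k$ forces, via the bounded exponent, $x = 0$. (Concretely: $x = 2^{N}y_1 = 2^{2N}y_2 = \cdots$, and $2^N x = 0$ gives $2^{2N} y_1 = 0$, so $y_1$ is torsion, so $2^N y_1 = 0$, i.e.\ $x = 0$.)

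The hard part, such as it is, is making sure the torsion-free quotient is genuinely $2$-adically separated; this is where the hypotheses $\vcd_2(\mathrm{Frac}(A)) < \infty$ and $Pic(A) = 0$ enter, funneled through Lemmas~\ref{lemm:witt-injection} and~\ref{lemm:witt-torsion}. Everything else is a routine diagram chase with the bounded-exponent torsion. The statement for $\W(A)$ follows verbatim from the same argument applied to $\W$ in place of $\GW$ (using $\W(A)\hookrightarrow\W(K)$ directly), and the claim for $\GW(A)$ additionally uses the injection $\GW(A)\hookrightarrow\W(A)\times\Z$ from Lemma~\ref{lemm:witt-injection} to transport separatedness along a map of finitely-many factors. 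One could alternatively phrase the whole thing homologically: by Lemma~\ref{lemm:derived-p-comp}, bounded $2$-torsion gives $L_2^\comp M \wequi M_2^\comp$, so it suffices to show $M \to L_2^\comp M$ is injective, i.e.\ that $M$ has no nonzero infinitely $2$-divisible elements, which is the content of the separatedness of $\bar M$.
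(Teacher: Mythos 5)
Your argument is correct, but it follows a genuinely different route from the paper, which is worth comparing. After the common reduction to fields (via Lemma~\ref{lemm:witt-injection}), the paper's own proof is shorter: it observes that $2 \in \I(k)$, so any infinitely $2$-divisible element of $\W(k)$ lies in $\bigcap_n \I(k)^n$, and this intersection vanishes by the Arason--Pfister Hauptsatz (\cite[Theorem III.5.1]{milnor1973symmetric}). That disposes of torsion and torsion-free parts simultaneously and, notably, does not use the $\vcd_2 < \infty$ hypothesis at all (only $\mathrm{Pic}(A)=0$, through Lemma~\ref{lemm:witt-injection}). Your proof instead splits $\W$ into torsion and torsion-free pieces: you invoke Pfister's local-global principle to embed $\W(K)/\W(K)_{\mathrm{tors}}$ into $\prod_\alpha \Z$ over orderings (which is $2$-adically separated, hence so is any subgroup), and then dispatch the torsion with the bounded-exponent input of Lemma~\ref{lemm:witt-torsion}. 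This is correct, and the bounded-torsion bootstrapping step ($x = 2^N y_1$, $2^{2N}y_1 = 0$, so $y_1$ torsion, so $2^N y_1 = 0$) is the right finishing move. One small slip: you write ``finite-rank free module,'' but the set of orderings of $K$ can be infinite. That phrase should just be deleted --- what you actually use is that $\prod_\alpha \Z$ is $2$-adically separated and that subgroups of separated groups have $\bigcap_n 2^n(\cdot) = 0$, which holds for any index set. Net comparison: the paper's proof is slicker and hypothesis-lighter (no $\vcd$ needed, no torsion/torsion-free split); yours is more hands-on, trades the Hauptsatz for Pfister's signature theorem plus the torsion bound, and as a by-product makes visible exactly where the $\vcd_2$ hypothesis enters.
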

\begin{proof}
For any abelian group $G$, elements in the kernel of $G \to G_2^\comp$ must be in the kernel of $G \to G/2^n$ for all $n$, and hence be infinitely $2$-divisible.
We show the only such element in $\GW(A), \W(A)$ is $0$.
Since $\Z$ has no infinitely $2$-divisible elements (other than $0$), it suffices to prove the claim about $\W(A)$.
As above we may assume that $A=k$ is a field.
Since $2 \in \I(k)$, any infinitely $2$-divisible element lies in $\cap_n \I(k)^n$.
This group is zero \cite[Theorem III.5.1]{milnor1973symmetric}, whence the result.
\end{proof}

\begin{lemma} \label{lemm:witt-completion}
Let $\vcd_2(k) < \infty$.
Then the $\I$-adic and $2$-adic filtrations on $\W(k)$ induce the same topology.
In particular $\W(k)_2^\comp \wequi \W(k)_\I^\comp$.
\end{lemma}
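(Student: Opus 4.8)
The plan is to show the two filtrations induce the same topology by proving mutual inclusions of the form $2^N \W(k) \subset \I(k)^n$ and $\I(k)^m \subset 2^M \W(k)$ for suitable indices, since the statement that two filtrations of an abelian group induce the same topology is exactly that each member of one filtration contains some member of the other. One inclusion is formal: since $2 = \lra{1} + \lra{1} \in \I(k)$, we have $2^n \in \I(k)^n$, hence $2^n \W(k) \subset \I(k)^n$ for all $n$, with no hypothesis on $k$ needed. So the content is entirely in the reverse inclusion: each power $\I(k)^n$ of the fundamental ideal should be contained in some power $2^M \W(k)$ of $2$, i.e. the $\I$-adic topology is at least as fine as the $2$-adic one.

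For the reverse inclusion I would invoke the hypothesis $\vcd_2(k) =: d < \infty$. The key input is the solved Milnor conjecture: $\I(k)^n/\I(k)^{n+1} \cong \ul{k}^M_n(k) = K^M_n(k)/2$, i.e. the graded pieces of the $\I$-adic filtration are mod-$2$ vector spaces, so multiplication by $2$ is zero on $\gr^\bullet_\I \W(k)$; equivalently $2\I(k)^n \subset \I(k)^{n+1}$. Combined with the finite cohomological dimension, one knows (this is the ``Milnor conjecture + finite $\vcd_2$'' package, e.g. as in the reference already cited, \cite[Lemma 29]{bachmann-gwtimes}, or via $K^M_n(k)/2 \cong H^n_{\et}(k,\mu_2^{\otimes n}) = 0$ for $n > d$) that $\I(k)^{d+1} = \I(k)^{d+2} = \cdots$ and that $\bigcap_n \I(k)^n = 0$ \cite[Theorem III.5.1]{milnor1973symmetric}; a stable value of the $\I$-adic filtration which is also contained in its own intersection must vanish, so $\I(k)^{d+1} = 0$. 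Then $\I(k)^{d+1} = 0 \subset 2^M \W(k)$ trivially for every $M$, which gives the reverse inclusion and in fact shows the $\I$-adic filtration is eventually constant at $0$.

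Assembling: $2^n \W(k) \subset \I(k)^n$ for all $n$, and $\I(k)^{d+1} = 0 \subset 2^n \W(k)$ for all $n$; hence the two filtrations are cofinal in each other and define the same topology. For the final clause, completion depends only on the topology (the completion is the limit over any cofinal system of the filtration subgroups, and cofinal subsystems compute the same limit), so $\W(k)_2^\comp \wequi \W(k)_\I^\comp$; concretely, since $\I(k)^{d+1} = 0$ the $\I$-adic completion map is an isomorphism onto $\W(k)/\I(k)^{d+1} = \W(k)$... but one should be slightly careful here since $\W(k)$ itself need not be $2$-complete, so the cleaner phrasing is just that both completions are computed from the same topology. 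The main obstacle, such as it is, is locating the cleanest citation for $\I(k)^{d+1} = 0$ under $\vcd_2(k) \le d$; everything else is a formal manipulation of filtrations using only $2 \in \I(k)$ and $\bigcap_n \I(k)^n = 0$.
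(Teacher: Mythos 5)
There is a genuine gap. Your key claim is that $\I(k)^{d+1}=0$ when $\vcd_2(k)=d<\infty$, derived from ``$K^M_n(k)/2 \cong H^n_{\et}(k,\mu_2^{\otimes n})=0$ for $n>d$''. That vanishing requires $\cd_2(k)\le d$, not $\vcd_2(k)\le d$. Recall $\vcd_2(k):=\cd_2(k[\sqrt{-1}])$; for formally real fields these differ drastically. The test case is $k=\R$: here $\vcd_2(\R)=0$, but $\cd_2(\R)=\infty$, $H^n(\R,\Z/2)=\Z/2$ for all $n$, so $K^M_n(\R)/2=\Z/2\ne 0$ for all $n$, and indeed $\I(\R)=2\Z$, $\I(\R)^n=2^n\Z\ne 0$. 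So the $\I$-adic filtration on $\W(\R)$ is \emph{not} eventually constant and certainly not eventually zero, and your argument breaks down already in the simplest relevant example. (Your invocation of $\bigcap_n\I^n=0$ is fine, but you only get ``eventual value is zero'' if the filtration stabilizes, which it does not.)

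What is true, and what the paper uses, is the much weaker inclusion $\I(k)^{\vcd_2(k)+1}\subset 2\W(k)$, due to Elman--Lum \cite[last Theorem]{elman-lum-2cohom}. Iterating this with your (correct) easy inclusion $2\W(k)\subset\I(k)$ gives
\[
\I(k)^{n(d+1)}\subset 2^n\W(k)\subset\I(k)^n
\]
for all $n$, hence the two filtrations are mutually cofinal and induce the same topology, and the completions agree. In short: the easy direction of your argument is fine, but the hard direction needs the Elman--Lum bound $\I^{d+1}\subset 2\W$ rather than the vanishing $\I^{d+1}=0$, which simply fails over formally real fields despite finite virtual cohomological dimension.
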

\begin{proof}
We have $2\W(k) \subset \I(k)$ and $\I(k)^{\vcd_2(k)+1} \subset 2\W(k)$ \cite[last Theorem]{elman-lum-2cohom}, which implies the claim.
\end{proof}

\begin{lemma} \label{lemm:witt-units}
Let $k$ be a field.
Then $\W(k)_{(2)}$ is a local ring with maximal ideal $\I$, and similarly $\GW(k)_{(2)}$ is a local ring (with maximal ideal $\I+(2)$).
In particular $x \in \W(k)_{(2)}$ is a unit if and only if $\rk(x) \ne 0$.
\end{lemma}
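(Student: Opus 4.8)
The plan is to prove that $R := \W(k)_{(2)}$ is local by exhibiting $\I R$ as a maximal ideal contained in the Jacobson radical $\operatorname{rad}(R)$; the last assertion of the lemma is then automatic, since the rank homomorphism induces $R/\I R \cong \F_2$, so that $\I R = \ker(\rk\colon R \to \F_2)$ is exactly the set of non-units of $R$. The case of $\GW(k)_{(2)}$ will follow by the identical argument, with $\I R$ replaced by the even-rank ideal $\ker(\GW(k)_{(2)} \to \F_2)$, which is the meaning of ``$\I + (2)$'' there.

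The crux is the first step: every maximal ideal $\mathfrak m$ of $R$ contains $2$, equivalently $2 \in \operatorname{rad}(R)$. For this I would use that $R$ is generated as a ring by the image of $\Z_{(2)}$ together with the rank-one classes $\langle a \rangle$, $a \in k^\times$, and that $\langle a\rangle^2 = \langle a^2\rangle = 1$. Hence in the field $R/\mathfrak m$ the image of each $\langle a\rangle$ is a square root of $1$, so it equals $\pm 1$ and already lies in the image of $\Z_{(2)}$; therefore $R/\mathfrak m$ is a quotient ring of $\Z_{(2)}$. The only quotient ring of $\Z_{(2)}$ which is a field is $\F_2$, so $R/\mathfrak m \cong \F_2$ and $2 \in \mathfrak m$.

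Next I would show that each one-fold Pfister class $\langle\langle a\rangle\rangle := 1 - \langle a\rangle$ lies in $\operatorname{rad}(R)$. Writing $w = \langle\langle a\rangle\rangle$, the relation $\langle a\rangle^2 = 1$ gives $w^2 = 2w$. For any $r \in R$ the element $1 + 2r$ is a unit (as $2 \in \operatorname{rad}(R)$), and a one-line computation using $w^2 = 2w$ shows $(1 + rw)\bigl(1 - r(1+2r)^{-1}w\bigr) = 1$; thus $1 + rw$ is a unit for every $r$, i.e.\ $w \in \operatorname{rad}(R)$. Since $\I$ is generated as an abelian group by the classes $\langle\langle a\rangle\rangle$ (a standard fact: for $x = \sum_i n_i\langle a_i\rangle \in \I$ one has $x = -\sum_i n_i\langle\langle a_i\rangle\rangle + \tfrac12(\sum_i n_i)\langle\langle -1\rangle\rangle$, using $2\langle 1\rangle = \langle\langle -1\rangle\rangle$ in $\W(k)$), it follows that $\I R \subseteq \operatorname{rad}(R)$.

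Finally, $\I R \subseteq \operatorname{rad}(R)$ and $R/\I R \cong \F_2$ is a field, so $\I R$ is a maximal ideal contained in the Jacobson radical, hence the unique maximal ideal of $R$; thus $R$ is local with maximal ideal $\I R$, and $x \in R$ is a unit if and only if $x \notin \I R$, i.e.\ if and only if $\rk(x) \ne 0$. For $\GW(k)_{(2)}$ the same three steps go through verbatim: $\hat\I := \ker(\GW(k) \to \Z)$ is again generated as an abelian group by the $\langle\langle a\rangle\rangle$, so $\hat\I \cdot \GW(k)_{(2)} + (2) \subseteq \operatorname{rad}(\GW(k)_{(2)})$, and since $\GW(k)/(\hat\I + (2)) \cong \F_2$ this ideal is the unique maximal ideal. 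The only part that requires an idea is the residue-field computation in the first step; everything else is routine algebra.
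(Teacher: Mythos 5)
Your proof is correct, and it takes a genuinely different route from the paper. The paper disposes of the lemma in one line by citing the known description of the Zariski spectrum of $\GW(k)$ (Balmer's Figure 1; equivalently, the classical picture $\Spec(\W(k))\cong(\Omega\times\Spec\Z)/\Omega\times\{(2)\}$ coming from signatures), from which locality after localizing at $2$ is read off by inspection. You instead give a self-contained ring-theoretic argument: (i) every residue field of $\W(k)_{(2)}$ is $\F_2$, because the generators $\langle a\rangle$ square to $1$ and hence map to $\pm1$, forcing the residue field to be a field quotient of $\Z_{(2)}$; (ii) the identity $w^2=2w$ for $w=1-\langle a\rangle$, combined with $2\in\operatorname{rad}$, makes each Pfister class a Jacobson-radical element via the explicit inverse $(1+rw)^{-1}=1-r(1+2r)^{-1}w$; (iii) since these classes generate $\I$ and the quotient is $\F_2$, the fundamental ideal is the unique maximal ideal. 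All three steps check out (the decomposition of an even-rank element into Pfister classes, using $2=\langle\langle -1\rangle\rangle$ in $\W(k)$, is the standard fact you quote, and the same computation runs verbatim in $\GW(k)_{(2)}$ with $\hat\I+(2)$). What your approach buys is independence from the spectrum computation — it needs only the presentation of $\W(k)$ by the $\langle a\rangle$ and elementary facts about the Jacobson radical — at the cost of being longer than the paper's citation. One cosmetic caveat: like the paper, your argument for $\GW$ implicitly uses that $\GW(k)$ is generated by rank-one forms, which requires $\operatorname{char}(k)\ne 2$; this is consistent with the paper's standing conventions.
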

\begin{proof}
The claim about $\GW(k)_{(2)}$ follows by inspection of the prime spectrum, depicted in \cite[Figure 1]{balmer2010spectra} (observe that this reference denotes by $\I$ what we denote by $\I+(2)$).
This implies the claim about $\W(k) = \GW(k)/1+\lra{-1}$.

\end{proof}

\subsection{The homotopy $t$-structure} \label{subsec:t-structure}
\subsubsection{}
The category $\SH(k)$ admits a $t$-structure called the \emph{homotopy $t$-structure} \cite[\S5.2]{morel-trieste}.
Its non-negative part generated (in the sense of e.g. \cite[Proposition 1.4.4.11]{lurie-ha}) by spectra of the form \[ \Sigma^\infty_+ X \wedge \Gmp{n}, \] for $X \in \Sm_k$ and $n \in \Z$.
We denote by \[ E \mapsto E_{\ge 0} \quad\text{and}\quad E \mapsto E_{\le 0} \] the truncation functors.

\subsubsection{}
For $E \in \SH(k)$, denote by $\ul{\pi}_i(E)_j$ the Nisnevich sheaf on $\Sm_k$ associated with the presheaf \[ X \mapsto [\Sigma^i \Sigma^\infty_+ X, E \wedge \Gmp{j}]. \]
One may prove that \cite[Theorem 2.3]{hoyois-algebraic-cobordism} \[ \SH(k)_{\ge 0} = \{E \in \SH(k) \mid \ul{\pi}_i(E)_j = 0 \text{ for all } i < 0, j \in \Z \}, \] and similarly \[ \SH(k)_{\le 0} = \{E \in \SH(k) \mid \ul{\pi}_i(E)_j = 0 \text{ for all } i > 0, j \in \Z \}. \]

\subsubsection{}
The homotopy $t$-structure is non-degenerate \cite[Corollary 2.4, Remark 2.5]{hoyois-algebraic-cobordism}, i.e. \[ \cap_i \SH(k)_{\ge i} = 0 = \cap_i \SH(k)_{\le i}. \]

\subsubsection{}
Each $\ul{\pi}_i(E)_j =: F$ is an \emph{unramified sheaf} \cite[Lemma 6.4.4]{morel2005stable}.
This means in particular that $F = 0$ if and only if for every finitely generated separable field extension $K/k$, which we may view as the fraction field of a smooth $k$-variety, the generic stalk $F(K)$ is zero.

\subsubsection{}
The heart $\SH(k)^\heart$ can be identified with the category of \emph{homotopy modules} \cite[Theorem 5.2.6]{morel-trieste}.
Namely for $E \in \SH(k)^\heart$ there are canonical isomorphism \[ \ul{\pi}_0(E)_{j-1} \wequi (\ul{\pi}_0(E)_{j})_{-1}, \] where on the right hand side the $(\ph)_{-1}$ means Voevodsky's \emph{contraction operation} \cite[Definition 4.3.10]{morel-trieste}.
Moreover, any sequence of Nisnevich sheaves $F_i$ together with isomorphisms $F_{i-1} \wequi (F_i)_{-1}$ such that the cohomology of each $F_i$ is homotopy invariant (i.e. $F_i$ is ``strictly homotopy invariant'') gives rise to an object of $\SH(k)^\heart$, and this is an equivalence of categories.

\subsubsection{} \label{subsub:pi0-sphere}
It turns out that \cite[Theorem 6.2.1]{morel2004motivic-pi0} \[ \ul{\pi}_0(\1)_* \wequi \ul{K}_*^{MW}. \]
This implies that \cite[Lemma 3.10]{A1-alg-top} \[ \ul{\pi}_0(\1[\eta^{-1}])_* \wequi \ul{W}[\eta^{\pm 1}]. \]
Here $\ul{K}_*^{MW}$ is the homotopy module of \emph{Milnor-Witt $K$-theory} \cite[\S3.2]{A1-alg-top} and $\ul{W}$ is the sheaf of unramified Witt rings, i.e. the Zariski sheafification of $X \mapsto \W(X)$.

\subsubsection{}
Another common indexing convention is \[ \ul{\pi}_{i,j}(E) = \ul{\pi}_{i-j}(E)_{-j}. \]
We also use the common abbreviations \[ \pi_i(E)_j = \ul{\pi}_i(E)_j(k) \quad\text{and}\quad \pi_{i,j}(E) = \ul{\pi}_{i,j}(E)(k). \]
The definition of $\pi_{i,j}$ extends to general base schemes: for $E \in \SH(S)$ we put \[ \pi_{i,j}(E) = [S^{i,j}, E]_{\SH(S)}. \]
We will not use $\ul{\pi}_{i,j}$ unless the base $S$ is the spectrum of a field.

\subsection{Completion and localization} \label{subsec:completion}
\subsubsection{}
For any set of numbers $\scr S$ (or more generally $\scr S \subset \pi_{**}(\1)$) and $E \in \SH(S)$ there exists an initial $\scr S$-periodic spectrum $E'$ under $E$.
In other words for every $x \in \scr S$ the endomorphism of $E'$ induced by $x$ is an equivalence.
For the sets $\scr S = \{x\}$ and (given a prime $p$) $\scr S = \{n \mid (n,p) = 1\}$ we respectively denote $E'$ by $E[1/x]$ and $E_{(p)}$.
These localizations may be computed as the mapping telescope of appropriate endomorphisms, i.e. ``in the expected way''.
Analogous results for $\SH$ are even more well-known, and their standard proofs adapt immediately to $\SH(S)$; see e.g. \cite[\S12.1]{bachmann-norms} for details.
We write $E \otimes \Q$ for the result of inverting all primes (i.e. $\scr S = \Z \setminus 0$).

We write \[ \SH(S)[x^{-1}] \subset \SH(S) \] for the category of $x$-periodic spectra.

\subsubsection{}
Given a prime $p$ and $E \in \SH(S)$, there exists an initial $p$-complete spectrum $E_p^\comp$ under $E$; in other words whenever $F$ is $p$-periodic (i.e. $F \wequi F[1/p]$) then $[F, E_p^\comp] = 0$.
Moreover \[ E_p^\comp \wequi \lim_n E/p^n, \] where $E/p^n$ denotes the cofiber of the endomorphism of multiplication by $p^n$.
Equivalently, $E \mapsto E_p^\comp$ is the localization at those maps $\alpha$ such that $\alpha/p$ is an equivalence.
Again the standard proofs of these claims for $\SH$ in place of $\SH(S)$ immediately generalize; see e.g. \cite[\S2.1]{bachmann-SHet} for details.
A similar discussion holds for other graded endomorphisms of $\1$; e.g. \[ E_\eta^\comp \wequi \lim_n E/\eta^n. \]

\subsubsection{}
We record some basic facts about these constructions.
Recall the derived $p$-completion of abelian groups $L_p^\comp A \in \SH$ from \S\ref{subsec:derived-completion}.

\begin{lemma} \label{lemm:completion-homotopy}
\begin{enumerate}
\item Let $E \in \SH(k)$. Then $\ul{\pi}_i(E_{(p)})_j \wequi (\ul{\pi}_i(E)_j)_{(p)}$.
  Similarly if $E \in \SH(S)$ then $\pi_{i,j}(E_{(p)}) \wequi \pi_{i,j}(E)_{(p)}$.
\item Let $E \in \SH(S)$. There is a split short exact sequence \[ 0 \to \pi_0 L_p^\wedge \pi_i(E)_j \to \pi_i(E_p^\comp)_j \to \pi_1 L_p^\wedge \pi_{i-1}(E)_j \to 0. \]
  The map $\pi_i(E)_j \to \pi_i(E_p^\comp)_j$ factors as \[ \pi_i(E)_j \to \pi_0 L_p^\wedge \pi_i(E)_j \to \pi_i(E_p^\comp)_j, \] all maps being the canonical ones.
\end{enumerate}
\end{lemma}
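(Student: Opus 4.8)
The plan is to treat the two parts separately: part (1) is formal once one writes $p$-localization as a filtered colimit, and part (2) reduces to a standard statement about $p$-completion of ordinary spectra. For part (1), write $E_{(p)} \wequi \colim_r E$ as the filtered colimit over the poset of positive integers prime to $p$, ordered by divisibility, with transition maps given by multiplication. The objects $S^{i,j} \in \SH(S)$ are compact, as are the generators $\Sigma^i\Sigma^\infty_+ X \wedge \Gmp j$ ($X \in \Sm_k$) of $\SH(k)$; hence $[S^{i,j}, -]$ and the presheaves $X \mapsto [\Sigma^i\Sigma^\infty_+ X, (-) \wedge \Gmp j]$ commute with this filtered colimit (one also uses that $- \wedge \Gmp j$ is an equivalence, so it commutes with the colimit as well). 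Since Nisnevich sheafification commutes with colimits, and the $p$-localization of a (pre)sheaf of abelian groups is exactly this same filtered colimit, we obtain $\pi_{i,j}(E_{(p)}) \wequi \colim_r \pi_{i,j}(E) \wequi \pi_{i,j}(E)_{(p)}$ and $\ul\pi_i(E_{(p)})_j \wequi \colim_r \ul\pi_i(E)_j \wequi (\ul\pi_i(E)_j)_{(p)}$.

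For part (2), first reduce to a statement in $\SH$. Fix $i,j$ and let $X \in \SH(S)$ be a sphere with $[X, -] \wequi \pi_i(-)_j$, so that $M := \map(X, E) \in \SH$ has $\pi_0 M = \pi_i(E)_j$ and $\pi_{-1} M = \pi_{i-1}(E)_j$. The functor $\map(X, -)\colon \SH(S) \to \SH$ preserves limits and is exact, hence commutes with $(-)/p^n$ and therefore with $(-)_p^\comp = \lim_n(-)/p^n$; thus $\map(X, E_p^\comp) \wequi M_p^\comp$ and $\pi_i(E_p^\comp)_j = \pi_0 M_p^\comp$. It therefore suffices to prove: for every $M \in \SH$ there is a split short exact sequence
\[ 0 \to \pi_0 L_p^\comp \pi_0 M \to \pi_0 M_p^\comp \to \pi_1 L_p^\comp \pi_{-1} M \to 0, \]
through which the canonical map $\pi_0 M \to \pi_0 M_p^\comp$ factors via the canonical map $\pi_0 M \to \pi_0 L_p^\comp \pi_0 M$.

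The key point is that $\pi_0 M_p^\comp$ depends only on the two-stage Postnikov truncation $N := \tau_{\geq -1}\tau_{\leq 0} M$. Indeed, from the fiber sequences $\tau_{\geq 1} M \to M \to \tau_{\leq 0} M$ and $N \to \tau_{\leq 0} M \to \tau_{\leq -2} M$ one checks, using the Milnor $\limone$-sequence for $\lim_n(-)/p^n$ together with the observation that the towers $\{\pi_k(Y)/p^n\}_n$ have surjective transition maps (hence vanishing $\limone$), that $(\tau_{\geq 1}M)_p^\comp$ has vanishing $\pi_0$ and $\pi_{-1}$ and that $(\tau_{\leq -2}M)_p^\comp$ has vanishing $\pi_0$ and $\pi_1$; the resulting long exact sequences then give $\pi_0 M_p^\comp \cong \pi_0 (\tau_{\leq 0}M)_p^\comp \cong \pi_0 N_p^\comp$, compatibly with the canonical maps from $\pi_0 M = \pi_0 N$. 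Now $N$ has homotopy concentrated in degrees $-1$ and $0$, so its unique $k$-invariant lies in $[H\pi_{-1}M, \Sigma^2 H\pi_0 M] = \Ext^2_\Z(\pi_{-1}M, \pi_0 M) = 0$; hence $N \wequi H\pi_0 M \oplus \Sigma^{-1} H\pi_{-1} M$. Completing and using $(HA)_p^\comp \wequi H L_p^\comp A$ (immediate from exactness of $H\colon D(\Ab) \to \SH$ and the definition of $L_p^\comp$) gives $N_p^\comp \wequi H L_p^\comp \pi_0 M \oplus \Sigma^{-1} H L_p^\comp \pi_{-1} M$; taking $\pi_0$ produces the short exact sequence. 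It splits via the (non-canonical) splitting of $N$, while the injection $\pi_0 L_p^\comp \pi_0 M \hookrightarrow \pi_0 N_p^\comp$ is canonical — induced by the map $\tau_{\geq 0} N \to N$ — and the factorization of $\pi_0 M \to \pi_0 M_p^\comp$ is read off from the canonical factorization $H\pi_0 M \to (H\pi_0 M)_p^\comp \to N_p^\comp$.

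I expect the only genuine work to be the connectivity and $\limone$ estimates showing that $\pi_0 M_p^\comp$ sees only the truncation $N$; everything else is formal, the one substantive input being the vanishing $\Ext^2_\Z = 0$, which splits $N$ into Eilenberg--MacLane summands and thereby splits the sequence.
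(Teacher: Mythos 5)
Your reduction of part~(2) to the analogous statement for ordinary spectra via $\map(X,-)$ is exactly the paper's move; the paper then simply cites \cite[Proposition VI.5.1]{bousfield1987homotopy} for the classical statement, whereas you attempt to prove it directly. Your direct proof has a genuine gap at the splitting step. You assert that $N=\tau_{\geq -1}\tau_{\leq 0}M$ splits because its $k$-invariant lies in $[H\pi_{-1}M,\Sigma^2 H\pi_0 M]=\Ext^2_\Z(\pi_{-1}M,\pi_0 M)=0$. The identification of this mapping group with an $\Ext$-group is false in $\SH$: the group $[HA,\Sigma^2 HB]$ of stable cohomology operations is $H^2(HA;B)$, which is typically nonzero even though $\Ext^2_\Z$ vanishes identically. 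For instance $[H(\Z/2),\Sigma^2 H(\Z/2)]=\F_2\{\Sq^2\}$, and $[H\Z,\Sigma^2 H(\Z/2)]$ is nonzero as well. (Your identification would hold inside $D(\Z)$, i.e.\ among $H\Z$-modules, but $N$ is just a spectrum, not an $H\Z$-module, so there is no reason for its $k$-invariant to factor through $D(\Z)$.) Consequently $N$ need not split, and your derivation of the splitting breaks.

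The rest of your argument is sound and can be salvaged for everything except the splitting. Your connectivity and $\limone$ estimates reducing $\pi_0 M_p^\comp$ to $\pi_0 N_p^\comp$ are correct, and the short exact sequence itself, together with the factorization of $\pi_0 M\to\pi_0 M_p^\comp$, follow by $p$-completing the fiber sequence $H\pi_0 M\to N\to\Sigma^{-1}H\pi_{-1}M$, using $(HA)_p^\comp\wequi HL_p^\comp A$, and reading off the resulting long exact sequence in homotopy (since $L_p^\comp A$ is concentrated in degrees $0,1$); no splitting of $N$ is required for that. But the splitting of the short exact sequence is precisely the delicate content of Bousfield's result --- it is non-canonical and rests on structural facts about $\Hom(\Z/p^\infty,-)$ and $\Ext(\Z/p^\infty,-)$ in the category of Ext-$p$-complete abelian groups, not on a splitting of the Postnikov piece $N$. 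You would need to supply that argument or cite it, as the paper does.
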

Beware the absence of underlines in (2)!
\begin{proof}
(1) Immediate.

(2) Since $\map(\Sigma^{i,j} \1, \ph)$ preserves $p$-completions, this follows from the analogous statement for ordinary spectra (see e.g. \cite[Proposition VI.5.1]{bousfield1987homotopy}).
For last assertion, apply the result to $E_{\ge i} \to E$.
\end{proof}

\begin{corollary} \label{corr:completion-inj}
Let $E \in \SH(S)$.
If $\pi_{i,j}(E) \to \pi_{i,j}(E)_p^\comp$ is injective then so is $\pi_{i,j}(E) \to \pi_{i,j}(E_p^\comp)$.
\end{corollary}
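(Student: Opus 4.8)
The plan is simply to chain together the two short exact sequences that govern the $p$-completion map, so this is really a two-step diagram chase with all inputs already at hand. First I would invoke the factorization supplied by Lemma~\ref{lemm:completion-homotopy}(2): the canonical map $\pi_{i,j}(E) \to \pi_{i,j}(E_p^\comp)$ factors as
\[ \pi_{i,j}(E) \xrightarrow{a} \pi_0 L_p^\comp \pi_{i,j}(E) \xrightarrow{b} \pi_{i,j}(E_p^\comp), \]
where $b$ is the inclusion of a direct summand in the displayed split short exact sequence of that lemma, hence injective. Consequently it suffices to prove that $a$ is injective.

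For this I would use the description of $\pi_0 L_p^\comp$ recalled in \S\ref{subsec:derived-completion}: there is a natural surjection $c\colon \pi_0 L_p^\comp \pi_{i,j}(E) \to \pi_{i,j}(E)_p^\comp$ (with kernel $\limone_n \pi_{i,j}(E)[p^n]$), and by naturality of the relevant constructions the composite $c \circ a$ is precisely the canonical comparison map $\pi_{i,j}(E) \to \pi_{i,j}(E)_p^\comp$. By hypothesis this composite is injective, and therefore so is the first factor $a$. Combining with the previous paragraph, $b \circ a$ is injective, which is exactly the assertion.

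I do not expect any genuine obstacle here; the only point that needs a modicum of care is the bookkeeping — confirming that the factorization of Lemma~\ref{lemm:completion-homotopy}(2) is indeed available over an arbitrary base scheme $S$ (it is, since that lemma is stated for $E \in \SH(S)$, once one matches the two indexing conventions) and that the map $c$ occurring there is literally the canonical map from the derived to the classical $p$-completion, so that $c \circ a$ is the map appearing in the hypothesis.
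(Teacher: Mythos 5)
Your argument is correct and is essentially the same as the paper's: factor the comparison map through $\pi_0 L_p^\comp \pi_{i,j}(E)$ via Lemma~\ref{lemm:completion-homotopy}(2), observe that the second factor is injective, and then deduce injectivity of the first factor $a$ from the hypothesis by noting that composing $a$ with the natural map $\pi_0 L_p^\comp \pi_{i,j}(E) \to \pi_{i,j}(E)_p^\comp$ recovers the assumed-injective comparison map. The one small cosmetic remark: you flag that the map $c$ to the classical completion is surjective, but that fact plays no role — all you use is that $c\circ a$ is the hypothesized injection.
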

\begin{proof}
By Lemma \ref{lemm:completion-homotopy} the map $\pi_{i,j}(E) \to \pi_{i,j}(E_p^\comp)$ factors as $\pi_{i,j}(E) \xrightarrow{\alpha} \pi_0 L_p^\comp \pi_{i,j}(E) \hookrightarrow \pi_{i,j}(E_p^\comp)$ so it suffices to show that $\alpha$ is injective.
As reviewed in \S\ref{subsec:derived-completion}, the injection $\pi_{i,j}(E) \hookrightarrow \pi_{i,j}(E)_p^\comp$ factors as $\pi_{i,j}(E) \xrightarrow{\alpha} \pi_0 L_p^\comp \pi_{i,j}(E) \to \pi_{i,j}(E)_p^\comp$.
The result follows.
\end{proof}

\subsubsection{}
We often employ localization and completion together, using the following well-known result.
\begin{lemma} \label{lemm:inv-compl}
Let $\scr C$ be a presentable stable $\infty$-category, $E \in \scr C$ and $p$ an integer.
Then $E \wequi 0$ if and only if $E[1/p] \wequi 0$ and $E_p^\comp \wequi 0$ (equivalently, $E/p \wequi 0$; a fortiori this holds if $E_{(p)} \wequi 0$).

Similarly if $\scr C$ is presentably symmetric monoidal, $L \in \scr C$ is invertible and $x \in \pi_0(L)$ then $E \wequi 0$ if and only if $0 \wequi E_x^\comp$ and $0 \wequi E[1/x]$.
\end{lemma}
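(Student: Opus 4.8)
The plan is to reduce the whole statement to the elementary observation that, for any object $X$, one has $X/p \wequi 0$ precisely when $p\cdot\id_X$ is an equivalence. Granting this, the ``only if'' direction is immediate: if $E \wequi 0$ then $E[1/p]$, $E/p$, $E_p^\comp$ and $E_{(p)}$ all vanish, each being the value at $E$ of a functor (a colimit, a limit, a cofiber) that preserves the zero object. For the ``if'' direction I would argue as follows: if $E/p \wequi 0$, then $p\cdot\id_E$, and hence each $p^n\cdot\id_E$, is an equivalence, so the canonical map $E \to E[1/p] = \colim(E \xrightarrow{p} E \xrightarrow{p} \cdots)$ is an equivalence; combined with $E[1/p]\wequi 0$ this gives $E \wequi 0$. (Neither hypothesis alone suffices: $\1[1/p]/p \wequi 0$ while $\1[1/p]\not\wequi 0$, and $(\1/p)[1/p]\wequi 0$ while $\1/p\not\wequi 0$.)

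It then remains to see that ``$E_p^\comp \wequi 0$'' is interchangeable with ``$E/p \wequi 0$'' and that $E_{(p)}\wequi 0$ implies both. If $E/p \wequi 0$, the octahedral axiom applied to $p^n = p\circ p^{n-1}$ gives cofiber sequences $E/p^{n-1} \to E/p^n \to E/p$, so $E/p^n \wequi 0$ for all $n$ by induction and $E_p^\comp \wequi \lim_n E/p^n \wequi 0$. Conversely, by the description of $p$-completion recalled in \S\ref{subsec:completion} (``localization at the maps that become equivalences after applying $(-)/p$''), the unit $E \to E_p^\comp$ becomes an equivalence after applying $(-)/p$, so $E/p \wequi (E_p^\comp)/p$ and $E_p^\comp \wequi 0$ forces $E/p \wequi 0$. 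Finally, the cofiber of $E \to E_{(p)}$ is a filtered colimit of objects $E/m$ with $m$ coprime to $p$, each annihilated by a power of $m$ and therefore $p$-divisible; as $(-)/p$ preserves colimits this cofiber is $p$-divisible, whence $E_p^\comp \wequi (E_{(p)})_p^\comp$ and $E_{(p)}\wequi 0$ gives $E_p^\comp \wequi 0$.

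The symmetric monoidal statement is proved by the same argument, replacing $p\cdot\id_E$ by $x\otimes\id_E\colon E\wequi\1\otimes E\to L\otimes E$ and the integer powers $p^n$ by the twists $x^{\otimes n}\colon\1\to L^{\otimes n}$; invertibility of $L$ lets one write $E[1/x]=\colim(E\xrightarrow{x}L\otimes E\xrightarrow{x}L^{\otimes 2}\otimes E\to\cdots)$ and hence deduce $E\wequi E[1/x]$ from $E/x\wequi 0$, and the identifications of the previous paragraph go through verbatim with $x$-power cofibers in place of $p$-power cofibers.

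I do not expect any genuine obstacle: beyond the bookkeeping of tracking where each of $E[1/p]\wequi 0$ and $E/p\wequi 0$ is used, the only non-formal input is the identity $E/p\wequi(E_p^\comp)/p$, which follows from \S\ref{subsec:completion}. (Should one wish to avoid citing that, one can instead observe directly that the fibre of $E\to E_p^\comp=\lim_n E/p^n$ is $\lim(\cdots\xrightarrow{p}E\xrightarrow{p}E)$, which after applying $(-)/p$ becomes the homotopy limit of a tower with nilpotent transition maps and is therefore $p$-divisible.)
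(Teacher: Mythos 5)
Your proof is correct and follows essentially the same route as the paper's: both arguments hinge on the single observation that $E/p \wequi 0$ forces $p\cdot\id_E$ to be invertible, hence $E \wequi E[1/p]$, after which vanishing of $E[1/p]$ finishes it; the paper makes the $E_p^\comp \wequi 0 \Leftrightarrow E/p \wequi 0$ step in one line by citing the localization description of $p$-completion, whereas you spell out both directions (octahedral induction and the localization fact) and additionally verify the parenthetical claim about $E_{(p)}$, which the paper leaves implicit.
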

For $\scr C = \SH$ this result goes back to at least \cite[Lemma 1.34]{ravenel1984localization}, and essentially the same proof works in general.
We include it for the convenience of the reader.
\begin{proof}
We give the proof of the second statement, the proof of the first one is obtained by replacing $x$ by $p$.
Note that by the definition of $E_x^\comp$ as a localization, we have $E_x^\comp \wequi 0$ if and only if $E/x \wequi 0$, i.e. $x: E \to E \wedge L$ is an equivalence, i.e. $E \wequi E[1/x]$.
We thus need to show that $E \wequi 0$ if and only if ($E \wequi E[1/x]$ and $E[1/x] \wequi 0$), which is clear.
\end{proof}

\subsection{Very effective spectra} \label{subsec:very-effective}
We write \[ \SH(k)^\veff \subset \SH(k) \] for the subcategory generated under colimits (equivalently, colimits and extensions \cite[Remark after Proposition 4]{bachmann-very-effective}) by spectra of the form $\Sigma^\infty_+ X$, with $X \in \Sm_k$.
This is the non-negative part of a $t$-structure \cite[Proposition 1.4.4.11]{lurie-ha}.
If $E \in \SH(k)^\eff$, then $E \in \SH(k)^\eff_{\ge 0} = \SH(k)^\veff$ (respectively $E \in \SH(k)^\eff_{\le 0}$) if and only if $\ul{\pi}_i(E)_0 = 0$ for $i < 0$ (respectively $i > 0$) \cite[\S3]{bachmann-very-effective}.

\subsection{Inverting \texorpdfstring{$\rho$}{rho}} \label{subsec:rho}
\subsubsection{}
We denote by \[ \rho: \1 \to \Gm \] the map corresponding to $-1 \in \scr O^\times$.
Then there is a canonical equivalence \cite{bachmann-real-etale} \[ \SH(S)[\rho^{-1}] \wequi \SH(RS), \] where $RS$ denotes the \emph{real space associated with $S$} \cite[(0.4.2)]{real-and-etale-cohomology} and $\SH(RS)$ means sheaves of spectra on the topological space $RS$.
One puts $\Sper(A) := R\Spec(A)$.
\begin{corollary} \label{cor:rho-cons-1}
Let $\{k_\alpha/k\}$ be the set of real closures of $k$.
Then the canonical functor \[ \SH(k)[\rho^{-1}] \to \prod_\alpha \SH(k_\alpha)[\rho^{-1}] \wequi \prod_\alpha \SH \] is conservative.
\end{corollary}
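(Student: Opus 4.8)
The plan is to transport the statement through the equivalence $\SH(S)[\rho^{-1}] \wequi \SH(RS)$ recalled above (from \cite{bachmann-real-etale}), applied to $S = \Spec k$, so that $\SH(k)[\rho^{-1}] \wequi \SH(\Sper k)$, where $\Sper k = R\Spec k$ is the real spectrum of $k$, i.e. the space of orderings of $k$ with the Harrison topology \cite[(0.4.2)]{real-and-etale-cohomology}. Two standard facts from the theory of ordered fields enter: (i) $\Sper k$ is a Boolean space, i.e. compact, Hausdorff and totally disconnected — in particular it is paracompact of covering dimension $0$, and pro-finite; and (ii) the points of $\Sper k$ are the orderings of $k$, which are in canonical bijection with the $k$-isomorphism classes of real closures $k_\alpha/k$. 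If $k$ admits no ordering then $\Sper k = \emptyset$, so $\SH(k)[\rho^{-1}] \wequi \SH(\emptyset) \wequi 0$ and the assertion is vacuous; thus I may assume $\Sper k \ne \emptyset$.

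Next I would identify the functor of the corollary with the family of all stalk functors on $\SH(\Sper k)$. For each ordering $\alpha$ the field $k_\alpha$ is real closed, so $\Sper k_\alpha$ is a one-point space and $\SH(k_\alpha)[\rho^{-1}] \wequi \SH(\Sper k_\alpha) \wequi \SH$; moreover the inclusion $k \hookrightarrow k_\alpha$ induces the map $\Sper k_\alpha \to \Sper k$ that selects the point $\alpha$. Using the functoriality in $S$ of the equivalence $\SH(S)[\rho^{-1}] \wequi \SH(RS)$ (which is part of its construction in \cite{bachmann-real-etale}), the base-change functor $\SH(k)[\rho^{-1}] \to \SH(k_\alpha)[\rho^{-1}]$ corresponds to pullback along $\Sper k_\alpha \to \Sper k$, that is, to the stalk functor $x_\alpha^\ast \colon \SH(\Sper k) \to \SH$ at $\alpha$. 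Hence the corollary reduces to the claim that the stalk functors at all points of $\Sper k$ are jointly conservative on sheaves of spectra on $\Sper k$.

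For this I would use the zero-dimensionality of $\Sper k$. Since $\Sper k$ is paracompact of covering dimension $0$, the $\infty$-topos $\mathrm{Shv}(\Sper k)$ has homotopy dimension $0$ and in particular is hypercomplete \cite[\S7.2]{lurie-htt}. In a hypercomplete $\infty$-topos a map is an equivalence if and only if the homotopy sheaves of its fiber vanish; and homotopy sheaves of a sheaf of spectra on a topological space are ordinary (discrete) sheaves, which vanish precisely when all their stalks vanish, so — using exactness of stalks — this happens if and only if the map is an equivalence on all stalks. Thus $\{x_\alpha^\ast\}_\alpha$ is jointly conservative on $\SH(\Sper k)$, proving the corollary. (Alternatively, writing $\Sper k \wequi \lim_i T_i$ as a cofiltered limit of finite discrete sets gives $\SH(\Sper k) \wequi \colim_i \Fun(T_i, \SH)$; an object $E$ is represented compatibly by functors $E_i \colon T_i \to \SH$, and if $E$ is nonzero the sets $S_i = \{t \in T_i : E_i(t) \ne 0\}$ are nonempty and form a cofiltered system, so $\lim_i S_i \ne \emptyset$ by compactness, and any point of it is an $x \in \Sper k$ with $x^\ast E \ne 0$.)

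The substantive step is the last one: $\infty$-topoi need not have enough points, so one must genuinely exploit that $\Sper k$ is a Boolean (hence zero-dimensional) space — either through the covering-dimension/hypercompleteness criterion, or through the profinite-limit presentation. The remaining ingredients — the dictionary between orderings, points of $\Sper k$, and real closures, and the matching of base change with the stalk functor via functoriality of $\SH(\ph)[\rho^{-1}] \wequi \SH(R(\ph))$ — are routine.
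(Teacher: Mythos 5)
Your proposal is correct and follows essentially the same route as the paper, which disposes of the statement in one line by citing Scheiderer's identification of the stalks of the small real \'etale site of $k$ (the real closures) and the resulting joint conservativity of the stalk functors. The only difference is that you spell out why stalks are jointly conservative on sheaves of \emph{spectra} over $\Sper(k)$ -- via zero-dimensionality, hence homotopy dimension $0$ and hypercompleteness, or via the profinite presentation -- a point the paper leaves implicit in the citation; this is a welcome but standard supplement rather than a different argument.
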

\begin{proof}
Immediate from knowing the stalks of the small real étale site of $k$ \cite[Proposition 3.7.2]{real-and-etale-cohomology}.
The last equivalence comes from $\Sper(k) \wequi \{*\}$ for $k$ real closed.
\end{proof}

Given topological spaces $X, Y$, write $C(X, Y)$ for the set of continuous maps from $X$ to $Y$.
\begin{corollary} \label{cor:rho-periodic-stems}
For a local ring $k$, we have \[ \pi_* \1_k[\rho^{-1}] \wequi C(\Sper(k), \pi_*^s) \wequi C(\Sper(k), \Z) \otimes \pi_*^s. \]
Here $\Z$ and $\pi_*^s$ are given the discrete topologies.
\end{corollary}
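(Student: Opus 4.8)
The plan is to transport the question to the real spectrum via the equivalence $\SH(k)[\rho^{-1}] \simeq \SH(\Sper k)$ of \S\ref{subsec:rho} and there to recognise $\1_k[\rho^{-1}]$ as (the global sections of) a constant sheaf of spectra. Being a symmetric monoidal equivalence, $\SH(k)[\rho^{-1}] \simeq \SH(\Sper k)$ carries $\1_k[\rho^{-1}]$ to the unit object, namely the constant sheaf $\underline{\1}$ on $\Sper k$ with value the sphere spectrum. Moreover, since inverting $\rho$ makes $\rho\colon \1 \to \Gm$ an equivalence, $\Gm$ becomes invertible and the bigraded homotopy groups of $\1_k[\rho^{-1}]$ collapse onto the ordinary homotopy groups of the global-sections spectrum; thus $\pi_* \1_k[\rho^{-1}] \cong \pi_* \Gamma(\Sper k, \underline{\1})$, and it suffices to compute $\pi_* \Gamma(\Sper k, \underline{E})$ for an arbitrary spectrum $E$.

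For this I would run the descent spectral sequence $H^s(\Sper k, \underline{\pi_t E}) \Rightarrow \pi_{t-s} \Gamma(\Sper k, \underline{E})$. The decisive input is a piece of real algebra: for a \emph{local} ring $k$ the real spectrum $\Sper k$ retracts continuously onto its subspace of closed points, which is precisely $\Sper \kappa$ for $\kappa = k/\mathfrak{m}$ the residue field, and this retraction is a cohomology equivalence; since $\Sper \kappa$ is a profinite space, it follows that $R\Gamma(\Sper k, \underline{M}) \simeq C(\Sper \kappa, M)$ is concentrated in cohomological degree $0$ for every abelian group $M$ (see \cite{real-and-etale-cohomology}). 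Hence only the column $s = 0$ of the spectral sequence survives, it degenerates, and $\pi_* \Gamma(\Sper k, \underline{E}) \cong C(\Sper k, \pi_* E)$ — here using that a continuous map from $\Sper k$ to a discrete target factors through $\pi_0(\Sper k) \cong \Sper \kappa$. Taking $E = \1$ yields the first claimed isomorphism $\pi_* \1_k[\rho^{-1}] \cong C(\Sper k, \pi_*^s)$.

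The second isomorphism is then purely formal: writing $\Sper \kappa = \lim_i X_i$ as a cofiltered limit of finite discrete sets, one has $C(\Sper k, M) \cong \colim_i M^{X_i}$ for every abelian group $M$; applying this with $M = \pi_n^s$ and with $M = \Z$, and using $\Z^{X_i} \otimes \pi_n^s \cong (\pi_n^s)^{X_i}$ together with the fact that $-\otimes \pi_*^s$ commutes with filtered colimits, yields $C(\Sper k, \pi_*^s) \cong C(\Sper k, \Z) \otimes \pi_*^s$. I expect the only non-formal step to be the real-algebra input highlighted above — the identification, for $k$ local, of $R\Gamma(\Sper k, \underline{M})$ with $C(\Sper k, M)$ placed in degree zero, equivalently the vanishing $H^{>0}(\Sper k, \underline{M}) = 0$; granting it, the remainder is bookkeeping with the descent spectral sequence and with filtered colimits of finite products.
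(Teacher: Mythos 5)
Your argument follows essentially the same route as the paper: transport to $\SH(\Sper k)$, run the descent spectral sequence, collapse it using the vanishing of higher real cohomology for local rings, then handle the $\otimes$-isomorphism by a formal argument. The differences are in the two supporting steps, and they are worth a remark.

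For the collapse, the paper cites directly \cite[Theorem 7.6]{real-and-etale-cohomology} (``the real étale cohomological dimension of local rings is $0$''). You instead try to re-derive this via a retraction of $\Sper k$ onto its set of closed points $\Sper \kappa$, asserting that this retraction is a cohomology equivalence and that $\pi_0(\Sper k) \cong \Sper\kappa$. These claims are true, but they are not obviously weaker inputs than the cited theorem — they are, more or less, the ingredients in its proof — and stating them without precise reference introduces risk without gain. Note also that you don't need the $\Sper\kappa$ detour for the first isomorphism: $H^0(\Sper k, \underline{M}) = C(\Sper k, M)$ for any space, and the only substantive input is $H^{>0}(\Sper k, \underline{M})=0$. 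It is cleaner to just cite Scheiderer and move on.

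For the second isomorphism $C(\Sper k,\pi_*^s) \cong C(\Sper k,\Z)\otimes\pi_*^s$, you take a genuinely different route from the paper. The paper observes that $C(X,-)$ is an exact additive functor on abelian groups (surjectivity preserved via a set-theoretic section), whence $C(X,A)\cong C(X,\Z)\otimes A$ for finitely generated $A$ by comparison of free resolutions; this uses that $\pi_n^s$ is finitely generated. You instead factor through the profinite space $\pi_0(\Sper k)$, write it as a cofiltered limit of finite sets, and use that $\otimes$ commutes with filtered colimits; this works for arbitrary coefficients $M$ and does not need finiteness of $\pi_*^s$ — a mild gain in generality, at the cost of invoking profiniteness of $\pi_0$ of a spectral space. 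Both arguments are correct; the paper's is more elementary (it needs nothing about the topology of $\Sper k$ beyond being a topological space), yours is more structural and generalizes to non-finitely-generated coefficients.

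In short: the proof is correct, the skeleton matches the paper's, and your variants on the two lemmas are both valid but not obviously improvements — the retraction step would be better replaced by the direct citation, while the profinite-colimit argument for the tensor identity is a reasonable alternative to exactness.
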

\begin{proof} 
We need to compute $[\Sigma^* \1, \1]_{\SH(\Sper(k))}$.
We can do this using the descent spectral sequence\NB{ref?} \[ H^p_\ret(k, \pi_q^s) \Rightarrow [\Sigma^{q-p}\1, \1]_{\SH(\Sper(k))}. \]
Since the real étale cohomological dimension of local rings is $0$ \cite[Theorem 7.6]{real-and-etale-cohomology}, the spectral sequence collapses and yields \[ [\Sigma^* \1, \1]_{\SH(\Sper(k))} \wequi H^0_\ret(k, \pi_*^s) \wequi C(\Sper(k), \pi_*^s). \]
This proves the first equivalence.
For the second, it suffices to prove that if $X$ is a topological space and $A$ is a finitely generated abelian group, then $C(X, A) \wequi C(X, \Z) \otimes A$.\NB{ref?}
For this it is enough to show that the functor $C(X, \ph)$ is exact (on the category of discrete abelian groups).
Since finite sums of abelian groups are products, their preservation is clear.
Given an exact sequence \[0 \to A \to B \to C \to 0 \] of abelian groups, exactness of \[ 0 \to C(X, A) \to C(X, B) \to C(X, C) \to 0 \] is clear at all but the last place; i.e. we need to show that $C(X, \ph)$ preserves surjections.
This is true since if $\alpha: A \to B$ is a surjection of abelian groups, then there exists a set-theoretic (and so continuous) section $s: B \to A$, and then $C(X, s)$ is a set-theoretic section of $C(X, \alpha)$, whence the latter is surjective.
\end{proof}

\subsubsection{} \label{subsec:r-R}
If $RS = \{*\}$ (such as if $S = Spec(R)$, with $R=\Z, \Z[1/2], \Q, \R$, and so on) then $\SH(S)[\rho^{-1}] \wequi \SH$.
In this situation we denote by $r_\R$ the composite \[ r_\R: \SH(S) \to \SH(S)[\rho^{-1}] \wequi \SH(RS) \wequi \SH. \]
By construction, the equivalence is given by taking global sections.
If $S = \Spec(\R)$, then $r_\R$ is equivalent to the functor induced from the one sending a smooth variety $X/\R$ to its topological space of real points \cite[\S10]{bachmann-real-etale}.

\subsubsection{} \label{subsub:plus-minus}
The above will arise for us mainly as follows.
For $E \in \SH(S)$ there is a functorial splitting \[ E[1/2] \wequi E[1/2, 1/\eta] \vee E[1/2]_\eta^\comp =: E[1/2]^- \vee E[1/2]^+. \]
If $S=\Spec(k)$, then an equivalent formulation is that the ring $\GW(k)[1/2] \wequi [\1[1/2], \1[1/2]]_{\SH(k)}$ splits as \[ \GW(k)[1/2] \wequi \W(k)[1/2] \times \Z[1/2]. \]
We obtain a splitting at the level of categories \[ \SH(S)[1/2] \wequi \SH(S)[1/2]^- \times \SH(S)[1/2]^+. \]
For any $E \in \SH(S)$, $\eta$ acts as an isomorphism on $E[1/2]^-$ and as zero on $E[1/2]^+$.
On the other hand \begin{equation}\label{eq:rho-eta-periodic} \eta\rho = -2 \text{ on } \SH(S)[\eta^{-1}], \end{equation} and $\rho$ is nilpotent on $E[1/2]^+$.
See \cite[Lemma 39]{bachmann-real-etale} for all of this.
We deduce the following.
\begin{corollary} \label{cor:rR-eta-2-per}
Let $k$ be uniquely orderable.
Then \[ \SH(k)[1/2, 1/\eta] \xrightarrow{r_\R} \SH[1/2] \] is an equivalence.

For any field $k$, with set of real closures $\{k_\alpha\}$, the functor \[ \SH(k)[1/2,1/\eta] \xrightarrow{(r_\R^\alpha)_\alpha} \prod_\alpha \SH[1/2] \] is conservative.
\end{corollary}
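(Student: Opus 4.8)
The plan is to identify $\SH(k)[1/2,1/\eta]$ with the full subcategory of $2$-periodic objects of $\SH(k)[\rho^{-1}]$, and then to apply the real-étale equivalence $\SH(k)[\rho^{-1}] \wequi \SH(\Sper(k))$ together with Corollary \ref{cor:rho-cons-1}. The heart of the matter is the identity $\SH(S)[1/2,1/\eta] = \SH(S)[1/2,1/\rho]$ of full subcategories of $\SH(S)$, for any base $S$. To prove it I would use the functorial splitting $\SH(S)[1/2] \wequi \SH(S)[1/2]^- \times \SH(S)[1/2]^+$ with $\SH(S)[1/2]^- = \SH(S)[1/2,1/\eta]$, and determine on which $2$-periodic spectra $\rho$ acts invertibly. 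On the minus factor $\eta$ is invertible, so by \eqref{eq:rho-eta-periodic} one has $\rho = -2\eta^{-1}$, which is invertible since $2$ and $\eta$ are; on the plus factor $\rho$ is nilpotent by \cite[Lemma 39]{bachmann-real-etale}, hence invertible there only on the zero object. Thus the $\rho$-periodic objects of $\SH(S)[1/2]$ are precisely those lying in the minus factor, which is the asserted identity.

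Granting this, I get a chain
\[ \SH(k)[1/2,1/\eta] = \SH(k)[1/2,1/\rho] = \SH(k)[\rho^{-1}][1/2] \wequi \SH(\Sper(k))[1/2]. \]
Unwinding the construction in \S\ref{subsec:r-R}, when $\Sper(k)$ is a one-point space this chain (together with $\SH(\Sper(k)) \wequi \SH$) is by definition the functor $r_\R$; and for an arbitrary field $k$ each $r_\R^\alpha$, restricted to $\SH(k)[\rho^{-1}]$, is the $\alpha$-th component $\SH(k)[\rho^{-1}] \to \SH(k_\alpha)[\rho^{-1}] \wequi \SH$ of the functor appearing in Corollary \ref{cor:rho-cons-1}.

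The first claim is then immediate: for $k$ uniquely orderable $\Sper(k)$ is a point, so the chain above reads $r_\R\colon \SH(k)[1/2,1/\eta] \wequi \SH[1/2]$. For the second claim I would invoke Corollary \ref{cor:rho-cons-1}, which makes $\SH(k)[\rho^{-1}] \to \prod_\alpha \SH(k_\alpha)[\rho^{-1}] \wequi \prod_\alpha \SH$ conservative with components $r_\R^\alpha$; its restriction to the full subcategory $\SH(k)[1/2,1/\eta] = \SH(k)[1/2,1/\rho]$ remains conservative, and it factors through $\prod_\alpha \SH[1/2]$ because the exact functors $r_\R^\alpha$ preserve $2$-periodicity. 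The one point that needs care is the bookkeeping of the second paragraph — that the $\eta$-periodic localization agrees with the $\rho$-periodic one after inverting $2$ not just abstractly but as a subcategory of $\SH(k)$, and that $r_\R$ really is this chain of equivalences — which amounts to a direct unwinding of \S\ref{subsec:r-R}, \eqref{eq:rho-eta-periodic} and \cite[Lemma 39]{bachmann-real-etale}; beyond Corollary \ref{cor:rho-cons-1} and the $\pm$-decomposition no new ingredient is required.
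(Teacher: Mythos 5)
Your argument is correct and follows essentially the same route as the paper, which dispatches the first claim as "clear from the above discussion" (i.e., from the $\pm$-splitting of $\SH(S)[1/2]$, the identity $\eta\rho=-2$ on $\eta$-periodic spectra, and the nilpotence of $\rho$ on the plus part, which together give $\SH(k)[1/2,1/\eta]=\SH(k)[1/2,1/\rho]$) and derives the second directly from Corollary \ref{cor:rho-cons-1}. Your write-up makes these steps explicit, including the identification of the restriction of $r_\R$ with the resulting chain of equivalences, but introduces no new ideas beyond what the paper sketches.
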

\begin{proof}
The first claim is clear from the above discussion.
The second is an immediate consequence of Corollary \ref{cor:rho-cons-1}.
\end{proof}

One easily deduces the motivic Serre finiteness theorem \cite{levine2015witt} \begin{equation} \label{eq:serre-finiteness} \ul{\pi}_*(\1[\eta^{-1}]) \otimes \Q \wequi \ul{W} \otimes \Q. \end{equation}

\begin{remark} \label{rmk:eta-realization}
Since $\eta$ is the geometric Hopf map, $r_\R(\eta)$ comes from the Hopf map $S^1 \to \P^1(\R) \wequi S^1$.
This is just the squaring map, and so $r_\R(\eta) = 2$.
This observation is closely related to \eqref{eq:rho-eta-periodic}.
\end{remark}

\section{Adams operations for the motivic spectrum $\KO$} \label{sec:adams}
Throughout this section we will work with base schemes $S$ such that $1/2 \in S$.

\subsection{Summary}
See \S\ref{subsec:KO-KGL} for the definition of the motivic spectra $\KO, \KGL$ and the Bott element $\beta$ (and the table of notation in \S\ref{subsec:table-of-notation} for the meaning of $n_\epsilon$).
\begin{theorem} \label{thm:ko-adams}
Let $n$ be odd.
For every scheme with $1/2 \in S$ we construct a map $\adamspsi^n: \KO_S[1/n] \to \KO_S[1/n] \in \SH(S)$.
These maps satisfy the following properties.
\begin{enumerate}
\item They are compatible with base change (up to homotopy).
\item We have $\adamspsi^n(\beta) = n^2 \cdot n^2_\epsilon \cdot \beta$.
\item $\adamspsi^n$ is a morphism of $\scr E_\infty$-ring spectra.
\item The following diagram commutes (up to homotopy)
\begin{equation*}
\begin{CD}
\KO[1/n] @>>> \KGL[1/n] \\
@V{\adamspsi^n}VV @V{\adamspsi^n}VV \\
\KO[1/n] @>>> \KGL[1/n],
\end{CD}
\end{equation*}
  where $\adamspsi^n: \KGL[1/n] \to \KGL[1/n]$ is the usual Adams operation; see e.g. \cite[Definition 5.3.2 and sentences thereafter]{riou2010algebraic}.
\end{enumerate}
\end{theorem}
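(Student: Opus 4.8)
The plan is to construct $\adamspsi^n$ by assembling it from a $2$-complete piece and an away-from-$2$ piece and gluing along the arithmetic fracture square, so that properties (1), (3) and (4) are essentially built in and only (2) requires genuine work. Since $\KO$ and $\KGL$ are stable under base change, it suffices to carry out the construction over $\Spec\Z[1/2n]$ and pull back, which gives (1). Over that base the fracture square exhibits $\KO[1/n]$ as the pullback of $\KO[1/2n]\to\KO[1/n]_2^\comp[1/2]\leftarrow\KO[1/n]_2^\comp$, and the corresponding square of $\scr E_\infty$-rings is again a pullback; so producing $\adamspsi^n$ as in (3) amounts to giving $\scr E_\infty$-self-maps of $\KO[1/2n]$ and of $\KO[1/n]_2^\comp$ together with a homotopy identifying the two induced self-maps of the corner term $\KO[1/n]_2^\comp[1/2]$.

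For the $2$-complete piece I would begin from the motivic Snaith theorem \cite{gepner2009motivic,spitzweck2009bott}, which presents $\KGL\simeq\Sigma^\infty_+(\P^\infty)[\beta_\KGL^{-1}]$ as an $\scr E_\infty$-ring, with $\P^\infty\simeq B\Gm$ a commutative group object of motivic spaces. The $n$-th tensor-power map $\P^\infty\to\P^\infty$ (induced by $n\colon\Gm\to\Gm$) is a homomorphism of commutative group objects; applying $\Sigma^\infty_+$ and inverting $n$ yields an $\scr E_\infty$-self-map of $\KGL[1/n]$ which one identifies with Riou's Adams operation \cite[Definition 5.3.2ff]{riou2010algebraic}, so that the $\KGL$-version of (4) holds by construction and $\adamspsi^n(\beta_\KGL)=n\beta_\KGL$. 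Because the duality involution on $\KGL$ corresponds under Snaith to the group inversion of $\P^\infty$, and inversion commutes with the $n$-th power map, this $\adamspsi^n$ is $C_2$-equivariant; passing to $2$-completions and (naive) homotopy fixed points and invoking the homotopy fixed point theorem for Hermitian $K$-theory \cite{hu2011homotopy} (see also \S\ref{sec:htpy-fixed}) in the form $(\KGL_2^\comp)^{hC_2}\simeq\KO_2^\comp$ yields the required $\scr E_\infty$-self-map of $\KO[1/n]_2^\comp$, manifestly compatible with the $\KGL$-operation.

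For the away-from-$2$ piece I would use the splitting $\SH(S)[1/2]\simeq\SH(S)[1/2]^+\times\SH(S)[1/2]^-$ of \S\ref{subsub:plus-minus} and treat the two summands separately. On $\KO[1/2]^+$ (where $\eta=0$) one has $\KO[1/2]^+\simeq\KGL[1/2]^{hC_2}$, so the $C_2$-equivariant Adams operation on $\KGL$ restricts to one here. On $\KO[1/2]^-$ (where $\eta$, hence $\rho=-2/\eta$, is invertible) one uses $\SH(S)[1/2,1/\eta]\simeq\SH(RS)[1/2]$ of \S\ref{subsec:rho}, under which $\KO[1/2]^-$ corresponds to the constant sheaf on $\KO^\topsup[1/2]$; here I define $\adamspsi^n$ to be the constant sheaf on the classical Adams operation $\psi^n$ of $\KO^\topsup[1/2n]$. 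Both pieces are $\scr E_\infty$, giving an $\scr E_\infty$-self-map of $\KO[1/2n]$. To glue it to the $2$-complete operation one must identify the two induced self-maps of the corner term $\KO[1/n]_2^\comp[1/2]$ and choose a filling homotopy; here one exploits that on the plus (eigenspace) part everything is induced from the single $C_2$-equivariant operation on $\KGL$, and on the minus part both sides come from the classical $\psi^n$ via real realization --- so the two agree, and pinning down the homotopy $\scr E_\infty$-ly is where the construction requires the most care. This finishes (3), and (4) falls out of the $\KGL$-compatibility.

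The main obstacle is property (2), the formula $\adamspsi^n(\beta)=n^2\,n_\epsilon^2\,\beta$: since $\adamspsi^n$ on $\KO$ is defined through $2$-completed homotopy fixed points, we have no direct hold on its action on $\pi_{**}\KO$ and must import the classical computation along realizations. On $\KGL[1/n]$ we already know $\adamspsi^n(\beta_\KGL)=n\beta_\KGL$, and $\beta$ maps to a unit times a power of $\beta_\KGL$ under $\KO\to\KGL$; moreover, after base change to $\C$ (resp.\ $\R$) and complex (resp.\ real) realization, our operation on $\KO_2^\comp$ becomes an $\scr E_\infty$-self-map of $\KU$ (resp.\ of $\KO^\topsup$). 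The key rigidity input is that the space of $\scr E_\infty$-self-maps of $\ku$ is discrete --- a standard consequence of Goerss--Hopkins obstruction theory \cite{goerss2004moduli} (the $\scr E_1$ statement being classical) --- so such a self-map is pinned down by its effect on $\pi_2$, which the $\KGL$-comparison determines. From this one reads off the action of $\adamspsi^n$ on $\pi_{**}\KO_2^\comp$, combines it with the elementary computation on $\KO[1/2n]$ (on the minus part $n_\epsilon=1$ in $\W$ and the classical $\psi^n$ on $\KO^\topsup$ supplies the factor $n^2$; the plus part again reduces to $\KGL$), and reassembles via the fracture square, using that $\pi_{**}\KO$ injects into $\pi_{**}\KO[1/2]\times\pi_{**}\KO_2^\comp$ by the boundedness of torsion in Witt groups (Lemmas \ref{lemm:witt-torsion}, \ref{lemm:GW-inj}). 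The delicate parts throughout are the bookkeeping matching the $\KO$-Bott element with the $\KGL$-Bott element across all these completions, localizations and realizations, and making the $\scr E_\infty$-rigidity argument precise.
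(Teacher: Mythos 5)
Your proposal follows essentially the same route as the paper: use the motivic Snaith theorem to make the $\KGL$ Adams operation $C_2$-equivariant, pass to $2$-adically completed homotopy fixed points to get an operation on $\KO_2^\comp$, take the classical $\adamspsi^n$ on $\KO^\topsup[1/2n]$ for the minus part, and glue via a fracture square. The packaging is slightly different: the paper first defines $\KO^+$ (Definition~\ref{def:generalized-plus}) and shows $\KO[1/n]^+\wequi\KGL[1/n]^{hC_2}$ (Lemma~\ref{lemm:KO+}), so the actual gluing in Lemma~\ref{lemm:adamspsi-E-infty} takes place only over the rational corner $\KO_2^\comp[1/2]^-\in D(\Q)$. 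You instead glue $\KO[1/2n]$ against $\KO[1/n]_2^\comp$ directly, which forces you to also match the two candidate operations over $\KO_2^\comp[1/2]^+$. That extra check is doable (both sides ultimately come from the single $C_2$-equivariant $\adamspsi^n_\Snaith$, and the needed commutations of $(\ph)^{hC_2}$ with $(\ph)_2^\comp$ and $(\ph)[1/2]^+$ are exactly the content of Lemma~\ref{lemm:KO+}), but it is a genuine extra step of bookkeeping that the paper's reorganization absorbs into the definition of $\KO^+$.

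Where the proposal is genuinely thin is property~(2), which is the technical heart of the theorem and is not a bookkeeping matter. You correctly identify Goerss--Hopkins discreteness as the rigidity input, but ``such a self-map is pinned down by its effect on $\pi_2$, from which one reads off the action of $\adamspsi^n$ on $\pi_{**}\KO_2^\comp$'' elides the real work. What must actually be computed is the action of the induced endomorphism of $(((\ku_2^\comp)^{hC_2})_{\ge 0})^{hC_2'}$ on $\pi_{-4}$, and this requires pinning down $\adamspsi^n$ on $(\KO^\topsup_2{}^\comp)^{4*}(\RP^\infty_+)$, as in Lemma~\ref{lemm:adams-technical}; the discreteness is needed precisely so one knows the equivariant operation is the \emph{naively} equivariant $\adamspsi^n$ before doing that computation. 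Further, passing from $\Spec(\R)$ (where Quillen--Lichtenbaum makes the fixed-point identification) back to $\Spec(\Z[1/2])$ is not a formality: $\W(\Z[1/2])\cong\Z[g]/(g^2,2g)$ has a $\Z/2$-summand invisible to real realization, so one must either prove $\adamspsi^n_\OSnaith(\beta)/\beta\in\GW(\Z)_2^\comp\subset\GW(\Z[1/2])_2^\comp$ (Lemma~\ref{lemm:htpy-fixed-KGL-Z}, relying on \cite{GW-III}) or run the localization-sequence argument with $\K(\F_2)$ given as an alternative. Your invocation of injectivity of $\pi_{**}\KO\to\pi_{**}\KO[1/2]\times\pi_{**}\KO_2^\comp$ does not sidestep this: the $g$-ambiguity lives in the $2$-complete factor. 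One small misstatement to flag: complex realization of $\KO$ also gives $\KO^\topsup$ (not $\KU$); the $\ku_2^\comp$ entering the argument arises as $\map_{\SH(\R)}(\Sigma^\infty_+\Spec(\C),\KGL_2^\comp)$ with the Galois $C_2$-action on top of the duality $C_2'$-action, not as a realization of $\KO$.
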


\begin{remark}
Shortly after this article was written, Fasel--Haution supplied a much simpler construction of operations $\adamspsi^n_\FH: \KO[1/n] \to \KO[1/n]$ \cite{FH-adams}.
These operations satisfy (1) and (4) by construction, and (2) is an immediate consequence of the construction (see Lemma \ref{lemm:adamspsi-FH-action} for details).
Their operations are only constructed as homotopy ring maps \cite[Theorem 5.2.4]{FH-adams} rather than $\scr E_\infty$-ring maps, so they do not (on the nose) ``satisfy'' (3).
However in the sequel we never use that our operations are $\scr E_\infty$ (rather than just homotopy ring maps), so the article \cite{FH-adams} can be used as a drop-in replacement for the entirety of this section.
\end{remark}

\begin{remark}
For a scheme $X$, exterior powers of vector bundles induce a special $\lambda$-ring structure on $\GW(X)$ \cite{zibrowius2018gamma,FH-adams}.
As in any special lambda ring, there are thus induced Adams operations.
From our construction, it is unclear if these ``geometric'' operations coincide with the ones induced by the spectrum maps $\adamspsi^n$.
On the other hand the spectrum maps $\adamspsi^n_\FH$ are by construction closely related to the geometric operations (see Lemma \ref{lemm:adamspsi-FH-action}).
\end{remark}

\begin{remark}
We show in \S\ref{subsec:geometric-ops} that $\adamspsi^n \wequi \adamspsi^n_\FH$ (see Proposition \ref{prop:FH-comparison}).
One can view this as either identifying $\adamspsi^n$ with the geometric operation, or lifting $\adamspsi^n_\FH$ to an $\scr E_\infty$-operation.
\end{remark}

\begin{remark} \label{rmk:obstruction-even}
One may show that for any $n$ (even or odd), $\adamspsi^n(\beta) = n^2 \cdot n^2_\epsilon \cdot \beta$, where by $\adamspsi^n$ we mean the ``geometric'' operation mentioned above.
This implies that any spectrum map realizing this geometric operation must invert $n^2$ and $n^2_\epsilon$ (since $\beta$ is a unit).
If $n$ is odd, then $n^2_\epsilon$ maps to a unit in $\Z[1/n]$ (namely $n^2$) and also in $\W(k)[1/n]$ (namely $1$), whence is a unit in $\GW(k)[1/n]$\NB{Use the pullback $\GW(k) \wequi \W(k) \times_{\F_2} \Z$.} and so $\KO[1/(n^2\cdot n^2_\epsilon)] \wequi \KO[1/n]$.
One the other hand if $n$ is even then the image of $n^2_\epsilon$ in $\W(k)$ is $0$ and so \[ \KO[1/(n^2 \cdot n^2_\epsilon)] \wequi \KO[1/2]^+[1/(n^2\cdot n^2_\epsilon)] \vee \KO[1/2]^-[1/(n^2\cdot n^2_\epsilon)] \wequi \KO[1/n]^+. \]
An Adams operation $\adamspsi^n: \KO[1/n]^+ \to \KO[1/n]^+$ for $n$ even satisfying all expected properties exists but is not very interesting; see Remark \ref{rmk:construction-even}.
\end{remark}

\begin{remark} \label{rmk:adams-kw}
The endofunctors of $\SH(k)$ given by $E \mapsto E_{\ge 0}$ and $E \mapsto E[\eta^{-1}]$ are lax symmetric monoidal (the former being the composite of the strong symmetric monoidal functor $\SH(k)_{\ge 0} \hookrightarrow \SH(k)$ and its hence lax symmetric monoidal right adjoint, and the latter being a smashing localization).
It follows that the functor \[ \SH(k) \to \SH(k), E \mapsto E_{\ge 0}[\eta^{-1}] \] is lax symmetric monoidal.
Applying it to the ring map $\adamspsi^n: \KO[1/n] \to \KO[1/n]$ we obtain \[ \adamspsi^n: \kw[1/n] \to \kw[1/n] \in \CAlg(\SH(k)). \]
\end{remark}

\begin{example} \label{ex:adamspsi-pi0}
We will use many times the following fact: if $E \to F \in \SH(S)$ is any morphism, then the induced map $\pi_{**}(E) \to \pi_{**}(F)$ is a $\pi_{**}(\1)$-module morphism.
For example, suppose that $\KO^0(S)$ is generated by elements of the form $\lra{a}$, for $a \in \scr O(S)^\times$ (e.g. $S$ the spectrum of a field or $\Z[1/d!]$ \cite[Lemma 5.5]{bachmann-euler}).
Then $\pi_{0,0}(\1) \to \pi_{0,0}(\KO)$ is surjective, and hence any ring map $\adamspsi: \KO \to \KO$ acts trivially on $\pi_{0,0}(\KO)$.
More generally, $\adamspsi$ acts trivially on the image of $\pi_{**}(\1) \to \pi_{**}(\KO)$, e.g. on $\eta$.
\end{example}

Here is a sketch of our construction.
We view $\KGL$ as a motivic spectrum with (homotopy coherent) $C_2$-action coming from passage to duals.
Using the motivic Snaith theorem, we construct a $C_2$-equivariant $\scr E_\infty$-endomorphism $\adamspsi^n_\Snaith$ of $\KGL[1/n]$.
By the homotopy fixed point theorem, over sufficiently nice bases, like $\Z[1/2]$, $\KGL^{hC_2}$ is $2$-adically equivalent to $\KO$.
We obtain for $n$ odd an $\scr E_\infty$-endomorphism $\adamspsi^n_\OSnaith$ on $\KO^+ := \KGL^{hC_2}$.
There is a fracture square for $\KO$ involving $\KO^+$, $\KO[1/2]^-$ and $\KO_2^\comp[1/2]^-$.
To build our Adams operation on $\KO[1/n]$ we will choose a compatible operation $\adamspsi^n_-$ on $\KO[1/2]^-$.
We have $\SH(\Z[1/2])[1/2]^- \wequi \SH[1/2]$ (see \S\ref{subsec:rho}), and $\KO[1/2]^-$ corresponds to the topological orthogonal $K$-theory spectrum $\KO^\topsup[1/2]$ under this equivalence.
It thus seems natural to let $\adamspsi^n_-$ be the \emph{topological} Adams operation on $\KO^\topsup[1/2n]$.
Since $\KO_2^\comp[1/2]^- \in \SH(\Z[1/2]) \otimes \Q^- \wequi D(\Q)$, compatibility of $\adamspsi^n_\OSnaith$ and $\adamspsi^n_-$ is purely a question about homotopy groups.
The operation $\adamspsi^n_-$ on $\KO^\topsup[1/2n]$ acts on a generator of $\pi_4$ by multiplication by $n^2$.
We can write $\adamspsi^n_\OSnaith(\beta) = a \beta$, for some $a \in \GW(\Z[1/2])_2^\comp$, and the compatibility is equivalent to requiring that the image of $a$ in $\W(\Z[1/2])_2^\comp[1/2] \wequi \Q_2$ be $n^2$.
Our proof of this ultimately relies on the fact that the space of $\scr E_\infty$-endomorphisms of $\KU^\topsup$ is discrete, i.e. Goerss--Hopkins obstruction theory.

\begin{remark}
This proof seems very unsatisfying to the authors.
We believe that the Grothendieck--Witt space of any scheme should admit $\scr E_\infty$ Adams operations after inverting the relevant integer, functorially in the scheme.
Presumably this should be related to the spectral $\lambda$-ring theory of Barwick--Glasman--Mathew--Nikolaus.
This operation would coincide with Zibrowius' one essentially by construction, and the fact that $\adamspsi^n(\beta) = n^2 \cdot n^2_\epsilon \cdot \beta$ would be a fairly straightforward computation.
Taking suspension spectra and inverting the Bott element (see \S\ref{subsec:KO-KGL}), we would immediately obtain an $\scr E_\infty$ Adams operation on $\KO[1/n]$.

Unfortunately we have been unable to implement this idea, forcing us to perform the contortions sketched above instead.
\end{remark}

Along the way, we also determine the real realization of $\KO$.
Recall the real realization functor $r_\R$ from \S\ref{subsec:r-R}.
Note that the periodicity generator $\beta: S^{8,4} \to \KO$ gives a periodicity of degree $4$ in $r_\R(\KO_S)$, so this spectrum cannot possibly be the topological variant $\KO^\topsup$.
\begin{lemma} \label{lemm:KO-real-realization}
Let $S=\Spec(\R)$.
\begin{enumerate}
\item $r_\R(\KGL) = 0$
\item The map $r_\R(\KO) \to r_\R(\KO_2^\comp)$ identifies with $\KO^\topsup[1/2] \to (\KO^\topsup)_2^\comp[1/2]$.
\end{enumerate}
\end{lemma}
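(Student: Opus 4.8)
The plan is to reduce both statements to computations that happen after inverting $2$ and after $2$-completion, using the $\rho$-periodic fracture philosophy from \S\ref{subsec:rho}. For (1), note that $\KGL$ is $\eta$-null (it is orientable, so $\eta$ acts as zero on it), hence $\KGL[1/2] \wequi \KGL[1/2]^+$, i.e. $\rho$ is nilpotent on $\KGL[1/2]$ by \S\ref{subsub:plus-minus}. Since $r_\R$ factors through $\SH(\R)[\rho^{-1}]$, and $\rho$ is nilpotent on $\KGL[1/2]$ while $r_\R$ only sees the $\rho$-periodic part, we get $r_\R(\KGL[1/2]) = 0$. On the other hand $\KGL$ is already a $\Z[1/2]$-... no: $\KGL$ is not $2$-periodic, so I cannot argue $\KGL \wequi \KGL[1/2]$ directly. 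Instead I would use Lemma \ref{lemm:inv-compl}: it suffices to show $r_\R(\KGL)[1/2] \wequi 0$ and $r_\R(\KGL)_2^\comp \wequi 0$ (equivalently $r_\R(\KGL/2) \wequi 0$). The first is the argument just given (since $r_\R$ is exact, $r_\R(\KGL)[1/2] \wequi r_\R(\KGL[1/2]) = 0$). For the second, $\KGL/2$ is still orientable hence $\eta$-null, and $\rho\eta = 0$ forces... actually $\rho$ acts nilpotently on anything $2$-complete and $\eta$-complete; since $\KGL/2$ is $2$-torsion and $\eta$-null, $\eta\rho = -2 = 0$ on $\SH[\eta^{-1}]$-parts, so again the $\rho$-periodic localization vanishes and $r_\R(\KGL/2) = 0$. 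Thus $r_\R(\KGL) = 0$.

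For (2), I would again work one localization at a time. Rationally: $r_\R(\KO) \otimes \Q$ — by the rational splitting and the identification $\SH(\R)[1/2]^- \wequi \SH[1/2]$ with $\KO[1/2]^-$ corresponding to $\KO^\topsup[1/2]$ (stated in the proof sketch preceding this lemma and in \S\ref{subsec:rho}), the functor $r_\R$ restricted to the minus part is literally this equivalence, so $r_\R(\KO)[1/2] \wequi \KO[1/2]^-$ realizes to $\KO^\topsup[1/2]$. Meanwhile $r_\R(\KO)/2 \wequi r_\R(\KO/2)$, and $\KO/2$ is $\eta$-complete-ish but not $\eta$-periodic; however the plus part $\KO/2^{+}$ has $\rho$ nilpotent so realizes to $0$, and the minus part $(\KO/2)[1/2]^-$ realizes via the equivalence to $\KO^\topsup/2$; putting these together $r_\R(\KO/2) \wequi \KO^\topsup/2 \wequi (\KO^\topsup)_2^\comp/2$. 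Assembling via the fracture square $r_\R(\KO) \wequi r_\R(\KO)[1/2] \times_{r_\R(\KO)[1/2]_2^\comp} r_\R(\KO)_2^\comp$ and matching it with the topological fracture square $\KO^\topsup \wequi \KO^\topsup[1/2] \times_{(\ldots)} (\KO^\topsup)_2^\comp$ identifies $r_\R(\KO)$ with a homotopy pullback whose pieces are $\KO^\topsup[1/2]$ and $r_\R(\KO)_2^\comp$. The cleanest formulation of the desired statement is then exactly that the map $r_\R(\KO) \to r_\R(\KO_2^\comp)$ is, after the evident identifications, the map $\KO^\topsup[1/2] \to (\KO^\topsup)_2^\comp[1/2]$; note the target is $2$-complete *and* $2$-inverted hence rational, which is consistent with $\KO_2^\comp[1/2]^-$ being an object of $D(\Q)$ as remarked in the proof sketch.

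The key computational input — and the main obstacle — is pinning down $r_\R(\KO_2^\comp)$, i.e. understanding the real realization of the $2$-completed Hermitian $K$-theory spectrum. Here I would invoke the homotopy fixed point theorem: $\KO_2^\comp \wequi (\KGL^{hC_2})_2^\comp$ over $\Spec(\R)$ (with the appropriate finiteness, which holds here since $\vcd_2(\R) < \infty$), so $r_\R(\KO_2^\comp) \wequi r_\R(\KGL^{hC_2})_2^\comp \wequi (r_\R(\KGL)^{hC_2})_2^\comp$ provided $r_\R$ commutes with the relevant homotopy limit and completion — but $r_\R(\KGL) = 0$ by (1), so this naive argument gives $0$, which is wrong. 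The subtlety is that $r_\R$ does *not* commute with the $C_2$-homotopy-fixed-point limit (the limit is infinite and $r_\R$ is only a finite-limit-preserving localization after inverting $\rho$ — or rather, the issue is that $\rho$-inversion interacts nontrivially with the tower). So instead I would compute $r_\R(\KO_2^\comp)$ directly from the minus-part description: $r_\R(E) \wequi r_\R(E[1/2]^-)$ for any $E$ (since the plus part realizes to zero, $\rho$ being nilpotent on it), so $r_\R(\KO_2^\comp) \wequi r_\R((\KO_2^\comp)[1/2]^-)$, and $(\KO_2^\comp)[1/2]^- \wequi (\KO[1/2]^-)_2^\comp$ which under the equivalence $\SH(\R)[1/2]^- \wequi \SH[1/2]$ is $(\KO^\topsup[1/2])_2^\comp \wequi (\KO^\topsup)_2^\comp[1/2]$. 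This gives the target, and chasing the unit map $\KO \to \KO_2^\comp$ through the same minus-part identifications gives that the induced map on real realizations is $\KO^\topsup[1/2] \to (\KO^\topsup)_2^\comp[1/2]$, completing the proof. The one genuinely delicate point to verify carefully is that $r_\R$ commutes with $2$-completion when restricted to the minus part — this should follow because on the minus part $r_\R$ is an equivalence onto $\SH[1/2]$, hence preserves all limits and colimits there.
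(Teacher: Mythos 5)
Your argument for part (1) has a genuine gap on the mod-$2$ side. You correctly reduce to showing $r_\R(\KGL)[1/2]\wequi 0$ and $r_\R(\KGL/2)\wequi 0$, and the first is fine (orientability gives $\eta=0$, hence $\KGL[1/2]\wequi\KGL[1/2]^+$, on which $\rho$ is nilpotent by \S\ref{subsub:plus-minus}). But the second rests on the assertion that ``$\rho$ acts nilpotently on anything $2$-complete and $\eta$-complete,'' which is nowhere in the paper and is false as stated: over $\R$, $\rho^n\ne 0$ in $K^{MW}_*(\R)$ for all $n$, and $\1_\eta^\comp$ gives a $2$- and $\eta$-complete spectrum on which $\rho$ is not nilpotent. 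The relation $\eta\rho^2=-2\rho$ does force $2\rho=0$ on any $\eta$-null spectrum, but once you reduce mod $2$ that relation becomes vacuous, so you have no control on powers of $\rho$ on $\KGL/2$. The paper's proof sidesteps the fracture entirely: it writes $\KGL\wequi\Sigma^\infty_+(\Z\times\Gr)[\beta_\KGL^{-1}]$, uses that $r_\R$ preserves colimits to get $r_\R(\KGL)\wequi\Sigma^\infty_+(\Z\times BO)[r_\R(\beta_\KGL)^{-1}]$, and observes that $r_\R(\beta_\KGL)\in\pi_1(\Z\times BO)$ has vanishing cube because $\pi_3(\Z\times BO)\wequi\pi_2(O)=0$.

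For part (2), the reduction ``$r_\R(E)\wequi r_\R(E[1/2]^-)$ for any $E$'' is also false in general --- take $E=\1$: the left side is the sphere spectrum, the right side is the sphere with $2$ inverted. What is true for any $E$ is only $r_\R(E)[1/2]\wequi r_\R(E[1/2]^-)$. To drop the $[1/2]$ for $E=\KO$ and $E=\KO_2^\comp$ one needs the nontrivial fact that $r_\R(\KO)$ (respectively $r_\R(\KO_2^\comp)$) is already a $\Z[1/2]$-module, and that is exactly what the Wood cofiber sequence $\Sigma^{1,1}\KO\xrightarrow{\eta}\KO\to\KGL$ together with $r_\R(\eta)=2$ (Remark \ref{rmk:eta-realization}) and part (1) buy you. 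This input never appears in your proposal --- which is a telling omission, since $r_\R(\eta)=2$ is the engine of the whole argument. Once that is in place the rest of your strategy (identify the minus part with $\KO^\topsup$-type spectra, compute $\pi_*r_\R(\KO_2^\comp)$ via $\KO_2^\comp[1/2]^-\wequi\KO_2^\comp[1/\eta,1/2]$, match the fracture squares) is in the same spirit as the paper's, and you correctly diagnose why the naive $\KGL^{hC_2}$ route fails. But as written both (1) and (2) rest on unproved (indeed false) general claims.
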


\subsection{Motivic ring spectra $\KO$ and $\KGL$} \label{subsec:KO-KGL}
\subsubsection{} \label{subsub:BC2-triangle}
Recall that $BC_2$ is the ordinary $1$-category with one object $C_2$, and space of endomorphisms given by the group $C_2$ of order $2$.
The category $BC_2^\triangleleft$ is obtained by adding an initial object $*$; in other words $BC_2^\triangleleft$ is equivalent to the opposite of the ordinary $1$-category of $C_2$-orbits (i.e. the subcategory of $\Fin_{C_2}$ on the two objects $C_2$ and $*$).
For a category $\scr C$, $\Fun(BC_2, \scr C)$ is the category of objects in $\scr C$ with a homotopy coherent $C_2$-action.
Given $X \in \Fun(BC_2, \scr C)$ we write $X^{hC_2} \in \scr C$ for its limit.
The functor $\Fun(BC_2^\triangleleft, \scr C) \to \Fun(BC_2, \scr C)$ given by evaluation at $C_2$ admits a (partially defined) right adjoint (the right Kan extension) sending $X \in \Fun(BC_2, \scr C)$ to the diagram $X^{hC_2} \to X$ \cite[Proposition 4.3.2.17 and Definition 4.3.2.2]{lurie-htt}.
It follows that given any object $(X \to Y) \in \Fun(BC_2^\triangleleft, \scr C)$ there is a functorially induced morphism $X \to Y^{hC_2} \in \scr C$ (provided that $Y^{hC_2}$ exists).

\subsubsection{}
For a symmetric monoidal category $\scr C^\otimes$, $\CAlg(\scr C) = \CAlg(\scr C^\otimes)$ denotes the category of $\scr E_\infty$-algebras, and for any category $\scr D$ (symmetric monoidal or not) with finite products $\CMon(\scr D) := \CAlg(\scr D^\times)$.
The group completion or \emph{(direct sum) $K$-theory} functor \[ \CMon(\Grpd) \to \CMon(\Grpd)^\gp \to \Spc_* \] (with the second functor being the forgetful one, using that $\Grpd \wequi \Spc$) is lax symmetric monoidal (e.g. use \cite[Theorem 5.1]{gepner2016universality} and the fact that right adjoints of symmetric monoidal functors are lax symmetric monoidal) and hence induces \[ K^\oplus: \CAlg(\CMon(\Grpd)) \to \CAlg(\Spc_*). \]
The symmetric monoidal structure on $\CMon(\Spc)$ is given by tensor product, and the one on $\Spc_*$ by smash product.

\subsubsection{}
Let $S$ be a scheme.
Write $\Vect(S)$ for the ordinary $1$-groupoid of vector bundles on $S$.
This carries the following structures.
\begin{itemize}
\item For $V, W \in \Vect(S)$, there is the \emph{direct sum} $V \oplus W \in \Vect(S)$; this way $\Vect(S)$ becomes an object in $\CMon(\Grpd)$.
\item For $V, W \in \Vect(S)$, there is their \emph{tensor product} $V \otimes W \in \Vect(S)$.
  This operation distributes over the sum, promoting $\Vect(S)$ to $\CAlg(\CMon(\Grpd))$.
\item For $V \in \Vect(S)$, there is the \emph{dual} $V^\dual = \iHom(V, \scr O_S)$.
  This operation commutes with sum and tensor product, and there is a functorial isomorphism $(V^\dual)^\dual \wequi V$.
  Hence $\Vect(S)$ is promoted to $\Fun(BC_2, \CAlg(\CMon(\Grpd)))$.\footnote{To be precise, the generator of $C_2$ acts via $\Vect(S) \to \Vect(S)$, $V \mapsto V^\dual$, $\alpha: V \xrightarrow{\wequi} W \mapsto (\alpha^\dual)^{-1}: V^\dual \to W^\dual$. We must pass to inverses in order to obtain a functor $\Vect(S) \to \Vect(S)$ instead of $\Vect(S)^\op \to \Vect(S)$.}
\item For $f: X \to Y \in \Sch$, there is a pullback operation $f^*: \Vect(Y) \to \Vect(X)$.
  This is compatible with composition in $f$, and also with all the previous structures.
\end{itemize}
All in all we obtain \[ \Vect: BC_2 \times \Sch^\op \to \CAlg(\CMon(\Grpd)). \]
Applying $K^\oplus$ we get $K^\oplus \Vect: BC_2 \times \Sch^\op \to \CAlg(\Spc_*)$, or equivalently \[ K^\oplus \Vect \in \Fun(BC_2, \CAlg(\PSh(\Sch)_*)). \]
If $X \in \Sch$ is affine, then $K^\oplus \Vect(X)$ is the \emph{$K$-theory space of $X$} \cite[Theorem 7.6]{thomason-trobaugh}, but in general this is not correct.
We can fix this issue by passing to Zariski sheaves: \[ \K := L_\Zar K^\oplus \Vect \in \Fun(BC_2, \CAlg(\PSh(\Sch)_*)). \]
Then for every $X \in \Sch$ the space $\K(X)$ is the (Thomason--Trobaugh) $K$-theory space of $X$ \cite[Theorem 8.1]{thomason-trobaugh}.

\subsubsection{}
Consider the homotopy fixed points $\Vect(S)^{hC_2}$.
This is an ordinary $1$-category, which can be described as follows: an object consists of a vector bundle $V$ together with an isomorphism $\alpha: V \wequi V^\dual$, such that $\alpha^\vee$ corresponds to $\alpha$ under the double dual identification.
In other words, $\Vect(S)^{hC_2}$ is the groupoid of \emph{non-degenerate symmetric bilinear forms} (see e.g. \cite[Definition 2.4]{schlichting2010hermitian}) on $S$, which we also denote by $\Bil(S)$.

Using \S\ref{subsub:BC2-triangle}, we may thus extend $\Vect$ over $BC_2^\triangleleft$ to obtain \[ (\Bil \to \Vect): BC_2^\triangleleft \times \Sch^\op \to \Grpd. \]
Using that the forgetful functor $\CAlg(\CMon(\Grpd)) \to \Grpd$ preserves limits (being a right adjoint), this further upgrades to \[ (\Bil \to \Vect) \in \Fun(BC_2^\triangleleft, \CAlg(\CMon(\Grpd))). \]
Applying as before direct sum $K$-theory and sheafification, we obtain the presheaf \[ \GW := L_\Zar K^\oplus \Bil \in \PSh(\Sch_{\Z[1/2]})_*. \]
We are restricting the base schemes here to $1/2 \in S$ because then this definition of $\GW$ given us the \emph{correct} Grothendieck-Witt space \cite[Remark 4.13]{schlichting2010hermitian} \cite[Corollary 8.5]{schlichting2010mayer}.\NB{Is this true in general?}
All in all we have thus built \[ (\GW \to \K) \in \Fun(BC_2^\triangleleft, \CAlg(\PSh(\Sch_{\Z[1/2]})_*)). \]

\subsubsection{} \label{subsub:KO-cons}
Restricting $\K$ from $\PSh(\Sch)_*$ to $\PSh(\Sm_S)_*$ and motivically localizing, we obtain $L_\mot \K \in \CAlg(\Spc(S)_*)$ and then $\Sigma^\infty L_\mot \K \in \CAlg(\SH(S))$.
We have the \emph{Bott element} $\beta_{\KGL} \in \widetilde{\K}^0(\P^1) \wequi [S^{2,1}, \K]$ and hence obtain $\beta_{\KGL}' \in \pi_{2,1} \Sigma^\infty L_\mot \K$.
By definition we have \[ \KGL_S := (\Sigma^\infty L_\mot \K)[\beta_{\KGL}'^{-1}] \in \CAlg(\SH(S)). \]
See \cite[Proposition 3.2, Lemma 3.3 and Theorem 3.8]{hoyois2016cdh} for details on periodic $\scr E_\infty$-ring spectra; the upshot is that $\KGL_S$ is represented by the prespectrum $(L_\mot \K, L_\mot \K, \dots)$ with bonding maps given by multiplication by $\beta_\KGL$.
It follows from \cite[Example 3.4]{hoyois2016cdh} and \cite[Proposition 6.8 and Theorem 10.8]{thomason-trobaugh} that if $S$ is Noetherian and regular, then the canonical map $\K|_{\Sm_S} \to \Omega^\infty \KGL_S$ is an equivalence; in other words $\KGL$ represents algebraic $K$-theory.

There is a similar Bott element $\beta \in \widetilde{\GW}^0(\HP^1 \wedge \HP^1) \wequi [S^{8,4}, \GW]$ \cite[Definition 5.3, Theorem 5.1]{panin2010motivic}, and inverting it in the suspension spectrum we obtain \[ \KO_S := (\Sigma^\infty L_\mot \K)[\beta'^{-1}] \in \CAlg(\SH(S)). \]
Arguing as above, using \cite[Theorems 9.6, 9.8 and 9.10]{schlichting2016hermitian} we see that if $S$ is Noetherian, regular, and $1/2 \in S$, then the canonical map $\GW|_{\Sm_S} \to \Omega^\infty \KO_S$ is an equivalence; in other words $\KO$ represents hermitian $K$-theory.
In fact (in this situation) \cite[Theorem 9.10]{schlichting2016hermitian} implies that \begin{equation}\label{eq:KO-higher} \Omega^\infty \Sigma^{2n,n} \KO \wequi \GW^{[n]}, \end{equation} where $\GW^{[n]}$ is the presheaf of $n$-shifted Grothendieck--Witt spaces \cite[Definition 9.1]{schlichting2016hermitian}.

One may show that the image $\bar\beta$ of $\beta$ under the map $\GW \to \K$ is $\beta_{\KGL}^4$ (see e.g. \cite[Proposition 3.3]{rondigs2013slices}).
It follows that we could equivalently define $\KGL$ as $(\Sigma^\infty L_\mot \K)[\bar\beta'^{-1}]$, and in particular we obtain a morphism of $\scr E_\infty$-rings $\KO \to \KGL$.
We now make this $C_2$-equivariant.

\subsubsection{}
Consider the functor \[ F: \CAlg(\SH(S)) \to \Cat, E \mapsto 2^{\pi_{**} E}; \] in other words $F(E)$ is the set of subsets of $\pi_{**} E$.
We view this power set as a category (in fact poset) by partially ordering it by inclusion.
\newcommand{\mrk}{\mathrm{mrk}}
The functor $F$ classifies a fibration (by \cite[Theorem 3.2.0.1]{lurie-htt}) \[ \CAlg(\SH(S))_\mrk \to \CAlg(\SH(S)). \]
Thus the objects of $\CAlg(\SH(S))_\mrk$ are pairs $(E, X)$ with $E \in \CAlg(\SH(S))$ and $X \subset \pi_{**}(E)$, and the morphisms $(E, X) \to (E', X')$ are morphisms $E \xrightarrow{\alpha} E'$ such that $\alpha_*(X) \subset X'$.
In particular given a functor $b: \scr C \to \CAlg(\SH(S))$, a lift of $b$ to $\CAlg(\SH(S))_\mrk$ is a section of $F$, or in other words a choice for every object $c \in \scr C$ of $X_c \subset \pi_{**}(b(c))$ subject to the \emph{condition} that for every map $\alpha: c' \to c \in \scr C$ we have $b(\alpha)_*(X_{c'}) \subset X_c$.

The functor $\CAlg(\SH(S)) \to \CAlg(\SH(S))_\mrk, E \mapsto (E, \pi_{**}(E)^\times)$ has a left adjoint which sends $(E, X)$ to the initial $E$-algebra in which all elements of $X$ become invertible.

\subsubsection{}
We have the functor \[ (\Sigma^\infty L_\mot \GW \to \Sigma^\infty L_\mot \K) : BC_2^\triangleleft \to \CAlg(\SH(S)). \]
We lift this to a functor $BC_2^\triangleleft \to \CAlg(\SH(S))_\mrk$ by choosing the sets $\{\beta'\}$ and $\{\bar\beta'\}$ respectively.
Here $\bar\beta'$ is the image $\beta_\KGL'^4$ of $\beta'$ in $\pi_{**} \Sigma^\infty L_\mot \K$ and so fixed by the $C_2$-action.
Composing with the periodization functor $\CAlg(\SH(S))_\mrk \to \CAlg(\SH(S))$ we obtain \[ (\KO_S \to \KGL_S) \in \Fun(BC_2^\triangleleft, \CAlg(\SH(S)). \]

\subsubsection{}
It is straightforward to make this entire construction functorial in $S$ as well.\NB{details?}

\subsubsection{}
It can be shown that the space $L_\mot \K$ (respectively $L_\mot \GW$) is motivically equivalent to the product of $\Z$ and the infinite Grassmannian (respectively the infinite orthogonal Grassmannian) \cite[Proposition 8.1 and Theorem 8.2]{schlichting2015geometric} \cite[Corollary 2.10]{hoyois2016equivariant}.
Since Grassmannians are stable under base change, so are the motivic spaces $L_\mot \K$ and $L_\mot \GW$, and hence so are the spectra $\KGL$ and $\KO$.

\subsubsection{}
Recall the real realization functor $r_\R$ from \S\ref{subsec:r-R}.
We now determine $r_\R(\KO)$ (and $r_\R(\KGL)$).
\begin{proof}[Proof of Lemma \ref{lemm:KO-real-realization}]
(1) Since $\KGL \wequi \Sigma^\infty(\Z \times \Gr)[\beta_\KGL^{-1}]$ we get $r_\R(\KGL) \wequi \Sigma^\infty(\Z \times \Gr(\R))[r_\R(\beta_\KGL)^{-1}]$.
It suffices to show that $r_\R(\beta_\KGL) \in \pi_1(\Z \times \Gr(\R))$ is nilpotent.
Since $\Gr(\R) \wequi BO$, by Bott periodicity \cite[\S4.1]{karoubi2005bott} we have $\pi_3(\Z \times \Gr(\R)) \wequi \pi_2(O) \wequi 0$, whence the result.

(2) The Wood cofiber sequence \cite[Theorem 3.4]{rondigs2013slices} $\Sigma^{1,1}\KO \xrightarrow{\eta} \KO \to \KGL$ together with $r_\R(\eta) = 2$ (Remark \ref{rmk:eta-realization}) and (1) implies that $r_\R(\KO)$ is $2$-periodic.
We deduce that $r_\R(\KO) \wequi r_\R(\KO[1/2]^-) \wequi r_\R(\KW[1/2])$, and this was shown to be $\KO[1/2]$ in \cite[Theorem 4.4]{rondigs2016eta}.
It also follows that $r_\R(\KO_2^\comp)$ is rational, and to conclude it suffices to show that $\pi_* r_\R(\KO_2^\comp) \wequi \Q_2[\beta^{\pm 1}]$.
We have (see Corollary \ref{cor:rR-eta-2-per} for the first step) \[ \pi_* r_\R(\KO_2^\comp) \wequi \pi_*(\KO_2^\comp[1/2]^-) \wequi \pi_*(\KO_2^\comp[1/\eta,1/2]) \wequi \pi_*(\KO_2^\comp[1/(\beta\eta^4),1/2]). \]
For $n < 0$ we have $\pi_n(\KO) \wequi W(\R) \wequi \Z$ if $n \equiv 0 \pmod{4}$, and $=0$ else; in either case multiplication by $\beta\eta^4$ is an isomorphism \cite[Proposition 6.3]{schlichting2016hermitian} \cite[Theorem 1.5.22]{balmer2005witt}.
This implies that $\pi_n(\KO_2^\comp) \wequi \Z_2^\comp$ or $0$, depending on $n$ as before, the same is true in the colimit along $\beta\eta^4$.
The result follows.
\end{proof}

\subsection{The motivic Snaith and homotopy fixed point theorems} \label{subsec:GS-htpy}
\subsubsection{}
The presheaf $\Gm: X \mapsto \scr O(X)^\times$ defines an abelian group object in $\PSh(\Sm_S)_{\le 0}$.
Sending an element to its inverse lifts this to $\Gm \in \Fun(BC_2, \Ab(\PSh(\Sm_S)_{\le 0}))$.
Let $L \subset \Vect$ denote the subgroupoid spanned sectionwise by the trivial line bundle $\scr O$.
This is closed under tensor products and duals, and hence defines a subfunctor \[ L \hookrightarrow \Vect: BC_2 \to \CMon(\PSh(\Sm_S)). \]
By inspection $\Omega L$ is discrete, and identifies with $\Gm$.
Since $L$ is (sectionwise) connected, we thus obtain $B\Gm \wequi L \hookrightarrow \Vect$.
Composing with $\Vect \to \Omega^\infty \KGL$ and using the adjunction \[ \Sigma^\infty_+ \dashv \Omega^\infty: \CMon(\PSh(\Sm_S)) \adj \CAlg(\SH(S)) \] we obtain a map\footnote{Note that if $\scr C \adj \scr D$ is an adjunction then so is $\Fun(\scr A, \scr C) \adj \Fun(\scr A, \scr D)$, e.g. by \cite[Lemmas D.3 and D.6]{bachmann-norms}.} \[ \Sigma^\infty_+ B\Gm \to \KGL \in \Fun(BC_2, \CAlg(\SH(S))). \]
We also have the map \[ \tilde\beta_{\KGL}: S^{2,1} \wequi \Sigma^\infty \P^1 \to \Sigma^\infty \P^\infty \wequi \Sigma^\infty B\Gm \to \Sigma^\infty_+ B\Gm, \] employing the stable splitting $\scr X_+ \wequi \scr X \vee S^0$.
The image of $\tilde\beta_{\KGL}$ in $\pi_{**} \KGL$ is the Bott element $\beta_{\KGL}$ \cite[Proposition 4.2]{gepner2009motivic}.
We may thus lift the map $\Sigma^\infty_+ B\Gm \to \KGL$ to $\Fun(BC_2, \CAlg(\SH(S))_\mrk)$ by choosing the sets $\{\tilde \beta_{\KGL}, \sigma \tilde \beta_{\KGL}\}$ and $\{\beta_{\KGL}, -\beta_{\KGL}\}$ respectively above $\Sigma^\infty_+ B\Gm$ and $\KGL$ (here $\sigma$ denotes the action by $C_2$, so that in particular $\sigma \beta_{\KGL} = -\beta_{\KGL}$).
Inverting the marked elements and noting that $\beta_{\KGL}, -\beta_{\KGL}$ are units in $\KGL$ we obtain the \emph{motivic Snaith map} \[ \Sigma^\infty_+ B\Gm[\tilde\beta_{\KGL}^{-1}, \sigma \tilde\beta_{\KGL}^{-1}] \to \KGL \in \Fun(BC_2, \CAlg(\SH(S))). \]
\begin{lemma}
The motivic Snaith map is an equivalence.
\end{lemma}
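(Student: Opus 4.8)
The plan is to reduce the claim to the (non-equivariant) motivic Snaith theorem of Gepner--Snaith \cite{gepner2009motivic} and Spitzweck--{\O}stv{\ae}r \cite{spitzweck2009bott}. The key observation is that, since $BC_2$ has a single object, the evaluation functor $\Fun(BC_2, \SH(S)) \to \SH(S)$ (forgetting the $C_2$-action) is conservative: a natural transformation of functors out of $BC_2$ is an equivalence precisely when it is one on the unique object. Hence it suffices to prove that the underlying map of spectra (in fact of $\scr E_\infty$-rings) is an equivalence.

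First I would identify this underlying map. Forgetting the $C_2$-action, the marked set $\{\beta_{\KGL}, -\beta_{\KGL}\}$ on $\KGL$ consists of units, so inverting those elements does nothing; the underlying map is therefore
\[ \Sigma^\infty_+ B\Gm[\tilde\beta_{\KGL}^{-1}, \sigma\tilde\beta_{\KGL}^{-1}] \to \KGL, \]
obtained from the underlying map $\Sigma^\infty_+ B\Gm \to \KGL$ constructed above by inverting $\tilde\beta_{\KGL}$ and $\sigma\tilde\beta_{\KGL}$. By construction — via $B\Gm \simeq L \hookrightarrow \Vect \to \Omega^\infty \KGL$ — this underlying map is the classical one, classifying the tautological line bundle on $B\Gm \simeq \P^\infty$; in particular it carries $\tilde\beta_{\KGL}$ to the Bott element $\beta_{\KGL}$ \cite[Proposition 4.2]{gepner2009motivic}.

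Next I would factor the localization as
\[ \Sigma^\infty_+ B\Gm \to \Sigma^\infty_+ B\Gm[\tilde\beta_{\KGL}^{-1}] \to \Sigma^\infty_+ B\Gm[\tilde\beta_{\KGL}^{-1}, \sigma\tilde\beta_{\KGL}^{-1}]. \]
The first arrow, followed by the map to $\KGL$, is precisely the non-equivariant motivic Snaith map, hence an equivalence of $\scr E_\infty$-rings by \cite{gepner2009motivic, spitzweck2009bott}. Transporting $\sigma\tilde\beta_{\KGL}$ across this equivalence gives $\sigma\beta_{\KGL} = -\beta_{\KGL}$, which is a unit in $\KGL$; therefore $\sigma\tilde\beta_{\KGL}$ is already invertible in $\Sigma^\infty_+ B\Gm[\tilde\beta_{\KGL}^{-1}]$, and the second arrow is an equivalence as well. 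Composing, the underlying map is an equivalence, and by conservativity so is the motivic Snaith map in $\Fun(BC_2, \CAlg(\SH(S)))$.

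There is essentially no serious obstacle here: all the substance lies in the non-equivariant motivic Snaith theorem, which is cited, and the $C_2$-equivariance comes ``for free'' from the conservativity of $\Fun(BC_2, \SH(S)) \to \SH(S)$. The only points that require care are the bookkeeping of the marked elements (so that the underlying map is genuinely $\Sigma^\infty_+ B\Gm[\tilde\beta_{\KGL}^{-1}, \sigma\tilde\beta_{\KGL}^{-1}] \to \KGL$) and the verification that the underlying map of our $BC_2$-diagram is the classical Snaith map rather than a twisted variant — both of which are immediate from the explicit construction and the computation of the image of $\tilde\beta_{\KGL}$.
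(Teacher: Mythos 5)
Your argument is correct and is essentially the paper's own proof: both reduce to the non-equivariant motivic Snaith theorem (\cite[Theorem 4.17]{gepner2009motivic}) for the inversion of $\tilde\beta_{\KGL}$, and both observe that $\sigma\tilde\beta_{\KGL}$ maps to the unit $-\beta_{\KGL}$, so the further periodization is a no-op. You are merely more explicit about the conservativity of the forgetful functor $\Fun(BC_2, \CAlg(\SH(S))) \to \CAlg(\SH(S))$, which the paper leaves implicit.
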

\begin{proof}
We have $\Sigma^\infty_+ B\Gm[\tilde\beta_{\KGL}^{-1}] \wequi \KGL$ by \cite[Theorem 4.17]{gepner2009motivic}.
The image of $\sigma \tilde \beta_{\KGL}$ in this ring is a unit (e.g. because it corresponds to $-\beta_{\KGL}$), so the further periodization does nothing.
\end{proof}

\subsubsection{} \label{subsub:GS}
Since $\Gm$ is a discrete abelian group object, we have the endomorphisms $\adamspsi^n: \Gm \to \Gm, x \mapsto x^n$ for all $n \in \Z$, and they all commute.
In particular we obtain \[ \adamspsi^n: \Gm \to \Gm \in \Fun(BC_2, \Ab(\PSh(\Sm_S)_{\le 0})). \]
This deloops to \[ \adamspsi^n: B\Gm \to B\Gm \in \Fun(BC_2, \CMon(\PSh(\Sm_S))). \] 
\begin{lemma} \label{lemm:action-KGL}
The composite \[ S^{2,1} \xrightarrow{\tilde \beta_{\KGL}} \Sigma^\infty_+ B\Gm \xrightarrow{\Sigma^\infty_+ B \adamspsi^n} \Sigma^\infty_+ B\Gm \to \KGL \] is homotopic to $n\beta_{\KGL}$.
\end{lemma}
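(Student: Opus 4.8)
The plan is to identify $B\adamspsi^n$ geometrically and then reduce to a standard computation in $\widetilde{\K}^0(\P^1)$. Recall the equivalence $B\Gm \wequi L$, where $L \subset \Vect$ is the substack of line bundles equipped with its tensor-product structure; under it $B\Gm$ is the classifying object for line bundles and $\Omega B\Gm \wequi \Gm$ as abelian group objects of $\PSh(\Sm_S)$. Since $\adamspsi^n\colon \Gm \to \Gm$ is the group homomorphism $x \mapsto x^n$ — equivalently the $n$-fold sum of $\id_{\Gm}$ for the additive structure on abelian-group-object endomorphisms — and delooping preserves this additive structure, $B\adamspsi^n$ is multiplication by $n$ for the group structure on $B\Gm \wequi \Pic$; equivalently, by change of structure group, $B\adamspsi^n$ is the map classifying $\scr L \mapsto \scr L^{\otimes n}$.

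Granting this, I would first observe that the standard inclusion $\P^1 \hookrightarrow \P^\infty \wequi B\Gm$ classifies $\scr O_{\P^1}(1)$ (or $\scr O_{\P^1}(-1)$, according to the sign convention), so the composite $\P^1 \to B\Gm \xrightarrow{B\adamspsi^n} B\Gm$ classifies $\scr O_{\P^1}(n)$. Post-composing with $B\Gm \hookrightarrow \Vect \to \Omega^\infty\KGL$, which sends $\scr L$ to $[\scr L]$, and passing to reduced classes via the stable splitting $\scr X_+ \wequi \scr X \vee S^0$, the composite in the lemma is identified with $[\scr O_{\P^1}(n)] - 1 \in \widetilde{\K}^0(\P^1_S) \wequi [S^{2,1}, \KGL]$; the same identification sends $\tilde\beta_{\KGL}$ to $[\scr O_{\P^1}(1)] - 1 = \beta_{\KGL}$ (\cite[Proposition 4.2]{gepner2009motivic}). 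Writing $H = [\scr O_{\P^1}(1)]$, the projective bundle formula gives $\widetilde{\K}^0(\P^1_S) \wequi \K^0(S)\cdot(H-1)$ with $(H-1)^2 = 0$, so $H^n = (1 + (H-1))^n = 1 + n(H-1)$ and therefore $[\scr O_{\P^1}(n)] - 1 = n(H-1) = n\beta_{\KGL}$. (With the other convention $\beta_{\KGL} = [\scr O(-1)] - 1 = -(H-1)$, the composite classifies $\scr O(-n)$ and $[\scr O(-n)] - 1 = -n(H-1) = n\beta_{\KGL}$ as well, so the answer is independent of the chosen normalization.)

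The computation in the second paragraph is routine, and the point that requires care is the first: making precise that the abstract delooping of $x \mapsto x^n$ is the tensor-power operation on line bundles. I expect to justify this using that the equivalence $B\Gm \wequi L$ is an equivalence of abelian group objects of $\PSh(\Sm_S)$ — it exists since $L$ is sectionwise connected with $\Omega L \wequi \Gm$ as group objects — and is natural in the relevant constructions, so that applying $B$ to the endomorphism $\adamspsi^n$ of $\Gm$ yields exactly the $n$-th power endomorphism of the group object $L$, which by inspection is $\scr L \mapsto \scr L^{\otimes n}$. With this identification in hand, the rest is bookkeeping with the motivic Snaith map.
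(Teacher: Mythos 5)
Your proof is correct and takes essentially the same approach as the paper: identify the composite as classifying $\scr O_{\P^1}(n)$ and compute $[\scr O(n)]-1 = n(H-1) = n\beta_{\KGL}$ using $(H-1)^2 = 0$ in $\K^0(\P^1)$. You are more explicit than the paper about why $B\adamspsi^n$ is the map classifying $\scr L \mapsto \scr L^{\otimes n}$ (the paper simply asserts that the composite corresponds to $\scr O(1)^{\otimes n}$), but the underlying argument is the same.
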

\begin{proof}
The stable splitting $\P^1 \to \P^1_+$ induces the map \begin{gather*} \K^0(\P^1_+) \wequi \K^0(*)[\scr O(1)]/(\scr O(1)-1)^2 \to \K^0(\P^1) \wequi \K^0(*) \\ a + b\scr O(1) \mapsto b. \end{gather*}
The element \[ \P^1_+ \to B\Gm_+ \xrightarrow{B\adamspsi} B\Gm_+ \to \KGL \in K^0(\P^1_+) \] corresponds to \[ \scr O(1)^{\otimes n} = ([\scr O(1)-1] + 1)^n = 1 + n[\scr O(1)-1] \] which thus maps to $n\beta_{\KGL}$ as desired.
\end{proof}
We can thus form the following composite in $\Fun(BC_2, \CAlg(\SH(S))_\mrk)$ \[ (\Sigma^\infty_+ B\Gm, \{\tilde \beta_{\KGL}, \sigma \tilde \beta_{\KGL}\}) \xrightarrow{\adamspsi^n} (\Sigma^\infty_+ B\Gm, \{\adamspsi^n(\tilde \beta_{\KGL}), \sigma \adamspsi^n(\tilde \beta_{\KGL})\}) \to (\KGL, \{n\beta_{\KGL}, -n\beta_{\KGL}\}). \]
Inverting the marked elements we obtain \[ \KGL \wequi \Sigma^\infty_+ B\Gm[\tilde\beta_{\KGL}^{-1}, \sigma \tilde\beta_{\KGL}^{-1}] \to \KGL[n\beta_{\KGL}^{-1}] \wequi \KGL[1/n]. \]
Further inverting $n$ in the source, we finally arrive at \[ \adamspsi^n_\Snaith: \KGL[1/n] \to \KGL[1/n] \in \Fun(BC_2, \CAlg(\SH(S))). \]

\begin{proposition}
The underlying map $\adamspsi^n_\Snaith: \KGL[1/n] \to \KGL[1/n]$ coincides (up to homotopy) with the map $\adamspsi^n_\Riou$ constructed by Riou \cite[Definition 5.3.2 and sentences thereafter]{riou2010algebraic}.
\end{proposition}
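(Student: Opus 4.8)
The plan is to use the motivic Snaith equivalence to replace $\KGL$ by the far more transparent spectrum $\Sigma^\infty_+ B\Gm$. Forgetting the $C_2$-action and the $\scr E_\infty$-structure, the preceding lemma gives $\KGL_S \wequi \Sigma^\infty_+ B\Gm[\tilde\beta_{\KGL}^{-1}]$, so $\KGL_S[1/n]$ is the localization of $\Sigma^\infty_+ B\Gm$ obtained by inverting the class $\tilde\beta_{\KGL}$ and the integer $n$. Since $\tilde\beta_{\KGL}$ maps to the unit $\beta_{\KGL} \in \pi_{2,1}\KGL$ and $n$ is invertible in $\KGL_S[1/n]$, the target $\KGL_S[1/n]$ is itself local for this localization; hence restriction along the localization map $\Sigma^\infty_+ B\Gm \to \KGL_S[1/n]$ induces a bijection $[\KGL_S[1/n], \KGL_S[1/n]] \cong [\Sigma^\infty_+ B\Gm, \KGL_S[1/n]]$. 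It therefore suffices to check that $\adamspsi^n_\Snaith$ and $\adamspsi^n_\Riou$ restrict to the same class in $[\Sigma^\infty_+ B\Gm, \KGL_S[1/n]] = \KGL^{0,0}(B\Gm)[1/n]$.

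Next I would compute both restrictions. Writing $B\Gm \wequi \colim_N \P^N_S$ and using that each $\Sigma^\infty_+ \P^N_S$ is compact, there is a Milnor exact sequence presenting $\KGL^{0,0}(B\Gm)[1/n]$ as an extension of $\lim_N \KGL^{0,0}(\P^N_S)[1/n]$ by $\limone_N \KGL^{-1,0}(\P^N_S)[1/n]$; the projective bundle formula makes $\KGL^{*,0}(\P^N_S)$ a finite free $\KGL^{*,0}(S)$-module with split-surjective skeletal restriction maps, so the $\limone$-term vanishes and a class in $\KGL^{0,0}(B\Gm)[1/n]$ is detected by its images in the groups $\KGL^{0,0}(\P^N_S)[1/n]$. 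Now the composite $\Sigma^\infty_+ \P^N_S \hookrightarrow \Sigma^\infty_+ B\Gm \to \KGL_S$ is the class $[\scr O(1)]$ of the tautological line bundle. By construction $\adamspsi^n_\Snaith$ restricts to $\Sigma^\infty_+ B\adamspsi^n$ postcomposed with the Snaith map, where $\adamspsi^n \colon \Gm \to \Gm$ is the $n$-th power homomorphism, so $(B\adamspsi^n)^*\scr O(1) \cong \scr O(1)^{\otimes n}$ and the restriction of $\adamspsi^n_\Snaith$ to $\P^N_S$ is $[\scr O(n)]$. On the other hand, by the defining property of $\adamspsi^n_\Riou$ recorded in \cite[Definition 5.3.2 and sentences thereafter]{riou2010algebraic}, $\adamspsi^n_\Riou$ induces the classical Adams operation on $\KGL^{0,0}(X)$ for $X$ smooth, and the classical $\psi^n$ sends $[\scr O(1)]$ to $[\scr O(1)^{\otimes n}] = [\scr O(n)]$. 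The two restrictions thus agree on every $\P^N_S$, hence on $B\Gm$, which by the first paragraph proves the proposition.

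I expect the only real friction to be bookkeeping: reconciling our normalization of $\adamspsi^n_\Snaith$ with Riou's normalization of $\adamspsi^n_\Riou$ --- the direction in which $n$ is inverted, and the fact (part of Riou's construction) that his operation specializes to the classical Adams operation on $\KGL^{0,0}$ of smooth schemes. Granting that, every remaining step --- the Snaith/locality identification of the endomorphism sets, the vanishing of $\limone_N \KGL^{-1,0}(\P^N_S)$ from the projective bundle formula, and the elementary identity $\psi^n([\scr O(1)]) = [\scr O(n)]$ on each $\P^N_S$ --- is routine. Alternatively, one could bypass the explicit Milnor-sequence argument altogether and simply invoke Riou's own classification of stable operations on $\KGL[1/n]$ together with the fact that such an operation is determined by its effect on $\KGL^{0,0}$ of smooth schemes.
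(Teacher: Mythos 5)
Your overall strategy is the same as the paper's: detect endomorphisms of $\KGL[1/n]$ by restricting along $\Sigma^\infty_+ B\Gm \to \KGL[1/n]$ and then check that both operations send $\scr O(1)$ to $\scr O(1)^{\otimes n}$. The computation of the two restrictions (including the Milnor-sequence reduction to the $\P^N$'s, where $\limone$ vanishes by the projective bundle theorem) is fine and matches what the paper does. The gap is in the first step: the claimed bijection $[\KGL[1/n],\KGL[1/n]] \cong [\Sigma^\infty_+ B\Gm, \KGL[1/n]]$ is false, and the ``locality'' argument you give does not apply. Inverting $\tilde\beta_{\KGL}$ is a localization in $\scr E_\infty$-rings (or in $\Sigma^\infty_+ B\Gm$-modules), not a Bousfield localization of the ambient category of spectra, so local-ness of the target buys you nothing for plain homotopy classes of spectrum maps --- and the proposition is about the underlying spectrum maps, since $\adamspsi^n_\Riou$ is not a priori an $\scr E_\infty$-map. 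Concretely, writing $\KGL[1/n] = \colim_k \Sigma^{-2k,-k}\Sigma^\infty_+ B\Gm[1/n]$ gives a $\lim$/$\limone$ presentation of $[\KGL[1/n], \KGL[1/n]]$ in which restriction to $B\Gm$ is projection to the $0$-th term of an inverse system whose transition map on $\Z[1/n]\fpsr{U}$ is (essentially) $g \mapsto (1+U)g'(U)$; this projection is not surjective (e.g.\ $g_0 = U$ does not lift integrally, exactly as in the classical $\KU$ computation of Adams--Harris--Switzer).

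What you actually need is only \emph{injectivity} of the restriction map, and that is a genuine arithmetic theorem, not a formal consequence of the Snaith presentation: one must rule out both the $\limone$ contribution and compatible sequences $(0, g_1, g_2, \dots)$ in the inverse limit, which uses integrality of the coefficients. This is why the paper first base-changes to $\Spec(\Z)$ and then invokes Riou's result \cite[Remark 5.2.9]{riou2010algebraic} that $[\KGL,\KGL] \to [\Sigma^\infty_+\P^\infty,\KGL] \cong \Z\fpsr{U}$ is injective. If you replace your first paragraph by (i) base change to $\Z$ and (ii) a citation of Riou's injectivity (or a proof of it), the rest of your argument goes through and recovers the paper's proof.
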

\begin{proof}
Since both operations are stable under base change, we may assume that $S=Spec(\Z)$.
By \cite[Remark 5.2.9]{riou2010algebraic} the map \[ [\KGL, \KGL] \to [\Sigma^\infty_+ \P^\infty, \KGL] \wequi \Z\fpsr{U} \] is injective.
Here $U = \scr O(1) - 1$ is the first Chern class of the tautological bundle.
The image of $\adamspsi^n_\Riou$ is $(1+U)^n$ \cite[Definition 5.3.2]{riou2010algebraic}.
It suffices to verify that the same holds for $\adamspsi^n_\Snaith$.
The image in this case is given by \[ \Sigma^\infty_+ \P^\infty \wequi \Sigma^\infty_+ B\Gm \xrightarrow{B\psi_n} \Sigma^\infty_+ B\Gm \to \KGL. \]
This corresponds to $\scr O(1)^{\otimes n} = (1 + U)^n$ by construction.
\end{proof}

\begin{remark} \label{rmk:adams-classical}
In light of \cite[\S3]{riou2010algebraic}, this means that the $\adamspsi_\Snaith^n$ act on (higher) algebraic $K$-groups in the same way as any of the other standard constructions.
\end{remark}

\subsubsection{}
The map \[ \KO \to \KGL \in \Fun(BC_2^\triangleleft, \CAlg(\SH(S))) \] induces by definition a map \[ \KO \to \KGL^{hC_2}. \]
\begin{proposition} \label{prop:htpy-fixed}
Let $S$ be Noetherian, regular, $1/2 \in S$ and $vcd_2(s) < \infty$ for all $s \in S$.
Then the map $\KO \to \KGL^{hC_2} \in \SH(S)$ is a $2$-adic equivalence.
\end{proposition}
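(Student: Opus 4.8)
The plan is to reduce, by base change, to the case where $S$ is the spectrum of a field of characteristic $\ne 2$ and finite virtual $2$-cohomological dimension, where the assertion is the classical homotopy fixed point theorem for Hermitian $K$-theory.

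To set up the reduction, recall that ``$2$-adic equivalence'' means precisely that the cofiber $C$ of $\KO_S \to \KGL_S^{hC_2}$ satisfies $C/2 \wequi 0$. Since $(\ph)^{hC_2} = \lim_{BC_2}(\ph)$ is a limit in a stable category, it preserves cofiber sequences, and in particular commutes with $(\ph)/2$; hence $C/2 \wequi \cof\big(\KO_S/2 \to (\KGL_S/2)^{hC_2}\big)$. Now $\KO$, $\KGL$, and the $C_2$-action on $\KGL$ coming from passage to duals are all stable under base change (see \S\ref{subsec:KO-KGL}), so the only point to verify is that forming $(\KGL/2)^{hC_2}$ is compatible with the point restrictions $i_s^\ast\colon \SH(S) \to \SH(\kappa(s))$. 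Granting this, conservativity of the family $\{i_s^\ast\}_{s \in S}$ on $\SH(S)$ for $S$ Noetherian of finite Krull dimension (which follows by Noetherian induction from the localization sequences $j_!j^\ast \to \id \to i_\ast i^\ast$) reduces the claim to the statement that $\KO_k \to \KGL_k^{hC_2}$ is a $2$-adic equivalence for each residue field $k = \kappa(s)$; by hypothesis such $k$ has $\mathrm{char}(k) \ne 2$ and $\vcd_2(k) < \infty$.

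Over such a field this is exactly the homotopy fixed point theorem for Hermitian $K$-theory of Hu--Kriz--Ormsby \cite{hu2011homotopy}, which I would simply cite, noting that Appendix \S\ref{sec:htpy-fixed} moreover supplies a self-contained reproof by comparison of slice spectral sequences. Briefly: the slices of $\KGL$ are $s_q\KGL \wequi \Sigma^{2q,q}\HZ$ carrying an explicit (sign) $C_2$-action, which feeds into a homotopy-fixed-point spectral sequence; this is identified $2$-adically with the slice spectral sequence of $\KO$ using R\"ondigs--{\O}stv{\ae}r's computation for $\KW$, and the hypothesis $\vcd_2(k) < \infty$ enters through the improved version of Levine's slice convergence theorem to guarantee strong convergence on both sides.

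The step I expect to be the main obstacle is precisely the interchange of $(\ph)^{hC_2}$ (equivalently, of $2$-completion) with base change in the reduction above: homotopy fixed points is an infinite limit, so this is not formal, and is in substance a convergence statement for the mod-$2$ homotopy-fixed-point spectral sequence of $\KGL$ --- which is again exactly where finiteness of $\vcd_2$ is indispensable. Once one has localized to a single field of characteristic $\ne 2$ and finite $\vcd_2$, nothing further is needed beyond quoting the known theorem.
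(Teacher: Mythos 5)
Your route is genuinely different from the paper's, and (as you yourself anticipate) it has a real gap at the step where you interchange $(\ph)^{hC_2}$ with the point restrictions $i_s^*$. Conservativity of $\{i_s^*\}$ does hold for $S$ Noetherian of finite Krull dimension, but to deduce the theorem from the field case you need $i_s^*\big((\KGL_S/2)^{hC_2}\big) \wequi (\KGL_{\kappa(s)}/2)^{hC_2}$, and this is exactly what is not formal. The homotopy fixed points is an infinite limit over $BC_2$, and $i_s^*$ is a left adjoint; worse, for a non-closed point $s$ the functor $i_s^*$ is itself a filtered colimit of restrictions along open neighborhoods (continuity of $\SH(-)$), which certainly does not commute with infinite limits. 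You describe the needed interchange as ``in substance a convergence statement for the mod-$2$ homotopy-fixed-point spectral sequence of $\KGL$,'' which is accurate, but you give no argument for it --- one would need a base-change-compatible horizontal vanishing line, essentially uniformly over $X \in \Sm_S$ --- and establishing that is of the same order of difficulty as the theorem itself. (For closed points of the regular scheme $S$ one can use $i^* \wequi i^! \otimes \Th(-N_i)$ to see that $i^*$ preserves limits, but the closed points alone are not a conservative family, so this does not rescue the reduction.) There is also a smaller issue: Hu--Kriz--Ormsby work in characteristic $0$, so for positive-characteristic residue fields you would have to invoke the appendix \S\ref{sec:htpy-fixed} of this paper rather than \cite{hu2011homotopy}.

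The paper avoids the base-change problem altogether. Rather than base-change the internal object $\KGL_S^{hC_2}$, it tests the cofiber against the compact generators $\Sigma^{2n,n}X_+$ with $X$ smooth and affine over $S$, using that $\map(\Sigma^{2n,n}X_+,-)$ is corepresentable and hence commutes with the $hC_2$-limit: $\map(\Sigma^{2n,n}X_+,\KGL^{hC_2}) \wequi \map(\Sigma^{2n,n}X_+,\KGL)^{hC_2}$. After using $(8,4)$-periodicity to reduce to $n=0$, the assertion becomes the non-motivic statement that $GW(X)/2 \to \big(K(X)/2\big)^{hC_2}$ is an equivalence for $X$ a QL-scheme, which is a theorem of Berrick--Karoubi--Schlichting--{\O}stv{\ae}r \cite{berrick2015homotopy} (modulo a comparison of the vector-bundle and perfect-complex models, handled in \S\ref{subsub:KO-cons}). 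Thus the finiteness of $\vcd_2$ enters through the QL hypothesis on $X$, where the hard convergence work has already been done in the cited reference, rather than through a motivic spectral-sequence argument over $S$ that would then have to be shown compatible with base change.
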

\begin{proof}
It suffices to show that for every (absolutely) affine, smooth $S$-scheme $X$ the morphism \[ \map(\Sigma^{2n,n} X_+, \KO)/2 \to \map(\Sigma^{2n,n} X_+, \KGL^{hC_2})/2 \] is an equivalence.
Since these spectra are periodic, we may assume that $n \ge 0$, and then since $X$ is arbitrary we may assume that $n=0$.\NB{details?}
Since $X$ is a QL-scheme in the sense of \cite[Definition 2.1]{berrick2015homotopy}, the claim follows from \cite[Theorem 2.4 and Corollary 2.6]{berrick2015homotopy}.
To be precise, their definition of (Hermitian) $K$-theory (and the involution on $K$-theory) uses perfect complexes, but the evident functor $\Vect \to \Perf$ is duality preserving (when using the $0$-shifted duality on $\Perf$, which is why we reduced to $n=0$) and induces an equivalence on (Hermitian) $K$-theory spaces, as we have seen in \S\ref{subsub:KO-cons}.
\end{proof}

\begin{definition}
Let $n$ be odd, whence invertible in $\Z_2^\comp$.
We denote by \[ \adamspsi^n_{\OSnaith}: \KO_2^\comp \to \KO_2^\comp \] the map induced from $\adamspsi^n_\Snaith$ by completing at $2$ and taking homotopy fixed points.
(If the base scheme does not satisfy the assumptions of Proposition \ref{prop:htpy-fixed}, pull back from $\Z[1/2]$.)
\end{definition}

\subsection{Action on the Bott element}
For an algebraically closed field $\bar K$ of characteristic zero, the group $\K_2(\bar K)$ is uniquely divisible, and $\K_1(\bar K) = \bar K^\times \wequi \Q/\Z \oplus D$ where $\Q/\Z$ corresponds to the roots of unity and $D$ is uniquely divisible \cite[Theorem VI.1.6]{weibel-k-book}.
This implies (using e.g. Lemma \ref{lemm:completion-homotopy}(2)) that $\pi_2(\KGL(\bar K)_2^\comp) \wequi \Z_2^\comp$.
\begin{lemma} \label{lemm:adams-action-alg-closed}
The action of $\adamspsi^n$ on $\pi_2(\KGL(\bar K)_2^\comp)$ is given by multiplication by $n$.
\end{lemma}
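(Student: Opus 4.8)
The plan is to reduce the statement to the action of $\adamspsi^n$ on roots of unity, by first observing that $\pi_2(\KGL(\bar K)_2^\comp)$ is built entirely out of $\K_1(\bar K) = \bar K^\times$.

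First I would apply (the ordinary-spectrum form of) Lemma~\ref{lemm:completion-homotopy}(2) to the spectrum $\KGL(\bar K)$: there is a natural short exact sequence
\[ 0 \to \pi_0 L_2^\comp \pi_2\KGL(\bar K) \to \pi_2(\KGL(\bar K)_2^\comp) \to \pi_1 L_2^\comp \pi_1\KGL(\bar K) \to 0 . \]
Since $\pi_2\KGL(\bar K) = \K_2(\bar K)$ is uniquely divisible its derived $2$-completion vanishes, so the right-hand map is an isomorphism; and $\pi_1 L_2^\comp \pi_1\KGL(\bar K) = \pi_1 L_2^\comp(\bar K^\times) = \lim_m \bar K^\times[2^m] = \lim_m \mu_{2^m}(\bar K)$, with transition maps $\zeta \mapsto \zeta^2$ (see \S\ref{subsec:derived-completion}), which is the Tate module $\Z_2^\comp$. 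This sequence is natural with respect to the endomorphism $\adamspsi^n$ of $\KGL(\bar K)$ and of its $2$-completion, so it suffices to show that $\adamspsi^n$ acts on each $\mu_{2^m}(\bar K) \wequi \Z/2^m$ by multiplication by $n$; this then forces multiplication by $n$ on the limit. Since inverting the odd integer $n$ does not affect $2$-power torsion, this in turn follows once I know that $\adamspsi^n$ acts on $\K_1(\bar K) = \bar K^\times$ by $u \mapsto u^n$.

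I would deduce the latter from the construction of $\adamspsi^n = \adamspsi^n_\Snaith$. By construction (\S\ref{subsub:GS}) it sits in a homotopy-commutative square with $\Sigma^\infty_+ B\adamspsi^n$ along the motivic Snaith map $\Sigma^\infty_+ B\Gm \to \KGL$. Applying $\Omega^\infty$, passing to sections over $\Spec\bar K$ and taking $\pi_1$, and using that the unit $B\Gm \to \Omega^\infty\Sigma^\infty_+ B\Gm$ followed by $\Omega^\infty$ of the Snaith map is the canonical map $B\Gm \to \Omega^\infty\KGL$ — which on $\pi_1$ over $\bar K$ is the standard isomorphism $\bar K^\times \xrightarrow{\wequi} \K_1(\bar K)$, classifying the trivial line bundle with its tautological automorphism — while $B\adamspsi^n$ deloops $x \mapsto x^n \colon \Gm \to \Gm$ and hence induces the $n$-th power map on $\pi_1 B\Gm(\bar K) = \bar K^\times$, a diagram chase gives that $\adamspsi^n$ is the $n$-th power map on $\K_1(\bar K) = \bar K^\times$. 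Combined with the previous paragraph this proves the lemma. (Alternatively, the comparison with Riou's operation established above, together with Remark~\ref{rmk:adams-classical}, reduces this to the classical fact that the standard Adams operation restricts to $u \mapsto u^n$ on the unit summand $R^\times \subset \K_1(R)$.)

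The hard part will be exactly this last step: extracting the action of the rather indirectly constructed $\adamspsi^n$ on $\K_1(\bar K) = \bar K^\times$, equivalently checking that on $2$-power roots of unity it is multiplication by $n$. The bookkeeping in the first two paragraphs — naturality of Lemma~\ref{lemm:completion-homotopy}(2) and the passage to the limit — is routine. As a consistency check, this is precisely what is needed for the familiar identity $\adamspsi^n(\beta) = n\beta$ for the Bott element with $\Z/2^m$-coefficients, since that element maps to a generator of $\mu_{2^m}(\bar K) = \K_1(\bar K)[2^m]$.
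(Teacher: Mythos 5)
Your proof is correct and follows essentially the same route as the paper: identify $\pi_2(\KGL(\bar K)_2^\comp)$ with $\pi_1 L_2^\comp\K_1(\bar K)$ via Lemma~\ref{lemm:completion-homotopy}(2) and the unique divisibility of $\K_2(\bar K)$, then reduce to the fact that $\adamspsi^n$ acts on $\K_1(\bar K)=\bar K^\times$ by $a\mapsto a^n$. The paper disposes of that last fact by citing Weibel (with Remark~\ref{rmk:adams-classical} providing the identification with the standard operation), whereas you supply an explicit diagram chase through $B\Gm$ and the Snaith map, relegating the citation route to a parenthetical alternative --- a valid, more self-contained unpacking, but not a genuinely different argument.
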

\begin{proof}
We have $\pi_2(\KGL(\bar K)_2^\comp) \wequi \pi_1 L_2^\comp \K_1(\bar K)$, compatibly with the Adams action.
It thus suffices to show that the action on $\K_1(\bar K)$ is multiplication by $n$, i.e. $\adamspsi^n([a]) = [a^n]$.
This is \cite[Example IV.5.4.1]{weibel-k-book}.
\end{proof}

Viewing $\GL(K)$ as a discrete topological group, apply the topological $+$-construction to obtain a space $B\GL(K)^+ \times \Z$ with $\pi_i(B\GL(K)^+ \times \Z) = \K_i(K)$ for $i \ge 0$.
In this model, the $C_2$-action on $\K(K) \wequi B\GL(K)^+ \times \Z$ is induced from the automorphism of $\GL(K)$ given by $A \mapsto A^{-T}$ (and the identity on $\Z$).
If $K = \R$ or $K = \C$, we can give $\GL(K)$ its usual topology instead; denote the result by $\GL(K^\topsup)$.
Functoriality of the $+$-construction yields \[ \K(K) \to B\GL(K^\topsup)^+ \times \Z \in \Fun(BC_2, \SH_{\ge 0}). \]
These maps are in fact $p$-adic equivalences for all $p$ \cite[Corollary 4.7]{suslin1984k}.

\begin{lemma} \label{lemm:adams-technical}
Let $E \in \Fun(BC_2 \times BC_2', \CAlg(\SH))$ denote $\ku_2^\comp$ with its usual action by complex conjugation and passage to dual bundles.
Suppose given a map $\adamspsi: E \to E$ such that the induced map on $\pi_2$ is given by multiplication by the odd integer $n$.
Then the induced endomorphism of \[ ((E^{hC_2})_{\ge 0})^{hC_2'} \] acts by multiplication by $n^{-2}$ on $\pi_{-4}$.
\end{lemma}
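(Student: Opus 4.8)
The plan is to identify the spectrum $T := ((E^{hC_2})_{\ge 0})^{hC_2'}$ with a $2$-completed function spectrum into connective orthogonal $K$-theory, and then transport the $\adamspsi$-action through this identification, reducing the computation to the behaviour of $\adamspsi$ on the Bott class and on the tautological line bundle over $BC_2$. First I would identify $T$. On the classical spectrum $\ku$ the involution ``passage to dual bundles'' \emph{is} the complex-conjugation involution (as $V^\dual \iso \bar V$ for a complex vector bundle $V$), so the two $C_2$-actions on $E = \ku_2^\comp$ coincide; since $\mathrm{End}_{\scr E_\infty}(\ku)$, hence $\mathrm{End}_{\scr E_\infty}(\ku_2^\comp)$, is discrete (recalled in the introduction), the automorphism space of $\ku_2^\comp$ is a discrete group, and a $C_2\times C_2'$-action on a spectrum with discrete automorphism group whose two restrictions agree is forced to be pulled back from a single $C_2$-action along the sum homomorphism $C_2\times C_2'\to C_2$. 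A short translation argument then shows that $E^{hC_2}$ carries the \emph{trivial} residual $C_2'$-action. Comparing homotopy fixed point spectral sequences in non-negative degrees with the classical identity $\KO^\topsup \wequi (\KU^\topsup)^{hC_2}$ gives $(E^{hC_2})_{\ge 0}\wequi (\KO^\topsup_{\ge 0})_2^\comp =: (\ko^\topsup)_2^\comp$. Hence $T \wequi \map(\Sigma^\infty_+ BC_2,\,(\ko^\topsup)_2^\comp)$, with $\adamspsi$ acting by post-composition with the induced $\scr E_\infty$-endomorphism of $(\ko^\topsup)_2^\comp$.

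Next I would compute $\pi_{-4}T$. Splitting off the basepoint, $\pi_{-4}T \wequi \pi_{-4}((\ko^\topsup)_2^\comp) \oplus \widetilde{(\ko^\topsup)}^4(BC_2)_2^\comp = \widetilde{(\ko^\topsup)}^4(BC_2)_2^\comp$, since $\ko^\topsup$ is connective. Because $\pi_5\ko^\topsup = \pi_6\ko^\topsup = \pi_7\ko^\topsup = 0$, the reduced Atiyah--Hirzebruch spectral sequences for $\ko^\topsup$ and for $\KO^\topsup$ over $BC_2 \wequi \RP^\infty$ agree in total degree $4$, so $\widetilde{(\ko^\topsup)}^4(BC_2) \wequi \widetilde{(\KO^\topsup)}^4(BC_2)$, which is $\Z_2^\comp$ after completion (the classical computation of $\KO^\topsup$-cohomology of $\RP^\infty$). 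Being torsion-free, it injects under complexification into $\widetilde{(\KU^\topsup)}^4(BC_2) = \Z_2^\comp\cdot\mu\beta_\ku^{-2}$, where $\beta_\ku\in\pi_2\ku$ is the Bott class and $\mu = [L]-1$ for $L$ the complexified tautological line bundle over $BC_2$ --- the kernel of complexification on $\widetilde{(\ko^\topsup)}^4$ lies in the image of multiplication by $\eta$, hence is $2$-torsion, hence vanishes in the torsion-free group $\Z_2^\comp$. All of these identifications are natural in the self-map $\adamspsi$.

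Finally I would run $\adamspsi$ through this chain. As $\adamspsi$ is multiplication by the odd integer $n$ on $\pi_2 E = \Z_2^\comp\beta_\ku$ and is an $\scr E_\infty$-ring map, it is multiplication by the unit $n^k$ on $\pi_{2k}\ku_2^\comp$; in particular it is an equivalence, so it extends to an $\scr E_\infty$-self-equivalence of $(\KU^\topsup)_2^\comp \wequi \ku_2^\comp[\beta_\ku^{-1}]$ sending $\beta_\ku^{-2}$ to $n^{-2}\beta_\ku^{-2}$. On $\widetilde{(\KU^\topsup)}^0(BC_2) = \Z_2^\comp\mu$ the relation $\mu^2 = -2\mu$ together with torsion-freeness forces $\adamspsi(\mu) = a\mu$ with $a^2 = a$; as an equivalence cannot annihilate $\mu$ we get $a = 1$, i.e. $\adamspsi(\mu) = \mu$. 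Hence $\adamspsi(\mu\beta_\ku^{-2}) = n^{-2}\mu\beta_\ku^{-2}$, and by naturality $\adamspsi$ acts on $\pi_{-4}T$ by multiplication by $n^{-2}$.

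The step I expect to be the main obstacle is the first one: verifying that the residual $C_2'$-action is trivial and that $(E^{hC_2})_{\ge 0}$ is \emph{exactly} $(\ko^\topsup)_2^\comp$ rather than a spectrum agreeing with it only through a range of degrees. This forces one to argue via discreteness of $\mathrm{End}_{\scr E_\infty}(\ku_2^\comp)$ and a careful homotopy fixed point spectral sequence comparison instead of a hands-on manipulation; the remaining steps are bookkeeping, the one delicate point being to ensure the torsion-free class in $\pi_{-4}T$ sits in the lowest filtration, so that the explicit line-bundle computation applies verbatim.
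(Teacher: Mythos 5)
Your proposal is correct and follows essentially the same route as the paper: both invoke discreteness of $\scr E_\infty$-endomorphisms of $\ku_2^\comp$ to rigidify the equivariance, identify the target with $\map(\RP^\infty_+,\ko_2^\comp)$, and finish by observing that the Adams operation fixes the class of the tautological line bundle over $\RP^\infty$ (for $n$ odd) while scaling the inverse Bott class by $n^{-2}$. The paper carries out the last step inside $\KO^{4*}(\RP^\infty_+)$ using Fujii's calculation of that ring, whereas you pass one step further to $\KU$ and use $\mu^2=-2\mu$; both work, though your Atiyah--Hirzebruch comparison establishing $\widetilde{\ko}^4(\RP^\infty)\wequi\widetilde{\KO}^4(\RP^\infty)$ is stated a bit loosely (agreement of $E_2$-pages in a single total degree does not by itself give agreement of $E_\infty$) --- the clean version, as in the paper, is that $\Sigma^\infty\RP^\infty\in\SH_{\ge 1}$ while $\Sigma^{3}\KO_{\le -4}$ and $\Sigma^{4}\KO_{\le -4}$ lie in $\SH_{\le -1}$, so the relevant long exact sequence collapses.
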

\begin{proof}\discuss{Suggestions?}
By Goerss--Hopkins obstruction theory, the space $\Map_{\CAlg(\SH)}(\KU_2^\comp, \KU_2^\comp)$ is discrete and isomorphic to $\Hom_{\CAlg^\heart}(\pi_* \KU_2^\comp, \pi_* \KU_2^\comp) \wequi (\Z_2^\comp)^\times$ via evaluation at the Bott element\NB{$\KU_p^\comp = E(1)$} \cite[Corollary 7.7]{goerss2004moduli}.
$K(1)$-localization and connective cover provide inverse equivalences $\Map_{\CAlg(\SH)}(\KU_2^\comp, \KU_2^\comp) \wequi \Map_{\CAlg(\SH)}(\ku_2^\comp, \ku_2^\comp)$.
It follows that \[ \Map_{\Fun(BC_2 \times BC_2', \CAlg(\SH))}(E, E) \wequi \Map_{\CAlg(\SH)}(E, E)^{hC_2 \times C_2'} \] is also discrete.\NB{Details for the above steps?}
The upshot of all this is that $\adamspsi$ is just given by the ordinary $\adamspsi^n$, made equivariant with respect to $C_2 \times C_2'$ in the usual way.

We have $(E^{hC_2})_{\ge 0} \wequi \ko_2^\comp$\NB{Ref?} and the induced action by $C_2'$ is trivial (since $\ko \wequi BO^+ \times \Z$ and transpose coincides with inverse on orthogonal matrices).
By the above discussion, the map induced by $\adamspsi$ is the usual Adams operation $\adamspsi^n$ on $\ko_2^\comp$, made compatible with the trivial action in the canonical (trivial) way.
We thus need to show that the Adams action on $\pi_{-4} (\ko_2^\comp)^{hC_2'} \wequi (\ko_2^\comp)^4 \RP^\infty_+$ is by multiplication by $n^{-2}$.

We first consider the $\KO$-cohomology.
Let us write $x \in \ko_4$ and $\beta \in \ko_8$ for the generators, so that $x^2 = 4\beta$.
The groups $(\KO_2^\comp)^{4*}(\RP^\infty_+)$ can be read off from \cite[Theorem 1]{fujii1967ko} as follows\NB{In other words $\KO^*(\RP^\infty) \wequi \KO^*(*) \oplus \widetilde{\KO}^*(\RP^\infty)$, and the latter $\Z_2^\comp$-module is free on a generator $\lambda$ if $*=0$, free on $x\lambda$ if $*=-4$, and determined by periodicity for general $* \equiv 0 \pmod{4}$.} \[ (\KO_2^\comp)^{4*} \RP^\infty_+ \wequi \Z_2^\comp[\lambda, x, \beta, \beta^{-1}]/(\lambda^2+2\lambda, x^2-4\beta); \] the $(\KO_2^\comp)^* \wequi \Z_2^\comp[x, \beta, \beta^{-1}]/(x^2-4\beta)$-algebra structure (coming from pullback along $\RP^\infty \to *$) is the evident one.
Here $|\lambda|=0$ and $\lambda$ corresponds to $\scr O(1)-1$.
The algebra structure is compatible with Adams operations (here we are crucially using that $\adamspsi$ is compatible with the trivial action in the trivial way\NB{I.e. this is why we needed G-H obstruction stuff...}).
Note that $\adamspsi(\lambda) = \lambda$.
For this it suffices to show that $\adamspsi(\scr O(1)) = \scr O(1)$; but $\adamspsi(\scr O(1)) = \scr O(n)$ \cite[Theorem 5.1(iii)]{adams1962vector} and $\scr O(2) \wequi \scr O$ (e.g. since $\scr O(2) = (1+\lambda)^2 = 1$ since $\lambda^2=-2\lambda$), so this holds since $n$ is odd.
It follows that the Adams action on $(\KO_2^\comp)^{4*}(\RP^\infty_+)$ is via multiplication by $n^{-2*}$, since this holds for the generators $1$ ($\adamspsi$ being a ring map), $\lambda$ (as seen above), and $x, \beta^{\pm 1}$ (which can be checked in $\KO^*$, where it e.g. follows from the injection into $\KU^*$)

It remains to observe that $(\ko_2^\comp)^4(\RP^\infty_+) \to (\KO_2^\comp)^4(\RP^\infty_+)$ is injective.
Indeed the obstruction to this is \[ [\Sigma^{-3} \RP^\infty_+, (\KO_2^\comp)_{< 0}]  = [\Sigma^{-3} \RP^\infty_+, (\KO_2^\comp)_{\le -4}] = 0. \]
\end{proof}

\begin{theorem} \label{thm:bott-action}
Let $S$ be a scheme with $1/2 \in S$ and $n$ odd.
The action of $\adamspsi^n_{\OSnaith}$ on the Bott element $\beta$ is given by multiplication by $n^2 \cdot n^2_\epsilon$.
\end{theorem}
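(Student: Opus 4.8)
\emph{The plan.} The constructions of $\adamspsi^n_{\OSnaith}$, of $\KGL,\KO$, and of the Bott element $\beta$ are all compatible with base change (Theorem~\ref{thm:ko-adams}(1) and the construction in~\S\ref{subsec:GS-htpy}), and $n^2\cdot n^2_\epsilon\in\GW(\Z[1/2])$ is likewise pulled back from $\Z[1/2]$; so I would first reduce to $S=\Spec(\Z[1/2])$. Since $\beta$ is a unit and $\adamspsi^n_{\OSnaith}$ is a ring map, $\adamspsi^n_{\OSnaith}(\beta)=u\beta$ for a unit $u\in\pi_{0,0}(\KO_2^\comp)$; using $\pi_{0,0}\KO=\GW(\Z[1/2])$, Lemma~\ref{lemm:completion-homotopy} and boundedness of the torsion (Lemmas~\ref{lemm:witt-torsion} and~\ref{lemm:derived-p-comp}) this group is $\GW(\Z[1/2])_2^\comp$, and the goal becomes $u=n^2\cdot n^2_\epsilon$. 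By Corollary~\ref{lemm:GW-inj} and Lemma~\ref{lemm:witt-injection} (with $\Pic(\Z[1/2])=0$), $\GW(\Z[1/2])_2^\comp$ embeds into $\W(\Z[1/2])_2^\comp\times\Z_2^\comp$ via Witt class and rank. Since $n$ is odd, $n_\epsilon=\lra 1$ in $\W$, so the target value has rank $n^4$ and Witt class $n^2$; thus it suffices to show $\rk(u)=n^4$ and that $u$ has Witt class $n^2$.

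\emph{The rank.} I would base change to an algebraically closed field $\bar K$ of characteristic $0$ and push forward along the $\scr E_\infty$-ring map $\KO_2^\comp\to\KGL_2^\comp$. Functoriality of homotopy fixed points and of $2$-completion makes this map intertwine $\adamspsi^n_{\OSnaith}$ with $(\adamspsi^n_\Snaith)_2^\comp$, and it sends $\beta$ to $\bar\beta=\beta_\KGL^4$. By Lemma~\ref{lemm:adams-action-alg-closed}, $\adamspsi^n_\Snaith$ multiplies $\beta_\KGL\in\pi_2(\KGL(\bar K)_2^\comp)$ by $n$, hence $\bar\beta$ by $n^4$; since $\pi_0\KO(\bar K)\to\pi_0\KGL(\bar K)$ is the rank map and $\bar\beta$ is a unit, $\rk(u)=n^4$.

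\emph{The Witt class.} I would base change to $\R$ and apply real realization $r_\R$. By Lemma~\ref{lemm:KO-real-realization}(2), $r_\R(\KO_2^\comp)\simeq(\KO^\topsup)_2^\comp[1/2]$, in which $r_\R(\beta)$ is a unit generating $\pi_4=\Q_2$, and the map $\GW(\Z[1/2])_2^\comp\to\pi_0 r_\R(\KO_2^\comp)=\Q_2$ factors through the $\eta$-inverted (minus) part $\W(\Z[1/2])_2^\comp[1/2]$. Combining this signature-type invariant with the rank, and with the injections of~\S\ref{subsec:witt-groups} (Lemmas~\ref{lemm:witt-injection} and~\ref{lemm:witt-completion}), pins down the class of $u$ in $\W(\Z[1/2])_2^\comp$ --- the residual $2$-torsion ambiguity of $\W(\Z[1/2])$ being resolved by a further base change (e.g.\ to $\Q_2$, where the remaining discrepancy is seen to vanish) or by direct comparison with the geometric $\lambda$-operation. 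So it remains to show $r_\R(\adamspsi^n_{\OSnaith})$ multiplies $r_\R(\beta)$ by $n^2$. For this I would pass through the $C_2$-equivariant Betti realization $\mathrm{Be}^{C_2}\colon\SH(\R)\to\SH^{C_2}$ with $r_\R\simeq\Phi^{C_2}\circ\mathrm{Be}^{C_2}$: it carries $\KGL$ with its duality $C_2$-action to Atiyah's Real $K$-theory $\mathrm{KR}$ with complex conjugation, is symmetric monoidal, and --- by a cellularity/descent argument --- commutes with the homotopy fixed points and $2$-completion used to build $\adamspsi^n_{\OSnaith}$. Under these identifications $r_\R(\KO_2^\comp)$ equipped with $r_\R(\adamspsi^n_{\OSnaith})$ becomes exactly the object $((E^{hC_2})_{\ge 0})^{hC_2'}$ of Lemma~\ref{lemm:adams-technical}, with $E=\ku_2^\comp$ carrying the conjugation action ($C_2$) and the duality action ($C_2'$) and $\adamspsi$ the realization of $\adamspsi^n_\Snaith$, whose action on $\pi_2$ is multiplication by $n$ (Lemma~\ref{lemm:adams-action-alg-closed} again, via complex realization). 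Lemma~\ref{lemm:adams-technical} then yields multiplication by $n^{-2}$ on $\pi_{-4}$, i.e.\ by $n^2$ on $r_\R(\beta)\in\pi_4$, so $r_\R(u)=n^2=r_\R(n^2 n^2_\epsilon)$. Together with the rank, $u=n^2\cdot n^2_\epsilon$.

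\emph{The main obstacle.} Everything except the Witt class is bookkeeping: the rank reduces at once to a statement about $\KGL$ over an algebraically closed field, and the algebra of $\GW(\Z[1/2])_2^\comp$ is handled by~\S\ref{subsec:witt-groups}. The essential difficulty is making the indirect homotopy-fixed-point definition of $\adamspsi^n_{\OSnaith}$ computable after real realization, which rests on two inputs: (i) compatibility of $C_2$-equivariant Betti realization with homotopy fixed points and $2$-completion for the spectra at hand; and (ii) the rigidity statement of Lemma~\ref{lemm:adams-technical}, namely that an $\scr E_\infty$-self-map of $\ku_2^\comp$ with the correct effect on $\pi_2$ must be the classical Adams operation --- ultimately Goerss--Hopkins obstruction theory for $\KU$. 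Extracting the sharper coefficient $n^2_\epsilon$ (rather than merely $n^2$) is what forces the careful analysis of $\GW(\Z[1/2])_2^\comp$, including its $2$-torsion.
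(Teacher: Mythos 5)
Your overall skeleton (reduce to $\Z[1/2]$, split the invariant into rank and Witt class, compute the rank via $\KGL$ over an algebraically closed field, compute the Witt class over $\R$, then pin down the residual torsion) matches the paper's, and you correctly identify Lemma~\ref{lemm:adams-technical} as the crucial rigidity input. But the mechanism you propose for making the Witt computation over $\R$ work has a concrete error, and the final torsion step is where most of the paper's effort lies.

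The error: you identify $r_\R(\KO_2^\comp)$ with the object $\bigl((E^{hC_2})_{\ge 0}\bigr)^{hC_2'}$ of Lemma~\ref{lemm:adams-technical}, where $E=\ku_2^\comp$. That object, per the proof of that lemma, is $(\ko_2^\comp)^{hC_2'}$ with trivial $C_2'$-action, i.e.\ $F(\RP^\infty_+,\ko_2^\comp)$; its $\pi_0$ is $\GW(\R)_2^\comp\cong\Z_2\oplus\Z_2$ and its $\pi_{-4}$ is $(\ko_2^\comp)^4(\RP^\infty_+)$, a finitely generated $\Z_2$-module. On the other hand $r_\R(\KO_2^\comp)\simeq(\KO^\topsup)_2^\comp[1/2]$ by Lemma~\ref{lemm:KO-real-realization}(2), whose $\pi_0$ and $\pi_{-4}$ are $\Q_2$. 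These are not equivalent. What the paper actually proves (via Suslin's theorem for $\C$ and the Quillen--Lichtenbaum conjecture for $\R$, not via any realization functor) is that it is the \emph{global sections spectrum} $\map_{\SH(\R)}(\1,\KO_2^\comp)$ --- denoted $\KO(\R)_2^\comp$ --- which identifies with $\bigl((E^{hC_2})_{\ge 0}\bigr)^{hC_2'}$. Global sections and $\rho$-periodic realization differ precisely by the $\rho$-nilpotent information you need. Relatedly, the asserted compatibility of $r_\R=\Phi^{C_2}\circ\mathrm{Be}^{C_2}$ with the duality homotopy fixed points cannot hold: $\Phi^{C_2}$ is cocontinuous and does not preserve the countable limit $(\ph)^{hC_2'}$, and indeed $r_\R(\KGL)=0$ (Lemma~\ref{lemm:KO-real-realization}(1)) whereas $r_\R$ of the fixed points $\KO_\eta^\comp$ is nonzero, so any naive ``commutes with the construction'' claim breaks. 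The paper deliberately avoids this by working with the $\C/\R$ Galois action on global sections, so that both $C_2$'s are Borel actions on the non-motivic spectrum $E$ and Lemma~\ref{lemm:adams-technical} applies on the nose.

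Separately, the step you describe as ``the residual $2$-torsion ambiguity of $\W(\Z[1/2])$ being resolved by a further base change (e.g.\ to $\Q_2$ \dots) or by direct comparison with the geometric $\lambda$-operation'' is not a bookkeeping afterthought: it is the bulk of the paper's proof. Resolving it via $\Q_2$ would require re-establishing the Suslin/Quillen--Lichtenbaum/homotopy-fixed-point circle over $\Q_2$, which is not done anywhere in the paper and whose Galois group is not $C_2$; and the comparison with the geometric operations in Proposition~\ref{prop:FH-comparison} is \emph{derived} from Theorem~\ref{thm:ko-adams}(2), i.e.\ from the present theorem, so invoking it here would be circular. The paper instead runs a localization-sequence argument $\Spec(\F_2)\to\Spec(\Z)\to\Spec(\Z[1/2])$ (or alternatively Lemma~\ref{lemm:htpy-fixed-KGL-Z}, relying on \cite{GW-III}) to show $a$ lies in $\GW(\Z)_2^\comp\subset\GW(\Z[1/2])_2^\comp$, which together with the computation over $\R$ pins $a$ down; this is the part your proposal elides.
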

\begin{proof}
It suffices to prove the result for $S = \Spec(\Z[1/2])$.
We can write $\adamspsi_\OSnaith^n(\beta) = a \beta$, for some $a \in \GW(\Z[1/2])_2^\comp$; we need to show that $a = n^2 \cdot n^2_\epsilon$.
Since the map $\KO_2^\comp \to \KGL_2^\comp$ is compatible with the Adams action by construction, it follows (e.g. from Lemma \ref{lemm:action-KGL}) that $\rk(a) = n^4$.
Hence (using Lemma \ref{lemm:witt-injection}) it suffices to prove that $a$ has image $n^2$ in $\W(\Z[1/2])$.

We first prove the result for $S = Spec(\R)$ instead.
Consider \[ E = \KGL_2^\comp(\C) := \map_{\SH(\R)}(\Sigma^\infty_+ \Spec(\C), \KGL_2^\comp) \in \Fun(BC_2 \times BC_2', \SH); \] here the first action comes from complex conjugation and the second from the $C_2$-action on $\KGL$ (i.e. taking duals).
We have a map $E \to \K(\C^\topsup)_2^\comp \wequi \ku_2^\comp$ which is an equivalence by Suslin's theorem \cite[Corollary 4.7]{suslin1984k}.
It is $C_2 \times C_2'$-equivariant for the usual action on $\ku_2^\comp$.
By Lemma \ref{lemm:adams-action-alg-closed} the action of $\adamspsi^n$ on $\pi_2 E$ is by multiplication by $n$.
We may hence apply Lemma \ref{lemm:adams-technical} to deduce that the Adams action on $\pi_{-4}((E^{hC_2})_{\ge 0})^{hC_2'}$ is by multiplication by $n^{-2}$.
By the Quillen--Lichtenbaum conjecture for algebraic $K$-theory \cite[Theorem 2]{rosenschon2005homotopy} we have $((\KGL(\C)_2^\comp)^{hC_2})_{\ge 0} \wequi \KGL(\R)_2^\comp$.
We thus learn that $\adamspsi_\OSnaith^n$ acts by multiplication by $n^{-2}$ on $\pi_{-4}(\KO(\R)_2^\comp) \wequi \W(\R)_2^\comp\{\eta^4\beta^{-1}\}$.
The claim (over $\R$) follows since the Adams action on $\eta$ must be trivial (since it comes from the sphere spectrum; see Example \ref{ex:adamspsi-pi0}).

Now we go back to $S=Spec(\Z[1/2])$.
It would be enough to show that $\adamspsi^n_\OSnaith(\beta) = a\beta$, for some $a \in \GW(\Z)_2^\comp \subset \GW(\Z[1/2])_2^\comp$.
Indeed then we could determine $a$ by comparison with $S = \Spec(\R)$.
The containment $a \in \GW(\Z)_2^\comp \subset \GW(\Z[1/2])_2^\comp$ actually does hold for relatively formal reasons; see Lemma \ref{lemm:htpy-fixed-KGL-Z}.
Thus the proof is concluded.

Since the proof of Lemma \ref{lemm:htpy-fixed-KGL-Z} is rather involved (in that it relies on \cite{GW-III}), we provide here an alternative way of proceeding.\todo{this is a mess}
It is enough to show that $\adamspsi^n_\OSnaith$ acts on $\pi_{-4}(\KO_2^\comp) \wequi \W(\Z[1/2])_2^\comp\{\beta^{-1}\eta^4\}$ by multiplication by $n^{-2}$.
By arguing as in for example \cite[proof of Theorem 5.8]{bachmann-euler}, we find that \[ (*)\,\, \W(\Z[1/2]) \wequi \Z[g]/(g^2, 2g), \] where $g := \lra{2}-1$.\NB{details?}

Consider the decomposition \[ \Spec(\F_2) \xrightarrow{i} \Spec(\Z) \xleftarrow{j} \Spec(\Z[1/2]). \]
The sequence of functors \[ \Perf(\F_2) \xrightarrow{i_*} \Perf(\Z) \xrightarrow{j^*} \Perf(\Z[1/2]) \] induces a localization cofiber sequence $\K(\F_2) \to \K(\Z) \to \K(\Z[1/2])$ \cite[Example 6.11]{weibel-k-book} \cite[Theorems 3.21, and 7.4]{thomason-trobaugh}.
The functor $i_*$ is duality preserving if we give $\Perf(\F_2)$ the duality $\iHom(\ph, i^! \scr O)$\todo{ref}; then all the functors become $C_2$-equivariant and we get an induced localization sequence \[ \KGL_2^\comp(\F_2)^{hC_2} \to \KGL_2^\comp(\Z)^{hC_2} \to \KGL_2^\comp(\Z[1/2])^{hC_2}. \]
Since $i^! \scr O \wequi \Sigma \scr O$ we see that we get the \emph{shifted} duality on $\K(\F_2)$.
We have $\K(\F_2)_2^\comp \wequi \HZ_2^\comp$ \cite[Corollary IV.1.13]{weibel-k-book}, and the $C_2$-action is given by multiplication by $-1$, the duality being shifted.
In particular (see e.g. \cite[Lemma 1]{vcadek1999cohomology}\NB{$(\HZ_2^\comp)^{hC_2} \wequi (\HZ^{hC_2})_2^\comp$}) \[ \pi_*(\K(\F_2)_2^\comp)^{hC_2} = H^{-*}(BC_2, \Z_2^\comp) = \begin{cases} 0 & *\text{ even } \\ \Z/2 & *<0 \text{ odd }\end{cases}. \]
We thus get a short exact sequence \begin{gather*} (**)\,\, 0 = \pi_{-4} \KGL_2^\comp(\F_2)^{hC_2} \xrightarrow{i_*} \pi_{-4} \KGL_2^\comp(\Z)^{hC_2} \xrightarrow{j^*} \pi_{-4} \KGL_2^\comp(\Z[1/2])^{hC_2}\\ \xrightarrow{\partial} \pi_{-5} \KGL_2^\comp(\F_2)^{hC_2} = \Z/2. \end{gather*}

We now determine the image of the injection $j^*$.
By the homotopy fixed point theorem and $(*)$ we have \[ \pi_{-4} \KGL_2^\comp(\Z[1/2])^{hC_2} \wequi \W(\Z[1/2])_2^\comp \wequi \Z_2^\comp[g]/(g^2, 2g) \wequi \Z_2^\comp\{1\} \oplus \Z/2\{g\}. \]
The filtration induced by the homotopy fixed point spectral sequence is the $\I$-adic one, so the first two subquotients are  \[ \gr^0(\Z[1/2]) = \W(\Z[1/2])/\I \wequi \F_2\{1\} \quad\text{and}\quad \gr^2(\Z[1/2]) = \I(\Z[1/2])/\I^2 \wequi \F_2\{\lra{-1}, \lra{2}\}, \] which must be a subquotient of the appropriate group $e_2^{(i)}(\Z[1/2]) = H^{4+i}(C_2, K_i(\Z[1/2])_2^\comp)$ on the $E_2$ page.
By \cite[Corollary 16.3]{milnor1971introduction} \[ K_1(\Z[1/2])_2^\comp \wequi (\Z[1/2]^\times)_2^\comp \wequi \Z_2^\comp \oplus \Z/2, \] and also $K_0(\Z[1/2])_2^\comp \wequi \Z_2^\comp$.
We deduce that $e_2^{(i)}(\Z[1/2])$ is an $\F_2$-vector space of dimension at most $1$ if $i=0$, and at most $2$ if $i = 1$\NB{because we don't bother to work out the action}; since the subquotient $\gr^i(\Z[1/2])$ has the same dimension we find that $e_2^{(i)}(\Z[1/2]) = \gr^i(\Z[1/2])$.
The map $e_2^{(0)}(\Z) \to e_2^{(0)}(\Z[1/2])$ is an isomorphism, which implies that $\gr^0(\Z) \to \gr^0(\Z[1/2])$ is injective.
On the other hand $K_1(\Z)_2^\comp \wequi \Z^\times$, which implies that the map $e_2^{(1)}(\Z) \to e_2^{(1)}(\Z[1/2]) \wequi \F_2\{\lra{-1}, \lra{2}\}$ is an injection onto $\F_2\{\lra{-1}\}$.
These facts together imply that $j^*: \pi_{-4} \KGL_2^\comp(\Z)^{hC_2} \to \pi_{-4} \KGL_2^\comp(\Z[1/2])^{hC_2}$ does not hit the element $g$.\NB{$\gr^0(j^*)$ is an injection, whence $g\in F^1$ can only be hit from $F^1$, but not hit on $\gr^1$}
Consequently $\partial(g) = 1$ and $\partial(1 + \epsilon g) = 0$ for some $\epsilon \in \{0,1\}$.
Let us put $\alpha := 1 + \epsilon g$.
It follows from exactness of $(**)$ that \[ \pi_{-4} \KGL_2^\comp(\Z)^{hC_2} \wequi \Z_2^\comp\{\alpha\} \subset \pi_{-4} \KGL_2^\comp(\Z[1/2])^{hC_2}. \]

Note that we have an action of $\adamspsi^n := \adamspsi^n_\Snaith$ also on $\KGL(\Z)$, and hence on $\KGL_2^\comp(\Z)^{hC_2}$.
It follows that $\adamspsi^n(\alpha) = p\alpha$ for some $p \in \Z_2$.
We know (by Example \ref{ex:adamspsi-pi0}) that $\adamspsi$ acts on $\pi_{-4} \KGL_2^\comp(\Z[1/2])^{hC_2}$ by multiplication by some element $a + \epsilon' g \in \W(\Z[1/2])$; here $a \in \Z_2$ and $\epsilon' \in \Z/2$.
Comparison with the case $S=\Spec(\R)$ shows that $a = n^{-2}$.
We thus get \[ p j^* \alpha = j^*(p\alpha) = j^*(\adamspsi(\alpha)) = \adamspsi(j^*(\alpha)) = (n^{-2} + \epsilon' g) j^*\alpha \] and so \begin{align*} p(1 + \epsilon g) &= (n^{-2} + \epsilon' g)(1+\epsilon g) \\ &= n^{-2} + (n^{-2} \epsilon + \epsilon') g. \end{align*}
Comparing coefficients of $1$ yields $p=n^{-2}$, and then comparing coefficients of $g$ yields $\epsilon'=0$.
This was to be shown.
\end{proof}

\subsection{The $2$-adic fracture square} \label{subsec:2-fracture}
Recall the following well-known fact.
\begin{lemma}
Let $\scr C$ be a stable, presentable, compactly generated $\infty$-category and $n \in \Z$.
Then for every $E \in \scr C$ the natural commutative square
\begin{equation*}
\begin{CD}
E @>>> E[1/n] \\
@VVV  @VVV    \\
E_n^\comp @>>> E_n^\comp[1/n]
\end{CD}
\end{equation*}
is cartesian.
\end{lemma}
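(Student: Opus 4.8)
The plan is to peel the statement down to a single mod-$n$ statement about the completion unit. Since $\scr C$ is stable, a commutative square is cartesian if and only if it is cocartesian, and this holds if and only if the induced map on horizontal cofibers is an equivalence. So I would first reduce to showing that the natural map
\[ \cof\bigl(E \to E[1/n]\bigr) \longrightarrow \cof\bigl(E_n^\comp \to E_n^\comp[1/n]\bigr) \]
is an equivalence. Now $(\ph)[1/n]$ is computed as the filtered colimit $\colim\bigl(E \xrightarrow{n} E \xrightarrow{n} \cdots\bigr)$, and cofibers, being pushouts, commute with filtered colimits; so writing $E/n^k := \cof(E \xrightarrow{n^k} E)$ we get $\cof(E \to E[1/n]) \wequi \colim_k E/n^k$, and likewise $\cof(E_n^\comp \to E_n^\comp[1/n]) \wequi \colim_k E_n^\comp/n^k$. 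Under these identifications the displayed map is the colimit over $k$ of the maps $E/n^k \to E_n^\comp/n^k$ induced by the completion unit $u \colon E \to E_n^\comp$ (naturality makes this a map of $\N$-indexed towers).

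It therefore suffices to prove that $u/n^k$ is an equivalence for every $k \ge 1$. I would argue in two steps. Step one: $u/n$ is an equivalence. This is exactly the assertion that the completion unit is a mod-$n$ equivalence, which is part of the description of $(\ph)_n^\comp$ as the Bousfield localization at the maps inverted by $(\ph)/n$ recalled in \S\ref{subsec:completion} (the unit of a Bousfield localization is always a local equivalence). Step two: any mod-$n$ equivalence is a mod-$n^k$ equivalence. Indeed, if $f/n$ is an equivalence then, $(\ph)/n$ being exact, $\cof(f)/n \wequi \cof(f/n) \wequi 0$, i.e.\ $n \colon \cof(f) \to \cof(f)$ has vanishing cofiber and is hence an equivalence; then $n^k \colon \cof(f) \to \cof(f)$ is an equivalence too, so $\cof(f/n^k) \wequi \cof(f)/n^k \wequi 0$, i.e.\ $f/n^k$ is an equivalence. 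Applying this with $f = u$ finishes the proof.

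I do not expect any serious obstacle: the argument is purely formal. The points that need a little care are bookkeeping ones — that $(\ph)[1/n]$ and $(\ph)_n^\comp$ are well-defined on $\scr C$ and that the ambient colimits and limits exist (this is where presentability and compact generation are used), that $\cof$ genuinely commutes with the defining colimit of $(\ph)[1/n]$, and that one correctly invokes the localization description of $(\ph)_n^\comp$. If one wished to avoid citing that description, the alternative is to prove directly that $u/n$ is an equivalence by computing $E_n^\comp/n \wequi \lim_k (E/n^k)/n$ — using that $(\ph)/n \wequi \Sigma\,\fib(n)$ commutes with sequential limits — and identifying that tower as pro-constant with value $E/n$; but citing the localization description is shorter.
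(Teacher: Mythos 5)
Your proof is correct, but it takes a genuinely different route from the paper's. The paper's argument is a one-line reduction: for compact $X \in \scr C$, the functors $\map(X,\ph)$ preserve limits and colimits (hence $n$-completion and $n$-inversion), and form a conservative family, which reduces the claim to $\scr C = \SH$, where it is cited as known. Your argument is self-contained and purely formal: you replace the cartesian-square condition by an equivalence of horizontal cofibers, identify $\cof(E \to E[1/n])$ with $\colim_k E/n^k$, and reduce the whole thing to the single fact that the completion unit $u: E \to E_n^\comp$ is a mod-$n$ equivalence (which is exactly the description of $(\ph)_n^\comp$ as a Bousfield localization recalled in \S\ref{subsec:completion}), bootstrapped to mod-$n^k$ via the observation that $\cof(u)$ is $n$-periodic. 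What your approach buys is independence from the external citation and from any reduction to $\SH$; it also makes visible that compact generation, which the lemma assumes and the paper's proof uses to get a conservative family of corepresentables, is not actually needed for your argument --- presentability alone suffices. What the paper's approach buys is brevity, at the cost of outsourcing the content. Both are fine; yours is closer to a genuine proof and would survive dropping the compact-generation hypothesis.
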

\begin{proof}
Let $X \in \scr C$ be compact.
Functors of the form $\map(X, \ph)$ preserve limits and colimits, so completion and localization, and form a conservative collection.
This reduces the result to $\scr C = \SH$ where it is well-known; see e.g. \cite[Proposition 2.2]{bauer2011bousfield}.
\end{proof}

We can apply this with $\scr C = \SH(S)$ and $n=2$ to obtain the fracture square
\begin{equation} \label{eq:2-facture}
\begin{CD}
E @>>> E[1/2] \wequi E[1/2]^- \vee E[1/2]^+ \\
@VVV     @VVV \\
E_2^\comp @>>> E_2^\comp[1/2] \wequi E_2^\comp[1/2]^- \vee E_2^\comp[1/2]^+; \\
\end{CD}
\end{equation}
here we have used the decomposition of $2$-periodic spectra into $+$ and $-$ parts (see \S\ref{subsub:plus-minus}).
\begin{definition}\label{def:generalized-plus}
For $E \in \SH(S)$ we define \[ E^\pm = E_2^\comp \times_{E_2^\comp[1/2]^\pm} E[1/2]^\pm. \]
\end{definition}
Note that there are maps \[ E \to E^\pm \quad\text{and}\quad E^\pm \to E_2^\comp \to E_2^\comp[1/2]^\mp. \]

\begin{lemma} \label{lemm:pasting}
Let $\scr C$ be an $\infty$-category and $X, A_1, A_2, B_1, B_2 \in \scr C$ with maps $X \to B_1 \times B_2$ and $A_i \to B_i$.
Then \[ X \times_{B_1 \times B_2} (A_1 \times A_2) \wequi (X \times_{B_1} A_1) \times_{B_2} A_2. \]
\end{lemma}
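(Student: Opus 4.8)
The plan is to deduce this from the pasting law for pullback squares \cite[Lemma 4.4.2.1]{lurie-htt}, together with the elementary observation that forming a product with a fixed object is a pullback along a projection. Concretely, for any map $f\colon C \to D$ and any object $E$ of $\scr C$, the square with corners $C\times E,\, C,\, D\times E,\, D$ (projections going horizontally, $f\times\id_E$ and $f$ going vertically) exhibits $C\times E$ as $(D\times E)\times_D C$; this follows at once from the facts that $\Map(W,-)$ preserves products and pullbacks and that the corresponding statement holds in $\Spc$. (Alternatively, one may reduce the whole lemma to $\scr C = \Spc$ at the outset via the Yoneda embedding, since limits in a presheaf $\infty$-category are computed pointwise.)

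With this in hand I would argue as follows. Write $f_i\colon A_i \to B_i$ for the given maps and factor $f_1\times f_2$ as
\[ A_1\times A_2 \xrightarrow{\,f_1\times\id\,} B_1\times A_2 \xrightarrow{\,\id\times f_2\,} B_1\times B_2 . \]
By the observation above, $\id\times f_2$ is the pullback of $f_2$ along the projection $B_1\times B_2 \to B_2$, so the pasting law gives $X\times_{B_1\times B_2}(B_1\times A_2) \wequi X\times_{B_2}A_2$ (with $X\to B_2$ the evident composite). Again by the observation, $f_1\times\id$ is the pullback of $f_1$ along the projection $B_1\times A_2 \to B_1$, so a second application of pasting yields
\[ X\times_{B_1\times B_2}(A_1\times A_2) \wequi (X\times_{B_2}A_2)\times_{B_1}A_1 . \]
Running the same argument through the other factorization $A_1\times A_2 \to A_1\times B_2 \to B_1\times B_2$ produces $X\times_{B_1\times B_2}(A_1\times A_2) \wequi (X\times_{B_1}A_1)\times_{B_2}A_2$, which is the desired identification.

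The only real content is bookkeeping: one must check that the comparison equivalences produced by the pasting law are the canonical ones — i.e.\ that the maps $X\times_{B_2}A_2 \to B_1$, etc., agree with the maps named in the statement — which is routine since every map in sight is a projection or is induced by the universal property of the relevant pullback. Conceptually the cleanest way to see the statement is that both sides compute the limit of the zigzag diagram $A_1 \xrightarrow{f_1} B_1 \leftarrow X \to B_2 \xleftarrow{f_2} A_2$, a limit which may be evaluated in stages and in either order; the computation above is simply a hands-on implementation of this principle, so I do not expect any genuine obstacle.
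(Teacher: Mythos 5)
Your proof is correct and takes essentially the same approach as the paper. Both proofs rest on the pasting law for pullbacks (\cite[Lemma 4.4.2.1]{lurie-htt}) together with the observation that forming a product $-\times E$ realizes a map as a pullback along a projection. The paper organizes this as a single $3\times 3$ commutative diagram with the intermediate column $A_1\times B_2$, checking that the right-hand squares and the horizontal rectangles are cartesian and then pasting; you instead factor $f_1\times f_2$ through $B_1\times A_2$ and apply pasting twice in sequence. These are symmetric presentational variants of the same argument.
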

\begin{proof}
Consider the following commutative diagram
\begin{equation*}
\begin{tikzcd}
(X \times_{B_1} A_1) \times_{B_2} A_2 \ar[r] \ar[d] & A_1 \times A_2 \ar[r] \ar[d]            & A_2 \ar[d] \\
X \times_{B_1} A_1 \ar[r] \ar[d] & A_1 \times B_2 \ar[r, shift right=1.5] \ar[r, shift left=1.5] \ar[d] & {B_2 \atop A_1} \ar[d] \\
X \ar[r]                         & B_1 \times B_2 \ar[r]                                      & B_1.
\end{tikzcd}
\end{equation*}
The two squares on the right hand side are cartesian, and so are the horizontal rectangles.
It follows that the left hand squares are cartesian, and hence so is the left hand vertical rectangle, by the pasting law \cite[Lemma 4.4.2.1]{lurie-htt}.
\end{proof}

Our slightly unconventional Definition \ref{def:generalized-plus} is partially justified by the following result.
\begin{corollary} \label{cor:mot-fracture}
For $E \in \SH(S)$ we have cartesian squares
\begin{equation*}
\begin{split}
\begin{CD}
E @>>> E[1/2]^- \\
@VVV    @VVV    \\
E^+ @>>> E_2^\comp[1/2]^-
\end{CD}
\end{split}
\quad\text{and}\quad
\begin{split}
\begin{CD}
E @>>> E[1/2]^+ \\
@VVV    @VVV    \\
E^- @>>> E_2^\comp[1/2]^+
\end{CD}
\end{split}
\end{equation*}
\end{corollary}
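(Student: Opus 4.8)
The plan is to obtain both squares from the $2$-adic fracture square \eqref{eq:2-facture} by a single application of the pasting law for pullbacks against products, Lemma \ref{lemm:pasting}; there is no new content here, only a rearrangement of corners.

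First I would record two formal facts. Since $\SH(S)$ is stable, the coproducts in \eqref{eq:2-facture} are also products, so that square reads
\[
E \wequi E_2^\comp \times_{E_2^\comp[1/2]^- \times E_2^\comp[1/2]^+}\bigl(E[1/2]^- \times E[1/2]^+\bigr),
\]
the right-hand vertical map being the product of its two components. Second, the decomposition $\SH(S)[1/2] \wequi \SH(S)[1/2]^- \times \SH(S)[1/2]^+$ of \S\ref{subsub:plus-minus} is an equivalence of categories, so it splits every morphism of $\SH(S)[1/2]$ into its $-$ and $+$ parts; applying this to the image of $E \to E_2^\comp$ under the localization $\SH(S) \to \SH(S)[1/2]$ shows that, under the product decompositions above, the map $E[1/2] \to E_2^\comp[1/2]$ is the product of the completion maps $E[1/2]^\pm \to E_2^\comp[1/2]^\pm$. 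Thus \eqref{eq:2-facture} is exactly of the shape to which Lemma \ref{lemm:pasting} applies.

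Then I would apply Lemma \ref{lemm:pasting} with $X = E_2^\comp$, $(A_1,B_1) = (E[1/2]^+, E_2^\comp[1/2]^+)$ and $(A_2,B_2) = (E[1/2]^-, E_2^\comp[1/2]^-)$, the map $X \to B_1 \times B_2$ being $E_2^\comp \to E_2^\comp[1/2]$. The left-hand side of the pasting identity is $E$ by \eqref{eq:2-facture}, the inner fiber product $X \times_{B_1} A_1$ is $E^+$ by Definition \ref{def:generalized-plus}, and the induced map $E^+ \to B_2 = E_2^\comp[1/2]^-$ is the composite $E^+ \to E_2^\comp \to E_2^\comp[1/2]^-$ recorded after that definition; hence
\[
E \wequi E^+ \times_{E_2^\comp[1/2]^-} E[1/2]^-,
\]
which is the first cartesian square. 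Interchanging the roles of $+$ and $-$ throughout gives $E \wequi E^- \times_{E_2^\comp[1/2]^+} E[1/2]^+$, the second square. The only point requiring a moment's care is the naturality of the $\pm$-splitting of morphisms invoked in the second formal fact; everything else is immediate from \eqref{eq:2-facture} and Lemma \ref{lemm:pasting}, so I expect no real obstacle.
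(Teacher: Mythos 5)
Your proof is correct and follows essentially the same route as the paper, which simply declares the result immediate from Lemma \ref{lemm:pasting}, the fracture square \eqref{eq:2-facture}, and Definition \ref{def:generalized-plus}; you have just spelled out the choice of $X$, $A_i$, $B_i$ in the pasting lemma and the (easy but worth-noting) fact that the map $E[1/2] \to E_2^\comp[1/2]$ is the product of its $\pm$ components.
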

\begin{proof}
Immediate from Lemma \ref{lemm:pasting}, the fracture square \eqref{eq:2-facture} and the definition of $E^\pm$.
\end{proof}

\begin{remark} \label{rmk:e-infinity-fracturing}
Note that if $E \in \CAlg(\SH(S))$ then $E_2^\comp$, $E[1/2]^+ = E[1/2]_\eta^\comp$ and $E[1/2]^- = E[1/2,1/\eta]$ are $\scr E_\infty$-rings, and hence so are $E^{\pm}$ (being pullbacks of $\scr E_\infty$-rings along $\scr E_\infty$-ring maps).
All in all the fracture squares for $E$ from Corollary \ref{cor:mot-fracture} are pullback diagrams in $\CAlg(\SH(S))$.
\end{remark}

\subsection{Proof of the main theorem}
\begin{lemma}[Heard] \label{lemm:heard}
The diagram $(\alpha: \KO \to \KGL) \in \Fun(BC_2^\triangleleft, \SH(S))$ is a universal $\eta$-complete limit diagram: if $F: \SH(S) \to \scr C$ is any functor preserving binary products, then $F(\alpha/\eta)): F(\KO/\eta) \to (F(\KGL/\eta))^{hC_2}$ is an equivalence.
\end{lemma}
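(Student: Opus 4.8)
The plan is to show that $\KGL/\eta$, regarded as a Borel $C_2$-spectrum via the duality action, is \emph{co-induced}, and then to deduce the statement from the elementary fact that the homotopy fixed points of a co-induced object are computed by a split cosimplicial object, whose totalization is preserved by any functor that preserves finite products.

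First I would recall the Wood cofiber sequence $\Sigma^{1,1}\KO\xrightarrow{\eta}\KO\xrightarrow{\alpha}\KGL$ \cite[Theorem 3.4]{rondigs2013slices}, which identifies $\alpha$ with an equivalence $\KO/\eta\wequi\KGL$. Since $\eta$ maps to $0$ in $\pi_{1,1}\KGL$ it acts by $0$ on $\KGL$, so smashing the Wood sequence with $\1/\eta$ shows that on underlying spectra $\KGL/\eta\wequi\KGL\oplus\Sigma^{2,1}\KGL\wequi\KGL\oplus\KGL$, the last equivalence using that the Bott element $\beta_\KGL\in\pi_{2,1}\KGL$ is invertible. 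On bigraded homotopy the duality action negates $\beta_\KGL$, so $\pi_{**}(\KGL/\eta)$ is in each degree a \emph{free} $\Z[C_2]$-module: the $C_2$-module extension $0\to\pi_{**}\KGL\to\pi_{**}(\KGL/\eta)\to\Sigma^{2,1}\pi_{**}\KGL\to 0$ is forced to be the non-split one, because $\eta$ is a non-zero $C_2$-equivariant self-map of $\KGL$ --- applying $(-)^{hC_2}$ it becomes multiplication by $\eta\in\pi_{1,1}\KO$, which is non-zero (at least $2$-adically, by Proposition \ref{prop:htpy-fixed}). As a consistency check, the same input makes the image of $\eta$ a unit in the Tate cohomology ring $\hat H^{*}(C_2;\pi_{**}\KGL)$, so the Tate spectral sequence collapses to zero and $(\KGL/\eta)^{tC_2}\wequi 0$.

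The key step is then to upgrade this to the assertion that the canonical $C_2$-equivariant map $\Theta\colon\operatorname{Ind}_e^{C_2}(\KO/\eta)\to\KGL/\eta$, adjoint to the underlying map of $\alpha/\eta$, is an equivalence; here $\operatorname{Ind}_e^{C_2}\wequi\operatorname{coInd}_e^{C_2}$ because $C_2$ is finite and we work in spectra, so $\operatorname{Ind}_e^{C_2}(\KO/\eta)$ is a co-induced Borel $C_2$-spectrum. By definition $\Theta$ is an equivalence precisely when it is one on underlying spectra, where it is the map $(\KO/\eta)\oplus(\KO/\eta)\to\KGL/\eta$ whose two components are the underlying $\alpha/\eta$ and its $C_2$-conjugate. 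Identifying $\KO/\eta\wequi\KGL$ (Wood) and using the description of $\KGL/\eta$ above, this is a $2\times 1$ matrix of self-maps of $\KGL$ which I would check is invertible by a short computation in $\pi_{**}\KGL=\Z[\beta_\KGL^{\pm 1}]$ --- equivalently, using freeness of $\pi_{**}(\KGL/\eta)$ over $\Z[C_2]$, that the image of a generator and its conjugate span. This is where I expect the main obstacle to lie: the underlying verification is routine, but one must be careful to produce $\Theta$ as a genuine morphism of $C_2$-objects rather than merely on underlying spectra, and to record its compatibility with the coaugmentation $\KO/\eta\to\KGL/\eta$ --- that compatibility is exactly what upgrades ``limit diagram'' to ``universal limit diagram''. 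It is also the place where the \emph{integral} conclusion, in contrast with the merely $2$-adic equivalence $\KO\to\KGL^{hC_2}$ of Proposition \ref{prop:htpy-fixed}, is bought: it rests on the periodicity $\Sigma^{2,1}\KGL\wequi\KGL$, which has no analogue for $\KO^\topsup\to\KU^\topsup$.

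Finally, granting $\KGL/\eta\wequi\operatorname{coInd}_e^{C_2}(\KO/\eta)$ compatibly with the coaugmentation, I would conclude formally. For $Y\in\Fun(BC_2,\SH(S))$ the homotopy fixed points $Y^{hC_2}$ are the totalization of the cosimplicial object $D_Y^\bullet$ with $D_Y^n=\prod_{C_2^{\times n}}Y$ (cochains on $BC_2$), and by construction (\S\ref{subsub:BC2-triangle}) $F(\alpha/\eta)$ is the induced map $F(\KO/\eta)\to\mathrm{Tot}(D_{F(\KGL/\eta)}^\bullet)$. Since $F$ preserves binary --- hence all finite --- products, $D_{F(\KGL/\eta)}^\bullet\wequi F(D_{\KGL/\eta}^\bullet)$. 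Now when $\KGL/\eta\wequi\operatorname{coInd}_e^{C_2}(Z)$ with $Z=\KO/\eta$, the cosimplicial object $D_{\KGL/\eta}^\bullet$ is equivalent to $[n]\mapsto\Map(C_2^{\times(n+1)}_+,Z)$, the object of $Z$-valued cochains on the contractible simplicial $C_2$-set $[n]\mapsto C_2^{\times(n+1)}$; its underlying simplicial set has extra degeneracies, so $D_{\KGL/\eta}^\bullet$ is a \emph{split} cosimplicial object with coaugmentation $Z=\KO/\eta$. Hence $F(D_{\KGL/\eta}^\bullet)$ is again split, its totalization is its coaugmentation $F(\KO/\eta)$, and therefore $F(\alpha/\eta)\colon F(\KO/\eta)\to(F(\KGL/\eta))^{hC_2}$ is an equivalence, as claimed.
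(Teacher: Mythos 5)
Your proposal is correct and is essentially the paper's proof: the paper identifies $(\KO/\eta \to \KGL/\eta)$ with the coinduced diagram $(\KGL \to \KGL^{C_2})$ (deferring that identification to Heard, \S3) and then concludes formally from $p_*i_* \wequi (\id)_*$, which is exactly your coinduction-plus-split-cosimplicial-object argument. One small caution on your reconstruction of the identification: the ``non-split extension'' paragraph as stated only shows that the $C_2$-\emph{spectrum} $\KGL/\eta$ does not split off $\Sigma^{2,1}\KGL$ compatibly with the boundary map (which need not force each $\pi_{2n,n}(\KGL/\eta)$ to be $\Z[C_2]$-free), so the proof really does rest on the deferred $\pi_{**}$-computation that the image of a generator and its conjugate form a $\Z$-basis in each bidegree --- that computation does go through and is the content of Heard's argument.
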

\begin{proof}
This is essentially \cite[\S3]{heard2017homotopy}.
To paraphrase, $(\KO/\eta \to \KGL/\eta) \in \Fun(BC_2, \SH(S))$ identifies with $(\KGL \to \KGL^{C_2})$, where by $\KGL^{C_2}$ we mean the product $\KGL \times \KGL$ with its switch action.
This is clearly a universal limit diagram.
In slightly more detail, let $i: * \to BC_2$ and $p: BC_2 \to *$ be the canonical functors.
Then $X^{C_2} \wequi i_*(X)$ and $Y^{hC_2} \wequi p_*(Y)$; thus $(X^{C_2})^{hC_2} \wequi p_*i_* X \wequi (\id)_* X \wequi X$.
\end{proof}

We can use this to identify $\KO^+$, in the sense of Definition \ref{def:generalized-plus}.
\begin{lemma} \label{lemm:KO+}
Let $S$ satisfy the assumptions of Proposition \ref{prop:htpy-fixed} (such as $\Spec(\Z[1/2])$).
Then for $n$ odd we have $\KO[1/n]^+ \wequi \KGL[1/n]^{hC_2} \in \SH(S)$.
\NB{for $n$ odd we have $E^+[1/n] \wequi E[1/n]^+$, and $F^{hC_2}[1/n] \wequi F[1/n]^{hC_2}$: comparing fracture squares, first claim reduces to $E_2^\comp \wequi E_2^\comp[1/n] \wequi E[1/n]_2^\comp$. For latter claim, take $(\ph)^{hC_2}$ in $2$-adic fracture square for $F, F[1/n]$. Suffices to show that inverting $n$ commutes with $(\ph)^{hC_2}$ on $F[1/2]$, which holds since $(\ph[1/2])^{hC_2} \wequi (\ph[1/2])_{hC_2}$.}
The same holds for $n$ even and any $S$.
\end{lemma}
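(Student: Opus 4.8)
The plan is to reduce to the case $n=1$ and then identify $\KGL^{hC_2}$ with $\KO^+$ by comparing $2$-adic fracture squares, the homotopy fixed point theorem (Proposition \ref{prop:htpy-fixed}) supplying the $2$-complete corner.

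First I would record two commutation facts for $n$ odd. For any $E \in \SH(S)$ one has $E^+[1/n] \wequi E[1/n]^+$: localization at $n$ preserves the finite limit of Definition \ref{def:generalized-plus} as well as the splitting $(\ph)[1/2]^\pm$, so this reduces to $E_2^\comp[1/n] \wequi E_2^\comp \wequi E[1/n]_2^\comp$, which holds since $n$ is a unit in $\Z_2^\comp$ and hence acts invertibly on each $E/2^m$, so on $E_2^\comp = \lim_m E/2^m$, and also $E[1/n]/2^m \wequi E/2^m$. Likewise, for any $F \in \Fun(BC_2,\SH(S))$ one has $F^{hC_2}[1/n] \wequi F[1/n]^{hC_2}$: apply $(\ph)^{hC_2}$ to the $2$-adic fracture squares of $F$ and of $F[1/n]$, then invert $n$; the $2$-complete corner is unaffected (its $hC_2$ is again $2$-complete and $n$ is odd), while on the $F[1/2]$-corner the Tate spectral sequence has $E_2$-page $\widehat H^\bullet(C_2,\pi_{**}(F[1/2]))$, which vanishes (being both $2$-torsion and uniquely $2$-divisible), so $(F[1/2])^{hC_2} \wequi (F[1/2])_{hC_2}$, a colimit that commutes with $[1/n]$. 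Granting $\KO^+ \wequi \KGL^{hC_2}$, these two facts give $\KO[1/n]^+ \wequi \KO^+[1/n] \wequi \KGL^{hC_2}[1/n] \wequi \KGL[1/n]^{hC_2}$ for every odd $n$.

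For $n=1$: in the $2$-adic fracture square for $\KGL^{hC_2}$, the $2$-complete corner is $(\KGL^{hC_2})_2^\comp \wequi \KO_2^\comp$ by Proposition \ref{prop:htpy-fixed}. For the remaining corners I would use the $C_2$-equivariant Wood cofiber sequence $\Sigma^{1,1}\KO \xrightarrow{\eta} \KO \to \KGL$ together with the vanishing of $\eta$ on $\KGL$ (complex orientability): then $\KGL[1/2]$ is $\eta$-periodic-trivial, and $\KGL[1/2] \wequi \KO[1/2]^+ \vee \Sigma^{2,1}\KO[1/2]^+$ $C_2$-equivariantly, the duality action (which is $\adamspsi^{-1}$, so multiplies $\beta$ by $-1$) being trivial on the first summand and by $-1$ on the second, the splitting being equivariant because the obstruction to splitting it lies in the $(-1)$-eigenspace of a group with $2$ inverted, hence vanishes. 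Therefore $(\KGL[1/2])_{hC_2} \wequi (\KGL[1/2])^{hC_2} \wequi \KO[1/2]^+$, and the remaining contribution to $(\KGL^{hC_2})[1/2]$, coming from the Tate term $(\KGL^{tC_2})[1/2]$, gets identified (again via the homotopy fixed point theorem, now in its $\eta$-periodic form) with $\KO_2^\comp[1/2]^-$. Matching these corners through the pasting Lemma \ref{lemm:pasting} turns the fracture square for $\KGL^{hC_2}$ into the one of Definition \ref{def:generalized-plus}, whence $\KGL^{hC_2} \wequi \KO^+$.

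For $n$ even and arbitrary $S$: here $2 \mid n$, so $\KO[1/n]$ is $2$-periodic and $\KO[1/n]^+$ agrees with $\KO[1/n][1/2]^+$ (the $2$-complete corner of Definition \ref{def:generalized-plus} vanishes), while $\KGL[1/n]^{hC_2} \wequi \KGL[1/n]_{hC_2}$ requires no homotopy fixed point input. Applying the Wood splitting verbatim gives $\KGL[1/n] \wequi \KO[1/n]^+ \vee \Sigma^{2,1}\KO[1/n]^+$ $C_2$-equivariantly with trivial and sign actions, and since $2$ is invertible the homotopy orbits of the sign summand vanish, leaving $\KGL[1/n]_{hC_2} \wequi \KO[1/n]^+$.

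The delicate step is the $[1/2]$-corner in the case $n=1$: because the motivic Tate construction $\KGL^{tC_2}$ need not vanish after inverting $2$, one cannot simply commute $[1/2]$ past $(\ph)^{hC_2}$, and identifying $(\KGL^{tC_2})[1/2]$ with $\KO_2^\comp[1/2]^-$ is precisely where the hypotheses on $S$ (via the homotopy fixed point theorem) genuinely enter.
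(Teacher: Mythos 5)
Your reduction to $n=1$ (via $E^+[1/n]\wequi E[1/n]^+$ and $F^{hC_2}[1/n]\wequi F[1/n]^{hC_2}$ for $n$ odd) and your treatment of the even case are fine, and your $n=1$ strategy --- form the $2$-adic fracture square \emph{of} $\KGL^{hC_2}$ and identify its corners --- is consistent with the answer. But the pivotal step is not actually carried out. Because $(\ph)^{hC_2}$ does not commute with inverting $2$, the top-right corner $(\KGL^{hC_2})[1/2]$ is an extension of the localized Tate term $(\KGL^{tC_2})[1/2]$ by $(\KGL[1/2])^{hC_2}\wequi\KO[1/2]^+$, and your identification of that Tate term with $\KO_2^\comp[1/2]^-$ is asserted with a reference to ``the homotopy fixed point theorem in its $\eta$-periodic form,'' which is not an available statement. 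This identification is essentially equivalent to the minus-part-after-inverting-$2$ content of the lemma itself, so as written the argument is circular at exactly the point you flag as delicate. It \emph{can} be filled in non-circularly (use that $G^{tC_2}\wequi(G_2^\comp)^{tC_2}$, Proposition \ref{prop:htpy-fixed} for $(\KGL_2^\comp)^{hC_2}$, the norm equivalence for $(\KGL_2^\comp[1/2])_{hC_2}$, and then take the cofiber), but that requires the norm cofiber sequence, the equivariant Wood splitting (itself only sketched via an obstruction argument), and a further check that the resulting maps between the four corners agree with those of Definition \ref{def:generalized-plus} --- none of which appears in your write-up.

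The paper sidesteps all of this by reversing the order of operations: it applies $(\ph)^{hC_2}$ to the $2$-adic fracture square of $\KGL[1/n]$ itself. Since homotopy fixed points preserve cartesian squares, this exhibits $\KGL[1/n]^{hC_2}$ as the pullback of $(\KGL_2^\comp)^{hC_2}$, $\KGL[1/2n]^{hC_2}$ and $(\KGL_2^\comp[1/2])^{hC_2}$; the first is $\KO_2^\comp$ by Proposition \ref{prop:htpy-fixed}, and the latter two are $\KO[1/n][1/2]^+$ and $\KO[1/n]_2^\comp[1/2]^+$ by Lemma \ref{lemm:heard} (using that $\eta=0$ on $\KGL$, so $\KGL[1/2n]$ and hence its fixed points are $\eta$-complete). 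No Tate construction and no equivariant splitting of $\KGL[1/2]$ are needed, and the resulting square is the defining square of $\KO[1/n]^+$ on the nose. I would recommend restructuring your argument along these lines; alternatively, if you want to keep your route, you must supply a genuinely independent computation of $(\KGL^{tC_2})[1/2]$.
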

\begin{proof}
For $n$ even we have \[ \KO[1/n]^+ \wequi \KO[1/n]_\eta^\comp \wequi \KGL[1/n]^{hC_2}, \] by Lemma \ref{lemm:heard}.
Thus we now consider $n$ odd.

Taking homotopy fixed points in the $2$-adic fracture square for $\KGL[1/n]$ we obtain a cartesian square
\begin{equation*}
\begin{CD}
\KGL[1/n]^{hC_2} @>>> \KGL[1/2n]^{hC_2} \\
@VVV               @VVV \\
(\KGL_2^\comp)^{hC_2} @>>> (\KGL_2^\comp[1/2])^{hC_2},
\end{CD}
\end{equation*}
noting that $\KGL[1/n]_2^\comp \wequi \KGL_2^\comp$.
By Proposition \ref{prop:htpy-fixed} we have $(\KGL_2^\comp)^{hC_2} \wequi \KO_2^\comp \wequi \KO[1/n]_2^\comp$.
We also have \[ \KGL[1/2n]^{hC_2} \wequi (\KGL[1/2n]^{hC_2})_\eta^\comp \stackrel{(L.\ref{lemm:heard})}{\wequi} \KO[1/2n]_\eta^\comp \wequi \KO[1/n][1/2]^+; \] here we have used that $\KGL[1/2n]^{hC_2}$ is $\eta$-complete since $\KGL[1/2n]$ is ($\eta$ acting by $0$).
The same argument shows that $(\KGL_2^\comp[1/2])^{hC_2} \wequi \KO_2^\comp[1/2]^+ \wequi \KO[1/n]_2^\comp[1/2]^+$.
Hence the above cartesian square identifies with the defining square of $\KO[1/n]^+$.
\end{proof}

\begin{remark} \label{rmk:construction-even}
Using $\KO[1/n]^+ \wequi \KGL[1/n]^{hC_2}$, for any $n$ we can define $(\adamspsi^n)^+: \KO[1/n]^+ \to \KO[1/n]^+$ as $(\adamspsi^n_\Snaith)^{hC_2}$.
If $n$ is even this is the best we can do (see Remark \ref{rmk:obstruction-even}).
For $n$ odd we shall see how to extend this to an endomorphism of all of $\KO[1/n]$.
\end{remark}

\begin{definition} \label{def:psin-OSnaith-ext}
For $n$ odd and $S$ satisfying the assumptions of Proposition \ref{prop:htpy-fixed}, we denote by \[ \adamspsi^n_\OSnaith: \KO[1/n]^+ \to \KO[1/n]^+ \] the map induced from $\adamspsi^n_\Snaith: \KGL[1/n] \to \KGL[1/n]$ (see \S\ref{subsub:GS}) via the equivalence $\KGL[1/n]^{hC_2} \wequi \KO[1/n]^+$.
\end{definition}

For $S = \Spec(\Z[1/2])$ we have $\KO[1/2n]^- \in \SH(\Z[1/2])[1/2]^- \stackrel{r_\R}{\wequi} \SH[1/2]$, and $r_\R(\KO) \wequi \KO^\topsup[1/2]$ by Lemma \ref{lemm:KO-real-realization}.
By $\adamspsi^n_\topsup: \KO^\topsup[1/n] \to \KO^\topsup[1/n]$ we mean the topological Adams operation, for example obtained by taking homotopy fixed points of the complex realization of the operation on $\KGL$.
\begin{lemma} \label{lemm:adamspsi-E-infty}
In $\CAlg(\SH(\Z[1/2]))$ there exists a unique (up to homotopy) map $\adamspsi^n: \KO[1/n] \to \KO[1/n]$ such that the two squares
\begin{equation*}
\begin{split}
\begin{CD}
\KO[1/n] @>>> \KO[1/n]^+ \\
@V{\adamspsi^n}VV  @V{\adamspsi_{\OSnaith}^n}VV \\
\KO[1/n] @>>> \KO[1/n]^+ \\
\end{CD}
\end{split}
\quad\text{and}\quad
\begin{split}
\begin{CD}
\KO[1/n] @>>> \KO[1/2n]^- \wequi \KO^\topsup[1/2n] \\
@V{\adamspsi^n}VV       @V{\adamspsi^n_\topsup}VV \\
\KO[1/n] @>>> \KO[1/2n]^- \wequi \KO^\topsup[1/2n] \\
\end{CD}
\end{split}
\end{equation*}
commute (in $\CAlg(\SH(\Z[1/2]))$).
\end{lemma}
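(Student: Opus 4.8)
The plan is to realize $\KO[1/n]$ as a pullback and to build $\adamspsi^n$ as a map of pullbacks. By Corollary \ref{cor:mot-fracture} and Remark \ref{rmk:e-infinity-fracturing}, $\KO[1/n]$ is, in $\CAlg(\SH(\Z[1/2]))$, the pullback of the cospan
\[ \KO[1/n]^+ \xrightarrow{c} \KO_2^\comp[1/2]^- \xleftarrow{c'} \KO[1/2n]^-, \]
where I have used $\KO[1/n]_2^\comp \wequi \KO_2^\comp$ and $\KO[1/n][1/2]^- \wequi \KO[1/2n]^- \wequi \KO^\topsup[1/2n]$ (both because $n$ is odd; the last identification via the discussion preceding the lemma) and, by Lemma \ref{lemm:KO+}, $\KO[1/n]^+ \wequi \KGL[1/n]^{hC_2}$. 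Since a map of pullback cospans induces a map of pullbacks, it suffices to produce $\scr E_\infty$-endomorphisms of the three corners together with homotopies making the two resulting squares commute. I would take $\adamspsi^n_\OSnaith$ (Definition \ref{def:psin-OSnaith-ext}) on $\KO[1/n]^+$, the topological operation $\adamspsi^n_\topsup$ on $\KO[1/2n]^-$, and on $\KO_2^\comp[1/2]^-$ the endomorphism $\chi$ obtained from $\adamspsi^n_\OSnaith$ by $2$-completing (to an endomorphism of $(\KGL[1/n]^{hC_2})_2^\comp \wequi \KO_2^\comp$) and passing to the $(1/2)^-$-part; then $c$ intertwines $\adamspsi^n_\OSnaith$ and $\chi$ essentially by construction, since these operations are natural with respect to $2$-completion and inverting $2$. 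So the square over $\KO[1/n]^+$ is free.

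The crux is the other square: that $c'$ intertwines $\adamspsi^n_\topsup$ with $\chi$. First I would record that $\KO_2^\comp[1/2]^-$ is a rational spectrum, with $\pi_* \wequi \W(\Z[1/2])_2^\comp[1/2][\beta^{\pm 1}] \wequi \Q_2[\beta^{\pm 1}]$, $|\beta| = 4$, using $\W(\Z[1/2]) \wequi \Z \oplus \Z/2$ (cf. the identity $(*)$ in the proof of Theorem \ref{thm:bott-action}). Hence both maps $\KO[1/2n]^- \to \KO_2^\comp[1/2]^-$ under comparison factor through the rationalization $(\KO[1/2n]^-)_\Q$, whose homotopy ring is the smooth $\Q$-algebra $\Q[\beta^{\pm 1}]$ on a degree-$4$ class. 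For such a source, the space $\Map_{\CAlg}((\KO[1/2n]^-)_\Q, M)$ into any $H\Q$-algebra $M$ has, at each basepoint, homotopy groups $\pi_i \wequi \pi_{i+4}(M)$ (the relative cotangent complex being free of rank one on a degree-$4$ class), and evaluation at $\beta$ embeds its set of components into $\pi_4 M$. For $M = \KO_2^\comp[1/2]^-$ this gives $\pi_1 = \pi_5 M = 0$, so it is enough to check that $c' \circ \adamspsi^n_\topsup$ and $\chi \circ c'$ send $\beta$ to the same element of $\pi_4 M$. Now $\adamspsi^n_\topsup(\beta) = n^2 \beta$ on $\pi_4 \KO^\topsup$ (the classical formula), while $\chi$ acts on $c'(\beta)$ through $\adamspsi^n_\OSnaith(\beta) = n^2 n_\epsilon^2 \beta$ (Theorem \ref{thm:bott-action}), and the image of $n^2 n_\epsilon^2$ in $\W(\Z[1/2])_2^\comp[1/2] \wequi \Q_2$ is $n^2$ since $n_\epsilon^2 \mapsto 1$ in $\W$. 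Thus both composites send $\beta \mapsto n^2 c'(\beta)$, hence are homotopic, with connected space of homotopies.

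Feeding the resulting map of cospans through the universal property of the pullback produces $\adamspsi^n \in \CAlg(\SH(\Z[1/2]))$ satisfying the two required squares. For uniqueness I would observe that the space of data (a map $\adamspsi^n$ together with homotopies witnessing the two squares) is equivalent to the space of paths in $\Map_{\CAlg}(\KO[1/n], \KO_2^\comp[1/2]^-)$ between the two prescribed composites $\KO[1/n] \to \KO_2^\comp[1/2]^-$ — the contributions of the $\KO[1/n]^+$-corner and of both squares being contractible, as they amount to based path spaces. Both composites factor through $(\KO[1/n])_\Q$, indeed through its $\eta$-periodic summand $\Q[\beta^{\pm 1}]$, and lie in the same component by the $\pi_4$-computation above; since $\pi_1$ of that mapping space vanishes, the path space is connected, so $\adamspsi^n$ is unique up to homotopy. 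The main obstacle is the middle paragraph: identifying $\KO_2^\comp[1/2]^-$ and the rationalized $\eta$-periodic part of $\KO[1/2n]^-$ precisely enough as $\scr E_\infty$-rings that the comparison of the two operations over $\KO_2^\comp[1/2]^-$ genuinely reduces, via a small amount of rational obstruction theory, to the single homotopy-group identity supplied by Theorem \ref{thm:bott-action}.
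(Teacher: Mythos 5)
Your proposal is correct and follows essentially the same route as the paper's proof. Both start from the fracture square for $\KO[1/n]$ of Remark \ref{rmk:e-infinity-fracturing}, identify the comparison over the corner $\KO_2^\comp[1/2]^-$ as the only nontrivial datum, and reduce to a rational obstruction-theory computation that boils down to $\adamspsi^n_\OSnaith(\beta)$ and $\adamspsi^n_\topsup(\beta)$ agreeing in $\pi_4 \KO_2^\comp[1/2]^- \wequi \Q_2$, via Theorem \ref{thm:bott-action}.

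The only substantive difference is packaging. The paper applies $\Map_{\CAlg}(\KO[1/n], \ph)$ directly to the fracture square, notes that a map with the two required commuting squares is the same as a path in $M := \Map_{\CAlg}(\KO[1/n], \KO_2^\comp[1/2]^-)$ between the two given composites $a,b$, and then uses $r_\R$ (Lemma \ref{lemm:KO-real-realization}) to transport $M$ into $D(\Q)$, where it identifies $r_\R(\KO) \otimes \Q$ with the free $\scr E_\infty$-$\Q$-algebra on an invertible degree-$4$ generator to compute $\pi_0 M \wequi \Q_2^\times$ and $\pi_1 M = 0$. You instead build $\adamspsi^n$ as a map of cospans and then note that the contribution of the $\KO_2^\comp[1/2]^-$ corner is all that survives; you stay in $\SH(\Z[1/2])$, invoking the rationalization $(\KO[1/2n]^-)_\Q$ and its cotangent complex rather than going through $r_\R$. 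These are the same move: your parenthetical assertion that $L_{(\KO[1/2n]^-)_\Q / \Q}$ is free of rank one on a degree-$4$ class, together with the injectivity of evaluation at $\beta$ on components, is precisely the content of the paper's freeness claim for $r_\R(\KO)\otimes\Q$, which the paper justifies by the standard computation $\pi_* F(\Sigma^{4}\1) \wequi \Q[x]$. If you want to make the cotangent-complex version self-contained you should either cite that same freeness (as the paper does) or note that $\Q[\beta^{\pm 1}]$ is étale over $\Q[\beta]$ and formal over $\Q$; as written this is the one step of your argument that leans on an assertion rather than a reference. Otherwise your existence and uniqueness arguments match the paper's, and the reduction of the square over $\KO[1/n]^+$ to a tautology (since $\chi$ is defined via $\adamspsi^n_\OSnaith$) is implicit in the paper's setup as well.
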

\begin{proof}
Applying Remark \ref{rmk:e-infinity-fracturing} with $E = \KO[1/n]$ (and $S=\Spec(\Z[1/2])$) we obtain a pullback square
\begin{equation*}
\begin{CD}
\Map_{\CAlg(\SH(\Z[1/2]))}(\KO[1/n], \KO[1/n]) @>>> \Map_{\CAlg(\SH(\Z[1/2]))}(\KO[1/n], \KO[1/n]^+) \\
@VVV @VVV \\
\Map_{\CAlg(\SH(\Z[1/2]))}(\KO[1/n], \KO[1/2n]^-) @>>> \Map_{\CAlg(\SH(\Z[1/2]))}(\KO[1/n], \KO_2^\comp[1/2]^-) =: M.
\end{CD}
\end{equation*}
We have two maps $a,b: \KO[1/n] \to \KO_2^\comp[1/2]^- \in \CAlg(\SH(\Z[1/2]))$, induced by $\adamspsi^n_\OSnaith$ and $\adamspsi^n_\topsup$, respectively.
A choice of $\scr E_\infty$-ring map $\adamspsi^n$ with the desired properties is a path in $M$ from $a$ to $b$.
Thus such a map exists if $a, b$ are homotopic, and is unique if $\pi_1$ of the corresponding component of $M$ vanishes.
Since $\KO_2^\comp[1/2]^- \in \SH(\Z[1/2])[1/2]^- \otimes \Q \stackrel{r_\R}{\wequi} D(\Q)$, we find that \[ M \wequi \Map_{\CAlg(D(\Q))}(r_\R(\KO) \otimes \Q, r_\R(\KO_2^\comp[1/2]^-)). \]
We claim that $r_\R(\KO) \otimes \Q$ is the free $\scr E_\infty$-ring over $\Q$ on an invertible generator in degree $4$.
Indeed if we denote that universal object by $F(\Sigma^4 \1)[x^{-1}]$, then the canonical map $F(\Sigma^4 \1)[x^{-1}] \to r_\R(\KO) \otimes \Q$ induces an isomorphism on $\pi_*$ by Lemma \ref{lemm:KO-real-realization} and the well-known computation $\pi_*F(\Sigma^{2n} \1) \wequi \Q[x]$ (with $|x|=2n$; this follows e.g. from \cite[Corollary 8.4]{richter2017algebraic}).
We deduce that \[ M \subset \Omega^{\infty + 4} r_\R(\KO_2^\comp[1/2]^-) \] is given by the union of those path components corresponding to invertible elements of $\pi_4 r_\R(\KO_2^\comp[1/2]^-)$.
It thus follows from Lemma \ref{lemm:KO-real-realization} again that $\pi_0 M \wequi \Q_2^\times\{x\}$ and $\pi_1 M = 0$ (for any choice of base point).
In other words $\adamspsi^n$ exists if and only if $a$ and $b$ act in the same way on $\beta$, and if so it is unique.
Since both maps act by multiplication by $n^2$ (see Theorem \ref{thm:bott-action} for $\adamspsi^n_\OSnaith$, and for $\adamspsi^n_\topsup$ we can use e.g. comparison with the action on $\KU$ and Lemma \ref{lemm:action-KGL}), the result follows.
\end{proof}

\begin{proof}[Proof of Theorem \ref{thm:ko-adams}]
Since everything is stable under base change, we may assume that $S = \Spec(\Z[1/2])$.
We let $\adamspsi^n$ be the map constructed in Lemma \ref{lemm:adamspsi-E-infty}.
Hence $(*)$ the induced endomorphism of $\KO^+ \wequi \KGL^{hC_2}$ is given by $\adamspsi^n_\OSnaith \wequi (\adamspsi^n_\Snaith)^{hC_2}$.
It follows that $(**)$ the induced endomorphism of $\KO_2^\comp \wequi (\KO^+)_2^\comp \wequi (\KGL_2^\comp)^{hC_2}$ is also given by $\adamspsi^n_\OSnaith$.

(1,3) Hold by definition.

(2) By Corollary \ref{corr:completion-inj} and Lemma \ref{lemm:GW-inj}, the map $\pi_{8,4}(\KO) \to \pi_{8,4}(\KO_2^\comp)$ is injective, and hence it suffices to prove the claim for $(\adamspsi^n)_2^\comp$.
By $(**)$, this is $\adamspsi^n_\OSnaith$, so the claim follows from Theorem \ref{thm:bott-action}.

(4) The map $\KO \to \KGL$ factors as $\KO \to \KO^+ \wequi \KGL^{hC_2} \to \KGL$, so the claim follows from $(*)$.
\end{proof}

\begin{remark}
The construction of $\adamspsi^n$ also implies that if $S$ is a regular $QL$-scheme, then under the equivalence $\KO[1/n]^+ \wequi \KGL[1/n]^{hC_2}$  we have $(\adamspsi^n)^+ \wequi \adamspsi^n_\OSnaith$ (see Definition \ref{def:psin-OSnaith-ext}).\NB{really?}
\end{remark}

\subsection{The motivic image of orthogonal $j$ spectrum}
In this section we show that our Adams operation can be used to construct a ``motivic image of orthogonal $j$'' spectrum.
This idea originated in discussions with JD Quigley and Dominic Culver regarding \cite{culver2019complex}.
None of the results in this section are used in the sequel.

We begin by determining the lowest two ``generalized slices'' of the sphere spectrum.
\begin{theorem} \label{thm:very-effective-slices}
Let $k$ be a field of exponential characteristic $e \ne 2$.
\begin{enumerate}
\item The unit map $\1 \xrightarrow{u} \KO \in \SH(k)$ induces an equivalence on $\tilde s_0$.
\item There is a fibration sequence\todo{(Röndigs) figure out the boundary map?} \[ \Sigma^{3,2}\HZ/24 \to \tilde s_1(\1)[1/e] \xrightarrow{\tilde s_1(u)} \tilde s_1(\KO)[1/e] \wequi \Sigma^{2,1} \HZ/2. \]
\end{enumerate}
\end{theorem}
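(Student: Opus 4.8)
\emph{Part (1).}
The plan is to use that $\tilde s_0$ is lax symmetric monoidal and that, by \cite{bachmann-very-effective}, the target of $\tilde s_0$ (the zeroth generalized slice category) is equivalent to the category of effective homotopy modules, the equivalence carrying $\tilde s_0$ to the effective zeroth homotopy module $\ul\pi_0^{\eff}(\ph)$ and $\tilde s_0(\1)$ to $\ul\pi_0(\1)_* \wequi \ul K^{MW}_*$, i.e.\ to $H\tilde\Z$. Thus $\tilde s_0(u)$ is an equivalence as soon as $\ul\pi_0^{\eff}(u)\colon \ul K^{MW}_* \to \ul\pi_0^{\eff}(\KO)_*$ is an isomorphism. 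Effective homotopy modules are unramified, so it suffices to check this on the generic stalk over each finitely generated field extension $F/k$, where---after using the contraction maps to reduce weight by weight to weights $0$ and $-1$---it becomes the assertion that the unit ring map $\GW(F) \to \pi_{0,0}(\KO_F) = \GW(F)$ and its contraction $\W(F) \to \pi_{0,-1}(\KO_F) = \W(F)$ are isomorphisms. The first is the identity because $\GW(F)$ is generated by the forms $\lra{a}$, $a \in F^\times$, all of which lie in the image of $\pi_{0,0}(\1_F)$ (cf.\ Example~\ref{ex:adamspsi-pi0}), and the second then follows by naturality. This proves (1), and incidentally recovers $\tilde s_0(\KO) \wequi H\tilde\Z$.

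\emph{Part (2).}
Here I would invoke two inputs. First, \cite{bachmann-very-effective} gives $\tilde s_1(\KO)[1/e] \wequi \Sigma^{2,1}\HZ/2$. Second, the computation of the first stable homotopy module of the motivic sphere by R{\"o}ndigs--Spitzweck--{\O}stv{\ae}r, reinterpreted through the very effective slice filtration and after inverting $e$ (which one is free to do: inverting $e$ identifies $\SH(k)[1/e]$ with $\SH(k^{\mathrm{perf}})[1/e]$ and $\tilde s_1$ is compatible with this and with field base change, so one may reduce to the fields treated in loc.\ cit.), produces a cofiber sequence
\[ \Sigma^{3,2}\HZ/24 \xrightarrow{\;\iota\;} \tilde s_1(\1)[1/e] \xrightarrow{\;q\;} \Sigma^{2,1}\HZ/2 . \]
The theorem then follows once $\tilde s_1(u)$ is identified with $q$. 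The composite $\tilde s_1(u)\circ\iota$ automatically vanishes: it lies in $[\Sigma^{3,2}\HZ/24, \Sigma^{2,1}\HZ/2]$, and a short calculation (split off the prime-to-$2$ part; on the $2$-complete part use that multiplication by $8$ is null on mod-$2$ spectra to reduce to the mod-$2$ motivic cohomology of $\HZ$, which vanishes in negative cohomological degree, being a quotient of the motivic Steenrod algebra) shows this group is $0$. Hence $\tilde s_1(u)$ factors as $\tilde s_1(\1)[1/e] \xrightarrow{q} \Sigma^{2,1}\HZ/2 \xrightarrow{\bar q} \Sigma^{2,1}\HZ/2$; since $[\Sigma^{2,1}\HZ/2, \Sigma^{2,1}\HZ/2] = \Z/2$, the map $\bar q$ is either $0$ or an equivalence, and it is an equivalence precisely when $\tilde s_1(u) \ne 0$. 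To establish the latter I would use that $\tilde s_1(u)$ is $H\tilde\Z$-linear (as $\tilde s_0(u)$ is an equivalence), hence $\ul K^{MW}_*$-linear on homotopy sheaves, so it is enough to check nonvanishing in a single weight, where it is identified with a reduction map that is visibly nonzero---equivalently, one records that $\eta$ survives from $\pi_{**}(\1)$ to $\pi_{**}(\KO)$, being invertible in $\KW = \KO[\eta^{-1}]$. This yields the fiber sequence.

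\emph{The main obstacle.} It lies entirely in Part (2): faithfully translating R{\"o}ndigs--Spitzweck--{\O}stv{\ae}r's computation of $\ul\pi_1(\1)_*$ into the displayed cofiber-sequence description of the \emph{very effective} slice $\tilde s_1(\1)$ (rather than the larger ordinary slice $s_1(\1)$), and then the non-nullity of $\tilde s_1(u)$---morally the statement that the motivic image-of-$J$ class of order $24$ dies in Hermitian $K$-theory while the surviving $\eta$-power class is exactly what is seen by $\tilde s_1(\KO) = \Sigma^{2,1}\HZ/2$. The genuinely formal ingredients---the vanishing of $[\Sigma^{3,2}\HZ/24, \Sigma^{2,1}\HZ/2]$ and the identification $[\Sigma^{2,1}\HZ/2, \Sigma^{2,1}\HZ/2] = \Z/2$---are what let this comparatively small amount of non-formal input suffice.
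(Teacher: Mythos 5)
Your argument rests on identifying $\tilde s_0$ with the functor $E \mapsto f_0\ul\pi_0(E)_*$, but this is not correct: the generalized slice $\tilde s_0(E)$ is genuinely larger. By \cite[Lemma 11(1)]{bachmann-very-effective} (which the paper invokes) there is a cofiber sequence $s_0(E_{\ge 1}) \to \tilde s_0 E \to f_0 \ul{\pi}_0(E)_*$, and the first term is typically nonzero; for $E = \1$ this is exactly what makes $H\tilde\Z = \tilde s_0(\1)$ different from the homotopy module $\ul K^{MW}_*$ (they have different $\ul\pi_1$). Your argument only verifies that $u$ is an isomorphism on $\ul\pi_0(\ph)_0$, which is the second piece; you must also show that $s_0(\1_{\ge 1}) \to s_0(\KO_{\ge 1})$ is an equivalence. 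The paper does this by reducing to a comparison of $s_0(\1) \wequi \HZ$ with $s_0(\ko) \wequi \HZ$ and checking that the resulting self-map of $\HZ$ is the identity via the rank map.

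\textbf{Part (2).} Your overall strategy — first factor $\tilde s_1(u)$ through $q$ by showing $[\Sigma^{3,2}\HZ/24, \Sigma^{2,1}\HZ/2] = 0$, then prove the induced $\bar q \in [\Sigma^{2,1}\HZ/2, \Sigma^{2,1}\HZ/2] = \Z/2$ is nonzero — is a genuinely different route from the paper's, and the two formal computations you record are correct. However, there are two real gaps. First, as you yourself flag, the cofiber sequence $\Sigma^{3,2}\HZ/24 \to \tilde s_1(\1)[1/e] \to \Sigma^{2,1}\HZ/2$ is the substantive content; the paper derives it from the cofiber sequence $f_2 \Sigma \ul{\pi}_1(E)_* \to \tilde s_1(E) \to s_1(E_{\ge 1})$ of \cite[Lemmas 11(2), 8]{bachmann-very-effective}, the identification $\pi_1(\1)_{-2} \wequi \Z/24$ of \cite{rondigs2016first} together with Lemma~\ref{lemm:HZ-detect} to rigidify the $\HZ/24$, and \cite[Theorem 16]{bachmann-very-effective} for $\tilde s_1(\KO)$. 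Simply waving at the R{\"o}ndigs--Spitzweck--{\O}stv{\ae}r computation does not produce this. Second, your argument for $\bar q \ne 0$ via the survival of $\eta$ does not work: in the \emph{generalized} slice filtration $\eta$ lies in $\ul\pi_0(\1)_{-1} = \ul W \subset \ul K^{MW}_*$ and is therefore detected by $\tilde s_0$, not by $\tilde s_1$. (You may be thinking of the \emph{ordinary} slice filtration, where $\eta$ does have positive filtration since $\pi_{1,1}(s_0\1) = \pi_{1,1}(\HZ) = 0$.) To detect $\tilde s_1(u)$ one needs a class in $\tilde s_1$-filtration, and the paper establishes the matching instead by reducing to the statement that $s_1(\1) \to s_1(\ko)$ is an equivalence (both being $\Sigma^{1,1}\HZ/2$), citing \cite{ananyevskiy2017very} and \cite{rondigs2016first}. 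Your factor-and-detect strategy could be completed along those lines — i.e.\ identify the generating class of $\pi_{2,1}\tilde s_1(\1)$ and trace it to $\pi_{2,1}\tilde s_1(\KO)$ — but the $\eta$-argument as stated does not supply this.
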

\begin{proof}
(1) For $E \in \SH(k)$ there is a functorial cofiber sequence \cite[Lemma 11(1)]{bachmann-very-effective} \[ s_0(E_{\ge 1}) \to \tilde{s}_0 E \to f_0 \ul{\pi}_0(E)_*. \]
It thus suffices to show that $\1 \to \KO$ induces an equivalence on $s_0(\ph_{\ge 0})$ and $\ul{\pi}_0(\ph)_0$.
Indeed then it also induces an equivalence on $f_0\ul{\pi}_0(\ph)_*$, so on $s_0(\ul{\pi}_0(\ph)_*)$, and on $s_0(\ph_{\ge 1})$ (the latter since $s_0$ is a stable functor), and hence on $\tilde s_0(\ph)$ by the cofiber sequence.
It is well-known that the unit map $\1 \to \KO$ induces an isomorphism on $\ul{\pi}_{0,0}$.\NB{ref?}
Similarly we know that $s_0(\1) \wequi \HZ$ and $s_0(\KO_{\ge 0}) \wequi \HZ$ (e.g. use \cite[Theorem 16]{bachmann-very-effective}).
In the commutative diagram
\begin{equation*}
\begin{CD}
\GW(k) \wequi \pi_{0,0}(\1) @>{\wequi}>> \pi_{0,0}(\KO_{\ge 0}) \wequi \GW(k) \\
@VVV                                        @VVV \\
\Z \wequi \pi_{0,0} s_0(\1) @>{\alpha}>> \pi_{0,0} s_0(\KO_{\ge 0}) \wequi \Z
\end{CD}
\end{equation*}
the vertical maps are both the rank map; it follows that $\alpha$ is an isomorphism.
Since this holds over any field, the map $s_0(\1) \to s_0(\KO_{\ge 0})$ induces an isomorphism on $\ul{\pi}_{0,0}$; since this is the only non-vanishing effective homotopy sheaf of $\HZ$ the map is an equivalence.

(2) We invert the exponential characteristic throughout.
For $E \in \SH(k)$ there is a functorial cofiber sequence \cite[Lemmas 11(2) and 8]{bachmann-very-effective} \[ f_2 \Sigma \ul{\pi}_1(E)_* \to \tilde s_1(E) \xrightarrow{\beta} s_1(E_{\ge 1}). \]
Note that $s_1$ of the first term of the cofiber sequence vanishes, so $(*)$ the map $\beta$ is equivalent to the projection $\tilde s_1 E \to s_1 \tilde s_1 E$.

We shall show that (a) $f_2 \Sigma \ul{\pi}_1(\1)_* \wequi \Sigma^{3,2} \HZ/24$, (b) the unit map induces an isomorphism $s_1(\1_{\ge 1}) \to s_1(\KO_{\ge 1}) \wequi \Sigma^{2,1} \HZ/2$, and (c) $f_2 \Sigma \ul{\pi}_1(\KO)_* = 0$.
This will imply the result.

(a) First note that \[ \ul{\pi}_1(f_0 \KO)_{-2} \wequi \ul{\pi}_1(\KO)_{-2} \wequi \ul{\pi}_0(\KO^{[-1]})_{-1} \stackrel{(**)}{\wequi} \Z_{-1} \wequi 0, \] where $(**)$ is obtained from e.g. \cite[Table 1]{bachmann-very-effective}.
This implies via \cite[(1.2)]{rondigs2016first} that \[ \pi_1(\1)_{-2} \wequi \Z/24. \]
Since this holds compatibly over any field (see \cite[Remark 5.8]{rondigs2016first}), Lemma \ref{lemm:HZ-detect} below implies the claim.

(b) The claim that $s_1(\KO_{\ge 1}) \wequi \Sigma^{2,1} \HZ/2$ is immediate from $(*)$ and \cite[Theorem 16]{bachmann-very-effective}.
Since $\1 \to \KO$ induces an isomorphism $\ul{\pi}_0(\ph)_0$ it also does on $f_0 \ul{\pi}_0(\ph)_*$ and hence on $s_1 \ul{\pi}_0(\ph)_*$.
Considering the cofiber sequence $s_1(\ph_{\ge 1}) \to s_1(\ph_{\ge 0}) \to s_1(\ul{\pi}_0(\ph)_*)$, it thus suffices to show that $s_1(\1) \to s_1(\KO_{\ge 0})$ is an equivalence.
Note that $s_1(\KO_{\ge 0}) \wequi s_1 f_0 \KO_{\ge 0} \wequi s_1 \ko$.
The claim thus follows from \cite[Theorem 3.2]{ananyevskiy2017very} and \cite[Corollary 2.13 and Lemma 2.28]{rondigs2016first} (both spectra are given by $\Sigma^{1,1} \HZ/2$).

(c) Since $\tilde s_1 \KO \wequi \Sigma^{2,1} \HZ/2$ \cite[Theorem 16]{bachmann-very-effective} is a $1$-slice, the claim follows from $(*)$.

This concludes the proof.
\end{proof}

\begin{lemma} \label{lemm:HZ-detect}
Let $k$ be a field, $H \in \SH(k)^{\veff\heart}$.
Suppose that (1) $\pi_{0,0}(H) \wequi \Z/n$ for some $n \in \Z$, and (2) for every finitely generated separable field extension $K/k$, the map $\ul{\pi}_{0,0}(H)(k) \to \ul{\pi}_{0,0}(H)(K)$ is an isomorphism.
Then $H \wequi \HZ/n$.
\end{lemma}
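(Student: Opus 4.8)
Write $F := \ul\pi_0(H)_0$, the unique possibly-nonzero weight-zero homotopy sheaf of $H$. The plan is to first pin down $F$ and then bootstrap to an equivalence of spectra.

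\emph{Step 1: $F \cong \ul{\Z/n}$ is the constant sheaf.} Since $H$ lies in the heart, $F$ is strictly homotopy invariant, hence unramified in the sense of \S\ref{subsec:t-structure}; in particular $F(X) \hookrightarrow F(k(X))$ for every smooth irreducible $X$. Now $k(X)/k$ is a finitely generated separable extension, so by hypotheses (1) and (2) the pullback $F(k) \to F(k(X))$ is an isomorphism onto $\Z/n$. As it factors through $F(k) \to F(X) \hookrightarrow F(k(X))$, the map $F(k) \to F(X)$ is injective with image hitting all of $F(k(X))$, and since $F(X) \hookrightarrow F(k(X))$ this forces $F(X) = \Z/n$ with $F(k) \xrightarrow{\sim} F(X)$. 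Passing to connected components, the canonical map $\ul{\Z/n} \to F$ determined by $1 \in F(k) = \Z/n$ is thus an isomorphism over all of $\Sm_k$.

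\emph{Step 2: $H \wequi \HZ/n$.} Note $\ul\pi_0(\HZ/n)_0 \cong \ul{\Z/n}$ as well. Consider the map $H \to s_0 H$ to the zeroth slice, with fibre $f_1 H \in \SH(k)^\eff$; recall that $s_0 H$ is an $\HZ$-module equal to its own zeroth slice. I would next argue that $s_0 H \wequi \HZ/n$ and that $H \to s_0 H$ induces an isomorphism on $\ul\pi_i(\ph)_0$ for every $i$: as $F = \ul{\Z/n}$ is a constant sheaf it carries a (unique) structure of transfers, so the ``transfer completion'' effected by $s_0$ leaves $H$ unchanged at weight zero, and since the weight-zero part of $\DM(k)$ over a field is the derived category of abelian groups, the only $\HZ$-module in $\SH(k)^{\veff\heart}$ that equals its own zeroth slice and has $\pi_{0,0} \cong \Z/n$ is $\HZ/n$. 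Granting this, $f_1 H$ is an effective spectrum all of whose weight-zero homotopy sheaves vanish. The family of functors $E \mapsto \ul\pi_i(E)_0$ is conservative on $\SH(k)^\eff$: the Nisnevich sheaves of spectra $X \mapsto \map(\Sigma^\infty_+ X, E)$ detect objects of $\SH(k)^\eff$ and, restricted to any finite-dimensional $X$, are hypercomplete with homotopy sheaves the $\ul\pi_i(E)_0$. Hence $f_1 H = 0$ and $H \wequi s_0 H \wequi \HZ/n$.

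The main obstacle is the middle of Step 2: showing the zeroth slice of $H$ is $\HZ/n$ and that $H \to s_0 H$ is a weight-zero equivalence --- equivalently, that an object of $\SH(k)^{\veff\heart}$ whose underlying weight-zero sheaf is the \emph{constant} sheaf $\ul{\Z/n}$ is automatically a module over $\HZ$ and equals its own zeroth slice. This is where the structure theory of the heart $\SH(k)^{\veff\heart}$ of effective homotopy modules, and its comparison with $\DM(k)$, must be invoked; by contrast the unramifiedness input of Step 1 and the conservativity of the $\ul\pi_i(\ph)_0$ are formal.
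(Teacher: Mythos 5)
Your Step 1 is fine: unramifiedness of $\ul{\pi}_0(H)_0$ plus hypotheses (1) and (2) do show that the underlying Nisnevich sheaf $F$ is the constant sheaf $\ul{\Z/n}$, with $F(k) \to F(X)$ an isomorphism for all connected smooth $X$. But Step 2 has a genuine gap, and it is precisely the one you flag at the end. Knowing that the underlying weight-zero sheaf of $H$ is constant does \emph{not} determine $H$ as an object of $\SH(k)^{\veff\heart}$: the functor $H \mapsto \ul{\pi}_0(H)_0$ to Nisnevich sheaves is exact and conservative (this is \cite[Proposition 5(3)]{bachmann-very-effective}, which the paper's proof uses), but it is not full or faithful --- an object of the heart carries additional structure (transfers / contraction data for effective homotopy modules) that the underlying sheaf forgets. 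So the isomorphism $\ul{\Z/n} \xrightarrow{\sim} F$ of sheaves constructed in Step 1 does not by itself produce any map $\HZ/n \to H$ or $H \to \HZ/n$ in the heart, and your slice-theoretic bridge (``$H$ is automatically an $\HZ$-module equal to its own zeroth slice'') is exactly the kind of claim that this missing structure would have to justify; as written it is not an argument.

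The paper's own proof sidesteps this issue entirely, and comparing with it is instructive. Rather than first identifying the sheaf and then trying to rebuild the object, it works from the start with maps out of the unit $\ul{GW} \in \SH(k)^{\veff\heart}$: the hypothesis $\pi_{0,0}(H) \cong \Z/n$ hands you a distinguished element and hence a canonical map $\alpha\colon \ul{GW} \to H$ \emph{in the heart}, and one separately has the rank map $\beta\colon \ul{GW} \to \HZ/n$. One then shows (using unramifiedness and hypothesis (2), via base change to a separable closure) that $\alpha$ is an epimorphism with $\alpha(K)$ equal to the rank map for every $K$, so $\ker(\alpha)$ and $\ker(\beta)$ have the same underlying sheaf; conservativity and exactness of the underlying-sheaf functor then force $\ker(\alpha) = \ker(\beta)$ as subobjects of $\ul{GW}$, whence $H \cong \ul{GW}/\ker(\alpha) \cong \ul{GW}/\ker(\beta) \cong \HZ/n$. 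This route buys exactly what your Step 2 is missing: by always comparing subobjects of a fixed object $\ul{GW}$, the non-fullness of the underlying-sheaf functor never becomes an obstacle. If you want to salvage your approach, you would need to supply the map into or out of $H$ by some other means --- but the cleanest way to do that is precisely to start from the unit, which collapses your two steps into the paper's argument.
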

\begin{proof}
Write $\ul{GW} \in \SH(k)^{\veff\heart}$ for the unit.
The isomorphism $\pi_{0,0}(H) \wequi \Z/n$ induces a map $\alpha: \ul{GW} \to H$.
We also have the rank map $\beta: \ul{GW} \to \HZ/n$.
We shall show that $\alpha, \beta$ are surjections with equal kernels; this will prove the result.
Since $\ul{\pi}_{0,0}(H)(k) \to \ul{\pi}_{0,0}(H)(K)$ is an isomorphism and $\ul{GW}(k) \to \ul{\pi}_{0,0}(H)(k)$ is surjective, $\alpha$ is surjective.
We claim that $\alpha(K)$ is the rank map.
Indeed let $\bar{K}/K$ be a separable closure and consider the commutative diagram
\begin{equation*}
\begin{CD}
\ul{GW}(K) @>{\alpha(K)}>> \ul{\pi}_{0,0}(H)(K) \wequi \Z/n \\
@VVV                       @|     \\
\Z \wequi \ul{GW}(\bar K) @>{\alpha(\bar K)}>> \ul{\pi}_{0,0}(H)(\bar K) \wequi \Z/n.
\end{CD}
\end{equation*}
The map $\alpha(\bar K)$ is the unique morphism of abelian groups sending $1$ to $1$, whence $\alpha(K)$ is the rank map as desired.
We deduce that $\ker(\alpha)(K) = \ker(\beta)(K)$, and hence $\ker(\alpha) = \ker(\beta)$ by unramifiedness (and since ``taking the underlying sheaf'' is an exact conservative functor \cite[Proposition 5(3)]{bachmann-very-effective}).
The result follows.
\end{proof}

We put $\ksp := \tilde f_0 \Sigma^{4,2} \KO$, so that \[ \Sigma^{4,2} \ksp \wequi \tilde f_2 \KO. \]
\begin{corollary}\NB{Also for $n$ even with $\ko[1/n\cdot n_\epsilon]$...}
For $n$ odd, the Adams operation $\adamspsi^n-1: \ko[1/n] \to \ko[1/n]$ lifts to $\adamsphi^n: \ko[1/n] \to \Sigma^{4,2} \ksp[1/ne]$.
\end{corollary}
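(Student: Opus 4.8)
The plan is to run the standard obstruction-theoretic argument against the very effective slice tower of $\KO$, in exact analogy with the classical construction of the image-of-$j$ spectrum from the vanishing of $\adamspsi^n-1$ on $\pi_0,\pi_1,\pi_2$ of connective real $K$-theory. First I would set up the geometry. With $\ko=\tilde f_0\KO$ and $\Sigma^{4,2}\ksp=\tilde f_2\KO$, the canonical comparison map is $\tilde f_2\KO\to\tilde f_0\KO$; put $C=\cof(\tilde f_2\KO\to\tilde f_0\KO)$. After inverting $ne$ and invoking Theorem~\ref{thm:very-effective-slices}, there is a cofiber sequence
\[ \Sigma^{2,1}\HZ/2\;\wequi\;\tilde s_1\KO[1/e]\;\longrightarrow\;C[1/ne]\;\longrightarrow\;\tilde s_0\KO\;\wequi\;\HZ[1/e], \]
inverting the odd integer $n$ having no effect on either $2$-primary piece. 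Since $\tilde f_0$ commutes with localization and is lax symmetric monoidal, $\adamspsi^n\colon\KO[1/n]\to\KO[1/n]$ induces a ring endomorphism $\adamspsi^n$ of $\ko[1/n]$; and as $\Sigma^{4,2}\ksp[1/ne]\wequi\fib(\ko[1/ne]\to C[1/ne])$, a lift $\adamsphi^n$ of $\adamspsi^n-1$ to $\Sigma^{4,2}\ksp[1/ne]$ exists if and only if the composite $\ko[1/n]\xrightarrow{\adamspsi^n-1}\ko[1/ne]\to C[1/ne]$ is nullhomotopic, so this is what I would establish.

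The image in $\tilde s_0\KO=\HZ$ is killed immediately: the composite $\ko=\tilde f_0\KO\to C\to\tilde s_0\KO$ is the canonical projection, and since $\adamspsi^n$ is a unital $\scr E_\infty$-ring map, $\tilde s_0(\adamspsi^n)$ commutes with the unit $\tilde s_0\1\to\tilde s_0\KO$, which is an equivalence by Theorem~\ref{thm:very-effective-slices}(1); hence $\tilde s_0(\adamspsi^n)=\id$ and $(\ko\to\HZ)\circ(\adamspsi^n-1)=0$. Thus the composite into $C[1/ne]$ factors through $\tilde s_1\KO[1/e]=\Sigma^{2,1}\HZ/2$. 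Moreover $\adamspsi^n$ is compatible with the whole tower $\tilde f_2\KO\to\tilde f_1\KO\to\tilde f_0\KO$, so it induces an endomorphism $\bar\adamspsi$ of $C[1/ne]$ which is the identity on $\tilde s_0$ and acts on $\tilde s_1\wequi\Sigma^{2,1}\HZ/2$ by an $\HZ$-linear endomorphism (generalized slices are $\HZ$-modules and $\tilde s_0(\adamspsi^n)=\id$), i.e.\ by an element of $\operatorname{End}_{\HZ}(\HZ/2)=\Z/2$; this element is the identity since $n$ is odd (reduce to $n=1$, e.g.\ via multiplicativity $\adamspsi^n\circ\adamspsi^m\wequi\adamspsi^{nm}$ of the Adams operations, or via $2$-adic continuity of $\adamspsi^{(-)}$ on $\KO_2^\comp$). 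Consequently $\bar\adamspsi-\id$ factors as $C[1/ne]\to\HZ[1/e]\xrightarrow{u}\Sigma^{2,1}\HZ/2\to C[1/ne]$ for a unique $u\in[\HZ[1/e],\Sigma^{2,1}\HZ/2]\cong\Z/2$ (generated by $\Sq^2$ precomposed with mod-$2$ reduction), and our composite equals this $u$ sandwiched between the projection $\ko\to\HZ$ and the inclusion $\Sigma^{2,1}\HZ/2\to C$.

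The hard part will be showing that $u=0$, so that the composite $\ko[1/n]\to C[1/ne]$ is null. This is not formal: $[\ko,\Sigma^{2,1}\HZ/2]\cong\HZ/2^{2,1}(\ko)$ is nonzero — as one reads off from the mod-$2$ motivic cohomology of $\ko$ computed in \cite{ananyevskiy2017very} — so the lift is genuinely obstructed. I would pin $u$ down by a low-degree computation: $u$ is detected by the behaviour of $\adamspsi^n$ on the low-bidegree homotopy of $\KO$ near bidegree $(2,1)$ relative to the two-step very effective slice filtration, and since $\adamspsi^n$ acts trivially on the image of $\pi_{**}\1\to\pi_{**}\KO$ (Example~\ref{ex:adamspsi-pi0}) — in particular on $\eta$ and on the generators of these groups over $\Z[1/2]$, from which everything base-changes — together with the triviality of $\adamspsi^n$ on both $\tilde s_0\KO$ and $\tilde s_1\KO$ established above, the off-diagonal part $u$ is forced to vanish. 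Once $u=0$ we obtain $\adamsphi^n\colon\ko[1/n]\to\Sigma^{4,2}\ksp[1/ne]$ lifting $\adamspsi^n-1$; the variants (with $\kw$ in place of $\ko$, and the case of even $n$ after also inverting $n_\epsilon$) follow by the same argument, mutatis mutandis.
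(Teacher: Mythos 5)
Your reduction of the corollary to showing that the composite $\ko[1/n]\xrightarrow{\adamspsi^n-1}\ko[1/ne]\to C[1/ne]$ is null is the same as the paper's, and your analysis of the two-step very effective slice filtration of $C=\tilde f_{\le 1}\ko$ together with the observation $\tilde s_0(\adamspsi^n)=\id$ (via Theorem~\ref{thm:very-effective-slices}(1)) is sound. But the argument stops precisely where the work begins: having located the obstruction in (a quotient of) $[\HZ[1/e],\Sigma^{2,1}\HZ/2]$, you assert that ``the off-diagonal part $u$ is forced to vanish'' by a ``low-degree computation'' that is never carried out. Triviality of $\adamspsi^n-1$ on $\tilde s_0$ and $\tilde s_1$ is exactly what \emph{produces} the class $u$; it cannot then be invoked to conclude $u=0$, else there would be no obstruction theory to run. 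You are also aiming at a stronger statement than needed: what must vanish is the image of $u$ under $[\HZ,\Sigma^{2,1}\HZ/2]\to[\ko,C]$, and this can hold even if $u\neq 0$ (classically $\Sq^2$ annihilates the mod-$2$ Thom class of $\ko^\topsup$ since $H^*(\ko^\topsup;\F_2)\cong\mathcal{A}/\mathcal{A}(\Sq^1,\Sq^2)$), so reducing to ``$u=0$'' may itself overshoot. Finally, for $\bar\adamspsi-1$ to factor as $C\to\HZ\xrightarrow{u}\Sigma^{2,1}\HZ/2\to C$ you need $\alpha\circ i=0$, where $\alpha\colon C\to\Sigma^{2,1}\HZ/2$ is the first lift and $i\colon\Sigma^{2,1}\HZ/2\to C$ is the fiber inclusion; from $\tilde s_1(\bar\adamspsi)=\id$ you only get $i\circ\alpha\circ i=0$, and $i$ is not a monomorphism.

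The paper's proof avoids all of this by filtering the \emph{source} rather than the target. It considers the cofiber sequence $\1\to\ko\to D$ and reduces to two claims: (a) $\1\to\ko\xrightarrow{\adamspsi^n-1}\ko\to C$ is null, which is immediate from $\adamspsi^n(1)=1$; and (b) every map $D[1/ne]\to C[1/ne]$ is null. For (b) it observes that $C$ lies in the negative part of the $t$-structure whose non-negative part is $\Sigma^{4,2}\SH(k)^\veff$, so it suffices to show $D\in\Sigma^{4,2}\SH(k)^\veff$; this falls out of a $3\times 3$ diagram once one knows that $\tilde f_{\le 1}\cof(\1\to\ko)\wequi\Sigma^{4,2}\HZ/24\in\Sigma^{4,2}\SH(k)^\veff$, which is exactly the content of Theorem~\ref{thm:very-effective-slices}(2). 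That statement about the first generalized slice of the unit map $\1\to\KO$ is the ingredient missing from your proposal: it is what ensures the obstruction lands where it does vanish. Without it (or some substitute input about $\tilde s_1$ of the unit), the closing ``computation'' cannot be completed, and the proposal is incomplete at its crucial step.
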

\begin{proof}
We invert $e$ throughout.

Let us write $\tilde f_{\le 1}$ for the cofiber of $\tilde f_2 \to \id$.
Then we have a cofiber sequence $\Sigma^{4,2} \ksp \to \ko \to C := \tilde f_{\le 1} \ko$.
We need to show that the composite \[ \ko[1/n] \xrightarrow{\adamspsi^n-1} \ko[1/n] \to C[1/n] \] is zero.
Considering the cofiber sequence \[ \1 \to \ko \to D, \] it suffices to show that (a) the composite $\1 \to \ko \xrightarrow{\adamspsi^n-1} \ko \to C$ is zero, and (b) any map $D \to C$ is zero.

(a) It is enough to show that $\1 \to \ko \xrightarrow{\adamspsi^n-1} \ko$ is zero.
This is clear since $\adamspsi^n(1) = 1$.

(b) Recall that $\Sigma^{4,2} \SH(k)^\veff$ defines the non-negative part of a $t$-structure on $\SH(k)$ with $\tilde f_2$ and $\tilde f_{\le 1}$ as non-negative and negative truncation, respectively (see e.g. \cite[\S B]{bachmann-norms}).
It follows that $C$ is in the negative part of this $t$-structure, and hence it is enough to show that $D$ is in the non-negative part.
Consider the following commutative diagram, in which all rows and columns are cofiber sequences (defining $E, F$)
\begin{equation*}
\begin{CD}
\tilde f_2 \1 @>>> \1 @>>> \tilde f_{\le 1} \1 \\
@VVV            @VVV          @VVV \\
\tilde f_2 \ko @>>> \ko @>>> \tilde f_{\le 1} \ko \\
@VVV                @VVV     @VVV \\
E              @>>> D   @>>> F.
\end{CD}
\end{equation*}
It suffices to show that $E, F \in \Sigma^{4,2} \SH(k)^\veff$.
This is clear for $E$, since $\Sigma^{4,2} \SH(k)^\veff$ is closed under colimits.
It follows from Theorem \ref{thm:very-effective-slices} that $F \wequi \Sigma^{4,2} \HZ/24 \in \Sigma^{4,2} \SH(k)^\veff$.\NB{details?}

This concludes the proof.
\end{proof}
\NB{Uniqueness of the lift?}

\begin{definition}
A motivic image of orthogonal $j$ spectrum is any spectrum $\jo$ in a fiber sequence \[ \jo \to \ko_{(2)} \xrightarrow{\adamsphi^3} \Sigma^{4,2} \ksp_{(2)}. \]
\end{definition}
Clearly the unit map $\1_{(2)} \to \ko_{(2)}$ lifts to $\jo$.
Theorem \ref{thm:main} states that the lifted unit map $\1_{(2)} \to \jo$ is an $\eta$-periodic equivalence.\NB{What is complex realisation of $\jo$?}

\subsection{Geometric operations} \label{subsec:geometric-ops}
In this section we compare our stable Adams operations to the ones constructed by Fasel--Haution \cite{FH-adams}.
None of the results in this section are used in the sequel.

\subsubsection{}
We begin with the following adaptation of \cite[Theorem 13.1]{panin2010motivic}.
\begin{lemma} \label{lemm:KO-opns}
For $n \ne 0$ the map \[ [\KO[1/n], \KO[1/n]]_{\CAlg(\h\SH(\Z[1/2]))} \to \KSp[1/n]^0(\HP^\infty) \times \GW(\Z[1/2])[1/n], \alpha \mapsto (\alpha(H(1)), \alpha(\beta)/\beta) \] is injective.
Here $H(1) \in \KSp^0(\HP^\infty)$ corresponds to the tautological bundle.
\end{lemma}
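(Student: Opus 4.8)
The plan is to deduce the statement from the Panin--Walter analysis of ring maps out of the motivic hermitian $K$-theory spectrum \cite[\S13]{panin2010motivic}, after stripping off the localization at $n$ at the level of sets of ring maps.

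First I would dispose of the $[1/n]$. Recall from \S\ref{subsub:KO-cons} that $\KO$ is the hermitian $K$-theory spectrum, so it agrees with the Panin--Walter model as a commutative ring object of $\h\SH(\Z[1/2])$, and that $\KO[1/n]$ is a smashing localization of $\KO$. For any commutative ring object $B$ of $\h\SH(\Z[1/2])$ on which $n$ acts invertibly, writing $\KO[1/n] = \colim(\KO \xrightarrow{n} \KO \xrightarrow{n} \cdots)$ and noting that multiplication by $n$ is then invertible on each group $[\Sigma^{a,b}\KO, B]$, the relevant $\limone$ vanishes and restriction along $\KO \to \KO[1/n]$ is a bijection
\[ [\KO[1/n], B]_{\CAlg(\h\SH(\Z[1/2]))} \xrightarrow{\ \sim\ } [\KO, B]_{\CAlg(\h\SH(\Z[1/2]))}. \]
Running the same $\limone$-argument for $(\KO \wedge \KO)[1/n]$ shows this bijection respects ring structures (the unique extension of a ring map $\KO \to B$ to $\KO[1/n]$ is again a ring map). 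Taking $B = \KO[1/n]$, and noting that the bijection carries $\alpha$ to $\alpha|_\KO$, which has the same effect on $H(1)$ and on $\beta$, I am reduced to proving that
\[ [\KO, \KO[1/n]]_{\CAlg(\h\SH(\Z[1/2]))} \longrightarrow \KSp[1/n]^0(\HP^\infty) \times \GW(\Z[1/2])[1/n], \qquad f \longmapsto \bigl(f_*(H(1)),\ f(\beta)/\beta\bigr), \]
is injective.

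Next I would invoke Panin--Walter. This last injectivity is the content of \cite[Theorem 13.1]{panin2010motivic}: their computation of the $\KSp$-cohomology of quaternionic projective space, together with the fact that $\KO$ is generated, as a commutative ring spectrum, by the tautological symplectic bundle $H(1)$ on $\HP^\infty$ and the Bott element $\beta$, shows that a morphism of commutative ring objects $f\colon \KO \to A$ in $\h\SH$ is determined by the pair $\bigl(f_*(H(1)),\ f(\beta)\bigr)$. Specializing to $A = \KO[1/n]$ --- where the relevant codomains are $\KSp[1/n]^0(\HP^\infty)$ and $\GW(\Z[1/2])[1/n] = \pi_{0,0}\KO[1/n]$, and $f(\beta)$ is equivalent data to $f(\beta)/\beta$ since $\beta$ is a unit --- this is exactly the asserted injectivity. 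The further Panin--Walter datum $f(1) = 1$ is automatic here, since $\pi_{0,0}(\1_{\Z[1/2]}) \to \pi_{0,0}(\KO)$ is surjective (cf.\ Example \ref{ex:adamspsi-pi0}), so it contributes no coordinate.

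The hard part will be the extraction in the second step: squeezing out of the Panin--Walter projective bundle computation the precise statement that a ring map out of $\KO$ is pinned down by its effect on $H(1)$ and $\beta$ alone. This entails the implicit passage to the limit over $\HP^\infty = \colim_m \HP^m$ (harmless, as the tower $\{\KSp[1/n]^{*,*}(\HP^m)\}_m$ is Mittag--Leffler) and the identification of the $\GW(\Z[1/2])[1/n]$-factor with exactly the Bott-element datum. If one would rather cite \cite{panin2010motivic} only for the un-periodic spectrum $\Sigma^\infty L_\mot \GW$, one inserts between the two steps above the parallel reduction across the Bott localization $\KO = (\Sigma^\infty L_\mot \GW)[\beta^{-1}]$; it is there that the $\GW(\Z[1/2])[1/n]$-coordinate, recording $f(\beta)/\beta$, originates.
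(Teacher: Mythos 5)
Your overall strategy is the same as the paper's — you correctly route the statement through the Panin–Walter universality results for $\BO$. Your first step, making explicit the reduction from $[\KO[1/n],\KO[1/n]]_{\CAlg(\h\SH)}$ to $[\KO,\KO[1/n]]_{\CAlg(\h\SH)}$ via the $\limone$-vanishing in the mapping telescope, is a point the paper uses tacitly, and your justification (that precomposition with $n$ on $\KO$ agrees with multiplication by $n$ in the abelian group $[\KO, B]$, and this is an isomorphism when $B$ is $n$-local; similarly for $\KO \wedge \KO$ to control the ring structure) is correct.

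The gap is in the second step, and it is the step you yourself flag as ``the hard part.'' You attribute to \cite[Theorem 13.1]{panin2010motivic} the statement that a ring map $f\colon\KO\to A$ in $\h\SH$ is determined by the pair $(f_*(H(1)),\, f(\beta))$. That is not what Theorem 13.1 says. The paper cites the \emph{proofs} of Theorems 13.1--13.3 only for a weaker intermediate fact: using the cell presentation $\KO \simeq \colim_i \Sigma^{\infty + (4-8i,2-4i)} X_i$ with $X_i$ smooth affine over $\Z[1/2]$ (\cite[Theorem 12.3]{panin2010motivic}), one gets an injection (in fact isomorphism) $[\KO, \KO[1/n]] \hookrightarrow \lim_i \KO[1/n]^{8i-4,4i-2}(X_i)$, i.e.\ a spectrum map out of $\KO$ is pinned down by its effect on $\KSp[1/n]^0(X)$ for all smooth affine $X$ together with $\beta$. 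Bridging from there to ``determined by $H(1)\in\KSp^0(\HP^\infty)$ and $\beta$'' requires two further inputs that the paper invokes and that your sketch omits: (i) \cite[Theorem 8.1]{panin2010motivic}, which says that for $X$ affine the $\KSp^0$-class of any symplectic bundle is pulled back from the tautological bundle on some $\HGr(r,\infty)$, so the multiplicativity of $\alpha$ reduces the question to those tautological classes; and (ii) the quaternionic projective bundle computation \cite[Theorems 11.4 and 8.1]{panin2010quaternionic}, identifying $\KO^{**}(\HGr(r,\infty))$ with power series over $\KO^{**}(\HP^\infty)$, so that a ring map's effect on $\HGr(r,\infty)$ is determined by its effect on $\HP^\infty$. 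Without (i) and (ii) the claim that ``$\KO$ is generated, as a commutative ring spectrum, by $H(1)$ and $\beta$'' has no proof; it is precisely the conclusion, not a hypothesis available from Theorem 13.1.

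Your final parenthetical suggestion — to run a parallel reduction across the Bott localization $\KO=(\Sigma^\infty L_\mot\GW)[\beta^{-1}]$ — is not used by the paper and is unnecessary once the colimit-of-cells presentation of $\KO$ itself is in play, since that presentation already incorporates the periodicity.
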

\begin{proof}
There is a presentation $\KO \wequi \colim_i \Sigma^{\infty+(4-8i,2-4i)} X_i$, for a sequence of pointed, smooth, affine $\Z[1/2]$-schemes $X_i$ \cite[Theorem 12.3]{panin2010motivic}.
By \cite[proof of Theorems 13.1, 13.2, 13.3]{panin2010motivic}, the induced map $\gamma: [\KO, \KO[1/n]] \to \lim_i \KO[1/n]^{8i-4,4i-2}(X_i)$ is an injection (in fact, equivalence).
Note that $\gamma(\alpha)_i = \alpha(\gamma(\id)_i)$.
The isomorphism $\KO[1/n]^{8i-4,4i-2}(X_i) \wequi \KSp[1/n]^0(X_i)\{\beta^{-i}\}$ hence shows that a ring map $\alpha: \KO[1/n] \to \KO[1/n]$ is determined by its effect on $\KSp[1/n]^0(X)$ for $X$ smooth affine, and on $\beta$.
By \cite[Theorem 8.1]{panin2010motivic}, since $X$ is affine the action of $\alpha$ is determined by the action on the class corresponding to the tautological bundle of $\HGr(r, \infty)$ (for various $r$), and on $\beta$.
The claim now follows by the computation of the cohomology of $\HGr(r, \infty)$ in terms of $\HP^\infty$ \cite[Theorems 11.4 and 8.1]{panin2010quaternionic}.
\end{proof}

Recall that over any base scheme $S$ with $1/2 \in S$ we have \[ \KSp^0(\HP^\infty_S) = \KO^{4,2}(\HP^\infty_S) \wequi \bigoplus_{i \ge 0} \KO^{4-4i,2-2i}(S)\{b_1^{\KO}(\gamma)^i\}. \]
Since $\KO^{8i, 4i}(S) = \GW(S)$ and $\KO^{8i-4,4i-2}(S) = \Z$ make sense also for $S=\Spec(\Z)$ (with $\GW(\Z) = \Z \oplus \Z\{\lra{-1}\}$), we will by slight abuse of notation put \[ \KSp^0(\HP^\infty_\Z) := \bigoplus_{i \ge 0} \KO^{4-4i,2-2i}(\Z)\{b_1^{\KO}(\gamma)^i\}. \]
\begin{corollary} \label{cor:KO-ring-end}
Let $n$ be odd and $R \subset [\KO[1/n], \KO[1/n]]_{\SH(\Z[1/2])}$ denote the set of those homotopy ring maps such that $\alpha(\beta)/\beta \in \GW(\Z)[1/n] \subset \GW(\Z[1/2])[1/n]$ and $\alpha(H(1)) \in \KSp^0(\HP^\infty_\Z)[1/n] \subset \KSp^0(\HP^\infty_{\Z[1/2]})[1/n]$.
Then the map \[ R \to \GW(\Z)[1/n], \alpha \mapsto \alpha(\beta)/\beta \] is an injection.
\end{corollary}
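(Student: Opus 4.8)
The plan is to use Lemma~\ref{lemm:KO-opns} to reduce the statement to the claim that, for $\alpha\in R$, the class $\alpha(H(1))$ depends only on $u:=\alpha(\beta)/\beta$, and then to establish this by fracturing $\KO[1/n]$ at the prime $2$.

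\emph{Reduction.} By Lemma~\ref{lemm:KO-opns} the map $\alpha\mapsto(\alpha(H(1)),\alpha(\beta)/\beta)$ is already injective on $R$, so it suffices to show that $\alpha,\alpha'\in R$ with $\alpha(\beta)/\beta=\alpha'(\beta)/\beta$ satisfy $\alpha(H(1))=\alpha'(H(1))$. Here $\alpha(H(1))$ is obtained by post-composing $H(1)\colon\Sigma^\infty_+\HP^\infty\to\Sigma^{4,2}\KO[1/n]$ with $\alpha$; since the towers $\{\KSp^0(\HP^n)\}$ and $\{\KSp^{-1}(\HP^n)\}$ are Mittag--Leffler (projective bundle formula), $\KSp^0(\HP^\infty_\Z)[1/n]=\lim_n\KSp^0(\HP^n)[1/n]$, so phantom self-maps of $\KO[1/n]$ act trivially on it and $\alpha(H(1))$ depends on $\alpha$ only through its effect on $\pi_{**}\KO[1/n]$. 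Moreover $\KSp^0(\HP^\infty_\Z)[1/n]=\bigoplus_{i\ge 0}\KO^{4-4i,2-2i}(\Z)[1/n]\{b_1^{\KO}(\gamma)^i\}$ is torsion-free (each summand is a copy of $\Z[1/n]$ or of $\GW(\Z)[1/n]$), so any $N$-torsion self-map of $\KO[1/n]$ acts as $0$ on it. Hence it is enough to show that $\alpha$ and $\alpha'$ induce the same endomorphism of $\pi_{**}\KO[1/n]$ modulo torsion.

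\emph{Fracture and rigidity.} Since $n$ is odd, $\KO[1/n]_2^\comp\simeq\KO_2^\comp$, so by Corollary~\ref{cor:mot-fracture} together with Remark~\ref{rmk:e-infinity-fracturing}, a homotopy ring endomorphism of $\KO[1/n]$ amounts to a compatible triple of such on $\KO_2^\comp$, on $\KO[1/n][1/2]^-$ and on $\KO[1/n][1/2]^+$ (the rational corners $\KO_2^\comp[1/2]^\pm$ carrying the compatibility data). I would then argue that each component of the triple is determined, up to torsion on homotopy, by $u$. On $\KO_2^\comp\simeq\KGL_2^\comp{}^{hC_2}$ (Proposition~\ref{prop:htpy-fixed}), discreteness of the space of homotopy ring endomorphisms of $\KU$ (McClure; cf.\ the proof of Lemma~\ref{lemm:adams-technical}) forces $\alpha_2^\comp$ to be induced by a $C_2$-equivariant ring endomorphism of $\KGL_2^\comp$ acting on $\beta_\KGL$ by some $m\in(\Z_2^\comp)^\times$ with $m^4=\rk(u)$; as $\KO_2^\comp$ only records $m^2$ (its action on $\beta_\KGL^2$, hence on all of $\pi_{**}\KGL_2^\comp{}^{hC_2}$) and $m^2$ is the unique square of a fourth root of $\rk(u)$, $\alpha_2^\comp$ is determined by $u$. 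On $\KO[1/n][1/2]^-$, using that $\Z[1/2]$ is uniquely orderable, $r_\R$ restricts to an equivalence $\SH(\Z[1/2])[1/2,1/\eta]\simeq\SH[1/2]$ (Corollary~\ref{cor:rR-eta-2-per}) identifying it with $\KO^\topsup[1/2n]$ (Lemma~\ref{lemm:KO-real-realization}); since $\alpha$ fixes $u$ and the rest of the image of $\pi_{**}(\1[\eta^{-1}])$ (Example~\ref{ex:adamspsi-pi0}), it acts on $\pi_*\KO^\topsup[1/2n]=\Z[1/2n][r_\R(\beta)^{\pm 1}]$ as the $\Z[1/2n]$-algebra map multiplying $r_\R(\beta)$ by $r_\R(u)$, so is exactly determined by $u$. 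On $\KO[1/n][1/2]^+\simeq\KGL[1/2n]^{hC_2}$, with homotopy $\Z[1/2n][\beta_\KGL^{\pm 2}]$, a homotopy ring endomorphism is recorded by a unit $w\in\Z[1/2n]^\times$ with $w^2=\rk(u)$, and compatibility with $\alpha_2^\comp$ across $\KO_2^\comp[1/2]^+$ forces $w=m^2$ in $\Z_2^\comp$; since $\Z[1/2n]^\times\hookrightarrow(\Z_2^\comp)^\times$ this pins $w$ down from $u$. Reassembling the three compatible components shows $\alpha$ and $\alpha'$ agree on $\pi_{**}\KO[1/n]$ up to torsion, which by the reduction step gives $\alpha(H(1))=\alpha'(H(1))$.

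\emph{Main obstacle.} The delicate step is the plus part: before completing at $2$, the spectrum $\KGL[1/2n]^{hC_2}$ carries genuinely more homotopy ring self-maps than its $2$-completion does — the relation $w^2=\rk(u)$ leaves a sign — so the spurious square root can only be eliminated by the gluing with the (rigid) $2$-complete factor. Making this precise requires checking that the relevant homotopy of the rational corner $\KO_2^\comp[1/2]^+$ is faithfully detected $2$-adically, and that the resulting $w$ truly lies in $\Z[1/2n]^\times$ rather than merely in $\Q^\times$; both are controlled by the integrality conditions defining $R$ (entering via Lemma~\ref{lemm:KO-opns}), and that is where the real work of the proof lies.
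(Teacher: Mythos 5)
Your reduction step has a genuine gap. You claim that since $\KSp^0(\HP^\infty_\Z)[1/n]=\lim_m \KSp^0(\HP^m)[1/n]$ and phantom self-maps act trivially on such a limit, it follows that ``$\alpha(H(1))$ depends on $\alpha$ only through its effect on $\pi_{**}\KO[1/n]$.'' The first observation is fine, but the second does not follow: a self-map of $\KO[1/n]$ that is zero on $\pi_{**}$ need not act by zero on $\KSp^0(\HP^m)$ for finite $m$, because $\HP^m$ is cellular but not a wedge of motivic spheres, and the effect of a ring endomorphism $\alpha$ on the free $\KO^{**}$-module $\KO^{**}(\HP^m)$ is determined by its action on the module generators $b_1^i=(H(1)-H_-)^i$, which are \emph{not} classes in $\pi_{**}$. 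Indeed the entire point of Lemma~\ref{lemm:KO-opns} is that $\alpha$ is determined by $\alpha(\beta)$ \emph{together with} $\alpha(H(1))$, precisely because the latter is extra data beyond $\pi_{**}$; so the claim that $\pi_{**}(\alpha)$ mod torsion already determines $\alpha(H(1))$ is essentially the statement you are trying to prove and cannot be used as a lemma. The paper instead passes first to the rationalization $\KO\otimes\Q$ (legitimate since the target of Lemma~\ref{lemm:KO-opns} for $R$ is torsion-free, so $R\hookrightarrow [\KO\otimes\Q,\KO\otimes\Q]$), where the $+/-$ splitting and the freeness of $(\KO\otimes\Q)^\pm$ as rational $\scr E_\infty$-rings on $\beta$ resp.\ $H_-$ make the ring endomorphism space genuinely determined by $\pi_{**}$; that reduction cannot be run integrally the way you propose.

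The $2$-complete corner of your fracture argument has a second gap: you assert that discreteness of $\KU$ ring endomorphisms ``forces $\alpha_2^\comp$ to be induced by a $C_2$-equivariant ring endomorphism of $\KGL_2^\comp$.'' But there is no construction that takes a homotopy ring self-map of $\KO_2^\comp=(\KGL_2^\comp)^{hC_2}$ and produces a $C_2$-equivariant one of $\KGL_2^\comp$ inducing it; Goerss--Hopkins discreteness (or McClure's result) controls the space of ring endomorphisms of $\KU_2^\comp$, not the fiber of $\Map_{\mathrm{ring}}^{BC_2}(\KGL_2^\comp,\KGL_2^\comp)\to\Map_{\mathrm{ring}}(\KO_2^\comp,\KO_2^\comp)$. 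The paper sidesteps this entirely: once the problem is rational, $\alpha$ is parametrized by $a=\alpha(\beta)/\beta$ and $b=\alpha(H_-)/H_-$, the relation $H_-^2=2h\beta$ forces $b^2=\rk(a)$, and the sign of $b$ is pinned down by showing $b\equiv 1\pmod 4$ via the explicit description $[\KO_2^\comp,\KO_2^\comp]_{\SH}\cong\Z_2^\comp\fpsr{T}$ with $T=\adamspsi^5-1$ (using that $n$ is odd). Your instinct that the sign ambiguity must be resolved $2$-adically is right, but both the reduction to $\pi_{**}$ and the $2$-adic rigidity input would need to be replaced to make the argument work.
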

\begin{proof}
Consider the commutative diagram
\begin{equation*}
\begin{CD}
R @>>> \KSp[1/n]^0(\HP^\infty_\Z) \times \GW(\Z)[1/n] \\
@V{\alpha \mapsto \alpha \otimes \Q}VV @VVV \\
[\KO \otimes \Q, \KO \otimes \Q]_{\CAlg(\h\SH(\Z[1/2]))} @>>> (\KSp \otimes \Q)^0(\HP^\infty_{\Z[1/2]})\times (\GW(\Z[1/2]) \otimes \Q).
\end{CD}
\end{equation*}
The top horizontal map is injective by Lemma \ref{lemm:KO-opns}, and the right hand vertical map is injective since $\GW(\Z), \Z$ are torsion-free.
It follows that the left hand vertical map is injective.

Write $H_- \in \KO^{-4,-2}(*)$ for the trivial symplectic bundle.
We shall now show that a homotopy ring map $\alpha: \KO \otimes \Q \to \KO \otimes \Q$ (over $\Z[1/2]$) is determined by its effect on $\beta$ and $H_-$.
Since \[ \SH(S)\otimes \Q \wequi (\SH(S) \otimes \Q)^+ \times (\SH(S) \otimes \Q)^- \] as symmetric monoidal categories, we need only prove the same claim about $(\KO \otimes \Q)^\pm$.
We have equivalences of homotopy ring spectra $(\KO \otimes \Q)^+ \wequi \HZ[t, t^{-1}] \otimes \Q$ and $(\KO \otimes \Q)^- \wequi (\1 \otimes \Q)^-[u, u^{-1}]$ (see e.g. \cite[Theorem D]{deglise2019borel}); here $|t| = (4,2)$ and $|u| = (8,4)$.
Since \[ [\1, \Sigma^{2n,n}(\1 \otimes \Q)^-] \stackrel{r_\R}{\wequi} [\Q, \Q[n]]_{D(\Q)} = 0 \text{ for } n \ne 0 \] and similarly \[ [\HZ, \Sigma^{2n,n} \HZ \otimes \Q]_{\SH(\Z[1/2])} = 0 \text{ for } n \ne 0 \] (see e.g. \cite[Remark 5.3.16]{riou2010algebraic}), it follows that $\alpha$ is determined by its effect on $t, u$.
Since $t$ can be chosen to be the image of $H_-$, and $u$ the image of $\beta$, the claim follows.

We deduce the following: given $\alpha \in R$, write $\alpha(\beta) = a \beta$ and $\alpha(H_-) = b H_-$, for some $a \in \GW(\Z)[1/n]$, $b \in \Z[1/n]$ (uniquely determined).
Then $a$ and $b$ determine $\alpha$.
To conclude the proof, we need to show that $a$ determines $b$.
Since $H_-^2 = 2h\beta$, we find that $b^2 = \rk(a)$, so that $a$ determines $b$ up to a sign.
It will thus suffice to prove that $b \equiv 1 \pmod{4}$.\NB{Since among $b,-b$, only one can be $\equiv 1 \pmod 4$. Here we are definitely using that $n$ is odd, btw.}
Since the complex realization of $\KO$ is $\KO^\topsup$ (see e.g. \cite[Lemma 2.13]{ananyevskiy2017very}), we may as well show: if $\alpha: \KO_2^\comp \to \KO_2^\comp \in \SH$ is a homotopy unital map, then $(\alpha-1)(H_-) \equiv 0 \pmod 4$.
One has $[\KO_2^\comp, \KO_2^\comp]_{\SH} \wequi \Z_2^\comp\fpsr{T}$, where $T=\adamspsi^5-1$ (see e.g. \cite[Proposition 3.7]{hahn2007iwasawa}).
We may thus (formally) write $\alpha = \sum_{i \ge 0} a_i T^i$ for certain $a_i \in \Z_2^\comp$.
Since $\alpha$ is unital we must have $1 = \alpha(1) = a_0$.
Since $\adamspsi^5(H_-) = 25H_-$ (e.g. by comparison with the Adams action on $\KU_2^\comp$) we get $T(H_-) = 24H_- \equiv 0 \pmod 4$.
This implies the claim and concludes the proof.
\end{proof}

\subsubsection{}
For a scheme $X$ over $\Z[1/2]$, put $\GW^\pm(X) = \KO^0(X) \oplus \KSp^0(X)$.
This is a commutative, $\Z/2$-graded $\lambda$-ring with $\lambda$-operations given by exterior powers of vector bundles \cite[Theorem 4.2.4]{FH-adams}.
We denote the associated Adams operations by $\adamspsi^n_\geo$.
Recall the stable operations $\adamspsi^n_\FH: \KO[1/n] \to \KO[1/n]$ from \cite[\S5.2]{FH-adams}.
\begin{lemma} \label{lemm:adamspsi-FH-action}
For $n$ odd, we have \[ \pi_0\Omega^\infty(\adamspsi^n_\FH) = \adamspsi^n_\geo|_{\KO^0(\ph)[1/n]} \] and \[ \pi_0\Omega^{\infty+4,2}(\adamspsi^n_\FH) = \lra{(-1)^{n(n-1)/2}}n n_\epsilon \adamspsi^n_\geo|_{\KSp^0(\ph)[1/n]}. \]
Moreover $\adamspsi^n_\FH(\beta) = n^2 n^2_\epsilon \beta$.
\end{lemma}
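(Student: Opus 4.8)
Parts (1) and (2) are not new: they are read off from the construction of Fasel--Haution \cite{FH-adams}. Part (1) is the defining feature of $\adamspsi^n_\FH$ — after applying $\Omega^\infty$ and $\pi_0$ it recovers the $\lambda$-ring operation on $\KO^0(\ph)[1/n] = \GW^{[0]}(\ph)[1/n]$ — and part (2) is their computation of the discrepancy on the symplectic summand $\GW^{[2]}(\ph) = \KSp^0(\ph)$; the unit $\lra{(-1)^{n(n-1)/2}} n n_\epsilon$ records that the degree-shift $\GW^{[0]} \to \GW^{[2]}$ is not respected by the $\Z/2$-graded $\lambda$-structure. So the only point requiring argument is the value $\adamspsi^n_\FH(\beta) = n^2 n_\epsilon^2 \beta$.

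The plan is to deduce this by $2$-adic comparison with $\adamspsi^n_\OSnaith$. First reduce to the universal case $S = \Spec \Z[1/2]$, everything being stable under base change. Since $\adamspsi^n_\FH$ is built from exterior powers of vector bundles, it is compatible with duality, hence — via the duality-equivariant map $(\KO \to \KGL)$ of \S\ref{subsec:KO-KGL} — with the $C_2$-equivariant operation $\adamspsi^n_\Snaith$ on $\KGL[1/n]$; on underlying spectra this compatibility is property (4) of Theorem \ref{thm:ko-adams}, which \cite{FH-adams} establishes, combined with $\adamspsi^n_\Snaith \simeq \adamspsi^n_\Riou$. Consequently the endomorphism induced by $\adamspsi^n_\FH$ on $\KO[1/n]^+ \simeq \KGL[1/n]^{hC_2}$ (Lemma \ref{lemm:KO+}) is $(\adamspsi^n_\Snaith)^{hC_2}[1/n]$, and after $2$-completion — using $(\KO[1/n]^+)_2^\comp \simeq \KO_2^\comp$ — it is $\adamspsi^n_\OSnaith$. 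By Theorem \ref{thm:bott-action} this operation acts on $\beta \in \pi_{8,4}(\KO_2^\comp)$ by multiplication by $n^2 n_\epsilon^2$, so $(\adamspsi^n_\FH)_2^\comp(\beta) = n^2 n_\epsilon^2 \beta$. Finally, over $\Z[1/2]$ the map $\GW(\Z[1/2]) \to \GW(\Z[1/2])_2^\comp$ is injective (Lemma \ref{lemm:GW-inj}, using $\vcd_2(\Q) < \infty$), hence so is $\pi_{8,4}(\KO[1/n]) \to \pi_{8,4}(\KO[1/n])_2^\comp$, and then so is $\pi_{8,4}(\KO[1/n]) \to \pi_{8,4}(\KO[1/n]_2^\comp)$ by Corollary \ref{corr:completion-inj}; the identity $\adamspsi^n_\FH(\beta) = n^2 n_\epsilon^2 \beta$ follows.

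The main obstacle I anticipate is checking that the compatibility of $\adamspsi^n_\FH$ with $\adamspsi^n_\Snaith$ on $\KGL$ really is $C_2$-equivariant, i.e. refines to a morphism of diagrams over $BC_2^\triangleleft$, so that one may pass to homotopy fixed points and recognize the result as $\adamspsi^n_\OSnaith$; \cite{FH-adams} records only the non-equivariant square, so this must be extracted from the exterior-power construction. A more geometric alternative that sidesteps this is to combine parts (1) and (2) directly: writing $\adamspsi^n_\FH(\beta) = a\beta$, the rank of $a$ is $n^4$ by Theorem \ref{thm:ko-adams}(4) and Lemma \ref{lemm:action-KGL} (since $\beta \mapsto \beta_\KGL^4$), while expressing $\beta$ via the reduced symplectic Borel classes of the tautological bundle on $\HP^1$ \cite{panin2010motivic} and applying (2) twice contributes $(\lra{(-1)^{n(n-1)/2}} n n_\epsilon)^2 = n^2 n_\epsilon^2$, and — using real realization (Lemma \ref{lemm:KO-real-realization}) together with $n_\epsilon \equiv 1$ in $\W$ — the Witt class of $a$ is $n^2$; since $\Z[1/2]$ is a Dedekind domain with trivial Picard group, Lemma \ref{lemm:witt-injection} then pins $a$ down to $n^2 n_\epsilon^2$. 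On this route the delicate point is instead ruling out the residual $2$-torsion of $\W(\Z[1/2])$, which one handles as in the proof of Theorem \ref{thm:bott-action}.
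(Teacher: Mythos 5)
Your proposal is organized around a gap that isn't really there, and the "main plan" (Route 1) you give is the wrong approach.

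The paper's proof establishes all three claims at once by a short induction on the construction of $\adamspsi^n_\FH$: one records that, by construction, $\alpha_i := \pi_0\Map(\Sigma^{4i,2i}\ph, \adamspsi^n_\FH)$ satisfies the periodicity compatibility $\alpha_i((H(1)-H_-)\boxtimes E) = (H(1)-H_-)\boxtimes\alpha_{i+1}(E)$, that $\alpha_{-1} = \omega^{-1}\adamspsi^n_\geo$ by construction, and that $\adamspsi^n_\geo(H(1)-H_-) = \omega(H(1)-H_-)$ by \cite[Lemma 5.1.4]{FH-adams}; together these yield $\alpha_i = \omega^i\adamspsi^n_\geo$ for all $i$. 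Parts (1) and (2) are the cases $i=0,1$, and the $\beta$-statement then falls out because $\beta = (H(1)-H_-)^{\boxtimes 2}$ and $\adamspsi^n_\geo$ is multiplicative, so $\adamspsi^n_\geo(\beta) = \omega^2\beta = n^2 n_\epsilon^2\beta$. So all three statements are on equal footing, ``read off'' in the same way; (3) does not need a fundamentally different argument from (1) and (2).

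Route 1 does not work as stated, and the ``obstacle'' you flag is genuinely disqualifying. The Fasel--Haution operation is built from exterior powers of vector bundles on the level of Grothendieck--Witt spaces; it is never constructed as a morphism of $C_2$-diagrams refining $\adamspsi^n_\Snaith$ on $\KGL$. Establishing that refinement would require redoing a significant part of the equivariant construction of \S\ref{subsec:KO-KGL} for the exterior-power operations, and — had one done so — one would already have reproved Proposition \ref{prop:FH-comparison}, which is the later result whose proof cites this lemma. That is exactly why the $\adamspsi^n_\FH$-versus-$\adamspsi^n_\OSnaith$ comparison is deferred to Proposition \ref{prop:FH-comparison} and is argued there through an injectivity statement ($\alpha\mapsto\alpha(\beta)/\beta$) rather than through homotopy fixed points. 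Using the $2$-adic route to prove this lemma gets the logic backwards.

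Route 2 is morally the paper's argument and yields the correct answer, but two things are off. First, what you need to ``apply twice'' is not part (2) of the lemma (which concerns the spectrum-level operation on $\KSp^0$) but the geometric identity $\adamspsi^n_\geo(H(1)-H_-) = \omega(H(1)-H_-)$; the Bott element lives in $\widetilde{\GW}^0(\HP^1\wedge\HP^1)$, so it is $\alpha_0 = \adamspsi^n_\geo$ from part (1), not $\alpha_1$, that gets evaluated on $\beta$, and the multiplicativity of $\adamspsi^n_\geo$ then hands you $\omega^2$ directly. Second, once you have $\omega^2 = n^2 n_\epsilon^2$ (using $\lra{u}^2 = 1$), you are done: the rank computation, the real realization, and the injection from Lemma \ref{lemm:witt-injection} are all redundant and only reproduce what you already computed. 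That redundancy suggests you didn't trust the direct computation; the reason to trust it is that $\adamspsi^n_\FH$ is by construction determined by the inductive relation and $\alpha_{-1}$, which is the content of the paper's proof.
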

\begin{proof}
Put $\alpha_i = \pi_0\Map(\Sigma^{4i,2i} \ph, \adamspsi^n_\FH)$, $\omega = \lra{(-1)^{n(n-1)/2}}n n_\epsilon$.
By construction, for $X \in \Sm_S$ and $E \in \KO^0(X)$ or $E \in \KSp^0(X)$ we have \begin{equation} \label{eq:adams-FH-ind} \alpha_{i}((H(1)-H_-) \boxtimes E) = (H(1)-H_-) \boxtimes \alpha_{i+1}(E) \in \GW^\pm(\HP^1 \wedge X_+). \end{equation}
Furthermore by construction \cite[p. 16]{FH-adams}, the map $\alpha_{-1}$ is given by $\omega^{-1} \adamspsi^n_\geo$.
Since $\adamspsi^n_\geo(H(1)-H_-) = \omega(H(1)-H_-)$ \cite[Lemma 5.1.4]{FH-adams}, and $\adamspsi^n_\geo$ preserves products in all of $\GW^\pm(X)$, \eqref{eq:adams-FH-ind} inductively implies that $\alpha_i = \omega^i \adamspsi^n_\geo$.
All claims follow.
\end{proof}

\subsubsection{}
So far no satisfactory spectrum $\KO \in \SH(\Z)$ has been constructed.
We offer a replacement $\KO' \in \SH(\Z)$ which has enough good properties for our purposes.
To construct it, we form a pullback square in $\CAlg(\SH(\Z))$ as follows
\begin{equation} \label{eq:define-KO'}
\begin{CD}
\KO'_\Z @>>> \KO^\topsup[1/2] \\
@VVV           @VVV           \\
\KGL_\Z^{hC_2} @>>> (\KO^\topsup)_2^\comp[1/2].
\end{CD}
\end{equation}
We view $\KO^\topsup[1/2]$ as an element of $\SH[1/2] \wequi \SH(\Z)[1/2]^-$, and similarly for $(\KO^\topsup)_2^\comp$.
Note that the canonical functor $\SH(\Z) \to \SH(\Z[1/2])$ preserves limits and induces an equivalence on $(\ph) \otimes \Z[1/2]^-$.
In order to define the map $\KGL_\Z^{hC_2} \to (\KO^\topsup)_2^\comp[1/2] \in \SH(\Z)$, it thus suffices to produce the same map over $\Z[1/2]$.
There we have $\KGL^{hC_2} \wequi \KO^+$ and $(\KO^\topsup)_2^\comp[1/2] \wequi \KO_2^\comp[1/2]^- \wequi (\KO^+)_2^\comp[1/2]^-$ (see Lemma \ref{lemm:KO+}, Corollary \ref{cor:mot-fracture} and Lemma \ref{lemm:KO-real-realization}), yielding an evident map.
If $S$ is any scheme, we define $\KO'_S \in \CAlg(\SH(S))$ by pullback from $\Z$.
We stress that we do not know if this is a reasonable definition in general.

\begin{remark} \label{rmk:construct-KO'}
$\KO'_\Z$ is essentially constructed in such a way as to recover the $2$-adic fracture square (in the sense of Corollary \ref{cor:mot-fracture}) for the conjectural object $\KO_\Z$.
Indeed by construction, $(\KO'_\Z)_2^\comp \wequi (\KGL_\Z^{hC_2})_2^\comp$, which one expects to coincide with $(\KO_\Z)_2^\comp$ by the homotopy fixed point theorem; relatedly one expects that $\KGL_\Z^{hC_2} \wequi \KO_\Z^+$ (as in Lemma \ref{lemm:KO+}).
Then we glue in the minus part, which should not differ between $\Z$ and $\Z[1/2]$, to obtain an object with the expected homotopy groups integrally.
\end{remark}

Here are some useful properties of $\KO'$.
For part (2) below we require some difficult results about Hermitian $K$-theory over schemes in which $2$ is not invertible from \cite{GW-III}.
\begin{lemma} \label{lemm:htpy-fixed-KGL-Z}
\begin{enumerate}
\item For any scheme $S$, the ring spectrum $\KO'_S \in \SH(S)$ is $\SL$-oriented.
\item For $S=\Spec(\Z)$ we have \[ \pi_{4*,2*}\KO'_\Z \wequi \GW(\Z)[\beta, \beta^{-1}, H_-]/(\I(\Z)H_-, H_-^2-2h\beta), \] and also \[ \pi_{4*,2*}(\KGL_\Z^{hC_2})_2^\comp \wequi \GW(\Z)_2^\comp[\beta, \beta^{-1}, H_-]/(\I(\Z)H_-, H_-^2-2h\beta), \]
\item If $1/2 \in S$ then $\KO'_S \wequi \KO_S$.
\end{enumerate}
\end{lemma}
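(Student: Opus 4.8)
The three parts are essentially independent, and I would prove them in the order (3), (1), (2), since (3) is purely formal, (1) bootstraps off the pullback description of $\KO'$, and (2) is the computational heart where the hard input from \cite{GW-III} enters.

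\emph{Part (3).} The claim is that over a base $S$ with $1/2 \in S$ the square \eqref{eq:define-KO'}, base-changed to $S$, \emph{is} the $2$-adic fracture square of $\KO_S$ furnished by Corollary \ref{cor:mot-fracture}. Since $\SH(\Z) \to \SH(S)$ is exact, $\KO'_S$ is the pullback of $\KGL_S^{hC_2} \to (\KO^\topsup)_2^\comp[1/2]_S \leftarrow \KO^\topsup[1/2]_S$, while Corollary \ref{cor:mot-fracture} (first square, $E = \KO_S$) presents $\KO_S$ as $\KO_S^+ \times_{(\KO_S)_2^\comp[1/2]^-} \KO_S[1/2]^-$. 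I would match the two spans termwise: $\KO_S^+ \wequi \KGL_S^{hC_2}$ follows from Lemma \ref{lemm:heard} (which identifies $\KO[1/2]^+ = \KO[1/2]_\eta^\comp$ with $\KGL[1/2]^{hC_2}$, $\KGL^{hC_2}$ being $\eta$-complete) together with Proposition \ref{prop:htpy-fixed} for the $2$-complete part, all of this stable under base change from $\Z[1/2]$; and $\KO_S[1/2]^- \wequi \KW_S[1/2] \wequi r_\R(\KO_S)[1/2]$, which over $\Z[1/2]$ is $\KO^\topsup[1/2]$ by Lemma \ref{lemm:KO-real-realization}, again compatibly with base change. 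The two connecting maps agree by construction, since the map $\KGL_\Z^{hC_2} \to (\KO^\topsup)_2^\comp[1/2]$ in \eqref{eq:define-KO'} was \emph{defined} over $\Z[1/2]$ to be the fracture-square map of $\KO_{\Z[1/2]}$.

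\emph{Part (1).} By base change it suffices to produce a ring map $\MSL_\Z \to \KO'_\Z$, and by the pullback \eqref{eq:define-KO'} this amounts to compatible ring maps to $\KGL_\Z^{hC_2}$, to $\KO^\topsup[1/2]$, and to $(\KO^\topsup)_2^\comp[1/2]$. The first exists because $\KGL$ carries an $\SL$-orientation compatible with the $C_2$-action by duals (the dual of an $\SL$-bundle is again an $\SL$-bundle, and the Thom classes transform up to the harmless sign $(-1)^{\rk}$), as in the Panin--Walter construction of the $\SL$-orientation of $\KO$; and a $C_2$-equivariant ring map $\MSL \to \KGL$ with trivial $C_2$-action on $\MSL$ is the same as a ring map $\MSL \to \KGL^{hC_2}$. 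For the minus corners, $\KO^\topsup$ is classically $\MSO^\topsup$-oriented, $r_\R(\MSL)[1/2]$ is $\MSO^\topsup[1/2]$-orientable (real points of $\SL_n$ retract onto $SO(n)$), and $2$-completion preserves orientations. The compatibility over $(\KO^\topsup)_2^\comp[1/2]$ is the one point requiring care: that target is rational ($\pi_* \wequi \Q_2$ in degrees $\equiv 0 \bmod 4$) and $\MSL \otimes \Q$ is polynomial, so ring maps into it are rigid, and the comparison of the two induced orientations reduces to the classical fact that the motivic $\SL$-orientation of $\KO$ realizes to the standard one.

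\emph{Part (2).} First, $2$-completing \eqref{eq:define-KO'} and using $(\KO^\topsup[1/2])_2^\comp \wequi 0 \wequi ((\KO^\topsup)_2^\comp[1/2])_2^\comp$ gives $(\KO'_\Z)_2^\comp \wequi (\KGL_\Z^{hC_2})_2^\comp \wequi (\KGL(\Z)_2^\comp)^{hC_2}$, the last step because $(\ph)^{hC_2}$, being a fibre up to shift, commutes with $2$-completion. The core computation is then $\pi_{4*,2*}(\KGL(\Z)_2^\comp)^{hC_2}$: one runs the homotopy fixed point spectral sequence $H^s(C_2,\pi_t\KGL(\Z)_2^\comp) \Rightarrow \pi_{t-s}((\KGL(\Z)_2^\comp)^{hC_2})$ with input the known $2$-complete $K$-groups $K_*(\Z)_2^\comp$ (Quillen--Lichtenbaum) and identifies the outcome with the $2$-complete higher Grothendieck--Witt groups of $\Z$; this is exactly where the difficult results of \cite{GW-III} on hermitian $K$-theory of schemes in which $2$ is not invertible (and the comparison à la \cite{berrick2015homotopy}) are needed, yielding $\pi_{4*,2*}(\KGL(\Z)_2^\comp)^{hC_2} \wequi \GW(\Z)_2^\comp[\beta,\beta^{-1},H_-]/(\I(\Z)H_-, H_-^2-2h\beta)$. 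This is the main obstacle. Finally $\pi_{4*,2*}\KO'_\Z$ is assembled from the ordinary $2$-adic fracture square of the spectrum $\KO'_\Z$, whose $[1/2]$-part is $\KO_{\Z[1/2]}[1/2]$ by part (3) (with known homotopy), using that the resulting Mayer--Vietoris pullback recovers $\GW(\Z)$ by the fracture of the abelian group $\GW(\Z)$ — valid since $\GW(\Z) \hookrightarrow \GW(\Z)_2^\comp$ by Corollary \ref{lemm:GW-inj} and all torsion is $2$-primary of bounded order by Lemma \ref{lemm:witt-torsion} — together with $\GW(\Z[1/2])[1/2] \wequi \GW(\Z)[1/2]$.
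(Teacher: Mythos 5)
The three parts land with very different degrees of success, so let me take them in order.

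\emph{Part (3).} Your argument is the paper's argument: match the two spans corner by corner and observe that the connecting map in \eqref{eq:define-KO'} was \emph{defined} by pullback from $\Z[1/2]$, so the base-changed square is literally the $\KO_{\Z[1/2]}$-fracture square. One small imprecision: the identification $\KO_S^+ \wequi \KGL_S^{hC_2}$ over a $QL$-base is the content of Lemma~\ref{lemm:KO+}, not Lemma~\ref{lemm:heard} (the latter only provides the $\eta$-complete/modular statement $\KO/\eta \simeq (\KGL/\eta)^{hC_2}$; one still needs the homotopy fixed point theorem of Proposition~\ref{prop:htpy-fixed} to glue in the $2$-complete corner).

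\emph{Part (1).} This works, but you are taking a longer route than necessary and it shows at the compatibility step. The paper's argument: get $\MSL_\Z \to \KGL^{hC_2}_\Z$ from the $C_2$-equivariant refinement of $\MSL \to \KGL$ supplied by \cite[Corollary~B.3]{bachmann-euler}; for the two minus-type corners, base change to $\Z[1/2]$ where the whole span collapses to the $\KO_{\Z[1/2]}$-fracture square, and the single already-existing $\scr E_\infty$-orientation $\MSL \to \KO_{\Z[1/2]}$ factors through it compatibly \emph{by construction}. You instead build independent topological $\MSO$-orientations on $\KO^\topsup[1/2]$ and $(\KO^\topsup)_2^\comp[1/2]$ and then try to compare them at the rational gluing corner. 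That comparison is not automatic: you gesture at rigidity of $\scr E_\infty$-maps $\MSL\otimes\Q \to (\KO^\topsup)_2^\comp[1/2]$ and ``the classical fact that the motivic $\SL$-orientation of $\KO$ realizes to the standard one,'' but neither is spelled out. The paper's route avoids ever having to confront this issue.

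\emph{Part (2).} Here there is a genuine gap. The spectral sequence you write down, $H^s(C_2,\pi_t\KGL(\Z)_2^\comp) \Rightarrow \pi_{t-s}\bigl((\KGL(\Z)_2^\comp)^{hC_2}\bigr)$, computes only the \emph{weight-zero} bigraded homotopy $\pi_{*,0}$. The lemma demands $\pi_{4*,2*}$ for all weights, and the $C_2$-action on $\map(S^{4n,2n},\KGL)\simeq \K(\Perf_\Z)$ is \emph{not} the standard dualization: it is the $2n$-shifted duality, which by the Karoubi periodicity of shifted dualities is the standard one when $2n \equiv 0 \pmod 4$ and the \emph{negated} one when $2n \equiv 2 \pmod 4$. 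This is exactly what produces the $(8,4)$-periodicity, the class $H_- \in \pi_{4,2}$ of the symplectic hyperbolic form, and the relations $\I(\Z)H_- = 0$, $H_-^2 = 2h\beta$ in the answer. If you run your spectral sequence with the one (unstated, implicitly weight-zero) duality, you would instead produce $\GW(\Z)_2^\comp$ in every degree divisible by $8$ and nothing in degree $4$, i.e.\ the wrong ring. The paper spends most of its proof of~(2) on precisely this point: it analyses the dualities on $\map(\Sigma^{2n,n}X,\KGL)\simeq\K(\Perf_X)$ via pushforward along $X \times \P^1 \to X$ and tensoring with $\scr O[1]$, deduces that $\beta$ (represented by $\scr O[2]$ with a $4$-shifted self-duality) acts as an $(8,4)$-periodicity, and only then invokes \cite[Theorem~2]{GW-III} to identify $A_{0,0}$ and $A_{4,2}$. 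Without the shifted-duality bookkeeping, the computation does not even reduce to a statement that \cite{GW-III} can verify. (Relatedly, the spectral sequence detour is superfluous once one has the right framework: \cite[Theorem~2]{GW-III} directly identifies the relevant fixed-point spectra with $2$-complete Grothendieck--Witt theory, so no independent $E_2$-computation is required or performed in the paper.) The concluding fracture-square assembly of $\pi_{4*,2*}\KO'_\Z$ is fine in outline, and your observation that $\GW(\Z)\hookrightarrow\GW(\Z)_2^\comp$ with $2$-primary bounded torsion (Corollary~\ref{lemm:GW-inj}, Lemma~\ref{lemm:witt-torsion}) is the right input; the paper reads off the same answer from the $[1/2]^\pm$-decomposition of the corners.
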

\begin{proof}\NB{These are a bit sketchy...}
(3) Since $\KO$ is stable under base change among schemes containing $1/2$, we may assume that $S = \Spec(\Z[1/2])$.
Since $\SH(\Z) \to \SH(\Z[1/2])$ preserves limits, we know that $\KO'_{\Z[1/2]}$ sits in a similar cartesian square as $\KO'_\Z$.
In fact, by construction (see Remark \ref{rmk:construct-KO'}), the pulled back square identifies with the $2$-adic fracture square for $\KO_{\Z[1/2]}$, whence the result.

(1) We may assume that $S=\Spec(\Z)$.
The construction of the ring map $\MSL \to \KO \to \KGL$ in \cite[Corollary B.3]{bachmann-euler} shows that over any base, $\MSL \to \KGL$ refines to a $C_2$-equivariant map (for the trivial action on $\MSL$).
We thus obtain a ring map $\MSL \to \KGL^{hC_2}_{\Z}$.
We wish to patch this together with a ring map $\MSL \to \KO^\topsup[1/2]$, using the defining pullback square for $\KO'_\Z$.
As in the construction of the pullback square, in order to map into the minus part we may as well base change to $\Z[1/2]$; in this case we already have an orientation $\MSL \to \KO \in \SH(\Z[1/2])$ and the result follows.

(2) We first prove the second statement, i.e. we compute $A = \pi_{4*,2*}((\KGL^{hC_2})_2^\comp)$.
We have $\map(X, \KGL) \wequi \K(\Perf_X)$, for any $X \in \Sm_\Z$.
The $C_2$-action on $\KGL$ corresponds to the action on $\Perf_X$ by dualization $E \mapsto \Hom(E, \scr O_X)$.
The equivalence $\Omega_{\P^1}\K \wequi \K$ is implemented by the pushforward $i_*: \K(X) \to \K(X \times \P^1)$.
Since $i_*: \Perf_X \to \Perf_{X \times \P^1}$ is duality preserving if on $\Perf_{X \times \P^1}$ we use the $n$-shifted duality and on $\Perf_X$ the $(n+1)$-shifted duality, we find that for $n \ge 0$ the duality on $\map(\Sigma^{2n,n} X, \KGL) \wequi \K(\Perf_X)$ is the $n$-shifted one.
Iterated tensoring with $\scr O_X[1]$ induces a $C_2$-equivariant automorphism of $\Perf_X$, intertwining the $4n$-shifted duality and the usual one, or the $(4n+2)$-shifted duality and the negative of the usual one \cite[Proposition 7]{schlichting2010mayer}.
Denote by $\beta \in \pi_{8,4}(\KGL^{hC_2})$ the element corresponding to $1 \in \pi_{0,0}(\KGL^{hC_2})$ under the induced equivalence; in other words $\beta$ is represented by the perfect complex $\scr O[2]$ (with a certain canonical $4$-shifted duality naively expressed as $\Hom(\scr O[2], \scr O[4]) \wequi \Hom(\scr O, \scr O[2]) \wequi \scr O[2]$).
Multiplication by $\beta$ is an automorphism of $\KGL^{hC_2}$ (indeed this can be checked before taking fixed points, and $\beta$ corresponds to an automorphism of $\Perf_X$, and hence of $\KGL$).
It follows that $A$ is $(8,4)$-periodic.
We shall show that $A_{0,0} = \GW(\Z)_2^\comp$ and $A_{4,2} = \Z_2^\comp$, in such a way that the map \[ \alpha: A \to \pi_{4*,2*}((\KGL_{\Z[1/2]}^{hC_2})_2^\comp) \wequi (\KO^{4*,2*}_{\Z[1/2]})_2^\comp \] is the canonical one in degrees $(0,0)$ and $(4,2)$.
Since $\alpha(\beta) = \beta$ by construction, this implies that the map $\alpha$ is an injection, and we can check all the desired relations over $\Z[1/2]$, where we know them to be true.

Thus it remains to determine $A_{0,0}$ and $A_{4,2}$.
Since we have determined above the dualities on $\map(\1, \KGL) \wequi \K(\Z)$ and $\map(S^{4,2}, \KGL) \wequi \K(\Z)$ to be the usual one and its negative, the desired result is an immediate consequence of \cite[Theorem 2]{GW-III}.

Now we prove the first statement.
Note that \[ \KGL^{hC_2}_\Z[1/2]^- \wequi \KGL^{hC_2}_{\Z[1/2]}[1/2]^- \stackrel{L.\ref{lemm:KO+}}{\wequi} \KO_{\Z[1/2]}^+[1/2]^- \wequi (\KO_{\Z[1/2]})_2^\comp[1/2]^- \stackrel{L.\ref{lemm:KO-real-realization}}{\wequi} (\KO^\topsup)_2^\comp[1/2], \] where the middle equivalence is a direct consequence of Definition \ref{def:generalized-plus}.
Similarly \[ (\KGL^{hC_2}_\Z)_2^\comp[1/2]^- \wequi (\KO^\topsup)_2^\comp[1/2]. \]
On the other hand \[ \KGL^{hC_2}_\Z[1/2]/\eta \wequi (\KGL_\Z/\eta)^{hC_2}[1/2] \wequi \KGL[1/2] \wequi \KGL_\Z[1/2]^{hC_2}/\eta \] (see the proof of Lemma \ref{lemm:heard}), and so $\KGL^{hC_2}_\Z[1/2]^+ \wequi \KGL_\Z[1/2]^{hC_2}$, and similarly $(\KGL^{hC_2}_\Z)_2^\comp[1/2]^+ \wequi (\KGL_\Z)_2^\comp[1/2]^{hC_2}$.
Thus the defining square \eqref{eq:define-KO'} yields \[ \KO'_\Z[1/2]^- \wequi \KO^\topsup[1/2] \quad\text{and}\quad \KO'_\Z[1/2]^+ \wequi \KGL_\Z[1/2]^{hC_2}; \] of course also \[ (\KO'_\Z)_2^\comp \wequi (\KGL^{hC_2})_2^\comp \] and thus \[ (\KO'_\Z)_2^\comp[1/2]^- \wequi (\KO^\topsup)_2^\comp[1/2] \quad\text{and}\quad (\KO'_\Z)_2^\comp[1/2]^+ \wequi (\KGL_\Z)_2^\comp[1/2]^{hC_2}. \]
The ordinary $2$-adic fracture square for $\KO'_\Z$ thus yields an exact sequence \begin{gather*} \pi_{4*+1,2*}(\KGL_\Z)_2^\comp[1/2]^{hC_2} \oplus \pi_{4*+1,2*}(\KO^\topsup)_2^\comp[1/2] \to \\ \pi_{4*,2*} \KO'_\Z \to A_{4*,2*} \oplus \pi_{4*,2*} \KGL_\Z[1/2]^{hC_2} \oplus \pi_{4*,2*}\KO^\topsup[1/2] \to A_{4*,2*}[1/2]. \end{gather*}
Since $K_1(\Z) = \Z/2$ is torsion and $\pi_* (\KO^\topsup)_2^\comp[1/2] \wequi \Q_2[\beta^\pm]$ vanishes in odd degrees, the first group in the exact sequence is zero.
The desired result can now be read off since $\pi_{4*,2*} \KGL_\Z[1/2]^{hC_2} \wequi \Z[1/2, \beta_\KGL^{\pm 2}]$ and $\pi_{4*,2*}\KO^\topsup[1/2] \wequi \Z[1/2, \beta^\pm]$.
\end{proof}

\subsubsection{}
We can now compare the Adams operations.
\begin{proposition} \label{prop:FH-comparison}
Let $S$ be a scheme with $1/2 \in S$, and $n$ an odd integer.
The two maps \[ \adamspsi^n, \adamspsi^n_\FH: \KO[1/n] \to \KO[1/n] \in \SH(S) \] are homotopic.
\end{proposition}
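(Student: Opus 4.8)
The plan is to deduce the statement from Corollary~\ref{cor:KO-ring-end}. Both $\adamspsi^n$ and $\adamspsi^n_\FH$ are homotopy ring maps (the latter by \cite[Theorem 5.2.4]{FH-adams}) and both are stable under base change, so I would first reduce to $S=\Spec(\Z[1/2])$; a homotopy produced there pulls back to one over any $S$ with $1/2\in S$. Over $\Z[1/2]$ it then suffices, by Corollary~\ref{cor:KO-ring-end}, to show that both maps lie in the set $R$ considered there and that they have the same value under $\alpha\mapsto\alpha(\beta)/\beta$. The latter is immediate: $\adamspsi^n(\beta)/\beta = n^2 n^2_\epsilon$ by Theorem~\ref{thm:ko-adams}(2), $\adamspsi^n_\FH(\beta)/\beta = n^2 n^2_\epsilon$ by Lemma~\ref{lemm:adamspsi-FH-action}, and $n^2 n^2_\epsilon$ is the image of an element of $\GW(\Z)$, hence lies in $\GW(\Z)[1/n]$.

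Next I would verify $\adamspsi^n_\FH\in R$, i.e.\ that $\adamspsi^n_\FH(H(1))\in\KSp^0(\HP^\infty_\Z)[1/n]$. By Lemma~\ref{lemm:adamspsi-FH-action} the operation $\adamspsi^n_\FH$ acts on symplectic $K$-theory as $\omega\cdot\adamspsi^n_\geo$ with $\omega=\lra{(-1)^{n(n-1)/2}}n n_\epsilon\in\GW(\Z)$. Since $\adamspsi^n_\geo$ is given by universal integral formulas in exterior-power operations applied to bundles that already exist over $\Z$ (in particular the tautological symplectic bundle on $\HP^\infty_\Z$), it preserves the integral sublattice, so $\adamspsi^n_\geo(H(1))\in\KSp^0(\HP^\infty_\Z)[1/n]$, and multiplying by the integral class $\omega$ preserves this; here one uses the Panin--Walter style description of $\KSp^0(\HP^\infty)$ which identifies this integral sublattice with the expected free module on powers of the Borel class. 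This step I expect to be routine.

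The main work is showing $\adamspsi^n\in R$, since the integrality of $\adamspsi^n(H(1))$ is not visible from the construction in \S\ref{sec:adams}. Here the idea is to promote $\adamspsi^n$ to an operation over $\Z$. Using the defining pullback square~\eqref{eq:define-KO'} for $\KO'_\Z$, I would construct a homotopy ring endomorphism $\adamspsi^{\prime n}$ of $\KO'_\Z[1/n]$ by patching $(\adamspsi^n_\Snaith)^{hC_2}$ on $\KGL_\Z^{hC_2}[1/n]$ (which is available over $\Z$ by \S\ref{subsec:GS-htpy}) with the topological Adams operation on $\KO^\topsup[1/2n]$; the datum matching these over the rational corner $(\KO^\topsup)_2^\comp[1/2n]$ is exactly the one already produced in the proof of Lemma~\ref{lemm:adamspsi-E-infty}, since $\SH(\Z)\to\SH(\Z[1/2])$ induces an equivalence on minus parts, so such an $\adamspsi^{\prime n}$ exists and the relevant mapping-space component is contractible. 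Base changing to $\Z[1/2]$ and using $\KO'_{\Z[1/2]}\wequi\KO_{\Z[1/2]}$ (Lemma~\ref{lemm:htpy-fixed-KGL-Z}(3)) together with the uniqueness clause of Lemma~\ref{lemm:adamspsi-E-infty} identifies $(\adamspsi^{\prime n})|_{\Z[1/2]}$ with $\adamspsi^n$. Since $\adamspsi^{\prime n}$ is an endomorphism of $\KO'_\Z[1/n]$, it acts on $(\KO'_\Z)^{4,2}(\HP^\infty_\Z)[1/n]$; and because $\KO'_\Z$ is $\SL$-oriented (Lemma~\ref{lemm:htpy-fixed-KGL-Z}(1)) one gets $(\KO'_\Z)^{4,2}(\HP^\infty_\Z)=\KSp^0(\HP^\infty_\Z)$ (combining the Panin--Walter formula for $\HP^\infty$ with the computation of $\pi_{4*,2*}\KO'_\Z$ in Lemma~\ref{lemm:htpy-fixed-KGL-Z}(2)). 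Hence after base change $\adamspsi^n$ preserves $\KSp^0(\HP^\infty_\Z)[1/n]$, giving $\adamspsi^n(H(1))\in\KSp^0(\HP^\infty_\Z)[1/n]$ and $\adamspsi^n\in R$.

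With both maps in $R$ and agreeing on $\beta$, Corollary~\ref{cor:KO-ring-end} yields $\adamspsi^n\wequi\adamspsi^n_\FH$ over $\Z[1/2]$, hence over every $S$ with $1/2\in S$ by base change. I expect the genuine obstacle to be the construction and identification of $\adamspsi^{\prime n}$ over $\Z$: it rests on the somewhat rigid pullback presentation of $\KO'_\Z$ in \eqref{eq:define-KO'} and on Lemma~\ref{lemm:htpy-fixed-KGL-Z}, whereas the remaining integrality bookkeeping for $\KSp^0(\HP^\infty_\Z)$ should follow formally from the Panin--Walter computations.
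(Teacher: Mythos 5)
Your reduction to $\Z[1/2]$, the appeal to Corollary~\ref{cor:KO-ring-end}, the agreement on $\beta$, and the treatment of $\adamspsi^n_\FH$ all match the paper's proof. Where you diverge is in the final (and nontrivial) step, showing $\adamspsi^n\in R$, i.e.\ that $\adamspsi^n(H(1))$ lands in $\KSp^0(\HP^\infty_\Z)[1/n]$. The paper never builds an operation on $\KO'_\Z[1/n]$ itself; it observes that it suffices to check integrality $2$-adically, identifies $\KO^{4*,2*}(\HP^\infty_\Z)_2^\comp$ (using Lemma~\ref{lemm:htpy-fixed-KGL-Z}(2)) with the image of $((\KGL_2^\comp)^{hC_2})^{4*,2*}(\HP^\infty_\Z)$ inside $(\KO_2^\comp)^{4*,2*}(\HP^\infty_{\Z[1/2]})$, and then simply notes that $(\adamspsi^n)_2^\comp=\adamspsi^n_\OSnaith=(\adamspsi^n_\Snaith)^{hC_2}$ is defined over $\Z$ and hence preserves that image. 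You instead reconstruct a full $\scr E_\infty$-operation $\adamspsi^{\prime n}$ on $\KO'_\Z[1/n]$ by patching $(\adamspsi^n_\Snaith)^{hC_2}$ with $\adamspsi^n_\topsup$ across the square~\eqref{eq:define-KO'}, identify it with $\adamspsi^n$ over $\Z[1/2]$ via the uniqueness in Lemma~\ref{lemm:adamspsi-E-infty}, and then deduce integrality on $(\KO'_\Z)^{4,2}(\HP^\infty_\Z)$ using the $\SL$-orientation of $\KO'_\Z$ and Lemma~\ref{lemm:htpy-fixed-KGL-Z}(1,2). This is a heavier but essentially correct route: it buys you an actual operation over $\Z$ (and makes the integrality transparent once you have it), at the cost of carrying out the patching and the identification with $\adamspsi^n$ in $\CAlg(\SH(\Z[1/2]))$, whereas the paper needs only the already-constructed $2$-adic map and the calculation of $\pi_{4*,2*}$ over $\Z$. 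One small caution in your version: the identification of $(\adamspsi^{\prime n})|_{\Z[1/2]}$ with $\adamspsi^n$ via the uniqueness clause of Lemma~\ref{lemm:adamspsi-E-infty} requires checking that base change takes the two legs of your $\Z$-pullback square to the two legs of the $\Z[1/2]$-square used in that lemma (so that the displayed compatibility squares there actually commute for $(\adamspsi^{\prime n})|_{\Z[1/2]}$); this is true essentially by construction of $\KO'_\Z$ and Lemma~\ref{lemm:KO+}, but it is the part you should spell out if you were to write this up.
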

\begin{proof}
By definition, the maps are pulled back from $\Spec(\Z[1/2])$, so we may assume that $S = \Spec(\Z[1/2])$.
By Lemma \ref{lemm:adamspsi-FH-action} and Theorem \ref{thm:ko-adams}, both maps act on $\beta$ by multiplication by $n^2n^2_\epsilon$.
Hence by Corollary \ref{cor:KO-ring-end}, it suffices to check that both maps send $H(1)$ to an element of $\KO^{4,2}(\HP^\infty_\Z)[1/n] \subset \KO^{4,2}(\HP^\infty_{\Z[1/2]})[1/n]$.

First we treat $\adamspsi^n_\FH$.
Lemma \ref{lemm:adamspsi-FH-action} shows that up to a factor in $\GW(\Z)[1/n]^\times$, $\adamspsi^n_\FH(H(1)) = \adamspsi^n_\geo(H(1))$.
By definition, this is a linear combination with integer coefficients of exterior powers of $H(1)$, which are clearly already defined over $\Z$.

For $\adamspsi^n$, note that it suffices to show that the subring $\KO^{4*,2*}(\HP^\infty_\Z)_2^\comp$ is preserved.
Lemma \ref{lemm:htpy-fixed-KGL-Z} (together with the homotopy fixed point theorem over $\Z[1/2]$) implies that this ring coincides with the image of \[ ((\KGL_2^\comp)^{hC_2})^{4*,2*}(\HP^\infty_\Z) \to ((\KGL_2^\comp)^{hC_2})^{4*,2*}(\HP^\infty_{\Z[1/2]}) \wequi (\KO_2^\comp)^{4*,2*}(\HP^\infty_{\Z[1/2]}). \]
The result follows since $(\adamspsi^n)_2^\comp = \adamspsi^n_\OSnaith$ by definition, and the latter is already defined over $\Z$.
\end{proof}

\section{Cobordism spectra} \label{sec:cobordism}
\subsection{Summary}
We will be using the algebraic cobordism spectra $\MSL$ and $\MSp$ \cite[Example 16.22]{bachmann-norms} \cite{panin2010algebraic}.
They can be constructed explicitly out of the Thom spaces of tautological bundles on special linear (respectively quaternionic) Grassmannians; we review this in \S\ref{subsec:real-realization}.
In particular there is a canonical map \begin{equation}\label{eq:HP-MSp} \Sigma^{-4,-2} \Sigma^\infty_+ \HP^\infty \to \Sigma^{-4,-2} \Th(\gamma) \to \MSp, \end{equation} where $\gamma$ denotes the tautological bundle on $\HP^\infty$.\NB{In fact $\Th(\gamma) \wequi \HP^\infty$ (no plus!)}

There are notions of $\SL$-oriented and $\Sp$-oriented multiplicative cohomology theories \cite[Definitions 5.1 and 8.1]{panin2010algebraic}.
We will only deal with cohomology theories represented by homotopy commutative ring spectra $A \in \CAlg(\h\SH(S))$.
In this situation one way of exhibiting an $\SL$-orientation (respectively $\Sp$-orientation) is to exhibit a homotopy ring map $\MSL \to A$ (respectively $\MSp \to A$) (see e.g. \cite[Proposition 4.13]{bachmann-euler} or \cite[Theorems 5.5 and 13.2]{panin2010algebraic}).

If $A$ is $\Sp$-oriented and $V$ is a symplectic vector bundle bundle on $X \in \Sm_S$, then we obtain the \emph{Borel classes} $b_i(V) \in A^{4i,2i}(X)$ \cite[Definition 11.5]{panin2010algebraic}.
One has \cite[Theorem 8.2]{panin2010algebraic} \[ A^{**}(\HP^\infty) \wequi A^{**}\fpsr{b_1(\gamma)}. \]
Similarly if $A$ is $\SL$-oriented and $V$ is an oriented vector bundle, then we obtain the \emph{Pontryagin classes} $p_i(V) \in A^{8i,4i}(X)$ \cite[Definition 19]{ananyevskiy2015special}.
When dealing with $\eta$-periodic cohomology theories, we will implicitly shift everything into weight zero (by multiplying by powers of $\eta$); so then we write \[ b_i(V) \in A^{2i}(X) \quad\text{and}\quad p_i(V) \in A^{4i}(X). \]

The cohomology of $\MSp$ and $\MSL$ have been worked out in \cite[Theorem 13.1]{panin2010algebraic} \cite[Theorem 10]{ananyevskiy2015special}\footnote{In \cite{ananyevskiy2015special}, the author works over perfect fields of characteristic $\ne 2$ only. However, all results hold over general base schemes [personal communication]. Indeed the only place where the assumption on the base is used is in Lemma 12. This holds over general bases, as can be seen as follows. It suffices to show that the endomorphisms $[x:y] \mapsto [y:x]$ and $[x:y] \mapsto [-x:y]$ of $\P^1$ are $\A^1$-homotopic. Since both have determinant $-1$, and $\SL_2(\Z)$ is $\A^1$-connected (being generated by elementary matrices), the result follows.}.
We perform the straightforward dualization:
\begin{theorem} \label{thm:cobordism-MSL-summary}
Let $A$ be a cohomology theory (i.e. $A \in \CAlg(\h\SH(S))$).
\begin{enumerate}
\item Suppose that $A$ is $\Sp$-oriented.
  The Kronecker pairing $A_{**}(\HP^n) \otimes A^{**}(\HP^n) \to A_{**}$ is perfect.
  Denote by $\{\beta_i\}_{i=0}^n \in A_{**}(\HP^n)$ the dual basis to $\{b_1(\gamma)^i\}_{i=0}^n \in A^{**}(\HP^n)$.
  The $\beta_i$ are compatible with $\HP^n \hookrightarrow \HP^{n+1}$; denote by $\beta_i \in A_{**}(\HP^\infty)$ their common images.
  Write $e_i \in A_{4i,2i}(\MSp)$ for the image of $\beta_{i+1}$ under the map induced by \eqref{eq:HP-MSp}.
  Then \[ A_{**}(\MSp) \wequi A_{**}[e_0, e_1, e_2, \dots]/(e_0 - 1). \]
\item Suppose that $A$ is $\eta$-periodic and $\SL$-oriented.
  The canonical map $\MSp \to \MSL$ annihilates $e_i$ for $i$ odd and \[ A_* \MSL \wequi A_*[e_0, e_2, \dots]/(e_0-1). \]
\end{enumerate}
\end{theorem}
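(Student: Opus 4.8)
The plan is to obtain both parts by dualizing the known cohomology computations \cite[Theorem 13.1]{panin2010algebraic} and \cite[Theorem 10]{ananyevskiy2015special} along a perfect Kronecker pairing, which I will establish on the finite quaternionic (resp.\ special linear) Grassmannians and then transport through the colimits defining $\MSp$ (resp.\ $\MSL$) via the homology Thom isomorphism.

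For part (1): recall (see \S\ref{subsec:real-realization}) that $\MSp \wequi \colim_r \Sigma^{-4r,-2r}\Th(\gamma_r)$, with $\gamma_r$ the tautological symplectic bundle on $\BSp_r \wequi \HGr(r,\infty)$. Each finite Grassmannian $\HGr(r,N)$ is cellular with cells in bidegrees $(4i,2i)$ (the geometric content behind the symplectic projective bundle formula \cite[Theorem 8.2]{panin2010algebraic}), so $\Sigma^\infty_+ \HGr(r,N)$ is a finite dualizable spectrum whose $A$-cohomology is finite free over $A_{**}$; hence the Kronecker pairing $A_{**}(\ph)\otimes_{A_{**}} A^{**}(\ph)\to A_{**}$ on it is perfect. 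Letting $N\to\infty$ proves the perfectness assertion for $\HP^n$, produces the dual basis $\{\beta_i\}$, and gives its compatibility under $\HP^n\hookrightarrow\HP^{n+1}$ (because the cohomology bases $\{b_1(\gamma)^i\}$ are compatible). Applying the homology Thom isomorphism for the $\Sp$-oriented $A$ (which moves the Thom class into bidegree $(0,0)$ and, on the zero-section map of \eqref{eq:HP-MSp}, amounts to capping with $b_1(\gamma)$, shifting $\beta_{j}\mapsto\beta_{j-1}$), together with the fact that $A$-homology commutes with filtered colimits, yields $A_{**}(\MSp)\wequi\colim_r A_{**}(\Sigma^\infty_+\BSp_r)$ with the evident transition maps, each term the $A_{**}$-linear dual of $A^{**}(\BSp_r)\wequi A^{**}\fpsr{b_1,\dots,b_r}$.

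It remains to identify the ring structure and see that $A_{**}(\MSp)$ is polynomial on the $e_i$ (the images of $\beta_{i+1}$ under \eqref{eq:HP-MSp}, with $e_0=1$ because the Thom-iso twist in \eqref{eq:HP-MSp} sends $\beta_1$ to the unit). For this I would run the usual splitting-principle bookkeeping: the maps $\mu_r\colon(\HP^\infty)^{\times r}\to\BSp_r$ induced by $\Sp_1^{\times r}\hookrightarrow\Sp_r$ realize $b_i$, under $\mu_r^*$, as the $i$-th elementary symmetric polynomial in the Borel roots, and the $\mu_r$ are compatible with the block-sum maps $\BSp_m\times\BSp_n\to\BSp_{m+n}$ inducing the multiplication on $\MSp$; dualizing, the standard identification of the (continuous) dual of the ring of symmetric power series as a polynomial ring gives $A_{**}(\MSp)\wequi A_{**}[e_0,e_1,\dots]/(e_0-1)$. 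This is the literal motivic transcription of the classical passage from $\MU^*(\MU)$ to $\MU_*(\MU)$; I expect this symmetric-function and ring-structure bookkeeping, together with keeping track of the Thom-class normalization, to be the only nonformal step.

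For part (2): an $\eta$-periodic $\SL$-oriented $A$ is a fortiori $\Sp$-oriented (restrict along $\Sp_n\hookrightarrow\SL_{2n}$), so part (1) applies. The cohomology map $A^*(\MSL)\wequi A^*\fpsr{p_1,p_2,\dots}\to A^*(\MSp)\wequi A^*\fpsr{b_1,b_2,\dots}$ induced by $\MSp\to\MSL$ sends, by the splitting of oriented bundles into rank-two summands available for $\eta$-periodic $\SL$-oriented theories, each $p_i$ to the $i$-th elementary symmetric polynomial in the \emph{squared} Borel roots; in particular it is injective, with image the power-series subring on these elements, so the dual map $A_*(\MSp)\to A_*(\MSL)$ is surjective. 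Next, using $\BSp_1\wequi\BSL_2$ and the rank-two relation $p_1\mapsto b_1(\gamma)^2$, $p_{\ge 2}\mapsto 0$, the $\BSL_2$-Kronecker pairing shows that the image of $e_i$ in $A_*(\MSL)$ is detected only by the even powers of $b_1(\gamma)$, hence vanishes for $i$ odd. Therefore $A_*(\MSL)$ is an $A_*$-algebra quotient of $A_*[e_0,e_2,e_4,\dots]/(e_0-1)$; comparing in each (bi)degree against Ananyevskiy's computation of $A^*(\MSL)$ (both sides free $A_*$-modules of the same finite rank, and the map surjective) shows the quotient map is an isomorphism, giving $A_*\MSL\wequi A_*[e_0,e_2,e_4,\dots]/(e_0-1)$. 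The main obstacle here, beyond the cited computations, is the splitting-principle identity $p_i\mapsto e_i(\beta_1^2,\beta_2^2,\dots)$ in the $\eta$-periodic $\SL$-oriented setting and the attendant matching of dual bases through it.
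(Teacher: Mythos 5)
Your proposal follows essentially the same strategy as the paper: establish perfect Kronecker duality on the cellular, strongly dualizable finite Grassmannians (Lemma \ref{lemm:perfect-pairing} and Corollary \ref{cor:duality}), pass to the colimit, use the maps from $(\HP^\infty)^{\times n}$ to identify the cohomology with symmetric power series and dualize, and transport through the homology Thom isomorphism. The only noticeable deviation is in part (2), where you infer $A_*\MSL$ by a surjection-plus-rank-count against Ananyevskiy's $A^*(\MSL)\wequi A^*\fpsr{p_1,p_2,\dots}$, whereas the paper directly computes $A_{**}(\SGr(2n+1,\infty))\wequi\Sym^n\bigl(A_{**}(\HP^\infty)/\{\beta_1,\beta_3,\dots\}\bigr)$ (relying on cellularity of $\SGr$, Lemma \ref{lemm:SGr-cellular}) and then applies the Thom isomorphism; both hinge on the same input $\alpha^*(p_i)=\sigma_i(a_1^2,\dots,a_n^2)$ from \cite[Lemma 12]{ananyevskiy2015special}, so this is bookkeeping rather than substance.
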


\begin{example} \label{ex:SL-orientations}
$\KO$ admits a ring map from $\MSL$ (see e.g. \cite[Corollary B.3]{bachmann-euler}), and hence is $\SL$-oriented (whence also $\Sp$-oriented).
Since $\MSL$ is very effective, $\ko = \tilde f_0 \KO$ is also $\SL$-oriented.
Similarly so are $\kw = \ko[\eta^{-1}]$ and $\KW = \KO[\eta^{-1}]$, since they receive ring maps from $\ko$.
\end{example}
If $A$ denotes any of the above theories, then by Remark \ref{rmk:adams-kw} we obtain Adams operations on $A$-homology.
We are particularly interested in the case $S=\Spec(k)$ and $A = \kw_{(2)}$.
Note that if $\W(k) = \F_2$ then $\kw \wequi \kw_{(2)}$ and so all $\adamspsi^n$ act on $\kw_* E$ for any $E \in \SH(k)$ and $n$ odd.
\begin{proposition} \label{prop:adams-cobordism-summary}
Let $S=\Spec(k)$, where $k$ is a field (of characteristic $\ne 2$).
Suppose that $\W(k) = \F_2$.
Then \[ \adamspsi^3(e_{2i}) \in e_{2i} + \beta e_{2i-2} + \beta^2 \kw_* \MSL. \]
In fact \[ \adamspsi^3(e_i) \in e_i + \beta^2 \kw_* \MSp + \begin{cases} \beta e_{i-2} & i \text{ even} \\ 0 & i \text{ odd} \end{cases}. \]
\end{proposition}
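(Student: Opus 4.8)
The plan is to reduce the statement to a computation of $\adamspsi^3$ on $\kw^*(\HP^\infty)$, dualize it via the Kronecker pairing of Theorem \ref{thm:cobordism-MSL-summary}(1), and push forward along \eqref{eq:HP-MSp}. Under the hypothesis $\W(k) = \F_2$ we have $\pi_*\kw = \W(k)[\beta] = \F_2[\beta]$; multiplication by $3$ is invertible on this graded group, hence on $\kw$ itself (being bounded below in the homotopy $t$-structure), so the Adams operation $\adamspsi^3 \colon \kw \to \kw$ of Remark \ref{rmk:adams-kw} is defined. It acts trivially on $\pi_*\kw$: on $\pi_0 = \F_2$ because a ring endomorphism fixes $0$ and $1$, and on $\beta$ because $\adamspsi^3(\beta) = 3^2\cdot 3^2_\epsilon\cdot\beta$ by \eqref{eq:adams-formula}, while $9 \equiv 1$ and $3_\epsilon \equiv 1 \pmod 2$ (as $\langle-1\rangle = 1$ in $\W(k) = \F_2$). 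Consequently $\adamspsi^3$ is $\pi_*\kw$-linear on every $\kw$-(co)homology group, and the perfect Kronecker pairings $\kw_*(\HP^n)\otimes\kw^*(\HP^n) \to \kw_*$ become $\adamspsi^3$-equivariant for the \emph{trivial} action on the target, i.e. $\langle\xi,x\rangle = \langle\adamspsi^3\xi, \adamspsi^3 x\rangle$.

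Being a ring endomorphism, $\adamspsi^3$ on $\kw^*(\HP^\infty) = \kw^*\fpsr{b}$ (with $b := b_1(\gamma)$, weight-zero convention) is determined by $\adamspsi^3(b) \in \kw^2(\HP^\infty)$. Writing $\adamspsi^3(b) = \sum_j c_j b^j$ with $c_j \in \kw^{2-2j}(\mathrm{pt}) = \pi_{2j-2}\kw$, the fact that $\pi_*\kw = \F_2[\beta]$ is concentrated in non-negative degrees divisible by $4$ forces $c_j = 0$ for $j$ even, $c_1 \in \F_2$, and $c_{2m+1} \in \F_2\cdot\beta^m$; thus $\adamspsi^3(b) = c_1 b + \mu_1\beta b^3 + \mu_2\beta^2 b^5 + \cdots$ with $c_1,\mu_m \in \F_2$. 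Restricting to $\HP^1 \wequi S^{4,2}$ (where $b^2 = 0$) and using that $\adamspsi^3$ acts on $\widetilde\kw^{4,2}(\HP^1) \iso \kw^{0,0} = \F_2$ by a unit gives $c_1 = 1$. The essential input is $\mu_1 = 1$; here I would use $\adamspsi^3 \wequi \adamspsi^3_\FH$ (Proposition \ref{prop:FH-comparison}) together with Lemma \ref{lemm:adamspsi-FH-action}: on $\KSp^0(\HP^\infty)$ the operation $\adamspsi^3_\FH$ agrees, up to the unit $\langle-1\rangle\cdot 3\cdot 3_\epsilon$ (which is $1$ in $\W(k) = \F_2$), with the geometric operation $\adamspsi^3_\geo$, and since the rank-$2$ symplectic bundle $H(1)$ has $\lambda^2(H(1))$ the (trivial) determinant and $\lambda^{\ge 3}(H(1)) = 0$, Newton's identities give $\adamspsi^3_\geo(H(1)) = H(1)^3 - 3H(1)$. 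Expanding $H(1) = H_- + b_1(\gamma) + O(b_1(\gamma)^2)$ in $\GW^\pm(\HP^\infty)$, using $H_-^2 = 2h\beta = 0$ (as $2h = 0$ in $\W(k) = \F_2$), passing to $\kw$-coefficients along the $\adamspsi^3$-equivariant ring map $\KO[1/3] \to \kw$, and extracting the coefficient of $\beta b_1(\gamma)^3$ yields $\mu_1 = 1$.

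With $\adamspsi^3(b) = b + \beta b^3 + \beta^2(\cdots)$, the inverse power series is $(\adamspsi^3)^{-1}(b) = b + \beta b^3 + \beta^2(\cdots)$ (characteristic $2$), whence $(\adamspsi^3)^{-1}(b^j) = b^j + j\beta b^{j+2} + \beta^2(\cdots)$. Working at finite stages $\HP^n$ and using $\adamspsi^3$-equivariance of the Kronecker pairing with $\adamspsi^3|_{\kw_*} = \id$, one gets $\langle\adamspsi^3(\beta_{i+1}), b^j\rangle = \langle\beta_{i+1}, (\adamspsi^3)^{-1}(b^j)\rangle = \delta_{i+1,j} + j\beta\,\delta_{i-1,j} + \beta^2(\cdots)$, hence $\adamspsi^3(\beta_{i+1}) \equiv \beta_{i+1} + (i-1)\beta\beta_{i-1} \pmod{\beta^2\kw_*(\HP^\infty)}$ after passing to the colimit over $n$. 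Applying the map induced by \eqref{eq:HP-MSp}, which sends $\beta_{i+1} \mapsto e_i$, gives $\adamspsi^3(e_i) \in e_i + (i-1)\beta e_{i-2} + \beta^2\kw_*\MSp$; since $i-1 \equiv i+1 \pmod 2$ this is precisely the second displayed formula (with no assumption on $\mu_1$ needed in the odd case). The first displayed formula then follows by pushing forward further along $\MSp \to \MSL$, which kills $e_j$ for $j$ odd and is the identity on $e_j$ for $j$ even by Theorem \ref{thm:cobordism-MSL-summary}(2), using that for $i$ even $i-1$ is odd, so the term $\beta e_{i-2}$ survives with coefficient $1$.

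The main obstacle is the equality $\mu_1 = 1$: the naive ``topological'' value of the $\beta b_1(\gamma)^3$-coefficient of $\adamspsi^3(b_1(\gamma))$ is $4/9 \equiv 0 \pmod 2$, so the argument genuinely depends on the $\epsilon$-decorated relations special to $\kw$ over a quadratically closed field (namely $n_\epsilon \equiv n \pmod 2$ and $H_-^2 = 2h\beta$), and carrying out the expansion of $\adamspsi^3_\geo(H(1)) = H(1)^3 - 3H(1)$ with all Bott/weight normalizations tracked correctly is the technical heart; everything else (the structural reduction, the degree bookkeeping, and the dualization) is routine.
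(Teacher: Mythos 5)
Your overall strategy is the same as the paper's: pin down $\adamspsi^3(b)\in\kw^2(\HP^\infty)$ modulo $\beta^2$, dualize through the Kronecker pairing using that $\adamspsi^3$ acts trivially on $\pi_*\kw=\F_2[\beta]$ (this is exactly Lemma \ref{lemm:kronecker-adams} specialized to a coefficient-trivial operation), and push forward along \eqref{eq:HP-MSp} and $\MSp\to\MSL$. Your dualization is organized slightly differently — you invert the power series $f(b)=b+\beta b^3+O(\beta^2)$ on cohomology and pair directly, whereas the paper first computes $\adamspsi^{1/3}$ on homology and then inverts there — but these are the same computation, and your index bookkeeping ($j\beta\,\delta_{i-1,j}$, hence coefficient $i-1\equiv i+1\pmod 2$ on $e_{i-2}$) agrees with the paper's. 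The one substantive divergence is how you establish $\adamspsi^3(b)=b+\beta b^3+O(\beta^2)$: you invoke the Fasel--Haution comparison (Proposition \ref{prop:FH-comparison} and Lemma \ref{lemm:adamspsi-FH-action}) and the identity $\adamspsi^3_\geo(H(1))=H(1)^3-3H(1)$, while the paper's lemma instead uses compatibility with $\KGL$ (Theorem \ref{thm:ko-adams}(4)) together with the injection $\KO^{4,2}(\HP^\infty)\hookrightarrow\KGL^{4,2}(\HP^\infty)$ available when $\GW(k)=\Z$, computing $\adamspsi^3(H(1))=3^{-2}H(1)(\beta H(1)^2-3)$ there. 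Your route is precisely the alternative the paper itself records in the remark following that lemma (which yields $\adamspsi^3(b)=b(1-\beta b^2/3)$ over any field, reducing to $b(1+\beta b^2)$ mod $2$); what it buys is independence from the hypothesis $\W(k)=\F_2$ at that stage, at the cost of relying on \S\ref{subsec:geometric-ops}, which in turn depends on the rather heavy Lemma \ref{lemm:htpy-fixed-KGL-Z}. Your preliminary degree analysis forcing $\adamspsi^3(b)=c_1b+\mu_1\beta b^3+\mu_2\beta^2b^5+\cdots$ and the determination $c_1=1$ by restriction to $\HP^1$ are a clean addition not needed in the paper's version.

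Two small caveats. First, you state the plan for $\mu_1=1$ but defer the actual expansion ("the technical heart"); the answer is correct, but as written this step is a sketch rather than a proof — note in particular that in $\KO^{**}$ the cube $H(1)^3$ lives in $\KO^{12,6}$, so the identity from $\GW^\pm$ reads $\adamspsi^3(H(1))=\beta^{-1}H(1)^3-3H(1)$ up to the unit $\lra{-1}\cdot3\cdot3_\epsilon$, and it is this $\beta^{-1}$ (not any $H_-$-contribution, since $H_-$ dies in $\KW^2=0$) that produces the $\beta b^3$ term. Second, your justification that $\adamspsi^3$ is defined on $\kw$ itself ("multiplication by $3$ is invertible on $\pi_*\kw$, hence on $\kw$") is too quick: one needs invertibility on all homotopy sheaves $\ul{W}(K)[\beta]$ for finitely generated $K/k$, which holds because $k$ quadratically closed forces $-1$ to be a square in $K$, so $\W(K)$ is $2$-primary torsion; the paper simply asserts $\kw\wequi\kw_{(2)}$ in this situation.
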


We also record the following well-known facts.
\begin{lemma} \label{lemm:cobordism-realisation}
We have $r_\R(\MSp) \wequi \MU$ and $r_\R(\MSL) \wequi \MSO$.
\end{lemma}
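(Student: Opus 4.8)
The plan is to apply $r_\R$ directly to the geometric models of $\MSp$ and $\MSL$ recalled in \S\ref{subsec:real-realization}. Recall from \S\ref{subsec:r-R} that over $\R$ the functor $r_\R \colon \SH(\R) \to \SH$ is symmetric monoidal and colimit-preserving, and is induced by the assignment $X \mapsto X(\R)$ (space of real points) on $\Sm_\R$; in particular $r_\R(S^{2,1}) = r_\R(\P^1) \wequi S^1$, and for a vector bundle $V$ on $X \in \Sm_\R$ one has $r_\R(\Th(V)) \wequi \Th(V(\R))$, the Thom space of the topological real vector bundle $V(\R) \to X(\R)$. Now $\MSp \wequi \colim_r \Sigma^{-4r,-2r}\Sigma^\infty \Th(\gamma_r)$, where $\gamma_r$ is the tautological rank-$2r$ symplectic bundle on $\HGr(r,\infty) = \colim_N \HGr(r,N)$, and similarly $\MSL \wequi \colim_r \Sigma^{-2r,-r}\Sigma^\infty \Th(\gamma_r^{\SL})$ with $\gamma_r^{\SL}$ the tautological rank-$r$ special linear bundle on $\SGr(r,\infty)$. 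Since $r_\R$ commutes with all of these colimits and with Thom spaces, it remains only to identify $r_\R$ of the Grassmannians and of the tautological bundles.

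For this I would argue as follows. By Darboux's theorem the group $\mathrm{Sp}_{2N}(\R)$ acts transitively on the set of $2r$-dimensional subspaces of $\R^{2N}$ on which the standard form is non-degenerate, with stabilizer $\mathrm{Sp}_{2r}(\R) \times \mathrm{Sp}_{2(N-r)}(\R)$; passing to maximal compact subgroups shows $\HGr(r,N)(\R) \wequi \mathrm{U}(N)/(\mathrm{U}(r)\times \mathrm{U}(N-r))$, and hence $r_\R(\HGr(r,\infty)) \wequi \colim_N \mathrm{U}(N)/(\mathrm{U}(r)\times\mathrm{U}(N-r)) \wequi \mathrm{BU}(r)$. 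Under this equivalence the structure group $\mathrm{Sp}_{2r}(\R)$ of $\gamma_r(\R)$ deformation retracts onto $\mathrm{U}(r)$, so $\gamma_r(\R)$ is the underlying rank-$2r$ real bundle of the tautological complex rank-$r$ bundle $\gamma_r^{\C}$ on $\mathrm{BU}(r)$, and $\Th(\gamma_r(\R)) \wequi \Th(\gamma_r^{\C})$. Combining with $r_\R(S^{4r,2r}) \wequi S^{2r}$ gives $r_\R(\MSp) \wequi \colim_r \Sigma^{-2r}\Sigma^\infty\Th(\gamma_r^{\C}) \wequi \MU$. Entirely analogously, $\SGr(r,N)(\R)$ is homotopy equivalent to the Grassmannian of oriented $r$-planes in $\R^N$, so $r_\R(\SGr(r,\infty)) \wequi \mathrm{BSO}(r)$ with $r_\R(\gamma_r^{\SL})$ the tautological oriented rank-$r$ real bundle; since $r_\R(S^{2r,r}) \wequi S^r$ we obtain $r_\R(\MSL) \wequi \colim_r \Sigma^{-r}\Sigma^\infty\Th(\gamma_r^{\mathrm{or}}) \wequi \MSO$.

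The only real work is in the middle paragraph: one must match the real points of the Panin--Walter quaternionic Grassmannians and of Ananyevskiy's special linear Grassmannians with the classical models of $\mathrm{BU}(r)$ and $\mathrm{BSO}(r)$, compatibly with the tautological bundles and with the stabilization maps $\HGr(r,N) \hookrightarrow \HGr(r,N+1)$ and $\SGr(r,N)\hookrightarrow\SGr(r,N+1)$. This is a purely classical computation once one writes the real points as homogeneous spaces of the split real forms $\mathrm{Sp}_{2N}(\R)$ and $\mathrm{SL}_N(\R)$; alternatively it can be phrased as the statement that $r_\R$ sends the motivic classifying space $\mathrm{B}\mathrm{Sp}_{2r}$ (resp.\ $\mathrm{B}\mathrm{SL}_r$) to the topological classifying space of the maximal compact $\mathrm{U}(r)$ (resp.\ $\mathrm{SO}(r)$), which follows from the Morel--Voevodsky model for these classifying spaces together with the fact that $r_\R$ factors through \'etale-, hence real \'etale-, localization. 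Granting this, the equivalences $r_\R(\MSp)\wequi\MU$ and $r_\R(\MSL)\wequi\MSO$ drop out formally from the symmetric monoidality and cocontinuity of $r_\R$.
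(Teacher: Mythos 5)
Your proposal is correct but takes a more geometric route than the paper. You work directly from the explicit Grassmannian presentation $\MSp \wequi \colim_r \Sigma^{\infty-4r,2r}\Th(\gamma_r)$, push $r_\R$ through the colimits and Thom-space constructions, and then do the substantive work of identifying $\HGr(r,N)(\R)$ and $\SGr(r,N)(\R)$ as compact homogeneous spaces (passing to maximal compacts of the split real forms), checking compatibility with the tautological bundles and the stabilization maps. This is legitimate, and you correctly locate where the actual content sits. The paper, by contrast, never identifies the homotopy type of any real Grassmannian: it uses the homotopy-orbit presentation $MG_n \wequi \Sigma^{\infty-2n,n}\cof\bigl((\A^n\setminus 0)_{hG_n} \to *_{hG_n}\bigr)$ of Lemma~\ref{lemm:thom-space-tautology}, observes that since $\SL_n$ and $\Sp_n$ are \emph{special} these homotopy orbits are already bar constructions in $\Spc(S)_*$, and then applies $r_\R$ --- which preserves colimits and finite products, hence bar constructions. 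Everything then reduces to the single classical fact that $SO_n \hookrightarrow \SL_n(\R)$ and $U_n \hookrightarrow \Sp_n(\R)$ are homotopy equivalences; one never needs to analyze stabilizers, quotient manifolds, or Galois-cohomological identifications of real points. Your closing remark about $r_\R$ sending $B\Sp_{2r}$ to $BU(r)$ is close in spirit to this, though speciality (not the Morel--Voevodsky model plus real-\'etale localization) is the precise mechanism in the paper. In short: your argument front-loads concrete homogeneous-space geometry, the paper's is purely formal once Lemma~\ref{lemm:thom-space-tautology} is in place; both are sound.
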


\subsection{Real realization} \label{subsec:real-realization}
For an algebraic group $G$, write $BG$ for the stack of $G$-torsors, viewed as a sheaf of groupoids on $\Sm_S$.\footnote{There might be some concern here which topology we are using, but we shall only apply the following discussion to special groups, where all torsors are Zariski-locally trivial.}
Suppose given furthermore an inclusion $G \to \GL_n$.
Then we obtain a map $BG \to B\GL_n \to \K^\circ$ corresponding to the tautological virtual bundle on $B\GL_n$, and consequently a Thom spectrum $MG \in \SH(S)$ \cite[\S16.2]{bachmann-norms}.
For a $G$-torsor $E$, write $V(E) = E \times_G \A^n$ for the associated vector bundle, where $G$ acts on $\A^n$ via the embedding into $\GL_n$.
\begin{lemma} \label{lemm:thom-space-tautology}
There is a cofibration sequence\footnote{Again, the $G$-orbits should be taken as sheaves in some sufficiently strong topology.} \[ \Sigma^{\infty-2n,n}_+ (\A^n \setminus 0)_{hG} \to \Sigma^{\infty-2n,n}_+ BG \to MG. \]
\end{lemma}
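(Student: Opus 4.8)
The plan is to unwind the definition of $MG$ and identify the Thom space of the tautological bundle with the Borel/homotopy-orbit construction. Recall from \cite[\S16.2]{bachmann-norms} that $MG$ is the motivic Thom spectrum of the composite $BG \to B\GL_n \xrightarrow{V} \K^\circ$, where $V$ is the class of the tautological rank-$n$ bundle. Since the motivic Thom spectrum functor is colimit-preserving and compatible with pullback, and on $B\GL_n$ it sends the virtual class $V-n$ to $\Sigma^{-2n,-n}$ of $\Sigma^\infty$ of the Thom space of the tautological bundle, one obtains a canonical equivalence
\[ MG \wequi \Sigma^{-2n,-n}\Sigma^\infty \Th(V_{BG}), \]
where $V_{BG}$ denotes the rank-$n$ tautological bundle on $BG$ pulled back from $B\GL_n$ (we also write $V_{BG}$ for its total space), and $\Th(V_{BG}) = V_{BG}/(V_{BG}\setminus 0)$ is the cofiber, formed in pointed motivic spaces, of the open inclusion of the complement of the zero section. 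This is just the standard identification of abstract Thom spectra with suspension spectra of Thom spaces; it is legitimate here because $G$ is special, so $BG$ and the relevant quotients may be computed in the Zariski/Nisnevich topology (cf. the footnotes to the statement and to the definition of $BG$).

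Next I would identify the two spaces that occur. The total space of $V_{BG}$, as a sheaf over $BG$, is the quotient $[\A^n/G] \wequi (\A^n)_{hG}$, with $G$ acting through $G \hookrightarrow \GL_n$; the complement of the zero section is $[(\A^n \setminus 0)/G] \wequi (\A^n \setminus 0)_{hG}$, and the inclusion $V_{BG}\setminus 0 \hookrightarrow V_{BG}$ is the homotopy-orbit of $\A^n \setminus 0 \hookrightarrow \A^n$. Since $\A^n$ is $\A^1$-contractible, $(\A^n)_{hG} \wequi *_{hG} = BG$, and under this equivalence the map $V_{BG}\setminus 0 \to V_{BG}$ becomes the canonical projection $(\A^n \setminus 0)_{hG} \to BG$.

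Finally, for any inclusion $A \hookrightarrow X$ of motivic spaces there is a cofiber sequence of pointed spaces $A_+ \to X_+ \to X/A$, hence $\Sigma^\infty_+ A \to \Sigma^\infty_+ X \to \Sigma^\infty(X/A)$ is a cofiber sequence in $\SH(S)$. Applying this with $A = (\A^n \setminus 0)_{hG}$ and $X = BG$ gives a cofiber sequence
\[ \Sigma^\infty_+ (\A^n \setminus 0)_{hG} \to \Sigma^\infty_+ BG \to \Sigma^\infty \Th(V_{BG}), \]
and desuspending by $\Sigma^{2n,n}$ and using the equivalence of the first step yields the asserted sequence.

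The main obstacle is the first step: making precise, within the framework of \cite[\S16.2]{bachmann-norms}, that the abstract Thom spectrum of $BG \to \K^\circ$ agrees with $\Sigma^{-2n,-n}\Sigma^\infty$ of the Thom space of the honest tautological bundle, and that the total space of that bundle is the homotopy quotient $(\A^n)_{hG}$ — that is, all of the stacky and homotopy-colimit bookkeeping, including the (mild) point that $BG$ and the quotients must be formed in a topology where $G$-torsors are locally trivial. Once that identification is in hand, the remainder is a formal manipulation of cofiber sequences.
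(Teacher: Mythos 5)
Your proposal is correct and takes essentially the same approach as the paper; the only real difference is presentational. The paper unwinds $MG \wequi \colim_{E: X \to BG} \Th(V(E)-n)$ directly, applies the cofiber sequence $\Sigma^{\infty-2n,n}_+(V(E)\setminus 0) \to \Sigma^{\infty-2n,n}X_+ \to \Th(V(E))$ termwise, and then uses universality of colimits in $\infty$-topoi (once to see that passage to $G$-orbits preserves the pullback square identifying $V(E)\setminus 0$, and once more to compute $\colim_E V(E)\setminus 0 \wequi (\A^n\setminus 0)_{hG}$). This is precisely the content of the "stacky bookkeeping" you flag as the main obstacle at the end: the identifications $V_{BG} \wequi (\A^n)_{hG}$ and $V_{BG}\setminus 0 \wequi (\A^n\setminus 0)_{hG}$ are what the universality-of-colimits argument delivers. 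So you have the right skeleton and correctly identify the one step that needs justification; the paper supplies that justification via descent in the $\infty$-topos rather than via a separate "standard identification." One minor simplification in the paper worth noting: rather than passing through $(\A^n)_{hG} \wequi BG$ via $\A^1$-contractibility of the total space, it writes the termwise cofiber sequence with $X_+$ (the base, not the total space) from the outset, which amounts to the same thing but avoids an extra reduction.
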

\begin{proof}
By definition, \[ MG \wequi \colim_{E: X \to BG} \Th(V(E)-n), \] where the colimit is over all smooth schemes with a map to $BG$, i.e. smooth schemes and $G$-torsors on them, and $\Th(V(E))$ is the associated Thom spectrum.
In other words there is a cofiber sequence $\Sigma^{\infty-2n,n}_+ V(E) \setminus 0 \to \Sigma^{\infty-2n,n} X_+ \to \Th(V(E))$ and hence \[ \colim_{E: X \to BG} \Sigma^{\infty-2n,n}_+ V(E) \setminus 0 \to \Sigma^{\infty-2n,n}_+ BG \to MG. \]
By universality of colimits in $\infty$-topoi \cite[Theorem 6.1.0.6]{lurie-htt}, passage to $G$-orbits preserves pullbacks.\NB{details?}
It follows that there is a cartesian square
\begin{equation*}
\begin{CD}
(E \times (\A^n \setminus 0))_{hG} \wequi V(E) \setminus 0 @>>> (\A^n \setminus 0)_{hG} \\
@VVV              @VpVV \\
E_{hG} \wequi X     @>>> *_{hG} \wequi BG.
\end{CD}
\end{equation*}
Thus $\colim_{E: X \to BG} V(E) \setminus 0 \wequi (\A^n \setminus 0)_{hG}$, by universality of colimits again (this time in the presheaf $\infty$-topos).
The result follows.
\end{proof}

If $G=\GL_n$, the association $E \mapsto V(E)$ induces an equivalence between $G$-torsors and vector bundles of rank $n$.
Similarly if $G=\SL_n$, we get an equivalence between $G$-torsors and oriented vector bundles (i.e. carrying a trivialization of the determinant), and if $G=\Sp_n$ then we obtain an equivalence with symplectic bundles of rank $2n$ (i.e. carrying a non-degenerate, alternating bilinear form).
See e.g. \cite[\S3]{asok2015affine} for this.
The Grassmannian variety $\Gr(n, k)$ represents the functor of $n$-dimensional vector subbundles of $\scr O^k$; similarly $\SGr(n, k)$ represents the functor of $n$-dimensional vector subbundles of $\scr O^k$ together with an orientation, and $\HGr(n,k)$ represents the functor of $2n$-dimensional vector subbundles $V$ of $\scr O^{2k}$ such that the restriction of the canonical alternating form on $\scr O^{2k}$ to $V$ remains non-degenerate.
There are thus canonical $\GL_n, \SL_n$ and $\Sp_n$ torsors on $\Gr(n,k), \SGr(n,k)$ and $\HGr(n,k)$, respectively, inducing maps \begin{equation} \label{eq:Gr-BGL} \Gr(n, \infty) \to B\GL_n, \quad \SGr(n, \infty) \to B\SL_n \quad\text{and}\quad \HGr(n,\infty) \to B\Sp_n. \end{equation}
Each of these maps is well-known to be a motivic equivalence; see e.g. \cite[Proposition 2.6]{A1-homotopy-theory}, \cite[proof of Theorem 4.1.1]{asok2015affine}, \cite[proof of Theorem 8.2]{panin2010motivic}.

We can use this to connect to more standard definitions of Thom spectra.
\begin{lemma} \label{lemm:thom-space-classical}
We have motivic equivalences \[ M\GL_n \wequi \Sigma^{\infty-2n,n}\Th(\gamma_n^\GL), \quad M\SL_n \wequi \Sigma^{\infty-2n,n}\Th(\gamma_n^\SL) \quad\text{and}\quad M\Sp_n \wequi \Sigma^{\infty-4n,2n}\Th(\gamma_n^\Sp), \] where $\gamma_n^\GL \to \Gr(n,\infty)$ (respectively $\gamma_n^\SL \to \SGr(n,\infty), \gamma_n^\Sp \to \HGr(n,\infty)$) denotes the tautological virtual bundle.
\end{lemma}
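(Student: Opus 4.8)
The plan is to reduce the statement, via the motivic equivalences \eqref{eq:Gr-BGL}, to an unwinding of the definition of the Thom-spectrum functor. Recall (as in the proof of Lemma \ref{lemm:thom-space-tautology}, following \cite[\S16.2]{bachmann-norms}) that for a group $G$ together with a chosen embedding $G \hookrightarrow \GL_d$ one has, by definition, $MG \wequi \colim_{(X,E)} \Th(V(E) - d)$, where the colimit runs over pairs of a smooth $S$-scheme $X$ and a $G$-torsor $E$ on $X$; equivalently, $MG$ is the value on the map $BG \to B\GL_d \to \K^\circ$ of the Thom-spectrum functor, which is colimit-preserving and in particular sends equivalences over $\K^\circ$ to equivalences in $\SH(S)$. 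In our three cases $d = n$, $d = n$, and $d = 2n$ respectively.

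First I would invoke the recalled fact that the maps $\Gr(n,\infty) \to B\GL_n$, $\SGr(n,\infty) \to B\SL_n$ and $\HGr(n,\infty) \to B\Sp_n$ of \eqref{eq:Gr-BGL} are motivic equivalences, and that by construction (the Grassmannians represent the relevant functors of subbundles, whose universal subbundle is $\gamma_n^{\GL}$, $\gamma_n^{\SL}$, resp. $\gamma_n^{\Sp}$) they carry the tautological virtual bundle on the target to the class $[\gamma_n^{\GL}] - n$, $[\gamma_n^{\SL}] - n$, resp. $[\gamma_n^{\Sp}] - 2n$ of the tautological bundle on the source. Applying the Thom-spectrum functor therefore gives $M\GL_n \wequi \Th([\gamma_n^{\GL}] - n)$, $M\SL_n \wequi \Th([\gamma_n^{\SL}] - n)$ and $M\Sp_n \wequi \Th([\gamma_n^{\Sp}] - 2n)$, the Thom spectra now taken over the respective infinite Grassmannians.

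It remains to evaluate these. Write $\Gr(n,\infty) = \colim_k \Gr(n,k)$, and likewise $\SGr(n,\infty) = \colim_k \SGr(n,k)$ and $\HGr(n,\infty) = \colim_k \HGr(n,k)$, as filtered colimits of smooth $S$-schemes, over each finite level of which the tautological class is represented by an honest vector bundle $V$ of rank $r$ ($=n$ in the first two cases, $=2n$ in the third). By construction of the Thom-spectrum functor, $\Th([V] - r) \wequi \Sigma^{\infty - 2r, r}(V/(V \setminus 0))$, with $V/(V\setminus 0)$ the ordinary space-level Thom space. Since $\Sigma^\infty$ and the Thom-space construction commute with the filtered colimit in $k$, and the Thom spaces of the restrictions of the tautological bundle to the finite levels have colimit the Thom space $\Th(\gamma_n)$ of the tautological bundle on the infinite Grassmannian, passing to the colimit yields $M\GL_n \wequi \Sigma^{\infty-2n,n}\Th(\gamma_n^{\GL})$, $M\SL_n \wequi \Sigma^{\infty-2n,n}\Th(\gamma_n^{\SL})$ and $M\Sp_n \wequi \Sigma^{\infty-4n,2n}\Th(\gamma_n^{\Sp})$.

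The only input that is not purely formal is the identification of the restricted tautological bundle in \eqref{eq:Gr-BGL}, but this is immediate from the modular descriptions of $\Gr$, $\SGr$ and $\HGr$ (and one should note in passing that $\SGr(n,k)$ and $\HGr(n,k)$ need only be smooth quasi-projective, which is irrelevant here); everything else is bookkeeping with the definition of the Thom-spectrum functor and with filtered colimits. Accordingly I do not expect any genuine obstacle — the lemma simply records the passage between the $\K^\circ$-theoretic and the classical Grassmannian models of these Thom spectra.
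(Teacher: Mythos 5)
Your argument follows the same route as the paper: reduce via the motivic equivalences of \eqref{eq:Gr-BGL} and then feed them through the Thom spectrum functor. However, your justification for the key step — that the Thom spectrum functor inverts these maps — is not correct as stated. You write that the functor ``is colimit-preserving and in particular sends equivalences over $\K^\circ$ to equivalences in $\SH(S)$,'' but colimit-preservation alone does not yield this: the maps $\Gr(n,\infty) \to B\GL_n$, $\SGr(n,\infty) \to B\SL_n$, $\HGr(n,\infty) \to B\Sp_n$ are equivalences only after \emph{motivic} localization, while the explicit colimit formula defining $MG$ is taken over $\PSh(\Sm_S)_{/\K^\circ}$, and a colimit-preserving functor out of presheaves has no a priori reason to descend along $L_{\mot}$ (compare $\Sigma^\infty_+: \PSh(\Sm_S) \to \SH$, which preserves colimits yet certainly does not invert $\A^1$-equivalences). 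That the motivic Thom spectrum functor does factor through motivic localization is a genuine input, established in \cite[Proposition 16.9 and Remark 16.11]{bachmann-norms}, and this is precisely what the paper's one-line proof cites. Once that citation is supplied in place of the informal justification, the remainder of your argument — identifying the level-wise Thom spaces $\Th(\gamma|_{\Gr(n,k)})$ and passing to the filtered colimit in $k$ — is correct, and usefully records details the paper leaves implicit.
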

\begin{proof}
We need to prove that the motivic Thom spectrum functor inverts the motivic equivalence $\Gr_n \to B\GL_n$, and similarly for $\SL_n, \Sp_n$.
This follows from \cite[Proposition 16.9 and Remark 16.11]{bachmann-norms}.
\end{proof}

By definition \cite[Example 16.22]{bachmann-norms}, the ($\scr E_\infty$-ring maps of) spectra \begin{equation} \label{eq:thom-spectra} \MSp \to \MSL \to \MGL \end{equation} are obtained by applying the motivic Thom spectrum formalism to the maps $\K^{\Sp\circ} \to \K^{\SL\circ} \to \K^{\circ}$.
Since $\K^{\Sp\circ} \stackrel{\A^1}{\wequi} \colim_n B\Sp_n$ (argue as in the discussion just before \cite[Theorem 16.13]{bachmann-norms}), we find (using Lemma \ref{lemm:thom-space-classical} and \cite[Proposition 16.9 and Remark 16.11]{bachmann-norms}) that \[ \MSp \wequi \colim_n M\Sp_n \wequi \colim_n \Sigma^{\infty - 4n,2n}\Th(\gamma_n^\Sp), \] as expected.
Similarly \[ \MSL \wequi \colim_n \Sigma^{\infty - 2n,n}\Th(\gamma_n^\SL) \quad\text{and}\quad \MGL \wequi \colim_n \Sigma^{\infty - 2n,n}\Th(\gamma_n^\GL). \]
\begin{corollary}
We have $r_\R(\MSp) \wequi \MU$ and $r_\R(\MSL) \wequi \MSO$.
In fact $r_\R(\MSp_n) \wequi \MU_n$ and $r_\R(\MSL_n) \wequi \MSO_n$.
\end{corollary}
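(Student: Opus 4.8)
The plan is to realize the Thom-space presentations just established. Recall from the discussion above that $\MSp \wequi \colim_n M\Sp_n \wequi \colim_n \Sigma^{\infty-4n,2n}\Th(\gamma_n^\Sp)$ and $\MSL \wequi \colim_n M\SL_n \wequi \colim_n \Sigma^{\infty-2n,n}\Th(\gamma_n^\SL)$, where $\gamma_n^\Sp \to \HGr(n,\infty)$ and $\gamma_n^\SL \to \SGr(n,\infty)$ are the tautological (rank $2n$, resp.\ rank $n$) bundles. Since $r_\R$ is the composite of the smashing localization $\SH(\R) \to \SH(\R)[\rho^{-1}]$ with an equivalence $\SH(\R)[\rho^{-1}] \wequi \SH$, it is symmetric monoidal and preserves colimits; so it suffices to compute $r_\R(M\Sp_n)$ and $r_\R(M\SL_n)$ compatibly with the structure maps.

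First I would reduce to topological Thom spaces. By \S\ref{subsec:r-R} we have $r_\R(\Sigma^\infty_+ X) \wequi \Sigma^\infty_+ X(\R)$ for $X \in \Sm_\R$, and $r_\R(S^{2,1}) \wequi S^1$ (as $\P^1(\R) \wequi S^1$), hence $r_\R(\Sigma^{2d,d}) \wequi \Sigma^d$. For a rank-$d$ vector bundle $V$ on $X \in \Sm_\R$ (so that $V$ and $V \setminus 0$ are smooth $\R$-schemes), writing $\Th(V)$ as the cofiber of $\Sigma^\infty_+(V \setminus 0) \to \Sigma^\infty_+ V$ and using $(V \setminus 0)(\R) = V(\R) \setminus 0$, we obtain $r_\R(\Sigma^{\infty-2d,d}\Th(V)) \wequi \Sigma^{\infty-d}\Th(V(\R))$, where $V(\R)$ is the rank-$d$ real vector bundle of real points. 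Thus $r_\R(M\Sp_n) \wequi \Sigma^{\infty-2n}\Th(\gamma_n^\Sp(\R))$ and $r_\R(M\SL_n) \wequi \Sigma^{\infty-n}\Th(\gamma_n^\SL(\R))$, and it remains to identify the spaces $\HGr(n,\infty)(\R)$, $\SGr(n,\infty)(\R)$ together with those real bundles.

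For the special linear case this is immediate: $\SGr(n,\infty)$ represents oriented rank-$n$ subbundles, so $\SGr(n,\infty)(\R)$ is the Grassmannian of oriented $n$-planes in $\R^\infty$, a model for $B\mathrm{SO}(n)$, with $\gamma_n^\SL(\R)$ the universal oriented $\R^n$-bundle; hence $r_\R(M\SL_n)$ is $\Sigma^{\infty-n}$ of the corresponding Thom space, i.e.\ $\MSO_n$. For the symplectic case, $\HGr(n,\infty)(\R)$ is the space of finite-dimensional subspaces of $(\R^\infty, \omega)$ on which $\omega$ stays nondegenerate, which is a model for $B\Sp_{2n}(\R)$ with $\gamma_n^\Sp(\R)$ classified by the standard representation $\R^{2n}$. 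The inclusion of a maximal compact subgroup $U(n) \hookrightarrow \Sp_{2n}(\R)$ is a homotopy equivalence (Cartan/polar decomposition), and restricts the standard real representation $\R^{2n}$ to the realification of the standard unitary representation $\C^n$; hence $\HGr(n,\infty)(\R) \wequi BU(n)$ and $\gamma_n^\Sp(\R)$ is the realification of the universal $\C^n$-bundle, so $r_\R(M\Sp_n) \wequi \MU_n$. (Alternatively, use the motivic equivalence $\HGr(n,\infty) \wequi B\Sp_n$ recorded above together with $r_\R(B\Sp_n) \wequi B(\Sp_{2n}(\R))$.)

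Finally I would pass to the colimit. The structure maps of the colimits computing $\MSp$ and $\MSL$ are induced by the stabilizations $B\Sp_n \to B\Sp_{n+1}$ and $\SGr(n,\infty) \to \SGr(n+1,\infty)$, which realize to the standard inclusions $U(n) \hookrightarrow U(n+1)$ and $\mathrm{SO}(n) \hookrightarrow \mathrm{SO}(n+1)$; hence $r_\R(\MSp) \wequi \colim_n \MU_n \wequi \MU$ and $r_\R(\MSL) \wequi \colim_n \MSO_n \wequi \MSO$. The main obstacle is the bundle-level statement in the quaternionic case: one must check not merely that $\HGr(n,\infty)(\R) \wequi BU(n)$ but that the tautological symplectic bundle realizes to the realified universal complex bundle, so that its Thom space really is $\MU(n)$; the special linear case, and the compatibility of $r_\R$ with Thom spaces and with sequential colimits, are formal.
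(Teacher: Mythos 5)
Your proposal is correct and gives a different (though closely related) argument from the paper's. Both approaches ultimately rest on the observations that $r_\R$ preserves colimits and finite limits/products, that it sends a smooth $\R$-variety to its space of real points, and that $SO(n) \hookrightarrow \SL_n(\R)$, $U(n) \hookrightarrow \Sp_{2n}(\R)$ are homotopy equivalences. The difference is which presentation of $M\SL_n$ and $M\Sp_n$ is fed into $r_\R$. The paper realizes the \emph{bar-construction} model $\cof\bigl((\A^n \setminus 0)_{hG} \to *_{hG}\bigr)$ coming from Lemma \ref{lemm:thom-space-tautology}: because $\SL_n$ and $\Sp_n$ are special, the motivic homotopy orbits are the honest Zariski/geometric bar construction, so after applying $r_\R$ one lands in the topological bar construction for $G(\R)$, which is then compared to $SO(n)$, $U(n)$. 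Your approach instead realizes the \emph{Grassmannian Thom-space} model from Lemma \ref{lemm:thom-space-classical} directly: you compute the real points of the finite Grassmannians $\SGr(n,k)(\R)$ and $\HGr(n,k)(\R)$ (the oriented, resp.\ real-symplectic, Grassmannians), pass to the colimit in $k$, identify these with $BSO(n)$ and $BU(n)$, and check that the tautological bundle realizes to the universal one (for the quaternionic case, via the maximal compact $U(n) \hookrightarrow \Sp_{2n}(\R)$ and the fact that the standard $\R^{2n}$ restricts to the realification of the standard $\C^n$). Your version is more concrete and avoids invoking specialness to identify the motivic $BG$ with a geometric bar construction, but it requires identifying the real points of the infinite Grassmannians (implicitly as filtered colimits of smooth finite Grassmannians, where the formula $r_\R(\Sigma^\infty_+ X) \wequi \Sigma^\infty_+ X(\R)$ applies), whereas the paper's bar-construction argument sidesteps any explicit Grassmannian topology. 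Both are valid, and the key nontrivial checks you flag (bundle-level identification in the symplectic case, compatibility with stabilization maps) are handled correctly.
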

\begin{proof}
It suffices to show the ``in fact'' part.
By Lemma \ref{lemm:thom-space-tautology}, we have \[ \MSL_n \wequi \Sigma^{\infty-2n,n}\cof(T_n \to G_n), \] where $T_n = (\A^n \setminus 0)_{h\SL_n}$ and $G_n = *_{h\SL_n}$.
Since $\SL_n$ is a special group \cite[\S4.4.b]{serre1958espaces}, we can take the homotopy orbits in the Zariski topology, and hence (since Zariski equivalences are motivic equivalences), we can just take the homotopy orbits in the category of motivic spaces.
In other words we obtain \[ T_n \wequi \colim_{m \in \Delta^\op} \SL_n^{\times m} \times (\A^n \setminus 0) \in \Spc(S)_*. \]
Since $r_\R$ preserves colimits and finite products, we find that \[ r_\R(T_n) \wequi \colim_{m \in \Delta^\op} \SL_n(\R)^{\times m} \times (\R^n \setminus 0) \wequi (\R^n \setminus 0)_{h\SL_n(\R)}. \]
The inclusion $SO_n \to \SL_n(\R)$ is a homotopy equivalence \cite[\S E.5]{hall2015lie}, so that this is the same as $(\R^n \setminus 0)_{hSO_n}$.
Similarly we find that $r_\R(G_n) \wequi BSO_n$, whence \[ r_\R(\MSL_n) \wequi \Sigma^{-n}\cof((\R^n \setminus 0)_{hSO_n} \to BSO_n) \wequi \MSO_n, \] as desired.
The argument for $\MSp$ is similar, using that $\Sp_n$ is special \cite[\S4.4.c]{serre1958espaces} and $\Sp_n(\R) \wequi U_n$ \cite[\S4.4]{Arnold2001}.
\end{proof}

\subsection{Homology}
\subsubsection{}
We recall some well-known facts about duality in homology and cohomology theories represented by (motivic) spectra.

Let $A \in \CAlg(\h\SH(S))$ and $X \in \SH(S)$.
Then we have the \emph{Kronecker pairing} \begin{gather*} A_{**}(X) \otimes_{A_{**}} A^{**}(X) \to A_{**} \\ (f: \Sigma^{**} \1 \to A \wedge X) \otimes (g: X \to \Sigma^{**} A) \mapsto (\Sigma^{**} \1 \xrightarrow{f} A \wedge X \xrightarrow{\id \wedge g} A \wedge A \to A). \end{gather*}
This is easily seen to be $A_{**}$-bilinear.
\begin{lemma} \label{lemm:perfect-pairing}
If $X$ is cellular and strongly dualizable, and either $A_{**}X$ or $A^{**}X$ is flat over $A$, then the Kronecker pairing is perfect.\NB{And so in particular both $A_{**}X$ and $A^{**}X$ are flat.}
\end{lemma}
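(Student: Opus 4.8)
The plan is to deduce perfectness of the Kronecker pairing from the Künneth theorem for cellular motivic spectra together with the formalism of dualizable objects.

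First I would introduce the Spanier--Whitehead dual $DX := \iHom_{\SH(S)}(X, \1)$, with evaluation $\epsilon \colon X \wedge DX \to \1$ and coevaluation $\delta \colon \1 \to DX \wedge X$. Strong dualizability of $X$ gives a natural isomorphism
\[ A^{p,q}(X) = [X, \Sigma^{p,q} A] \cong [\Sigma^{-p,-q}\1, A \wedge DX] = A_{-p,-q}(DX), \]
and a direct unwinding of the definitions identifies the Kronecker pairing, under this isomorphism, with the $A_{**}$-bilinear pairing
\[ A_{**}(X) \otimes_{A_{**}} A_{**}(DX) \xrightarrow{\times} A_{**}(X \wedge DX) \xrightarrow{\epsilon_*} A_{**}(\1) = A_{**}, \]
where $\times$ denotes the external product of the homology theory $A_{**}(-)$. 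Next I would record two easy points: (i) since $X$ is cellular and strongly dualizable it is a retract of a finite cellular spectrum (a standard fact; cf.\ Dugger--Isaksen on motivic cell structures), so $DX$ is a retract of the finite cellular dual of that spectrum and is in particular again cellular; and (ii) $A^{**}(X)$ is flat over $A_{**}$ if and only if $A_{**}(DX)$ is, since they differ only by a regrading. Thus, under the hypothesis, one of the two cellular spectra $X$, $DX$ has flat $A_{**}$-homology.

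The key input is then the Künneth theorem for cellular motivic spectra (Dugger--Isaksen): the homology functor $F := A_{**}(-) \colon \h\SH(S) \to (\text{graded } A_{**}\text{-modules})$ is lax symmetric monoidal with lax structure maps the external products $\lambda_{Y,Z} \colon A_{**}(Y) \otimes_{A_{**}} A_{**}(Z) \to A_{**}(Y \wedge Z)$, and $\lambda_{Y,Z}$ is an isomorphism whenever $Y, Z$ are cellular and one of $A_{**}(Y)$, $A_{**}(Z)$ is flat over $A_{**}$. By the previous paragraph this applies to $(Y,Z) = (X, DX)$ and to $(DX, X)$, so both $\lambda_{X,DX}$ and $\lambda_{DX,X}$ are isomorphisms. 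To conclude I would invoke the purely formal fact that a lax symmetric monoidal functor sends the duality datum $(X, DX, \epsilon, \delta)$ in $\h\SH(S)$ to $FX$, $F(DX)$ equipped with $\epsilon_* \circ \lambda_{X,DX}$ and $\lambda_{DX,X}^{-1} \circ \delta_*$, and that when these two $\lambda$'s are isomorphisms the triangle identities for the original datum pass to triangle identities for the new one. Hence $(FX, F(DX))$ is a duality datum in graded $A_{**}$-modules: the pairing $FX \otimes_{A_{**}} F(DX) \to A_{**}$ is perfect, and by the first paragraph it is exactly the Kronecker pairing. Since $FX = A_{**}(X)$ and $F(DX) = A^{-**}(X)$ are then dualizable $A_{**}$-modules, hence finitely generated projective, both $A_{**}(X)$ and $A^{**}(X)$ are in particular flat, which gives the statement of the footnote.

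The main obstacle I expect is the middle step: pinning down the exact form of the cellular Künneth theorem that is needed and making sure $DX$ is cellular (this is where the reduction to a finite cellular spectrum is used). The opening identification is pure bookkeeping, and the final step is a standard generality about lax monoidal functors and dualizable objects, so all the real content sits in the Künneth input.
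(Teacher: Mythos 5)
Your proposal is correct and conceptually matches the paper's proof: both reduce perfectness of the Kronecker pairing to (a) a K\"unneth isomorphism for cellular spectra in the presence of a flatness hypothesis, and (b) the formal fact that a lax symmetric monoidal functor carries a duality datum to a duality datum once the relevant lax structure maps are invertible. The one place you differ is in how (a) is obtained: you invoke the Dugger--Isaksen K\"unneth theorem for a pair of cellular spectra, which forces you to first argue that $DX$ is itself cellular via the retract-of-finite-cellular observation, whereas the paper avoids this entirely by proving, via the Landweber-style comparison of homological functors on the cellular subcategory, the \emph{one-sided} statement that $A_{**}(DX \wedge Y) \cong A_{**}(DX) \otimes_{A_{**}} A_{**}(Y)$ for every cellular $Y$ (using only that $A_{**}(DX) \cong A^{-**}(X)$ is flat) --- this never requires $DX$ to be cellular and applies directly with $Y = X$ and $Y = X \wedge DX$. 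Your detour through cellularity of $DX$ is correct and standard (dualizable objects are compact in $\SH(S)$, compact cellular objects are retracts of finite cellular spectra, and $D$ preserves finite cellularity), but the paper's one-sided K\"unneth is slightly cleaner and self-contained; the rest of the argument is, up to presentation, the same.
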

\begin{proof}\discuss{Suggestions?}
Write $DX$ for the dual and put $\otimes := \otimes_{A_{**}}$.
Replacing $X$ by $DX$ if necessary, we may assume that $A^{**}X$ is flat.
Then for any cellular object $Y$ we get $A_{**}(DX \wedge Y) \wequi A_{**}(DX) \otimes A_{**}(Y)$; indeed this holds for spheres by construction and is a natural transformation of homological functors preserving filtered colimits (here we use that $A^{**}X$ is flat).

Let $u: \1 \to DX \wedge X$ and $c: X \wedge DX \to \1$ be the unit and co-unit of the strong duality between $X$ and $DX$.
Since $X$ is cellular, $A_{**}(DX \wedge X) \wequi A_{**}(DX) \otimes A_{**}(X)$, and hence $A_{**}(u), A_{**}(c)$ define maps of the correct shape to exhibit a strong duality between $A_{**}(X)$ and $A_{**}(DX)$.
Consider the following diagram
\begin{equation*}
\begin{tikzcd}[column sep=tiny]
A_{**}(X) \otimes A_{**} \ar[dd, "\wequi" swap] \ar[dr, "\id \otimes A_{**}(u)"] & A_{**}(X) \otimes A_{**}(DX) \otimes A_{**}(X) \ar[d, shift right=10, "\wequi" swap] \ar[d, shift left=10, "\wequi"] & A_{**} \otimes A_{**}(X) \ar[dd, "\wequi"] \\
     & A_{**}(X) \otimes A_{**}(DX \wedge X) \quad\quad A_{**}(X \wedge DX) \otimes A_{**}(X) \ar[ur, "A_{**}(c) \otimes \id"] \ar[d, shift right=10] \ar[d, shift left=10] \\
A_{**}(X \wedge \1) \ar[r, "A_{**}(\id \wedge u)"] & A_{**}(X \wedge DX \wedge X) \ar[r, "A_{**}(c \wedge \id)"] & A_{**}(\1 \wedge X).
\end{tikzcd}
\end{equation*}
The vertical maps are lax monoidal structure maps; the equivalence $A_{**}(DX \wedge X) \wequi A_{**}(DX) \otimes A_{**}(X)$ has already been established.
The diagram commutes because $A_{**}$ is lax symmetric monoidal.
Up to suppressing tensoring with the unit, the bottom horizontal composite is the identity (by definition of $u, c$ exhibiting a strong duality).
It follows that the top horizontal composite (inverting the middle vertical maps) $(\id \otimes A_{**}(c)) \circ (A_{**}(u) \otimes \id)$ is the identity.
A similar diagram shows that $(A_{**}(c) \otimes \id) \circ (\id \otimes A_{**}(u))$ is the identity.
Thus $A_{**}(u), A_{**}(c)$ exhibit a strong duality between $A_{**}(X)$ and $A_{**}(DX)$.
This was to be shown.
\end{proof}

In preparation for later, we also observe the following.
\begin{lemma} \label{lemm:kronecker-adams}
Let $\adamspsi: A \to A$ be a ring automorphism.
Then the Kronecker pairing satisfies \[ \lra{\adamspsi x, y} = \adamspsi \lra{x, \adamspsi^{-1} y}. \]
\end{lemma}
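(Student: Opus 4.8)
The plan is to unwind the definitions and run a short diagram chase in the homotopy category $\h\SH(S)$. Recall that for $x \in A_{**}(X)$ represented by a map $x\colon \Sigma^{**}\1 \to A \wedge X$, the element $\adamspsi x$ is represented by $(\adamspsi \wedge \id_X)\circ x$; for $y \in A^{**}(X)$ represented by $y\colon X \to \Sigma^{**}A$, the element $\adamspsi^{-1}y$ is represented by $(\Sigma^{**}\adamspsi^{-1})\circ y$ (here we use that $\adamspsi$, being an automorphism, has an inverse). With $\mu\colon A \wedge A \to A$ the multiplication, $\lra{x,y}$ is the homotopy class of the composite $\mu \circ (\id_A \wedge y)\circ x$.

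The key steps are then two elementary identities. First, the interchange law for the smash product, applied to the two maps $\adamspsi$ and $y$ acting on different smash factors, gives $(\id_A \wedge y)\circ(\adamspsi \wedge \id_X) \wequi (\adamspsi \wedge \id_{\Sigma^{**}A})\circ(\id_A \wedge y)$. Second, since $\adamspsi$ is a homotopy ring map we have $\adamspsi \circ \mu \wequi \mu \circ (\adamspsi \wedge \adamspsi)$, and composing with $\id_A \wedge \adamspsi^{-1}$ on the right yields $\mu \circ (\adamspsi \wedge \id_A) \wequi \adamspsi \circ \mu \circ (\id_A \wedge \adamspsi^{-1})$. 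Stringing these together (and reading off homotopy classes in $\pi_{**}A = A_{**}$) gives
\[ \lra{\adamspsi x, y} \;=\; \bigl[\mu \circ (\id_A \wedge y)\circ(\adamspsi \wedge \id_X)\circ x\bigr] \;=\; \bigl[\adamspsi \circ \mu \circ \bigl(\id_A \wedge (\adamspsi^{-1}\circ y)\bigr)\circ x\bigr] \;=\; \adamspsi\lra{x, \adamspsi^{-1}y}. \]

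I expect no real obstacle here; the only points requiring a little care are that every identity invoked already holds in $\h\SH(S)$, so no coherence data is needed, and that the bigraded suspensions are harmless since $\adamspsi$ has bidegree $(0,0)$ and commutes with $\Sigma^{**}$ (so the "multiplication" $A \wedge \Sigma^{**}A \to \Sigma^{**}A$ used above is $\id_{S^{**}}\wedge\mu$ up to a swap, to which the ring-map identity still applies). One could alternatively phrase the argument by observing that $\adamspsi$ induces compatible natural transformations of the homology and cohomology functors and that the Kronecker pairing is natural in the coefficient ring, but the direct chase above is the shortest route.
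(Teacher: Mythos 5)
Your proof is correct and takes essentially the same route as the paper: both arguments slide $\adamspsi$ past $y$ using bifunctoriality of the smash product and then invoke the homotopy-ring-map identity $\mu\circ(\adamspsi\wedge\adamspsi)\wequi\adamspsi\circ\mu$ (the paper phrases it via $\adamspsi^{-1}$, arriving at $\adamspsi^{-1}\lra{\adamspsi x,y}=\lra{x,\adamspsi^{-1}y}$ and then applying $\adamspsi$, whereas you isolate the identity $\mu\circ(\adamspsi\wedge\id)\wequi\adamspsi\circ\mu\circ(\id\wedge\adamspsi^{-1})$ and conclude directly). The paper records this chase as a single commutative diagram, but the content is identical.
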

\begin{proof}
The commutative\NB{right hand square commutes b/c $\adamspsi^{-1}$ is a ring map} diagram (in which suspensions have been suppressed)
\begin{equation*}
\begin{CD}
\1 @>x>> X \wedge A @>{y \wedge \id}>> A \wedge A \\
@.          @V{\id \wedge \adamspsi}VV     @V{\id \wedge \adamspsi}VV \\
        @.   X \wedge A @>{y \wedge \id}>> A \wedge A @>m>> A \\
@.             @.                   @V{\adamspsi^{-1} \wedge \adamspsi^{-1}}VV  @V{\adamspsi^{-1}}VV \\
        @.               @.                A \wedge A @>m>> A
\end{CD}
\end{equation*}
shows\NB{paths rdrrd vs rrddr} that $\adamspsi^{-1}\lra{\adamspsi x, y} = \lra{x, \adamspsi^{-1}y}$.
The result follows.
\end{proof}

Let \[ X_0 \to X_1 \to \dots \in \SH(S) \] be a directed system and put $X = \colim_i X_i$.
Suppose that $\limone_i A^{**}(X_i) = 0$.
Then \[ A^{**}(X) \wequi \lim_i A^{**}(X_i), \] and we can give this the \emph{inverse limit topology} (i.e. give each $A^{**}(X_i)$ the discrete topology and take the limit in bigraded topological abelian groups).
\begin{corollary} \label{cor:duality}
Assume in addition that each $X_i$ is strongly dualizable and cellular, and $A^{**}(X_i)$ or $A_{**}(X_i)$ is flat.
Then \begin{equation}\label{eq:cont-coh} A^{**}(X) \wequi \Hom_{A_{**}}(A_{**}(X), A_{**}) \quad\text{and}\quad A_{**}(X) \wequi \Hom_{A_{**},c}(A^{**}(X), A_{**}). \end{equation}
Here $\Hom_{A_{**},c}$ means continuous homomorphisms (for the discrete topology on the target and the inverse limit topology on the source).
\end{corollary}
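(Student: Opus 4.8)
The strategy is to reduce everything to the perfect pairings at the finite stages $X_i$ supplied by Lemma~\ref{lemm:perfect-pairing} and then pass to the (co)limit. Recall first that $A$-homology commutes with filtered colimits (smashing with $A$ and taking bigraded homotopy groups both do), so $A_{**}(X) \wequi \colim_i A_{**}(X_i)$; and that the Milnor sequence together with the hypothesis $\limone_i A^{**}(X_i) = 0$ gives $A^{**}(X) \wequi \lim_i A^{**}(X_i)$, which is exactly the inverse limit topology on $A^{**}(X)$. By Lemma~\ref{lemm:perfect-pairing}, for each $i$ the Kronecker pairing identifies $A^{**}(X_i)$ and $A_{**}(X_i)$ with each other's $A_{**}$-linear duals; in particular both are dualizable (equivalently, finitely generated projective) $A_{**}$-modules.

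For the first equivalence in \eqref{eq:cont-coh} one simply dualizes the colimit: since $\Hom_{A_{**}}(-, A_{**})$ carries colimits to limits,
\[ \Hom_{A_{**}}(A_{**}(X), A_{**}) \wequi \lim_i \Hom_{A_{**}}(A_{**}(X_i), A_{**}) \wequi \lim_i A^{**}(X_i) \wequi A^{**}(X), \]
and naturality of the Kronecker pairing in $X_i \to X$ shows this is the map asserted in the statement. For the second equivalence we must instead show
\[ A_{**}(X) \wequi \colim_i A_{**}(X_i) \wequi \colim_i \Hom_{A_{**}}(A^{**}(X_i), A_{**}) \wequi \Hom_{A_{**},c}\bigl(\textstyle\lim_i A^{**}(X_i), A_{**}\bigr). \]
The map from the colimit to the continuous dual is the evident one: a homomorphism out of $A^{**}(X_i)$, precomposed with the projection $\lim_j A^{**}(X_j) \to A^{**}(X_i)$, is continuous, and by naturality this is compatible with the identifications $A_{**}(X_i) \wequi \Hom_{A_{**}}(A^{**}(X_i), A_{**})$ of Lemma~\ref{lemm:perfect-pairing}. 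Conversely, a continuous homomorphism $\lim_j A^{**}(X_j) \to A_{**}$ vanishes on some basic open subgroup $\ker(\lim_j A^{**}(X_j) \to A^{**}(X_i))$, hence factors through the image of this projection in $A^{**}(X_i)$; one then recognizes this factorization as the restriction of an honest element of $A_{**}(X_i) = \Hom_{A_{**}}(A^{**}(X_i), A_{**})$, again via the perfect pairing at stage $i$.

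The only point requiring genuine care is this last one: identifying the continuous $A_{**}$-linear dual of the pro-module $\{A^{**}(X_i)\}_i$ with $\colim_i \Hom_{A_{**}}(A^{**}(X_i), A_{**})$. This is a formal fact about linearly topologized modules which is immediate when the transition maps $A^{**}(X_{i+1}) \to A^{**}(X_i)$ are (eventually) split surjective --- the case in all of our applications, where $A^{**}(X)$ is a (power series) ring and the $X_i$ are its evident finite approximations --- so that in practice the image occurring above is all of $A^{**}(X_i)$ and no extension problem arises. Everything else is bookkeeping with the Kronecker pairing and the lax symmetric monoidal structure of $A_{**}(-)$ underlying Lemma~\ref{lemm:perfect-pairing}.
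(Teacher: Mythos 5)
Your argument follows the same route as the paper's: for the first isomorphism, apply $\Hom_{A_{**}}(-,A_{**})$ to $A_{**}(X) \wequi \colim_i A_{**}(X_i)$ and use Lemma~\ref{lemm:perfect-pairing} termwise together with the Milnor sequence; for the second, observe that a continuous homomorphism out of $A^{**}(X) \wequi \lim_i A^{**}(X_i)$ has open kernel and hence factors through a finite stage. The subtle point you flag is real and the paper passes over it in silence: a continuous $f$ factors a priori only through $\im\bigl(A^{**}(X) \to A^{**}(X_i)\bigr)$, not through $A^{**}(X_i)$ itself, so identifying $\Hom_{A_{**},c}(A^{**}(X),A_{**})$ with $\colim_i \Hom_{A_{**}}(A^{**}(X_i),A_{**})$ uses that the transition maps $A^{**}(X_{i+1}) \to A^{**}(X_i)$ are eventually surjective (plain surjectivity already suffices; you do not actually need the splittings you invoke). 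This is not among the stated hypotheses of the corollary, but, as you observe, it holds wherever the result is used (the Grassmannian towers, where surjectivity on cohomology is part of the cited computations), so the corollary is fine for its purpose. Isolating this explicitly is a modest but genuine improvement on the paper's write-up.
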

\begin{proof}
For the first claim we compute \begin{gather*} A^{**}(X) \wequi \lim_i A^{**}(X_i) \wequi \lim_i \Hom_{A_{**}}(A_{**}(X_i), A_{**}) \\\wequi \Hom_{A_{**}}(\colim_i A_{**}(X_i), A_{**}) \wequi \Hom_{A_{**}}(A_{**}(X), A_{**}), \end{gather*} using the Milnor exact sequence \cite[Proposition VI.2.15]{goerss2009simplicial}, Lemma \ref{lemm:perfect-pairing}, and compactness of the spheres.
For the second claim, note that by definition a basis of open neighborhoods of $0$ in $A^{**}(X)$ is given by $\ker(A^{**}(X) \to A^{**}(X_i))$.
Any continuous homomorphism thus factors through $A^{**}(X_i)$ for some $i$, yielding the formula \[ \Hom_{A_{**},c}(A^{**}(X), A_{**}) \wequi \colim_i \Hom_{A_{**}}(A^{**}(X_i), A_{**}). \]
But also \[ \colim_i \Hom_{A_{**}}(A^{**}(X_i), A_{**}) \wequi \colim_i A_{**}(X_i) \wequi A_{**}(X), \] using Lemma \ref{lemm:perfect-pairing} and compactness of the spheres again.
\end{proof}

\subsubsection{}
We recall some standard facts about oriented ring spectra and Thom isomorphisms.
Given a homotopy ring spectrum $E$ over $S$ and an algebraic group $G \to \GL_n$, a $G$-orientation of $E$ consists of a choice of \emph{Thom class} $t(V) \in E^{2n,n}(\Th(V))$ for every $G$-bundle $V$ on a smooth $S$-scheme, satisfying certain naturality and normalization axioms (see e.g. \cite[Definition 3.3]{ananyevskiy2019sl}).
\begin{lemma} \label{lemm:homology-thom}
Let $E$ be $G$-oriented and $V$ a Nisnevich locally trivial $G$-bundle on $X \in \Sm_S$ (e.g. $G$ special and $V$ arbitrary).
Then the map \[ E \wedge \Th(V) \xrightarrow{\id_E \wedge \Delta} E \wedge \Th(V) \wedge X_+ \xrightarrow{\id_E \wedge t(V) \wedge \id_X} E \wedge \Sigma^{2n,n} E \wedge X_+ \xrightarrow{m} E \wedge \Sigma^{2n,n} X_+ \] is an equivalence.
\end{lemma}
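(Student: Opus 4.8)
Write $\theta_V\colon E\wedge\Th(V)\to E\wedge\Sigma^{2n,n}X_+$ for the composite in the statement. The two features that drive the argument are: (a) $\theta_{(\ph)}$ is natural in the pair $(X,V)$ with respect to pullback of bundles --- the Thom diagonal $\Th(V)\to\Th(V)\wedge X_+$, the Thom class $t(V)$, and the multiplication $m$ are all natural in this sense; and (b) the endofunctor $E\wedge(\ph)$ of $\SH(S)$ preserves all colimits. The plan is to use (a) and (b) to reduce to the case where $V$ is trivial, and then to compute that case directly.

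\textbf{Reduction to the trivial case.} Since $X$ is quasi-compact we may choose a finite Nisnevich cover $\{U_\alpha\to X\}$ over which $V$ becomes trivial (a Zariski cover works when $G$ is special). As $\SH(S)$ satisfies Nisnevich descent, $\Sigma^\infty_+X\wequi\colim_{\Delta^\op}\Sigma^\infty_+\check C_\bullet$ for the Čech nerve $\check C_\bullet$ of the cover, and correspondingly $\Th(V)\wequi\colim_{\Delta^\op}\Th(V|_{\check C_\bullet})$ (the Thom spectrum is a cofiber of suspension spectra, and both $\Sigma^\infty_+$ and $\cof$ preserve colimits) and $\Sigma^{2n,n}X_+\wequi\colim_{\Delta^\op}\Sigma^{2n,n}(\check C_\bullet)_+$. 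By (a), $\theta_{(\ph)}$ upgrades to a natural transformation of the two resulting $\Delta^\op$-diagrams; by (b) it suffices to check that $\theta_{V|_{\check C_p}}$ is an equivalence for each $p$, and on each $\check C_p$ the bundle $V$ is trivial. (Alternatively, one may induct on the size of a finite trivializing cover using the Mayer--Vietoris square attached to an elementary distinguished square, since $E\wedge(\ph)$ carries it to a pushout and $\theta$ is natural; the five lemma then reduces the inductive step to the corners, where $V$ is trivial.)

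\textbf{The trivial case.} Suppose $V=\scr O_X^{\,n}$ (the case $G=\Sp$ is identical with $\scr O_X^{\,2n}$, which we suppress). Then $\Th(V)\wequi\Sigma^{2n,n}X_+$, the Thom diagonal is $\Sigma^{2n,n}$ of the diagonal $X_+\to X_+\wedge X_+$, and by the normalization axiom for a $G$-orientation the Thom class $t(V)\in E^{2n,n}(\Sigma^{2n,n}X_+)$ is $\Sigma^{2n,n}$ of the composite $X_+\to\1\xrightarrow{u}E$, where $u$ is the unit and $X_+\to\1$ the collapse. Plugging this in, the map $t(V)\wedge\id_X$ applied after the diagonal collapses one copy of $X_+$ to $\1$ and inserts $u$; since $X_+\xrightarrow{\Delta}X_+\wedge X_+\to X_+$ (collapsing that copy) is the identity, one is left with $\id_E\wedge\Sigma^{2n,n}(u\wedge\id_{X_+})$ followed by $m$. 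The unit axiom $m\circ(\id_E\wedge u)\wequi\id_E$ then gives $\theta_{\scr O^n}\wequi\id_{E\wedge\Sigma^{2n,n}X_+}$, which is in particular an equivalence. This completes the proof.

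\textbf{Main obstacle.} There is no real difficulty, only bookkeeping: the work lies in verifying that $\theta_{(\ph)}$ is genuinely a natural transformation of the descent diagrams above (so that the colimit argument applies), and in carrying out the base-case chase so that the normalization axiom of the Thom class is used in the right place; both are routine.
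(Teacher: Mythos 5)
Your proof is correct, and it reaches the same conclusion by a closely related but technically different route. The paper's proof first uses the smooth projection formula to reduce to the case $X=S$, and then invokes Nisnevich separation (conservativity after restriction to a cover, together with naturality of the Thom class) to reduce to a trivial bundle; the trivial case is then dismissed as holding "by definition." You instead keep $X$ general and argue via the \v Cech nerve of a trivializing Nisnevich cover, using that $\Sigma^\infty_+$, $\Th(\ph)$, and $E\wedge(\ph)$ all commute with the relevant colimits and that $\theta_{(\ph)}$ is natural in $(X,V)$; the outcome is the same reduction to the trivial case. You also unwind the trivial case explicitly using the normalization axiom and the unit, which the paper leaves implicit. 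Both arguments turn on the same two points --- naturality in $(X,V)$ and the locality of equivalences in $\SH(S)$ for the Nisnevich topology --- so neither has a real advantage, though the paper's reduction to $X=S$ removes the need to track the diagonal over a general base, while your \v Cech-nerve argument avoids appealing to the projection formula.
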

We call this equivalence the \emph{(homological) Thom isomorphism}; it induces in particular $t: E_{**}\Th(V) \wequi E_{*-2n,*-n}(X)$.
\begin{proof}
By the smooth projection formula, we may assume that $X=S$.
By Nisnevich separation \cite[Proposition 6.23]{hoyois2016equivariant} and naturality of the Thom class, we may assume that $V$ is trivial.
In this case the map is homotopic to the identity, by definition.
\end{proof}

\subsubsection{}
We now compute the homology of $\MSp$, following the standard topological proof.

Write $\HGr(r,n)$ denote the quaternionic Grassmannian of $2r$-dimensional symplectic subspaces in $2n$-dimensional symplectic space (see e.g. \cite{panin2010quaternionic}).
For example $\HGr(1,n) = \HP^n$.

\begin{lemma}
Suppose that $A$ is $\Sp$-oriented.
We have $A_{**}(\HP^\infty) \wequi A_{**}\{\beta_0, \beta_1, \dots\}$, and the canonical map $\alpha: (\HP^\infty)^n \to \HGr(n, \infty)$ induces \[ A_{**}(\HGr(n, \infty)) \wequi \Sym^n(A_{**}(\HP^\infty)^{\otimes n}). \]
\end{lemma}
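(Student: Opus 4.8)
The plan is to read off the statement from the known cohomology rings of $\HP^\infty$ and the quaternionic Grassmannians by dualizing via Corollary \ref{cor:duality}, the essential geometric input being the symplectic splitting principle. \emph{Step 1 ($A_{**}(\HP^\infty)$).} Write $\HP^\infty = \colim_n \HP^n$. Each $\HP^n$ is smooth projective, hence strongly dualizable, and cellular (the filtration $\HP^{n-1} \subset \HP^n$ has cofibers $S^{4i,2i}$, see \cite{panin2010quaternionic}); the quaternionic projective bundle theorem \cite[Theorem 8.2]{panin2010algebraic} gives $A^{**}(\HP^n) \wequi A^{**}[b_1(\gamma)]/(b_1(\gamma)^{n+1})$, a free $A^{**}$-module, with surjective (hence $\limone$-vanishing) restriction maps. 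By Lemma \ref{lemm:perfect-pairing} the Kronecker pairing over $\HP^n$ is perfect, so $A_{**}(\HP^n)$ is $A_{**}$-free on the dual basis $\beta_0, \dots, \beta_n$ to $1, b_1(\gamma), \dots, b_1(\gamma)^n$; passing to the colimit gives $A_{**}(\HP^\infty) \wequi A_{**}\{\beta_0, \beta_1, \dots\}$, and by Corollary \ref{cor:duality} this is the continuous $A_{**}$-linear dual of $A^{**}(\HP^\infty) \wequi A^{**}\fpsr{b_1(\gamma)}$.

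\emph{Step 2 (the Grassmannian and the splitting principle).} By \cite[Theorems 8.1 and 11.4]{panin2010quaternionic} each finite $\HGr(n,k)$ is cellular with free $A^{**}$-cohomology and surjective restriction maps, and $A^{**}(\HGr(n,\infty)) \wequi A^{**}\fpsr{b_1(\gamma), \dots, b_n(\gamma)}$, where the $b_i(\gamma)$ are the Borel classes of the rank-$2n$ tautological symplectic bundle; Corollary \ref{cor:duality} again identifies $A_{**}(\HGr(n,\infty))$ with the continuous dual of this power series ring. Now $\alpha$ classifies the external sum $\gamma_1 \boxplus \dots \boxplus \gamma_1$ of $n$ copies of the rank-$2$ tautological bundle on $\HP^\infty$, so the symplectic Whitney sum formula \cite{panin2010quaternionic} shows that the ring map $\alpha^* \colon A^{**}\fpsr{b_1, \dots, b_n} \to A^{**}\fpsr{q_1, \dots, q_n}$ (with $q_j$ the first Borel class pulled back from the $j$-th factor) is given by $b_i \mapsto e_i(q_1, \dots, q_n)$. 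By the fundamental theorem of symmetric functions in power series, $\alpha^*$ is injective with image the subring of symmetric power series, and this inclusion splits $A^{**}$-linearly: inside the span of each orbit $O$ of monomials under the symmetric group $\Sigma_n$ one splits off the free rank-one summand spanned by the orbit sum $\sum_{q^\alpha \in O} q^\alpha$.

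\emph{Step 3 (dualize).} Since $A^{**}$ is flat over itself and all objects in sight are cellular, one has a Künneth isomorphism $A_{**}((\HP^\infty)^n) \wequi A_{**}(\HP^\infty)^{\otimes n}$ (a natural transformation of homology theories commuting with filtered colimits, exactly as in the proof of Lemma \ref{lemm:perfect-pairing}), compatibly with the permutation action of $\Sigma_n$. The map $\alpha_*$ is the continuous transpose of the split injection $\alpha^*$, hence surjective, and under the perfect pairings its kernel is precisely the closed submodule generated by $\{x - \sigma x : \sigma \in \Sigma_n\}$ (the annihilator of the symmetric power series, computed on the explicit dual monomial basis). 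Therefore $\alpha$ induces $A_{**}(\HGr(n,\infty)) \wequi (A_{**}(\HP^\infty)^{\otimes n})_{\Sigma_n} = \Sym^n(A_{**}(\HP^\infty)^{\otimes n})$, a free $A_{**}$-module on the monomial-symmetric products $\beta_{i_1}\cdots\beta_{i_n}$ with $i_1 \le \dots \le i_n$.

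\emph{Main obstacle.} The proof is largely bookkeeping once the inputs of \cite{panin2010quaternionic} are assembled (cellularity and freeness of the quaternionic Grassmannians, the Grassmannian bundle theorem, the Whitney formula for Borel classes). The steps requiring genuine care are: tracking the inverse-limit topologies so that Corollary \ref{cor:duality} and the notion of continuous dual behave correctly when dualizing the splitting principle, and the mild $\limone$/completed-tensor bookkeeping needed to identify $A_{**}((\HP^\infty)^n)$ with $A_{**}(\HP^\infty)^{\otimes n}$ as $\Sigma_n$-modules; the symmetric-function combinatorics itself is routine.
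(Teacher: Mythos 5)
Your proof is correct and takes essentially the same route as the paper: both dualize the known power-series computations $A^{**}(\HP^\infty) \wequi A^{**}\fpsr{b_1}$ and $A^{**}(\HGr(n,\infty)) \wequi A^{**}\fpsr{b_1,\dots,b_n}$ via the Kronecker pairing (Lemma \ref{lemm:perfect-pairing} / Corollary \ref{cor:duality}), and both identify $\alpha^*$ with the embedding onto symmetric power series via the Cartan/Whitney formula, then pass to continuous duals to obtain $\alpha_*$ as the projection onto $\Sigma_n$-coinvariants. The only differences are cosmetic (which sources you cite for cellularity of the finite Grassmannians, and how explicitly you spell out the K\"unneth step and the kernel of $\alpha_*$).
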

\begin{proof}
$\Sigma^\infty_+ \HGr(r,n) \in \SH(S)$ is cellular and strongly dualizable \cite[Proposition 3.1]{rondigs2016cellularity}.
By \cite[Theorem 11.4]{panin2010quaternionic} the maps \[ A^{**}(\HGr(r,n)) \leftarrow A^{**}(\HGr(r,n+1)) \] are surjective, and \begin{equation}\label{eq:coh-HGr} A^{**}(\HGr(r, \infty)) \wequi \lim_n A^{**}(\HGr(r,n)) \wequi A^{**}\fpsr{b_1, \dots, b_r}. \end{equation}
We deduce (using Corollary \ref{cor:duality}) that $A_{**}(\HP^\infty)$ is the topological dual of $A^{**}(\HP^\infty) \wequi A^{**}\fpsr{b_1}$, so that compatible classes $\beta_i$ exist as claimed.
In particular \begin{equation} \label{eq:hoh-HP} A_{**}(\HP^\infty) \wequi A_{**}\{\beta_0, \beta_1, \dots\}. \end{equation}

Since \eqref{eq:coh-HGr} holds over any base, we find that \[ A^{**}((\HP^\infty)^n) \wequi A^{**}\fpsr{a_1, \dots, a_n}, \] where $a_i = b_1(\gamma_i)$, $\gamma_i$ being the tautological bundle on the $i$-th factor $\HP^\infty$.
There is a map\footnote{Recall that $\HGr(n, k)$ represents the functor of $2n$-dimensional symplectic subbundles of the trivial symplectic bundle $\scr O^{2k}$. On $\HP^k$ we thus have $\gamma \hookrightarrow \scr O^{2k}$ whence $\gamma^{\boxplus n} \hookrightarrow \scr O^{2nk}$; this defines a map $(\HP^k)^n \to \HGr(n, nk)$. Now take colimits.} $\alpha: (\HP^\infty)^n \to \HGr(n, \infty)$ such that \[ \alpha^*(\gamma) \wequi \gamma_1 \boxplus \gamma_2 \boxplus \dots \boxplus \gamma_n =: E. \]
By the Cartan formula \cite[Theorem 10.5]{panin2010quaternionic}, the map \[ \alpha^*: A^{**}(\HGr(n,\infty)) \to A^{**}((\HP^\infty)^n) \] is given by \[ b_i \mapsto b_i(E) = \sigma_i(a_1, \dots, a_n), \] where $\sigma_i$ is the $i$-th elementary symmetric polynomial.
The map $\alpha^*$ is thus a split injection onto $(A^{**}((\HP^\infty)^n))^{\Sigma_n}$.
Passage to continuous duals yields $\alpha_*$, which is hence as claimed.
\end{proof}
\begin{remark}
The above result can also be deduced from \cite[Theorem 5.10]{ananyevskiy2017stable}.
\end{remark}

\begin{lemma}\label{lemm:taut-contractible} \NB{ref?}
Let $\gamma$ be the tautological bundle on $\HP^\infty$.
Then $\gamma \setminus 0$ is (motivically) contractible.
\end{lemma}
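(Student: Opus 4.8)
The plan is to identify the motivic space $\gamma \setminus 0$ (the complement of the zero section in the total space of $\gamma$) with a homotopy orbit space, and then to exploit the transitive action of $\Sp_1 \cong \SL_2$ on $\A^2 \setminus 0$.

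First I would recall, from the equivalence \eqref{eq:Gr-BGL}, that the tautological $\Sp_1$-torsor on $\HGr(1,\infty) = \HP^\infty$ exhibits a motivic equivalence $\HP^\infty \wequi B\Sp_1$, under which $\gamma$ is the rank-$2$ bundle associated with the defining representation $\A^2$ of $\Sp_1$ (here I use that $\Sp_1 \cong \SL_2$, a non-degenerate alternating form on $\A^2$ being the same datum as a trivialization of $\det$). Hence $\gamma \setminus 0$ is the associated bundle with fibre $\A^2 \setminus 0$, i.e. the motivic homotopy orbit space $(\A^2 \setminus 0)_{h\Sp_1}$; since $\Sp_1$ is special, these homotopy orbits may be formed in the Zariski topology, as in \S\ref{subsec:real-realization}. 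Next I would use that $\SL_2$ acts transitively on $\A^2 \setminus 0$: the orbit map $\SL_2 \to \A^2 \setminus 0$, $g \mapsto g e_1$, is a Zariski-locally trivial torsor under the upper-triangular unipotent subgroup $U \cong \mathbb{G}_a$, with local sections given by elementary matrices over the standard affine cover of $\A^2 \setminus 0$. Thus $\A^2 \setminus 0 \cong \SL_2 / U$ as $\SL_2$-schemes, and the usual manipulations with homotopy orbits of special groups (compare the proof of Lemma \ref{lemm:thom-space-tautology}) give
\[ \gamma \setminus 0 \;\wequi\; (\A^2 \setminus 0)_{h\Sp_1} \;\wequi\; (\SL_2/U)_{h\SL_2} \;\wequi\; BU \;\wequi\; B\mathbb{G}_a . \]
Finally, $B\mathbb{G}_a$ is motivically contractible: it is the realization of the bar construction $[n] \mapsto \mathbb{G}_a^n = \A^n$, all of whose terms are $\A^1$-contractible, so the realization is $\A^1$-equivalent to the realization of the constant simplicial space on a point.

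The main obstacle is the bookkeeping in the first two steps — matching the pointed motivic space $\gamma \setminus 0$ with $(\A^2 \setminus 0)_{h\Sp_1}$ and justifying the homotopy-orbit manipulations $(\SL_2/U)_{h\SL_2}\wequi BU$ for the special groups $\Sp_1$ and $U$; once $\gamma \setminus 0 \wequi B\mathbb{G}_a$ is established, contractibility is immediate. If one prefers to avoid classifying spaces, one can instead argue at finite level: the projection $(\gamma|_{\HP^n}) \setminus 0 \to \A^m \setminus 0$, $(L,v) \mapsto v$, where $\A^m$ denotes the ambient symplectic space, factors as $(\gamma|_{\HP^n}) \setminus 0 \cong Q/\mathbb{G}_a \to \A^m \setminus 0$ with $Q = \{(v,w) : \langle v,w\rangle = 1\}$ and $\mathbb{G}_a$ acting by $w \mapsto w + bv$; here $Q \to \A^m \setminus 0$ is a Zariski-locally trivial affine bundle (sections $v \mapsto \langle v,e_j\rangle^{-1} e_j$) and $Q \to Q/\mathbb{G}_a$ is a Zariski-locally trivial $\mathbb{G}_a$-torsor, so both are motivic equivalences, and passing to the colimit over $n$ yields $\gamma \setminus 0 \wequi \A^\infty \setminus 0 \wequi \ast$.
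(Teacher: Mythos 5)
Your closing finite-level argument is, up to notation, the paper's own proof: the paper writes $\gamma_n \setminus 0$ as the homogeneous space $\Sp_n/(G \times \Sp_{n-1})$ with $G \wequi \mathbb{G}_a$ the stabilizer of a vector in $\Sp_1 = \SL_2$, divides out the contractible special group $G$ to get $\Sp_n/\Sp_{n-1} \wequi \A^{2n}\setminus 0$, and passes to the colimit. Your zig-zag $(\gamma|_{\HP^n})\setminus 0 \leftarrow Q \rightarrow \A^m \setminus 0$ through the space $Q$ of normalized symplectic frames is exactly the same reduction made explicit, and it is complete: both maps are Zariski-locally trivial torsors under $\mathbb{G}_a$ resp.\ under a vector bundle, hence $\A^1$-equivalences, and $\A^\infty\setminus 0 \wequi *$ finishes it.

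Your main argument is a genuinely different route, and the computation $(\A^2\setminus 0)_{h\SL_2} \wequi (\SL_2/U)_{h\SL_2} \wequi BU \wequi B\mathbb{G}_a \wequi *$ is correct. The step that needs more care is the very first one, $\gamma\setminus 0 \wequi (\A^2\setminus 0)_{h\Sp_1}$. What specialness and freeness give you for free is $\gamma\setminus 0 \wequi (E\times(\A^2\setminus 0))_{h\Sp_1}$, where $E$ is the total space of the tautological $\Sp_1$-torsor on $\HP^\infty$ (the infinite symplectic Stiefel variety $\{(v,w): \lra{v,w}=1\}$); comparing this with $(\A^2\setminus 0)_{h\Sp_1}$ via the projection $E \to *$ requires knowing that $E$ is $\A^1$-contractible. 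Equivalently, the cartesian square in the proof of Lemma \ref{lemm:thom-space-tautology} identifies $(\A^2\setminus 0)_{h\Sp_1}$ with a colimit over \emph{all} $\Sp_1$-torsors, and pulling back the motivic equivalence $\HP^\infty \to B\Sp_1$ does not formally yield a motivic equivalence of total spaces. Contractibility of $E$ is precisely what your affine-bundle argument $Q \to \A^m\setminus 0$ establishes, so the honest logical order is: finite-level argument first, then (optionally) the $B\mathbb{G}_a$ repackaging. The repackaging buys a clean conceptual statement at the infinite level; the finite-level argument (like the paper's phrasing) buys the stronger and independently useful fact $\gamma_n\setminus 0 \wequi \A^{2n+2}\setminus 0$.
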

\begin{proof}
Let $\gamma_n$ be the tautological bundle on $\HP^n$.
Then $\gamma_n \setminus 0 = \Sp_{n}/(G \times \Sp_{n-1})$, where $G \subset \Sp_1$ is the subgroup fixing the first vector.\NB{$\Sp_n$ acts transitively on the set of pairs of a 2d symplectic subspace of $k^n$ and a vector in it: can always extend this data to a symplectic basis. The stabilizer of the standard subspace with first basis vector is as displayed.}
One checks (e.g. using $\Sp_1 = \SL_2$) that $G \wequi \mathbb{G}_a$.
Since all of these groups are special (and the actions are free), the étale quotients are homotopy quotients, and we deduce that $\gamma_n \setminus 0 \wequi \Sp_{n}/\Sp_{n-1}$\NB{$\wequi \A^{2n} \setminus 0$}.
Taking colimits we get $\gamma \setminus 0 \wequi \Sp/\Sp \wequi *$.
\end{proof}

\begin{proof}[Proof of Theorem \ref{thm:cobordism-MSL-summary}(1)]
Consider the motivic space $\BSp \stackrel{\A^1}{\wequi} \colim_n \HGr(n, \infty)$ (see \eqref{eq:Gr-BGL}).
This has an $H$-space structure coming from addition of vector bundles.
The diagram
\begin{equation*}
\begin{CD}
(\HP^\infty)^n @>>> \BSp^n \\
@V{\alpha}VV     @V{m}VV \\
\HGr(n, \infty) @>>> \BSp
\end{CD}
\end{equation*}
commutes (essentially by construction); here the horizontal maps classify the tautological bundles.
The inclusion at the base point $(\HP^\infty)^n \to (\HP^\infty)^{n+1}$ covers the canonical map $\HGr(n, \infty) \to \HGr(n+1,\infty)$ and induces in cohomology the quotient map \[ A^{**}((\HP^\infty)^{n+1}) \wequi A^{**}\fpsr{b}^{\hat\otimes {n+1}} \to A^{**}\fpsr{b}^{\hat\otimes {n}} \wequi A^{**}((\HP^\infty)^{n}). \]
Passing to continuous duals, we deduce that under the identification $A_{**}(\HGr(n,\infty)) \wequi \Sym^n(A_{**}\HP^\infty)$, the map $\HGr(n,\infty) \to \HGr(n+1,\infty)$ corresponds to multiplication by $\beta_0$.
This implies that \[ A_{**}(\BSp) \wequi \colim_n A_{**}(\HGr(n,\infty)) \wequi \colim_n \Sym^n(A_{**}\HP^\infty) \wequi A_{**}[\beta_0, \beta_1, \dots]/(\beta_0-1). \]

The Thom isomorphism (Lemma \ref{lemm:homology-thom}) induces an isomorphism of rings $A_{**} \BSp \xrightarrow{t} A_{**} \MSp$.
It remains to identify the classes $t(\beta_i)$.
Let $\gamma$ be the tautological bundle on $\HP^\infty$.
The defining cofiber sequence of pointed spaces \[ \gamma \setminus 0 \to \gamma \wequi \HP^\infty \xrightarrow{s} \Th(\gamma) \] has the property that $\gamma \setminus 0$ is contractible (Lemma \ref{lemm:taut-contractible}).
This supplies us with an isomorphism \[ s: A_{**}\{\beta_1, \beta_2, \dots\} \wequi \widetilde{A}_{**}(\HP^\infty) \wequi A_{**}(\Th(\gamma)). \]
To conclude the proof, we shall show that $s(\beta_i) = t(\beta_{i-1})$.

To see this, it suffices to show that the composite \[ \tilde A_{**}(\HP^\infty) \stackrel{s}{\wequi} A_{**}(\Th(\gamma)) \stackrel{t}{\wequi} A_{*-4,*-2}(\HP^\infty) \] maps $\beta_i$ to $\beta_{i-1}$.
By construction, its dual is \[ A^{*+4,*+2}(\HP^\infty) \xrightarrow{t(\gamma) \cdot} A^{**}(\Th(\gamma)) \xrightarrow{s} \tilde A^{**}(\HP^\infty), \] i.e. multiplication by $s^*(t(\gamma))$.
By definition, this is $b = b_1(\gamma)$ \cite[Proposition 7.2]{panin2010quaternionic}, and so the claim follows by dualization.
\end{proof}

\subsubsection{}
Write $\SGr(n, k)$ for the special linear Grassmannian varieties \cite[Definition 22]{ananyevskiy2015special}.
\begin{lemma} \label{lemm:SGr-cellular}
$\Sigma^\infty_+ \SGr(n,k) \in \SH(S)$ is cellular and strongly dualizable.
\end{lemma}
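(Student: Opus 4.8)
The plan is to mimic the proof strategy already used for $\Sigma^\infty_+ \HGr(r,n)$ in the proof of the preceding lemmas, namely to exhibit $\SGr(n,k)$ (or rather its stable suspension spectrum) as built out of cells via a filtration with affine space complements, and then invoke standard closure properties of the subcategory of cellular strongly dualizable objects. First I would recall that cellularity and strong dualizability are closed under finite (co)limits and retracts in $\SH(S)$, and that $\Sigma^\infty_+ X$ is cellular and strongly dualizable whenever $X$ admits a finite filtration whose successive cofibers are Thom spaces of (trivial, or more generally locally trivial) vector bundles over cellular strongly dualizable bases; the base cases $\Sigma^\infty_+ \A^n$ and $\Sigma^\infty_+(\A^n\setminus 0)$ and finite products thereof are manifestly cellular and dualizable.

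The key geometric input is the cell structure on $\SGr(n,k)$. One route: the forgetful map $\SGr(n,k) \to \Gr(n,k)$ is an $\A^1$-equivalence onto the open locus where the tautological bundle is orientable, or more precisely it is a $\Gm$-torsor (the determinant of the tautological bundle) which, being a line bundle minus its zero section pulled back along a map, fits into a Gysin/cofiber sequence; and $\Gr(n,k)$ itself has a well-known cell structure by Schubert cells (each a copy of affine space), so $\Sigma^\infty_+\Gr(n,k)$ is cellular and strongly dualizable (this is the classical computation, cf. \cite[Proposition 3.1]{rondigs2016cellularity} and its analogues). Alternatively, and perhaps cleanest, one uses that $\SGr(n,k)$ carries its own Schubert-type stratification by cells isomorphic to affine spaces times $\Gm$-factors (as set up in \cite{ananyevskiy2015special}), or one inducts on $k$ using a cofiber sequence relating $\SGr(n,k)$, $\SGr(n,k-1)$ and $\SGr(n-1,k-1)$ with Thom-space cofibers of tautological bundles, exactly as in the quaternionic case. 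In every version the successive subquotients are Thom spaces of vector bundles over strongly dualizable cellular spaces, hence themselves cellular and strongly dualizable, and finitely many such glued together stay in this subcategory.

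Concretely I would carry it out as: (1) state the closure properties of $\{$cellular, strongly dualizable$\}\subset\SH(S)$; (2) recall that $\Sigma^\infty_+\Gr(n,k)$ is cellular and strongly dualizable via its Schubert cell filtration (a finite filtration with affine-space cofibers); (3) identify the determinant line bundle map $\SGr(n,k)\to\Gr(n,k)$ and the induced cofiber sequence $\Sigma^\infty_+(\det\gamma_n\setminus 0)\to \Sigma^\infty_+ \Gr(n,k)\to \Th(\det\gamma_n)$ over $\Gr(n,k)$ — or rather the analogous identification of $\SGr(n,k)$ with the complement of the zero section of $\det\gamma_n$ — to conclude that $\Sigma^\infty_+\SGr(n,k)$ sits in a cofiber sequence between finite colimits of cellular strongly dualizable objects, hence is itself cellular and strongly dualizable; (4) conclude. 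The main obstacle, and the only genuinely nontrivial point, is establishing the cell structure on $\SGr(n,k)$ cleanly — getting the filtration whose graded pieces are Thom spaces of vector bundles over already-understood cellular bases, and checking the complements really are (iterated bundles of) affine spaces; once that geometry is in hand, the homotopy-theoretic bookkeeping is entirely formal. Everything else is a routine application of closure properties, so I would keep the write-up short and point to \cite{ananyevskiy2015special} and \cite{rondigs2016cellularity} for the cell structures.
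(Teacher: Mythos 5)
Your treatment of strong dualizability matches the paper's exactly: one writes $\SGr(n,k)=\det\gamma_{n,k}\setminus 0$, takes the cofiber sequence $\Sigma^\infty_+\SGr(n,k)\to\Sigma^\infty_+\Gr(n,k)\to\Th(\det\gamma_{n,k})$, identifies $\Th(\det\gamma_{n,k})\wequi\P(\det\gamma_{n,k}\oplus\scr O)/\P(\det\gamma_{n,k})$, and invokes strong dualizability of $\Gr(n,k)$ and of this Thom space. For cellularity, however, the paper does something genuinely different from what you sketch: it uses $\SGr(n,k)=\SL_n/P_k'$, observes that $P_k'$ is a special group (an iterated extension of $\SL_k$, $\SL_{n-k}$ and copies of $\mathbb{G}_a$), so the étale quotient is a Zariski quotient and hence a homotopy quotient $\colim_{\Delta^\op}(P_k')^{\times\bullet}\times\SL_n$; cellularity then reduces to (stable) cellularity of $\SL_n$ and $P_k'$, which is Wendt's theorem. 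Your proposed route for cellularity --- deduce it from the same cofiber sequence by the two-out-of-three property of the localizing subcategory of cellular objects, using cellularity of $\Gr(n,k)$ and of $\Th(\det\gamma_{n,k})$ --- is also viable, but it shifts the nontrivial content onto proving that $\Th(\det\gamma_{n,k})$ (equivalently the $\P^1$-bundle $\P(\det\gamma_{n,k}\oplus\scr O)$) is cellular, which requires a Zariski-trivialization or Schubert-cell argument; this is exactly the step the paper's group-quotient argument sidesteps. Your write-up treats this as if it were a one-liner, so if you keep this route, you should supply or cite that Thom-space cellularity explicitly. One smaller point: the first clause of your geometric discussion, that $\SGr(n,k)\to\Gr(n,k)$ is ``an $\A^1$-equivalence onto the open locus where the tautological bundle is orientable,'' is not correct --- the map is a $\Gm$-torsor over \emph{all} of $\Gr(n,k)$ (namely $\det\gamma_{n,k}\setminus 0\to\Gr(n,k)$), as you then correctly say; there is no open sublocus to restrict to.
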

\begin{proof}
By definition we have $\SGr(n,k) = \det\gamma_{n,k} \setminus 0$, where $\gamma_{n,k}$ is the tautological bundle on $\Gr(n,k)$.
We thus have a cofiber sequence \[ \SGr(n,k) \to \Gr(n,k) \to \Th(\det \gamma_{n,k}). \]
Since $\Gr(n,k)$ and $\Th(\det \gamma_{n,k}) \wequi \P(\det \gamma_{n,k} \oplus \scr O)/\P(\det \gamma)$ \cite[Proposition 2.17]{A1-homotopy-theory} are strongly dualizable \cite[Proposition 2.4.31]{triangulated-mixed-motives} so is $\SGr(n,k)$.\NB{Or use that for $p$ smooth proper and $E$ dualizable (e.g. invertible, e.g. $\gamma \setminus 0$), $p_\#$ is dualizable with dual $p_*(DE)$.}
For cellularity we use the description $\SGr(n,k) = \SL_n/P_k'$.
The group $P_k'$ is an extension of special groups ($\SL_k, \SL_{n-k}$ and $\mathbb{G}_a$) and thus special.\NB{ref? Pretty clear}
Thus the étale quotient defining $\SGr(n,k)$ is a Zariski quotient, and hence (the action being free) a homotopy quotient: \[ \SGr(n,k) \wequi (\SL_n)_{hP_k'} \wequi \colim_{i \in \Delta^\op} P_k^{\times i} \times \SL_n. \]
It thus suffices to show that $\SL_n$ and $P_k'$ are (stably) cellular; this is proved in \cite[Proposition 4.1]{wendt2010more}.
\end{proof}

Essentially by construction, the space $\SGr(n,k)$ represents the functor of $n$-dimensional subbundles of $\scr O^k$, together with a choice of trivialization of the determinant.
If $V$ is a symplectic bundle, then $\det(V)$ is trivialized (by the Pfaffian; see e.g. the discussion just before \cite[Definition 4.5]{ananyevskiy2012relation}), and so $V$ is also a special linear bundle.
This induces maps $\HGr(n, k) \to \SGr(2n, 2k)$ and $\HGr(n, \infty) \to \SGr(2n, \infty)$.
\begin{lemma}
Suppose that $A$ is $\SL$-oriented and $\eta$-periodic.
The composite \[ (\HP^\infty)^{n} \to \HGr(n, \infty) \to \SGr(2n, \infty) \to \SGr(2n+1, \infty) \] induces \[ A_{**}(\SGr(2n+1, \infty)) \wequi \Sym^n(A_{**}(\HP^\infty)/\{\beta_1, \beta_3, \dots\}). \]
\end{lemma}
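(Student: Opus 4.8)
The plan is to dualize a cohomology computation, exactly in the style of the proof of Theorem~\ref{thm:cobordism-MSL-summary}(1). Since $\Sigma^\infty_+ \SGr(2n+1,k)$ is cellular and strongly dualizable (Lemma~\ref{lemm:SGr-cellular}) and so is $\Sigma^\infty_+(\HP^\infty)^n$ (by \cite[Proposition 3.1]{rondigs2016cellularity} and stability of these properties under smash products), Corollary~\ref{cor:duality} lets me pass freely between the homology of $\SGr(2n+1,\infty)=\colim_k\SGr(2n+1,k)$ and the continuous dual of its cohomology, once the flatness hypothesis there is checked — which will be clear, as the cohomology of each finite-stage Grassmannian turns out to be a free $A^{**}$-module. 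So it suffices to compute the pullback ring map $A^{**}(\SGr(2n+1,\infty)) \to A^{**}((\HP^\infty)^n)$ and to identify its image as a split $A^{**}$-submodule.

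First I would recall from \cite{ananyevskiy2015special} that for an $\SL$-oriented, $\eta$-periodic theory $A$ one has $A^{**}(\SGr(2n+1,\infty)) \wequi A^{**}\fpsr{p_1,\dots,p_n}$, a power series ring on the Pontryagin classes $p_i=p_i(\gamma^{\SL}_{2n+1})$ of the tautological rank $2n+1$ bundle. The point of using odd rank $2n+1$ (rather than $2n$, where the map $\HGr(n,\infty)\to\SGr(2n,\infty)$ naturally lands) is that the Euler class of an odd-rank special linear bundle is $\eta$-torsion, hence vanishes after $\eta$-inversion, so no extra Euler-class generator appears. Next I would trace through the bundle classified by the composite: the tautological bundle pulls back along $\SGr(2n,\infty)\to\SGr(2n+1,\infty)$ to (rank $2n$ tautological)$\,\oplus\,\scr O$, along $\HGr(n,\infty)\to\SGr(2n,\infty)$ to the underlying $\SL$-bundle of $\gamma_1\boxplus\cdots\boxplus\gamma_n$ (up to a trivial summand), and along $(\HP^\infty)^n\to\HGr(n,\infty)$ — as computed in the proof of Theorem~\ref{thm:cobordism-MSL-summary}(1) — to $\gamma_1\boxplus\cdots\boxplus\gamma_n$, where $\gamma_j$ is the tautological rank $2$ symplectic bundle on the $j$-th factor. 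The Whitney sum formula for Pontryagin classes \cite{ananyevskiy2015special} gives $p_i\mapsto\sigma_i\big(p_1(\gamma_1),\dots,p_1(\gamma_n)\big)$, and since for a rank $2$ symplectic bundle $p_1(\gamma_j)=\lra{-1}\,b_1(\gamma_j)^2$ is (up to the unit $\lra{-1}$) the square of the Borel/Euler class $b_1(\gamma_j)$, the image of the pullback in $A^{**}((\HP^\infty)^n)\wequi A^{**}\fpsr{b_1(\gamma_1),\dots,b_1(\gamma_n)}$ is precisely the subring of power series symmetric in the $b_1(\gamma_j)$ and even in each of them.

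That subring is a split $A^{**}$-submodule of $A^{**}((\HP^\infty)^n)$: the ambient ring is topologically free over it on a basis of monomials representing cosets, just as in the $\Sigma_n$-invariance argument for $\HGr$. Passing to continuous duals via Corollary~\ref{cor:duality}, the split inclusion dualizes to a split surjection $A_{**}((\HP^\infty)^n)\wequi A_{**}(\HP^\infty)^{\otimes n}\twoheadrightarrow A_{**}(\SGr(2n+1,\infty))$. The dual of ``even in $b_1(\gamma_j)$'' is the quotient map $A_{**}(\HP^\infty)\to A_{**}(\HP^\infty)/\{\beta_1,\beta_3,\dots\}$ killing the generators dual to odd powers of $b_1$, and the dual of ``symmetric'' is passage to $\Sigma_n$-coinvariants, i.e. $\Sym^n$. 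Composing yields the asserted identification $A_{**}(\SGr(2n+1,\infty))\wequi\Sym^n\big(A_{**}(\HP^\infty)/\{\beta_1,\beta_3,\dots\}\big)$, and unwinding shows it is induced by the stated composite.

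The main obstacle I expect is the first step: pinning down $A^{**}(\SGr(2n+1,\infty))$ for a general $\SL$-oriented $\eta$-periodic $A$ — in particular carefully justifying the vanishing of the Euler class in odd rank and the Whitney formula for Pontryagin classes in this generality — together with the elementary but fiddly identification $p_1(\gamma_j)=\lra{-1}\,b_1(\gamma_j)^2$ relating the $\SL$-oriented Pontryagin class of a rank $2$ symplectic bundle to its symplectic Borel class. Everything after that is the same formal dualization already carried out for $\HGr$.
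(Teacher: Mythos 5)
Your proof follows the same route as the paper: cite Ananyevskiy's computation $A^{**}(\SGr(2n+1,\infty)) \wequi A^{**}\fpsr{p_1,\dots,p_n}$, pull back along $\alpha$ using the Whitney formula to get $\alpha^*(p_i)=\sigma_i\bigl(b_1(\gamma_1)^2,\dots,b_1(\gamma_n)^2\bigr)$, identify the image as a split submodule, and dualize via Lemma \ref{lemm:SGr-cellular} and Corollary \ref{cor:duality}. The only slight discrepancy is your unit factor in $p_1(\gamma_j)=\lra{-1}\,b_1(\gamma_j)^2$; the paper's invocation of \cite[Lemma 12]{ananyevskiy2015special} gives $p_t(\gamma_j)=1+b_1(\gamma_j)^2 t^2$ with no unit, but since $\lra{-1}$ is a unit this is harmless for the split-injection argument.
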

\begin{proof}
Write $\alpha$ for the composite.
We first determine the map \[ \alpha^*: A^{**}(\SGr(2n+1, \infty)) \to A^{**}((\HP^\infty)^{n}) \wequi A^{**}\fpsr{a_1, \dots, a_n}. \]
By \cite[Theorem 10]{ananyevskiy2015special} we have \[ A^{**}(\SGr(2n+1, \infty)) \wequi A^{**}\fpsr{p_1, \dots, p_n}; \] here $p_i = p_i(\gamma)$ are the \emph{Pontryagin classes} of the tautological bundle $\gamma$ on $\SGr(2n+1, \infty)$.
Thus $\alpha^*(p_i) = p_i(E)$, where $E := \gamma_1 \boxplus \gamma_2 \boxplus \dots \boxplus \gamma_n$ is the tautological bundle on $(\HP^\infty)^{n}$.
We compute using \cite[Lemma 12]{ananyevskiy2015special} \[ p_t(E) = \prod_i p_t(\gamma_i) = \prod_i (1+a_i^2t^2). \]
In other words \[ \alpha^*(p_i) = \sigma_i(a_1^2, \dots, a_n^2). \]
It follows that $\alpha^*$ is a split injection onto $A^{**}\fpsr{a_1^2, a_2^2, \dots, a_n^2}^{\Sigma_n}$.
By Lemma \ref{lemm:SGr-cellular}, Corollary \ref{cor:duality} and \cite[Remark 13]{ananyevskiy2015special} the map $\alpha_*$ is the topological dual of $\alpha^*$, which is easily checked to be as claimed.
\end{proof}

\begin{proof}[Proof of Theorem \ref{thm:cobordism-MSL-summary}(2)]
By the Thom isomorphism (Lemma \ref{lemm:homology-thom}) we have \[ A_{**} \MSL \wequi \colim_n A_{**} \SGr(2n+1, \infty), \] which by the above identifies with \[ A_{**}(\MSp)/(e_1, e_3, \dots). \]
This was to be shown.
\end{proof}

\subsection{Adams action}
In this section we work over a field $k$ of characteristic $\ne 2$.
Recall that for any $\Sp$-oriented cohomology theory $A$ we have $A^{**}(\HP^\infty) \wequi A^{**}\fpsr{b}$, where $b=b_1(\gamma)$ \cite[Theorem 8.2]{panin2010algebraic}.
Recall also that we put $\KW = \KO[\eta^{-1}]$ and $\kw = \KW_{\ge 0}$.
\begin{lemma}
Suppose that $\W(k) = \F_2$.
Then \[ \adamspsi^3(b) = b(1 + \beta b^2) \in \kw^2(\HP^\infty). \]
\end{lemma}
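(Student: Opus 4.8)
The plan is to reduce the statement to the classical behaviour of $\adamspsi^3$ on symmetric powers of the standard representation of $\Sp_1 = \SL_2$, and then to a computation of Borel classes on $\HP^\infty$ in the $\eta$-periodic, $\SL$-oriented theory $\kw$.

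Since $\adamspsi^3 \colon \kw \to \kw$ is a ring map (Remark~\ref{rmk:adams-kw}) and, by \eqref{eq:adams-formula} together with $\lra{-1} = 3 = 3_\epsilon = 1$ in $\W(k) = \F_2$, we have $\adamspsi^3(\beta) = \beta$, the map $\adamspsi^3$ is a continuous $\kw^{**}(\mathrm{pt})$-algebra endomorphism of $\kw^{**}(\HP^\infty) \wequi \kw^{**}(\mathrm{pt})\fpsr{b}$ fixing $\beta$ (and $\eta$, by Example~\ref{ex:adamspsi-pi0}). For degree reasons $\adamspsi^3(b) = \sum_{i \ge 0} c_i\beta^i b^{2i+1}$ with $c_i \in \W(k) = \F_2$, so it remains to show $c_0 = c_1 = 1$ and $c_i = 0$ for $i \ge 2$. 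By Proposition~\ref{prop:FH-comparison} we may compute with the Fasel--Haution operation, which by Lemma~\ref{lemm:adamspsi-FH-action} acts on $\KSp^0(\HP^\infty) = \KO^{4,2}(\HP^\infty)$ as $\lra{-1}\cdot 3\cdot 3_\epsilon$ times the Adams operation $\adamspsi^3_\geo$ of the $\lambda$-ring $\GW^\pm(\HP^\infty)$; after pushing forward along $\KO \to \kw$ the scalar becomes $1$ and $\adamspsi^3_\geo$ fixes $\kw^{**}(\mathrm{pt})$ and the class $H_-$. As $\lambda^2\gamma = \scr O$ and $\lambda^{\ge 3}\gamma = 0$, Newton's identities give $\adamspsi^3_\geo([\gamma]) = [\gamma]^3 - 3[\gamma] = [\Sym^3\gamma] - [\gamma]$ (using $\gamma^{\otimes 2} \wequi \Sym^2\gamma \oplus \scr O$ and $\gamma \otimes \Sym^2\gamma \wequi \Sym^3\gamma \oplus \gamma$). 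Thus $\adamspsi^3(b)$ is determined inside $\kw^2(\HP^\infty)$ by this relation together with the expansions of the tautological class $[\gamma]$ and of $[\Sym^3\gamma]$ as power series in $b = b_1(\gamma)$ (equivalently, the expansions of the Borel classes of $\gamma$ and of $\Sym^3\gamma$).

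The remaining — and main — task is therefore the computation of $b_1(\Sym^3\gamma)$ (together with the tautological class formula $[\gamma] = H_- + (\text{unit})\cdot b + \cdots$) in $\kw^{**}(\HP^\infty)$. Here the essential point is that, because $\kw$ is $\eta$-periodic and not $\GL$-orientable (so $\kw^{**}(B\Gm)$ carries no information and the naive maximal-torus splitting is unavailable), the ``formal group law'' governing Borel classes of symmetric powers of a rank-$2$ symplectic bundle is not additive; the deviation from additivity — computed via the quaternionic projective bundle theorem \cite[Theorem 8.2]{panin2010algebraic} and the structure of $\Sp$- and $\SL$-oriented $\eta$-periodic theories (Panin--Walter, Ananyevskiy) — is exactly what produces the term $\beta b^3$. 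Having expanded $b_1(\Sym^3\gamma)$, one substitutes into the relation of the previous paragraph and reduces modulo $2$ and the $\beta$-adic filtration: the leading coefficient, being $3^2$, becomes $1$; the $\beta b^3$-coefficient survives; and all higher coefficients vanish (modulo $2$ in the relevant filtration degree), forcing $\adamspsi^3(b) = b + \beta b^3 = b(1+\beta b^2)$. (Real realization over $\R$, under which $\HP^\infty$ becomes $\mathbb{CP}^\infty$ and $\gamma$ a complex line bundle, provides a useful cross-check for $c_0$ and $c_1$ via the classical action of $\adamspsi^3$ on $\KO^\topsup(\mathbb{CP}^\infty)$.)
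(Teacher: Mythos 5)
Your opening steps are sound and in fact parallel the Remark the paper places right after this lemma: using Proposition~\ref{prop:FH-comparison} and Lemma~\ref{lemm:adamspsi-FH-action} to reduce to the geometric Adams operation (the scalar $\lra{-1}\cdot 3\cdot 3_\epsilon$ becomes $1$ in $\W(k)=\F_2$), the Clebsch--Gordan identity $\gamma^{\otimes 3} - 3\gamma = \Sym^3\gamma - \gamma$, and the degree-reasons constraint $\adamspsi^3(b) = \sum_i c_i \beta^i b^{2i+1}$ are all correct and useful framing. (The paper's own proof avoids $\adamspsi^n_\FH$ and instead pushes into $\KGL$ via Theorem~\ref{thm:ko-adams}(4), using the injection $\KO^{4,2}(\HP^\infty)\hookrightarrow\KGL^{4,2}(\HP^\infty)$ which is available precisely because $\W(k)=\F_2$ forces $\GW(k)=\Z$ --- but these two routes to $\adamspsi^3_\geo([\gamma]) = [\gamma]^3 - 3[\gamma]$ are essentially interchangeable.)

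The genuine gap is in your second paragraph: you declare the expansion of $[\Sym^3\gamma]$ (or of $b_1(\Sym^3\gamma)$) as a power series in $b = b_1(\gamma)$ to be ``the remaining --- and main --- task'', attribute the term $\beta b^3$ to a ``deviation from additivity'' of some formal group law governing Borel classes of symmetric powers, and then simply assert the answer. Nothing is actually computed. Worse, this detour is both unnecessary and harder than the problem you started with: $\Sym^3\gamma$ has rank $4$, so $b_1(\Sym^3\gamma)$ is only one of its Borel classes, and extracting it in an $\eta$-periodic $\SL$-oriented theory would require a symplectic splitting principle of exactly the kind you observe is unavailable. The step that makes the lemma \emph{finite} is not this; it is the observation that $\adamspsi^3(H(1))$ can be written directly as a polynomial in $H(1)$ and $\beta$ --- namely $\adamspsi^3(H(1)) = 3^{-2}H(1)(\beta H(1)^2 - 3)$, since $\gamma^{\otimes 3}$ corresponds to $H(1)^3$ under the Bott-periodicity normalization --- combined with $b_1^\KO(\gamma) = H(1) - H_-$ (\cite[Theorem 6.10]{ananyevskiy2017stable}) and the vanishing $\KW^2(*) = 0$, which kills $H_-$ and $\adamspsi^3(H_-)$ upon passage to $\KW$. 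Once $H(1)$ is identified with $b$ in $\KW^{4,2}(\HP^\infty)$, the formula $\adamspsi^3(b) = 3^{-2}b(\beta b^2 - 3) = b(1+\beta b^2)$ drops out after reducing $3 \equiv -3 \equiv 3^{-2} \equiv 1 \pmod 2$. Without this mechanism for eliminating $H_-$ and turning $H(1)$ into $b$, your argument does not close, and the ``cross-check'' by real realization is not a substitute for it.
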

\begin{proof}
We implicitly invert $3$ throughout this proof.

Since $\kw^{**}(\HP^\infty) \hookrightarrow \KW^{**}(\HP^\infty)$, it suffices to prove the claim for $\KW$.
Note that \[ \KO^{4,2}(\HP^\infty) \wequi \bigoplus_{i \ge 0} \KO^{4-4i,2-2i}\{b_1^{\KO}(\gamma)^i\} \wequi \bigoplus_{i \ge 0} \Z\{b_1^{\KO}(\gamma)^i\} \hookrightarrow \KGL^{4,2}(\HP^\infty); \] here we use that $\KO^{8n,4n} \wequi \GW(k) = \Z$ by assumption, and $\KO^{8n+4,4n+2} \wequi \Z$ as always (see e.g. \cite[Table 1]{bachmann-very-effective}).
Write $\alpha: \KO \to \KGL$ for the canonical map and $H(1) \in \KO^{4,2}(\HP^\infty) \wequi \KSp^0(\HP^\infty)$ for the class of the tautological bundle.
We shall first determine $\adamspsi^3(H(1))$, and to do this we determine $\alpha(\adamspsi^3(H(1))) = \adamspsi^3(\alpha(H(1)))$ (see Theorem \ref{thm:ko-adams}(4)).
We have $\alpha(H(1)) = \beta_\KGL^{-2} \gamma$, where $\gamma \in \KGL^0(\HP^\infty)$ is the tautological bundle.
By Remark \ref{rmk:adams-classical}, ``our'' Adams operation on $\KGL$ is just the classical one, so can be computed in terms of exterior powers of vector bundles (see e.g. \cite[\S II.4]{weibel-k-book}).
It follows that for any rank $2$ vector bundle $V$ we have \[ \adamspsi^3(V) = V^{\otimes 3} - 3V \otimes \det{V} \in \KGL^0(X). \]
If the bundle is symplectic, then $\det V = 1$.
Hence \begin{gather*} \adamspsi^3(\alpha(H(1))) = \adamspsi^3(\beta_\KGL^{-2}\gamma) = 3^{-2}\beta_\KGL^{-2}(\gamma^{\otimes 3} - 3\gamma) \\ = 3^{-2} (\beta_\KGL^{-2}\gamma) (\beta_\KGL^4 (\beta_\KGL^{-2} \gamma)^2 - 3) = \alpha(3^{-2} H(1) (\beta H(1)^2 - 3)). \end{gather*}
Here we have used that $\psi^3$ is a ring map and $\psi^3(\beta_\KGL) = 3\beta_\KGL$ (by construction).
Hence by injectivity of $\alpha$ we get \[ \adamspsi^3(H(1)) = 3^{-2} H(1) (\beta H(1)^2 - 3). \]

By \cite[Theorem 6.10]{ananyevskiy2017stable} we have $b_1^\KO(\gamma) = H(1) - H_-$, where $H_- \in \KO^{4,2}(*)$.
Hence also $\adamspsi^3(H_-) \in \KO^{4,2}(*).$
It follows that $\adamspsi^3(b) \in \KW^{2}(\HP^\infty)$ is the image of $\adamspsi^3(H(1)) - \adamspsi^3(H_-) \in \KO^{4,2}(\HP^\infty)$.
One has $\KW^* \wequi \W(k)[\beta^{\pm 1}]$, with $|\beta|=4$ (see e.g. \S\ref{subsec:KW}).
Thus $\KW^2 = 0$, so $\psi^3(b)$ is just the image of $\adamspsi^3(H(1))$.
Since $2=0 \in \KW^*$, the result follows.
\end{proof}

\begin{remark}
Our proof is complicated by the fact that we did not want to use the geometric description of the Adams operations on $\KO^{**}(\HP^\infty)$; in particular this made it difficult to determine $\adamspsi^3(b)$ without the assumption that $\W(k) = \F_2$.
If we allow ourselves this description (i.e. Proposition \ref{prop:FH-comparison} and Lemma \ref{lemm:adamspsi-FH-action}) we get \[ \adamspsi^3(b_1(\gamma)) = \frac{\adamspsi^3_\geo(H(1) - H_-)}{\lra{-1}\cdot 3 \cdot 3_\epsilon} \in \KO^{4,2}(\HP^\infty). \]
Using that for a rank $2$ symplectic bundle $E$ one has $\adamspsi^3_\geo(E) = E^3 - 3E$\NB{since $(x+y)^3 = x^3+y^3 +3(xy)(x+y)$}, one easily deduces that \[ \adamspsi^3(b) = b(1-\beta b^2/3) \in \kw^2(\HP^\infty), \] over any field.
\end{remark}

\begin{lemma}
For a scheme $S$ and $n$ odd, the maps $\adamspsi^n: \KW[1/n] \to \KW[1/n] \in \SH(S)$ and (provided $S$ is a field) $\adamspsi^n: \kw[1/n] \to \kw[1/n]$ are equivalences.
\end{lemma}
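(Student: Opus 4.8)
The plan is to reduce the whole statement to the $\kw$-case over a field, settle that by computing the action on homotopy sheaves, and then bootstrap back to $\KW$ over an arbitrary base.

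\emph{The $\kw$-statement over a field $k$.} Since the homotopy $t$-structure on $\SH(k)$ is non-degenerate (\S\ref{subsec:t-structure}), the map $\adamspsi^n\colon \kw[1/n]\to\kw[1/n]$ is an equivalence as soon as it induces isomorphisms on all homotopy sheaves. I would use the well-known computation $\ul\pi_*\kw = \ul W[\beta]$ with $|\beta|=4$ (recalled in \S\ref{subsec:KW}; $\ul\pi_*$ in the single grading available after inverting $\eta$), so that $\ul\pi_*(\kw[1/n]) = \ul W[1/n][\beta]$. Now $\adamspsi^n$ is a ring map (Remark \ref{rmk:adams-kw}); it is the identity on $\ul\pi_0\kw = \ul W$, this being the image of $\ul\pi_0(\1[\eta^{-1}]) = \ul W$ on which $\adamspsi^n$ acts trivially by Example \ref{ex:adamspsi-pi0} (it fixes $\eta$ and the image of the sphere); and $\adamspsi^n(\beta) = n^2 n_\epsilon^2\beta$ by Theorem \ref{thm:ko-adams}(2). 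For $n$ odd one has $n_\epsilon = \sum_{i=0}^{n-1}\lra{-1}^i = 1$ in $\W(k)$ (since $\lra{-1}=-1$ there), so $\adamspsi^n(\beta)=n^2\beta$ with $n^2$ a unit in $\ul W[1/n]$. Hence $\adamspsi^n(x\beta^m) = n^{2m}x\beta^m$ for $x\in\ul W$, i.e.\ on $\ul\pi_{4m}\kw[1/n]$ the operation $\adamspsi^n$ is multiplication by the unit $n^{2m}$ (and both sides vanish in other degrees), which is an isomorphism of sheaves. This proves the $\kw$-case.

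\emph{The $\KW$-statement over an arbitrary base.} Recall $\KO$, and hence $\KW = \KO[\eta^{-1}]$, is stable under base change (\S\ref{subsub:KO-cons}), and $\adamspsi^n$ is too (Theorem \ref{thm:ko-adams}(1)); since every base in sight contains $1/2$, it suffices to treat $S=\Spec\Z[1/2]$, and there, since pullback to the residue fields $\F_p$ ($p$ odd) and $\Q$ is jointly conservative on $\SH(\Z[1/2])$ (localization recollement plus induction on dimension), it suffices to treat $S$ a field $k$ of characteristic $\ne 2$. Over such $k$ the Bott element $\beta\in\pi_{8,4}\KW$ is invertible (indeed $\KO$ is \emph{defined} by inverting it) and has degree with positive first coordinate, so $\KW_k = \kw_k[\beta^{-1}]$; moreover $\adamspsi^n(\beta)=n^2 n_\epsilon^2\beta$ and $n^2 n_\epsilon^2$ is already a unit in $\pi_0\kw_k[1/n]=\W(k)[1/n]$ (its image in $\W(k)$ is $n^2$, by the computation above), so inverting $\adamspsi^n(\beta)$ in $\kw_k[1/n]$ is the same as inverting $\beta$. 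Therefore $\adamspsi^n\colon\KW_k[1/n]\to\KW_k[1/n]$ is obtained from $\adamspsi^n\colon\kw_k[1/n]\to\kw_k[1/n]$ by inverting $\beta$; as localization preserves equivalences, this is an equivalence by the previous step.

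\emph{Main obstacle.} The two load-bearing points are (i) the reduction to fields over a general base, which rests on conservativity of pullback to points in $\SH$ (harmless over $\Spec\Z[1/2]$, but worth stating carefully), and (ii) the interplay between the rank of $n^2 n_\epsilon^2$, which is only $n^4\in\Z[1/n]^\times$, and its image $n^2$ in $\W(k)$: it is precisely the identity $n_\epsilon = 1$ in $\W(k)$ for odd $n$ that makes $n^2 n_\epsilon^2$ invertible after inverting $n$, hence makes $\beta$ and $\adamspsi^n(\beta)$ generate the same localization and makes $\adamspsi^n$ multiplication by a unit in each degree of $\ul\pi_*\kw[1/n]$. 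Everything else is formal.
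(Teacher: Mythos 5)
Your proof is correct and uses the same core computation as the paper — reduction to fields, then the fact that $\adamspsi^n$ acts on $\ul{\pi}_{4i}$ of (connective or periodic) Witt $K$-theory by multiplication by $n^{2i}$, a unit after inverting $n$, where the key observation $n_\epsilon = 1 \in \W(k)$ for odd $n$ is exactly what makes $n^2 n_\epsilon^2$ a unit — but the order of the two deductions is reversed relative to the paper. The paper proves the $\KW$-statement first (base change to fields, compute $\ul{\pi}_*\KW[1/n] = \ul{W}[1/n,\beta^{\pm}]$), then obtains the $\kw$-statement immediately by applying the connective cover functor, which trivially preserves equivalences. You instead prove the $\kw$-statement first (same computation, but on $\ul{W}[1/n][\beta]$) and then pass to $\KW$ by inverting $\beta$. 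Both routes are valid and of comparable length; the paper's direction is marginally slicker because deducing $\kw$ from $\KW$ needs no new idea, whereas deducing $\KW$ from $\kw$ requires the small but real observation that the operation $\adamspsi^n_\KW$ coincides with the $\beta$-localization of $\adamspsi^n_\kw$ — which holds because $\adamspsi^n_\kw$ sends $\beta$ to a unit times $\beta$, so the extension to $\kw[1/n,\beta^{-1}] \wequi \KW[1/n]$ exists and is unique, and the commutativity of the square relating $\adamspsi^n_\kw$ and $\adamspsi^n_\KW$ (which holds by construction of $\adamspsi^n_\kw$ as the connective cover) identifies it with $\adamspsi^n_\KW$. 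You gesture at this with ``as localization preserves equivalences,'' which is fine but compresses the real point; the literal statement is vacuous unless one first knows that $\adamspsi^n_\KW$ is that localization. On the other hand, your route avoids the minor technicality, implicit in the paper's one-liner, that $(\KW[1/n])_{\ge 0} \wequi \kw[1/n]$ (truncation commuting with the filtered colimit defining $[1/n]$). For the reduction to fields you invoke conservativity of pullback to residue fields of $\Z[1/2]$ rather than quoting the paper's reference, which is a correct and standard route. No gaps.
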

\begin{proof}
The second map is obtained from the first by applying the connective cover functor, so the second claim follows from the first.
The first claim is compatible with base change, so we may check it over $\Spec(\Z[1/2])$, and thus we may reduce to fields \cite[Proposition B.3]{bachmann-norms}.
It thus suffices to show that $\adamspsi^n: \ul{\pi}_* \KW[1/n] \to \ul{\pi}_* \KW[1/n]$ is an isomorphism; recall that $\ul{\pi}_* \KW[1/n] = \ul{W}[1/n, \beta, \beta^{-1}]$.
Hence by Example \ref{ex:adamspsi-pi0} and Theorem \ref{thm:ko-adams}(2,3) the map on $\ul{\pi}_{4i}$ is given by multiplication by $n^{2i}$, which is an isomorphism as needed (and all other homotopy sheaves vanish).
\end{proof}

\begin{proof}[Proof of Proposition \ref{prop:adams-cobordism-summary}]
It suffices to prove the ``in fact'' statement.

Denote the inverse of $\adamspsi^3$ by $\adamspsi^{1/3}$.
Note that $\adamspsi^{3}(\beta) = 9\beta = \beta$, so that $\adamspsi^3$ acts by the identity on $\kw_*$, and hence so does $\adamspsi^{1/3}$.
Applying Lemma \ref{lemm:kronecker-adams} to $\adamspsi^{1/3}$ we deduce that $\lra{\adamspsi^{1/3} x, y} = \lra{x, \adamspsi^3 y}$, i.e. that the action of $\adamspsi^{1/3}$ on $\kw_* \HP^\infty$ is dual to the action of $\adamspsi^3$ on $\kw^* \HP^\infty$.
On $\kw^{*}\HP^\infty$ we have \[ \adamspsi^3(b^n) = \adamspsi^3(b)^n = b^n(1 + \beta b^2)^n = b^n + n\beta b^{n+2} + O(\beta^2). \]
Hence \begin{equation} \label{eq:adams-1/3} \adamspsi^{1/3}(\beta_i) = \beta_i + \beta(i-2)\beta_{i-2} + O(\beta^2). \end{equation}
It follows\NB{for any ring map $\adamspsi: A \to A$ and $a \in A_{**}$, $m \in A_{**}E$ we have $\adamspsi(am) = \adamspsi(a)\adamspsi(m)$} that for any $x \in \kw_* \HP^\infty$ we have $\adamspsi^{1/3}(x) = x + O(\beta)$.
Since $\adamspsi^3$ is $\beta$-linear and inverse to $\adamspsi^{1/3}$, we deduce that $x = \adamspsi^3\adamspsi^{1/3}(x) = \adamspsi^3(x) + O(\beta)$, i.e. $\adamspsi^3(x) = x + O(\beta)$.
Using this when applying $\adamspsi^3$ to \eqref{eq:adams-1/3} we find that\NB{other perspective: $\adamspsi^{1/3} = \id + \beta f + O(\beta^2)$, whence $(\adamspsi^{1/3})^{\circ 2} = \id + O(\beta^2)$, i.e. modulo $\beta^2$ $\adamspsi^{1/3}$ is its own inverse, i.e. $\adamspsi^3 = \adamspsi^{1/3} + O(\beta^2)$} \[ \beta_i = \adamspsi^3(\beta_i) + \beta(i-2)(\beta_{i-2} + O(\beta)) + O(\beta^2), \quad\text{whence}\quad \adamspsi^{3}(\beta_i) = \beta_i + \beta(i-2)\beta_{i-2} + O(\beta^2). \]

Using that the $\beta_{i} \in \kw_* \HP^\infty$ maps to $e_{i-1} \in \kw_* \MSp$ and $2=0 \in \kw_*$ the result follows.
\end{proof}

\section{Some completeness results} \label{sec:completeness}
\subsection{Summary}
\begin{theorem} \label{thm:eta-complete}
Let $k$ be a field of $\chara(k) \ne 2$, and suppose that $\vcd_2(k) < \infty$.
If $E \in \SH(k)^\veff$, then $E_2^\comp$ is $\eta$-complete.
\end{theorem}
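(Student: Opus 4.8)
The plan is to run the $2$-complete slice tower of $E$ and exploit the fact that slices carry no $\eta$-divisibility.

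Two formalities to set up first. The $\eta$-complete objects form a stable subcategory of $\SH(k)$ closed under all limits (they are the local objects of the $\eta$-completion localization), and any spectrum on which $\eta$ acts nilpotently is automatically $\eta$-complete, since then the $\eta$-acyclization $\lim_n \Sigma^{n,n}(\ph)$ (limit along multiplication by $\eta$) vanishes. Also, because $2$-completion is itself a limit of cofibers of self-maps, $F_2^\comp$ is $\eta$-complete whenever $F$ is.

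The substantive input is the improved version of Levine's slice convergence theorem \cite[\S5]{bachmann-bott}: since $\vcd_2(k) < \infty$ and $E \in \SH(k)^\veff \subseteq \SH(k)^\eff_{\ge 0}$, the $2$-completed slice tower of $E$ converges, i.e. $(\lim_q f_q E)_2^\comp \simeq 0$, equivalently the canonical map $E_2^\comp \to \lim_q (E/f_q E)_2^\comp$ is an equivalence, where $E/f_q E := \cof(f_q E \to E)$. As $E$ is effective, $f_0 E = E$, so for each $q$ the spectrum $E/f_q E$ carries a finite filtration with graded pieces the slices $s_0 E, \dots, s_{q-1} E$. Each $s_j E$ is an $\HZ$-module (it is a module over $s_0\1 \simeq \HZ$), and $\eta$ acts as zero on $\HZ$-modules because $\pi_{1,1}\HZ = 0$; hence $\eta^q$ annihilates $E/f_q E$, so $E/f_q E$, and therefore also $(E/f_q E)_2^\comp$, is $\eta$-complete. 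Being a limit of $\eta$-complete spectra, $E_2^\comp \simeq \lim_q (E/f_q E)_2^\comp$ is $\eta$-complete, as required.

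The one genuinely hard ingredient is the improved slice-convergence statement itself: it is exactly what forces the passage to $2$-completions and the hypothesis $\vcd_2(k) < \infty$, whereas the classical Levine theorem would instead require the cohomological dimension of $k$ (infinite for e.g. real closed fields) to be finite. Everything else is formal manipulation of the slice tower together with the standard fact that slices are $\HZ$-modules; the only points one should verify are that the cited convergence theorem is available for connective (very) effective spectra over a possibly imperfect field of finite $\vcd_2$, and that $s_0\1 \simeq \HZ$ and the module structure on slices hold in that generality — if not, one reduces to the perfect closure.
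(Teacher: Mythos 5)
There is a genuine gap, and it sits precisely at the step you flag as ``the one genuinely hard ingredient.'' You claim that for $E\in\SH(k)^\veff$ with $\vcd_2(k)<\infty$, the improved Levine theorem from \cite[\S5]{bachmann-bott} gives that the \emph{$2$-completed} slice tower converges, i.e. $E_2^\comp \to \lim_q (E/f_q E)_2^\comp$ is an equivalence. That is not what the cited result (or the paper's Proposition~\ref{prop:slice-complete}) says: it says the \emph{$(2,\rho)$-completed} slice tower converges, i.e. $E_{2,\rho}^\comp \to \scomp(E)_{2,\rho}^\comp$ is an equivalence. The whole point of the improved theorem is that the classical Levine convergence requires finite $\cd_2(k)$, which fails over fields such as $\R$; passing to $\vcd_2$ at the cost of \emph{also} killing $\rho$ is exactly the content of the improvement. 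The $2$-completed slice tower does not converge over $\R$ (the slice spectral sequence has unbounded contributions in $\rho$-towers), so the statement you are appealing to is false as written, and removing the $\rho$-completion is not a cosmetic matter.

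With the correct input, your argument shows that $E/(2,\rho)$ (equivalently $E_{2,\rho}^\comp$) is $\eta$-complete; but the theorem asks about $E_2^\comp$, and getting from one to the other is a nontrivial extra step. This is exactly what the paper's Lemma~\ref{lemm:trade-rho} supplies: $E_2^\comp$ is $\eta$-complete if and only if $E/(2,\rho)$ is, the key point being that on $\eta$-periodic spectra $\eta\rho=-2$ (see \eqref{eq:rho-eta-periodic}), so a $2$-complete $\eta$-periodic spectrum which vanishes mod $\rho$ also vanishes mod $2$ and hence is zero. Your proof is missing this reduction entirely, and without it the argument does not close. The rest of your write-up (the formal stability properties of $\eta$-complete objects, slices being $\HZ$-modules, the reduction to finite towers of slices) matches the paper's strategy and is fine.
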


In particular, the map \[ \pi_{**} E_2^\comp \to \pi_{**} E_{2,\eta}^\comp \] is an isomorphism.
This is the only form of the result we shall use in the sequel.
In \S\ref{sec:completeness:main} we give a complete argument for this $\pi_{**}$-isomorphism.
The extension to an equivalence of spectra uses a technical result established in the remaining subsections; this is not relevant for the sequel and so can be skipped.

\subsection{Main result} \label{sec:completeness:main}
We first recall and extend the slice completeness result of \cite[\S5]{bachmann-bott}.
Recall that the \emph{slice tower} \cite{voevodsky-slice-filtration} is a functorial tower $f_\bullet E \to E \in \SH(k)$; the \emph{slice completion} of $E$ is \[ \scomp E = \lim_n \cof(f_n E \to E). \]
\begin{proposition} \label{prop:slice-complete}
Let $k$ be a field of exponential characteristic $e$, $t$ coprime to $e$ with $\vcd_t(k) < \infty$.\footnote{We are using here the slightly extended definition of virtual cohomological dimension introduced in \cite[\S1.1.2]{bachmann-bott}, i.e. $\vcd_t(k) = \max \{\cd_p(k[\sqrt{-1}] \mid p | t\}$.}
If $E \in \SH(k)_{\ge 0}$, then $E_{t,\rho}^\comp \to \scomp(E)_{t,\rho}^\comp$ is an equivalence.
\end{proposition}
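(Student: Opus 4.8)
The statement to prove is Proposition~\ref{prop:slice-complete}: for $k$ of exponential characteristic $e$ and $t$ coprime to $e$ with $\vcd_t(k) < \infty$, the map $E_{t,\rho}^\comp \to \scomp(E)_{t,\rho}^\comp$ is an equivalence whenever $E \in \SH(k)_{\ge 0}$. The natural strategy is to compare with the main result of \cite[\S5]{bachmann-bott}, which (in its unmodified form) asserts slice completeness after $t$-completion for very effective or connective spectra, and to feed in the extra $\rho$-completion.

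\emph{Step 1: reduce to the fiber.} Write $C = \fib(E \to \scomp E)$, so $C = \lim_n f_n E$ is the ``slice-complete kernel''. Since $(\ph)_{t,\rho}^\comp$ is exact, it suffices to show $C_{t,\rho}^\comp \wequi 0$, equivalently (by Lemma~\ref{lemm:inv-compl}, applied twice) that $C[1/t] \wequi 0$ after $\rho$-completion \emph{and} $C_{t}^\comp[1/\rho] \wequi 0$, or more efficiently that $C/t \wequi 0$ after inverting... — actually cleanest is: $C_{t,\rho}^\comp \wequi 0 \iff (C_t^\comp)[1/\rho] \wequi 0$ and $(C_t^\comp)_\rho^\comp \wequi 0$, and the latter reduces to $C_t^\comp/\rho \wequi 0$.

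\emph{Step 2: the $\rho$-periodic part.} On the $\rho$-inverted category $\SH(k)[\rho^{-1}] \wequi \SH(RS)$ the slice filtration degenerates: inverting $\rho$ kills all positive slices (this is the statement that $\1[\rho^{-1}]$ is in slice filtration $0$ and each $s_n \1$ for $n>0$ becomes contractible, which is how \cite{bachmann-bott} handles the $\rho$-periodic direction). Hence $f_n E[\rho^{-1}] \to E[\rho^{-1}]$ is an equivalence for $n \le 0$ and $\scomp(E)[\rho^{-1}] \wequi E[\rho^{-1}]$ after a colimit/limit interchange that is controlled by finiteness of the real étale cohomological dimension of local rings (which is $0$, Corollary~\ref{cor:rho-cons-1}/\ref{cor:rho-periodic-stems}). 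So $C[\rho^{-1}] \wequi 0$, giving the $[1/\rho]$ half.

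\emph{Step 3: the $\rho$-complete, $t$-complete part.} Here one uses $\vcd_t(k) < \infty$ together with the fact that $C/(t,\rho)$ is the fiber of a slice-completion map for $E/(t,\rho)$, which lies in a category where the improved Levine-style convergence theorem of \cite[\S5]{bachmann-bott} applies verbatim: connective spectra over a field of finite $\vcd_t$ are slice-complete after $t$-completion. The point is that $\rho$-completion does not disturb connectivity or effectivity (smashing with the $\rho$-complete sphere preserves $\SH(k)_{\ge 0}$, since $\1_\rho^\comp$ is connective), so $E_\rho^\comp \in \SH(k)_{\ge 0}$ and we may apply the cited theorem to it directly. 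I expect the cleanest route is: apply the result of \cite{bachmann-bott} to $E_\rho^\comp$ (still connective), concluding $(E_\rho^\comp)_t^\comp \to \scomp(E_\rho^\comp)_t^\comp$ is an equivalence, then commute $\scomp$ past $(\ph)_\rho^\comp$ using that slices are finite colimits of suspension spectra and $(\ph)_\rho^\comp$ commutes with the relevant limits up to a $\limone$-term that vanishes by the vcd bound.

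\emph{Main obstacle.} The delicate point is commuting the slice completion $\scomp$ (an inverse limit) past the two completions $(\ph)_t^\comp$ and $(\ph)_\rho^\comp$, and making sure the ``improved Levine theorem'' of \cite{bachmann-bott} is being applied to an object to which it genuinely applies. Concretely: $\scomp$ does not obviously commute with $[1/\rho]$ (a colimit), so Step~2 needs the degeneration of the slice filtration after inverting $\rho$ rather than a formal interchange; and in Step~3 one must verify that the quoted theorem is stated for $\rho$-complete connective input (or that its proof goes through), which is exactly the kind of extension the word ``extend'' in the preceding sentence of the paper signals. I would expect the actual argument in the paper to isolate a lemma of the form ``$s_n(\1)[\rho^{-1}] \wequi 0$ for $n > 0$'' plus ``$E \mapsto E_{t,\rho}^\comp$ preserves the relevant convergence'', and then the proposition follows by assembling Steps~1--3.
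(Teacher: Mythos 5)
Your Step~1 is confused: the goal is to show $C_{t,\rho}^\comp \wequi 0$ where $C = \fib(E \to \scomp E)$, and this is already a $\rho$-completion, so there is no $[1/\rho]$-piece to analyze. The fracture-square decomposition you sketch would be appropriate for the stronger (and false) claim $C_t^\comp \wequi 0$, which is not what the proposition asserts. Consequently your Step~2 (the $\rho$-periodic part) is irrelevant to this proposition. Since $(\ph)_{t,\rho}^\comp$ vanishes iff $(\ph)/(t,\rho)$ vanishes, one only ever needs to analyze things modulo $(t^n,\rho^m)$.

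The genuine gap is in Step~3. You assert that the improved Levine convergence theorem of \cite[\S5]{bachmann-bott} ``applies verbatim'' and yields a spectrum-level equivalence for connective spectra. It does not: as remarked just after the proposition statement, that reference only establishes a $\pi_{**}$-isomorphism, and promoting this to an equivalence of spectra is precisely the new content of \S\ref{sec:completeness}. The actual argument never commutes $\scomp$ past $(\ph)_\rho^\comp$; rather it quotes from \cite{bachmann-bott} only the fact that the tower $f_\bullet(E)/(t^n,\rho^m)$ is \emph{locally nilpotent} (Definition~\ref{def:locally-nilpotent}), and the new ingredient is the passage from local to sectionwise nilpotence (Proposition~\ref{prop:local-sectionwise-nilpotent}), proved by a Postnikov-filtration argument (Corollary~\ref{cor:postnikov-SHk-inc}) exploiting the finite homotopy dimension of Nisnevich topoi of smooth varieties; Corollary~\ref{cor:loc-nilpotent-convergent} then gives vanishing of $\lim_\bullet f_\bullet(E)/(t^n,\rho^m)$. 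Your proposal contains no mechanism for controlling the $\limone$-term or for passing from stalkwise information (which is what a $\pi_{**}$-isomorphism over field extensions provides) to sectionwise vanishing; the sentence ``\ldots up to a $\limone$-term that vanishes by the vcd bound'' is exactly the assertion that needs a proof, and that proof is the substance of \S\ref{sec:completeness}.
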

Note that the fact that $E_{t,\rho}^\comp \to \scomp(E)_{t,\rho}^\comp$ is a $\pi_{**}$-isomorphism was established in \cite[Corollary 5.13]{bachmann-bott}.
We will give the full proof of Proposition \ref{prop:slice-complete} in \S\ref{subsub:sc-proof}.

\begin{example}
Let $E \in \SH(k)[\eta^{-1}]_{\ge 0}$.
Since $\rho=2$ on $\SH(k)[\eta^{-1}]$, we deduce that $E/2^n$ is slice complete.
It follows that slice spectral sequence techniques as in \cite{ormsby2019homotopy} could be used (over fields of characteristic $\ne 2$ and $\vcd_2 < \infty$) to study the relationship between $\1[\eta^{-1}]_2^\comp$ and $\kw_2^\comp$.
We pursue a different strategy in the sequel.
\end{example}

The following argument is closely related to part of \cite[Lemma 20]{hu2011convergence}.
\begin{lemma} \label{lemm:trade-rho}
Let $E \in \SH(k)$.
Then $E_2^\comp$ is $\eta$-complete if and only if $E/(2,\rho)$ is $\eta$-complete.
\end{lemma}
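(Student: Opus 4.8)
\emph{Plan.} The starting point is the characterisation: for any $X \in \SH(k)$, write $X_\eta^\comp = \lim_n X/\eta^n$ and set $R(X) := \fib(X \to X_\eta^\comp)$, so that $X$ is $\eta$-complete precisely when $R(X) \wequi 0$. Each functor $X \mapsto X/\eta^n$ is exact and preserves limits, hence so are $(\ph)_\eta^\comp$ and $R$; moreover $R(X) \wequi \lim_n \Sigma^{n,n}X$ with transition maps given by multiplication by $\eta$, and this is $\eta$-periodic (the map $\eta\colon \Sigma^{1,1}R(X) \to R(X)$ is, on limits, the inclusion of a cofinal subtower, hence an equivalence). The plan is to apply $R$ to both $E_2^\comp$ and $E/(2,\rho)$, compare the outputs, and reduce to an elementary statement about a single $\eta$-periodic spectrum.

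Concretely, put $G := R(E) \in \SH(k)[\eta^{-1}]$. Since $R$ preserves limits and cofibre sequences and commutes with $\Sigma^{1,1}$, we get $R(E_2^\comp) \wequi G_2^\comp$ and $R(E/(2,\rho)) \wequi G/(2,\rho)$, so the lemma is equivalent to: $G_2^\comp \wequi 0$ iff $G/(2,\rho) \wequi 0$. Now I would use that on the $\eta$-periodic spectrum $G$ the element $\eta$ is invertible and $\eta\rho = -2$ (equation \eqref{eq:rho-eta-periodic}), so that $\rho = -2\eta^{-1}$ as a map $G \to \Sigma^{1,1}G$; since post-composition with the equivalence $\eta^{-1}$ does not change the cofibre, $G/\rho = \cof(\rho\colon G \to \Sigma^{1,1}G) \wequi \cof(2\colon G \to G) = G/2$, and therefore $G/(2,\rho) \wequi \cof(2\colon G/2 \to G/2) =: G/(2,2)$.

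So it remains to show, for $G$ (in fact for any spectrum), that $G/(2,2) \wequi 0 \iff G/2 \wequi 0 \iff G_2^\comp \wequi 0$. For the first equivalence: $(\1/2)[1/2] = \cof(2\colon \1[1/2] \to \1[1/2]) = 0$, hence $(G/2)[1/2] = 0$; if $G/(2,2) = \cof(2\colon G/2 \to G/2) \wequi 0$ then $2$ is invertible on $G/2$, forcing $G/2 \wequi (G/2)[1/2] \wequi 0$ (the reverse implication being trivial). For the second: if $G/2 \wequi 0$ then $G/2^n \wequi 0$ for all $n$ by induction along the cofibre sequences relating $G/2^n$, $G/2^{n+1}$ and $G/2$, so $G_2^\comp = \lim_n G/2^n \wequi 0$; conversely, $2$-completion is Bousfield localisation at $\1/2$, so the fibre of $G \to G_2^\comp$ is $\1/2$-acyclic, which gives $G/2 \wequi (G_2^\comp)/2$, vanishing when $G_2^\comp$ does. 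Chaining the displayed equivalences finishes the proof.

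Nothing here is deep; the one point that needs genuine care is that the relation $\eta\rho = -2$ is available only after passing to an $\eta$-periodic object — which is exactly why the argument is organised around $R(E) = \fib(E \to E_\eta^\comp)$ rather than $E$ itself — together with the small but non-formal identification $G/2 \wequi (G_2^\comp)/2$ coming from acyclicity of the completion fibre.
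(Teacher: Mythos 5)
Your argument is correct, and it is a recognizable but organizationally different version of the paper's proof. Both hinge on the same two facts: the relation $\eta\rho=-2$ on $\eta$-periodic spectra, and the elementary interplay between $(\ph)/2$ and $2$-completion (the fiber of $G\to G_2^\comp$ is $2$-divisible, hence $\1/2$-acyclic). The difference is in the bookkeeping. The paper handles the ``necessity'' direction by a one-line closure-under-finite-colimits remark and then, for ``sufficiency,'' works directly with $F:=\fib(E/2\to(E/2)_\eta^\comp)$, observes that $F$ is $\eta$-periodic and $2$-complete, gets $F/\rho\wequi F/2\wequi 0$, and concludes $F=0$. Your proof instead introduces $R(X)=\fib(X\to X_\eta^\comp)$ and proves that $R$ is exact and limit-preserving, so that the statement becomes $G_2^\comp=0\iff G/(2,\rho)=0$ for the single $\eta$-periodic spectrum $G=R(E)$. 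This treats both implications symmetrically and avoids having to assert separately that $F$ is $2$-complete (which the paper asserts because $F$ is a limit of $\1/2$-modules); in your version the step $G/2\wequi(G_2^\comp)/2$ does the same work. Note that your $G/2$ is literally the paper's $F$, so the two arguments really differ only in whether one shows ``$F$ is $2$-complete and $F/2=0\Rightarrow F=0$'' or ``$G/2=0\iff G_2^\comp=0$.'' Both are sound; yours is slightly more self-contained and makes the bidirectionality of the lemma transparent, whereas the paper's is marginally shorter because necessity is dispatched separately.

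One small point worth flagging for precision: when you write $G/(2,\rho)\wequi\cof(2\colon G/2\to G/2)$, it is cleanest to say that $\eta\rho=-2$ holds already on $G/2$ (which is $\eta$-periodic, being a cofiber of a map of $\eta$-periodic spectra), so that $(G/2)/\rho\wequi(G/2)/2$; your formulation implicitly applies the relation to $G$ and then again passes a $/2$, which amounts to the same thing but is a shade less direct. This is cosmetic; the argument goes through either way.
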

\begin{proof}
Necessity is clear since $\eta$-complete spectra are stable under finite colimits.
We thus show sufficiency.
Consider the fiber sequence \[ F \to E/2 \to (E/2)_\eta^\comp. \]
Then $F$ is $\eta$-periodic, $2$-complete, and $F/\rho \wequi 0$ by assumption.
By \eqref{eq:rho-eta-periodic} we have $F/\rho \wequi F/2$.
We deduce that $F \wequi 0$ ($F$ being $2$-complete); in other words $E/2$ is $\eta$-complete.
The result follows since $\eta$-complete spectra are stable under limits.
\end{proof}

\begin{proof}[Proof of Theorem \ref{thm:eta-complete}]
By Proposition \ref{prop:slice-complete}, $E/(2, \rho) \to \scomp(E)/(2,\rho)$ is an equivalence.
Since slices are $\eta$-complete (being modules over $\HZ_{(2)} \wequi s_0(\1_{(2)})$; see e.g. \cite[Theorem B.4]{bachmann-norms}), we deduce that $E/(2, \rho)$ is $\eta$-complete (being a limit of finite extensions of slices, by the effectivity assumption).
We conclude by Lemma \ref{lemm:trade-rho}.
\end{proof}

\subsection{Remaining proofs}
\subsubsection{}
\begin{definition}
\begin{enumerate}
\item By a \emph{tower} in a category $\scr C$ we mean an object of $\Fun(\Z^\op, \scr C)$, i.e. a diagram \[ E_\bullet = \dots E_2 \to E_1 \to E_0 \to E_{-1} \to \dots \in \scr C. \]
\item Suppose that $\scr C$ is pointed.
  We call a tower $E_\bullet$ \emph{nilpotent} if for every $n \in \Z$ there exists $N > n$ such that the composite map $E_N \to E_n$ is zero.
\end{enumerate}
\end{definition}

\begin{lemma} \label{lemm:boardman}
Let $E_\bullet \in \SH$ be a tower of spectra such that each tower $\pi_i(E_\bullet)$ of abelian groups is nilpotent.
Then \[ \lim_{i \to \infty} E_i \wequi 0. \] 
\end{lemma}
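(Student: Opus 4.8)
The plan is to reduce the statement to the classical Milnor $\lim^1$ sequence together with the hypothesis of nilpotence. Recall that for a tower $E_\bullet\in\SH$ indexed by $\Z^\op$ there is a fiber sequence computing the limit; concretely, for each integer $i$ there is a short exact sequence
\[
0\to {\lim}^1_n \pi_{i+1}(E_n)\to \pi_i(\lim_n E_n)\to \lim_n \pi_i(E_n)\to 0.
\]
So it suffices to show that whenever a tower $A_\bullet$ of abelian groups is nilpotent (in the sense that for every $n$ there is $N>n$ with $A_N\to A_n$ the zero map), one has $\lim_n A_n=0$ and $\lim^1_n A_n=0$. Applying this to $A_\bullet=\pi_i(E_\bullet)$ and $A_\bullet=\pi_{i+1}(E_\bullet)$ for every $i$ then forces $\pi_i(\lim_n E_n)=0$ for all $i$, hence $\lim_n E_n\wequi 0$.

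For the algebraic statement, $\lim_n A_n=0$ is immediate: an element of the limit is a compatible system $(a_n)$, and for each $n$ we have $a_n = (\text{image of } a_N)$ under $A_N\to A_n$, which is zero by nilpotence. For $\lim^1$, the cleanest route is to observe that a nilpotent tower is \emph{pro-zero}: the pro-object $\{A_n\}$ is isomorphic to the zero pro-object, since the structure maps are eventually zero, so it is in particular pro-isomorphic to a tower with surjective (indeed zero) transition maps, i.e. it satisfies the Mittag--Leffler condition; hence $\lim^1_n A_n=0$. Alternatively one argues directly: $\lim^1_n A_n$ is the cokernel of the map $\prod_n A_n\xrightarrow{1-\mathrm{shift}}\prod_n A_n$, and one checks that nilpotence of the tower makes this map surjective by solving for a preimage coordinate-by-coordinate, using that a sufficiently long composite of transition maps vanishes to make the resulting infinite sums finite in each coordinate.

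I would present the direct $\lim^1$ argument since it is self-contained and short. There is no real obstacle here; the only mild subtlety is keeping the indexing convention straight (the tower is indexed by $\Z^\op$, but only the behaviour as the index $\to+\infty$ matters, so one may restrict to $\N^\op$ by cofinality before invoking the Milnor sequence), and making sure the Milnor sequence is invoked in the correct homological degree so that both $\pi_i$ and $\pi_{i+1}$ towers are controlled. Everything else is a routine application of the fact that (derived) limits over $\Z^\op$ may be computed on a final subcategory, which is already used elsewhere in the paper (e.g. in the proof of Lemma \ref{lemm:derived-p-comp}).
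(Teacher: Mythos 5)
Your proposal is correct and follows essentially the same route as the paper: reduce via the Milnor $\lim$--$\lim^1$ exact sequence to the algebraic statement that a nilpotent tower of abelian groups has vanishing $\lim$ and $\lim^1$, the latter because nilpotence gives the Mittag--Leffler condition (the paper cites this directly, while you also offer the equivalent explicit computation with $1-\mathrm{shift}$ on $\prod_n A_n$). Your remark about restricting to a final subcategory before invoking the Milnor sequence is a harmless clarification already implicit in the paper's usage.
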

\begin{proof}
For fixed $i$, the tower $\pi_i(E_\bullet)$ is Mittag--Leffler, and hence $\limone_s \pi_i(E_s) \wequi 0$ \cite[Proposition 3.5.7]{weibel-hom-alg}.
Clearly also $\lim_s \pi_i E_s \wequi 0$.
The claim thus follows from the Milnor exact sequence \cite[Proposition VI.2.15]{goerss2009simplicial}.
\end{proof}

If $A_\bullet \to A$ is a tower in $\scr C_{/A}$ with $\scr C$ an abelian category, there is a canonical descending filtration on $A$ denoted by \[ F^kA = \im(A^k \to A). \]

\subsubsection{}
Let $\scr C$ be a triangulated category with a $t$-structure.
Fix $X \in \scr C$.
For every $E \in \scr C$, we have a natural tower
\[ \dots \to E_{\ge 2} \to E_{\ge 1} \to E_{\ge 0} \to \dots \to E. \]
Applying $[X, \ph]$ we obtain a tower in abelian groups, and hence a natural descending filtration on $[X, E]$ which we denote by $G^\bullet[X, E]$.
Given $F \in \scr C^\heart$, we put $H^i(X, F) = [X, \Sigma^i F]$.

\begin{lemma} \label{lemm:postnikov}
Let $\alpha: E \to F \in \scr C$ induce the zero map $H^i(X, \pi_i E) \to H^i(X, \pi_i F)$.
Then \[ \alpha(G^i[X, E]) \subset G^{i+1}[X, F]. \]
\end{lemma}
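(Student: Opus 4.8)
The plan is to exploit the functoriality of the truncation functors $(\ph)_{\ge n}$ and of the Postnikov layers, and thereby reduce the statement to the vanishing of a single obstruction class living in $H^i(X, \pi_i F)$. Recall that for every $E \in \scr C$ and every $n$ there is a distinguished triangle
\[ E_{\ge n+1} \to E_{\ge n} \xrightarrow{p_n^E} \Sigma^n \pi_n(E) \to \Sigma E_{\ge n+1}, \]
and that all three functors $(\ph)_{\ge n+1}$, $(\ph)_{\ge n}$ and $\Sigma^n \pi_n(\ph)$ are canonically functorial on $\scr C$ (the truncations come with the $t$-structure, and $\Sigma^n\pi_n(\ph)$ together with the natural transformation $p_n^{(\ph)}$ is the functorial $n$-th Postnikov layer). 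Consequently $\alpha \colon E \to F$ induces a morphism of these triangles; in particular the squares
\[
\begin{CD}
E_{\ge i} @>{\alpha_{\ge i}}>> F_{\ge i} \\
@V{p_i^E}VV @VV{p_i^F}V \\
\Sigma^i \pi_i(E) @>{\Sigma^i \pi_i(\alpha)}>> \Sigma^i \pi_i(F)
\end{CD}
\qquad\text{and}\qquad
\begin{CD}
E_{\ge i} @>{\alpha_{\ge i}}>> F_{\ge i} \\
@VVV @VVV \\
E @>{\alpha}>> F
\end{CD}
\]
commute.

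First I would take a class $x \in G^i[X,E]$ and, by definition of the filtration, a lift $\tilde x \in [X, E_{\ge i}]$ of $x$ along $E_{\ge i} \to E$. Pushing forward along $\alpha_{\ge i}$ yields $\alpha_{\ge i}(\tilde x) \in [X, F_{\ge i}]$, whose image in $[X,F]$ is $\alpha(x)$ by commutativity of the second square. Thus it suffices to show that $\alpha_{\ge i}(\tilde x)$ lifts along $F_{\ge i+1} \to F_{\ge i}$: any such lift $y \in [X, F_{\ge i+1}]$ maps to $\alpha(x)$ in $[X,F]$ (via $F_{\ge i+1} \to F_{\ge i} \to F$), whence $\alpha(x) \in G^{i+1}[X,F]$. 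By the long exact sequence attached to the triangle $F_{\ge i+1} \to F_{\ge i} \xrightarrow{p_i^F} \Sigma^i\pi_i(F)$, such a lift exists precisely when $(p_i^F)_*\big(\alpha_{\ge i}(\tilde x)\big) = 0$ in $H^i(X,\pi_i F)$. Using the first commuting square, $(p_i^F)_*\big(\alpha_{\ge i}(\tilde x)\big) = \big(\Sigma^i\pi_i(\alpha)\big)_*\big((p_i^E)_*(\tilde x)\big)$, and $\big(\Sigma^i\pi_i(\alpha)\big)_* \colon H^i(X,\pi_i E) \to H^i(X,\pi_i F)$ is exactly the map induced by $\pi_i(\alpha)$, which is zero by hypothesis. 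Hence the obstruction vanishes and the argument closes.

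I do not expect a genuine obstacle here: the proof is a short diagram chase once the bookkeeping is in place. The only points that deserve care are the functoriality assertions — that $\Sigma^n\pi_n(\ph)$ is a functor equipped with a natural transformation $p_n$ out of $(\ph)_{\ge n}$, and that $\alpha$ therefore induces an honest morphism of the defining triangles (so that both squares above commute on the nose). These are standard consequences of the axioms of a $t$-structure, and everything else is the manipulation of the long exact sequence of the layer triangle exactly as above.
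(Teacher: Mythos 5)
Your proof is correct and follows essentially the same route as the paper's: lift a class of $G^i[X,E]$ to $[X,E_{\ge i}]$, use functoriality of the truncation/Postnikov‑layer triangles to compare with $F$, and observe that the obstruction to lifting into $F_{\ge i+1}$ factors through $\pi_i(\alpha)$, which is zero by hypothesis. The paper's version is just a more compressed statement of the same diagram chase.
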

\begin{proof}
An element of $G^i[X, E]$ is represented by a map $X \to E_{\ge i}$.
Its image in $[X, F]$ is lands in $G^{i+1}[X, F]$ if and only if the composite $X \to E_{\ge i} \to E \to F$ factors through $F_{\ge i+1}  \to F$.
For this it is enough that the composite $X \to E_{\ge i} \to F_{\ge i} \to \Sigma^i\pi_i(F)$ is zero.
Since this factors as $X \to \Sigma^i\pi_i(E) \to \Sigma^i\pi_i(F)$, the result follows.
\end{proof}

We now specialise this to $\scr C = \SH(k)$, $X$ (the suspension spectrum of) a smooth variety.

\begin{remark} \label{rmk:postnikov-end}
Let $X \in \Sm_k$.
Then $G^k[X, E] = 0$ for $k > \dim{X}$; in fact $[X, E_{\ge k}] = 0$.\NB{And so the descent spectral sequence converges strongly.}
This follows from the fact that the Nisnevich topos of $X$ has homotopy dimension at most $\dim X$ (see e.g. \cite[Proposition A.3(3)]{bachmann-norms}).
\end{remark}

\begin{corollary} \label{cor:postnikov-SHk-inc}
Let $X \in \Sm_k$ be connected of dimension $\le d$.
Let $\alpha: E \to F \in \SH(k)$ induce the zero map on the generic stalk $\ul{\pi}_i(\ph)_0(k(X))$, for $i = 0, 1, \dots, d$.
Then the map \[ \alpha_*: [X, E] \to [X, F] \] increases filtration by (at least) $1$.
\end{corollary}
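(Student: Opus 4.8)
The plan is to deduce this directly from Lemma \ref{lemm:postnikov} and Remark \ref{rmk:postnikov-end}. Concretely, ``$\alpha_*$ increases filtration by at least $1$'' means $\alpha_*(G^i[X,E]) \subseteq G^{i+1}[X,F]$ for every $i \in \Z$. For $i < 0$ there is nothing to prove, since then $H^i(X,\ph) = 0$ and Lemma \ref{lemm:postnikov} applies vacuously; for $i > d = \dim X$ we have $G^i[X,E] = 0$ by Remark \ref{rmk:postnikov-end}. Hence, by Lemma \ref{lemm:postnikov}, it suffices to check that for each $i \in \{0,1,\dots,d\}$ the map $\alpha$ induces the zero homomorphism $H^i(X,\ul\pi_i(E)) \to H^i(X,\ul\pi_i(F))$ in the heart, where $\ul\pi_i(E), \ul\pi_i(F) \in \SH(k)^\heart$ are the homotopy modules of \S\ref{subsec:t-structure}.

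Next I would identify $H^i(X, M) = [\Sigma^\infty_+ X, \Sigma^i M]$, for $M \in \SH(k)^\heart$, with the Nisnevich cohomology $H^i_{\Nis}(X, M_0)$ of the underlying strictly $\A^1$-invariant sheaf $M_0 = \ul\pi_0(M)_0$ (standard for the homotopy $t$-structure, since $\Sigma^\infty_+ X$ is effective and connective). Then I would invoke Morel's Rost--Schmid (Gersten) resolution of $M_0$: the complex computing $H^*_{\Nis}(X, M_0)$ has in degree $p$ the group $\bigoplus_{x \in X^{(p)}} (M_0)_{-p}(\kappa(x);\omega_x)$, where $(M_0)_{-p}$ is the $p$-fold Voevodsky contraction and $\omega_x$ a rank-one twist. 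The point is that $\ul\pi_i(\alpha)\colon \ul\pi_i(E) \to \ul\pi_i(F)$ is a morphism of homotopy modules, so its component in twist $-p$ is the $p$-fold contraction of $\ul\pi_i(\alpha)_0$; since contraction is a functor, the induced map of Rost--Schmid complexes is, in degree $p$, the sum of the contracted maps $\ul\pi_i(\alpha)_{-p}(\kappa(x))$. Thus $H^i(X,\ul\pi_i(\alpha))$ vanishes as soon as all of these do.

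Finally I would reduce the vanishing of $\ul\pi_i(\alpha)_{-p}(\kappa(x))$, for $x \in X^{(p)}$ and $0 \le p \le i \le d$, back to the generic-stalk hypothesis. A contraction $N_{-1}(L)$ embeds naturally (in $L$ and in $N$) into $N_0(L(t))$, because $\Gm$ over $L$ is smooth connected with function field $L(t)$ and $N_0$ is unramified; iterating, $\ul\pi_i(\alpha)_{-p}(\kappa(x))$ is the restriction of $\ul\pi_i(\alpha)_0$ evaluated at the finitely generated extension $\kappa(x)(t_1,\dots,t_p)/k$, which is the function field of a smooth $k$-variety of dimension $(d-p)+p = d$. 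Since $\ul\pi_i(\alpha)_0$ kills such generic stalks for $i \le d$ by hypothesis, all these terms vanish, and we are done. The main obstacle is the bookkeeping at the interface with the Rost--Schmid complex: keeping track of the twists $\omega_x$ and verifying that the map of Gersten complexes induced by $\alpha$ really is the termwise sum of the contracted maps (equivalently, that passing to $\ul\pi_i$ and then to all contractions is compatible with the Rost--Schmid differentials). Granting Morel's machinery, the remaining steps are routine.
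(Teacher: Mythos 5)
Your reduction to Lemma \ref{lemm:postnikov} and the handling of the degenerate cases $i<0$ and $i>d$ are both fine and match the paper's own treatment. The gap is in the last step: the Rost--Schmid approach asks for more than the hypothesis supplies. To make $H^i(X,\ul\pi_i(\alpha))$ vanish via the Gersten complex you need the degree-$i$ component $\bigoplus_{x\in X^{(i)}}\ul\pi_i(\alpha)_{-i}(\kappa(x);\omega_x)$ to vanish, and your contraction--embedding trick converts each summand into a restriction of $\ul\pi_i(\alpha)_0$ evaluated at $\kappa(x)(t_1,\dots,t_i)$. But the hypothesis records vanishing only at the single field $k(X)$, and for a non-generic point $x\in X^{(p)}$, $p>0$, the field $\kappa(x)(t_1,\dots,t_p)$ is in general not $k$-isomorphic to $k(X)$ and admits no natural map of $k$-fields to or from $k(X)$ along which the vanishing could be transported. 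For instance, take $X=\P^2$ and $x$ the generic point of a plane curve $C$ of positive genus: then $\kappa(x)(t)=k(C)(t)$ is the function field of $C\times\P^1$, which is irrational, hence not isomorphic to $k(\P^2)$.

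The paper sidesteps the Gersten complex entirely by exploiting unramifiedness in a cruder but cheaper way: for every connected open $U\subset X$ (which has $k(U)=k(X)$) the restriction $\ul\pi_i(E)_0(U)\hookrightarrow\ul\pi_i(E)_0(k(X))$ is injective, so the naturality square for $\alpha$ forces $\ul\pi_i(\alpha)_0(U)=0$; thus $\ul\pi_i(\alpha)_0|_{X_\Zar}$ is the zero morphism of Zariski sheaves, hence $H^k_\Zar(X,\ul\pi_i(\alpha)_0)=0$ for all $k$, and Morel's agreement of Zariski and Nisnevich cohomology for strictly $\A^1$-invariant sheaves finishes. The payoff of arguing this way is precisely that only $k(X)$ is consulted. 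This matters in the application (Proposition \ref{prop:local-sectionwise-nilpotent}), where local nilpotence lets one choose an $N$ killing the maps at $k(X)$ but provides no uniform $N$ killing them at the infinitely many fields $\kappa(x)(t_1,\dots,t_p)$ that your route would require. So your argument would establish a correct statement, but only under a genuinely stronger hypothesis than the corollary assumes, and one that the paper cannot supply where the corollary is used.
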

\begin{proof}
We first claim that $\alpha: H^k(X, \ul{\pi}_i(E)_0) \to H^k(X, \ul{\pi}_i(F)_0)$ is the zero map, for $k \ge 0$ and $0 \le i \le d$.
Since Zariski and Nisnevich cohomology of the homotopy sheaves of $E$ agree \cite[Lemma 6.4.7]{morel2005stable}, for this it suffices to show that $\alpha: \ul{\pi}_i(E)_0|_{X_\Zar} \to \ul{\pi}_i(F)_0|_{X_\Zar}$ is the zero map.
This follows from unramifiedness of homotopy sheaves \cite[Lemma 6.4.4]{morel2005stable} and our assumption on $\alpha(k(X))$.
We have thus proved the claim.

Applying Lemma \ref{lemm:postnikov}, we deduce that ($*$) $\alpha(G^i[X, E]) \subset G^{i+1}[X, F]$ for $0 \le i \le d$.
Since $\Sigma^\infty_+ X \in \SH(k)_{\ge 0}$, for $i < 0$ we have $[X, F] \wequi [X, F_{\ge 0}] \wequi [X, F_{\ge i+1}]$, and so ($*$) still holds.
For $i>d$ we have $G^k[X, E] = 0$ by Remark \ref{rmk:postnikov-end}, and so again ($*$) holds.

This concludes the proof.
\end{proof}

\subsubsection{}
\begin{definition} \label{def:locally-nilpotent}
Let $E_\bullet \in \SH(k)$ be a tower.
\begin{enumerate}
\item We call $E_\bullet$ \emph{locally nilpotent} if for every connected $X \in \Sm_k$ and $i, j \in \Z$, the tower \[ \ul{\pi}_{i,j}(E_\bullet)(k(X)) \] is nilpotent.
\item We call $E_\bullet$ \emph{sectionwise nilpotent} if for every $X \in \Sm_k$ and $i, j \in \Z$, the tower \[ [\Sigma^{i,j} X, E_\bullet] \] is nilpotent.
\end{enumerate}
\end{definition}

\begin{proposition} \label{prop:local-sectionwise-nilpotent}
Let $E_\bullet \in \SH(k)$ be locally nilpotent.
Then it is sectionwise nilpotent.
\end{proposition}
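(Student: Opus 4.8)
The plan is to reduce the claim about arbitrary sections over a smooth $X$ to the claim about generic stalks, which is the hypothesis, by an induction on the (Krull) dimension of $X$ combined with the filtration-increasing property of the maps in a locally nilpotent tower established in Corollary \ref{cor:postnikov-SHk-inc}. First I would fix $X \in \Sm_k$ and $i,j \in \Z$; by decomposing $X$ into connected components (of which there are finitely many, $X$ being Noetherian of finite dimension) we may assume $X$ is connected, say of dimension $d$. We must produce, for each $n$, some $N>n$ with $E_N \to E_n$ inducing the zero map on $[\Sigma^{i,j}X, \ph]$.

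The key input is that a single transition map $E_m \to E_{m-1}$ which is zero on all the generic stalks $\ul{\pi}_{i',j}(\ph)(k(X))$ — equivalently (twisting by $\Gmp{j}$ and reindexing) on $\ul{\pi}_{i'}(\ph)_0(k(X))$ — raises the $G^\bullet$-filtration on $[\Sigma^{i,j}X,\ph]$ by at least one, by Corollary \ref{cor:postnikov-SHk-inc} applied to the finitely many degrees $i' = 0,1,\dots,d$ that matter (after the appropriate shift so that $\Sigma^{i,j}X$ is a suspension of $\Sigma^\infty_+ X$ in non-negative degrees). By Remark \ref{rmk:postnikov-end}, the filtration $G^\bullet[\Sigma^{i,j}X, E_n]$ is concentrated in the finite range $[\,\cdot\,, d]$ (more precisely $G^{d+1} = 0$), so it suffices to compose $d+1$ successive filtration-raising maps to land in the zero group. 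Thus the argument is: local nilpotence guarantees that for each relevant degree $i'$ and each level $m$ there is $m' > m$ with $E_{m'} \to E_{m}$ zero on $\ul{\pi}_{i'}(\ph)_0(k(X))$; choosing a common such level for the finitely many degrees $i' \le d$ and iterating $d+1$ times starting from $n$ yields the desired $N$.

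The one subtlety I expect to be the main obstacle is organizing the "common level" step cleanly: local nilpotence is a statement about the individual towers $\ul{\pi}_{i',j}(E_\bullet)(k(X))$ being nilpotent, so for a fixed starting level $m$ and degree $i'$ there is $N_{i'} > m$ with $E_{N_{i'}} \to E_m$ zero on that homotopy sheaf's generic stalk. Since the tower maps are compatible, taking $N = \max_{0 \le i' \le d} N_{i'}$ gives a single level at which $E_N \to E_m$ kills all of $\ul{\pi}_{i'}(\ph)_0(k(X))$ for $0 \le i' \le d$ simultaneously — here one uses that a map which factors through a map that is zero on a given sheaf is itself zero on that sheaf. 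Then $E_N \to E_m$ raises filtration by one on $[\Sigma^{i,j}X, \ph]$. Iterating this construction $d+1$ times, with the output level of one stage becoming the input of the next, produces $N = N(n, d) > n$ with $E_N \to E_n$ zero on $[\Sigma^{i,j}X, E_n]$ since it increases filtration $d+1$ times and $G^{d+1}$ vanishes; an entirely parallel bookkeeping handles all shifts $i$, completing the proof that $E_\bullet$ is sectionwise nilpotent.
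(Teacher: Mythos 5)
Your proof is correct and follows essentially the same strategy as the paper's: reduce to connected $X$ of dimension $d$, use Corollary \ref{cor:postnikov-SHk-inc} together with local nilpotence to find a transition map raising the Postnikov filtration $G^\bullet[\Sigma^{i,j}X,\ph]$ by one, iterate $d+1$ times, and conclude by the vanishing of $G^{d+1}$ from Remark \ref{rmk:postnikov-end}. The only cosmetic difference is that the paper reduces the "for each $n$" quantifier to $n=0$ and suppresses the bigraded index bookkeeping, whereas you carry these along explicitly; the mathematical content is identical.
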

\begin{proof}
Let $X \in \Sm_k$.
It suffices to show that there exists $N>0$ such that $[X, E_N] \to [X, E_0]$ is the zero map.
Writing $X$ as the disjoint union of its (finitely many) connected components, and using that $[X_1 \amalg X_2, E] \wequi [X_1, E] \oplus [X_2, E]$, we may assume that $X$ is connected, say of dimension $d$.
We claim that for any $i$ there exists $N(i)>i$ such that $[X, E_{N(i)}] \to [X, E_i]$ increases postnikov filtration by (at least) $1$.
Assuming this for now, if $N = N^{\circ(d+1)}(0)$, then the composite \[ [X, E_N] \to [X, E_{N^{\circ d}(0)}] \to [X, E_{N^{\circ (d-1)}(0)}] \to \dots \to [X, E_0] \] increases postnikov filtration by $d+1$, and hence is the zero map by Remark \ref{rmk:postnikov-end}.

It hence suffices to prove the claim; clearly we may assume that $i=0$.
The local nilpotence assumption implies that there exists $N = N(0) > 0$ such that the maps $\ul{\pi}_i(E_N)_0(k(X)) \to \ul{\pi}_i(E_0)_0(k(X))$ are all zero, for $0 \le i \le d$.
The claim now follows from Corollary \ref{cor:postnikov-SHk-inc}.

This concludes the proof.
\end{proof}

\begin{corollary} \label{cor:loc-nilpotent-convergent}
Let $E_\bullet \in \SH(k)$ be locally nilpotent.  
Then $\lim_i E_i \wequi 0$.
\end{corollary}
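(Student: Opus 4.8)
The plan is to deduce this from Proposition \ref{prop:local-sectionwise-nilpotent} together with Lemma \ref{lemm:boardman}, by testing $\lim_i E_i$ against the compact generators of $\SH(k)$. Recall that the objects $\Sigma^{p,q}\Sigma^\infty_+ X$, for $X \in \Sm_k$ and $p, q \in \Z$, form a set of compact generators of $\SH(k)$; hence it suffices to show that $\map(\Sigma^{p,q}\Sigma^\infty_+ X, \lim_i E_i) \wequi 0$ for each such object.

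First I would use that $\map(A, \ph)$ preserves limits, giving \[ \map(\Sigma^{p,q}\Sigma^\infty_+ X, \lim_i E_i) \wequi \lim_i \map(\Sigma^{p,q}\Sigma^\infty_+ X, E_i), \] which exhibits the left-hand side as the limit of a tower of (ordinary) spectra. By Proposition \ref{prop:local-sectionwise-nilpotent}, $E_\bullet$ is sectionwise nilpotent, so for every $n \in \Z$ the tower of abelian groups \[ \pi_n \map(\Sigma^{p,q}\Sigma^\infty_+ X, E_\bullet) \cong [\Sigma^{n+p,q}\Sigma^\infty_+ X, E_\bullet] \] is nilpotent in the sense of Definition \ref{def:locally-nilpotent}(2). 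Lemma \ref{lemm:boardman} then gives $\lim_i \map(\Sigma^{p,q}\Sigma^\infty_+ X, E_i) \wequi 0$, as desired, and since $\lim_i E_i$ is thereby right-orthogonal to a set of compact generators it vanishes.

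Finally, I would note that the genuine content here lies entirely in Proposition \ref{prop:local-sectionwise-nilpotent} (which in turn rests on the Postnikov-filtration argument of Corollary \ref{cor:postnikov-SHk-inc} and the finite Nisnevich dimension of smooth $k$-varieties); the only point in the present deduction requiring any care is the bookkeeping of bigraded versus topological suspensions together with the standard fact that the $\Sigma^{p,q}\Sigma^\infty_+ X$ compactly generate $\SH(k)$. I therefore do not anticipate a real obstacle in this step.
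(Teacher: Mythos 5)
Your proposal matches the paper's proof: test $\lim_i E_i$ against the compact generators $\Sigma^{p,q}\Sigma^\infty_+ X$, pass the limit inside the mapping spectrum, invoke Proposition \ref{prop:local-sectionwise-nilpotent} to get sectionwise nilpotence of the tower of homotopy groups, and finish with Lemma \ref{lemm:boardman}. This is precisely the argument the paper gives (in compressed form), so there is nothing to add.
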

\begin{proof}
Immediate from Proposition \ref{prop:local-sectionwise-nilpotent} (which shows that $\pi_i \map(\Sigma^{j,k} X, E_\bullet)$ is nilpotent) and Lemma \ref{lemm:boardman} (which implies that $\map(\Sigma^{j,k} X, \lim_i E_i) \wequi \lim_i \map(\Sigma^{j,k} X, E_i) \wequi 0$).
\end{proof}

\subsubsection{}\label{subsub:sc-proof}
We can now prove our extended slice completeness result.
\begin{proof}[Proof of Proposition \ref{prop:slice-complete}]
By \cite[Theorem 5.3(1)]{bachmann-bott} (using \cite[Remark 5.12]{bachmann-bott}) we know that $f_\bullet(E)/(t^n,\rho^m)$ is locally nilpotent.
The claim thus follows from Corollary \ref{cor:loc-nilpotent-convergent}.
\end{proof}

\section{The $\HW$-homology of $\kw$} \label{sec:HW-kw}
\subsection{Summary}
Throughout $k$ is a field with $\vcd_2(k) < \infty$.
Recall (see Lemmas \ref{lemm:witt-completion}, \ref{lemm:witt-torsion} and \ref{lemm:derived-p-comp}) that then \[ \W(k)_\I^\comp \wequi \W(k)_2^\comp \wequi L_2^\comp \W(k). \]
See \S\ref{subsec:spectra} for a definition of the motivic spectra $\HW, \kw$.
The following result will be improved upon in Proposition \ref{prop:kw-HW-better}.
\begin{theorem} \label{thm:HW-kw-summary}
Let $\vcd_2(k) < \infty$.
We have \[ \pi_* ((\kw \wedge \HW)_2^\wedge) \wequi \begin{cases} \W(k)_\I^\comp & *=4i \ge 0 \\ 0 & \text{else} \end{cases}. \]

There exist generators $t_i \in \pi_{4 \cdot 2^i}((\kw \wedge \HW)_2^\wedge)$ and  $x_i \in \pi_{4i}((\kw \wedge \HW)_2^\wedge)$ such that the following hold.
\begin{enumerate}
\item Let $k'/k$ be an extension with $\vcd_2(k') < \infty$.
  Then $x_i|_{k'}$ generates $\pi_{4i}((\kw_{k'} \wedge \HW_{k'})_2^\wedge) \wequi \W(k')_2^\comp$.
\item For $i \ge 0$ we have $t_i^2 \in (2 + \I(k)^2)t_{i+1}$.
\item Writing $i = \sum_n \epsilon_n 2^n$ for the binary expansion of $i \ge 0$, up to a unit (of $\W(k)_\I^\comp$) we have \[ x_i = \prod_n t_n^{\epsilon_n}. \]
\item $x_0$ can be chosen to be $1$.
\end{enumerate}
\end{theorem}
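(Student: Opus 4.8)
The plan is to reduce, via a sequence of completions and periodizations, to a $\tau$-free computation in $\HZ/2$-homology and then reassemble everything with an $\eta$-Bockstein spectral sequence.

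First I would record the chain of equivalences
\[ (\kw \wedge \HW)_2^\comp \wequi (\ko \wedge \ul{K}^W)[\eta^{-1}]_2^\comp \wequi (\ko \wedge \ul{K}^W)_2^\comp[\eta^{-1}]_2^\comp \wequi (\ko \wedge \ul{K}^W)_{\eta,2}^\comp[\eta^{-1}]_2^\comp. \]
The first two steps are formal (unwinding the definitions of $\kw$ and $\HW$ from \S\ref{subsec:spectra}, and commuting $2$-completion past the smashing localization $[\eta^{-1}]$). The essential step is the last one: since $\Sigma^{2,1}\ul{K}^W \in \SH(k)^\veff$ by \cite{bachmann-very-effective} and $\ko$ is connective, $\Sigma^{2,1}(\ko \wedge \ul{K}^W) \in \SH(k)^\veff$, so Theorem \ref{thm:eta-complete} applies and shows that $(\ko \wedge \ul{K}^W)_2^\comp$ is already $\eta$-complete. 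This is the point flagged as nontrivial in the introduction; it is what lets us trade inverting $\eta$ for completing at $\eta$, at the price of working with an $\eta$-completed object whose homotopy is accessible.

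Next I would compute $\pi_{**}((\ko \wedge \ul{K}^W)_{\eta}^\comp)$. The base of the computation is $\ko \wedge \ul{K}^W/\eta$: using $\ul{K}^W/\eta \wequi \ul{k}^M \wequi (\HZ/2)/\tau$ one identifies this with $(\ko \wedge \HZ/2)/\tau$, whose homotopy is $\HZ/2_{**}(\ko)/\tau$, and $\HZ/2_{**}(\ko)$ is known by \cite{ananyevskiy2017very}. Running the $\eta$-Bockstein spectral sequence attached to the tower $\{\ko \wedge \ul{K}^W/\eta^n\}$ (which converges thanks to the $\eta$-completeness just established) then computes $\pi_{**}((\ko \wedge \ul{K}^W)_\eta^\comp)$ together with its ring structure. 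I expect the $E_1$-page, after the $\tau$-quotient, to be a polynomial-type $\F_2$-algebra on classes whose internal bidegrees, once $\eta$ is inverted and the bigrading collapses, fall into degrees $4\cdot 2^i$; the higher Bockstein differentials and the multiplicative extensions are precisely what yield the relations $t_i^2 \in (2+\I(k)^2)t_{i+1}$ and, by expanding an arbitrary monomial in binary, the description $x_i = \prod_n t_n^{\epsilon_n}$ up to a unit. Inverting $\eta$ and $2$-completing collapses the bigrading and promotes the coefficients $\F_2$ to $\W(k)_\I^\comp$ (using $\W(k)_\I^\comp \wequi \W(k)_2^\comp$), giving the asserted groups, with $x_0$ the unit class $1$ by construction.

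Finally, for the base-change statement (1) I would use that every spectrum and every map above is natural in the field, so the $x_i$ are defined compatibly; since $\pi_{4i}((\kw \wedge \HW)_2^\comp) \wequi \W(k)_\I^\comp$ is a local ring whose residue field $\F_2$ is insensitive to field extension, it suffices to check that $x_i|_{k'}$ is nonzero modulo $\I(k')$, which one reads off from the $\HZ/2$-homology computation (a step that does not see the field, since $\HZ/2_{**}(\ko)$ and the $\tau$-quotient do not). The main obstacle I anticipate is not any single step but the bookkeeping in the middle paragraph: identifying $\ul{K}^W/\eta$ with $(\HZ/2)/\tau$ as ring spectra, controlling the $\tau$-Bockstein inside $\HZ/2_{**}(\ko)$, and then pushing the resulting multiplicative information through the $\eta$-Bockstein spectral sequence without losing track of the generators $t_i$ or of the extension that pins down $t_i^2$. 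The completeness reduction of the first paragraph is the conceptual key input, but it is short once Theorem \ref{thm:eta-complete} is available.
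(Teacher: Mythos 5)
Your proposal follows essentially the same route as the paper: the same chain of equivalences, the appeal to Theorem \ref{thm:eta-complete} for $\eta$-completeness of $(\ko \wedge \ul{K}^W)_2^\comp$, the identification of $\pi_{**}(\ko \wedge \ul{k}^M)$ from $\HZ/2_{**}\ko$ via \cite{ananyevskiy2017very}, the $\eta$-Bockstein spectral sequence, and lifting filtered generators via Corollary \ref{corr:filtered-free-lifting}. One small correction to your expectations: the paper shows the $\eta$-Bockstein spectral sequence collapses at $E_2$ (for sparseness reasons, all classes in positive filtration lie in stems $\equiv 0 \pmod 4$), so the relation $t_i^2 \in (2+\I^2)t_{i+1}$ comes purely from the multiplicative extension detecting $h\rho = 2$ on the associated graded, and item (3) is by construction (the $x_i$ are chosen as lifts of the monomials in the $u_n$) rather than derived from the $t_i^2$ relation.
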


\begin{remark}
It follows that $\pi_*((\kw \wedge \HW)_2^\wedge)$ is a flat $\W(k)_2^\wedge$-algebra generated by $t_0, t_1, \dots$, subject to the relations $t_i^2 = (2+r_i) t_{i+1}$ for certain $r_i \in \I(k)^2$.
\end{remark}

\subsection{The motivic dual Steenrod algebra} \label{subsec:steenrod}
\subsubsection{} \label{subsub:HZ2}
We will use the \emph{mod $2$ motivic cohomology spectrum} $\HZ/2$.
It is a consequence of the resolution of the Beilinson--Lichtenbaum and Milnor conjectures (see e.g. \cite[(7.1)]{kylling2018hermitian}) that \[ \ul{\pi}_*(\HZ/2)_* \wequi \ul{k}^M_*[\tau]. \]
Here $\ul{k}^M_*$ denotes the homotopy module of \emph{mod $2$ Milnor $K$-theory} \cite[Example 3.33]{A1-alg-top}, and $\tau \in \pi_1(\HZ/2)_1$.
In particular we have a cofiber sequence \begin{equation} \label{eq:cof-tau-HZ2} \Sigma^{0,-1} \HZ/2 \xrightarrow{\tau} \HZ/2 \to \ul{k}^M. \end{equation}

\subsubsection{}
We recall the structure of the motivic dual Steenrod algebra \cite[Theorem 5.6]{hoyois2013motivic}.
There exist elements \[ \tau_i, \xi_j \in \HZ/2_{**}\HZ/2, i = 0, 1, 2, \dots, j = 1, 2, \dots \] with \[ |\tau_i| = (2^{i+1} - 1, 2^i - 1) \quad\text{and}\quad |\xi_i| = (2^{i+1}-2, 2^i-1). \]
Then, when viewed as a left $\HZ/2$-algebra, there is an equivalence\footnote{in the naive sense that the right hand side has an $\HZ/2$-module structure and also a homotopy $\HZ/2$-algebra structure, and there is an equivalence respecting both structures} \begin{equation} \label{eq:motivic-dual-steenrod} \HZ/2 \wedge \HZ/2 \wequi \HZ/2[\tau_0, \tau_1, \dots, \xi_1, \xi_2, \dots]/(\tau_i^2 - (\tau + \rho\tau_0) \xi_{i+1} - \rho \tau_{i+1}). \end{equation}
We may occasionally write $\xi_0 := 1$.
The switch map on $\HZ/2 \wedge \HZ/2$ induces an automorphism of $\HZ/2_{**}\HZ/2$ called \emph{antipode} which we denote by $x \mapsto \overline{x}$.

\begin{lemma} \label{lemm:right-unit}
\begin{enumerate}
\item The action of $\HZ/2^{**}\HZ/2$ on $\HZ/2_{**}$ is determined by $\Sq^1(\tau) = \rho$, $\Sq^i(x) = 0$ for $x = \tau$ and $i>1$, or $i \ge 0$ and $x = 1$ or $x \in \ul{k}_1^M(k)$.
\item The right unit $\eta_R: \HZ/2_{**} \to \HZ/2_{**}\HZ/2$ is given by $\eta_R(\tau) = \tau + \rho \tau_0$ and $\eta_R(a) = a$, for $a \in \ul{k}_*^M(k)$.
\end{enumerate}
\end{lemma}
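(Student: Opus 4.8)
The plan is to treat (1) and (2) as dual to one another under the standard pairing between the motivic Steenrod algebra and its dual, and to bootstrap both from the single nontrivial input $\Sq^1(\tau)=\rho$. First recall that by the resolution of the Milnor conjectures $\HZ/2_{**}$ is, as a ring, the free graded-commutative $\F_2$-algebra on $\tau$ (in bidegree $(0,-1)$) and on the symbols in $\ul{k}_1^M(k)$ (in bidegree $(-1,-1)$). Since the motivic Cartan formula expresses any operation on a product in terms of its values on the factors, Part (1) reduces to computing $\Sq^i$ on $1$, on a symbol $[u]\in\ul{k}_1^M(k)$, and on $\tau$.

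For $1$ and for $[u]$ the vanishing is purely a matter of bidegrees: $\Sq^i$ has bidegree $(i,\lfloor i/2\rfloor)$, so $\Sq^i(1)\in H^{i,\lfloor i/2\rfloor}(k;\Z/2)$ and $\Sq^i([u])\in H^{1+i,1+\lfloor i/2\rfloor}(k;\Z/2)$, both of which vanish for $i\ge 1$ because $H^{p,q}(k;\Z/2)=0$ for $p>q$. For $\tau\in H^{0,1}$ the same bound kills $\Sq^i(\tau)\in H^{i,1+\lfloor i/2\rfloor}(k;\Z/2)$ for $i\ge 3$; the value $\Sq^1(\tau)=\rho$ is Voevodsky's computation (recorded in \cite{hoyois2013motivic} together with \eqref{eq:motivic-dual-steenrod}), and the one remaining case $\Sq^2(\tau)=0$ (which lies in $H^{2,2}(k;\Z/2)$, a group that can be nonzero, e.g.\ over $\R$) is part of that cited computation, or equivalently follows from the instability properties of the motivic Steenrod operations. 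Together with the Cartan formula this determines the action on all of $\HZ/2_{**}$, proving (1).

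Part (2) I would deduce from (1) using the $\HZ/2_{**}$-linear duality pairing $\langle\ph,\ph\rangle\colon \HZ/2^{**}\HZ/2\otimes_{\HZ/2_{**}}\HZ/2_{**}\HZ/2\to\HZ/2_{**}$ and its characterizing identities $\langle\theta,\eta_R(x)\rangle=\theta(x)$ and $\langle\theta,\eta_L(y)\rangle=y\cdot\theta(1)$, together with $\HZ/2_{**}$-linearity in the left (coefficient) variable and the Milnor dual-basis values $\langle\Sq^1,\tau_0\rangle=1$, $\langle\Sq^1,\xi_1\rangle=0$, $\langle\Sq^2,\tau_0\rangle=0$, $\langle\Sq^2,\xi_1\rangle=1$. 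For $a\in\ul{k}_*^M(k)$: each symbol lifts along the Hurewicz map $\pi_{**}(\1)\to\pi_{**}(\HZ/2)$ — the class $[u]$ lifts to $[u]\in\ul{K}_1^{MW}(k)=\pi_{-1,-1}(\1)$ by Morel, and products of symbols lift to products — and $\eta_L$ and $\eta_R$ agree on Hurewicz images, both being induced from the unit of the $\scr E_\infty$-ring $\HZ/2\wedge\HZ/2$; hence $\eta_R(a)=\eta_L(a)=a$. For $\tau$: since $\mu\eta_R(\tau)=\tau=\mu\eta_L(\tau)$, the difference $\eta_R(\tau)-\eta_L(\tau)$ lies in the augmentation ideal in bidegree $(0,-1)$; a bidegree count in \eqref{eq:motivic-dual-steenrod}, together with naturality in $k$ (so that scalar coefficients are universal, i.e.\ powers of $\rho$), shows this group is spanned by $\rho\tau_0$ and $\rho^2\xi_1$. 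Writing $\eta_R(\tau)=\tau+\epsilon\rho\tau_0+\delta\rho^2\xi_1$ with $\epsilon,\delta\in\F_2$ and pairing against $\Sq^1$ and $\Sq^2$, the identities above and (1) give $\epsilon\rho=\Sq^1(\tau)=\rho$ and $\delta\rho^2=\Sq^2(\tau)=0$; testing over a field with $\rho\ne 0$ (e.g.\ $\R$) and concluding in general by base change (the Hopf algebroid being defined over $\Z[1/2]$) yields $\epsilon=1$, $\delta=0$, i.e.\ $\eta_R(\tau)=\tau+\rho\tau_0$.

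There is no single hard step, but three points need care: fixing the $\HZ/2_{**}$-linearity conventions so that the adjunction reads as stated; the bidegree bookkeeping using the degrees of $\tau_i,\xi_j$ that rules out correction terms other than $\rho\tau_0$ and $\rho^2\xi_1$; and the one genuinely external input, Voevodsky's values $\Sq^1(\tau)=\rho$ and $\Sq^2(\tau)=0$, which I would simply cite. (Alternatively, the formula $\eta_R(\tau)=\tau+\rho\tau_0$ is recorded directly in the sources for \eqref{eq:motivic-dual-steenrod}, in which case (1) follows by pairing against operations, reversing the logic above.)
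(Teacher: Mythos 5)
Your argument follows the same outline as the paper's: reduce to a small list of candidate terms by bidegree, identify their coefficients with values of Steenrod operations via the coaction/duality, and then cite Voevodsky for $\Sq^1(\tau)=\rho$ and $\Sq^2(\tau)=0$. Two local issues are worth flagging.

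First, the phrase ``free graded-commutative $\F_2$-algebra on $\tau$ and on the symbols'' is not accurate: $\ul{k}^M_*(k)$ has Steinberg relations, so $\HZ/2_{**}$ is generated by, but not free on, those classes. This does not damage the argument, since for the Cartan reduction you only need a generating set.

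Second, and more seriously, the intermediate claim that ``naturality in $k$ shows that the scalar coefficients are powers of $\rho$'' is both unnecessary and unjustified as stated. The bidegree count gives $\eta_R(\tau)=a\tau+b\tau_0+c\xi_1$ with $a\in\F_2$, $b\in \ul{k}^M_1(k)$, $c\in\ul{k}^M_2(k)$ (this is exactly what the paper records), and nothing in naturality alone forces $b,c$ to be powers of $\rho$: base change to the prime field does not accomplish this, since e.g.\ $\ul{k}^M_1(\Q)$ and $\ul{k}^M_1(\Z[1/2])$ are strictly larger than $\F_2\{\rho\}$, and over $\F_p$ with $p\equiv 1\pmod 4$ one has $\rho=0$ while $\ul{k}^M_1\neq 0$. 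The correct step — which you do in fact take in the very next sentence — is simply to evaluate the pairing against $\Sq^1$ and $\Sq^2$, which directly identifies $b=\Sq^1(\tau)$ and $c=\Sq^2(\tau)$ in $\ul{k}^M_*(k)$, no universality needed. This is precisely what the paper does (``This translates into $\Sq^1(\tau)=b$ and $\Sq^2(\tau)=c$''), after which one cites Voevodsky for the two values. The detour through ``powers of $\rho$'' plus base change to $\R$ is circuitous and, if you insist on it, needs more care than you give it (one has to separately pin down the part killed under $\Z[1/2]\to\R$). Drop that sentence and the proof matches the paper's.
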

\begin{proof}
The right unit is a special case of a homology coaction, and so dual to the action of $\HZ/2^{**}\HZ/2$ on $\HZ/2^{**}$\NB{ref?}: $\eta_R(x) = \sum_I \Sq^I(x) \cdot \widehat{\Sq^I}$.
Thus (1) and (2) are (essentially) equivalent.
The right unit is $\ul{k}_*^M(k)$-linear (see e.g. Example \ref{ex:adamspsi-pi0} and use that $\ul{\pi}_0(\1)_* \wequi \ul{K}_*^{MW} \to \ul{\pi}_0(\HZ/2) \wequi \ul{k}_*^M$ is surjective) and satisfies $\eta_R(1) = 1$.
Thus for $x \in \ul{k}^M_*(k)$ we get $\eta_R(x) = x$.
For degree reasons\NB{$\tau$ is in Milnor--Witt stem $1$, $\tau_i$ in $2^i$ and $\xi_i$ in $2^i-1$. The only summands which can contribute are thus $1$, $\tau_0$ and $\xi_1$.}, we must have $\eta_R(\tau) = a\tau + b\tau_0 + c \xi_1$, for some $a \in \F_2$, $b \in \ul{k}_1^M(k)$, $c \in \ul{k}_2^M(k)$.
This translates into $\Sq^0(\tau) = a\tau$, $\Sq^1(\tau) = b$ and $\Sq^2(\tau) = c$ (using \cite[Lemmas 13.1 and 13.5]{voevodsky2003reduced}).
Hence $a=1$.
We have $c = 0$ by \cite[Lemma 9.9]{voevodsky2003reduced}; $b = \rho$ is holds essentially by definition (using that $\Z(1)[1] = \Gm$ and considering the long exact sequence computing $H^*(k,\Z/2(1))$).
\end{proof}

\begin{corollary} \label{cor:right-gens}
The monomials \[ \prod_{i,j} \overline{\tau_i}^{\epsilon_i} \overline{\xi_j}^{n_j} \] form a \emph{left} $\HZ/2_{**}$-module basis of $\HZ/2_{**}\HZ/2$.
\end{corollary}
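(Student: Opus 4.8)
The plan is to bootstrap from the structure theorem \eqref{eq:motivic-dual-steenrod}, which is precisely the assertion that the monomials $m_{\epsilon,n} := \prod_i \tau_i^{\epsilon_i}\prod_j \xi_j^{n_j}$ (with $\epsilon_i\in\{0,1\}$ and almost all $n_j=0$) form a basis of $\HZ/2\wedge\HZ/2$ as a \emph{left} $\HZ/2_{**}$-module. I want to transport this along the antipode. Recall $x\mapsto\overline x$ is the ring automorphism of $\HZ/2_{**}\HZ/2$ induced by the switch of the two smash factors; it is involutive and interchanges the two units, $\overline{\eta_L(a)}=\eta_R(a)$. Hence for $a\in\HZ/2_{**}$ and $x\in\HZ/2_{**}\HZ/2$ we have $\overline{\eta_L(a)\cdot x}=\overline x\cdot\eta_R(a)$, so the automorphism carries the left basis $\{m_{\epsilon,n}\}$ to a \emph{right} $\HZ/2_{**}$-module basis $\{\overline m_{\epsilon,n}\}$, where $\overline m_{\epsilon,n}=\prod_i\overline{\tau_i}^{\epsilon_i}\prod_j\overline{\xi_j}^{n_j}$. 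What remains is to upgrade this to the left module structure.

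For that I would run a triangularity argument entirely inside the left module structure. First, since $\overline{(\ph)}$ is a ring map, $\overline{\tau_i}^2=\overline{\tau_i^2}$ is still reducible via the image of the relation in \eqref{eq:motivic-dual-steenrod}, and its value lies in the left $\HZ/2_{**}$-span of the $\overline m_{\epsilon,n}$ (here one uses $\eta_R(\tau)=\eta_L(\tau)+\rho\tau_0$ from Lemma \ref{lemm:right-unit}(2) and $\overline{\tau_0}=\tau_0$); hence the $\overline m_{\epsilon,n}$ with $\epsilon_i\le 1$ already span $\HZ/2_{**}\HZ/2$ as a left $\HZ/2_{**}$-module. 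Next, filter $\HZ/2_{**}\HZ/2$ as a left $\HZ/2_{**}$-module by word length in the generators $\tau_i,\xi_j$, i.e.\ let $F^p$ be the left submodule spanned by the $m_{\epsilon,n}$ with $\sum_i\epsilon_i+\sum_j n_j\ge p$; since only finitely many monomials occur in each bidegree, this filtration is exhaustive, Hausdorff and complete. A bidegree count (using that $\HZ/2_{**}$ is concentrated in bidegrees $(i,j)$ with $j\le i\le 0$, so that no coefficient can compensate the degree of a generator) together with the fact that the antipode is the identity modulo decomposables shows $\overline{\xi_j}\equiv\xi_j$ and $\overline{\tau_i}\equiv\tau_i$ modulo $F^2$, whence $\overline m_{\epsilon,n}\equiv m_{\epsilon,n}$ modulo $F^{|\epsilon,n|+1}$. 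Thus the left $\HZ/2_{**}$-linear endomorphism sending $m_{\epsilon,n}\mapsto\overline m_{\epsilon,n}$ induces the identity on the associated graded, so it is an isomorphism by Lemma \ref{lemm:filtration-comparison}; therefore $\{\overline m_{\epsilon,n}\}$ is a left $\HZ/2_{**}$-module basis, as claimed.

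The main obstacle I anticipate is justifying that the corrections to $\overline{\xi_j}$ and $\overline{\tau_i}$ strictly raise word length — equivalently, that the antipode is the identity on the indecomposable quotient in the sense of Lemma \ref{lemm:indecomposables}, including the $\rho$- and $\tau$-linear correction terms that have no classical analogue. This needs the coproduct formulas for the motivic dual Steenrod algebra rather than just the algebra presentation \eqref{eq:motivic-dual-steenrod} quoted above, so I would recall those from \cite{hoyois2013motivic} (or extract exactly the needed leading-term information from the coaction dual to the $\Sq^I$-action appearing in Lemma \ref{lemm:right-unit}). Once the leading terms are pinned down, the rest is a formal application of Boardman's comparison lemma.
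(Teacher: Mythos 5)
Your strategy agrees with the paper's up to the midpoint: both get the right $\HZ/2_{**}$-module basis for free from the antipode being an involutive (anti-)automorphism, and then try to deduce the left-module statement by a triangularity argument. The gap is in the particular filtration you chose, and it sits at exactly the step you flagged as delicate.

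The word-length filtration (all $\tau_i,\xi_j$ given weight $1$) is not multiplicative on $\HZ/2_{**}\HZ/2$: the relation in \eqref{eq:motivic-dual-steenrod}, $\tau_i^2=(\tau+\rho\tau_0)\xi_{i+1}+\rho\tau_{i+1}$, has the length-one monomial $\rho\tau_{i+1}$ on the right, so a $\tau$-square reduction can lower word length. Your bidegree count establishing $\overline{\tau_i}\equiv\tau_i$ and $\overline{\xi_j}\equiv\xi_j$ modulo $F^2$ is fine, but passing from generators to general monomials is where it breaks: conjugating a monomial triggers such reductions and the output falls back into the \emph{same} word length. Concretely, with $\overline{\tau_0}=\tau_0$ and $\overline{\tau_1}=\tau_1+\xi_1\tau_0$ one computes
\[
\overline{\tau_0\tau_1}=\tau_0(\tau_1+\xi_1\tau_0)=\tau_0\tau_1+\xi_1\tau_0^2=\tau_0\tau_1+\tau\xi_1^2+\rho\tau_0\xi_1^2+\rho\tau_1\xi_1,
\]
and the terms $\tau\xi_1^2$ and $\rho\tau_1\xi_1$ have word length $2$, equal to that of $\tau_0\tau_1$. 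So the $\HZ/2_{**}$-linear endomorphism $m_{\epsilon,n}\mapsto\overline{m_{\epsilon,n}}$ does \emph{not} induce the identity on the associated graded, and Lemma~\ref{lemm:filtration-comparison} cannot be invoked as you propose. This is a genuinely motivic obstruction: classically $\tau_i^2=0$ (or the $\tau_i$ are absent), so word length is multiplicative there; here it is not. The paper sidesteps the issue by working with the $k^M_*(k)$-basis $\{\tau^p\overline m\}$ and ordering the variables lexicographically with $\tau<\overline{\tau}_0<\overline{\xi}_1<\overline{\tau}_1<\overline{\xi}_2<\cdots$, an order in which the conjugated relation $\overline{\tau_i}^2=\tau\,\overline{\xi_{i+1}}+\rho\,\overline{\tau_{i+1}}$ strictly increases; the change of basis is then unitriangular. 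Your scheme could likely be repaired by assigning weights $w(\tau_i)=w(\xi_i)=2^i$ (so both the displayed relation and the antipode corrections strictly raise weight and the filtration is still degreewise finite), but with unit weights the argument fails.
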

\begin{proof}\discuss{Suggestions?}
Since their conjugates form a left basis, it is clear that these monomials form a right basis.
It suffices to prove that elements of the form $\tau^p \overline{m}$ (where $\overline m$ is one of the monomials) form a $k_*^M$-basis.
By Lemma \ref{lemm:right-unit}, elements of the form $(\tau+\rho \tau_0)^p \overline{m}$ form a $k_*^M$-basis, where $\overline{m}$ is one of the monomials in the claim.
Order the monomials in the $\overline{\tau}_i, \overline{\xi}_i$ and $\tau$ lexicographically, with $\tau < \overline{\tau}_i < \overline{\xi}_i < \overline{\tau}_{i+1}$.
The relation from \eqref{eq:motivic-dual-steenrod} ensures that $\tau_i^2$ is a sum of monomials $>\tau_i$; this implies that if $\tau^p \overline{m}$ is any such monomial, $\tau_0 \tau^p \overline{m}$ is a sum of larger monomials.
Since $\overline{\tau_0} = \tau_0$ (see again \cite[Theorem 5.6]{hoyois2013motivic}), it follows that the matrix expressing the elements $\{(\tau+\rho \tau_0)^p \overline{m}\}_{p,\overline{m}}$ in terms of the basis $\{\tau^p \overline{m}\}_{p,\overline{m}}$ is triangular (with unit diagonal).
The result follows.\NB{better argument? E.g. can use exotic grading which enables graded Nakayama...}
\end{proof}

\subsubsection{}
The Kronecker pairing induces an isomorphism $\HZ/2^{**}\HZ/2 \wequi (\HZ/2_{**}\HZ/2)^\dual$ (see e.g. \cite[Proposition 5.5]{hoyois-algebraic-cobordism}).
By passing to the dual of the monomial basis of $\HZ/2_{**}\HZ/2$, any monomial $m \in \HZ/2_{p,q}\HZ/2$ defines a \emph{dual} $\widehat m \in \HZ/2^{p,q} \HZ/2$.
\begin{warning}
We can extend this map $\HZ/2_{**}$-linearly, but the extension is not compatible with gradings: if $a \in \HZ/2_{r,s}$ then $\widehat{am} \in \HZ/2^{p-r,q-s} \HZ/2$ whereas $am \in \HZ/2_{p+r,q+s}\HZ/2$.
\end{warning}
\begin{example} \label{ex:steenrod-explicit}
Since $\HZ/2^{p,q}$ is concentrated in $q>0$, except for $\HZ/2^{0,0} = \Z/2$, we find that $\HZ/2^{*,0}\HZ/2$ is generated as an $\F_2$-vector space by $1$ and $\widehat\tau_0$.
In other words \[ \HZ/2^{0,0}\HZ/2 = \F_2, \quad \HZ/2^{1,0} = \F_2\{\widehat{\tau_0}\} \quad\text{and}\quad \HZ/2^{p,0}\HZ/2 = 0 \text{ else}. \]
Similarly $\HZ/2^{*,1}\HZ/2$ is generated by $\tau, k_1^M, \widehat{\xi_1}, \widehat{\tau_1}, \tau\widehat{\tau_0}, k_1^M\widehat{\tau_0}, \widehat{\xi_1\tau_0}, \widehat{\tau_1\tau_0}$, so that \begin{gather*} \HZ/2^{0,1}\HZ/2 = \F_2\{\tau\}, \HZ/2^{1,1}\HZ/2 = k_1^M \oplus \F_2\{\tau\widehat{\tau_0}\}, \HZ/2^{2,1}\HZ/2 = \F_2\{\widehat{\xi_1}\} \oplus k_1^M\{\widehat{\tau_0}\}, \\ \HZ/2^{3,1}\HZ/2 = \F_2\{\widehat{\tau_1}\} \oplus \F_2\{\widehat{\tau_0\xi_1}\}, \HZ/2^{4,1}\HZ/2 = \F_2\{\widehat{\tau_0\tau_1}\}, \text{ and } \HZ/2^{p,1}\HZ/2 = 0 \text{ else}. \end{gather*}
\end{example}

If $\alpha \in \HZ/2^{**}\HZ/2$ is any endomorphism, then we denote by \[ \alpha^L = \alpha \wedge \id, \alpha^R = \id \wedge \alpha : \HZ/2 \wedge \HZ/2 \to \HZ/2 \wedge \HZ/2 \] the induced maps.
We obtain \[ \alpha^L_*, \alpha^R_*: \HZ/2_{**}\HZ/2 \to \HZ/2_{**}\HZ/2. \]
\begin{lemma} \label{lemm:action-formulas}
\begin{enumerate}
\item For any $\alpha \in \HZ/2^{**}\HZ/2$, the map $\alpha^L_*$ is right $\HZ/2_{**}$-linear, and $\alpha^R_*$ is left $\HZ/2_{**}$-linear
\item We have the formulas
\begin{align*}
\begin{split}
\widehat{\tau_0}^L_*(\tau_0) &= 1 \\
\widehat{\tau_0}^L_*(\tau_i) &= 0, i > 0 \\
\widehat{\tau_0}^L_*(\xi_i) &= 0, i \ge 0 \\
\end{split}
\quad
\begin{split}
\widehat{\tau_1}^L_*(\tau_1) &= 1 \\
\widehat{\tau_1}^L_*(\tau_i) &= 0, i \ne 1 \\
\widehat{\tau_1}^L_*(\xi_i) &= 0, i \ge 0 \\
\end{split}
\quad
\begin{split}
\widehat{\xi_1}^L_*(\tau_1) &= \tau_0 \\
\widehat{\xi_1}^L_*(\tau_i) &= 0, i \ne 1 \\
\widehat{\xi_1}^L_*(\xi_1) &= 1 \\
\widehat{\xi_1}^L_*(\xi_i) &= 0, i \ne 1 \\
\end{split}
\quad
\begin{split}
\widehat{\tau_0}^R_*(\tau_i) &= \xi_i, i \ge 0 \\
\widehat{\tau_0}^R_*(\xi_i) &= 0, i \ge 1, \text{ and} \\
\widehat{\xi_1}^R_*(\tau_i) &= 0, i \ge 0 \\
\widehat{\xi_1}^R_*(\xi_i) &= \xi_{i-1}^2, i \ge 0.
\end{split}
\end{align*}
  (Here in the last formula $\xi_{-1} := 0$.)

\item $\widehat{\tau_0}^L_*$ is a derivation (i.e. satisfies $\widehat{\tau_0}^L_*(ab) = \widehat{\tau_0}^L_*(a)b + a\widehat{\tau_0}^L_*(b)$), $\widehat{\tau_1}^L_*$ is a derivation on the right $\HZ/2_{**}$-subalgebra on all the generators except $\tau_0$, and $\widehat{\xi_1}^L_*$ is a derivation on the right $\HZ/2_{**}$-subalgebra on all the generators except $\tau_0, \tau_1$.
\end{enumerate}
\end{lemma}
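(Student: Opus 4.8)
The plan is to prove the three parts of Lemma~\ref{lemm:action-formulas} by reducing everything to dualizing the known structure of the motivic Steenrod algebra described in \eqref{eq:motivic-dual-steenrod}, together with the action formulas in Lemma~\ref{lemm:right-unit}. The underlying principle is that $\alpha^L$ and $\alpha^R$ are, on homology, the maps adjoint to left- respectively right-multiplication by $\alpha$ in the (topological) dual, so their effect on a homology class is read off from the comultiplication on $\HZ/2_{**}\HZ/2$.

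\medskip

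\emph{Part (1).} First I would recall that $\HZ/2 \wedge \HZ/2$ is a bimodule over $\HZ/2$, with the left module structure coming from the left smash factor and the right structure from the right factor. The map $\alpha^L = \alpha \wedge \id$ is by construction a map of \emph{right} $\HZ/2$-modules (it only touches the left factor), and dually $\alpha^R = \id \wedge \alpha$ is a map of \emph{left} $\HZ/2$-modules. Passing to homotopy groups, $\alpha^L_*$ is right $\HZ/2_{**}$-linear and $\alpha^R_*$ is left $\HZ/2_{**}$-linear. This is essentially a formal consequence of how the bimodule structure on $\HZ/2 \wedge \HZ/2$ is defined, so it should be quick.

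\medskip

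\emph{Part (2).} This is the computational heart. For the $\widehat{\tau_0}^L_*$, $\widehat{\tau_1}^L_*$, $\widehat{\xi_1}^L_*$ formulas one uses that $\alpha^L_*$ is the $\HZ/2_{**}$-linear dual of left multiplication by $\alpha$ on the Steenrod algebra, equivalently it is computed from the coproduct $\psi$ on the dual: $\langle \widehat{m}, \alpha^L_* x\rangle_{\text{left}}$ pairs $\widehat m \otimes \alpha$ against $\psi(x)$. Concretely, $\widehat{m}^L_*(x)$ is the coefficient of $m$ (in the left-module monomial basis of Corollary~\ref{cor:right-gens}, or rather its classical analogue) appearing in the left tensor factor of $\psi(x)$. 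So I would write down $\psi(\tau_i)$ and $\psi(\xi_i)$ — these are the standard formulas $\psi(\xi_i) = \sum_{j} \xi_{i-j}^{2^j} \otimes \xi_j$ and $\psi(\tau_i) = \tau_i \otimes 1 + \sum_j \xi_{i-j}^{2^j} \otimes \tau_j$, valid verbatim in the motivic setting by \cite[Theorem 5.6]{hoyois2013motivic} — and extract the required coefficients. For instance $\psi(\tau_1) = \tau_1 \otimes 1 + \xi_1 \otimes \tau_0 + 1 \otimes \tau_1$ immediately gives $\widehat{\tau_1}^L_*(\tau_1) = 1$ (coefficient of $\tau_1$ in the left slot, paired with the unit) and, pairing the $\xi_1\otimes\tau_0$ term, also gives $\widehat{\xi_1}^L_*(\tau_1) = \tau_0$, $\widehat{\tau_0}^L_*(\tau_1) = 0$, etc. The $\widehat{\tau_0}^R_*$ and $\widehat{\xi_1}^R_*$ formulas are the mirror image: $\widehat{m}^R_*(x)$ is the coefficient of $m$ in the \emph{right} tensor factor of $\psi(x)$. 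Thus $\widehat{\tau_0}^R_*(\tau_i) = \xi_i$ (coefficient of $\tau_0$ in the right slot of $\psi(\tau_i) = \sum_j \xi_{i-j}^{2^j}\otimes \tau_j$, namely the $j=0$ term $\xi_i \otimes \tau_0$), and $\widehat{\xi_1}^R_*(\xi_i) = \xi_{i-1}^2$ (the $j=1$ term $\xi_{i-1}^{2}\otimes\xi_1$ in $\psi(\xi_i)$). One subtlety: because the relation $\tau_i^2 = (\tau + \rho\tau_0)\xi_{i+1} + \rho\tau_{i+1}$ involves $\tau$ and $\rho$, the monomial basis is the one of Corollary~\ref{cor:right-gens}, and I must make sure the pairing is taken with respect to that basis; but since all the classes in question ($\tau_i$, $\xi_i$) are themselves basis elements of low degree, no reduction is needed and the subtlety does not bite.

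\medskip

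\emph{Part (3).} The derivation statements follow from part (2) together with comultiplicativity. The general principle is: if $\alpha$ is \emph{primitive} in the Steenrod algebra (i.e. $\psi \alpha = \alpha \otimes 1 + 1 \otimes \alpha$), then $\alpha^L_*$ is a derivation with respect to the algebra structure on $\HZ/2_{**}\HZ/2$ for which $\alpha^L_*$ respects the relevant module structure. Here $\widehat{\tau_0}$ is dual to $\Sq^1$, which is primitive, so $\widehat{\tau_0}^L_*$ is an honest derivation on all of $\HZ/2_{**}\HZ/2$. For $\widehat{\tau_1}$ and $\widehat{\xi_1}$ the corresponding Steenrod operations are not primitive, but the failure of primitivity is supported on terms involving $\tau_0$ (and for $\widehat{\tau_1}$ also $\tau_1$), which is exactly why one must restrict to the subalgebra generated by the remaining generators; there the coproduct of $\widehat{\tau_1}$ resp.\ $\widehat{\xi_1}$ becomes primitive modulo the ignored generators, and the Leibniz rule holds. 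Concretely I would verify this by checking the derivation identity on a set of algebra generators (the $\tau_i$, $\xi_j$) using the explicit formulas from part (2) and the relation \eqref{eq:motivic-dual-steenrod}, and then extending multiplicatively.

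\medskip

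The main obstacle I anticipate is bookkeeping in part (2): getting the left-versus-right tensor-factor conventions straight, keeping track of which monomial basis the pairing uses (in particular the role of the $\tau_i^2$-relation and the antipode $\overline{(\ph)}$ when monomials of higher degree enter), and confirming that the motivic coproduct formulas are literally the classical ones with no $\tau$- or $\rho$-correction in the degrees at hand. Once the conventions are pinned down, each individual formula is a one-line extraction of a coefficient, and part (3) is a short formal consequence.
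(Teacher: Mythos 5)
Your proposal takes essentially the same route as the paper: for (1) the observation that $\alpha^L$ only touches the left smash factor; for (2) partial dualization, reading $\widehat m^L_*(x)$ off the coproduct $\Delta(x)$ as a coefficient extraction (the paper cites Boardman's eightfold way for the formula where you derive it directly, but the computations are identical); and for (3), your primitivity framing is a clean repackaging of the paper's combinatorial argument, since the Leibniz rule for $\widehat g^L_*$ dualizes exactly to $\lra{ab,\widehat g} = \lra{a,\widehat g}\epsilon(b) + \epsilon(a)\lra{b,\widehat g}$, i.e.\ to primitivity of $\widehat g$ on the relevant subalgebra. Two corrections are needed. The parenthetical ``(and for $\widehat{\tau_1}$ also $\tau_1$)'' should read $\widehat{\xi_1}$: it is $\widehat{\xi_1}^L_*$ that the lemma only claims to be a derivation after excluding both $\tau_0$ and $\tau_1$. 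And the closing sentence --- ``check the derivation identity on a set of algebra generators \dots\ and then extend multiplicatively'' --- is not a valid strategy as stated: the Leibniz rule is a condition on products, not on generators, so there is nothing to ``check on generators.'' The substantive verification, which the paper does carry out, is that when $a,b$ lie in the stated subalgebra, the expansion of $\tau_i^2 = (\tau+\rho\tau_0)\xi_{i+1}+\rho\tau_{i+1}$ cannot produce $g$ on the left tensor factor of $\Delta(ab)=\Delta(a)\Delta(b)$; your primitivity paragraph already contains this idea, so the final ``concretely'' sentence should either be dropped or replaced by that product-expansion check.
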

\begin{proof}
(1) Immediate from the definitions.

(2) For $E \in \SH(k)$ we have an action \[ \HZ/2^{**}\HZ/2 \otimes_{\HZ/2_{**}} \HZ/2_{**}E \to \HZ/2_{**} E, \alpha \otimes e \mapsto \alpha_*(e). \]
By the eightfold way \cite[p. 190]{boardman1982eightfold}, this is obtained from the coaction \[ \Delta: \HZ/2_{**} E \to \HZ/2_{**}\HZ/2 \otimes_{\HZ/2_{**}} \HZ/2_{**} E \] by partial dualization: if $\Delta(e) = \sum_i a_i \otimes e_i$ then $\alpha_*(e) = \sum_i \lra{a_i, \alpha} e_i$.\NB{We are ignoring some conjugations here, but should be ok b/c char. 2...}
Applying this with $E = \HZ/2$, it follows that for $\alpha \in \HZ/2^{**}\HZ/2$ and $x \in \HZ/2_{**}\HZ/2$ with $\Delta(x) = \sum x_i \otimes y_i$, we have \[ \alpha^L_*(x) = \sum_i \lra{x_i, \alpha} y_i \text{, and similarly } \alpha^R_*(x) = \sum_i \lra{y_i, \alpha} x_i. \]
The formulas now follow from the formulas for the comultiplication in $\HZ/2_{**}\HZ/2$ \cite[Theorem 5.6]{hoyois2013motivic}.

(3) \NB{Slicker proof/reference? Details?}\discuss{Suggestions?}
By (1), it suffices to show the claims about the $\Z/2$-subalgebras.
Let $g$ be one of the monomial generators of $\HZ/2_{**}\HZ/2$ and $a, b \in \HZ/2_{**}\HZ/2$.
Suppose we can write \[ \Delta(a) = 1 \otimes a + g \otimes a' + \sum a_i \otimes a_i', \quad \Delta(b) = 1 \otimes b + g \otimes b' + \sum b_i \otimes b_i', \] where the $a_i, b_i$ are monomials.
Suppose further that when expanding $a_i b_j$ into monomials, $g$ does not appear.
Then $\widehat{g}^L_*(a) = a', \widehat{g}^L_*(b) = b'$ and \[ \Delta(ab) = g \otimes (ab' + a'b) + \dots, \] where $g$ does not appear on the left in the omitted terms.
It follows that $\widehat{g}^L_*(ab) = ab' + a'b$, which is what we wanted.
$\Delta(a), \Delta(b)$ can always be written in the desired form, the only problem may occur when expanding the product.
As long as $g$ itself is not a product, the only way the assumption can fail is from an ``unexpected'' contribution, i.e. coming from $\tau_i^2 = (\tau + \rho\tau_0) \xi_{i+1} + \rho \tau_{i+1}$.
It thus suffices to ensure that $\Delta(a), \Delta(b)$ do not contain $\tau_i$ on the left hand side, for $0 \le i < N$ for certain $N$ depending on $g$.
This will happen if $\tau_i$ for $i<N$ are excluded as generators (considering again the explicit formulas for the comultiplication).
One checks that for $g = \tau_0, \tau_1, \xi_1$, respectively $N=0,1,2$ work.\NB{Doesn't $N=1$ work for $g=\xi_1$ as well?}
The result follows.
\end{proof}

\subsection{Spectra employed in the proof} \label{subsec:spectra}
\subsubsection{}
We write $\ko = \tilde f_0 \KO$ for the spectrum of \emph{very effective hermitian $K$-theory}.
We also put $\kgl = f_0 \KGL \wequi \tilde f_0 \KGL$.
Note that we have canonical ring maps $\ko \to \kgl \to s_0(\KGL) \wequi \HZ$.
\begin{proposition} \label{prop:HZ2-ko}
The canonical map $\HZ/2_{**} \ko \to \HZ/2_{**}\HZ/2$ is injective and hits the elements $\overline{\xi}_1^2, \overline{\xi}_2, \dots, \overline{\tau}_2, \overline{\tau}_3, \dots$.
The resulting map \[ \HZ/2 \wedge \ko \leftarrow \HZ/2[\overline{\xi}_1^2, \overline{\xi}_2, \dots, \overline{\tau}_2, \overline{\tau}_3, \dots]/(\dots) \hookrightarrow \HZ/2 \wedge \HZ/2 \] is an algebra isomorphism.

Similarly $\HZ/2_{**} \kgl \to \HZ/2_{**}\HZ/2$ is injective and \[ \HZ/2 \wedge \kgl \wequi \HZ/2[\overline{\xi}_1, \overline{\xi}_2, \dots, \overline{\tau}_2, \overline{\tau}_3, \dots]/(\dots) \hookrightarrow \HZ/2 \wedge \HZ/2. \]
Also \[ \HZ/2 \wedge \HZ \wequi \HZ/2[\overline{\xi}_1, \overline{\xi}_2, \dots, \overline{\tau}_1, \overline{\tau}_2, \overline{\tau}_3, \dots]/(\dots) \hookrightarrow \HZ/2 \wedge \HZ/2. \]
\end{proposition}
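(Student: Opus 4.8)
The plan is to prove Proposition~\ref{prop:HZ2-ko} by dualizing known module descriptions of the motivic cohomology of $\ko$, $\kgl$ and $\HZ$ over the motivic Steenrod algebra and reading off the image inside the dual Steenrod algebra $\HZ/2_{**}\HZ/2$ from \eqref{eq:motivic-dual-steenrod}. Write $\mathcal A^{**}=\HZ/2^{**}\HZ/2$. The inputs I would assemble first are: $\HZ/2^{**}\ko$ is the quotient of $\mathcal A^{**}$ by the left ideal generated by $\Sq^1$ and $\Sq^2$ (i.e. $\mathcal A^{**}//\mathcal A(1)$), as computed in \cite{ananyevskiy2017very}; $\HZ/2^{**}\HZ\cong\mathcal A^{**}/\mathcal A^{**}\Sq^1$, which drops out of smashing the cofiber sequence $\HZ\xrightarrow{2}\HZ\to\HZ/2$ with $\HZ/2$ (multiplication by $2$ is null, so the triangle splits and identifies the cohomology); and $\HZ/2^{**}\kgl\cong\mathcal A^{**}/(\mathcal A^{**}\Sq^1+\mathcal A^{**}Q_1)$, obtained the same way from the first slice cofiber sequence of $\KGL$, or directly from \cite{ananyevskiy2017very}. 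Since $\ko$, $\kgl$ and $\HZ$ are cellular and of finite type over $\HZ/2_{**}$, the Kronecker pairing is perfect (Lemma~\ref{lemm:perfect-pairing}, Corollary~\ref{cor:duality}), so in each case the ring map $X\to\HZ/2$ identifies $\HZ/2_{**}X$ with the $\HZ/2_{**}$-submodule of $\HZ/2_{**}\HZ/2$ annihilating the corresponding left ideal of $\mathcal A^{**}$.

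Using $\langle ba,x\rangle=\langle b,a^R_*(x)\rangle$, annihilating $\mathcal A^{**}\Sq^1$ means lying in $\ker\widehat{\tau_0}^R_*$ (as $\Sq^1=\widehat{\tau_0}$ by Example~\ref{ex:steenrod-explicit}); for $\kgl$ one must additionally lie in $\ker\widehat{\tau_1}^R_*$ (the dual of $Q_1$); and for $\ko$ one intersects $\ker\widehat{\tau_0}^R_*$ with the kernel of the $R$-operator dual to $\Sq^2$. I would then compute these kernels in the conjugate coordinates, which are the natural ones: by Lemma~\ref{lemm:right-unit} the relations \eqref{eq:motivic-dual-steenrod} become $\overline{\tau_i}^2=\tau\,\overline{\xi_{i+1}}+\rho\,\overline{\tau_{i+1}}$, so
\[ B^{\HZ}_*:=\HZ/2_{**}[\overline{\xi_1},\overline{\xi_2},\dots,\overline{\tau_1},\overline{\tau_2},\dots]/(\overline{\tau_i}^2-\tau\overline{\xi_{i+1}}-\rho\overline{\tau_{i+1}}:i\ge1) \]
is genuinely closed under multiplication inside $\HZ/2_{**}\HZ/2$, as is its subalgebra $B^{\kgl}_*$ obtained by dropping $\overline{\tau_1}$, and its subalgebra $B^{\ko}_*$ obtained by additionally replacing $\overline{\xi_1}$ by $\overline{\xi_1}^2$. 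By the conjugate monomial basis of Corollary~\ref{cor:right-gens}, these three subalgebras are free $\HZ/2_{**}$-module summands of $\HZ/2_{**}\HZ/2$ with complementary bases $\{1,\overline{\tau_0}\}$, $\{1,\overline{\tau_0},\overline{\tau_1},\overline{\tau_0\tau_1}\}$, and the latter four together with $\overline{\xi_1}$, i.e. of coranks $2$, $4$, $8$. One checks, using the explicit formulas of Lemma~\ref{lemm:action-formulas} (supplemented by short computations of $\widehat{\tau_1}^R_*$ and of the $\Sq^2$-operator on conjugate monomials), that the listed generators lie in the respective annihilator kernels, giving inclusions $B^{\HZ}_*\subseteq\HZ/2_{**}\HZ$, $B^{\kgl}_*\subseteq\HZ/2_{**}\kgl$, $B^{\ko}_*\subseteq\HZ/2_{**}\ko$; since the coranks of $\mathcal A^{**}//\mathcal A(0)$, $\mathcal A^{**}//E(Q_0,Q_1)$, $\mathcal A^{**}//\mathcal A(1)$ inside $\mathcal A^{**}$ are also $2$, $4$, $8$, these inclusions are forced to be equalities. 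Multiplicativity of the resulting isomorphisms is automatic because $\HZ/2\wedge X\to\HZ/2\wedge\HZ/2$ is a ring map and the $B^?_*$ are rings.

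The delicate part is this last computation: passing to the conjugate generators, keeping track of the $\rho$- and $\tau$-twisted square relations while checking subalgebra closure, correctly identifying which $\alpha^R_*$ operators correspond to $\Sq^1$, $Q_1$, $\Sq^2$ and evaluating them on conjugate monomials (Lemma~\ref{lemm:action-formulas} records the $L$-operators and only $\widehat{\tau_0}^R_*$, $\widehat{\xi_1}^R_*$ explicitly, so a handful of auxiliary identities are needed), and then matching the corank count on the two sides so that the a~priori inclusions become equalities. A route that avoids some of this for $\kgl$ and $\HZ$ is to argue purely through the split cofiber sequences $\Sigma^{1,1}\ko\xrightarrow{\eta}\ko\to\kgl$ and $\Sigma^{2,1}\kgl\xrightarrow{v}\kgl\to\HZ$ (the Wood and first-slice sequences): $\eta$ and $v$ act by zero on $\HZ/2$-homology for degree reasons (there is nothing available in bidegree $(1,1)$ or $(2,1)$), so the sequences split and present $\HZ/2_{**}\kgl$ and $\HZ/2_{**}\HZ$ as rank-$2$ free extensions of $\HZ/2_{**}\ko$ and $\HZ/2_{**}\kgl$; one then only has to locate the extra polynomial generators $\overline{\xi_1}$ (bidegree $(2,1)$) and $\overline{\tau_1}$ (bidegree $(3,1)$) inside $\HZ/2_{**}\HZ/2$, which is a small explicit check.
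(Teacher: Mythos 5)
Your first route — dualizing the known $\HZ/2^{**}$-module presentations $\mathcal A^{**}//\mathcal A(1)$, $\mathcal A^{**}//E(Q_0,Q_1)$, $\mathcal A^{**}//\mathcal A(0)$ through the perfect Kronecker pairing, and matching corank on both sides — is a genuinely different organization from what the paper does. The paper works entirely on the homology side, inductively climbing the cofiber sequences $\HZ\xrightarrow{2}\HZ\to\HZ/2$, then $\Sigma^{2,1}\kgl\xrightarrow{\beta_\KGL}\kgl\to\HZ$, then $\Sigma^{1,1}\ko\xrightarrow{\eta}\ko\to\kgl$, identifying the resulting Bockstein $\delta^R_*$ with $\widehat{\tau_0}$, $\widehat{\tau_1}$, $\widehat{\xi_1}$ respectively, and using the conjugate monomial basis (Corollary \ref{cor:right-gens}) to split off the kernel. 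Your approach trades the inductive Bockstein analysis for a dualization-plus-corank argument; it buys you a single uniform picture, but at the price of having to verify (i) the module rank of $\mathcal A(1)\subset\mathcal A^{**}$ over $\HZ/2_{**}$ motivically, and (ii) additional $R$-operator formulas (for $\widehat{\tau_1}$ and for the operator dual to $\Sq^2$) that Lemma \ref{lemm:action-formulas} does not record. You correctly flag that these are the delicate points; I would not call this a gap, only unfinished bookkeeping.

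Your proposed shortcut for $\kgl$ and $\HZ$, however, does contain a real error. You claim $\beta_\KGL$ (your $v$) ``acts by zero on $\HZ/2$-homology for degree reasons (there is nothing available in bidegree $(2,1)$).'' That is false: under the very isomorphism being proved, $\pi_{2,1}(\HZ/2\wedge\kgl)\cong\F_2\{\overline{\xi_1}\}\neq 0$, so the group that $\beta_\KGL$ lands in is not zero and no degree count forces vanishing. The point is that $\beta_\KGL\not\in\pi_{**}(\1)$, so, unlike the case of $\eta$, one cannot simply factor its action on an $\HZ/2$-module through $\pi_{**}(\HZ/2)$. The paper addresses exactly this subtlety: it argues that over $\pi_{**}(\HZ/2\wedge\kgl)$ the formal group laws $x+y+\beta_\KGL xy$ and $x+y$ are isomorphic, hence the former has infinite height, hence $0=[2]_{\kgl}(x)=\beta_\KGL x^2$ and so $\beta_\KGL=0$ in $\pi_{**}(\HZ/2\wedge\kgl)$. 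Any completion of your argument must supply a substitute for this step; the degree argument you invoke does not work.

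Finally, note that the cohomology identifications you take as input (e.g.\ $\HZ/2^{**}\ko\cong\mathcal A^{**}//\mathcal A(1)$ from \cite{ananyevskiy2017very}) are themselves typically proved by the same kind of cofiber-sequence induction the paper carries out, so your first route is less of a shortcut than it may appear, although it is logically sound as long as you are willing to cite those computations as black boxes.
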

\begin{proof}
Once we have produced the maps in question, to show they are equivalences it suffices to show isomorphisms on $\pi_{**}$ over any field extension.
Since all the maps and spectra are stable under base change, we thus need only prove the assertions on the level of $\pi_{**}$.

This is essentially contained in \cite{ananyevskiy2017very}.
We first explain the case of $\HZ$.
We have the cofiber sequence \[ \HZ \xrightarrow{2} \HZ \xrightarrow{p} \HZ/2 \xrightarrow{\partial} \Sigma \HZ. \]
The map $\id_{\HZ/2} \wedge 2_{\HZ}$ is given by $2=0$ (since $2 \in \pi_{**}(\1)$), hence after smashing with $\HZ/2$ the cofiber sequence splits and we get \[ \HZ/2_{**}\HZ/2 \wequi \HZ/2_{**}\HZ \oplus C. \]
Putting $\delta = p \circ \partial$, we find that $\HZ/2_{**}\HZ = \ker(\delta_*^R)$.
One knows that $\delta = \widehat{\tau_0}$ corresponds to the Bockstein $\Sq^1$.
Using Lemma \ref{lemm:action-formulas} we find that $\delta_*^L$ vanishes on the right $\HZ/2_{**}$-algebra $M$ generated by $\tau_1, \tau_2, \dots, \xi_1, \xi_2, \dots$ and does not vanish in $\tau_0 M$.
Dualizing, we find that $\delta_*^R$ vanishes on $\overline{M}$ and does not vanish on $\overline{\tau}_0 \overline{M}$ (these are now left $\HZ/2_{**}$-modules).
By Lemma \ref{cor:right-gens} we have $\HZ/2_{**}\HZ/2 = \overline{M} \oplus \overline{\tau}_0 \overline{M}$.
It follows that $\ker(\delta_*^R) = \overline{M}$, as desired.

The argument $\kgl$ is essentially the same, using $\Sigma^{2,1}\kgl \xrightarrow{\beta_\KGL} \kgl \to \HZ$.
It is not immediately apparent that $\beta_\KGL: \HZ/2 \wedge \kgl \to \HZ/2 \wedge \kgl$ is the zero map, since $\beta_{\KGL} \not\in \pi_{**}(\1)$.
We can argue as follows.
Over the ring  $\pi_{**}(\HZ/2 \wedge \kgl)$ the formal group laws $x + y + \beta_\KGL xy$ and $x + y$ are isomorphic; in particular the former must have infinite height \cite[Lemma A2.2.9]{ravenel1986complex}.
Thus $0 = [2]_{\kgl}(x) = \beta_\KGL x^2$, whence $\beta_\KGL$ is zero in $\pi_{**}(\HZ/2 \wedge \kgl)$.
Continuing with the above argument, this time it turns out that $\delta = \widehat{\tau_1}$ \cite[Lemma 2.9]{ananyevskiy2017very}; the rest of the argument goes through as before.

For $\ko$ the same argument works, using $\Sigma^{1,1} \ko \xrightarrow{\eta} \ko \to \kgl$.
Since $\eta \in \pi_{**}(\1)$ it is immediate that it acts by $0$ on $\HZ/2 \wedge \ko$.
The boundary map $\delta$ turns out to be $\widehat{\xi_1}$ \cite[Lemma 2.12]{ananyevskiy2017very} \cite[Lemma 13.1]{voevodsky2003reduced}.
The rest goes through as before.
\NB{details?}
\end{proof}

\subsubsection{} \label{subsec:KW}
We denote by $\KW = \KO[\eta^{-1}]$ the \emph{Witt theory spectrum}.
As the name suggest, it represents Balmer--Witt theory \cite{hornbostel2005a1}.
We put $\kw = \KW_{\ge 0}$.
The image of the Bott element $\beta \in \pi_{8,4} \KO$ yields an element $\beta \in \pi_4 \KW = \pi_4 \kw$ and we have \cite[Theorem 1.5.22]{balmer2005witt} \[ \ul{\pi}_*(\KW) \wequi \ul{W}[\beta^{\pm}] \quad\text{and}\quad \ul{\pi}_*(\kw) \wequi \ul{W}[\beta]. \]
\begin{lemma} \label{lemm:ko-kw}
The canonical map $\ko \to \KO \to \KW$ induces an equivalence $\ko[\eta^{-1}] \wequi \kw$.
\end{lemma}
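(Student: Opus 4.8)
The statement to prove is that $\ko \to \KO \to \KW$ induces an equivalence $\ko[\eta^{-1}] \wequi \kw$, where $\ko = \tilde f_0 \KO$ and $\kw = \KW_{\ge 0} = \KO[\eta^{-1}]_{\ge 0}$. The natural strategy is to factor the claim through the defining universal property of $\kw$ as a connective cover and to identify $\ko[\eta^{-1}]$ with a connective cover as well. First I would observe that since inverting $\eta$ is a smashing localization, $\ko[\eta^{-1}] \to \KO[\eta^{-1}] = \KW$ is the $\eta$-periodization of $\ko \to \KO$, and there is a canonical comparison $\ko[\eta^{-1}] \to (\KW)_{\ge 0} = \kw$ coming from the fact that $\ko \in \SH(k)_{\ge 0}$ (indeed $\ko = \tilde f_0 \KO \in \SH(k)^{\veff} \subset \SH(k)_{\ge 0}$, and $\SH(k)_{\ge 0}$ is closed under $\eta$-periodization since $\eta$-periodization is a filtered colimit of shifts by $\Gmp{-1} = S^{-1,-1}$ applied to connective spectra — wait, that shifts below zero, so this needs the genuine argument). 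So the cleaner route: $\ko[\eta^{-1}]$ need not be connective a priori, and one must show $(\ko[\eta^{-1}])_{\ge 0} \wequi \kw$ together with connectivity of $\ko[\eta^{-1}]$ itself, or directly that the map $\ko[\eta^{-1}] \to \kw$ is an equivalence.

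The key computational input is the homotopy sheaves. By construction $\ul{\pi}_i(\KO)_j$ are the (shifted, twisted) Hermitian $K$-theory sheaves, and one knows $\ul{\pi}_0(\KO)_* $ and the effective truncation behavior from \cite[Table 1]{bachmann-very-effective} and the computation that $\ko = \tilde f_0 \KO$ has $\ul{\pi}_i(\ko)_0 = 0$ for $i < 0$. The plan is to compute $\ul{\pi}_*(\ko[\eta^{-1}])$ directly: since $\eta$-periodization commutes with taking homotopy sheaves (it is a filtered colimit), $\ul{\pi}_n(\ko[\eta^{-1}]) = \colim(\ul{\pi}_n(\ko)_0 \xrightarrow{\eta} \ul{\pi}_{n+1}(\ko)_1 \to \cdots)$ suitably indexed — more precisely $\ul{\pi}_n(\ko[\eta^{-1}]) = \colim_m \ul{\pi}_{n}(\Sigma^{-m,-m}\ko)$ where the transition maps are $\eta$. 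Using the known values of $\ul{\pi}_i(\KO)_j$ (which agree with those of $\ko$ in the relevant nonnegative range and differ only in negative effective degrees), and the fact that multiplication by $\eta: \ul{\pi}_i(\KO)_j \to \ul{\pi}_{i+1}(\KO)_{j+1}$ stabilizes, one identifies $\ul{\pi}_n(\ko[\eta^{-1}])$ with $\ul{W}$ if $n \ge 0$ and $4 | n$, and $0$ otherwise — matching $\ul{\pi}_*(\kw) = \ul{W}[\beta]$ exactly. The comparison map $\ko[\eta^{-1}] \to \kw$ is then checked to realize these isomorphisms, e.g. because it is compatible with the $\beta$-action and is an isomorphism on $\ul{\pi}_0$ (both give $\ul{W}$, the rank-zero Witt sheaf, via the canonical map), hence an isomorphism on all $\ul{\pi}_{4i}$ by $\beta$-linearity; and in negative degrees and non-multiples of $4$ both sides vanish.

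An alternative, possibly slicker, packaging: show $\ko[\eta^{-1}]$ is connective, so that the universal property of $\kw = \KW_{\ge 0}$ gives a factorization $\ko[\eta^{-1}] \to \kw \to \KW$, and then show this first map is an equivalence by checking it induces an isomorphism on $\ul{\pi}_i$ for $i \ge 0$ (both map isomorphically onto the effective part, using that $\ko \to \KO$ is an iso on $\ul{\pi}_i(\ph)_j$ for $i \ge $ something and that inverting $\eta$ only sees the stable value). Connectivity of $\ko[\eta^{-1}]$ follows because $\ko \in \SH(k)_{\ge 0}$ and inverting $\eta$ preserves connectivity: $\eta: S^{1,1} \to \1$ has $S^{1,1} = \Sigma \Gm \in \SH(k)_{\ge 0}$, wait — $\Gm \in \SH(k)_{\ge 0}$ in the homotopy $t$-structure (it is $\Gmp{1}$ which is a generator), so $S^{1,1} \wedge \ko \in \SH(k)_{\ge 1} \subset \SH(k)_{\ge 0}$, hence each term in the colimit $\ko \to S^{-1,-1}\wedge \ko \to \cdots$ — hmm, $S^{-1,-1}$ is NOT connective. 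So this needs care: the colimit defining $\ko[\eta^{-1}]$ has non-connective terms, but the colimit can still be connective. The honest argument is the homotopy sheaf computation above.

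\textbf{Main obstacle.} The real work is the homotopy sheaf bookkeeping: one must know precisely $\ul{\pi}_i(\KO)_j$ for all $i, j$ (the $4$-fold Bott-periodic pattern of Grothendieck--Witt, Witt, $\Z$, and $0$ sheaves, cf. \cite[Table 1]{bachmann-very-effective} and \cite{schlichting2016hermitian}), the effect of multiplication by $\eta$ on these (which is an isomorphism onto the Witt part and kills the hyperbolic/$\Z$ summands), and the effect of the very effective truncation $\tilde f_0$. Granting these standard facts, the identification $\ul{\pi}_*(\ko[\eta^{-1}]) \wequi \ul{W}[\beta] \wequi \ul{\pi}_*(\kw)$ and the check that the natural map realizes it is routine. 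I expect the proof in the paper to simply cite the known homotopy sheaves and dispatch this in a few lines.
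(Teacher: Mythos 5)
Your plan — compute $\ul{\pi}_*(\ko[\eta^{-1}])$ as the colimit along $\eta$-multiplication of the sheaves $\ul{\pi}_*(\ko)_{-n}$ and match against $\ul{\pi}_*(\kw)\wequi\ul{W}[\beta]$ — is exactly what the paper does, though the paper dispatches it more economically: it cites only that $\ul{\pi}_*(\ko)_{-n}\wequi\ul{\pi}_*(\KO)_{-n}$ for $*,n\ge 0$ and $\ul{\pi}_*(\ko)_{-n}=0$ for $*<0$, rather than walking through the full $4$-fold-periodic table and the $\beta$-linearity check. Your detours about connectivity of $\ko[\eta^{-1}]$ (which you correctly note cannot be read off from the terms of the colimit) land back on the homotopy-sheaf computation as the honest argument, which is the right call.
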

\begin{proof}
Since $\ko \in \SH(k)_{\ge 0}$, the map $\ko \to \KW$ indeed factors through $\kw$.
We have \[ \ul{\pi}_*(\ko[\eta^{-1}]) \wequi \colim \left[ \ul{\pi}_*(\ko)_0 \xrightarrow{\eta} \ul{\pi}_*(\ko)_{-1} \xrightarrow{\eta} \dots \right]. \]
Since $\ul{\pi}_*(\ko)_{-n} \wequi \ul{\pi}_*(\KO)_{-n}$ for $*,n \ge 0$ and $\ul{\pi}_*(\ko)_{-n} = 0$ for $* < 0$, the result follows.
\end{proof}
We put $\HW = \1[\eta^{-1}]_{\le 0}$; in other words this is just the homotopy module $\ul{W}[\eta^\pm]$.
The unit map induces \begin{equation}\label{eq:HW-kw-trunc} \HW \wequi \kw_{\le 0} \wequi \kw/\beta. \end{equation}
\begin{warning}
In other works $\HW$ would perhaps have been denoted $\ul{K}^W[\eta^{-1}]$ or $\ul{W}[\eta^\pm]$ (and $\HW$ would have denoted something else).
Since this object is so central for our work, we reserve the prominent notation.
\end{warning}

\subsubsection{} \label{subsec:f0HW}
Consider the spectrum $f_0 \HW$.
One may show (e.g. see \cite[Theorem 17]{bachmann-very-effective}) that $\ul{\pi}_0(f_0 \HW)_* \wequi \ul{K}^W_*$ is the homotopy module of \emph{(unramified) Witt $K$-theory}; in other words $\ul{K}^W_* = \ul{I}^*$ (and multiplication by $\eta$ induces the inclusion $\ul{I}^{*+1} \hookrightarrow \ul{I}^*$; for $*<0$ we put $\ul{I}^* = \ul{W}$).
There is a canonical map $\ul{K}_*^W \to \ul{k}^M$ \cite{morel2004puissances}; by the resolution of the Milnor conjecture  \cite{voevodsky2003motivic,orlov2007exact,morel2005milnor} this induces a cofiber sequence \begin{equation} \label{eq:KW-mod-eta} \Sigma^{1,1} \ul{K}^W \xrightarrow{\eta} \ul{K}^W \to \ul{k}^M. \end{equation}
Taking effective covers, we obtain a map $f_0 \HW \to f_0 \ul{k}^M \wequi \HZ/2$ \cite[Lemma 12]{bachmann-very-effective}.
The induced square
\begin{equation} \label{eq:structure-conj}
\begin{CD}
f_0 \HW @>>> \ul{K}^W \\
@VVV         @VVV     \\
\HZ/2   @>>> \ul{k}^M
\end{CD}
\end{equation}
is cartesian \cite[Theorem 17]{bachmann-very-effective}.
\begin{lemma} \label{lemm:tau-tilde}
The map $\tau: \Sigma^{0,-1} \HZ/2 \to \HZ/2$ lifts uniquely (up to homotopy) to a map $\tilde{\tau}: \Sigma^{0,-1} \HZ/2 \to f_0 \HW$, and $\cof(\tilde\tau) \wequi \ul{K}^W.$
\end{lemma}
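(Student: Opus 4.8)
The plan is to produce the lift $\tilde\tau$ from the cartesian square \eqref{eq:structure-conj} and then identify its cofiber. First I would record the relevant cofiber sequences: on one hand \eqref{eq:cof-tau-HZ2}, i.e. $\Sigma^{0,-1}\HZ/2 \xrightarrow{\tau} \HZ/2 \to \ul k^M$, and on the other hand the bottom row of \eqref{eq:structure-conj}, $\HZ/2 \to \ul k^M$, together with the defining property that \eqref{eq:structure-conj} is a pullback. The key observation is that the composite $\Sigma^{0,-1}\HZ/2 \xrightarrow{\tau} \HZ/2 \to \ul k^M$ is zero (by \eqref{eq:cof-tau-HZ2} the first two terms compose to the map that is killed in $\ul k^M$), hence also the composite $\Sigma^{0,-1}\HZ/2 \xrightarrow{\tau} \HZ/2 \to \ul k^M$ followed by nothing — more precisely, the map $\Sigma^{0,-1}\HZ/2 \xrightarrow{\tau} \HZ/2 \to \ul k^M$ agrees with the map $\Sigma^{0,-1}\HZ/2 \to \ul K^W \to \ul k^M$ that is likewise null via \eqref{eq:KW-mod-eta}. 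Since $f_0\HW = \HZ/2 \times_{\ul k^M} \ul K^W$, to give the lift $\tilde\tau: \Sigma^{0,-1}\HZ/2 \to f_0\HW$ it suffices to give the two legs $\Sigma^{0,-1}\HZ/2 \xrightarrow{\tau} \HZ/2$ and $\Sigma^{0,-1}\HZ/2 \xrightarrow{0} \ul K^W$ together with a chosen nullhomotopy of the two induced maps to $\ul k^M$; the former leg is $\tau$ by construction, so $\tilde\tau$ indeed lifts $\tau$.

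For \emph{uniqueness} of the lift (up to homotopy), I would argue that the space of lifts is a torsor under $[\Sigma^{0,-1}\HZ/2, \fib(f_0\HW \to \HZ/2)]$. By the pullback square \eqref{eq:structure-conj}, $\fib(f_0\HW \to \HZ/2) \wequi \fib(\ul K^W \to \ul k^M) \wequi \Sigma^{1,1}\ul K^W$ by \eqref{eq:KW-mod-eta}. So the obstruction/indeterminacy group is $[\Sigma^{0,-1}\HZ/2, \Sigma^{1,1}\ul K^W] = [\HZ/2, \Sigma^{1,2}\ul K^W]$. Both $\HZ/2$ and $\ul K^W$ lie in the heart (up to the relevant effective truncations — here $\ul K^W$ is a homotopy module and $\HZ/2$ is concentrated in a single slice-degree range), and a $t$-structure / connectivity count shows this mapping group vanishes: $\HZ/2 \in \SH(k)_{\le 0}$ while $\Sigma^{1,2}\ul K^W \in \SH(k)_{\ge 1}$ (shifting a heart object up by one simplicial degree), so there are no maps. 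This gives uniqueness, and also shows the relevant higher obstruction to \emph{existence} of the lift vanishes, confirming the first construction.

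Finally, to compute $\cof(\tilde\tau)$: I would map the cofiber sequence $\Sigma^{0,-1}\HZ/2 \xrightarrow{\tilde\tau} f_0\HW \to \cof(\tilde\tau)$ into the square \eqref{eq:structure-conj}. Composing $\tilde\tau$ with $f_0\HW \to \ul K^W$ gives the zero map (that was the second leg), so $\cof(\tilde\tau)$ receives a map from, and surjects onto in the appropriate sense, $\ul K^W$; composing with $f_0\HW \to \HZ/2$ gives $\tau$, whose cofiber is $\ul k^M$ by \eqref{eq:cof-tau-HZ2}. Applying the octahedral axiom to the composite $\Sigma^{0,-1}\HZ/2 \xrightarrow{\tilde\tau} f_0\HW \to \HZ/2$ (whose ``first'' map has cofiber $\cof(\tilde\tau)$ and whose composite is $\tau$ with cofiber $\ul k^M$), one gets a cofiber sequence $\cof(\tilde\tau) \to \ul k^M \to \Sigma^{1,0}(\fib(f_0\HW\to\HZ/2)) \wequi \Sigma^{2,1}\ul K^W$, i.e. $\cof(\tilde\tau)$ is the fiber of a map $\ul k^M \to \Sigma^{2,1}\ul K^W$; comparing with the pullback square \eqref{eq:structure-conj} (whose total fiber description identifies $f_0\HW$ and hence, after quotienting by $\Sigma^{0,-1}\HZ/2$, its $\tau$-cofiber) shows this map is the connecting map of \eqref{eq:KW-mod-eta} up to the shift, whose fiber is precisely $\ul K^W$. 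Hence $\cof(\tilde\tau) \wequi \ul K^W$.

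\textbf{Main obstacle.} The delicate point is the bookkeeping in the octahedral/pullback chase identifying $\cof(\tilde\tau)$ with $\ul K^W$ rather than merely ``something with the same homotopy sheaves'': one must track that the maps $f_0\HW \to \HZ/2$ and $f_0\HW \to \ul K^W$ from \eqref{eq:structure-conj}, the map $\tau$, and the lift $\tilde\tau$ all fit into a single commuting diagram, so that the natural map $f_0\HW/\tilde\tau \to \ul K^W$ (induced because $\tilde\tau$ dies in $\ul K^W$) is an equivalence. I would verify this by checking it induces an isomorphism on $\ul\pi_{**}$ (using $\ul\pi_0(f_0\HW)_* \wequi \ul I^*$, $\ul\pi_*(\HZ/2)$ as in \S\ref{subsub:HZ2}, and that $\tau$ is injective in the appropriate range), which by non-degeneracy of the homotopy $t$-structure suffices. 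Everything else is a formal consequence of the cartesian square \eqref{eq:structure-conj} and the cofiber sequences \eqref{eq:cof-tau-HZ2} and \eqref{eq:KW-mod-eta}.
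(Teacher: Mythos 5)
Your uniqueness step invokes a $t$-structure vanishing in the wrong direction. The principle $[X,Y]=0$ holds when $X \in \scr C_{\ge n}$ and $Y \in \scr C_{\le n-1}$: the \emph{source} must be the more connective object and the \emph{target} the more coconnective one. You have the source $\HZ/2$ in $\SH(k)_{\le 0}$ and the target $\Sigma^{1,2}\ul K^W$ in $\SH(k)_{\ge 1}$, which is exactly the opposite configuration and implies nothing. Concretely, $\Sigma^{1,2}\ul K^W$ is $\Sigma$ of the heart object $\Sigma^{0,2}\ul K^W$, so it lies in $\SH(k)_{\ge 1} \cap \SH(k)_{\le 1}$ and the group you want to kill is $\Ext^1_{\SH(k)^\heart}(\HZ/2, \Sigma^{0,2}\ul K^W)$, which has no formal reason to vanish. (There is also a milder imprecision: even if that group did vanish, the set of lifts of $\tau$ is a torsor under its \emph{image} in $[\Sigma^{0,-1}\HZ/2, f_0\HW]$, not under the group itself.)

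The fix, and the paper's actual argument, is to apply $[\Sigma^{0,-1}\HZ/2, \ph]$ to the other fiber sequence that \eqref{eq:structure-conj} provides, namely $\Sigma^{-1}\ul k^M \to f_0\HW \to \HZ/2 \oplus \ul K^W$, so that the $t$-structure bounds point the right way: $\Sigma^{0,-1}\HZ/2 \wequi \Sigma\Gmp{-1}\wedge\HZ/2 \in \SH(k)_{\ge 1}$ while $\Sigma^{-1}\ul k^M$, $\ul K^W$, $\ul k^M$ all lie in $\SH(k)_{\le 0}$. The long exact sequence then collapses to $[\Sigma^{0,-1}\HZ/2, f_0\HW] \wequi [\Sigma^{0,-1}\HZ/2, \HZ/2]$, giving existence and uniqueness in one stroke. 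The cofiber computation can also be shortened: $(\tilde\tau, 0, \tau, 0)$ assemble into a morphism of bicartesian squares from the square with both left entries $\Sigma^{0,-1}\HZ/2$ (identity vertical map) and both right entries $0$ into \eqref{eq:structure-conj}; taking cofibers yields a bicartesian square whose bottom edge $\cof(\tau) \to \ul k^M$ is an equivalence by \eqref{eq:cof-tau-HZ2}, hence so is the top edge $\cof(\tilde\tau) \to \ul K^W$, with no octahedron chase or check on $\ul{\pi}_{**}$ needed.
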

\begin{proof}
Since $\Sigma^{0,-1} \HZ/2 \wequi \Sigma \Gmp{-1} \wedge \HZ/2 \in \SH(k)_{\ge 1}$ and $\ul{K}^W, \ul{k}^M \in \SH(k)_{\le 0}$, the long exact sequence for $[\Sigma^{0,-1} \HZ/2, \ph]$ shows that there is a unique lift as claimed.
We can view this as a morphism from the bicartesian square
\begin{equation*}
\begin{CD}
\Sigma^{0,-1} \HZ/2 @>>> 0 \\
@V{\id}VV                  @VVV \\
\Sigma^{0,-1} \HZ/2 @>>> 0
\end{CD}
\end{equation*}
into \eqref{eq:structure-conj}; taking cofibers we get a bicartesian square
\begin{equation*}
\begin{CD}
\cof(\tilde \tau) @>>> \ul{K}^W \\
@VVV                  @VVV \\
\cof(\tau) @>>> \ul{k}^M.
\end{CD}
\end{equation*}
The bottom horizontal map is an equivalence by \eqref{eq:cof-tau-HZ2}, hence so is the top one.
This was to be shown.
\end{proof}
\begin{corollary} \label{cor:K-W-eff}
We have $\ul{K}^W \in \Sigma^{0,-1} \SH(k)^\veff$.
\end{corollary}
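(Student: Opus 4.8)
The goal is to show that $\ul{K}^W \in \Sigma^{0,-1}\SH(k)^\veff$, equivalently that $\Sigma^{0,1}\ul{K}^W \in \SH(k)^\veff$. The plan is to read this off from Lemma \ref{lemm:tau-tilde}, which exhibits $\ul{K}^W$ as the cofiber of the map $\tilde\tau\colon \Sigma^{0,-1}\HZ/2 \to f_0\HW$. Shifting up by $\Gm$ (i.e. applying $\Sigma^{0,1}$, which is smashing with $\Gmp{1}$) turns this into a cofiber sequence
\begin{equation*}
\HZ/2 \xrightarrow{\Sigma^{0,1}\tilde\tau} \Sigma^{0,1} f_0\HW \to \Sigma^{0,1}\ul{K}^W.
\end{equation*}

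First I would note that $\Sigma^{0,1}\SH(k)^\veff \subset \SH(k)^\veff$: since $\SH(k)^\veff$ is generated under colimits by $\Sigma^\infty_+ X$ for $X \in \Sm_k$, and $\Gm \wedge \Sigma^\infty_+ X$ is (a retract of, hence via extensions/colimits inside) $\Sigma^\infty_+(\Gm \times X)$, the category $\SH(k)^\veff$ is closed under $\wedge \Gm = \Sigma^{0,-1}$; hmm — actually the convenient direction here is the other one. Let me instead argue directly with the cofiber sequence above. We have $\HZ/2 \in \SH(k)^\veff$ (it is effective and connective, being a slice-type spectrum; concretely $\HZ \in \SH(k)^\veff$ and $\HZ/2$ is a finite colimit of copies of $\HZ$). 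We have $f_0\HW \in \SH(k)^\eff_{\ge 0} = \SH(k)^\veff$ since $f_0\HW$ is effective by construction and $\ul\pi_i(f_0\HW)_0 = 0$ for $i<0$ (indeed $\ul\pi_0(f_0\HW)_* \wequi \ul{K}^W_*$ and the higher effective homotopy sheaves vanish, as $f_0\HW$ is in the heart up to the effective cover — more carefully, $f_0\HW$ is the effective cover of a homotopy module, hence connective effective, so lies in $\SH(k)^\veff$). Therefore $\Sigma^{0,1}f_0\HW \in \Sigma^{0,1}\SH(k)^\veff$.

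The cleanest route: since $\SH(k)^\veff$ is closed under extensions and colimits, and $\HZ/2, f_0\HW \in \SH(k)^\veff$, the cofiber $\Sigma^{0,1}\ul{K}^W \wequi \cof(\HZ/2 \xrightarrow{\Sigma^{0,1}\tilde\tau} \Sigma^{0,1}f_0\HW)$ is a colimit of $\HZ/2$ (shifted) and $\Sigma^{0,1}f_0\HW$; so it suffices that $\Sigma^{0,1}f_0\HW \in \SH(k)^\veff$ and $\HZ/2 \in \SH(k)^\veff$, and then $\Sigma^{0,1}\ul{K}^W \in \SH(k)^\veff$, i.e. $\ul{K}^W \in \Sigma^{0,-1}\SH(k)^\veff$ after shifting back down. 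So the only thing to check beyond bookkeeping is $\Sigma^{0,1}f_0\HW \in \SH(k)^\veff$: this holds because $\Sigma^{2,1}\1$ — wait, $\Sigma^{0,1} = \Sigma^{-1,0}\Sigma^{1,1}$, and $\Sigma^{1,1}X = S^1 \wedge \Gm \wedge X = \P^1 \wedge X$; so $\Sigma^{0,1}f_0\HW = \Sigma^{-1,0}\P^1 \wedge f_0\HW$, which need not obviously be very effective. The right statement to invoke is simply that $\Sigma^{2,1}\SH(k)^\veff \subset \SH(k)^\veff$ (smashing with $\P^1$ preserves very effectivity, as $\P^1 \wedge \Sigma^\infty_+ X$ is a summand of $\Sigma^\infty_+(\P^1\times X)$), so $\Sigma^{2,1}f_0\HW \in \SH(k)^\veff$, hence $\Sigma^{0,1}\ul{K}^W = \Sigma^{-2,0}\cdot\Sigma^{2,2}\ul{K}^W$ — I am overcomplicating.

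The honest, correct statement is this: $\ul{K}^W \in \SH(k)^{\veff\heart}$ actually fails (it is $\Sigma^{0,-1}$-very-effective, not very effective, because $\ul\pi_0(\ul{K}^W)_0 = \ul{I}^0 = \ul W \ne 0$ but its negative-weight contractions are $\ul W$ which does not vanish — precisely the obstruction). So the real content is the $\Gm$-twist. The main and only real step is: from the cofiber sequence $\HZ/2 \to \Sigma^{0,1}f_0\HW \to \Sigma^{0,1}\ul{K}^W$ (the $\Sigma^{0,1}$ of Lemma \ref{lemm:tau-tilde}'s cofiber sequence), deduce $\Sigma^{0,1}\ul{K}^W \in \SH(k)^\veff$ using (a) $\HZ/2 \in \SH(k)^\veff$, (b) $\Sigma^{0,1}f_0\HW \in \SH(k)^\veff$, which follows from $\Sigma^{2,1}\1 \in \SH(k)^\veff$ together with $f_0\HW \in \SH(k)^\veff$ and $\Sigma^{0,1} = \Sigma^{-2,0} \circ \Sigma^{2,1}$ being inapplicable — so instead one checks (b) directly via $\ul\pi_i(\Sigma^{0,1}f_0\HW)_0 = \ul\pi_i(\Gm \wedge f_0\HW)_0 = \ul\pi_i(f_0\HW)_1 = (\ul\pi_0(f_0\HW)_1)$-part which vanishes for $i<0$ since $f_0\HW$ is connective. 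The anticipated main obstacle is exactly pinning down (b): verifying that the $\Gm$-twist $\Sigma^{0,1}f_0\HW$ stays very effective, which amounts to checking its negative effective homotopy sheaves in weight $0$ vanish — this is where the connectivity of $f_0\HW$ and the identification of $\ul\pi_*(f_0\HW)$ with $\ul K^W$ via \cite{bachmann-very-effective} does the work. Once (b) is in hand, closure of $\SH(k)^\veff$ under colimits and extensions finishes the proof in one line.
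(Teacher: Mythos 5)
Your strategy (shift the cofiber sequence from Lemma \ref{lemm:tau-tilde} and use closure of $\SH(k)^\veff$ under colimits) is the paper's, but there is a shift error that is fatal. You equate $\Sigma^{0,1}$ with $\Gm\wedge(\ph)$, but in the paper's conventions $S^{p,q}=S^{p-q}\wedge\Gmp{q}$, so $\Gm = S^{1,1}$ and $\P^1 = S^{2,1}$, while $S^{0,1}=\Sigma^{-1}\Gm$. Thus $\Sigma^{0,1}$ is a $\Gm$-twist \emph{followed by a desuspension $\Sigma^{-1}$}, and desuspension is exactly what $\SH(k)^\veff$ is not closed under. Correcting your homotopy-sheaf computation accordingly, $\ul\pi_i(\Sigma^{0,1}f_0\HW)_0 \wequi \ul\pi_{i+1}(f_0\HW)_1$, and at $i=-1$ this is $\ul\pi_0(f_0\HW)_1 = \ul{K}^W_1 = \ul{I}$, which is nonzero for a general field $k$; so $\Sigma^{0,1}f_0\HW\notin\SH(k)^\veff$ and your step (b) fails. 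The same off-by-one conceals that the conclusion fails as well: $\ul\pi_{-1}(\Sigma^{0,1}\ul{K}^W)_0 = \ul\pi_0(\ul{K}^W)_1 = \ul{K}^W_1 = \ul{I}\ne 0$, so $\Sigma^{0,1}\ul{K}^W$ is not very effective, and $\ul{K}^W\notin\Sigma^{0,-1}\SH(k)^\veff$ whenever $\ul{I}\ne 0$.

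The correct twist is by $\Gm$ itself (or by $\P^1$). Applying $\Gm\wedge(\ph) = \Sigma^{1,1}$ to the cofiber sequence gives $\Sigma\HZ/2 \to \Gm\wedge f_0\HW \to \Gm\wedge\ul{K}^W$; both $\Sigma\HZ/2$ and $\Gm\wedge f_0\HW$ lie in $\SH(k)^\veff$ (the latter because $\Gm$, being a retract of $\Sigma^\infty_+(\A^1\setminus 0)$, is very effective and $\SH(k)^\veff$ is closed under smash products), hence so does the cofiber $\Gm\wedge\ul{K}^W$. This proves $\ul{K}^W\in\Sigma^{-1,-1}\SH(k)^\veff$, and a fortiori $\Sigma^{2,1}\ul{K}^W\in\SH(k)^\veff$, which is the form of the claim stated in \S\ref{sec:intro} and the one actually used in \S\ref{sec:HW-kw}. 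The exponent $\Sigma^{0,-1}$ in the displayed corollary is presumably a misprint for $\Sigma^{-1,-1}$ or $\Sigma^{-2,-1}$; once you track the bidegree correctly your argument matches the intended one.
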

\begin{proof}
Since $\SH(k)^\veff$ is closed under colimits (and $\HZ/2, f_0\HW \in \SH(k)^\veff$\NB{justification?}), this is immediate from Lemma \ref{lemm:tau-tilde}.
\end{proof}

\subsection{Determination of \texorpdfstring{$\pi_*(\kw \wedge \HW)_2^\comp$}{pi*(kw wedge HW)}}\discuss{Mike: I think this might interest you.}
\subsubsection{}
For $E \in \SH(k)$, consider the $\eta$-multiplication tower \begin{equation} \label{eq:eta-tower} \dots \to \Sigma^{2,2} E \xrightarrow{\eta} \Sigma^{1,1} E \xrightarrow{\eta} E. \end{equation}
Functorially associated with this is a spectral sequence (the $\eta$-Bockstein spectral sequence) with \cite[beginning of \S1.2.2 and Construction 1.2.2.6]{lurie-ha} \begin{equation} \label{eq:eta-ss} \begin{gathered} E_1^{p,q,w} = \pi_{p+q}(\cof(\eta: \Sigma^{-p+1,-p+1} E \to \Sigma^{-p,-p} E))_w, \quad p \le 0 \\ d_r: E_r^{p,q,w} \to E_{r+1}^{p-r,q+r-1,w} \\ \rightsquigarrow \pi_{p+q}(E)_w. \end{gathered} \end{equation}
Here by the last line we mean that the $E_\infty$-page in position $(p,q,w)$ is is related to $\pi_{p+q}(E)_w$ (but we are not claiming any kind of convergence).

\begin{remark} \label{rmk:basechange-ss}
Since the tower \eqref{eq:eta-tower} is compatible with base change, so is the associated spectral sequence \eqref{eq:eta-ss}.
\end{remark}

The boundary map in the cofiber sequence \[ \Sigma^{1,1} E \xrightarrow{\eta} E \xrightarrow{p} E/\eta \xrightarrow{\partial} \Sigma^{2,1} E \] induces the Bockstein \[ \delta = p\partial: E/\eta \to \Sigma^{2,1} E/\eta. \]
By construction, $\delta^2 = 0$ and so $\delta_*$ gives $\pi_{*}(E/\eta)_*$ the structure of a chain complex.
We write its homology (respectively cycles, respectively the entire complex) in spot corresponding to $\pi_a(E/\eta)_b$ as $H_a(\pi_{*}(E/\eta)_*, \delta_*)_b$ (respectively $Z_a(\pi_{*}(E/\eta)_*, \delta_*)_b$, $C_a(\pi_{*}(E/\eta)_*, \delta_*)_b$).
Recall the notion of conditional convergence from \cite[Definition 5.10]{boardman1999conditionally}.
\begin{lemma} \label{lemm:reindexing-convergence}
By suitably re-indexing the spectral sequence \eqref{eq:eta-ss} we obtain a conditionally convergent spectral sequence
\begin{gather*}
  E_1^{s,f,w} = \pi_s(E/\eta)_{w+f} \Rightarrow \pi_s(E_\eta^\comp)_w \\
  d_r: E_r^{s,f,w} \to E^{s-1,f+r,w}_r.
\end{gather*}
Here $E_1^{s,f,w} = 0$ for $f < 0$.
We have \begin{gather*} E_2^{s,0,w} = Z_s(\pi_{*}(E/\eta)_*, \delta_*)_w \\ E_2^{s,f,w} = H_s(\pi_{*}(E/\eta)_{*}, \delta_*)_{f+w}, f > 0. \end{gather*}
\end{lemma}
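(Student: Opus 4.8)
The plan is to derive everything from the standard machinery of the tower \eqref{eq:eta-tower}. First I would recall how the spectral sequence \eqref{eq:eta-ss} arises: filtering $E_\eta^\comp \simeq \lim_n E/\eta^n$ by the tower whose $n$-th stage is the fiber of $E_\eta^\comp \to E/\eta^n$, or equivalently running the spectral sequence of the $\eta$-adic filtration on $E$ as in \cite[Construction 1.2.2.6]{lurie-ha}. The re-indexing is purely bookkeeping: set $f = -p \ge 0$ (the $\eta$-adic filtration degree), $s = p+q$ (the total degree, which is preserved since each $\Sigma^{i,i}$ shifts both coordinates equally), and $w$ the weight. The $d_1$-differential in \eqref{eq:eta-ss} goes $E_1^{p,q,w} \to E_1^{p-1,q,w}$, which in the new indexing is $d_1: E_1^{s,f,w} \to E_1^{s-1,f+1,w}$; inductively $d_r$ raises $f$ by $r$ and lowers $s$ by $1$, giving the stated trigrading of the differential. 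Since $\eta$ has bidegree $(1,1)$, the cofiber $\cof(\eta: \Sigma^{1,1}E \to E) = E/\eta$ has $\pi_s(E/\eta)_{w}$ sitting in filtration $0$ and weight $w$; accounting for the internal suspension by $\Sigma^{f,f}$ that produces the $f$-th graded piece, one gets $E_1^{s,f,w} = \pi_s(\Sigma^{f,f}(E/\eta))_w = \pi_s(E/\eta)_{w+f}$, with vanishing for $f<0$ because the tower \eqref{eq:eta-tower} is indexed by non-negative powers of $\eta$ on the completion side.

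Next I would address conditional convergence. By \cite[Definition 5.10]{boardman1999conditionally}, conditional convergence toward $\pi_*(E_\eta^\comp)_*$ amounts to the statement that the limit of the tower defining the filtration vanishes, i.e. $\lim_n F^n = 0$ where $F^n$ is the fiber of $E_\eta^\comp \to E/\eta^n$. But $E_\eta^\comp = \lim_n E/\eta^n$ by the very definition of $\eta$-completion recalled in \S\ref{subsec:completion}, so $\lim_n F^n \simeq \fib(E_\eta^\comp \to \lim_n E/\eta^n) \simeq 0$; hence conditional convergence holds essentially tautologically. (One should be a little careful that the indexing tower I am using is cofinal in, or equivalent to, the one Lurie's construction produces; this is where a short compatibility check is needed.)

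Finally I would compute the $E_2$-page. The differential $d_1: E_1^{s,f,w} \to E_1^{s-1,f+1,w}$ is, under the identifications $E_1^{s,f,w} = \pi_s(E/\eta)_{w+f}$ and $E_1^{s-1,f+1,w} = \pi_{s-1}(E/\eta)_{w+f+1}$, exactly the map induced by the boundary composite $E/\eta \xrightarrow{\partial} \Sigma^{2,1}E \xrightarrow{p} \Sigma^{2,1}(E/\eta)$, i.e. the Bockstein $\delta$; this is the standard identification of the $d_1$ in a filtration-by-powers spectral sequence with the connecting map of the associated cofiber sequence, and I would cite or spell out the relevant diagram chase. Thus $E_2^{s,f,w} = \ker(\delta_*)/\mathrm{im}(\delta_*)$ computed at the appropriate spot. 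In filtration $f>0$ both an incoming and an outgoing $d_1$ are present, so $E_2^{s,f,w} = H_s(\pi_*(E/\eta)_*, \delta_*)_{f+w}$; in filtration $f=0$ there is no incoming $d_1$ (the source would have $f=-1<0$ and vanishes), so only the kernel survives and $E_2^{s,0,w} = Z_s(\pi_*(E/\eta)_*, \delta_*)_w$. The main obstacle, such as it is, is not any deep input but getting the trigraded indexing and the weight-shift bookkeeping exactly right and verifying cleanly that the $d_1$ really is $\delta_*$ with the correct degrees; everything else is formal manipulation of towers and the Milnor sequence.
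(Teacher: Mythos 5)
Your proposal is correct and follows essentially the same route as the paper: re-index with $s=p+q$, $f=-p$, identify $E_1$ via $\cof(\eta)\simeq E/\eta$, observe that the $d_1$ is the Bockstein $\delta_*$, and read off the $E_2$-page, distinguishing $f=0$ (only a kernel, since nothing maps in from $f=-1$) from $f>0$. The paper packages the conditional-convergence step into a separate Lemma~\ref{lemm:boardman-cond-conv}, which does exactly the tower replacement $E_\bullet \rightsquigarrow \cof(\lim E_\bullet \to E_\bullet)$ plus the Milnor sequence that you describe and correctly flag as ``a short compatibility check''; so the content is the same, only the bookkeeping is packaged differently.
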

\begin{proof}
We have $\cof(\eta: \Sigma^{-p+1,-p+1} E \to \Sigma^{-p,-p} E) \wequi \Sigma^{-p,-p} E/\eta$ and hence $E_1^{p,q,w} \wequi \pi_{p+q}(E/\eta)_{w-p}$.
The re-indexing is obtained by putting $s=p+q$ and $f=-p$.

By Lemma \ref{lemm:boardman-cond-conv} below, the spectral sequence converges conditionally to \[ \cof(\lim_p \Sigma^{p,p} E \to E) \wequi \lim_p \cof(\Sigma^{p,p} E \xrightarrow{\eta^p} E) \wequi \lim_p E/\eta^p \wequi E_\eta^\comp. \]

By construction, the $d_1$-differentials are induced by $\delta_*$, whence the identification of the $E_2$-page.
\end{proof}

\begin{remark} \label{rmk:multiplicative-ss}
If $E \to E/\eta$ is an $\scr E_\infty$-ring map, then the spectral sequence above can be identified with the descent spectral sequence for this map.
In particular, it is multiplicative.
Furthermore if $G$ is an $E$-module, then the spectral sequence for $G$ is a module over the one for $E$.
\end{remark}

In the proof of Lemma \ref{lemm:reindexing-convergence}, we have made use of the following well-known fact.
Let $E_\bullet: \Z^\op \to \SH$ be a tower of spectra.
Then as above there is an associated spectral sequence $E^{p,q}_n(E_\bullet)$.
\begin{lemma} \label{lemm:boardman-cond-conv} \discuss{right?}
The spectral sequence $E^{p,q}_n(E_\bullet)$ converges conditionally to $\cof(\lim E_\bullet \to \colim E_\bullet)$.
\end{lemma}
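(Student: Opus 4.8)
The plan is to deduce this from Boardman's conditional convergence criterion after replacing $E_\bullet$ by a tower with vanishing homotopy limit. First I would set $L := \lim E_\bullet \in \SH$, note that the canonical maps $L \to E_p$ assemble into a map of towers from the constant tower $\underline L$ to $E_\bullet$, and form the levelwise cofiber $\tilde E_\bullet := \cof(\underline L \to E_\bullet)$, with its map of towers $E_\bullet \to \tilde E_\bullet$. Comparing the cofiber sequences $L \to E_p \to \tilde E_p$ over a transition map $E_{p+1}\to E_p$ and taking cofibers of the three vertical maps shows $\cof(E_{p+1}\to E_p) \xrightarrow{\ \sim\ } \cof(\tilde E_{p+1}\to\tilde E_p)$; hence $E_\bullet \to \tilde E_\bullet$ is an equivalence on every associated graded piece, so it induces an isomorphism of spectral sequences $E_n^{p,q}(E_\bullet) \cong E_n^{p,q}(\tilde E_\bullet)$ for all $n\ge 1$ by the usual inductive comparison using naturality of the $d_r$. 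Since colimits commute with cofibers, $\colim\tilde E_\bullet \simeq \cof(L \to \colim E_\bullet)$ is the claimed target, while, since limits in a stable category commute with cofibers, $\lim\tilde E_\bullet \simeq \cof(L \xrightarrow{\ \mathrm{id}\ } L)\simeq 0$.

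So it remains to prove: for a tower $\tilde E_\bullet$ with $\lim\tilde E_\bullet\simeq 0$, the associated spectral sequence converges conditionally to $\pi_\ast$ of its colimit. Restricting to the cofinal $\Z^\op_{\ge 0}$- (resp. $\Z_{\le 0}$-) subdiagrams, the limit and colimit are sequential, so the Milnor sequence $0 \to \limone_p \pi_{\ast+1}\tilde E_p \to \pi_\ast(\lim\tilde E_\bullet) \to \lim_p\pi_\ast\tilde E_p \to 0$ \cite[Proposition VI.2.15]{goerss2009simplicial}, together with $\lim\tilde E_\bullet\simeq 0$, yields $\lim_p\pi_\ast\tilde E_p = 0 = \limone_p\pi_\ast\tilde E_p$. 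These are exactly the hypotheses of Boardman's criterion \cite[Definition 5.10]{boardman1999conditionally} for conditional convergence to the colimit of the unrolled exact couple with $A$-terms $\pi_\ast\tilde E_p$ and $E_1$-terms $\pi_\ast\cof(\tilde E_{p+1}\to\tilde E_p)$ — the exact couple underlying the spectral sequence of \cite[Construction 1.2.2.6]{lurie-ha} applied to $\tilde E_\bullet$. Since $\pi_\ast$ commutes with sequential colimits, the colimit of the $A$-terms is $\pi_\ast\colim\tilde E_\bullet \simeq \pi_\ast\cof(L \to \colim E_\bullet)$, which is the assertion (with the filtration on the target being the image filtration $\operatorname{im}(\pi_\ast\tilde E_p \to \pi_\ast\colim\tilde E_\bullet)$).

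The hard part will be purely conventional bookkeeping rather than anything substantive: verifying that the spectral sequence produced by \cite[Construction 1.2.2.6]{lurie-ha} out of the $\Z^\op$-filtered object $E_\bullet$ really is the one attached to the above unrolled exact couple, carrying the stated trigrading and the differential pattern $d_r\colon E_r^{p,q}\to E_r^{p-r,q+r-1}$, and that ``conditional convergence'' (a property of the exact couple) transports correctly along the isomorphism of spectral sequences $E_\bullet\to\tilde E_\bullet$, so that the phrase ``$E^{p,q}_n(E_\bullet)$ converges conditionally to $\cof(\lim E_\bullet\to\colim E_\bullet)$'' is unambiguous. I expect this to be routine but it is the step most vulnerable to an off-by-one in the indexing; alternatively one can simply invoke the statement, which is folklore in the theory of filtered spectra (combine \cite[\S1.2.2]{lurie-ha} with \cite{boardman1999conditionally}).
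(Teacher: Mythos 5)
Your proof is correct and is essentially the paper's argument: form the levelwise cofiber $\tilde E_\bullet = \cof(\underline{L} \to E_\bullet)$ (the paper's $E'_\bullet$), observe that this does not change the $E_1$-page and hence the whole spectral sequence, that it kills the limit while turning the colimit into $\cof(L\to\colim E_\bullet)$, and then invoke the Milnor sequence to verify Boardman's criterion. Your extra remark about the bookkeeping needed to match Lurie's construction to Boardman's unrolled exact couple is a fair flag, but it is precisely the ``routine'' step that the paper also leaves implicit, so there is no substantive difference in approach.
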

\begin{proof}
Let $E'_p = \cof(\lim E_\bullet \to E_p)$.
Then there is a morphism of towers $E_\bullet \to E'_\bullet$ inducing a morphism of spectral sequences $E^{p,q}_n(E_\bullet) \to E^{p,q}_n(E'_\bullet)$.
By construction, this induces an isomorphism on the $E_1$-page, and hence on all following pages.
Noting that $\lim E'_\bullet \wequi 0$ and $\colim E'_\bullet \wequi \cof(\lim E_\bullet \to \colim E_\bullet)$, we may replace $E_\bullet$ by $E'_\bullet$, and so assume that $\lim E_\bullet \wequi 0$.
Conditional convergence to the colimit means by definition \cite[Definition 5.10]{boardman1999conditionally} that \[ \lim \pi_i(E_\bullet) \wequi 0 \wequi \limone \pi_i(E_\bullet), \text{ for } i \in \Z. \]
By the Milnor exact sequence \cite[Proposition VI.2.15]{goerss2009simplicial}, this follows from (and is in fact equivalent to) $\lim E_\bullet \wequi 0$.
\end{proof}

\subsubsection{}
Applying Remark \ref{rmk:multiplicative-ss} to the map (see \eqref{eq:KW-mod-eta}) \[ \ul{K}^W \to \ul{K}^W/\eta \wequi \ul{k}^M \] and $G = E \wedge \ul{K}^W$, we obtain the spectral sequence \begin{equation} \label{eq:eta-ss2} \begin{gathered} E_1(G)^{*,*,*} = C(\pi_*(E \wedge \ul{k}^M)_*, \delta_*)[h] \Rightarrow \pi_*((E \wedge \ul{K}^W)_\eta^\comp)_* \\ E_2(G)^{*,*,*} = Z(\pi_*(E \wedge \ul{k}^M)_*, \delta_*)[h]/h \cdot \im(\delta_*) \end{gathered}. \end{equation}
Here \[ h = 1 \in E^1(\ul{K}^W)^{0,1,-1} \wequi \ul{k}^M(k)_0, \] and we have used the module structure to act with $h \in E_1(\ul{K}^W)$ on $E_1(G)$.
\begin{example} \label{ex:ss-sphere}
Taking $E=\1$, we get $E_1(G)^{s,f,w} = 0$ unless $s=0$, so the spectral sequence collapses at $E_1$.
The spectral sequence converges to $\pi_0((\ul{K}^W)_\eta^\comp)_* = (\ul{K}^W(k)_*)_\I^\comp$; on $E_1 = E_\infty$ we see the $\I$-adic filtration (as we must) with subquotients given by $\ul{k}^M(k)_*$.
The element $h$ detects $\eta \in \ul{K}^W(k)_{-1}$.
In particular $h$ is a permanent cycle.
\end{example}

Via the spectral sequence, we obtain a filtration $F^\bullet \pi_*(G_\eta^\comp)_*$ on the bigraded group $\pi_*(G_\eta^\comp)_*$.
Multiplication by $\eta$ induces a map $\pi_*(G_\eta^\comp)_* \to \pi_*(G_\eta^\comp)_{*-1}$ which maps $F^\bullet$ to $F^{\bullet + 1}$ and on associated graded corresponds to multiplication by $h$.
Taking the colimit we obtain a filtration on \[ \pi_*(G_\eta^\comp)_*[\eta^{-1}] = \colim_i \pi_*(G_\eta^\comp)_{*-i} \wequi \pi_*(G_\eta^\comp[\eta^{-1}]) \] with \begin{equation}\label{eq:eta-inverted-filt} F^\bullet \pi_*(G_\eta^\comp[\eta^{-1}])_* = \colim_i F^{\bullet + i} \pi_*(G_\eta^\comp)_{*-i} \end{equation} and associated graded \[ \gr^\bullet \pi_*(G_\eta^\comp[\eta^{-1}])_* \wequi E_\infty(G)^{*,\bullet,*}[h^{-1}]. \]
Note that even though the filtration $F^\bullet \pi_*(G_\eta^\comp)_*$ terminates at $F^0$ (i.e. $F^0 \pi_*(G_\eta^\comp)_* = \pi_*(G_\eta^\comp)_*$) this need not be the case for the induced filtration on $\pi_*(G_\eta^\comp[\eta^{-1}])_*$: we could well have \[ F^0 \subsetneq F^{-1} \subsetneq F^{-2} \subsetneq \dots. \]
Note also that we are not making any claim about the completeness etc. of the filtrations.

\subsubsection{}
We may wish to apply spectral sequence \eqref{eq:eta-ss2} with $E = \HZ/2$.
To begin with, using that $(\HZ/2)/\tau \wequi \ul{k}^M$ (see \eqref{eq:cof-tau-HZ2}), the form of $\HZ/2_{**}\HZ/2$ (see \eqref{eq:motivic-dual-steenrod}) and the fact that $\eta_R(\tau) = \tau + \rho\tau_0$ (Lemma \ref{lemm:right-unit}) we find that \begin{equation} \label{eq:HW-km} \pi_{**}(\HZ/2 \wedge \ul{k}^M) \wequi \ul{k}^M_*(k)[\tau_0, \tau_1, \dots, \xi_1, \xi_2, \dots]/(\tau_i^2 - \rho \tau_{i+1}). \end{equation}
Now we determine the differential.
\begin{lemma} \label{lemm:determine-delta}
The action of $\delta_*$ on $\pi_{**}(\HZ/2 \wedge \ul{k}^M)$ satisfies \begin{equation*} \delta_*(\tau_i) = 0 \quad\text{and}\quad \delta_*(\xi_i) = \xi_{i-1}^2 \end{equation*}
\end{lemma}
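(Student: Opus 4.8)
The plan is to recognise $\id_{\HZ/2}\wedge\delta$ as the operation $\widehat{\xi_1}^R_*$ recorded in Lemma~\ref{lemm:action-formulas}(2), and then read off the two formulas directly.

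First I would set up the bookkeeping. By \eqref{eq:KW-mod-eta} we have $\ul{k}^M\wequi\ul{K}^W/\eta$, hence $\HZ/2\wedge\ul{k}^M\wequi(\HZ/2\wedge\ul{K}^W)/\eta$, and by construction $\id_{\HZ/2}\wedge\delta$ is the $\eta$-Bockstein of the ring spectrum $\HZ/2\wedge\ul{K}^W$, i.e.\ the composite $(\HZ/2\wedge\ul{K}^W)/\eta\xrightarrow{\partial}\Sigma^{2,1}(\HZ/2\wedge\ul{K}^W)\to\Sigma^{2,1}(\HZ/2\wedge\ul{K}^W)/\eta$. Under \eqref{eq:HW-km} this is a map of bidegree $-(2,1)$ on $\HZ/2_{**}\HZ/2\,/\,\eta_R(\tau)$, where $\eta_R(\tau)=\tau+\rho\tau_0$ by Lemma~\ref{lemm:right-unit}. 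Since $\eta$ acts by $0$ on $\ul{k}^M$, hence on $\HZ/2\wedge\ul{k}^M$, the correction term in the Leibniz rule for an $\eta$-Bockstein vanishes, so $\delta_*$ is a derivation of $\pi_{**}(\HZ/2\wedge\ul{k}^M)$. It therefore suffices to evaluate $\delta_*$ on the algebra generators $\tau_0,\tau_1,\dots,\xi_1,\xi_2,\dots$; note that, since $\delta_*$ is $\ul{k}^M_*(k)$-linear, the relations $\tau_i^2=\rho\tau_{i+1}$ are compatible with the claimed answer (in characteristic $2$, $\delta_*(\tau_i^2)=0=\rho\,\delta_*(\tau_{i+1})$).

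The substantive step is the identification $\id_{\HZ/2}\wedge\delta=\widehat{\xi_1}^R_*$ on $\pi_{**}(\HZ/2\wedge\ul{k}^M)$. This is exactly analogous to the computation used in the proof of Proposition~\ref{prop:HZ2-ko}, where the $\eta$-Bockstein attached to $\Sigma^{1,1}\ko\xrightarrow{\eta}\ko\to\kgl$ is identified (after smashing with $\HZ/2$) with $\widehat{\xi_1}$, following \cite[Lemma 2.12]{ananyevskiy2017very} and \cite[Lemma 13.1]{voevodsky2003reduced}; one transports that identification along the unit map $\ko\to f_0\HW$ (which exists because $\ko$ is effective), together with $\ul{K}^W\wequi\ul\pi_0(f_0\HW)_*$ and $\ul{k}^M\wequi\ul{K}^W/\eta$. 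Any remaining ambiguity (handedness of the action, antipode, a possible $\rho$-multiple of a lower operation) is pinned down by a single low-degree check: in bidegree $(0,0)$ one has $\pi_{0,0}(\HZ/2\wedge\ul{k}^M)=\F_2$, so $\delta_*(\xi_1)$ is $0$ or $1$, and it is nonzero because $\delta$ is. Once $\id_{\HZ/2}\wedge\delta=\widehat{\xi_1}^R_*$ is established, Lemma~\ref{lemm:action-formulas}(2) gives $\widehat{\xi_1}^R_*(\tau_i)=0$ for all $i$ and $\widehat{\xi_1}^R_*(\xi_i)=\xi_{i-1}^2$ (with $\xi_{-1}:=0$), which is exactly the assertion.

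The main obstacle is this identification step: the comparison of the $\eta$-Bockstein on $\HZ/2\wedge\ul{K}^W$ with the known one on $\HZ/2\wedge\ko$ must be set up carefully through the map $\ko\to f_0\HW$ (or else carried out directly on the cofiber sequence \eqref{eq:KW-mod-eta} smashed with $\HZ/2$, as in Ananyevskiy's argument), and one must be disciplined about the left/right and antipode bookkeeping inside $\HZ/2_{**}\HZ/2$ that was already a recurring nuisance in the proof of Proposition~\ref{prop:HZ2-ko}. Everything after that is formal.
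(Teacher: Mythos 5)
Your proposed identification $\id_{\HZ/2}\wedge\delta=\widehat{\xi_1}^R_*$ is the right conclusion, but the argument you give leaves one term unaddressed. After lifting $\delta$ to a class $\tilde\delta\in[\HZ/2,\Sigma^{2,1}\HZ/2]$ (which the paper does explicitly, and which you need at least implicitly to make sense of applying $\widehat{\xi_1}^R_*$ to $\pi_{**}(\HZ/2\wedge\ul{k}^M)$), Example~\ref{ex:steenrod-explicit} gives $\tilde\delta = a\widehat{\xi_1} + b\widehat{\tau_0}$ with $a\in\F_2$ and $b\in\ul{k}^M_1(k)$. Both of your verification steps are blind to $b$: the comparison with the $\ko$-Bockstein only constrains $\delta_*$ on the image of $\HZ/2_{**}\kgl$ in $\HZ/2_{**}\ul{k}^M$, and by Proposition~\ref{prop:HZ2-ko} that subalgebra contains neither $\tau_0$ nor $\tau_1$; while your low-degree check $\delta_*(\xi_1)\ne 0$ pins down $a=1$ but says nothing about $b$, since $\widehat{\tau_0}^R_*(\xi_1)=0$ by Lemma~\ref{lemm:action-formulas}(2). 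A nonzero $b$ would give $\delta_*(\tau_i)=b\xi_i\ne 0$, so the first asserted formula $\delta_*(\tau_i)=0$ is precisely what your argument does not establish.

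The missing step is to use $\delta^2=0$: from it one computes $0=\delta_*(\delta_*(\tau_1))=\delta_*(b\xi_1)=b\delta_*(\xi_1)=b$, forcing $b=0$. This is the final step of the paper's own proof, which otherwise takes a different and somewhat shorter route: rather than transporting from $\ko$, it lifts $\delta$ to $\tilde\delta$, enumerates the two possible summands via Example~\ref{ex:steenrod-explicit}, fixes $a=1$ by base-changing to an algebraically closed field (where $\ul{k}^M_1=0$, so a nonvanishing $\delta_*$ forces $a\ne 0$; note that ``$\delta\ne 0$ implies $\delta_*\ne 0$'' also requires a small argument, supplied in the paper by the splitting of $\HZ/2\wedge\ul{k}^M$), and kills $b$ as above. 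Your transport-from-$\ko$ route would also work once the $\delta^2=0$ argument is added, but it incurs the extra cost of verifying that the comparison square with $\ko$ commutes and identifies the respective Bocksteins, which the paper's elimination argument avoids entirely.
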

\begin{proof}
We claim that there exists a commutative square
\begin{equation*}
\begin{CD}
\ul{k}^M @>{\delta}>> \Sigma^{2,1} \ul{k}^M \\
@AAA                              @AAA \\
\HZ/2 @>{\tilde\delta}>> \Sigma^{2,1} \HZ/2,
\end{CD}
\end{equation*}
where the vertical maps are the canonical projections.
Indeed using the cofiber sequence \[ \Sigma^2 \HZ/2 \xrightarrow{\tau} \Sigma^{2,1} \HZ/2 \to \Sigma^{2,1} \ul{k}^M \to \Sigma^3 \HZ/2 \] it suffices to show that $[\HZ/2, \Sigma^3 \HZ/2] = 0$, which follows from the form of the motivic Steenrod algebra (see Example \ref{ex:steenrod-explicit}).
We have (again by Example \ref{ex:steenrod-explicit}) \[ [\HZ/2, \Sigma^{2,1}\HZ/2] \wequi \F_2\{\widehat{\xi_1}\} \oplus \ul{k}_1^M(k) \{\widehat{\tau_0}\}, \] so that $\tilde\delta = a \widehat{\xi_1} + b\widehat{\tau_0}$, for some $a \in \F_2$ and $b \in \ul{k}_1^M(k)$.
Comparison with Lemma \ref{lemm:action-formulas} (and noting that we are looking at $\tilde\delta_*^R$) yields \[ \delta_*(\tau_i) = b\xi_i \text{ and } \delta_*(\xi_i) = a\xi_{i-1}^2. \]
Since $\eta=0$ on $\HZ/2$, smashing the cofiber sequence for $\ul{K}^W/\eta \wequi \ul{k}^M$ with $\HZ/2$ we obtain a splitting \[ \pi_{**}(\HZ/2 \wedge \ul{k}^M) \wequi \pi_{**}(\HZ/2 \wedge \ul{K}^W) \oplus \pi_{**}(\Sigma^{2,1}\HZ/2 \wedge \ul{K}^W), \] with the first summand given by $\ker(\delta_*)$.
Since $\HZ/2 \wedge \ul{k}^M \ne 0$ (e.g. $\ul{\pi}_0(\ph)_* = \ul{k}^M_*$) we find that neither of the (isomorphic) summands can be trivial, and so $\delta_* \ne 0$.
After base change to an algebraically closed field we get $\ul{k}_1^M(\bar k) = 0$, so that the only way to get $\delta_* \ne 0$ is $a=1$.
Now we compute \[ \delta_*(\delta_*(\tau_1)) = \delta_*(b \xi_1) = b. \]
But $\delta^2=0$, so that $b=0$.
The result follows.
\end{proof}

\subsubsection{}
We now apply the spectral sequence with $E = \ko$.
Recall that we have determined $\pi_{**}(\HZ/2 \wedge \ul{k}^M)$ in \eqref{eq:HW-km}.
\begin{lemma}
The map $\ko \to \HZ/2$ induces an isomorphism \[ \pi_{**}(\ko \wedge \ul{k}^M) \wequi \ul{k}^M_*(k)[\xi_1^2, \xi_2, \dots, \tau_2, \tau_3, \dots]/(\tau_i^2 - \rho \tau_{i+1}) \hookrightarrow \pi_{**}(\HZ/2 \wedge \ul{k}^M). \]
The homology of $\delta_*$ acting on this is given by $\ul{k}^M_*(k)[\tau_2, \tau_3, \dots]/(\tau_i^2 - \rho \tau_{i+1})$.
\end{lemma}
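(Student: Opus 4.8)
The plan is to obtain the algebra $\pi_{**}(\ko \wedge \ul k^M)$ by reducing the computations of Proposition~\ref{prop:HZ2-ko} and \eqref{eq:HW-km} modulo $\tau$, and then to compute the $\delta_*$-homology by a Koszul-type acyclicity argument. Granting the first statement, the homology computation is essentially formal, so the bulk of the work is really the bookkeeping in the first part.

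For the first statement I would use $\ul k^M \wequi \HZ/2/\tau$ (see \eqref{eq:cof-tau-HZ2}) to write $\ko \wedge \ul k^M \wequi \cof(\tau\colon \Sigma^{0,-1}(\ko \wedge \HZ/2) \to \ko \wedge \HZ/2)$. By Proposition~\ref{prop:HZ2-ko}, $\pi_{**}(\ko \wedge \HZ/2)$ is a \emph{free} module over $\HZ/2_{**} = \ul{k}^M_*(k)[\tau]$ on the evident monomials, so $\tau$ is a non-zero-divisor on it and the long exact sequence of the cofiber collapses to $\pi_{**}(\ko \wedge \ul k^M) \wequi \pi_{**}(\ko \wedge \HZ/2)/\tau$. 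Moreover the inclusion $\pi_{**}(\ko \wedge \HZ/2) \hookrightarrow \pi_{**}(\HZ/2 \wedge \HZ/2)$ of Proposition~\ref{prop:HZ2-ko} has free cokernel over $\ul{k}^M_*(k)[\tau]$ (spanned by the complementary monomials), hence stays injective after $\otimes_{\ul{k}^M_*(k)[\tau]} \ul{k}^M_*(k)$; comparing with \eqref{eq:HW-km} and identifying the surviving generators and the surviving relations $\tau_i^2 = \rho\tau_{i+1}$ ($i \ge 2$) then exhibits $\pi_{**}(\ko \wedge \ul k^M)$ as the stated subalgebra $\ul{k}^M_*(k)[\xi_1^2, \xi_2, \dots, \tau_2, \tau_3, \dots]/(\tau_i^2 - \rho\tau_{i+1})$ of $\pi_{**}(\HZ/2 \wedge \ul k^M)$.

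For the homology, recall from Lemma~\ref{lemm:determine-delta} that $\delta_*$ is the $\ul{k}^M_*(k)$-linear derivation (it is the $d_1$-differential of the multiplicative spectral sequence~\eqref{eq:eta-ss2}) on $\pi_{**}(\HZ/2 \wedge \ul k^M)$ with $\delta_*(\tau_i)=0$ and $\delta_*(\xi_i)=\xi_{i-1}^2$, $\xi_0 := 1$; being natural, it restricts to the subalgebra $\pi_{**}(\ko \wedge \ul k^M)$, where $\delta_*(\xi_1^2)=0$, $\delta_*(\xi_i)=\xi_{i-1}^2 \in \pi_{**}(\ko \wedge \ul k^M)$ for $i \ge 2$, and $\delta_*(\tau_i)=0$ for $i \ge 2$. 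Since $\delta_*$ annihilates the $\tau_i$, the complex $(\pi_{**}(\ko \wedge \ul k^M), \delta_*)$ is the tensor product over $\ul{k}^M_*(k)$ of $(B, \delta_B)$ with $(T, 0)$, where $B = \ul{k}^M_*(k)[\xi_1^2, \xi_2, \xi_3, \dots]$ carries the above differential and $T = \ul{k}^M_*(k)[\tau_2, \tau_3, \dots]/(\tau_i^2 - \rho\tau_{i+1})$; as $T$ is free over $\ul{k}^M_*(k)$ (the squarefree monomials in the $\tau_i$ form a basis), one gets $H_*(\pi_{**}(\ko \wedge \ul k^M), \delta_*) \wequi H_*(B, \delta_B) \otimes_{\ul{k}^M_*(k)} T$. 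To finish I would show $H_*(B, \delta_B) = \ul{k}^M_*(k)$: writing $u = \xi_1^2$, $v_i = \xi_{i+1}$ (so $\delta_B u = 0$, $\delta_B v_1 = u$, $\delta_B v_i = v_{i-1}^2$ for $i \ge 2$), a straightforward induction — splitting $(B_n, \delta_B)$, $B_n = \ul{k}^M_*(k)[u, v_1, \dots, v_n]$, into even and odd powers of $v_n$ so as to present it as a sum of shifted mapping cones of multiplication by $v_{n-1}^2$ on $(B_{n-1}, \delta_B)$ — gives $H_*(B_n, \delta_B) = \ul{k}^M_*(k)[v_n^2]$ (the base case being the usual acyclicity of $\ul{k}^M_*(k)[u, v_1]$); passing to the colimit $B = \colim_n B_n$, each transition $\ul{k}^M_*(k)[v_n^2] \to \ul{k}^M_*(k)[v_{n+1}^2]$ kills $v_n^2 = \delta_B v_{n+1}$ and hence all of positive degree, so $H_*(B, \delta_B) = \ul{k}^M_*(k)$. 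Combining, $H_*(\pi_{**}(\ko \wedge \ul k^M), \delta_*) \wequi \ul{k}^M_*(k)[\tau_2, \tau_3, \dots]/(\tau_i^2 - \rho\tau_{i+1})$, with the $\tau_i$ surviving as permanent cycles.

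The main obstacle is the bookkeeping in the first part and in setting up $\delta_*$ on the subalgebra: pinning down exactly which generators of $\pi_{**}(\HZ/2 \wedge \HZ/2)$ (conjugate or not, and with which weights) survive to generate $\pi_{**}(\ko \wedge \ul k^M)$ after reducing modulo the correct copy of $\tau$, and verifying that $\delta_*$ acts there precisely by $\xi_i \mapsto \xi_{i-1}^2$, $\tau_i \mapsto 0$. Once that structure is in hand, the acyclicity of $B$ and hence the entire homology computation is routine.
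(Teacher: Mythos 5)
Your argument is correct. For the first part — identifying $\pi_{**}(\ko \wedge \ul{k}^M)$ — you take essentially the paper's route: smash the cofiber sequence $\Sigma^{0,-1}\HZ/2 \xrightarrow{\tau} \HZ/2 \to \ul{k}^M$ with $\ko$ and use freeness to kill the boundary map. Be a little more careful than your sketch suggests about \emph{which} $\tau$-action you are using: on $\pi_{**}(\ko\wedge\HZ/2)$ the cofiber map $\id_\ko\wedge\tau$ acts by the \emph{right} $\HZ/2_{**}$-module structure, which inside $\pi_{**}(\HZ/2\wedge\HZ/2)$ is multiplication by $\eta_R(\tau) = \tau + \rho\tau_0$ rather than by $\tau$ itself; the paper invokes Corollary~\ref{cor:right-gens} (the conjugate monomials give a \emph{right} $\HZ/2_{**}$-basis of the dual Steenrod algebra, which is not formal) precisely to make this injectivity and free-cokernel claim go through. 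With that caveat your reduction-mod-$\tau$ and Tor-vanishing argument is the same content.

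For the $\delta_*$-homology of $\F_2[\xi_1^2, \xi_2, \dots]$ your argument genuinely diverges from the paper's. The paper exhibits the full chain complex over $\F_2$ as a tensor product $\bigotimes_i F_i$ of the small subcomplexes $F_i = \F_2\{\xi_i^{2n}, \xi_i^{2n}\xi_{i+1}\}_{n\ge 0}$, each acyclic with $H_* = \F_2\{1\}$, and concludes by K\"unneth after checking that every monomial factors uniquely through the $F_i$'s. You instead compute $H_*(B_n)$ for the truncations $B_n = \ul{k}^M_*(k)[\xi_1^2, \xi_2, \dots, \xi_{n+1}]$ by induction on $n$, splitting by parity of the top generator to realise $B_n$ as a direct sum of shifted mapping cones of multiplication by $v_{n-1}^2$ on $B_{n-1}$, so that $H_*(B_n) = \ul{k}^M_*(k)[v_n^2]$, and then passing to the colimit where each transition map kills the surviving polynomial generator. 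Both routes are sound; the paper's Adams-style tensor decomposition is shorter once one has guessed the right $F_i$'s, while your induction is more mechanical and makes the collapse in the colimit transparent. One small wording slip: your base case $B_1 = \ul{k}^M_*(k)[u, v_1]$ is not \emph{acyclic} — its $\delta_B$-homology is $\ul{k}^M_*(k)[v_1^2]$, which is exactly what your inductive formula asserts and which only becomes trivial after stabilisation.
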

\begin{proof}
Applying antipodes in Lemma \ref{prop:HZ2-ko}, we find that $\pi_{**}(\ko \wedge \HZ/2) \hookrightarrow \pi_{**}(\HZ/2 \wedge \HZ/2)$ is the \emph{right} $\HZ/2_{**}$-algebra generated by $\xi_1^2, \xi_2, \dots, \tau_2, \tau_3, \dots$.
Since these generators form part of a right $\HZ/2_{**}$-basis (see Corollary \ref{cor:right-gens}), multiplication by $\eta_R(\tau)$ is injective and we obtain $\pi_{**}(\ko \wedge \ul{k}^M)$ as the quotient.
This proves the first claim.

It follows from Remark \ref{rmk:multiplicative-ss} that $\delta_*$ is a derivation, which by Lemma \ref{lemm:determine-delta} satisfies $\delta_*(\tau_i) = 0$, $\delta_*(\xi_i) = \xi_{i-1}^2$ (and also $\delta_*(\ul{k}_*^M(k)) = 0$, since these elements come from the sphere).
This implies that the homology of $\delta_*$ is given by \[ \ul{k}_*^M(k)[\tau_2, \tau_3, \dots]/(\tau_i^2 - \rho \tau_{i+1}) \otimes H', \] where $H'$ is the homology of $\delta_*$ restricted to the subring \[ \F_2[\xi_1^2, \xi_2, \dots]. \]
We can determine this as follows\discuss{better argument?}, adapting \cite[Proof of \S3 Lemma 16.9]{adams1995stable}.
Let $F_i \subset \F_2[\xi_1^2, \xi_2, \dots]$ be the $\F_2$-vector space with basis $\{\xi_i^{2n}, \xi_i^{2n}\xi_{i+1}\}_{n \ge 0}$.
Then $\delta_*(F_i) \subset F_i$ and $H_*(F_i, \delta) = \F_2\{1\}$.
Since we are working over a field, \[ H_*\left(\bigotimes_i F_i\right) \wequi \bigotimes_i H_*(F_i) \wequi \F_2\{1\}. \]
It thus remains to observe that $\bigotimes_i F_i = \F_2[\xi_1^2, \xi_2, \dots]$; equivalently every monomial (in which $\xi_1$ occurs to even power) can be written uniquely as a product $\prod_i m_i$, with $m_i$ one of the basis elements of $F_i$.
This is easily checked.\NB{Take $m_i = \xi_i^{2n}\xi_{i+1}^\epsilon$, where $2n + \epsilon'$ is the exponent of $\xi_i$, $\epsilon, \epsilon' \in \{0,1\}$ and $\epsilon$ is the parity of the exponent of $\xi_{i+1}$.}
\end{proof}

\begin{lemma}
The spectral sequence $E_*(\ko \wedge \ul{K}^W)^{*,*,*}$ collapses at $E_2$.
\end{lemma}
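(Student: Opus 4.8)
The plan is to show that the $E_2$-page, which we have just computed to be
\[ E_2(\ko \wedge \ul{K}^W)^{*,*,*} \wequi \ul{k}^M_*(k)[\tau_2, \tau_3, \dots]/(\tau_i^2 - \rho\tau_{i+1})[h]\big/ h\cdot\im(\delta_*), \]
contains no room for differentials or extension problems beyond what is forced. First I would observe that all the algebra generators $\tau_i$ ($i\ge 2$) appearing on the $E_2$-page live in filtration $f=0$ (they are classes in $\pi_{**}(\ko\wedge\ul{k}^M)$, hence in $E_1^{*,0,*}$), and $h$ lives in filtration $f=1$; moreover the whole $E_2$-page is concentrated in Milnor--Witt stem $s=0$. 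Indeed every class $\tau_i$ has bidegree $|\tau_i|=(2^{i+1}-1,2^i-1)$, which has topological dimension $=$ weight $+$ something — precisely, all of $\pi_{**}(\ko\wedge\ul{k}^M)$ sits in the region $s = p - q = 0$ after the reindexing that makes $\ul{k}^M_*(k)$ the stem-$0$ part. Since $d_r\colon E_r^{s,f,w}\to E_r^{s-1,f+r,w}$ strictly decreases $s$, and the target line $s=-1$ is empty, every differential vanishes and the spectral sequence collapses at $E_2$.

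More carefully, what I would actually write is: the spectral sequence of Lemma~\ref{lemm:reindexing-convergence} for $E=\ko$ and $G=\ko\wedge\ul{K}^W$ has $E_1^{s,f,w}=\pi_s((\ko\wedge\ul{k}^M)_*)_{w+f}$, which by the previous lemma is concentrated in the single internal degree $s=0$ (in the indexing where $\pi_{**}(\ko\wedge\ul{k}^M)$ has its $\ul{k}^M_*(k)$-algebra generators in ``stem zero''). The $d_r$ differential lowers $s$ by one, so it maps into a trivial group; hence $E_2=E_\infty$. This is a purely formal sparseness argument once the computation of $\pi_{**}(\ko\wedge\ul{k}^M)$ and of the homology of $\delta_*$ is in hand, and Remark~\ref{rmk:multiplicative-ss} tells us the $d_1$ differential is exactly $\delta_*$ so that the $E_2$-page is as claimed.

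The one subtlety — and the step I expect to be the main obstacle to stating cleanly — is making sure the grading bookkeeping genuinely forces $s$-concentration. The generators $\xi_i$ and $\tau_i$ have weights that grow, and in the $\eta$-Bockstein reindexing one has to confirm that $\pi_{**}(\ko\wedge\ul{k}^M)$, viewed via the identification with a subquotient of $\HZ/2_{**}\HZ/2$ and then killed by $\tau$, really does vanish outside $s=0$; equivalently that $\ko\wedge\ul{k}^M$ is concentrated in a single ``Milnor--Witt stem'' (it is an $\HZ/2$-module, $\ul{k}^M$ is a homotopy module in the heart, and smashing preserves the relevant connectivity, so $\ko\wedge\ul{k}^M \in \SH(k)^\heart[\text{stem }0]$ up to the weight grading). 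Once that is nailed down the collapse is immediate. I would phrase the proof as: ``By the preceding lemma, $E_1(\ko\wedge\ul{K}^W)^{s,f,w}$ vanishes unless $s=0$; since $d_r$ decreases $s$, all differentials vanish, so $E_2=E_\infty$.'' I should also remark that there may still be multiplicative extension problems in passing from $E_\infty$ to $\pi_*((\ko\wedge\ul{K}^W)_\eta^\comp)_*$ — but collapse of the spectral sequence is all that is asserted here, and those extensions (the relations $t_i^2=(2+r_i)t_{i+1}$ of Theorem~\ref{thm:HW-kw-summary}) are handled separately afterwards.
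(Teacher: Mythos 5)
The central claim of your argument — that the $E_2$-page is concentrated in Milnor--Witt stem $s=0$ — is false, and this makes the proposed proof collapse. In the indexing of Lemma~\ref{lemm:reindexing-convergence}, $E_1^{s,f,w}=\pi_s(\ko\wedge\ul{k}^M)_{w+f}$, and the relevant conversion is $\pi_{p,q}=\pi_{p-q}(\ph)_{-q}$, so a class of topological bidegree $(p,q)$ lives in $s=p-q$. For $\tau_i$ this is $s=(2^{i+1}-1)-(2^i-1)=2^i$, and for $\xi_i$ it is $s=2^i-1$, so $\pi_{**}(\ko\wedge\ul{k}^M)$ is spread out over many stems (in particular $\ko\wedge\ul{k}^M$ is nowhere near the heart). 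Your fallback justification (``it is an $\HZ/2$-module, $\ul{k}^M$ is a homotopy module in the heart, and smashing preserves the relevant connectivity'') does not establish heart-concentration: $\ko$ has arbitrarily high homotopy sheaves, and smashing with $\ul{k}^M$ does not truncate them. So the sparseness you invoke simply isn't there, and ``$d_r$ lands in the empty $s=-1$ line'' has no content. (A smaller point: your displayed formula for $E_2$ replaces the filtration-zero piece $Z(\delta_*)$ by $H(\delta_*)$; only the $f>0$ part is $h^f\cdot H(\delta_*)$.)

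What is actually true, and what the paper uses, is a weaker sparseness: on the $E_2$-page the \emph{positive-filtration} part is $h\cdot H(\delta_*)[h]$, and $H(\delta_*)=\ul{k}^M_*(k)[\tau_2,\tau_3,\dots]/(\tau_i^2-\rho\tau_{i+1})$ is generated over $\ul{k}^M_*(k)$ (which sits in $s=0$) by the $\tau_i$ with $i\ge 2$, which have $s=2^i\equiv 0\pmod 4$; hence $E_2^{*,f>0,*}$ is concentrated in stems $s\equiv 0\pmod 4$. Since $d_r$ lowers $s$ by $1$, no differential can emanate from positive filtration. To kill differentials from filtration zero one then needs a second step: by multiplicativity, if $d_r(x)=y$ then $d_r(hx)=hy$; but $hx$ has positive filtration so $hy=0$ by the previous paragraph, and multiplication by $h$ is injective on $E_2^{*,f>0,*}$ (where $y$ lives, since $f(y)>0$), forcing $y=0$. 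This $h$-multiplicativity argument is essential and is entirely missing from your proposal; it cannot be bypassed by the blanket $s=0$ claim, because that claim is wrong.

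Your final caveat (``The one subtlety... is making sure the grading bookkeeping genuinely forces $s$-concentration'') correctly identifies where the danger lies — and indeed that is exactly where the argument breaks. Going forward, I'd suggest explicitly computing the $(s,f,w)$-degrees of a couple of the $\tau_i$, $\xi_i$ at the outset of any such sparseness argument, rather than relying on a structural heuristic about heart-concentration.
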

In other words, there are no further differentials, and $E_2 = E_\infty$.
In particular the spectral sequence converges strongly \cite[Theorem 7.1]{boardman1999conditionally}.
\begin{proof}
We have $E_2(\ko \wedge \ul{k}^W) = Z(\delta_*)[h]/h \cdot \im(\delta_*)$ and so in particular $E_2(\ko \wedge \ul{K}^W)^{*,f>0,*} = h\cdot H(\delta_*)[h]$.
Recall that the generator $\tau_i$ has bidegree $(2^{i+1}-1,2^i-1)$, so that $\tau_i \in \pi_{2^i}(\HZ/2 \wedge \HZ/2)_{1-2^i}$.
Thus $\tau_i$ defines an element with $s=2^i$ and $w=1-2^i$ (and $f=0$) in our spectral sequence.
Similarly $\ul{k}_*^M(k)$ yields elements with $s=0,w=*,f=0$.
Since $i \ge 2$, it follows (using that $|h|=(0,1,-1)$) that $E_2(\ko \wedge \ul{K}^W)^{*,f>0,*}$ is concentrated in stems $s \equiv 0 \pmod{4}$.
Since all differentials lower $s$ by $1$, we find that any differential emanating from positive filtration vanishes.

Now we prove by induction on $r$ that all differentials vanish on $E_r$, starting with $r=2$.
By the induction hypothesis we have $E_r = E_2$.
Let $d_r(x) = y$ be any differential.
Then $d_r(hx) = hd_r(x) = hy$ is another differential, $h$ being a permanent cycle (see Example \ref{ex:ss-sphere}).
Since $hx$ has positive filtration $f$ (since $f(x) \ge 0$ and $f(h) = 1$), by the above we have $hy = 0$.
Since multiplication by $h$ is injective on $E_2^{*,f>0,*} = E_r^{*,f>0,*}$ and $f(y) > 0$, we deduce that $y=0$.
This was to be shown.
\end{proof}

Now we invert $\eta$ to obtain a new filtration, as in \eqref{eq:eta-inverted-filt}.
\begin{lemma}
The filtration $F^\bullet \pi_*((\ko \wedge \ul{K}^W)_\eta^\comp[\eta^{-1}])_*$ is complete, Hausdorff and exhaustive.
\end{lemma}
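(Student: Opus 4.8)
The plan is to read this off from the strong convergence of the spectral sequence $E_*(\ko\wedge\ul K^W)$. By the preceding lemma this spectral sequence collapses at $E_2$, so $RE_\infty=0$; combined with the conditional convergence of Lemma~\ref{lemm:reindexing-convergence} this gives strong convergence \cite[Theorem~7.1]{boardman1999conditionally}. Hence for every pair $(s,w)$ the filtration $F^\bullet\pi_s((\ko\wedge\ul K^W)_\eta^\comp)_w$ is exhaustive, Hausdorff and complete. Write $M_i:=\pi_s((\ko\wedge\ul K^W)_\eta^\comp)_{-i}$, so that multiplication by $\eta$ gives maps $M_i\to M_{i+1}$ raising filtration by one, $\colim_i M_i=\pi_s((\ko\wedge\ul K^W)_\eta^\comp[\eta^{-1}])$, and $F^n$ of this colimit equals $\colim_i F^{n+i}M_i$ by \eqref{eq:eta-inverted-filt}. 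Exhaustiveness is then immediate: since $F^0 M_i=M_i$ (the spectral sequence is concentrated in filtrations $\ge 0$), the $i$-th term of the colimit defining $F^{-i}$ of the abutment is all of $M_i$, and $\bigcup_i\im(M_i\to\colim_i M_i)$ is everything.

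For the remaining two properties the key point is that multiplication by $\eta$ acts on associated graded groups as multiplication by $h$, and --- because the spectral sequence collapses --- this is invertible in positive filtration: $E_\infty^{s,f,w}=H_s(\delta_*)_{f+w}$ for $f\ge 1$, and multiplication by $h$ identifies $E_\infty^{s,f,w}$ with $E_\infty^{s,f+1,w-1}$ whenever $f\ge 1$. Fix $n$ and choose $i$ with $n+i\ge 1$. For $j\ge i$ the map $\eta\colon F^{n+j}M_j\to F^{n+j+1}M_{j+1}$ is a morphism of filtered groups whose source and target are exhaustive, Hausdorff and complete (subgroups of $M_j$, resp. $M_{j+1}$, with the induced filtration, which is a tail of a complete Hausdorff exhaustive filtration), and it induces an isomorphism on associated graded by the invertibility just noted; hence it is an isomorphism of filtered groups by Lemma~\ref{lemm:filtration-comparison}. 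Therefore the colimit $F^n\pi_s((\ko\wedge\ul K^W)_\eta^\comp[\eta^{-1}])$ is attained already at stage $j=i$, i.e. it is carried isomorphically onto $F^{n+i}M_i$, compatibly with the inclusions $F^{n+1+i}M_i\subseteq F^{n+i}M_i$ as $n$ ranges over $n\ge 1-i$.

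Consequently $\bigcap_n F^n\pi_s((\ko\wedge\ul K^W)_\eta^\comp[\eta^{-1}])$ agrees with its tail $\bigcap_{n\ge 1-i}F^n\cong\bigcap_{m\ge 1}F^m M_i=\bigcap_m F^m M_i=0$ by Hausdorffness of $M_i$, and likewise $\limone_n F^n\pi_s((\ko\wedge\ul K^W)_\eta^\comp[\eta^{-1}])=\limone_m F^m M_i=0$ by completeness of $M_i$, since a $\limone$ is unchanged under passage to a cofinal subtower. The one genuinely essential ingredient --- and the step I expect to be the real content --- is the invertibility of $h$ in positive filtration, i.e. the collapse of the spectral sequence established in the previous lemma: without it, $h$-torsion or $h$-divisibility on $E_\infty$ could make the colimit defining the $\eta$-inverted filtration fail to stabilize, and one could lose completeness or the Hausdorff property.
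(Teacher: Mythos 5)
Your proof is correct and takes essentially the same approach as the paper's: both hinge on the observation that multiplication by $\eta$ induces an isomorphism $F^i \to F^{i+1}$ in positive filtration (deduced from Lemma~\ref{lemm:filtration-comparison} together with the fact that $h$ acts invertibly on $E_\infty^{*,f>0,*}$, i.e.\ the collapse established in the previous lemma), and then conclude that the tower of $F^n$'s on the $\eta$-inverted groups has a cofinal subtower identified with the (complete, Hausdorff) tower of $F^m M_i$'s. The paper phrases this as ``$R\lim_i F^i$ is unchanged after inverting $\eta$'' whereas you track the stabilization of the colimit defining $F^n$ more explicitly and then check $\bigcap_n F^n = 0$ and $\limone_n F^n = 0$ separately, but the content is the same.
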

\begin{proof}
By strong convergence of the spectral sequence, the filtration $F^\bullet \pi_*(\ko \wedge \ul{K}^W)_\eta^\comp)_*$ is complete, Hausdorff and exhaustive.
It is clear that exhaustive filtrations are stable under colimits.
We thus need to show completeness and Hausdorffness, or in other words that \[ R\lim_i F^i \pi_*((\ko \wedge \ul{K}^W)_\eta^\comp[\eta^{-1}])_* \wequi 0. \]
We claim that for $i > 0$ the map \[ \eta: F^i \pi_*((\ko \wedge \ul{K}^W)_\eta)_* \to F^{i+1} \pi_*((\ko \wedge \ul{K}^W)_\eta)_{*-1} \] is an isomorphism.
Indeed giving $F^i \pi_*((\ko \wedge \ul{K}^W)_\eta)_*$ the obvious induced filtration, this becomes a morphism of complete, Hausdorff, exhaustively filtered groups inducing an isomorphism on associated gradeds (given by $h: h^i H(\delta_*)[h] \to h^{i+1}H(\delta_*)[h]$), so this follows from Lemma \ref{lemm:filtration-comparison}.
Hence for $i>0$ the canonical map \[ F^i \pi_*((\ko \wedge \ul{K}^W)_\eta)_* \to F^i \pi_*((\ko \wedge \ul{K}^W)_\eta[\eta^{-1}])_* \] is an isomorphism
Thus \[ 0 \wequi R\lim_i F^i \pi_*((\ko \wedge \ul{K}^W)_\eta^\comp)_*  \xrightarrow{\wequi} R\lim_i F^i \pi_*((\ko \wedge \ul{K}^W)_\eta^\comp[\eta^{-1}])_*, \] and the result follows (here the first equivalence holds since the filtration on $\pi_*((\ko \wedge \ul{K}^W)_\eta^\comp)_*$ is complete and Hausdorff, as noted in the beginning).
\end{proof}

\subsubsection{Proof of Theorem \ref{thm:HW-kw-summary}}
We first compute $\pi_*((\ko \wedge \ul{K}^W)_\eta^\comp[\eta^{-1}])_*$.
This is a filtered graded module over \[ \pi_*((\ul{K}^W)_\eta^\comp[\eta^{-1}])_* \wequi \W(k)_\I^\comp[\eta^\pm]. \]
Here a priori $\W(k)_\I^\comp$ must mean the derived $\I$-adic completion, but since $\vcd_2(k) < \infty$ the $\I$-adic and $2$-adic completion agree (by Lemma \ref{lemm:witt-completion}), and the ordinary completion agrees with the derived completion (by Lemmas \ref{lemm:witt-torsion} and \ref{lemm:derived-p-comp}).
Note that $\gr^\bullet \pi_*((\ul{K}^W)_\eta^\comp[\eta^{-1}])_* = \ul{k}_*^M(k)[h^\pm]$.
Put $u_{i-2} = h^{1-2^i} \tau_i$; we have $(s,f,w)(u_i) = 4(2^i, 1-2^i, 0)$.
For $n = \sum_i \epsilon_i 2^i$ put \[ y_n = \prod_i u_i^{\epsilon_i}. \]
Observe that \[ \gr^\bullet \pi_*((\ko \wedge \ul{K}^W)_\eta^\comp[\eta^{-1}])_* \wequi \ul{k}_*^M(k)[h^\pm, \tau_2, \tau_3, \dots]/(\tau_i^2 - \rho\tau_{i+1}) \wequi \ul{k}_*^M(k)[h^\pm]\{y_0, y_1, \dots \}. \]
This is a free module on $\ul{k}_*^M[h^\pm]$ with at most one generator in every degree, and all our filtrations are complete, Hausdorff and exhaustive, so by Corollary \ref{corr:filtered-free-lifting} we get \[ \pi_*((\ko \wedge \ul{K}^W)_\eta^\comp[\eta^{-1}])_* \wequi \W(k)_\I^\comp[\eta^\pm]\{x_0, x_1, \dots \}, \] where $x_i$ is a lift of $y_i$.

It follows from Corollary \ref{cor:K-W-eff} and Theorem \ref{thm:eta-complete} that $(\ko \wedge \ul{K}^W)_2^\comp \wequi (\ko \wedge \ul{K}^W)_{2,\eta}^\comp$ and hence \[ ((\ko \wedge \ul{K}^W)_\eta^\comp[\eta^{-1}])_2^\comp \wequi ((\ko \wedge \ul{K}^W)_{2,\eta}^\comp[\eta^{-1}])_2^\comp \wequi ((\ko \wedge \ul{K}^W)_2^\comp[\eta^{-1}])_2^\comp \wequi (\ko \wedge \ul{K}^W[\eta^{-1}])_2^\comp. \]
Since $\ko[\eta^{-1}] \wequi \kw$ (Lemma \ref{lemm:ko-kw}) and $\ul{K}^W[\eta^{-1}] \wequi \HW$, we deduce that \[ (\kw \wedge \HW)_2^\comp \wequi ((\ko \wedge \ul{K}^W)_\eta^\comp[\eta^{-1}])_2^\comp. \]
Note that the $\W(k)_\I^\comp$ is derived $2$-complete, and hence all of $\pi_{*}((\ko \wedge \ul{K}^W)_\eta^\comp[\eta^{-1}])$ is.
In other words \[ \pi_{*}((\ko \wedge \ul{K}^W)_\eta^\comp[\eta^{-1}]) \wequi \pi_{*}(((\ko \wedge \ul{K}^W)_\eta^\comp[\eta^{-1}])_2^\comp) \] (by Lemma \ref{lemm:completion-homotopy}).
We have thus computed \[ \pi_{*}((\HW \wedge \kw)_2^\comp) \wequi \W(k)_\I^\comp\{x_0, x_1, \dots\}. \]

(1) Since our spectral sequence as well as the motivic dual Steenrod algebra are stable under base change (Remark \ref{rmk:basechange-ss}), so are the $y_i$.
Corollary \ref{corr:filtered-free-lifting} shows that any set of lifts will generate.

(2) Let $t_i$ be a lift of $u_i$.
Then $t_i^2$ is detected by $u_i^2 = u_{i+1}h\rho$.
Since \[ h\rho = -\eta[-1] = -(\lra{-1}-1) = 2\in \W(k) \] we have \[ t_i^2 = 2t_{i+1} + (\text{higher filtration}). \]

(3) True by construction.

(5) Clearly $1$ lifts $y_0 = 1$, whence the claim.

This concludes the proof.

\section{Main theorem} \label{sec:main}
\begin{lemma} \label{lemm:unit-connectivity}
The unit map $u: \1[\eta^{-1}] \to \kw$ is $1$-connected: $\cof(u) \in \SH(k)_{\ge 2}$.
\end{lemma}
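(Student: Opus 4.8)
The plan is to compare homotopy sheaves through low degrees. First I would recall that $\ul{\pi}_0(\1[\eta^{-1}])_* \wequi \ul{W}[\eta^{\pm 1}]$ (see \S\ref{subsub:pi0-sphere}) and that $\ul{\pi}_*(\kw) \wequi \ul{W}[\beta]$ with $\beta$ in degree $4$, so in particular $\ul{\pi}_0(\kw)_* \wequi \ul{W}$ (concentrated in weight $0$, since $\kw$ is $\eta$-periodic the weight is irrelevant after the identification $\ul{\pi}_i(\kw)_j \wequi \ul{\pi}_i(\kw)_0$). The key point is that $\1[\eta^{-1}]$ is connective for the homotopy $t$-structure (being a localization of $\1$, which is connective, at the map $\eta$ which preserves $\SH(k)_{\ge 0}$), and likewise $\kw = \KO[\eta^{-1}]_{\ge 0}$ is connective by construction. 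So $\cof(u)$ is automatically in $\SH(k)_{\ge 0}$, and to show it lies in $\SH(k)_{\ge 2}$ it suffices to check that $u$ induces an isomorphism on $\ul{\pi}_0(\ph)_*$ and an epimorphism on $\ul{\pi}_1(\ph)_*$.

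For the statement on $\ul{\pi}_0$: the unit map $\1 \to \KO$ induces the canonical map $\ul{K}^{MW}_* \to \ul{\pi}_0(\KO)_*$, and after inverting $\eta$ the source becomes $\ul{W}[\eta^{\pm 1}]$ while $\ul{\pi}_0(\kw)_* \wequi \ul{W}$; the induced map is the standard identification and hence an isomorphism. (Concretely, on generic stalks over a finitely generated field extension $K/k$ this is the identity of $\W(K)$, and both sides are unramified sheaves, so isomorphism on generic stalks gives isomorphism of sheaves.) For the epimorphism on $\ul{\pi}_1$: here I would argue that $\ul{\pi}_1(\kw)_*$ is a quotient (or at least is hit by) $\ul{\pi}_1(\1[\eta^{-1}])_*$. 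One approach: use that $\kw$ receives a ring map from $\1[\eta^{-1}]$ and is generated in low degrees by the unit together with $\beta \in \pi_4$; since $\beta$ contributes nothing to $\ul{\pi}_1$, and the next potential contribution ($\ul{\pi}_*(\kw)$ is $\ul{W}$-periodic of period $4$ with $\ul{\pi}_1 = \ul{\pi}_2 = \ul{\pi}_3 = 0$ as \emph{sheaves}), we actually have $\ul{\pi}_1(\kw)_* = 0$. Then epimorphicity is trivial and in fact $\cof(u) \in \SH(k)_{\ge 2}$ reduces just to the $\ul{\pi}_0$-isomorphism plus vanishing of $\ul{\pi}_1(\kw)$.

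So the cleanest route is: (a) $\1[\eta^{-1}], \kw \in \SH(k)_{\ge 0}$; (b) $\ul{\pi}_0(u)_*$ is an isomorphism, checked on generic stalks using unramifiedness and Morel's computation; (c) $\ul{\pi}_1(\kw)_* = 0$, which follows from $\ul{\pi}_*(\kw) \wequi \ul{W}[\beta]$ with $|\beta| = 4$. Combining (a)--(c), the long exact sequence of homotopy sheaves for $\1[\eta^{-1}] \xrightarrow{u} \kw \to \cof(u)$ gives $\ul{\pi}_i(\cof(u))_* = 0$ for $i \le 1$, i.e. $\cof(u) \in \SH(k)_{\ge 2}$.

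The main obstacle I anticipate is pinning down (b) with full rigor: one must know that the unit $\ul{K}^{MW}_* \to \ul{\pi}_0(\KO)_*$ becomes an isomorphism after $\eta$-inversion onto $\ul{\pi}_0(\kw)_* \wequi \ul{W}[\eta^{\pm1}]$, rather than merely a surjection or something with kernel. This is where I would lean on the identification $\ul{\pi}_0(\1[\eta^{-1}])_* \wequi \ul{W}[\eta^{\pm 1}]$ together with the fact that $\pi_{0,0}(\1) \to \pi_{0,0}(\KO) \wequi \GW(k)$ is an isomorphism (both are $\GW(k)$, compatibly), so after inverting $\eta$ both sides are $\W(k)$ and the map is the identity on generic stalks; unramifiedness of homotopy sheaves (\S\ref{subsec:t-structure}) upgrades this to an isomorphism of sheaves. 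If instead one only wants the weaker conclusion directly, note that $1$-connectedness of $\cof(u)$ is exactly the statement needed downstream, and only requires iso on $\ul\pi_0$ and epi on $\ul\pi_1$, both of which are immediate once $\ul\pi_1(\kw) = 0$ and the $\ul\pi_0$-identification are in hand.
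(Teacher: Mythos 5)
Your proof is correct and follows exactly the route the paper intends: the paper's proof consists of the single sentence ``Immediate from examination of the homotopy sheaves (see \S\ref{subsec:KW} and \S\ref{subsub:pi0-sphere}),'' and what you have written out — connectivity of both sides, $\ul{\pi}_0(\1[\eta^{-1}])_* \wequi \ul{W}[\eta^{\pm 1}]$, $\ul{\pi}_*(\kw) \wequi \ul{W}[\beta]$ with $|\beta|=4$ so $\ul{\pi}_1(\kw) = 0$, and the long exact sequence of homotopy sheaves — is the examination the paper leaves implicit. Same argument, just spelled out.
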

\begin{proof}
Immediate from examination of the homotopy sheaves (see \S\ref{subsec:KW} and \S\ref{subsub:pi0-sphere}).
\end{proof}

\begin{corollary} \label{cor:construct-phi}
\begin{enumerate}
\item Let $\adamspsi: \kw_{(2)} \to \kw_{(2)}$ be any map such that $\adamspsi(1) = 1$.
  Then in the following commutative diagram, the dotted arrow can be filled uniquely up to homotopy.
\begin{equation*}
\begin{tikzcd}
 & \Sigma^4 \kw_{(2)} \ar[d, "\beta"] \\
\kw_{(2)} \ar[r, "\adamspsi-1"] \ar[ru, dotted, "\adamsphi"]  & \kw_{(2)}
\end{tikzcd}
\end{equation*}

\item Let $\adamsphi: \kw_{(2)} \to \Sigma^4 \kw_{(2)}$ be any map.
  Then in the following commutative diagram, the dotted arrow can be filled uniquely up to homotopy.
\begin{equation*}
\begin{tikzcd}
  & \1[\eta^{-1}]_{(2)} \ar[d, "u"] \ar[dl, dotted] \\
\fib(\adamsphi) \ar[r] & \kw_{(2)} \ar[r, "\adamsphi"] & \Sigma^4 \kw_{(2)}
\end{tikzcd}
\end{equation*}
\end{enumerate}
\end{corollary}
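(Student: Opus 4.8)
The plan is to treat each part as a lifting problem against a fiber sequence, reading existence off the vanishing of a single obstruction group and uniqueness off the vanishing of the group one degree higher; the homotopy $t$-structure and the computation $\pi_*\kw = \W(k)[\beta]$ recalled in \S\ref{subsec:KW} do essentially all the work.

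For (1), I would start from the cofiber sequence $\Sigma^4\kw_{(2)}\xrightarrow{\beta}\kw_{(2)}\xrightarrow{\pi}\HW_{(2)}$ supplied by \eqref{eq:HW-kw-trunc}, in which $\pi$ is the truncation $\kw_{(2)}\to\tau_{\le 0}\kw_{(2)}$. Applying $\Map_{\SH(k)}(\kw_{(2)},-)$ produces a fiber sequence of spaces, so $\adamspsi-1$ lifts along $\beta_*$ exactly when the composite $\pi\circ(\adamspsi-1)$ is null in $[\kw_{(2)},\HW_{(2)}]$, and in that case the homotopy classes of such lifts form a quotient of $\pi_1\Map_{\SH(k)}(\kw_{(2)},\HW_{(2)}) = [\Sigma^1\kw_{(2)},\HW_{(2)}]$. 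Uniqueness is then immediate: $\Sigma^1\kw_{(2)}\in\SH(k)_{\ge 1}$ while $\HW_{(2)}\in\SH(k)_{\le 0}$, so this group vanishes. For existence I would note that a map $\kw_{(2)}\to\HW_{(2)}$ into a $0$-truncated target is determined by its $\tau_{\le 0}$, i.e.\ by the induced endomorphism of $\ul\pi_0(\kw_{(2)})_0 = \ul W_{(2)}$ (using $\HW_{(2)} \simeq \tau_{\le 0}\kw_{(2)}$ and $\ul\pi_0(\kw_{(2)})_0\simeq\ul\pi_0(\HW_{(2)})_0$); and $\adamspsi-1$, being a map of spectra, is $\ul\pi_0(\1)_0$-linear on $\ul\pi_0(\kw_{(2)})_0$, hence $\ul W_{(2)}$-linear, hence multiplication by $(\adamspsi-1)(1) = \adamspsi(1)-1 = 0$. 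Thus $\pi\circ(\adamspsi-1)\simeq 0$ and the lift $\adamsphi$ exists.

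For (2), the relevant fiber sequence is $\fib(\adamsphi)\to\kw_{(2)}\xrightarrow{\adamsphi}\Sigma^4\kw_{(2)}$ by construction. Applying $\Map_{\SH(k)}(\1[\eta^{-1}]_{(2)},-)$, the unit $u$ lifts to $\fib(\adamsphi)$ exactly when $\adamsphi\circ u\simeq 0$, with the homotopy classes of lifts controlled by $\pi_1\Map_{\SH(k)}(\1[\eta^{-1}]_{(2)},\Sigma^4\kw_{(2)}) = [\1[\eta^{-1}]_{(2)},\Sigma^3\kw_{(2)}]$. Since $\kw = \KO[\eta^{-1}]_{\ge 0}$ is $\eta$-periodic, smashing with $\Sigma^n\kw_{(2)}$ does not see the $\eta$-periodization, so $[\1[\eta^{-1}]_{(2)},\Sigma^n\kw_{(2)}]\cong[\1_{(2)},\Sigma^n\kw_{(2)}]\cong\pi_{-n}(\kw_{(2)})$; and $\pi_*\kw_{(2)} = \W(k)_{(2)}[\beta]$ with $|\beta|=4$ forces $\pi_{-4}\kw_{(2)} = 0 = \pi_{-3}\kw_{(2)}$. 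The first vanishing gives existence of the lift for \emph{every} $\adamsphi$, the second gives uniqueness.

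The only step that takes any thought is the vanishing of the obstruction $\pi\circ(\adamspsi-1)$ in (1): identifying $[\kw_{(2)},\HW_{(2)}]$ with $\ul W_{(2)}$-linear endomorphisms of $\ul\pi_0(\kw_{(2)})_0$ and tracing through the hypothesis $\adamspsi(1)=1$. Everything else is connectivity bookkeeping in the homotopy $t$-structure together with the known homotopy ring of $\kw$.
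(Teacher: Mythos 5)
Your argument is correct and follows essentially the same route as the paper: both parts are read off a fiber sequence by checking that an obstruction group and a uniqueness group vanish, using the homotopy $t$-structure and $\pi_*\kw_{(2)} = \W(k)_{(2)}[\beta]$. The only cosmetic difference is in part (1): to see that $\pi\circ(\adamspsi-1)$ vanishes you factor through $\tau_{\le 0}\kw_{(2)}\wequi\HW_{(2)}$ and argue that an endomorphism of $\HW_{(2)}$ is determined by its effect on $\ul\pi_0(\ph)_0$ (which, strictly speaking, also uses that $\HW_{(2)}$ is $\eta$-periodic so that the weight-$0$ data determines the whole homotopy module), whereas the paper reduces via Lemma~\ref{lemm:unit-connectivity} to $[\1[\eta^{-1}],\HW_{(2)}]\wequi[\1,\HW_{(2)}]\wequi\W(k)_{(2)}$ and evaluates at $1$; these are two phrasings of the same reduction.
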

\begin{proof}
Note that ($*$) by Lemma \ref{lemm:unit-connectivity}, if $E \in \SH(k)_{\le 0}$ then $[\kw, E] \wequi [\1[\eta^{-1}], E]$.
We have $\cof(\beta: \Sigma^4 \kw_{(2)} \to \kw_{(2)}) \wequi \HW_{(2)}$ (see \eqref{eq:HW-kw-trunc}); hence $\adamsphi$ exists if (a) $\kw \xrightarrow{\adamspsi-1} \kw_{(2)} \to \HW_{(2)}$ is zero, and is unique if (b) $[\kw, \Sigma^{-1}\HW_{(2)}] = 0$.
Supposing that $\adamsphi$ exists, denote its fiber by $F$.
The factorization $\1 \to F$ exists if (c) $[\1, \Sigma^4\kw_{(2)}] = 0$ and is unique if (d) $[\1, \Sigma^3\kw_{(2)}]=0$.

We have $\HW_{(2)} \in \SH(k)_{\le 0}$, whence by ($*$) for (a) it suffices to show that \[ \1 \xrightarrow{1} \kw \xrightarrow{\adamspsi-1} \kw_{(2)} \to \HW_{(2)} \] is zero, which holds since $\adamspsi(1) = 1$ by assumption.
For (b), again by ($*$) it is enough to show that $[\1, \Sigma^{-1}\HW_{(2)}] = 0$.
This is clear since $\Sigma^{-1} \HW_{(2)} \in \SH(k)_{<0}$.
(c) and (d) are immediate.
\end{proof}

We apply Corollary \ref{cor:construct-phi} to the map $\adamspsi = \adamspsi^3$ constructed in Remark \ref{rmk:adams-kw} to obtain $\adamsphi: \kw_{(2)} \to \Sigma^4\kw_{(2)}$.
From now on, this is the only map we will denote by $\adamsphi$.
\begin{remark} \label{rmk:phi-defn}
The defining property of $\adamsphi$ implies that for $E \in \SH(k)_{(2)}$ and $a \in \kw_*(E)$ we get $\beta \adamsphi(a) = \adamspsi^3(a) - a$.
In particular, if $\kw_*E$ is $\beta$-torsion free, then \begin{equation}\label{eq:adamsphi-compute} \adamsphi(a) = (\adamspsi^3(a)-a)/\beta. \end{equation}
Specializing even further, assume that $2=0 \in \W(k)$.
Then $\adamspsi^3(\beta)=9\beta=\beta$ (by Theorem \ref{thm:ko-adams}(2)) and hence \begin{equation}\label{eq:beta-phi-commute} \adamsphi(\beta x) = \beta\adamsphi(x). \end{equation}
\end{remark}
\begin{remark}
Arguing as in Example \ref{ex:adamspsi-pi0}, we see that $\adamsphi: \kw_* E \to \kw_{*-4}E$ is $\W(k)_{(2)}$-linear.
\end{remark}
\begin{example} \label{ex:phi-beta}
Since $\kw_* \wequi \W(k)[\beta]$ is $\beta$-torsion free, we deduce from Theorem \ref{thm:ko-adams}(2) that \[ \adamsphi(\beta^n) = (\adamspsi^3(\beta^n) - \beta^n)/\beta = (\adamspsi^3(\beta)^n-\beta^n)/\beta = (9^n-1)\beta^{n-1}. \]
\end{example}

Recall from Theorem \ref{thm:cobordism-MSL-summary}(2) (and Example \ref{ex:SL-orientations}) that \[ \kw_* \MSL \wequi \kw_*[e_0, e_2, e_4, \dots]/(e_0-1). \]
\begin{lemma} \label{lemm:phi-MSL}
Suppose that $\W(k) \wequi \F_2$.
Consider the action of $\adamsphi$ on $\kw_* \MSL$.
\begin{enumerate}
\item We have $\adamsphi(e_{2i}) \in e_{2i-2} + \beta \kw_* \MSL$.
\item We have $\adamsphi^{\circ i}(e_{2i}) = 1$.
\end{enumerate}
\end{lemma}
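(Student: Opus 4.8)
The plan is to deduce everything from Proposition \ref{prop:adams-cobordism-summary}, which gives $\adamspsi^3(e_{2i}) \in e_{2i} + \beta e_{2i-2} + \beta^2\kw_*\MSp$, together with the formula \eqref{eq:adamsphi-compute} for $\adamsphi$ and the fact that $\kw_*\MSL$ is $\beta$-torsion free. First I would check that $\kw_*\MSL \wequi \kw_*[e_0,e_2,\dots]/(e_0-1)$ is indeed $\beta$-torsion free (it is a free $\kw_* \wequi \W(k)[\beta]$-module), so that Remark \ref{rmk:phi-defn} applies and $\adamsphi(a) = (\adamspsi^3(a)-a)/\beta$ for all $a \in \kw_*\MSL$. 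Since $\W(k) = \F_2$ we also have $\adamspsi^3(\beta) = 9\beta = \beta$, so $\adamsphi$ is $\beta$-linear by \eqref{eq:beta-phi-commute}, and $\adamsphi$ is $\W(k)_{(2)}$-linear.

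For part (1), I would feed the image of the congruence in Proposition \ref{prop:adams-cobordism-summary} under the ring map $\kw_*\MSp \to \kw_*\MSL$ (which kills the odd $e_j$, by Theorem \ref{thm:cobordism-MSL-summary}(2)) into \eqref{eq:adamsphi-compute}: from $\adamspsi^3(e_{2i}) - e_{2i} \in \beta e_{2i-2} + \beta^2\kw_*\MSL$ we divide by $\beta$ to get $\adamsphi(e_{2i}) \in e_{2i-2} + \beta\kw_*\MSL$, using $\beta$-torsion freeness to make the division unambiguous. This is the easy half.

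For part (2), I would argue by induction on $i$, iterating $\adamsphi$ and controlling the ``$\beta$-error terms''. The statement $\adamsphi^{\circ i}(e_{2i}) = 1$ for $i=0$ is just $e_0 = 1$. For the inductive step, write $\adamsphi(e_{2i}) = e_{2i-2} + \beta r$ with $r \in \kw_*\MSL$, by part (1). Applying $\adamsphi^{\circ(i-1)}$ and using $\beta$-linearity of $\adamsphi$, one gets $\adamsphi^{\circ i}(e_{2i}) = \adamsphi^{\circ(i-1)}(e_{2i-2}) + \beta\,\adamsphi^{\circ(i-1)}(r) = 1 + \beta\,\adamsphi^{\circ(i-1)}(r)$ by the inductive hypothesis. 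So the claim reduces to showing $\beta\,\adamsphi^{\circ(i-1)}(r) = 0$, equivalently $\adamsphi^{\circ(i-1)}(r) = 0$ ($\beta$-torsion freeness again). The key observation is a degree/weight count: $\adamsphi$ lowers topological degree by $4$, $e_{2i}$ sits in degree $4i$, $\beta$ in degree $4$, so $r$ lies in degree $4i-8$; then $\adamsphi^{\circ(i-1)}(r)$ would land in degree $4i-8-4(i-1) = -4 < 0$. Since $\kw_*\MSL$ is concentrated in non-negative degrees (it is a polynomial algebra over $\kw_* \wequi \W(k)[\beta]$ on generators $e_{2j}$ of positive even degree), this group vanishes, forcing $\adamsphi^{\circ(i-1)}(r) = 0$.

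The main obstacle I anticipate is bookkeeping the gradings carefully enough to be sure the error terms always get pushed into negative degree after enough applications of $\adamsphi$ — one has to be slightly careful because $r$ itself is a sum of monomials of varying ``$\beta$-weight'' versus ``$e$-weight'', but all of them have total topological degree exactly $4i-8$, and $\adamsphi^{\circ(i-1)}$ drops total degree by exactly $4(i-1)$ regardless, so the argument is robust. A secondary point to nail down is that Proposition \ref{prop:adams-cobordism-summary} is stated over a field with $\W(k) = \F_2$, which is precisely our hypothesis, and that its $e_i$ are the same classes appearing here via the map $\MSp \to \MSL$; I would remark that $\kw \wequi \kw_{(2)}$ under this hypothesis (noted just before Proposition \ref{prop:adams-cobordism-summary}) so that $\adamspsi^3$, and hence $\adamsphi$, are defined on $\kw_*\MSL$ without further localization.
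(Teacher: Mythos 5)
Your proof is correct and takes essentially the same approach as the paper: part (1) is exactly the direct computation from Proposition \ref{prop:adams-cobordism-summary} and \eqref{eq:adamsphi-compute}, and for part (2) the paper likewise uses $\beta$-linearity (\eqref{eq:beta-phi-commute}) to iterate, writing $\adamsphi^{\circ i}(e_{2i}) = 1 + a_i\beta$ with $a_i \in \kw_{-4}\MSL = 0$, which is precisely your degree count $4i - 8 - 4(i-1) = -4 < 0$. Your version just makes the induction and the degree bookkeeping more explicit.
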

\begin{proof}
Since $\kw_*\MSL \wequi \kw_*[e_2, \dots]$ is $\beta$-torsion free, by \eqref{eq:adamsphi-compute} and Proposition \ref{prop:adams-cobordism-summary} we get \[ \adamsphi(e_{2i}) \in \beta^{-1}(e_{2i} + \beta e_{2i-2} + \beta^2 \kw_*\MSL - e_{2i}) = e_{2i-2} + \beta \kw_* \MSL, \] whence (1).
By \eqref{eq:beta-phi-commute}, $\adamsphi$ commutes with $\beta$, and hence by iteration we find that $\adamsphi^{\circ i}(e_{2i}) = 1 + a_i \beta$, for some $a_i \in \kw_{-4} \MSL$.
(2) follows since $\kw_{-4} \MSL = 0$.
\end{proof}

\begin{proposition} \label{prop:kw-HW-better}
Consider the maps \[ \pi_*(\kw \wedge \MSL) \xrightarrow{\alpha} \pi_*(\kw \wedge \kw_{(2)}) \xrightarrow{r} \pi_*(\kw \wedge \HW_{(2)}). \]
Put $\tilde x_i = \alpha(e_{2i})$ and $x_i = r \tilde x_i$.
\begin{enumerate}
\item The canonical maps induce equivalences (of right modules) \[ \kw \wedge \kw_{(2)} \wequi \bigvee_i \kw_{(2)}\{\tilde x_i\} \text{ and } \kw \wedge \HW_{(2)} \wequi \bigvee_i \HW_{(2)}\{x_i\}. \]
\item $x \in \pi_{4i}(\kw \wedge \HW_{(2)}) \wequi \W(k)_{(2)}$ is a generator if and only if $\adamsphi^{\circ i}(x)$ generates $\pi_0(\kw \wedge \HW_{(2)})$.
\end{enumerate}
\end{proposition}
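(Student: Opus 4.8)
The plan is to prove the two parts of Proposition \ref{prop:kw-HW-better} in order, using Theorem \ref{thm:HW-kw-summary} as the main input and the cobordism computation of Theorem \ref{thm:cobordism-MSL-summary}(2) to pin down the generators.

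\textbf{Part (1).} First I would establish the splitting for $\kw \wedge \HW_{(2)}$, since this is where Theorem \ref{thm:HW-kw-summary} applies directly. Note that $\kw \wedge \HW_{(2)}$ is a module over $\HW_{(2)}$ (via the right factor), and $\pi_* \HW_{(2)} = \W(k)_{(2)}$ is concentrated in degree $0$; moreover $\HW_{(2)}$ is an Eilenberg--MacLane-type object in the homotopy $t$-structure, so a module over it splits as a wedge of shifts of $\HW_{(2)}$ according to its homotopy groups (this is the analogue of the fact that modules over a field split). So I must identify $\pi_*(\kw \wedge \HW_{(2)})$ as a free $\W(k)_{(2)}$-module. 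Here one uses the fracture square: $\kw \wedge \HW_{(2)}$ is recovered from $(\kw \wedge \HW)_2^\comp$, $(\kw \wedge \HW)[1/2]$ and the rationalization. Theorem \ref{thm:HW-kw-summary} gives $\pi_*((\kw\wedge\HW)_2^\comp) \wequi \bigoplus_i \W(k)_\I^\comp\{x_i\}$ with $x_i = \alpha(e_{2i})$ base-change-compatible generators; combined with the motivic Serre finiteness computation \eqref{eq:serre-finiteness} and Lemma \ref{lemm:witt-completion} on the $[1/2]$ and rational parts, one gets $\pi_*(\kw \wedge \HW_{(2)}) \wequi \bigoplus_i \W(k)_{(2)}\{x_i\}$. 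The compatibility of the $e_{2i}$ with $\alpha$ identifies the generators as claimed. For $\kw \wedge \kw_{(2)}$ one argues similarly but with the module structure over $\kw_{(2)}$ whose homotopy is $\W(k)_{(2)}[\beta]$: since $\kw \wedge \HW_{(2)} \wequi (\kw \wedge \kw_{(2)})/\beta$ (using \eqref{eq:HW-kw-trunc}) and $\kw \wedge \kw_{(2)}$ is $\beta$-torsion free (which follows from $\beta$-torsion freeness of the associated graded, via Theorem \ref{thm:HW-kw-summary} and a Bockstein/filtration argument), the $\tilde x_i$ lifting the $x_i$ freely generate. The main subtlety here is justifying that a connective module over $\kw_{(2)}$ which is $\beta$-torsion-free and has the right mod-$\beta$ reduction splits as a wedge of $\kw_{(2)}$'s; this is a standard obstruction-theory argument in the homotopy $t$-structure but needs to be spelled out (lift the generators one degree at a time, obstructions vanish since $\HW_{(2)}$-module maps between shifts are concentrated in the right degree).

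\textbf{Part (2).} Having the splitting $\kw \wedge \HW_{(2)} \wequi \bigvee_i \HW_{(2)}\{x_i\}$, an element $x \in \pi_{4i}(\kw\wedge\HW_{(2)}) \wequi \W(k)_{(2)}$ is a generator iff its image in $\pi_{4i}(\kw\wedge\HW_{(2)}) \otimes_{\W(k)_{(2)}} \F_2 = \F_2$ (Nakayama, $\W(k)_{(2)}$ being local with residue field $\F_2$ by Lemma \ref{lemm:witt-units}) is nonzero. So I would show that $\adamsphi^{\circ i}$ sends the generator to a unit and a non-generator to a non-unit. Since $\adamsphi$ is $\W(k)_{(2)}$-linear, it is enough to track what $\adamsphi^{\circ i}$ does on the $\F_2$-reductions, i.e. to understand $\adamsphi$ after base-change to a quadratically closed field where $\W(k) = \F_2$; here the base-change compatibility of the generators from Theorem \ref{thm:HW-kw-summary}(1) is essential. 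Over such a field, Lemma \ref{lemm:phi-MSL}(2) gives $\adamsphi^{\circ i}(e_{2i}) = 1$ in $\kw_*\MSL$, and applying the ring map $r\alpha: \pi_*(\kw\wedge\MSL) \to \pi_*(\kw\wedge\HW_{(2)})$ (which commutes with $\adamsphi$ since $\adamsphi$ is natural, being built from the Adams operation) yields $\adamsphi^{\circ i}(x_i) = r\alpha(1) = 1 \ne 0$; conversely $\adamsphi$ raises filtration/lowers degree appropriately so a non-generator stays in the maximal ideal. The statement of (2) is exactly the iff form: $x$ generates $\iff \adamsphi^{\circ i}(x)$ generates $\pi_0$, and by $\W(k)_{(2)}$-linearity plus $\adamsphi^{\circ i}(x_i) = 1$ this reduces to checking $\adamsphi^{\circ i}$ induces an isomorphism $\pi_{4i} \to \pi_0$ after reduction mod the maximal ideal, which the $e_{2i}$ computation provides.

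\textbf{Expected main obstacle.} The hardest part is Part (1), specifically the splitting statements: asserting that $\kw\wedge\kw_{(2)}$ and $\kw\wedge\HW_{(2)}$ split as wedges of shifted copies of $\kw_{(2)}$ resp.\ $\HW_{(2)}$ requires both the computation of homotopy groups (drawing on all of \S\ref{sec:HW-kw}, the fracture square, and \eqref{eq:serre-finiteness}) \emph{and} an argument that the relevant $k$-invariants / obstruction groups vanish so that the abstract isomorphism of homotopy modules lifts to an equivalence of spectra. The $\HW_{(2)}$ case is cleaner because $\HW_{(2)}$ lives in the heart (up to $\eta$-periodicity) so module-splitting is formal; the $\kw_{(2)}$ case needs the $\beta$-torsion-freeness input and a genuine (if routine) induction. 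Part (2) is then essentially bookkeeping once Lemma \ref{lemm:phi-MSL} and the base-change compatibility are in hand.
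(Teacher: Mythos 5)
Your overall strategy for Part (2) matches the paper's, and your Part (1) is in the right spirit, but you have misidentified where the difficulty lies and missed a needed verification; the route the paper takes is also cleaner in a couple of places.

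\textbf{No obstruction theory is needed.} You frame the ``hardest part'' as showing that an abstract isomorphism of homotopy modules lifts to an equivalence of spectra, i.e.\ a $k$-invariant/obstruction problem. But the splitting does not require this. Each class $x_i \in \pi_{4i}(\kw\wedge\HW_{(2)})$ \emph{directly} gives a map $\Sigma^{4i}\1 \to \kw\wedge\HW_{(2)}$, which one extends $\HW_{(2)}$-linearly and sums to get a concrete map $\bigvee_i \Sigma^{4i}\HW_{(2)} \to \kw\wedge\HW_{(2)}$; likewise for $\kw_{(2)}$. There are no $k$-invariants to kill. The real content is checking that this explicit map is a $\pi_*$-isomorphism, which (since the construction commutes with base change and the homotopy $t$-structure is nondegenerate) reduces to checking it over all fields.

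\textbf{Gap in the rational part.} You assert that Theorem \ref{thm:HW-kw-summary} ``combined with the motivic Serre finiteness computation \eqref{eq:serre-finiteness} and Lemma \ref{lemm:witt-completion}'' gives the result, but Lemma \ref{lemm:witt-completion} is about $\I$-adic vs.\ $2$-adic topologies, which is only relevant to the $2$-complete leg. Serre finiteness tells you $\pi_*(\kw\wedge\HW)\otimes\Q$ has the right isomorphism type, but you still need to show the \emph{specific elements} $e_{2i}$ hit units in $\W(k)\otimes\Q$. The paper does this by reducing (via $\rho$-periodicity) to real closed fields where $\W(k)=\Z$, and then uses the arithmetic fracture square for $(\kw\wedge\HW)_{(2)}$ to transfer the non-vanishing of $e_{2i}$ from the $2$-complete case (where you already know it by the $\adamsphi$-argument) to the rational case. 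Your sketch skips this entirely.

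\textbf{A cleaner route to $\kw\wedge\kw_{(2)}$.} Your $\beta$-torsion-freeness-plus-induction argument could probably be made to work, but it is substantially more work than necessary. The paper instead observes that the extension-of-scalars functor $\kw_{(2)}\Mod \to \HW_{(2)}\Mod$ is conservative on bounded-below objects (citing \cite{bachmann-tambara,bachmann-hurewicz}); the map $\gamma\colon \bigvee_n \Sigma^{4n}\kw_{(2)} \to \kw\wedge\kw_{(2)}$ base-changes to the equivalence already established for $\HW_{(2)}$, so $\gamma$ is itself an equivalence. This both avoids the induction and the need to separately verify $\beta$-torsion-freeness.

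Part (2) of your proposal is correct and essentially matches the paper: pass to the residue field via Lemma \ref{lemm:witt-units}, base-change to a quadratically closed field, and use Lemma \ref{lemm:phi-MSL}(2) together with naturality of $\adamsphi$ to compute $\adamsphi^{\circ i}(x_i)=1$.
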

\begin{proof}
(1) We first prove the claim about $\kw \wedge \HW$.
By stability under base change, we may assume that $\vcd_2(k) < \infty$.
Since the field is arbitrary, it suffices to show that the map induces an isomorphism on $\pi_*$.
This we can check separately after inverting $2$ and after completing at $2$ (see Lemma \ref{lemm:inv-compl}).
Note that \eqref{eq:serre-finiteness} implies that $\1[\eta^{-1}] \otimes \Q \wequi \HW \otimes \Q$ and hence $(\kw \wedge \HW) \otimes \Q \wequi \kw \otimes \Q$, and so \[ \pi_* (\kw \wedge \HW) \otimes \Q \wequi \begin{cases} \W(k) \otimes \Q & * = 4n \ge 0 \\ 0 & \text{else} \end{cases}. \]
Similarly by Theorem \ref{thm:HW-kw-summary} we have \[ \pi_* ((\kw \wedge \HW)_2^\wedge) \wequi \begin{cases} \W(k)_\I^\comp & *=4i \ge 0 \\ 0 & \text{else} \end{cases}. \]
We thus need to show that the image of $e_{2i}$ in $\pi_{4i} (\kw \wedge \HW) \otimes \Q \wequi \W(k) \otimes \Q$ and $\pi_{4i}((\kw \wedge \HW)_2^\comp) \wequi \W(k)_\I^\comp$ is a unit.
We first deal with the $2$-complete case, in which it suffices to show that the image of $e_{2i}$ is a unit modulo $\I$ \cite[Tag 05GI]{stacks-project}.
Since $\W(k)/\I = \F_2$ is independent of $k$, we may reduce to the case where $k$ is quadratically closed and hence $\W(k) = \F_2$; it hence suffices to show that the image $y_i$ of $e_{2i}$ is non-zero.
But $\adamsphi^{\circ i}(y_i)$ is the image of $\adamsphi^{\circ i}(e_{2i}) = 1$ (by Lemma \ref{lemm:phi-MSL}).
Since $1 \ne 0 \in \pi_0(\kw \wedge \HW)_2^\comp$ we have $\adamsphi^{\circ i}(y_i) \ne 0$ and so $y_i \ne 0$.

In the rational case we are dealing with $\rho$-periodic spectra, so we may reduce to the case where $k$ is real closed and hence $\W(k) = \Z$ (see \S\ref{subsec:rho}); it is thus enough to show that the image of $e_{2i}$ is non-zero in $\W(k) \otimes \Q \wequi \Q$.
Consider the commutative diagram (``fracture square'')
\begin{equation*}
\begin{CD}
(\kw \wedge \HW)_{(2)} @>>>(\kw \wedge \HW) \otimes \Q \\
@VVV                        @VVV \\
(\kw \wedge \HW)_2^\comp @>>> ((\kw \wedge \HW)_2^\comp) \otimes \Q. \\
\end{CD}
\end{equation*}
On applying $\pi_{4i}$, we obtain a commutative diagram
\begin{equation*}
\begin{CD}
\pi_{4i}(\kw \wedge \HW)_{(2)} @>>> \pi_{4i}(\kw \wedge \HW) \otimes \Q \\
@VVV                        @VVV \\
\Z_2^\comp             @>{\iota}>> \Z_2^\comp[1/2].
\end{CD}
\end{equation*}
Since $\Z_2^\comp$ is torsion-free the map $\iota$ is injective.
$e_{2i}$ maps to a generator in the lower left hand corner, hence has non-zero image in the lower right hand corner.
It follows that it must also have non-zero image in the upper right hand corner.
This completes the proof for $\kw \wedge \HW$.

To treat $\kw \wedge \kw$, we note that we have built a map $\gamma: \bigvee_{n \ge 0} \Sigma^{4n} \kw_{(2)} \to \kw \wedge \kw_{(2)}$ of connective objects in $\kw_{(2)}\Mod$.
The extension of scalars functor $\kw_{(2)}\Mod \to \HW_{(2)}\Mod$ is conservative on bounded below objects (e.g. by \cite[Lemma 29]{bachmann-tambara} and \cite[Corollary 4]{bachmann-hurewicz}).
But \[ \gamma \otimes_{\kw_{(2)}} \HW_{(2)}: \bigvee_{n \ge 0} \Sigma^{4n} \HW_{(2)} \to \kw \wedge \HW_{(2)} \] is just the equivalence constructed above.
The result follows.

(2) We have $x = a x_i$ for some $a \in \W(k)_{(2)}$.
We need to show that $a$ is a unit if and only if $\adamsphi^{\circ i}(x)$ is a generator.
Since $\W(k)_{(2)}$ is a local ring (Lemma \ref{lemm:witt-units}), we may check this modulo $\I$, and hence base change to a quadratic closure.
We may thus assume that $\W(k) = \F_2$.
Now by construction (i.e. Lemma \ref{lemm:phi-MSL}(2)) we have $\adamsphi^{\circ i}(x_i) = 1$, and hence $\adamsphi^{\circ i}(x) = a$ generates $\pi_0$ if and only if $a$ is a unit, if and only if $x$ generates $\pi_{4i}$.
\end{proof}

\begin{theorem} \label{thm:main}
Let $k$ be any field of characteristic $\ne 2$.
Denote by $\adamsphi$ the map obtained via Corollary \ref{cor:construct-phi}(1) from the map $\adamspsi = \adamspsi^3$ constructed in Remark \ref{rmk:adams-kw}.
Then the canonical map $\1[\eta^{-1}]_{(2)} \to \fib(\adamsphi)$ obtained via Corollary \ref{cor:construct-phi}(2) is an equivalence.
In other words, there is a fiber sequence \[ \1[\eta^{-1}]_{(2)} \to \kw_{(2)} \xrightarrow{\adamsphi} \Sigma^4 \kw_{(2)}. \]
\end{theorem}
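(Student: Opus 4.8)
The plan is to follow the proof sketch in the introduction, now that all the ingredients are in place. Write $F = \fib(\adamsphi)$ and let $\alpha: \1[\eta^{-1}]_{(2)} \to F$ be the canonical map from Corollary \ref{cor:construct-phi}(2). First I would reduce to showing that the induced map $\HW_{(2)} \to F \wedge \HW_{(2)}$ is an equivalence: indeed, from the fiber sequence $F \to \kw_{(2)} \xrightarrow{\adamsphi} \Sigma^4\kw_{(2)}$ and $\cof(\beta) \wequi \HW$ (see \eqref{eq:HW-kw-trunc}) one checks by a connectivity/homotopy-sheaf argument (using Lemma \ref{lemm:unit-connectivity} and the fact that $\ul\pi_0$ of both $\1[\eta^{-1}]$ and $\kw$ is $\ul W[\eta^{\pm}]$) that $\cof(\alpha)$, if nonzero, would have a nonzero lowest homotopy sheaf detected after smashing with $\HW$; more precisely $\alpha$ is an equivalence iff $\alpha \wedge \id_{\HW}$ is. Since everything is compatible with base change and $\SH(k) \to \prod \SH(k_0)$ over prime fields is conservative on the relevant spectra, and since the class of fields of finite $\vcd_2$ and characteristic $\ne 2$ includes all prime fields, we may assume $\vcd_2(k) < \infty$. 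By Lemma \ref{lemm:inv-compl} it suffices to check that $(\alpha \wedge \id_{\HW})[1/2]$ and $(\alpha \wedge \id_{\HW})_2^\comp$ are equivalences, and since we are over an arbitrary such field with unramified homotopy sheaves it suffices to check isomorphism on $\pi_*$.

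Next I would set up the module-theoretic picture from Proposition \ref{prop:kw-HW-better}: $\kw \wedge \kw_{(2)} \wequi \bigvee_i \kw_{(2)}\{\tilde x_i\}$ and $\kw \wedge \HW_{(2)} \wequi \bigvee_i \HW_{(2)}\{x_i\}$ with $\tilde x_i = \alpha_{\MSL}(e_{2i})$, $x_i = r\tilde x_i$. Smashing the defining fiber sequence of $F$ with $\HW_{(2)}$ gives $F \wedge \HW_{(2)} \to \kw \wedge \HW_{(2)} \xrightarrow{\adamsphi} \Sigma^4(\kw \wedge \HW_{(2)})$, and since $\pi_*(\kw \wedge \HW_{(2)})$ is concentrated in degrees $4i \ge 0$ with $\adamsphi$ lowering degree by $4$, the long exact sequence computes $\pi_*(F \wedge \HW_{(2)})$ entirely in terms of $\adamsphi$ acting on $\bigoplus_i \W(k)_{(2)}\{x_i\}$. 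Writing $\adamsphi(x_n) = b_n x_{n-1}$ (for $n > 0$; rationally and $2$-adically, after checking $\adamsphi$ respects this module basis, which follows since $\adamsphi$ is $\W(k)_{(2)}$-linear and the $x_i$ are generators), the claim $\alpha \wedge \id_{\HW}$ an equivalence amounts to showing each $b_n$ is a unit in $\W(k)_{(2)}$; then the fiber is exactly $\HW_{(2)}\{x_0\}$ and one identifies the map from $\1[\eta^{-1}]_{(2)} \wedge \HW_{(2)} \wequi \HW_{(2)}$.

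For the rational statement, $\HW \otimes \Q \wequi \1[\eta^{-1}] \otimes \Q$ by \eqref{eq:serre-finiteness}, so $\pi_*(\kw \wedge \HW)\otimes\Q \wequi \W(k)[\beta]\otimes\Q$, and by Example \ref{ex:phi-beta} $\adamsphi(\beta^n) = (9^n - 1)\beta^{n-1}$; since $9^n - 1$ is a nonzero integer it is a unit in $\W(k)\otimes\Q$, so $b_n$ is a unit and the rational case is done. For the $2$-complete case I would invoke Proposition \ref{prop:kw-HW-better}(2): $b_n$ (which is $\adamsphi(x_n)$ up to the identification $x_{n-1} \mapsto 1$ after $\adamsphi$-iteration) is a unit in $\W(k)_\I^\comp$ iff $\adamsphi^{\circ n}(x_n)$ generates $\pi_0$, and since $\W(k)/\I = \F_2$ and the generators $x_i$ base-change to generators (Theorem \ref{thm:HW-kw-summary}(1)), we may extend $k$ to a quadratic closure so that $\W(k) = \F_2$, whence $\kw \wequi \kw_{(2)}$. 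There $\gamma: \pi_*(\kw \wedge \MSL) \to \pi_*(\kw \wedge \HW_{(2)})$ is a ring map commuting with $\adamsphi$, and Lemma \ref{lemm:phi-MSL}(2) gives $\adamsphi^{\circ i}(e_{2i}) = 1$, so $\adamsphi^{\circ i}(x_i) = \adamsphi^{\circ i}(\gamma(e_{2i})) = \gamma(1) = 1 \ne 0$, forcing $b_i$ to be a unit. The main obstacle I expect is the first reduction — carefully justifying that $\alpha$ is an equivalence iff $\alpha \wedge \id_{\HW}$ is (i.e.\ the connectivity/nilpotence argument showing $\HW$ detects the cofiber), and checking that the $\HW$-module splitting of Proposition \ref{prop:kw-HW-better} is genuinely compatible with $\adamsphi$ so that the long exact sequence argument cleanly isolates the $b_n$; the three unit computations themselves are then short.
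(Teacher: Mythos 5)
Your proposal is correct and follows the introduction's proof sketch; the key ingredients and the ultimate invocations (extension of scalars to $\HW_{(2)}$-modules, Proposition \ref{prop:kw-HW-better}, Example \ref{ex:phi-beta}, Lemma \ref{lemm:phi-MSL}) all match what the paper uses. However, you are doing more work than the paper does, because much of the case analysis you carry out is already encapsulated in Proposition \ref{prop:kw-HW-better}. The paper's actual proof of Theorem \ref{thm:main} has no rational/$2$-complete fracture, no reduction to $\vcd_2 < \infty$, no reduction to a quadratic closure, and no reference to $\MSL$: all of these appear only inside the proof of Proposition \ref{prop:kw-HW-better}(1)--(2), whose statement is a $\W(k)_{(2)}$-module statement valid over an arbitrary field. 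Given the proposition, the proof collapses to a double application of part (2): one must show $\adamsphi(x_i)$ generates $\pi_{4i-4}(\kw\wedge\HW_{(2)})$ for $i>0$; by (2) this is equivalent to $\adamsphi^{\circ(i-1)}(\adamsphi(x_i))=\adamsphi^{\circ i}(x_i)$ generating $\pi_0$; and this holds because $x_i$ generates $\pi_{4i}$, again by (2). In your write-up, the $2$-complete case starts by invoking Proposition \ref{prop:kw-HW-better}(2) and then re-derives its proof (quadratic closure, $\gamma$, Lemma \ref{lemm:phi-MSL}(2)), which is circular bookkeeping --- once you invoke (2) you are already done, rationally and integrally. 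One minor wording issue: you say the criterion gives a unit ``in $\W(k)_\I^\comp$,'' but Proposition \ref{prop:kw-HW-better}(2) as stated is about $\W(k)_{(2)}$; that stronger statement is exactly why no fracture argument is needed at the level of Theorem \ref{thm:main}. Finally, your first reduction (detecting $\alpha$ after $-\wedge\HW_{(2)}$) is the right idea; the paper phrases it as conservativity of the functor $\SH(k)[\eta^{-1}]_{(2)}\to\HW_{(2)}\Mod$ on bounded-below objects, which is the tidy way to package the homotopy-sheaf/connectivity argument you sketch.
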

\begin{proof}
Write $F = \fib(\adamsphi)$.
The ``extension of scalars'' functor $\SH(k)[\eta^{-1}]_{(2)} \to \HW_{(2)}\Mod$ is conservative on bounded below objects (e.g. by \cite[Lemma 29]{bachmann-tambara} and \cite[Corollary 4]{bachmann-hurewicz}).
It consequently suffices to show that $\HW_{(2)} \to F \wedge \HW$ is an equivalence.
Since the field is arbitrary, it is enough to show that we have an isomorphism on $\pi_*$.
Considering the long exact sequence for $\pi_* F$ and Proposition \ref{prop:kw-HW-better}, it suffices to show that $\adamsphi: \pi_{4i} (\kw \wedge \HW_{(2)}) \to \pi_{4i-4} (\kw \wedge \HW_{(2)})$ is an isomorphism for $i>0$.
In other words we need to show that $\adamsphi(x_i)$ generates $\pi_{4i-4}$ as a $\W(k)_{(2)}$-module.
By Proposition \ref{prop:kw-HW-better}(2) this holds if and only if $\adamsphi^{\circ (i-1)}(\adamsphi(x_i))$ generates $\pi_0$.
Since $x_i$ generates $\pi_{4i}$, this is indeed the case, again by Proposition \ref{prop:kw-HW-better}(2).
\end{proof}

\todo{remark about slices approach?}

\section{Applications} \label{sec:applications}
\subsection{Homotopy groups of the \texorpdfstring{$\eta$}{eta}-periodic sphere} \label{subsec:stable-stems}
Determination of the groups $\ul{\pi}_*(\1[\eta^{-1}])(k)$ (or some completed or localized variants), for various fields $k$, has been pursued by various authors over the years.
The case $k=\C$ was first to be approached, by Guillou--Isaksen \cite{guillou2015eta}.
They resolved it up to a conjecture about the classical Adams--Novikov spectral sequence, which was subsequently proved by Andrews--Miller \cite{andrews2017inverting}.
The cases $k=\R$ and $k=\Q$ (both up to $2$-adic completion) were done by Guillou--Isaksen \cite{guillou2016eta} and Wilson \cite{wilson2018eta}, respectively.
All of these authors use the motivic Adams spectral sequence.
In contrast, Ormsby--Röndigs \cite{ormsby2019homotopy} use the slice spectral sequence; this allows them to treat all fields of (characteristic $\ne 2$ and) finite cohomological dimension in which $-1$ is a sum of four squares (e.g. all finitely generated fields of odd characteristic).

Our results allow us to determine these groups for all fields of characteristic $\ne 2$.
\begin{theorem} \label{thm:stable-stems}
Let $k$ be a field, $\chara(k) \ne 2$.
We have \[ \ul{\pi}_*(\1_k[\eta^{-1}]) \wequi \begin{cases} \ul{W} & *=0 \\ \ul{W}[1/2] \otimes \pi_*^s \oplus \mathrm{coker}(8n: \ul{W}_{(2)} \to \ul{W}_{(2)}) & *=4n-1>0 \\ \ul{W}[1/2] \otimes \pi_*^s \oplus \ker(8n: \ul{W}_{(2)} \to \ul{W}_{(2)}) & *=4n>0 \\ \ul{W}[1/2] \otimes \pi_*^s & \text{else} \end{cases}. \]
Here $\pi_*^s$ denotes the classical stable stems.
\end{theorem}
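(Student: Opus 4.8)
The plan is to split the computation into the $2$-local part and the part where $2$ is inverted, using the fracture square from Lemma~\ref{lemm:inv-compl} (or rather the homotopy-level statement: $\ul{\pi}_*$ of a spectrum is determined by its $2$-localization together with the part with $2$ inverted, glued along the rationalization). First I would treat $\SH(k)[1/\eta,1/2]$, which is the easy part: by \S\ref{subsec:rho} this category is equivalent (conservatively, via the real realizations $r_\R^\alpha$ at the real closures of $k$) to a product of copies of $\SH[1/2]$, and more precisely the sheaf-level statement is that $\ul{\pi}_*(\1[1/\eta,1/2]) \wequi \ul{W}[1/2] \otimes \pi_*^s$, via the motivic Serre finiteness input \eqref{eq:serre-finiteness} combined with the fact that $r_\R(\eta) = 2$ (Remark~\ref{rmk:eta-realization}), so that inverting $\eta$ and $2$ on the real-étale side recovers all of $\pi_*^s[1/2]$. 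Concretely, $\1_k[1/\eta,1/2]$ corresponds under $\SH(k)[\rho^{-1}][1/2] \wequi \SH(R\Spec k)[1/2]$ to the sheafified classical sphere, and its homotopy sheaves are $\ul{W}[1/2] \otimes \pi_*^s$ (using Corollary~\ref{cor:rho-periodic-stems}-type reasoning sheafified over $\Sm_k$, or just unramifiedness). This accounts for every ``$\ul{W}[1/2]\otimes\pi_*^s$'' summand in the statement.

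Next I would compute the $2$-local part $\ul{\pi}_*(\1_k[\eta^{-1}]_{(2)})$ directly from the main theorem (Theorem~\ref{thm:main}): there is a fiber sequence $\1[\eta^{-1}]_{(2)} \to \kw_{(2)} \xrightarrow{\adamsphi} \Sigma^4 \kw_{(2)}$. Since $\ul{\pi}_*(\kw_{(2)}) \wequi \ul{W}_{(2)}[\beta]$ with $|\beta|=4$, and by Example~\ref{ex:phi-beta} the map $\adamsphi$ sends $\beta^n \mapsto (9^n-1)\beta^{n-1}$, I would run the long exact sequence in homotopy sheaves. In degree $4n$ for $n>0$ the relevant map $\ul{\pi}_{4n}(\kw_{(2)}) = \ul{W}_{(2)}\{\beta^n\} \to \ul{\pi}_{4n-4}(\Sigma^4\kw_{(2)}) = \ul{W}_{(2)}\{\beta^{n-1}\}$ is multiplication by $9^n - 1$; one then observes $\nu_2(9^n-1) = \nu_2(8n) = 3 + \nu_2(n)$ (a standard $2$-adic valuation identity, since $9 = 1+8$), so up to a $2$-adic unit this map \emph{is} multiplication by $8n$. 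Hence its kernel contributes to $\ul{\pi}_{4n}$ and its cokernel to $\ul{\pi}_{4n-1}$, giving exactly $\ker(8n:\ul{W}_{(2)}\to\ul{W}_{(2)})$ in degree $4n>0$ and $\mathrm{coker}(8n:\ul{W}_{(2)}\to\ul{W}_{(2)})$ in degree $4n-1>0$; in all other positive degrees the map is an iso or the groups vanish, so $\ul{\pi}_*(\1[\eta^{-1}]_{(2)}) = 0$. In degree $0$ the map $\adamsphi$ out of $\ul{\pi}_0 \kw_{(2)} = \ul{W}_{(2)}$ lands in $\ul{\pi}_{-4}\Sigma^4\kw_{(2)}=\ul\pi_0\kw_{(2)}$ and is $\adamspsi^3-1$, which is zero on $\ul{W}$ (Example~\ref{ex:adamspsi-pi0}: $\adamspsi$ acts trivially on the image of $\pi_{**}\1$, in particular on $\pi_0$), so $\ul{\pi}_0(\1[\eta^{-1}]_{(2)}) \wequi \ul{W}_{(2)}$; combined with the $2$-inverted computation (which gives $\ul{\pi}_0 = \ul{W}[1/2]$ — note $\pi_0^s[1/2] = \Z[1/2]$, so $\ul{W}[1/2]\otimes\pi_0^s = \ul{W}[1/2]$) and \eqref{subsub:pi0-sphere} ($\ul{\pi}_0(\1[\eta^{-1}])_* \wequi \ul{W}[\eta^{\pm 1}]$), we get $\ul{\pi}_0(\1[\eta^{-1}]) \wequi \ul{W}$ with no extra summand.

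Finally I would assemble the two pieces. For $* \ne 0$ the $2$-inverted part and the $2$-local part have no common rational content in positive degrees that interacts (the $2$-local groups in positive degrees are all $2$-torsion — $\ker$ and $\mathrm{coker}$ of multiplication by $8n$ on $\ul{W}_{(2)}$, which by Lemma~\ref{lemm:witt-torsion} is a bounded-torsion issue since all torsion in $\ul W$ is $2$-primary — while the $2$-inverted part $\ul W[1/2]\otimes\pi_*^s$ has trivial $2$-torsion), so the fracture/arithmetic-square gluing is a direct sum: $\ul{\pi}_*(\1[\eta^{-1}]) \wequi \ul{\pi}_*(\1[\eta^{-1}]_{(2)}) \oplus \ul{\pi}_*(\1[1/\eta,1/2])$ away from degree $0$, and in degree $0$ the gluing along $\ul{W}\otimes\Q$ identifies the two copies of $\ul W\otimes\Q$ to give $\ul W$. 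This yields precisely the stated formula. I expect the main obstacle to be bookkeeping the arithmetic-fracture gluing cleanly at the level of homotopy \emph{sheaves} (rather than groups), in particular checking that the rational comparison map $\ul{\pi}_*(\1[\eta^{-1}]_{(2)})\otimes\Q \to \ul{\pi}_*(\1[\eta^{-1}][1/\eta,1/2])\otimes\Q$ is an isomorphism in every degree — this is where \eqref{eq:serre-finiteness} does the real work, forcing the rationalization to be concentrated in degrees $\equiv 0,{-1}\pmod 4$ on the $2$-local side and to match the $\ul W[1/2]\otimes\pi_0^s$ contribution; a secondary nuisance is confirming $\nu_2(9^n-1)=3+\nu_2(n)$ and that this makes ``multiplication by $9^n-1$'' genuinely interchangeable with ``multiplication by $8n$'' as endomorphisms of $\ul W_{(2)}$ (true because they differ by a $2$-adic unit and $\ul W_{(2)}$ is a $\Z_{(2)}$-module).
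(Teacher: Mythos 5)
Your overall strategy matches the paper's: use Theorem~\ref{thm:main} for the $2$-local piece, real realization (Corollary~\ref{cor:rho-periodic-stems}) for the $2$-inverted piece, and assemble. Two points of divergence worth noting.

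\emph{Degree $0$.} Your reasoning here has a small slip: since $\kw$ is connective, $\ul{\pi}_0(\Sigma^4\kw_{(2)}) = \ul{\pi}_{-4}(\kw_{(2)}) = 0$ — not $\ul{\pi}_0\kw_{(2)}$. The map $\adamsphi$ in degree $0$ is zero simply because its target vanishes, not because ``$\adamspsi^3-1$ vanishes on $\ul{W}$''; the argument you give would in any case not be literally available since $\adamsphi$ is the \emph{lift} of $\adamspsi^3-1$ through $\beta$, not $\adamspsi^3-1$ itself. The conclusion $\ul{\pi}_0(\1[\eta^{-1}]_{(2)}) \wequi \ul{W}_{(2)}$ is correct (and the paper just takes $*\le 0$ as already known from \S\ref{subsub:pi0-sphere}), but the justification needs fixing.

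\emph{Assembly.} Your arithmetic-fracture assembly is more roundabout than necessary and puts the cart before the horse: you compute both pieces first, then argue that the gluing is a direct sum because the rational comparison map works out. The paper's route is cleaner: \eqref{eq:serre-finiteness} gives $\ul{\pi}_*(\1[\eta^{-1}])\otimes\Q \wequi \ul{W}\otimes\Q$, concentrated in degree $0$, so for $*>0$ the group $\ul{\pi}_*(\1[\eta^{-1}])$ is torsion; for any torsion abelian group $A$ one has $A \wequi A[1/2] \oplus A_{(2)}$, so the splitting $\ul{\pi}_*(\1[\eta^{-1}]) \wequi \ul{\pi}_*(\1[1/\eta,1/2]) \oplus \ul{\pi}_*(\1[\eta^{-1}]_{(2)})$ is immediate, \emph{before} computing either summand. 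This avoids any need to track a ``rational comparison map''. (Relatedly, your invocation of Lemma~\ref{lemm:witt-torsion} is unnecessary: $\ker(8n)$ and $\mathrm{coker}(8n)$ on $\ul{W}_{(2)}$ are annihilated by $8n$ and hence torsion with no hypotheses on $k$.)

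The identification of $\adamsphi$ on $\ul{\pi}_{4n}$ as multiplication by $9^n-1$, the valuation identity $\nu_2(9^n-1)=\nu_2(8n)$, and the long exact sequence bookkeeping giving $\ker$ in degree $4n$ and $\mathrm{coker}$ in degree $4n-1$ are all correct and match the paper (Example~\ref{ex:phi-beta}, Lemma~\ref{lemm:im-j-congruence}).
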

\begin{proof}
The cases $* \le 0$ are clear.

We have $\pi_* \1[\eta^{-1}] \otimes \Q \wequi \W(k) \otimes \Q$ (see \eqref{eq:serre-finiteness}), and hence $\pi_* \1[\eta^{-1}]$ is torsion for $*>0$.
Thus\NB{Using that for any torsion abelian group $A$ we have $A \wequi A[1/p] \oplus A_{(p)}$.} (for $*>0$) \[ \ul{\pi}_*\1[\eta^{-1}] \wequi \ul{\pi}_* \1[1/\eta, 1/2] \oplus \ul{\pi}_* \1[\eta^{-1}]_{(2)}. \]
We first show that $\ul{\pi}_* \1[1/\eta,1/2] \wequi \ul{W}[1/2] \otimes \pi_*^s$.
By Corollary \ref{cor:rho-periodic-stems} and \S\ref{subsub:plus-minus} we have \[ \ul{\pi}_*(\1[1/\eta,1/2])(K) \wequi C(\Sper(K), \Z) \otimes \pi_*^s \otimes \Z[1/2]. \]
The case $*=0$ yields \[ C(\Sper(K), \Z) \otimes \Z[1/2] \wequi \ul{\pi}_0(\1[1/\eta,1/2])(K) \wequi \W(K)[1/2]. \]
The claim follows.

It remains to determine $\ul{\pi}_*(\1[\eta^{-1}])_{(2)}$.
This is an immediate application of Theorem \ref{thm:main}, by taking the associated long exact sequence of homotopy sheaves of the fibration sequence $\1[\eta^{-1}]_{(2)} \to \kw_{(2)} \xrightarrow{\adamsphi} \Sigma^4 \kw_{(2)}$.
Since $\ul{\pi}_* \kw \wequi \ul{W}[\beta]$ (see \S\ref{subsec:KW}), the claim follows, as long as $* \not\in \{4n, 4n-1 \mid n > 0\}$, whereas in these cases we get the kernel and cokernel of \[ \adamsphi: \ul{W}_{(2)} \wequi \ul{\pi}_{4*} \kw_{(2)} \to \ul{\pi}_{4*-4} \kw_{(2)} \wequi \ul{W}_{(2)}. \]
By Example \ref{ex:phi-beta}, this is multiplication by $9^n-1$.
Since we are working $2$-locally, by Lemma \ref{lemm:im-j-congruence} below, up to a unit this is the same as $8n$.
\end{proof}
\begin{lemma} \label{lemm:im-j-congruence} \NB{reference?}
For $n \ge 0$ we have 
\[ \nu_2(9^n - 1) = \nu_2(8n). \]
\end{lemma}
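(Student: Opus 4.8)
The plan is to prove the $2$-adic valuation identity $\nu_2(9^n-1) = \nu_2(8n) = 3 + \nu_2(n)$ by a standard lifting-the-exponent argument. First I would dispose of the trivial case $n=0$ (both sides are the valuation of $0$, conventionally $+\infty$), and then assume $n \ge 1$.

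The key computation is the lifting-the-exponent lemma at the prime $2$: for an odd integer $a$ with $a \equiv 1 \pmod 4$, one has $\nu_2(a^n - 1) = \nu_2(a-1) + \nu_2(n)$. I would apply this with $a = 9$, noting $9 \equiv 1 \pmod 4$ and $\nu_2(9-1) = \nu_2(8) = 3$, which gives $\nu_2(9^n-1) = 3 + \nu_2(n)$. On the other hand $\nu_2(8n) = \nu_2(8) + \nu_2(n) = 3 + \nu_2(n)$, so the two sides agree. If I did not want to cite the lemma as a black box, I would instead prove the needed instance directly: write $9^n - 1 = (3^n-1)(3^n+1)$; since $3^n$ is odd, $3^n + 1 \equiv 0 \pmod 2$ but $3^n + 1 \equiv 2 \pmod 4$ (as $3^n \equiv 1$ or $3 \pmod 4$ depending on parity of $n$, and in either case $3^n+1 \equiv 2 \pmod 4$ when... ) — actually it is cleaner to factor through $9^n - 1$ directly: induct on $\nu_2(n)$. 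If $n$ is odd, then $9^n - 1 = (9-1)(9^{n-1} + 9^{n-2} + \dots + 1)$ and the second factor is a sum of $n$ odd terms, hence odd, giving $\nu_2(9^n-1) = 3$. If $n = 2m$, then $9^n - 1 = (9^m-1)(9^m+1)$; here $9^m + 1 \equiv 2 \pmod 4$ since $9^m \equiv 1 \pmod 4$ forces $9^m + 1 \equiv 2 \pmod 8$ is false — rather $9^m \equiv 1 \pmod 8$ so $9^m + 1 \equiv 2 \pmod 8$, whence $\nu_2(9^m+1) = 1$, and by induction $\nu_2(9^n-1) = \nu_2(9^m-1) + 1 = 3 + \nu_2(m) + 1 = 3 + \nu_2(n)$.

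There is essentially no obstacle here; this is a textbook fact and the only care needed is getting the base case of the induction ($n$ odd) and the inductive step ($9^m \equiv 1 \pmod 8$) right. I would write the argument as the short inductive computation in the previous paragraph, since it is self-contained and avoids invoking an external lemma. The resulting equality $\nu_2(9^n-1) = 3 + \nu_2(n) = \nu_2(8n)$ is exactly what is needed in the proof of Theorem \ref{thm:stable-stems} to identify multiplication by $9^n-1$ with multiplication by $8n$ up to a $2$-local unit.
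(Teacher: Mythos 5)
Your proposal is correct, but it takes a genuinely different route from the paper's. The paper expands $9^n - 1 = (1+8)^n - 1 = 8n + \sum_{p \ge 2} \binom{n}{p} 8^p$ and shows each term with $p \ge 2$ has strictly larger $2$-adic valuation than $8n$, using Legendre's formula to bound $\nu_2(p!) \le p < 3(p-1)$ for $p \ge 2$; this is an additive argument that exhibits $9^n - 1$ as $8n$ plus $2$-adically negligible garbage, and in fact gives slightly more information (that $9^n - 1$ and $8n$ differ by a $2$-adic unit multiple in a controlled way). Your argument is the multiplicative lifting-the-exponent one: reduce to the base case $n$ odd (where the geometric-series factor $9^{n-1} + \dots + 1$ is a sum of $n$ odd terms, hence odd) and then induct on $\nu_2(n)$ via $9^{2m} - 1 = (9^m - 1)(9^m + 1)$, using $9^m \equiv 1 \pmod 8$ to get $\nu_2(9^m + 1) = 1$. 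Both proofs land on $\nu_2(9^n - 1) = 3 + \nu_2(n)$; yours is the more standard number-theoretic route and generalizes cleanly to $a \equiv 1 \pmod 4$, while the paper's binomial argument is more elementary and self-contained for the single case at hand. The only thing to tidy in your write-up is the garbled sentence in the inductive step about $9^m + 1 \pmod 4$ versus $\pmod 8$: just state directly that $9 \equiv 1 \pmod 8$ gives $9^m + 1 \equiv 2 \pmod 8$.
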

\begin{proof}
Writing \[ 9^n - 1 = (1+8)^n -1 =  8n + \sum_{p \ge 2} {n \choose p} 8^p, \] it suffices to show that $\nu_2({n \choose p} 8^p) > \nu_2(8n)$.
Since ${n \choose p} = \frac{n(n-1) \dots (n-p+1)}{p!}$, it is enough to show that $(*)$ $\nu_2(p!) < \nu_2(8^{p-1}) = 3(p-1)$.
Legendre's formula (see e.g. \cite[Theorem 2.6.4]{moll2012numbers}) implies that $\nu_2(p!) \le p$, whence $(*)$ holds for $p \ge 2$ as needed.
\end{proof}

\subsection{Homotopy groups of \texorpdfstring{$\MSp[\eta^{-1}]$}{MSp}} \label{subsec:htpy-MSp}\discuss{Mike: I think this might interest you}
Next we compute the homotopy sheaves of $\MSp[\eta^{-1}]$.
This will require some slightly involved algebraic manipulations.

\begin{lemma} \label{lemm:almost-derivation}
Suppose that $E \in \SH(k)_{(2)}$ such that $\kw_* E$ is $\beta$-torsion free.
Then for $a, b \in \kw_*E$ we have \[ \adamspsi^3(a)\adamsphi(b) + \adamsphi(a)b = \adamsphi(ab). \]
\end{lemma}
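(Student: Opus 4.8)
The statement is a twisted Leibniz rule for $\adamsphi$, and it should follow formally from the defining relation of $\adamsphi$ together with multiplicativity of $\adamspsi^3$. Recall from Remark \ref{rmk:phi-defn} that for $E \in \SH(k)_{(2)}$ and $a \in \kw_*(E)$ one has $\beta\adamsphi(a) = \adamspsi^3(a) - a$, and when $\kw_*E$ is $\beta$-torsion free this determines $\adamsphi(a) = (\adamspsi^3(a)-a)/\beta$ uniquely. So the whole identity can be checked after multiplying by $\beta$, where $\beta$ acts injectively.

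\textbf{Key steps.} First I would reduce to the $\beta$-multiplied identity: it suffices to show
\[ \beta\bigl(\adamspsi^3(a)\adamsphi(b) + \adamsphi(a)b\bigr) = \beta\adamsphi(ab). \]
Here one has to be slightly careful about which module structure is in play: $\adamspsi^3(a)\adamsphi(b)$ means the product in the $\kw_*$-module $\kw_*E$ (using that $\adamspsi^3$ is a ring map and $\kw_* E$ is a module, cf. the footnote-style remark in the proof of Proposition \ref{prop:adams-cobordism-summary} that $\adamspsi(am) = \adamspsi(a)\adamspsi(m)$), but since $\beta$ is central and $\adamspsi^3(\beta) = 9\beta$ we must track the factors of $\beta$ carefully — or, cleaner, just note $\beta$ is a unit after inverting it and all identities are $\beta$-linear in the appropriate sense. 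Second, compute the right-hand side: $\beta\adamsphi(ab) = \adamspsi^3(ab) - ab = \adamspsi^3(a)\adamspsi^3(b) - ab$, using that $\adamspsi^3$ is multiplicative on $\kw_*E$. Third, compute the left-hand side:
\[ \beta\adamspsi^3(a)\adamsphi(b) + \beta\adamsphi(a)b = \adamspsi^3(a)\bigl(\beta\adamsphi(b)\bigr) + \bigl(\beta\adamsphi(a)\bigr)b = \adamspsi^3(a)\bigl(\adamspsi^3(b) - b\bigr) + \bigl(\adamspsi^3(a) - a\bigr)b. \]
Expanding: $\adamspsi^3(a)\adamspsi^3(b) - \adamspsi^3(a)b + \adamspsi^3(a)b - ab = \adamspsi^3(a)\adamspsi^3(b) - ab$. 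This matches the right-hand side, and since $\kw_*E$ is $\beta$-torsion free we may cancel $\beta$ to conclude.

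\textbf{Main obstacle.} There is really no analytic obstacle — the computation is a one-line cancellation once the defining relation is in hand. The only thing to be careful about is bookkeeping: making sure that $\beta\adamsphi(b)$ can be pulled through $\adamspsi^3(a)$ as written (i.e. that $\adamspsi^3(a) \cdot (\beta x) = \beta \cdot (\adamspsi^3(a) x)$ for $x \in \kw_* E$, which holds because $\beta \in \kw_*$ is central in the $\kw_*$-module structure and the product $\adamspsi^3(a)\adamsphi(b)$ is taken in that module), and that $\adamspsi^3$ is genuinely multiplicative on $\kw_* E$, which is the statement that $\kw_* E$ is a module over the ring $\kw_*$ via a ring map compatible with $\adamspsi^3$ — this is exactly the content of Remark \ref{rmk:adams-kw} and Example \ref{ex:adamspsi-pi0} applied to the $\kw$-module $\kw \wedge E$. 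So I would state the proof as: multiply through by $\beta$, substitute $\beta\adamsphi(\ph) = \adamspsi^3(\ph) - \ph$ everywhere, expand both sides using multiplicativity of $\adamspsi^3$, observe they agree, and cancel $\beta$ using torsion-freeness.
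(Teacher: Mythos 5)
Your proof is correct and is essentially identical to the paper's: the paper writes the same one-line computation directly in terms of $\beta^{-1}[\cdots]$ using \eqref{eq:adamsphi-compute}, which is logically equivalent to your phrasing of ``multiply by $\beta$, substitute the defining relation, expand via multiplicativity of $\adamspsi^3$, and cancel $\beta$ by torsion-freeness.'' (One small caveat on your attributions: Remark \ref{rmk:adams-kw} and Example \ref{ex:adamspsi-pi0} do not quite say that $\adamspsi^3$ is multiplicative on $\kw_*E$; what one actually needs is that $E$ is a ring spectrum so that $\adamspsi^3\wedge\id_E$ is a ring endomorphism of $\kw\wedge E$, which is the case in the paper's application $E=\MSp$.)
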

\begin{proof}
Via \eqref{eq:adamsphi-compute} this is a direct computation: \begin{gather*} \adamspsi^3(a)\adamsphi(b) + \adamsphi(a)b = \beta^{-1}[\adamspsi^3(a)(\adamspsi^3(b)-b) + (\adamspsi^3(a)-a)b] \\ = \beta^{-1}[\adamspsi^3(a)\adamspsi^3(b) - ab] = \beta^{-1}[\adamspsi^3(ab) - ab] = \adamsphi(ab). \end{gather*}
\end{proof}
\begin{lemma} \label{lemm:MSp-htpy-quad-closed}
Suppose that $\W(k) = \F_2$.
Then the operation $\adamsphi: \kw_*\MSp \to \kw_{*-4} \MSp$ is surjective, with kernel \[ \F_2[y_1, y_2, \dots] \subset \kw_*\MSp \wequi \F_2[\beta, e_1, e_2, \dots] \] where $\deg y_i = 2i$ and $y_2 = \beta$.
\end{lemma}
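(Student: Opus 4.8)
The statement concerns the action of $\adamsphi$ on $\kw_*\MSp \wequi \F_2[\beta, e_1, e_2, \dots]$ (using $\W(k) = \F_2$, so all the $\adamspsi^n$ act and $\kw_*\MSp$ is $\beta$-torsion free). First I would record the key formula: by Proposition \ref{prop:adams-cobordism-summary} we have $\adamspsi^3(e_i) \in e_i + \beta^2 \kw_*\MSp + \beta e_{i-2}$ for $i$ even and $\adamspsi^3(e_i) \in e_i + \beta^2\kw_*\MSp$ for $i$ odd, while $\adamspsi^3(\beta) = 9\beta = \beta$ since $2=0$. Via \eqref{eq:adamsphi-compute}, $\adamsphi(a) = (\adamspsi^3(a) - a)/\beta$, so $\adamsphi(\beta) = 0$ and, by \eqref{eq:beta-phi-commute}, $\adamsphi$ is $\F_2[\beta]$-linear (commutes with $\beta$). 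On generators, $\adamsphi(e_i) \in e_{i-2} + \beta\kw_*\MSp$ for $i$ even (as in Lemma \ref{lemm:phi-MSL}(1)) and $\adamsphi(e_i) \in \beta\kw_*\MSp$ for $i$ odd. Lemma \ref{lemm:almost-derivation} gives the twisted Leibniz rule $\adamsphi(ab) = \adamspsi^3(a)\adamsphi(b) + \adamsphi(a)b$.

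\textbf{Surjectivity.} I would prove $\adamsphi$ is surjective by an inductive/filtration argument. Filter $\kw_*\MSp$ by powers of $\beta$; on the associated graded $\gr(\kw_*\MSp) \wequi \F_2[e_1, e_2, \dots][\beta]$ the map $\adamsphi$ is homogeneous of $\beta$-degree $-1$ (raising $\beta$-filtration appropriately). On this graded object, $\adamsphi$ induces (up to lower-order terms) a derivation-like operator: modulo higher $\beta$-filtration, $\adamsphi(e_{2i}) \equiv e_{2i-2}$ and $\adamsphi(e_{2i+1}) \equiv 0$, and by the twisted Leibniz rule (where $\adamspsi^3 \equiv \id$ modulo $\beta$) $\adamsphi$ is a derivation on $\F_2[e_1,e_2,\dots]$ to leading order. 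Concretely the leading term of $\adamsphi$ is $\beta^{-1} \circ D$ where $D$ is the derivation of $\F_2[e_1, e_2, \dots]$ with $D(e_{2i}) = \beta e_{2i-2}$, $D(e_{2i+1}) = 0$ (and $D(e_2) = \beta$). To deduce surjectivity of the actual $\adamsphi$ from surjectivity of its leading term, I would use that $\kw_*\MSp$ is complete for the $\beta$-filtration in each fixed total degree (it is finite-dimensional over $\F_2$ in each degree times a polynomial variable $\beta$) and run a successive-approximation argument: given a target, solve it modulo $\beta$ using the derivation, then correct. This is the same mechanism as in Corollary \ref{corr:filtered-free-lifting}/Lemma \ref{lemm:filtered-surjection}.

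\textbf{Identification of the kernel.} For the kernel, I would first compute the kernel of the leading derivation $D$ on $R := \F_2[e_1, e_2, \dots]$. Since $D(e_1) = 0$, $D(e_2) = \beta \ne 0$, $D(e_3) = 0$, $D(e_4) = \beta e_2$, etc., one sees $D$ is the derivation with $D(e_{2i}) = \beta e_{2i-2}$ (setting $e_0 = 1$) and $D(e_{2i+1})=0$. Change variables: the odd generators $e_1, e_3, e_5, \dots$ are in $\ker D$; among the even generators, $\ker D$ on $\F_2[e_2, e_4, \dots]$ is computed exactly as in the computation of $\ker\delta_*$ in \S\ref{sec:HW-kw} (the Adams-style argument adapted there): it is a polynomial algebra on classes $e_{2i}^2$ and mixed classes $e_{2i}e_{2i+2}$ — more precisely, writing $F_i$ for the span of $\{e_{2i}^{2n}, e_{2i}^{2n}e_{2i+2}\}$, $\ker(D|_{F_i}) = \F_2\{1\}$ and the whole even polynomial ring factors as $\bigotimes_i F_i$, giving a kernel that is a polynomial algebra on one new generator in each even degree $2\cdot 2i = 4i$; combined with the odd $e_j$ this produces one polynomial generator $y_i$ in each degree $2i$ for $i \ge 1$. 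Then I would lift this back: since $\adamsphi$ agrees with $\beta^{-1}D$ modulo higher $\beta$-filtration and everything is $\beta$-complete degreewise, Lemma \ref{lemm:filtered-surjection} (applied to the short exact sequence $0 \to \ker\adamsphi \to \kw_*\MSp \xrightarrow{\adamsphi} \kw_{*-4}\MSp \to 0$, which is exact by the surjectivity just proved, filtered by $\beta$) yields $\gr\ker(\adamsphi) \wequi \ker(\gr\adamsphi) = \ker D$, and then Corollary \ref{corr:filtered-free-lifting}(2) promotes this to an isomorphism of polynomial algebras $\ker\adamsphi \wequi \F_2[y_1, y_2, \dots]$ on lifted generators with $\deg y_i = 2i$; since $\adamsphi(\beta) = 0$ and $\beta$ has degree $4$, one may and does choose $y_2 = \beta$.

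\textbf{Main obstacle.} The crux is the kernel computation for the leading derivation $D$ on the even-indexed polynomial subring — this is where the combinatorial Adams-style tensor-factorization argument (already needed in \S\ref{sec:HW-kw}) has to be reproduced, together with care about the interaction of the $\beta$-grading with the internal degrees so that the generators come out exactly one per even degree. A secondary technical point is verifying that the $\beta$-adic filtration is complete enough in each total degree to run the successive-approximation lifting; this should follow from the degreewise finite-dimensionality of $\kw_*\MSp/\beta^n$, but it must be checked carefully to legitimately invoke Corollary \ref{corr:filtered-free-lifting}.
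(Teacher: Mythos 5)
Your overall architecture coincides with the paper's: filter $\kw_*\MSp$ by powers of $\beta$, observe via \eqref{eq:beta-phi-commute} that $\adamsphi$ is a filtered map whose associated graded is (by Lemma \ref{lemm:almost-derivation} and Proposition \ref{prop:adams-cobordism-summary}) a derivation $D$ with $D(e'_i)=e'_{i-2}$ for $i$ even and $D(e'_i)=D(\beta')=0$ otherwise, and then lift the answer back through Lemma \ref{lemm:filtered-surjection} and Corollary \ref{corr:filtered-free-lifting}. The gap is in your treatment of $D$ itself. Writing $x_i=e'_{2i}$, the operator $D$ is a \emph{shift} derivation, $D(x_i)=x_{i-1}$ with $D(x_1)=1$; in particular $D^2\neq 0$ (e.g.\ $D^2(x_2)=1$), so the Adams-style tensor factorization you import from \S\ref{sec:HW-kw} --- which is tailored to the squaring differential $\delta_*(\xi_i)=\xi_{i-1}^2$ and computes a homology, not a kernel --- does not apply. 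Concretely, your subspaces $F_i=\mathrm{span}\{x_i^{2n},x_i^{2n}x_{i+1}\}$ are not $D$-stable ($D(x_i^{2n}x_{i+1})=x_i^{2n+1}\notin F_i$), and your proposed kernel generators are not in the kernel: $D(e_{2i}e_{2i+2})=e_{2i-2}e_{2i+2}+e_{2i}^2\neq 0$. The degree count also fails: the only degree-$4$ element of $\F_2[e'_2,e'_4,\dots]$ is $e'_2$, with $D(e'_2)=1$, so the even-index factor contributes no kernel generator in degree $4$; your count (a generator in each degree $4i$ from the even part, plus $\beta$, plus the odd $e_j$) would produce two generators in degree $4$, which contradicts the asserted answer $\F_2[y_1,y_2,\dots]$ with one generator per even degree.

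What is actually needed --- and what the paper isolates as Lemma \ref{lemm:comuputation} --- is that the shift derivation $\sum_i x_i\,\partial/\partial x_{i+1}$ on the even-index factor $A=\F_2[e'_2,e'_4,\dots]$ is surjective with kernel a polynomial ring on one generator in each degree $\geq 2$ (in the grading $|x_i|=i$); the splitting $\gr^\bullet\kw_*\MSp\wequi A\otimes B$ with $B=\F_2[\beta',e'_1,e'_3,\dots]$ and $D=D|_A\otimes\id_B$ then reduces everything to this. Neither the surjectivity of this operator nor its kernel is formal (your proposal asserts surjectivity of the leading term without proof). The paper identifies $D|_A$ with the dual of multiplication by $c_1$ on $H^*(\BU)$, so that its kernel is $H_*(\BSU)$, known to be polynomial on generators in degrees $4,6,8,\dots$. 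Without this input, or a correct direct combinatorial substitute, both the surjectivity and the kernel identification in your argument are unproven.
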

\begin{proof}
Let $R=\F_2[\beta]$, which we view as a filtered ring via the filtration by powers of $\beta$.
We also view $\kw_*\MSp$ as filtered by powers of $\beta$.
Note that \[ \gr^\bullet R \wequi \F_2[\beta'] \quad\text{and}\quad \gr^\bullet \kw_*\MSp \wequi \F_2[\beta', e_1', e_2', \dots]; \] here $\beta' \in \gr^1$ and $e_i' \in \gr^0$ are the images of $\beta \in F^1$ and $e_i \in F^0$.
By \eqref{eq:beta-phi-commute} we have $\adamsphi(\beta x) = \beta \adamsphi(x)$; in other words $\adamsphi$ is a filtered morphism.
Note also that all our filtrations are degreewise finite, and hence complete, Hausdorff and exhaustive.\NB{details?}
By Lemma \ref{lemm:filtered-surjection} and Corollary \ref{corr:filtered-free-lifting}(2), it thus suffices to show that $\gr^\bullet(\adamsphi)$ is surjective with kernel $\F_2[y_1', y_2', \dots]$, such that $\beta$ lifts $y_2'$.
Since $\adamsphi(\beta) = 0$ this makes sense, and since $\beta$ lifts $\beta'$ it suffices to show the claim with $y_2' = \beta'$.
By Proposition \ref{prop:adams-cobordism-summary} and \eqref{eq:adamsphi-compute} we have \[ \adamsphi(e_i) \in \beta\kw_* \MSp + \begin{cases} e_{i-2} & i \text{ even} \\ 0 & i \text{ odd} \end{cases}. \]
This implies that \begin{equation}\label{eq:bibi-2} \adamsphi(e_i') = \begin{cases} e_{i-2}' & i \text{ even} \\ 0 & i \text{ odd} \end{cases}. \end{equation}
Also by Proposition \ref{prop:adams-cobordism-summary} we have \[ \adamspsi^3(e_i) \equiv e_i \pmod{\beta} \] and hence, since $\adamspsi^3$ is a ring map and $\kw_*\MSp$ is generated by the $e_i$ (over $\kw_*$), we get \[ \adamspsi^3(x) \equiv x \pmod{\beta}\text{ for all } x \in \kw_*\MSp. \]
Via Lemma \ref{lemm:almost-derivation} this implies that \[ \adamsphi(ab) \equiv a\adamsphi(b) + \adamsphi(a)b \pmod{\beta} \] for all $a,b \in \kw_*\MSp$ and hence \begin{equation}\label{eq:derivation} \adamsphi(ab) = a\adamsphi(b) + \adamsphi(a)b \text{ for all } a,b \in \gr^\bullet \kw_* \MSp. \end{equation}
We also have $\adamsphi(\beta) = 8\beta = 0$ and hence \begin{equation}\label{eq:beta'} \adamsphi(\beta') = 0 \end{equation}

We have the decomposition \[ \gr^\bullet \kw_*\MSp \wequi \F_2[\beta', e_1', e_2', \dots] \wequi \F_2[e_2', e_4', \dots] \otimes_{\F_2} \F_2[\beta', e_1', e_3', \dots] =: A \otimes_{\F_2} B. \]
Using that $\adamsphi$ is a derivation (i.e. \eqref{eq:derivation}) and the action on the generators (i.e. \eqref{eq:bibi-2}, \eqref{eq:beta'}) we see that $\adamsphi = \adamsphi|_A \otimes_{\F_2} \id_B$.
Since $\otimes_{\F_2} B$ is an exact functor, it is thus sufficient to prove that $\adamsphi|_A$ is surjective with kernel $\F_2[y_4, y_6, \dots]$.
This is established in Lemma \ref{lemm:comuputation} below (put $x_i = e_{2i}'$, $A_0=\F_2$).
\end{proof}

\begin{lemma} \label{lemm:comuputation}
Let $A_0$ be a (commutative) ring and consider the commutative graded ring \[ A = A_0[x_1, x_2, \dots], \] where $|x_i| = i$.
Give it the derivation $\adamsphi$ with $\adamsphi(x_i) = x_{i-1}$ (and $\adamsphi(x_1) = 1$, $\adamsphi(A_0) = 0$).\NB{In other words $\adamsphi = \sum_{i \ge 0} x_i \partial/\partial x_{i+1}$.}
Then $\adamsphi$ is surjective with kernel a polynomial ring \[ A_0[y_2, y_3, \dots] \] where $|y_i| = i$.
\end{lemma}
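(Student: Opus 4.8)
The plan is to reduce to the universal case $A_0=\Z$, prove surjectivity by a monomial argument, identify $\ker\adamsphi_\Z$ with a familiar subring of the ring of symmetric functions so as to pin down its $\Z$-module structure, and finally extract polynomial generators by an inductive ``complete a basis'' construction.

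For the reduction: surjectivity of $\adamsphi\colon A\to A$ holds over any $A_0$ and involves no denominators. Working in the monomial basis, one checks by downward induction on the largest index $j$ occurring in a degree $n$ monomial $m=x_1^{a_1}\cdots x_j^{a_j}$ (with $a_j\ge 1$) that $m\in\operatorname{im}\adamsphi$, using
\[ \adamsphi\bigl(x_1^{a_1}\cdots x_j^{a_j-1}x_{j+1}\bigr)=m+(\text{a $\Z$-combination of degree $n$ monomials divisible by }x_{j+1}), \]
the correction terms having strictly larger largest index (with base case $\adamsphi(x_{n+1})=x_n$). Granting this, $0\to\ker\adamsphi_\Z\to A_\Z\xrightarrow{\adamsphi}A_\Z\to 0$ is a short exact sequence of $\Z$-modules that are free in each degree, hence degreewise split, hence remains exact after $\otimes_\Z A_0$; so $\ker(\adamsphi\colon A\to A)=\ker(\adamsphi_\Z)\otimes_\Z A_0$, and since a polynomial $\Z$-algebra base changes to a polynomial $A_0$-algebra on the same generators, it is enough to treat $A_0=\Z$.

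Now identify $A_\Z=\Z[x_1,x_2,\dots]$ with the ring $\Lambda$ of symmetric functions via $x_i\mapsto h_i$ (an isomorphism of graded rings, both sides being polynomial on generators in degrees $1,2,\dots$); under this identification $\adamsphi$ becomes $\partial/\partial p_1=h_1^\perp$, since both are derivations sending $h_i\mapsto h_{i-1}$. In the monomial symmetric function basis one has $h_1^\perp m_\lambda=m_{\lambda\setminus\{1\}}$ if $\lambda$ has a part equal to $1$, and $h_1^\perp m_\lambda=0$ otherwise; hence $h_1^\perp\colon\Lambda_n\to\Lambda_{n-1}$ carries the sub-basis $\{m_\lambda:\lambda\vdash n,\ \lambda\text{ has a part }1\}$ bijectively onto the full basis of $\Lambda_{n-1}$ and annihilates the remaining basis vectors. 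This reproves surjectivity and shows
\[ \ker\adamsphi_\Z=\Lambda^{(1)}:=\operatorname{span}_\Z\{\,m_\lambda:\text{all parts of }\lambda\text{ are }\ge 2\,\}, \]
a free $\Z$-module, indexed in each degree $n$ by the partitions of $n$ with all parts $\ge 2$, and a $\Z$-module direct summand of $\Lambda$. Over $\Q$ one has $\Lambda^{(1)}\otimes\Q=\Q[p_2,p_3,\dots]$, a polynomial algebra with one generator in each degree $\ge 2$, so a Poincaré series count shows $\Lambda^{(1)}$ is degreewise $\Z$-free of the same rank as a polynomial ring $\Z[y_2,y_3,\dots]$ with $|y_i|=i$.

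It remains to realize those generators. I would construct $y_2,y_3,\dots$ inductively, maintaining that $R_k:=\Z[y_2,\dots,y_k]$ is a polynomial subring of $\Lambda^{(1)}$ with $(R_k)_d=\Lambda^{(1)}_d$ for all $d\le k$ and that $R_k$ is a pure $\Z$-submodule of $\Lambda$. At stage $n$: by purity, the degree $n$ decomposables $(R_{n-1})_n$ are pure in $\Lambda_n$, hence in the direct summand $\Lambda^{(1)}_n$, so $\Lambda^{(1)}_n/(R_{n-1})_n$ is torsion free, and by the rank count it is free of rank one; pick $y_n$ lifting a generator. Then $(R_n)_n=\Lambda^{(1)}_n$, and $R_n=R_{n-1}[y_n]$ is polynomial because over $\Q$ the $y_2,\dots,y_n$ extend the polynomial generators $p_2,\dots,p_{n-1}$ of $\Lambda^{(1)}\otimes\Q$ and hence are algebraically independent. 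Once all $y_n$ are built, $\Z[y_2,y_3,\dots]\to\Lambda^{(1)}$ is degreewise injective by construction and surjective (each $m_\lambda$ with parts $\ge 2$ of degree $n$ lies in $R_n$), hence an isomorphism; base changing back gives $\ker\adamsphi=A_0[y_2,y_3,\dots]$.

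The main obstacle is verifying that \emph{purity of $R_{n-1}$ in $\Lambda$ is inherited by $R_n=R_{n-1}[y_n]$} (equivalently, that $(R_n)_d=\bigoplus_j (R_{n-1})_{d-jn}\,y_n^j$ is a pure $\Z$-submodule of $\Lambda_d$ for every $d$, not just $d\le n$); everything preceding it is routine. I expect this to be resolved either by a careful argument that multiplication by the content one homogeneous element $y_n$ is sufficiently compatible with the $\Z$-module structure of $\Lambda$ (via Gauss's lemma, $\Lambda/(y_n)$ is $\Z$-flat), or, more robustly, by running the same inductive construction in parallel over $\Q$ and over every $\F_p$ — where $\Lambda^{(1)}\otimes\F_p$ must separately be shown polynomial — and then assembling the integral statement from the fact that a degreewise map of finitely generated free $\Z$-modules that becomes an isomorphism after $\otimes\Q$ and after every $\otimes\F_p$ is an isomorphism.
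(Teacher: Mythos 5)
Your strategy coincides with the paper's up to the crucial step. Both reduce to $A_0=\Z$ via the degreewise splitting, and both identify $(\adamsphi, A_\Z)$ with contraction against the primitive class $c_1$ on $H_*(\BU)\wequi\Lambda$ (your $h_1^\perp$), reducing to the statement that $\ker\adamsphi_\Z \wequi H_*(\BSU;\Z)$ --- your $\Lambda^{(1)}$, the span of the $m_\lambda$ with all parts $\ge 2$ --- is a polynomial $\Z$-algebra. The paper then simply cites Adams--Liulevicius for this; you attempt to prove it from scratch and correctly flag the purity of $R_n=\Z[y_2,\dots,y_n]$ inside $\Lambda$ as the sticking point.

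That gap is genuine and is essentially the entire content of the cited theorem, not a technicality one can dispatch in a line. Knowing that $\Lambda^{(1)}$ is a degreewise free $\Z$-summand of $\Lambda$ with the Poincar\'e series of $\Z[y_2,y_3,\dots]$ and that $\Lambda^{(1)}\otimes\Q=\Q[p_2,p_3,\dots]$ is polynomial does not force $\Lambda^{(1)}$ to be polynomial over $\Z$: the graded ring $\Z[x,y]/(y^2-2x)$ with $|y|=1$, $|x|=2$ is connected, degreewise free of rank one, has rationalization $\Q[y]$, yet is not polynomial over $\Z$ (mod $2$ it is $\F_2[x,y]/(y^2)$). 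Your Gauss's-lemma remark controls $\Z$-torsion in $\Lambda/(y_n)\Lambda$, i.e.\ in the quotient by the ideal, whereas what is needed is purity of the subring $R_{n-1}[y_n]$, which is a different statement; and your mod-$p$ suggestion requires knowing that $\ker(h_1^\perp\otimes\F_p)$ is polynomial over $\F_p$, which is precisely the mod-$p$ form of the theorem being proved. The known proofs instead produce the integral polynomial generators explicitly, e.g.\ via the map $\C\P^\infty\wedge\C\P^\infty\to\BSU$ classifying $(L_1-1)(L_2-1)$; without some such concrete input the abstract inductive lifting cannot be completed. So: same route as the paper, but the paper black-boxes the one hard step and your attempt to open the box is incomplete.
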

\begin{proof}
If $\adamsphi$ is surjective (for some $A_0$), then the exact sequence of $A_0$-modules \[ 0 \to \ker(\adamsphi) \to A \xrightarrow{\adamsphi} A \to 0 \] is stable under base change, $A$ being free as an $A_0$-module.
We may thus assume that $A_0=\Z$.
Recall that \[ H^*(\BU) = \Z[c_1, c_2, \dots] \] and that the canonical map $\BSU \to \BU$ induces $H^*(\BSU) \wequi H^*(\BU)/c_1$.
In other words we have a short exact sequence \[ 0 \to H^*(\BU) \xrightarrow{c_1} H^*(\BU) \to H^*(\BSU) \to 0. \]
Passing to duals, we get \[ 0 \to H_*(\BSU) \to H_*(\BU) \xrightarrow{f} H_*(\BU) \to 0, \] where $f$ is dual to multiplication by $c_1$.
The homology of $\BU$ a polynomial ring, free on the generators of $H_*(\BU(1))$ (compare the proof of \ref{thm:cobordism-MSL-summary}).
Together with the description of the Hopf algebra structure on $H_*(\BU)$ (see e.g. \cite[\S24.1]{may2011more}), we find that $H_*(\BU) \wequi A$ (with degrees doubled) and that under this isomorphism $f$ corresponds to $\adamsphi$.
The result follows since $H_*(\BSU)$ is known to be a polynomial ring on generators in degrees $4, 6, 8, \dots$ \cite[Lemma 2.4]{adams1976primitive}.
\end{proof}

\begin{corollary} \label{cor:MSp-intermediate}
Let $k$ be any field of characteristic $\ne 2$.
Then \[ \ul{\pi}_* \MSp_{(2)}[\eta^{-1}] \wequi \ul{W}_{(2)}[y_1, y_2, \dots]. \]
\end{corollary}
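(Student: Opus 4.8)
The plan is to smash the fiber sequence of Theorem \ref{thm:main} with $\MSp$. Since $\eta$-periodization and $2$-localization are smashing, $\1[\eta^{-1}]_{(2)} \wedge \MSp \wequi \MSp_{(2)}[\eta^{-1}]$, and we obtain a fiber sequence
\[ \MSp_{(2)}[\eta^{-1}] \to \kw_{(2)} \wedge \MSp \xrightarrow{\adamsphi} \Sigma^4(\kw_{(2)} \wedge \MSp) \]
in $\SH(k)$, in which the first map is a ring map (it is $(\1[\eta^{-1}]_{(2)} \to \kw_{(2)}) \wedge \MSp$). By Theorem \ref{thm:cobordism-MSL-summary}(1) and Example \ref{ex:SL-orientations} we have $\ul\pi_*(\kw_{(2)} \wedge \MSp) \wequi \ul W_{(2)}[\beta, e_1, e_2, \dots]$ with $|\beta| = 4$ and $|e_i| = 2i$; in particular this homotopy module vanishes in odd degrees and is, in each degree, a finitely generated free $\ul W_{(2)}$-module. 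Hence, examining the long exact sequence of homotopy sheaves of the fiber sequence degree by degree, \emph{provided $\adamsphi$ is surjective on homotopy sheaves}, the unit induces an isomorphism of graded rings $\ul\pi_* \MSp_{(2)}[\eta^{-1}] \wequi \ker(\adamsphi)$. So it suffices to prove that $\adamsphi$ is surjective and that $\ker(\adamsphi)$ is a polynomial $\ul W_{(2)}$-algebra on generators in each even positive degree.

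Both assertions are about homotopy sheaves, hence (unramifiedness, \S\ref{subsec:t-structure}) may be checked on generic stalks, i.e.\ over each finitely generated field extension $K/k$; write $R = \kw_{(2),*}(\MSp_K) = \W(K)_{(2)}[\beta, e_1, e_2, \dots]$. The key observation is that by Remark \ref{rmk:phi-defn} we have $\adamspsi^3 - 1 = \beta\adamsphi$ on $R$, and $R$ is $\beta$-torsion free, so $\ker(\adamsphi) = \ker(\adamspsi^3 - 1) = R^{\adamspsi^3}$ is the fixed subring of the ring automorphism $\adamspsi^3$. Next I would base-change along a quadratic closure $\bar K/K$: since $\W(\bar K)_{(2)} = \F_2$, the restriction map $R \to \kw_{(2),*}(\MSp_{\bar K}) = \F_2[\beta, e_1, e_2, \dots]$ is reduction of the coefficients modulo the fundamental ideal, that is $R \otimes_{\W(K)_{(2)}} \F_2$; and, the Adams operation and the classes $e_i$ being compatible with base change (Theorem \ref{thm:ko-adams}(1), and the construction of $e_i$ from Chern classes of the tautological bundle), the reduction of $\adamsphi$ is exactly the operation analysed in Lemma \ref{lemm:MSp-htpy-quad-closed}.

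From here the commutative algebra should be routine, using that $\W(K)_{(2)}$ is local with residue field $\F_2$ (Lemma \ref{lemm:witt-units}) and that $R$ is degreewise finitely generated free over it. Cokernels commute with $\otimes\F_2$, so $\mathrm{coker}(\adamsphi) \otimes \F_2 = \mathrm{coker}(\adamsphi_{\bar K}) = 0$ by Lemma \ref{lemm:MSp-htpy-quad-closed}, whence $\mathrm{coker}(\adamsphi) = 0$ by Nakayama; this is surjectivity. Then $0 \to \ker(\adamsphi) \to R \xrightarrow{\adamsphi} R \to 0$ splits degreewise, so $\ker(\adamsphi)$ is degreewise projective, hence free, over $\W(K)_{(2)}$, and $\ker(\adamsphi)\otimes\F_2 \hookrightarrow R\otimes\F_2$ with image $\ker(\adamsphi_{\bar K}) = \F_2[y_1', y_2', \dots]$, again by Lemma \ref{lemm:MSp-htpy-quad-closed}. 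Choosing $\adamspsi^3$-fixed lifts $y_i \in \ker(\adamsphi)$ of the $y_i'$ (possible by freeness), the induced $\W(K)_{(2)}$-algebra map $\W(K)_{(2)}[y_1, y_2, \dots] \to \ker(\adamsphi)$ is in each degree a map of finite free modules of equal rank that is an isomorphism modulo the maximal ideal, hence an isomorphism. Globalizing the $y_i$ over $k$ (as $\adamspsi^3$-fixed elements of $\W(k)_{(2)}[\beta, e_1, \dots]$ reducing modulo $\I$ to polynomial generators) then gives the asserted isomorphism of homotopy modules.

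The main obstacle — and the reason the argument is not a verbatim copy of Lemma \ref{lemm:MSp-htpy-quad-closed} — is that over a general field $\adamsphi(\beta) = 9 - 1 = 8 \ne 0$ (Example \ref{ex:phi-beta}), so $\adamsphi$ no longer commutes with $\beta$ and the $\beta$-adic filtration used in the quadratically closed case is not $\adamsphi$-stable. The resolution is to replace the filtration by the pair of devices above: passing to the fixed ring $R^{\adamspsi^3}$, and reducing modulo $\I$ to transfer the whole $\F_2$-linear computation to Lemma \ref{lemm:MSp-htpy-quad-closed}. The one point requiring genuine care is the claim that this reduction of $\adamsphi$ really is the operation of Lemma \ref{lemm:MSp-htpy-quad-closed}; this is where base-change compatibility of the Borel-type classes $e_i$ and of the Adams operation must be pinned down.
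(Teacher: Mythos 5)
Your proof is correct and takes essentially the same approach as the paper's: smash the main fiber sequence with $\MSp$, reduce modulo $\I$ (realized as base change to a quadratic closure) to invoke Lemma \ref{lemm:MSp-htpy-quad-closed}, and lift surjectivity and polynomial generators by Nakayama, with base-change compatibility handled at the end. Your extra observation that $\ker(\adamsphi) = R^{\adamspsi^3}$ is a clean way to see the kernel is a subring, but it is not strictly necessary — the paper obtains the multiplicativity of $\ker(\adamsphi)$ implicitly via Lemma \ref{lemm:almost-derivation} — and otherwise the two arguments run in parallel step for step.
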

\begin{proof}
To ease notation, we implicitly invert $\eta$ throughout this proof.

We first show the claim about $\pi_*$ instead of $\ul{\pi}_*$.
Note that $\W(k)_{(2)}$ is a local ring (Lemma \ref{lemm:witt-units}).
We need to show that \[ \adamsphi: \kw_* \MSp_{(2)} \to \kw_{*-4} \MSp_{(2)} \] is surjective with kernel as indicated.
The map $\adamsphi$ is a map of $\W(k)_{(2)}$-modules which are degreewise finitely generated free.
Note that $\kw_* \MSp_{(2)}/\I$ is independent of $k$ and hence the same holds for $\adamsphi/\I$.
Thus by Lemma \ref{lemm:MSp-htpy-quad-closed} the claim holds modulo $\I$.
It follows (using Nakayama's lemma) that $\adamsphi$ is split surjective: we may choose $C \subset \kw_* \MSp_{(2)}$ such that $\kw_* \MSp_{(2)} = \ker(\adamsphi) \oplus C$ and $\adamsphi: C \to \kw_{*-4} \MSp_{(2)}$ is an isomorphism.
Thus $\ker(\adamsphi/\I) \wequi \ker(\adamsphi)/\I$.
This implies in particular that any element in $\ker(\adamsphi/\I)$ lifts to $\ker(\adamsphi)$, and that any family of elements of $\ker(\adamsphi)$ which form a basis of $\ker(\adamsphi/\I)$ form a basis of $\ker(\adamsphi)$ (again using Nakayama's lemma).
Lifting the polynomial generators $\bar y_i \in \ker(\adamsphi/\I)$ arbitrarily to $y_i \in \ker(\adamsphi)$ we deduce that monomials in the $y_i$ form a basis of $\ker(\adamsphi)$; the result about $\pi_*$ follows.

Since $\ul{\pi}_* \MSp_{(2)}$ is a $\ul{W}_{(2)}$-algebra, we obtain a map $\ul{W}_{(2)}[y_1, y_2, \dots] \to \ul{\pi}_* \MSp_{(2)}$.
We shall show this is an isomorphism.
To do so, we need to see that the map induces an isomorphism on fields, or equivalently that our generators $y_i$ are stable under base change.
The above proof shows that a family $\{y_i\}$ will generate (over some field $K$) if and only if it generates modulo $\I$.
Since $\W(K)/\I$ is independent of $K$, the result follows.
\end{proof}

\begin{theorem} \label{thm:MSp-htpy}
Let $k$ be any field of characteristic $\ne 2$.
Then \[ \ul{\pi}_* \MSp[\eta^{-1}] \wequi \ul{W}[y_1, y_2, \dots], \] where $|y_i| = 2i$.
\end{theorem}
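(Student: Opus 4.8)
The plan is to deduce Theorem~\ref{thm:MSp-htpy} from Corollary~\ref{cor:MSp-intermediate} by splitting off the odd part. Since no power of $\eta$ is null, $\MSp[\eta^{-1}]$ is an honest $\eta$-periodic spectrum, and for $*>0$ the homotopy sheaves are torsion: indeed by the motivic Serre finiteness statement \eqref{eq:serre-finiteness} we have $\ul\pi_*(\MSp[\eta^{-1}])\otimes\Q \wequi \ul W\otimes\Q[y_1,y_2,\dots]$ once we know the rational computation, and more directly $\MSp[\eta^{-1}]\otimes\Q$ is $\rho$-periodic so governed by real realization. So I would first establish the rational/odd-primary half and then glue with Corollary~\ref{cor:MSp-intermediate} via the arithmetic fracture square $\ul\pi_*\MSp[\eta^{-1}] \wequi \ul\pi_*\MSp[1/\eta,1/2] \times \ul\pi_*\MSp[\eta^{-1}]_{(2)}$ (valid degreewise because $\ul\pi_*$ for $*>0$ is torsion, hence splits as $A[1/2]\oplus A_{(2)}$; for $*=0$ it is just $\ul W$ and for $*<0$ it vanishes).

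First I would compute $\ul\pi_*\MSp[1/\eta,1/2]$ using \S\ref{subsec:rho}. By Corollary~\ref{cor:rR-eta-2-per} the functor to $\prod_\alpha \SH[1/2]$ over the real closures is conservative, and by Lemma~\ref{lemm:cobordism-realisation} we have $r_\R(\MSp)\wequi \MU$, so $r_\R(\MSp[1/\eta,1/2]) \wequi \MU[1/2]$ with $\pi_*\MU[1/2] \wequi \Z[1/2][b_1,b_2,\dots]$, $|b_i|=2i$ (Lazard/Milnor). Thus over a real closed field $\ul\pi_*\MSp[1/\eta,1/2]$ is a polynomial ring on classes in degrees $2,4,6,\dots$ over $\W(k)[1/2]\wequi\Z[1/2]$ — with generators matching the $y_i$ of Corollary~\ref{cor:MSp-intermediate} in degree and compatible with the rational computation there (both identify with the image of $\MU[1/2]$ under a real realization / rationalization comparison). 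Here I would use, as in the proof of Corollary~\ref{cor:MSp-intermediate}, that the $y_i$ can be taken compatible with base change, so the unit $\ul W[1/2][y_1,y_2,\dots] \to \ul\pi_*\MSp[1/\eta,1/2]$ is checked to be an isomorphism on each real closure, hence (by conservativity) an isomorphism.

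Then I would assemble: $\ul W[y_1,y_2,\dots] \to \ul\pi_*\MSp[\eta^{-1}]$, built from the $\ul W$-algebra structure and a choice of $y_i$ lifting the compatible generators on both the $[1/2]$-part and the $(2)$-part (which one can arrange since the two localizations of $\ul\pi_*\MSp[\eta^{-1}]$ glue over the zero spectrum in positive degrees — the rational map $A_{(2)}\otimes\Q \to A[1/2]\otimes\Q$ needs matching, which one handles exactly as in the $\kw\wedge\HW$ fracture argument in the proof of Proposition~\ref{prop:kw-HW-better}). Degreewise this map is an isomorphism: in degree $0$ it is the identity of $\ul W$; in negative degrees both sides vanish; in positive degrees it is an isomorphism after inverting $2$ (previous paragraph) and after $2$-localization (Corollary~\ref{cor:MSp-intermediate}), hence an isomorphism by the fracture square. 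Evaluating on finitely generated separable field extensions and using unramifiedness of homotopy sheaves promotes the $\pi_*$-statement to the sheaf statement.

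The main obstacle is the fracture/gluing bookkeeping: one must choose the polynomial generators $y_i \in \ul\pi_{2i}\MSp[\eta^{-1}]$ so that they simultaneously restrict to polynomial generators of the $[1/2]$-localization and of the $(2)$-localization, which requires checking that the two rational descriptions $\ul\pi_*\MSp[\eta^{-1}]_{(2)}\otimes\Q$ and $\ul\pi_*\MSp[1/\eta,1/2]\otimes\Q$ agree \emph{as} $\ul W\otimes\Q$-algebras with a distinguished generating set — equivalently that the two sources of generators (the $\MU[1/2]$-classes $b_i$ via real realization, and the $e_i$-derived $y_i$ via $\adamsphi$) are rationally proportional. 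This is morally forced by $\MSp\otimes\Q$ being a free graded-commutative algebra, but writing it cleanly is the delicate point; everything else is a routine repackaging of Corollary~\ref{cor:MSp-intermediate}, Lemma~\ref{lemm:cobordism-realisation}, and the $\rho$-inverted computations of \S\ref{subsec:rho}.
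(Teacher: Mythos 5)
Your proposed "fracture square" $\ul{\pi}_*\MSp[\eta^{-1}] \wequi \ul{\pi}_*\MSp[1/\eta,1/2] \times \ul{\pi}_*\MSp[\eta^{-1}]_{(2)}$ is false, and the justification you offer (that $\ul{\pi}_*$ is torsion for $*>0$) contradicts the rational computation you cite two lines earlier: rationally $\ul{\pi}_*\MSp[\eta^{-1}]\otimes\Q \wequi \ul{W}\otimes\Q[y_1,y_2,\dots]$, so $\ul{\pi}_{2n}\MSp[\eta^{-1}]$ has a nonzero torsion-free part for every $n\ge 1$. (Contrast with the $\eta$-periodic sphere, where the rational homotopy is concentrated in degree $0$ and the positive stems really are torsion.) An abelian group $A$ decomposes as a product $A[1/2]\times A_{(2)}$ only when $A$ is torsion; in general one has the fracture pullback $A\cong A_{(2)}\times_{A\otimes\Q}A[1/2]$, and here the rational gluing datum genuinely matters. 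This is exactly the issue you flag at the end as "morally forced but delicate", and it cannot be waved away --- it is the substance of the proof, not bookkeeping.

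The paper's proof takes a different route that sidesteps the need to match polynomial generators across the two localizations. One first reduces to $k=\Q$ (positive characteristic is immediate since there $\W(k)=\W(k)_{(2)}$ and Corollary~\ref{cor:MSp-intermediate} already gives the whole statement; in characteristic zero one passes to $\Q$ by essentially smooth base change), and then considers not the homotopy groups themselves but the module of indecomposables $M:=(J/J^2)_{2n}$ where $J=\ker(\pi_*\MSp[\eta^{-1}]\to\pi_*\HW)$ is the augmentation ideal. Your two localization computations give $M_{(2)}\cong\W(\Q)_{(2)}$ and $M[1/2]\cong\Z[1/2]\cong\W(\Q)[1/2]$. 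From these one shows $M$ is a finitely generated $\W(\Q)$-module that is stalkwise free of rank one, hence invertible, and then $\Pic(\W(\Q))=0$ (by idempotent lifting plus $\W(\Q)_{\red}\cong\Z$, using that $\Q$ is uniquely orderable) forces $M\cong\W(\Q)$. Lifting a generator of $M$ to $y_n\in\pi_{2n}$ gives a map $\ul{W}[y_1,y_2,\dots]\to\ul{\pi}_*\MSp[\eta^{-1}]$ which is then an isomorphism after $(2)$-localization and after inverting $2$ by Lemma~\ref{lemm:indecomposables}(2), hence an isomorphism. The Picard-group argument is precisely what replaces your unresolved "matching" step; without it, or some substitute for it, your proof does not close.
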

\begin{proof}
To ease notation, we implicitly invert $\eta$ throughout this proof.

If $\chara(k) > 0$ then $\W(k) = \W(k)_{(2)}$ \cite[Theorem III.3.6]{milnor1973symmetric}, and hence the result follows from Corollary \ref{cor:MSp-intermediate}.
We may thus assume that $\chara(k) = 0$, and by essentially smooth base change \cite[Lemma B.1]{bachmann-norms} that $k=\Q$.
Write $J$ for the augmentation ideal $\ker(\pi_* \MSp \to \pi_* \HW)$.
Let $n > 0$ and put $M = (J/J^2)_{2n}$.
We claim that $M \wequi W(k)$.
Assuming this for now, let $z_n$ be a generator of $M$, and $y_n \in \pi_{2n} \MSp$ a lift of $z_n$.
We obtain a map $\alpha: \ul{W}[y_1, y_2, \dots] \to \ul{\pi}_*\MSp$, which we shall show is an isomorphism.
It suffices to show that $\alpha_{(2)}$ and $\alpha[1/2]$ are isomorphisms (see e.g. Lemma \ref{lemm:inv-compl}).
It follows from Corollary \ref{cor:MSp-intermediate} that \[ M_{(2)} \wequi \W(k)_{(2)}\{y_n'\}, \] where the $y_n'$ form a family of polynomial generators of $\ul{\pi}_*\MSp_{(2)}$.
Thus $\alpha_{(2)}$ is an isomorphism (Lemma \ref{lemm:indecomposables}(2)).
To show that $\alpha[1/2]$ is an isomorphism, since we are dealing with $\rho$-periodic spectra, it suffices to show that there is an isomorphism on global sections (see \S\ref{subsec:r-R}; this is where we use $k=\Q$).
Moreover, since $r_\R(\MSp) \wequi \MU$ (see Lemma \ref{lemm:cobordism-realisation}), we know that \cite[Theorems 4.1.6 and A2.1.10]{ravenel1986complex} \[ \pi_*\MSp[1/2] \wequi \pi_* \MU[1/2] \wequi \Z[1/2, t_1, t_2, \dots] \] with $|t_i| = 2i$.
This implies that \[ M[1/2] \wequi \Z[1/2]\{t_n\} \] and so $\alpha[1/2]$ is an isomorphism (Lemma \ref{lemm:indecomposables}(2) again).

It remains to prove the claim that $M \wequi \W(\Q)$.
As we have seen, $M_{(2)} \wequi \W(\Q)_{(2)}$ and $M[1/2] \wequi \Z[1/2] \wequi \W(\Q)[1/2]$ (see e.g. \cite[Theorem III.3.10]{milnor1973symmetric} for the latter isomorphism).
We first show that $M$ is finitely generated as a $\W(\Q)$-module.
Indeed let $\frac{x}{2^m} \in M[1/2]$ and $\frac{y}{a} \in M_{(2)}$ (with $a \in \Z$ odd and $x, y \in M$) generate $M[1/2]$ and $M_{(2)}$ respectively; then $x$ and $y$ generate $M$.\NB{must be standard??}
By \cite[\S II.5.2, Theorem 1]{bourbakiCA}, a finitely generated module is invertible (by which we mean locally free of rank $1$) if and only if it is stalkwise invertible.
These properties hold for $M$, so it is an invertible $\W(\Q)$-module.
The result will follow if we show that $\Pic(\W(\Q))$ is trivial.
It follows from idempotent lifting \cite[Exercise I.2.2]{weibel-k-book}\NB{better reference} that for any ring $R$ we have $\Pic(R) \wequi \Pic(R_\red)$.
It thus suffices to show that $\W(\Q)_\red \wequi \Z$.
Since $\Q$ is uniquely orderable, this follows from \cite[Theorem III.3.8]{milnor1973symmetric}.
\end{proof}

\todo{say something about hurewicz image?}

\subsection{Homotopy groups of \texorpdfstring{$\MSL[\eta^{-1}]$}{MSL}}
We can easily adapt the above arguments to $\MSL$ as well.
\begin{theorem} \label{thm:homotopy-msl}
The canonical map $\MSp \to \MSL$ induces \[ \ul{\pi}_* \MSL[\eta^{-1}] \wequi \ul{W}[y_2, y_4, \dots]. \]
\end{theorem}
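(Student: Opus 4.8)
The plan is to mimic the proof of Theorem \ref{thm:MSp-htpy} as closely as possible, exploiting the computation of $\kw_* \MSL$ from Theorem \ref{thm:cobordism-MSL-summary}(2) and the behaviour of $\adamsphi$ on $e_{2i}$ from Proposition \ref{prop:adams-cobordism-summary}. First I would establish the quadratically closed case: if $\W(k) = \F_2$, then $\kw_* \MSL \wequi \F_2[\beta, e_2, e_4, \dots]$, and I want to show $\adamsphi: \kw_* \MSL \to \kw_{*-4} \MSL$ is surjective with kernel a polynomial $\F_2[\beta]$-algebra on generators $y_2, y_4, \dots$ of degrees $4, 8, \dots$ with $y_2$ lifting $\beta$. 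This is the exact analogue of Lemma \ref{lemm:MSp-htpy-quad-closed}. As there, I filter by powers of $\beta$; on the associated graded $\gr^\bullet \kw_* \MSL \wequi \F_2[\beta', e_2', e_4', \dots]$ the operation $\adamsphi$ becomes a derivation (by the argument via Lemma \ref{lemm:almost-derivation} and $\adamspsi^3 \equiv \id \pmod\beta$ on $\kw_*\MSp$, hence on the quotient $\kw_* \MSL$), with $\adamsphi(e_{2i}') = e_{2i-2}'$ for $i \ge 2$, $\adamsphi(e_2') = 0$, $\adamsphi(\beta') = 0$ — all read off from Proposition \ref{prop:adams-cobordism-summary}. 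Then $\gr^\bullet \kw_*\MSL \wequi \F_2[e_2', e_4', \dots] \otimes_{\F_2} \F_2[\beta']$ with $\adamsphi$ acting only on the first tensor factor, and Lemma \ref{lemm:comuputation} (with $x_i = e_{2i}'$, $A_0 = \F_2[\beta']$, indices shifted) shows $\adamsphi|_{\F_2[e_2', e_4', \dots]}$ is surjective with polynomial kernel on generators in degrees $4, 6, 8, \dots$ in the $\MSp$-grading — which doubled back to the $e_{2i}$-indexing gives generators $y_2, y_4, \dots$ of the stated degrees. Applying Lemma \ref{lemm:filtered-surjection} and Corollary \ref{corr:filtered-free-lifting}(2) lifts this back to $\kw_* \MSL$.

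Next I would run the descent-of-the-prime argument verbatim as in Corollary \ref{cor:MSp-intermediate}: for general $k$ of characteristic $\ne 2$, the map $\adamsphi: \kw_* \MSL_{(2)} \to \kw_{*-4} \MSL_{(2)}$ is a map of degreewise finitely generated free $\W(k)_{(2)}$-modules ($\W(k)_{(2)}$ being local by Lemma \ref{lemm:witt-units}), $\kw_* \MSL_{(2)}/\I$ is independent of $k$, so by the quadratically closed case and Nakayama $\adamsphi$ is split surjective with $\ker(\adamsphi)/\I \wequi \ker(\adamsphi/\I)$; lifting polynomial generators arbitrarily gives $\ul{\pi}_* \MSL_{(2)}[\eta^{-1}] \wequi \ul{W}_{(2)}[y_2, y_4, \dots]$, with generators stable under base change since generating is detected modulo $\I$.

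Finally, to pass from $\Z_{(2)}$-coefficients to integral coefficients I would repeat the fracture argument of Theorem \ref{thm:MSp-htpy}. In positive characteristic $\W(k) = \W(k)_{(2)}$ and we are done, so assume $\chara k = 0$ and reduce to $k = \Q$ by essentially smooth base change. Set $J = \ker(\pi_* \MSL[\eta^{-1}] \to \pi_* \HW)$ and $M = (J/J^2)_{2n}$ for $n$ even, $n > 0$ (the odd-degree indecomposables should vanish — this needs the identification $r_\R(\MSL) \wequi \MSO$ from Lemma \ref{lemm:cobordism-realisation} together with $\pi_* \MSO[1/2] \wequi \Z[1/2][t_2, t_4, \dots]$, which is where the ``even generators only'' phenomenon ultimately comes from). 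One shows $M \wequi \W(\Q)$ by checking $M_{(2)} \wequi \W(\Q)_{(2)}$ (from Corollary \ref{cor:MSp-intermediate}'s $\MSL$-analogue) and $M[1/2] \wequi \Z[1/2]$ (from $r_\R(\MSL) \wequi \MSO$ and the real realization identification of \S\ref{subsec:r-R}, using unique orderability of $\Q$), hence $M$ is an invertible $\W(\Q)$-module, and $\Pic(\W(\Q)) = \Pic(\W(\Q)_\red) = \Pic(\Z) = 0$ since $\W(\Q)_\red \wequi \Z$. Lifting generators $y_{2i}$ of $M$ gives $\alpha: \ul{W}[y_2, y_4, \dots] \to \ul{\pi}_* \MSL[\eta^{-1}]$, and $\alpha_{(2)}$, $\alpha[1/2]$ are isomorphisms by Lemma \ref{lemm:indecomposables}(2) exactly as before, so $\alpha$ is an isomorphism by Lemma \ref{lemm:inv-compl}. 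The main obstacle I anticipate is the bookkeeping ensuring the odd-indexed $e_i$ truly contribute nothing in $\kw_* \MSL$ and that the kernel-of-derivation computation (Lemma \ref{lemm:comuputation}) really produces generators only in degrees $\equiv 0 \pmod 4$ after the $\MSL$-reindexing — i.e. matching the $\F_2[e_2', e_4', \dots]$ picture to the $H_*(\BSU)$ computation with the correct doubling, rather than any genuinely new difficulty.
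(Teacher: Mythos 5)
Your proposal is correct, but it does noticeably more work than the paper's proof, and the divergence is worth flagging. The paper's proof takes the map $\alpha\colon \ul{W}[y_2,y_4,\dots] \to \ul{\pi}_*\MSL[\eta^{-1}]$ for free from Theorem \ref{thm:MSp-htpy} --- the $y_{2i}$ there already live in $\ul{\pi}_*\MSp[\eta^{-1}]$ integrally, and one simply pushes them forward along $\MSp \to \MSL$. After that the only work left is checking $\alpha[1/2]$ and $\alpha_{(2)}$ are isomorphisms: $\alpha[1/2]$ reduces by real realization (Lemma \ref{lemm:cobordism-realisation} plus \S\ref{subsub:plus-minus}) directly to Stong's $\pi_*\MSO[1/2] \wequi \Z[1/2][t_2,t_4,\dots]$, and $\alpha_{(2)}$ reduces via localness of $\W(k)_{(2)}$ to $\W(k)=\F_2$, where the surjectivity/kernel claim for $\adamsphi$ on $\kw_*\MSL$ is read off from the proof of Lemma \ref{lemm:MSp-htpy-quad-closed} by noting that passing $\MSp \to \MSL$ just replaces the second tensor factor $B = \F_2[\beta',e_1',e_3',\dots]$ of $\gr^\bullet\kw_*\MSp$ by $\F_2[\beta']$. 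No new $J/J^2$ or $\Pic$ argument is needed at all. Your proposal forgoes this import and rebuilds the integral generators from scratch over $\Q$, re-running the indecomposables-are-an-invertible-$\W(\Q)$-module-and-$\Pic(\W(\Q))=0$ argument; this is sound (once one knows, from your steps 1--2 and the topological $[1/2]$-computation, that $\pi_m\MSL[\eta^{-1}]$ vanishes for $m\not\equiv 0\pmod4$, so that only $(J/J^2)_{4n}$ matters), but it is redundant since Theorem \ref{thm:MSp-htpy} already supplies the integral $y_i$. Two smaller remarks: (i) in the application of Lemma \ref{lemm:comuputation} you should take $A_0 = \F_2$, $x_i = e_{2i}'$, and then tensor the output with $\F_2[\beta']$, exactly as in the $\MSp$ case --- writing ``$A_0 = \F_2[\beta']$'' and ``$A_0 = \F_2$'' in adjacent sentences is inconsistent; (ii) the ``main obstacle'' you anticipate about odd-indexed $e_i$ is not there, since Theorem \ref{thm:cobordism-MSL-summary}(2) already says $\kw_*\MSL$ has no odd $e_i$, and the degree count works out on the nose: $\ker(\adamsphi|_{\gr^\bullet})$ is polynomial on $\beta'$ (degree $4$) and $y_2', y_3', \dots$ (degrees $8,12,\dots$), matching $y_2, y_4, y_6, \dots$ with $|y_i| = 2i$.
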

\begin{proof}
We have a map $\alpha: \ul{W}[y_2, y_4, \dots] \to \ul{\pi}_* \MSL[\eta^{-1}]$ which we need to show is an equivalence; it suffices to do this for $\alpha[1/2]$ and $\alpha_{(2)}$.
For $\alpha[1/2]$ the claim reduces to the analogous result in topology (using Lemma \ref{lemm:cobordism-realisation} and \S\ref{subsub:plus-minus}): the map $\MU[1/2] \to \MSO[1/2]$ induces $\pi_*\MSO[1/2] \wequi \Z[1/2, t_2, t_4, \dots]$ \cite[\S IX, Proposition p. 178, Theorem p. 180]{stong2015notes}.
For $\alpha_{(2)}$ we use the resolution \[ \eta^{-1}\MSL_{(2)} \to \kw \wedge \MSL_{(2)} \xrightarrow{\adamsphi} \Sigma^4 \kw \wedge \MSL_{(2)}. \]
It suffices to show that $\pi_* \alpha_{(2)}$ is an isomorphism (since the base field is arbitrary and the formation of $\alpha$ is compatible with base change).
We hence need to show that $\adamsphi$ is surjective with kernel as indicated.
Using that $\W(k)_{(2)}$ is a local ring (Lemma \ref{lemm:witt-units}), this reduces to checking modulo $\I$, i.e. we may base change to a field with $\W(k) = \F_2$.
Examining the proof of Lemma \ref{lemm:MSp-htpy-quad-closed}, this is easily seen to hold.\NB{Better separation?}
\end{proof}

\subsection{$\MSp$, $\MSL$ and $\kw$}
The map $\MSp[\eta^{-1}] \to \MSL[\eta^{-1}]$ is an $\scr E_\infty$-ring map (see \eqref{eq:thom-spectra}) which annihilates $y_i$ for $i$ odd, for degree reasons.
It hence induces for $i$ odd an $\MSp[\eta^{-1}]$-module map $\MSp[\eta^{-1}]/y_i \to \MSL[\eta^{-1}]$.
Put \[ \MSp[\eta^{-1}]/(y_1, y_3, \dots, y_{2n+1}) = \bigotimes_{i=0}^n \MSp[\eta^{-1}]/y_{2i+1} \in \MSp[\eta^{-1}]\Mod. \] 
The canonical map $\MSp[\eta^{-1}] \to \MSp[\eta^{-1}]/y_{2n+3}$ induces \[ \MSp[\eta^{-1}]/(y_1, y_3, \dots, y_{2n+1}) \to \MSp[\eta^{-1}]/(y_1, y_3, \dots, y_{2n+3}), \] and we put \[ \MSp[\eta^{-1}]/(y_1, y_3, \dots) = \colim_n \MSp[\eta^{-1}]/(y_1, \dots, y_{2n+1}). \]
\begin{corollary} \label{cor:MSp-MSL}
There is a canonical equivalence \[ \MSp[\eta^{-1}]/(y_1, y_3, \dots) \wequi \MSL[\eta^{-1}]. \]
\end{corollary}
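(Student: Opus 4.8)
The plan is to show that the canonical $\MSp[\eta^{-1}]$-module map $\Phi\colon \MSp[\eta^{-1}]/(y_1, y_3, \dots) \to \MSL[\eta^{-1}]$ constructed just above is an equivalence, by checking that it induces an isomorphism on homotopy sheaves; since the homotopy $t$-structure on $\SH(k)$ is non-degenerate (and $\eta$-inverting is compatible with it) this is enough. Everything in sight is stable under base change, so one may also, if convenient, check the assertion on generic stalks $k(X)$.

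First I would compute $\ul{\pi}_*$ of the source. By Theorem \ref{thm:MSp-htpy} we have $\ul{\pi}_*\MSp[\eta^{-1}] \wequi \ul{W}[y_1, y_2, \dots]$ as a sheaf of graded rings, with $y_i \in \pi_{2i}\MSp[\eta^{-1}]$ the chosen generators. Stalk by stalk this is $\W(k(X))[y_1, y_2, \dots]$, on which multiplication by any polynomial variable is injective; hence each $y_i$ acts as a non-zero-divisor. Since $\MSp[\eta^{-1}]/y_i = \cof\bigl(y_i\colon \Sigma^{2i}\MSp[\eta^{-1}] \to \MSp[\eta^{-1}]\bigr)$ and tensoring over $\MSp[\eta^{-1}]$ is exact, the module $\MSp[\eta^{-1}]/(y_1, \dots, y_{2n+1}) = \bigotimes_{i=0}^n \MSp[\eta^{-1}]/y_{2i+1}$ is the iterated cofiber (Koszul complex) of the sequence $y_1, y_3, \dots, y_{2n+1}$. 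An induction on $n$, using that at each stage the next $y_{2i+1}$ remains a non-zero-divisor on the polynomial quotient $\ul{W}[y_1, y_2, \dots]/(y_1, y_3, \dots, y_{2i-1})$, shows that no higher Tor terms appear and that
\[ \ul{\pi}_*\bigl(\MSp[\eta^{-1}]/(y_1, \dots, y_{2n+1})\bigr) \wequi \ul{W}[y_1, y_2, \dots]/(y_1, y_3, \dots, y_{2n+1}). \]
Passing to the filtered colimit over $n$ (homotopy sheaves commute with filtered colimits, the generators $\Sigma^{i,j}\Sigma^\infty_+ X$ of the $t$-structure being compact) gives $\ul{\pi}_*\MSp[\eta^{-1}]/(y_1, y_3, \dots) \wequi \ul{W}[y_2, y_4, \dots]$.

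Next I would identify the map. By Theorem \ref{thm:homotopy-msl} the target has $\ul{\pi}_*\MSL[\eta^{-1}] \wequi \ul{W}[y_2, y_4, \dots]$, where the generators $y_{2i}$ are — by the construction in that proof — the images under $\MSp[\eta^{-1}] \to \MSL[\eta^{-1}]$ of the generators $y_{2i}$ of $\ul{\pi}_*\MSp[\eta^{-1}]$. Thus $\ul{\pi}_*\Phi$ is $\ul{\pi}_*\MSp[\eta^{-1}]$-linear, carries $1 \mapsto 1$, and carries $y_{2i} \mapsto y_{2i}$; as the source is free of rank one over $\ul{W}[y_2, y_4, \dots]$ this forces $\ul{\pi}_*\Phi$ to be the identity of $\ul{W}[y_2, y_4, \dots]$, hence an isomorphism. (Alternatively, once one knows $\ul{\pi}_*\Phi$ is surjective, one may invoke degreewise that a surjective endomorphism of a finitely generated module over a commutative ring is an isomorphism.) The only genuinely delicate point is the bookkeeping in the Koszul step — checking that the odd-indexed generators form a sequence that stays regular on each successive quotient, so that the iterated cofiber has exactly the expected homotopy sheaves with no hidden Tor — together with the routine verification that forming these quotients and the colimit commutes with $\ul{\pi}_{**}$. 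Beyond that the argument is purely formal, so I expect no serious obstacle.
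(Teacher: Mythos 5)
Your argument is correct and follows the same route as the paper: construct the canonical $\MSp[\eta^{-1}]$-module map to $\MSL[\eta^{-1}]$, use that $(y_1,y_3,\dots)$ is a regular sequence in $\ul{\pi}_*\MSp[\eta^{-1}]\wequi\ul{W}[y_1,y_2,\dots]$ to compute the homotopy sheaves of the quotient, and compare with Theorem \ref{thm:homotopy-msl}, whose generators are by construction the images of the $y_{2i}$. You spell out the Koszul bookkeeping and the identification of the map in more detail than the paper does, but the content is identical.
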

\begin{proof}
We implicitly invert $\eta$ throughout this proof.
Since $y_i$ maps to $0$ in $\MSL$ (for $i$ odd), the map $\MSp \to \MSL$ factors over $\MSp/y_i$.
We can thus form the composite \[ \MSp/(y_1, \dots, y_{2n+1}) \wequi \bigotimes_{i=0}^n \MSp/y_{2i+1} \to \MSL^{\otimes n} \to \MSL, \] with the last map being multiplication.
Taking the colimit we obtain $\MSp/(y_1, y_3, \dots) \to \MSL$.
To see that this is an equivalence we can compute the effect on homotopy sheaves.
Since $(y_1, y_3, \dots)$ is a regular sequence in $\ul{\pi}_*\MSp \wequi \ul{W}[y_1, y_2, \dots]$ we find \[ \ul{\pi}_*(\MSp/(y_1, y_3, \dots)) \wequi \ul{W}[y_2, y_4, \dots]; \] the result thus follows from Theorem \ref{thm:homotopy-msl}.
\end{proof}

There is an $\scr E_\infty$-map $\MSL \to \kw$ \cite[Corollary B.3]{bachmann-euler}.
\begin{lemma} \label{lemm:MSL-kw-surj-pi4}
The induced map $\pi_4(\MSL[\eta^{-1}]) \to \pi_4(\kw)$ is surjective.
\end{lemma}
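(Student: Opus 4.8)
The plan is to exhibit an explicit element of $\pi_4(\MSL[\eta^{-1}])$ whose image generates $\pi_4(\kw) \wequi \W(k)$. Since $\pi_4(\kw)$ is a free $\W(k)$-module of rank one on the Bott element $\beta$ (see \S\ref{subsec:KW}), it suffices to produce $z \in \pi_4(\MSL[\eta^{-1}])$ with $z \mapsto u\beta$ for some unit $u \in \W(k)$. The natural candidate is the generator $y_2 \in \pi_4(\MSL[\eta^{-1}])$ from Theorem \ref{thm:homotopy-msl} (or, equivalently, the class $e_2$ of \S\ref{sec:cobordism} pushed forward along $\MSL \to \kw$), and the content of the lemma is that it does not map to zero modulo the fundamental ideal.

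First I would reduce to a computation over a quadratically closed field. Since the $\scr E_\infty$-ring map $\MSL[\eta^{-1}] \to \kw$ is compatible with base change, and $\pi_4(\kw) \wequi \W(k)$ with $\W(k)_{(2)}$ a local ring with residue field $\F_2$ (Lemma \ref{lemm:witt-units}), the map on $\pi_4$ is surjective if and only if it is surjective after base change to a quadratic closure $\bar k$ of $k$, where $\W(\bar k) = \F_2$; here one also uses that $\pi_4(\MSL[\eta^{-1}])$ is generated by classes stable under base change (Theorem \ref{thm:homotopy-msl}, or directly the classes $e_i$ of Theorem \ref{thm:cobordism-MSL-summary}). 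So we may assume $\W(k) = \F_2$, in which case $\kw \wequi \kw_{(2)}$ and $\pi_4(\kw) = \F_2\{\beta\}$, and we must show the image of $\pi_4(\MSL[\eta^{-1}]) \to \F_2\{\beta\}$ is nonzero.

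Next I would identify $\pi_4(\MSL[\eta^{-1}]) \to \pi_4(\kw)$ with a piece of the map $\kw_* \MSL \to \kw_* \kw$ induced by the ring map $\MSL \to \kw$, composed with multiplication $\kw \wedge \kw \to \kw$; concretely, the $\kw$-homology ring $\kw_* \MSL \wequi \kw_*[e_0, e_2, e_4, \dots]/(e_0-1)$ of Theorem \ref{thm:cobordism-MSL-summary}(2) maps the degree-$4$ class $e_2$ to an element of $\pi_4(\kw \wedge \kw)$, which further maps via Proposition \ref{prop:kw-HW-better}(1)--(2) and the unit map to $\pi_4(\kw \wedge \HW_{(2)}) \wequi \W(k)_{(2)} = \F_2$. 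The key input is Lemma \ref{lemm:phi-MSL}(2): when $\W(k) = \F_2$, $\adamsphi^{\circ 1}(e_2) = 1 \ne 0 \in \pi_0(\kw \wedge \HW_{(2)})$, hence by Proposition \ref{prop:kw-HW-better}(2) the image $x_1$ of $e_2$ in $\pi_4(\kw \wedge \HW_{(2)})$ is a generator, i.e.\ nonzero. Tracing this back: the unit map $\1[\eta^{-1}] \to \kw$ is $1$-connected (Lemma \ref{lemm:unit-connectivity}), so $\pi_4(\kw) \to \pi_4(\kw \wedge \HW_{(2)})$ identifies $\pi_4(\kw)_{(2)}/(\text{higher filtration})$ with the $\HW$-homology in that degree; since the class $e_2 \in \kw_4 \MSL$ lifts (along the $\scr E_\infty$-structure) to $\pi_4 \MSL[\eta^{-1}]$ mapping to a class in $\pi_4(\kw)$ detected by the nonzero $x_1$, the map $\pi_4(\MSL[\eta^{-1}]) \to \pi_4(\kw)$ hits a generator.

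The main obstacle is the bookkeeping in the last paragraph: one must be careful that the class $e_2 \in \kw_*\MSL$, which a priori lives in $\kw$-homology, is actually in the image of the homotopy of $\MSL[\eta^{-1}]$ and that its image in $\pi_4(\kw)$ is correctly computed by the filtration/detection argument via $\HW$. The cleanest route is probably to avoid the homology bookkeeping altogether and instead argue directly: by Theorem \ref{thm:homotopy-msl}, $\pi_4(\MSL[\eta^{-1}]) \wequi \W(k)\{y_2\}$, so it is enough to show $y_2$ maps to a unit multiple of $\beta$; after reduction to $\W(k) = \F_2$ this is the statement that $y_2 \mapsto \beta \ne 0$, which follows because under $\MSL[\eta^{-1}] \to \kw$ the class $y_2$ corresponds (up to unit) to the generator $e_2$ of $\pi_4$, and Lemma \ref{lemm:phi-MSL} together with Proposition \ref{prop:kw-HW-better} shows that $e_2$ does not die in $\pi_4(\kw \wedge \HW_{(2)})$, hence a fortiori not in $\pi_4(\kw)$ (the unit map $\kw \to \kw \wedge \HW_{(2)}$ need not be injective on $\pi_4$, but the relevant composite is what is computed, so one argues with the map $\pi_4 \MSL[\eta^{-1}] \to \pi_4(\kw \wedge \HW_{(2)})$ factoring through $\pi_4(\kw)$ and being nonzero).
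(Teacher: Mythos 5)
Your proposal has two genuine gaps, one of which is fatal.

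\textbf{The reduction to $\W(k)=\F_2$ is not valid integrally.} You invoke Lemma \ref{lemm:witt-units} to say that surjectivity can be checked after base change to a quadratic closure. But that lemma only says $\W(k)_{(2)}$ is local; $\W(k) \wequi \pi_4(\kw)$ itself is not. For instance over $k = \R$ one has $\W(\R) = \Z$, and surjectivity onto $\Z$ is not detected by surjectivity onto $\W(\C) = \F_2$: an element hitting $3\beta$ would become a generator after base change to $\C$ but is not a generator over $\R$. The locality trick only disposes of the $2$-local part. The paper therefore splits the claim into an inverting-$2$ statement and a $2$-local statement; your proposal never addresses the inverting-$2$ statement at all. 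The paper handles it via real realization (Lemmas \ref{lemm:KO-real-realization} and \ref{lemm:cobordism-realisation}), reducing to the purely topological assertion that $\pi_4 \MSO[1/2] \to \pi_4 \ko[1/2]$ is onto, which is then checked by a formal group law computation. This is a necessary ingredient missing from your argument.

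\textbf{The $2$-local detection via $\kw \wedge \HW_{(2)}$ does not work.} Smashing the fiber sequence of Theorem \ref{thm:main} with $\MSL$ shows that the Hurewicz map $\pi_4 \MSL[\eta^{-1}]_{(2)} \to \kw_4 \MSL_{(2)}$ is injective with image $\ker(\adamsphi: \kw_4 \MSL \to \kw_0 \MSL)$. Over $\W(k)=\F_2$ one has $\kw_4 \MSL \wequi \F_2\{\beta\}\oplus\F_2\{e_2\}$, and by Lemma \ref{lemm:phi-MSL} one computes $\adamsphi(e_2)=1\ne 0$ while $\adamsphi(\beta)=8\beta=0$; hence the Hurewicz image is $\F_2\{\beta\}$, and in particular $e_2$ does \emph{not} lift to $\pi_4\MSL[\eta^{-1}]$, contrary to what you assert. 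Moreover, the class $\beta$ that actually lifts is killed along the composite $\pi_4(\kw) \to \pi_4(\kw \wedge \HW_{(2)})$: by Lemma \ref{lemm:hurewicz-beta}, $h(\beta)=8ux_1=0$ once $8=0$. So the functional you propose is identically zero on the relevant subgroup and detects nothing. The paper sidesteps this by mapping into $\pi_4(\kw\wedge\kw)_{(2)}$ instead, where the image of the lifted class is $\beta_R$ (the image of $\beta$ under the \emph{right} unit $\kw \to \kw\wedge\kw$), which is nonzero because the right unit is split by the multiplication and hence injective on homotopy. Using $\kw\wedge\kw$ rather than $\kw\wedge\HW$ is not an incidental choice --- it is the whole point of the $2$-local half of the proof.
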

\begin{proof}
It suffices to show surjectivity after tensoring with $\Z[1/2]$ and $\Z_{(2)}$.

\discuss{suggestions?}
For $\Z[1/2]$ this reduces (via Lemmas \ref{lemm:KO-real-realization} and \ref{lemm:cobordism-realisation}) to the topological result that the map $\MSO[1/2] \xrightarrow{\alpha^\topsup} \ko[1/2]$ (obtained by applying $r_\R$ to $\MSL[\eta^{-1}] \to \kw$) is surjective on $\pi_4$.
We shall make use of some of the theory of complex orientations and formal group laws, see e.g. \cite[\S\S4.1, A.2]{ravenel1986complex}.
Consider the formal group law $F$ induced by $\MU[1/2] \to \MSO[1/2] \xrightarrow{\alpha^\topsup} \ko[1/2] \to \ku[1/2]$.
Since the ring structure on $\ku[1/2]$ is the usual one, the associated formal group is the multiplicative one, and hence the formal group law must be \emph{isomorphic} to the multiplicative one\NB{Clearly it can't be the multiplicative one, since $t \not\in \pi_*\ko[1/2]$. Which one is it?}.
Thus there exists a formal power series $f(x) =b_0x + b_1 x^2 + \dots$ such that $F(x,y) = f^{-1}(f(x) + f(y)+tf(x)f(y))$, where $t \in \pi_2\ku[1/2]$ is the generator.
It suffices to show that $t^2$ can be obtained as a $\Z[1/2]$-linear combination of coefficients of the formal group law $F$.
Applying the isomorphism $x \mapsto ux$ (where $u$ is a unit in $\pi_* \ku[1/2]$, i.e. $u = \pm2^n$) does not change this property, so we may assume that $b_0=1$.
Noting that the coefficients of $F$ lie in $\pi_* \ko[1/2] = \Z[1/2,t^2]$, one finds that $b_1 = t/2$.
A tedious but straightforward verification shows that the coefficient of $xy^2$ is $-t^2$; hence the desired result.\NB{better argument?}

For $\Z_{(2)}$, since $\W(k)_{(2)}$ is a local ring we may base change to a field with $\W(k) = \F_2$.
Consider the commutative diagram
\begin{equation*}
\begin{CD}
\pi_4 \MSL_{(2)}[\eta^{-1}] @>>> \pi_4(\MSL \wedge \kw)_{(2)} \\
@VVV                               @VVV \\
\F_2 \wequi \pi_4 \kw_{(2)} @>>> \pi_4(\kw \wedge \kw)_{(2)}.
\end{CD}
\end{equation*}
We wish to show that the left hand vertical map is non-zero.
By Lemma \ref{lemm:MSp-htpy-quad-closed}, the image of the top map is spanned by $\beta$, which is mapped to $\beta_R$ (the image of $\beta$ under the right unit $u_R: \kw \to \kw \wedge \kw$) in the bottom right hand corner.
Since $\kw \wedge \kw$ is a ring, the right unit $\kw \to \kw \wedge \kw$ has a retraction and so induces an injection on homotopy groups; in particular $\beta_R \ne 0$.
The result follows.
\end{proof}

\begin{corollary} \label{cor:MSL-kw}
There exist generators $y_2, y_4, y_6, \dots \in \pi_* \MSL[\eta^{-1}]$ such that \[ \MSL[\eta^{-1}]/(y_4, y_6, \dots) \wequi \kw. \]
\end{corollary}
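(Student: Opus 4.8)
The plan is to follow the proof of Corollary~\ref{cor:MSp-MSL} almost verbatim. Throughout I implicitly invert $\eta$. Recall the $\scr E_\infty$-ring map $\MSL \to \kw$ of \cite[Corollary B.3]{bachmann-euler}; since $\kw$ is $\eta$-periodic this factors through an $\scr E_\infty$-ring map $f \colon \MSL[\eta^{-1}] \to \kw$, which in particular makes $\kw$ an $\MSL[\eta^{-1}]$-module and $f$ a map of $\MSL[\eta^{-1}]$-modules. By Theorem~\ref{thm:homotopy-msl} choose polynomial generators $y_2, y_4, y_6, \dots$ of $\ul\pi_*\MSL[\eta^{-1}] \wequi \ul W[y_2, y_4, \dots]$, with $|y_{2j}| = 4j$.

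First I would pin down the degree-$4$ generator. Over any field $k$ both $\pi_4 \MSL[\eta^{-1}] \wequi \W(k)\{y_2\}$ and $\pi_4\kw \wequi \W(k)\{\beta\}$ are free of rank one over $\W(k)$, and $f$ is surjective on $\pi_4$ by Lemma~\ref{lemm:MSL-kw-surj-pi4}; a surjection of free rank-one $\W(k)$-modules sends a basis element to a unit multiple of a basis element, so $f(y_2) = c\beta$ with $c \in \W(k)^\times$. In particular $\pi_*\kw \wequi \W(k)[f(y_2)]$.

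Next I would arrange that the higher generators are killed. For $j \ge 2$ we have $f(y_{2j}) \in \pi_{4j}\kw = \W(k)\{\beta^j\}$ for degree reasons, say $f(y_{2j}) = b_{2j}\beta^j$ with $b_{2j}\in\W(k)$; then $y_{2j}' := y_{2j} - b_{2j}c^{-j}y_2^j$ is again a set of polynomial generators and $f(y_{2j}') = 0$. Being an $\MSL[\eta^{-1}]$-module map annihilating $y_{2j}'$, $f$ factors through $\MSL[\eta^{-1}]/y_{2j}'$ for each $j \ge 2$; tensoring these factorizations over $\MSL[\eta^{-1}]$, composing with the iterated multiplication of the ring $\kw$, and passing to the colimit --- exactly as in Corollary~\ref{cor:MSp-MSL} --- produces a map $g \colon \MSL[\eta^{-1}]/(y_4', y_6', \dots) \to \kw$.

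Finally I would verify $g$ is an equivalence by computing homotopy sheaves. Since $(y_4', y_6', \dots)$ is a regular sequence in $\ul\pi_*\MSL[\eta^{-1}] \wequi \ul W[y_2, y_4', y_6', \dots]$, one gets $\ul\pi_*(\MSL[\eta^{-1}]/(y_4', y_6', \dots)) \wequi \ul W[y_2]$ (a filtered colimit of quotients by non-zero-divisors, as in Corollary~\ref{cor:MSp-MSL}), and under $g$ this is the $\ul W$-algebra map $\ul W[y_2] \to \ul W[\beta] \wequi \ul\pi_*\kw$ sending $y_2 \mapsto c\beta$, which is an isomorphism. Renaming $y_4', y_6', \dots$ to $y_4, y_6, \dots$ gives the statement. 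The only step that is not pure bookkeeping transported from Corollary~\ref{cor:MSp-MSL} is the degree-$4$ surjectivity input of Lemma~\ref{lemm:MSL-kw-surj-pi4}, without which the induction cannot be started; I expect no further obstacle.
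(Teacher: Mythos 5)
Your proposal is correct and follows essentially the same route as the paper: use Lemma \ref{lemm:MSL-kw-surj-pi4} to see that $y_2$ hits a unit multiple of $\beta$, modify the higher generators by polynomials in $y_2$ so they die under $\MSL[\eta^{-1}] \to \kw$, and then run the quotient-module construction and homotopy-sheaf check exactly as in Corollary \ref{cor:MSp-MSL}. The only (immaterial) difference is that the paper normalizes $y_2$ so that it maps exactly to $\beta$ rather than to $c\beta$ for a unit $c$.
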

\begin{proof}
Lemma \ref{lemm:MSL-kw-surj-pi4} shows that we may choose $y_2$ such that $\alpha(y_2) = \beta$, where $\alpha: \MSL \to \kw$ is the canonical $\scr E_\infty$-map.
Now let $n>1$.
We have $\alpha(y_{2n}) = a \beta^n$ for some $a \in \W(k)$; replacing $y_{2n}$ by $y_{2n} - ay_2^n$ ensures that $\alpha(y_{2n}) = 0$.
Arguing as in the proof of Corollary \ref{cor:MSp-MSL} can thus form a map \[ \MSL[\eta^{-1}]/(y_4, y_6, \dots) \to \kw \in \MSL[\eta^{-1}]\Mod \] which induces an isomorphism on $\ul{\pi}_*$.
This concludes the proof.
\end{proof}

\subsection{Cellularity results}
It is well-known that the spectra $\KO$ and $\KW$ are cellular (i.e. in the subcategory of $\SH(k)$ generated under colimits and desuspensions by $S^{p,q}$) \cite{rondigs2016cellularity}.
Unfortunately in general if $E$ is cellular there is little reason to believe that truncations like $E_{\ge 0}$ or $\ul{\pi}_0 E$ are cellular.
Our main theorem allows us to make some deductions of this form.

\begin{proposition} \label{prop:cellularity}
Let $k$ have exponential characteristic $e \ne 2$.
The spectra \[ \kw, \HW, \ko[1/e], \kgl[1/e], H\tilde \Z[1/e], f_0(\ul{K}^W) \in \SH(k) \] are cellular.
\end{proposition}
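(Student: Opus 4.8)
The key observation is that $\KO[\eta^{-1}] = \KW$ and $\1[\eta^{-1}]$ are already known to be cellular \cite{rondigs2016cellularity}, and that our main theorem \ref{thm:main} expresses $\kw_{(2)}$ in terms of cellular spectra. First I would reduce everything to a small number of basic building blocks. Recall $\kw = \KW_{\ge 0}$, and $\HW = \1[\eta^{-1}]_{\le 0} = \ul{W}[\eta^\pm]$; from the fiber sequence of Theorem \ref{thm:main}, $\1[\eta^{-1}]_{(2)} \to \kw_{(2)} \to \Sigma^4 \kw_{(2)}$, one sees that cellularity of $\1[\eta^{-1}]_{(2)}$ (inherited from cellularity of $\1[\eta^{-1}]$) plus cellularity of $\kw_{(2)}$ would be circular, so instead I would argue directly for $\kw$. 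The plan is to use the equivalence $(\kw \wedge \HW)$ (or rather the fact that $\kw$ is an $\1[\eta^{-1}]$-module built cellularly) — more efficiently: since $\kw[\eta^{-1}] = \KW$ is cellular and $\kw$ is $\eta$-\emph{connected} but not $\eta$-periodic, I would instead use the fact that $\kw$ is \emph{$\eta$-complete} after $2$-completion (Theorem \ref{thm:eta-complete}, as $\Sigma^{2,1}\ul{K}^W$ and hence the relevant connective pieces are very effective), together with $\kw/\eta \wequi \ul{k}^M \wequi (\HZ/2)/\tau$, which is cellular because $\HZ$ is cellular over fields of exponential characteristic $\ne 2$ \cite{hoyois-algebraic-cobordism} (or use $\HZ[1/e]$ cellular).

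Concretely, the steps in order: (1) $\HZ/2$ is cellular over $k$ with $1/e \in k$; hence $\ul{k}^M = (\HZ/2)/\tau$ is cellular, and so is $\ul{K}^W$, via the cofiber sequence $\Sigma^{0,-1}\HZ/2 \xrightarrow{\tilde\tau} f_0 \HW \to \ul{K}^W$ of Lemma \ref{lemm:tau-tilde} once we know $f_0 \HW$ is cellular — which follows from the cartesian square \eqref{eq:structure-conj} with corners $\HZ/2, \ul{K}^W, \ul{k}^M$ (circular again, so better: first show $\ul{K}^W$ cellular by inverting $\eta$ on $f_0\HW$, getting $\HW$, which is $\ul{W}[\eta^\pm] = \ul{\pi}_0(\1[\eta^{-1}])[\eta^\pm]$, and appeal to cellularity of $\1[\eta^{-1}]$ to see $\HW$ is cellular, then run \eqref{eq:structure-conj} the other way). (2) Deduce $\ko[1/e]$ cellular: $\ko = \tilde f_0 \KO$; use the very-effective slice filtration, whose slices $\tilde s_i(\KO)[1/e]$ are shifts of $\HZ/2$ and $\HZ$ (Theorem \ref{thm:very-effective-slices} and \cite{bachmann-very-effective, ananyevskiy2017very}), all cellular, and the tower converges; similarly $\kgl[1/e] = f_0\KGL[1/e]$ has slices shifts of $\HZ$. (3) $H\tilde\Z[1/e]$: use the fiber sequence relating $H\tilde\Z$, $\HZ$, and $\HW$ (via $\ul{\pi}_0(\1) = \ul{K}^{MW}$, $\ul{K}^{MW} = \ul{K}^M \times_{\ul{k}^M} \ul{K}^W$), all three of whose relevant pieces are now cellular. (4) $\kw = \ko[\eta^{-1}]$ (Lemma \ref{lemm:ko-kw}) is cellular since cellular spectra are closed under filtered colimits (here the $\eta$-mapping telescope), using $\ko$ cellular (step 2) — note $\ko$ itself is cellular without inverting $e$? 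If not, $\kw$ cellularity instead follows from $\kw/\beta \wequi \HW$ cellular plus $\kw$ being built from $\HW$ by the $\beta$-Bockstein, i.e. $\kw = \lim$ or rather a cellular iterated extension since $\ul\pi_*\kw = \ul W[\beta]$ is free over $\ul W$ — cleanest: $\kw \wedge_{\HW} \HW$-module arguments, or just note $\kw$ is an $\HW[\beta]$-type object.

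**Main obstacle.** The delicate point is avoiding circularity and handling the exponential characteristic correctly: cellularity of $\HZ$ (hence $H\tilde\Z$, $\ko$, $\kgl$) over a general field requires inverting $e$ (this is why those four entries carry $[1/e]$ while $\kw, \HW$ do not), so I must check that $\kw$ and $\HW$ are cellular \emph{integrally}, which forces me to build them from $\1[\eta^{-1}]$ and $\ul K^W$ rather than from $\HZ$. I expect the real work is assembling the right finite sequence of cofiber sequences / cartesian squares (Lemma \ref{lemm:tau-tilde}, square \eqref{eq:structure-conj}, the Wood sequence $\Sigma^{1,1}\KO \xrightarrow{\eta}\KO \to \KGL$ and its connective version, and the very-effective slice tower) so that each new spectrum is manifestly an iterated colimit/extension of previously-established cellular ones, with all towers converging; the convergence of the very-effective slice tower for $\ko$ and $\kgl$ (needing $\vcd$ or effectivity input, cf. \S\ref{sec:completeness}) is the step most likely to need care.
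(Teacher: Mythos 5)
There is a genuine gap, and in fact several, the most serious of which is that you never actually establish cellularity of $\kw$ and $\HW$ — the two spectra on the list that need the real new idea.

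First, the claim ``$\kw/\eta \wequi \ul{k}^M$'' is false. By Lemma \ref{lemm:ko-kw}, $\kw = \ko[\eta^{-1}]$, so $\kw$ is $\eta$-\emph{periodic} and $\kw/\eta \wequi 0$; you are confusing $\kw/\eta$ with $\ul{K}^W/\eta \wequi \ul{k}^M$ from \eqref{eq:KW-mod-eta}. (The spectrum $\ul{K}^W$ is the homotopy module of Witt $K$-theory, with $\ul{K}^W_* = \ul{I}^*$ and $\eta$ acting by inclusions; it is \emph{not} $\eta$-periodic. The spectrum $\kw = \KW_{\ge 0}$ has homotopy sheaves $\ul{W}[\beta]$, with $\eta$ acting invertibly in every degree.) So the proposed Bockstein-type argument for $\kw$ starts from an incorrect computation.

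Second, your route to $\HW$ cellular is circular in exactly the way the paper warns against in the sentence just before the proposition. You write that since $\1[\eta^{-1}]$ is cellular and $\HW = \1[\eta^{-1}]_{\le 0}$, cellularity of $\HW$ should follow. It does not: there is no reason a truncation of a cellular spectrum should be cellular, and that is precisely the point being made. The same issue recurs for $f_0\HW$ via the cartesian square \eqref{eq:structure-conj}, which you acknowledge (``circular again'') but then resolve by appealing to this same bad implication. Your final suggestions for $\kw$ (``$\kw \wedge_\HW \HW$-module arguments, or just note $\kw$ is an $\HW[\beta]$-type object'') do not constitute an argument.

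The idea that actually closes the loop — and that your proposal is missing entirely — is Corollaries \ref{cor:MSp-MSL} and \ref{cor:MSL-kw}: one exhibits $\kw$ as the quotient of $\MSL[\eta^{-1}]$ by a regular sequence of homotopy elements, and $\MSL[\eta^{-1}]$ in turn as a quotient of $\MSp[\eta^{-1}]$. Since $\MSp$ is cellular \cite[Proposition 3.1]{rondigs2016cellularity}, and iterated cofibers of self-maps of cellular spectra stay cellular, this gives cellularity of $\kw$ integrally; then $\HW \wequi \kw/\beta$ follows. The remaining entries are handled by Lemma \ref{lemm:cellular-pp} (cellularity can be checked after $(\ph)/\eta$ and $(\ph)[1/\eta]$), applied to $\ko$ (reducing to $\kgl, \kw$), to $f_0\ul{K}^W$ (reducing to $\HZ/2$, $\HW$ via \cite[Prop.~23, Thm.~17]{bachmann-very-effective}), together with the direct presentation $\kgl[1/e] \wequi \MGL[1/e]/(x_2,x_3,\dots)$ and a cofiber sequence for $H\tilde\Z$. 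This localization/mod-$\eta$ splitting lemma is also absent from your write-up; without it, your plan for $\ko$ and $\kgl$ falls back on convergence of the very-effective slice tower, which is not established in the generality needed and is harder than the argument the paper actually uses.
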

\begin{proof}
By Lemma \ref{lemm:cellular-pp} below, to prove that $E$ is cellular, it suffices to show that $E[1/\eta]$ and $E/\eta$ are cellular.
Since $\ko/\eta \wequi \kgl$ \cite[Proposition 2.11]{ananyevskiy2017very} and $\ko[1/\eta] \wequi \kw$, we may remove $\ko$ from the list.
The argument in \cite[Proposition 5.12]{spitzweck2012motivic}\NB{markus agrees} (employing \cite[Proposition B.1]{levine2013algebraic}) shows that $\kgl[1/e] \wequi \MGL[1/e]/(x_2, x_3, \dots)$ is cellular (since $\MGL$ is).
Put $H_W \Z := f_0(\ul{K}^W)$.
By \cite[Proposition 23]{bachmann-very-effective} we have $H_W \Z/\eta \wequi \HZ/2 \vee \Sigma^2 \HZ/2$, which is cellular, and by \cite[Theorem 17]{bachmann-very-effective} we have $H_W \Z[1/\eta] \wequi \HW$; hence we may remove $H_W \Z$ from the list.
Again by \cite[Proposition 23]{bachmann-very-effective} we have a cofiber sequence $\Sigma^{1,1} H_W \Z \to H\tilde \Z \to \HZ \vee \Sigma^2 \HZ/2$.
Since $\HZ[1/e]$ is cellular, we may also remove $H\tilde \Z$ from the list.

It remains to deal with $\kw$ and $\HW$.
Since $\MSp$ is cellular (see \cite[Proposition 3.1]{rondigs2016cellularity}), cellularity of $\kw$ is immediate from Corollaries \ref{cor:MSp-MSL} and \ref{cor:MSL-kw}.
Finally $\HW$ is cellular since $\HW \wequi \kw/\beta$.
\end{proof}

\begin{lemma} \label{lemm:cellular-pp}
Let $E \in \SH(S)$ and $x \in \pi_{**}(\1)$.
Then $E$ is cellular if and only if both $E/x$ and $E[1/x]$ are.
\end{lemma}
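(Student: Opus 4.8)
\textbf{Proof plan for Lemma \ref{lemm:cellular-pp}.}

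The plan is to prove the two implications separately, using that the subcategory $\SH(S)^{\cell}$ of cellular spectra is closed under colimits and desuspensions (hence under arbitrary (co)fibers and shifts). The ``only if'' direction is the easy one: if $E$ is cellular, then $E/x = \cof(\Sigma^{**}E \xrightarrow{x} E)$ is a cofiber of a map between cellular spectra, hence cellular; and $E[1/x] = \colim(E \xrightarrow{x} \Sigma^{-**}E \xrightarrow{x} \cdots)$ is a filtered colimit of cellular spectra, hence cellular. For this I only need that $x \in \pi_{**}(\1)$ so that multiplication by $x$ is a genuine self-map of (shifts of) $E$, and that $\SH(S)^{\cell}$ is closed under the relevant colimits — this is immediate from the definition as a localizing subcategory.

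The substance is the ``if'' direction: assume $E/x$ and $E[1/x]$ are cellular; show $E$ is. First I would observe that the localization map $E \to E[1/x]$ fits into a cofiber sequence $E_{x\text{-tors}} \to E \to E[1/x]$, where $E_{x\text{-tors}} := \fib(E \to E[1/x])$; it suffices to show $E_{x\text{-tors}}$ is cellular, since then $E$ is a fiber of a map of cellular spectra. Now $E_{x\text{-tors}}$ is $x$-torsion in the strong sense that $E_{x\text{-tors}}[1/x] \wequi 0$ (localization is smashing/idempotent), equivalently $E_{x\text{-tors}}$ is $x$-complete-ish; more precisely $E_{x\text{-tors}} \wequi \lim_n \Sigma^{?} \cof(x^n)$-type expression. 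The key computation is that $E_{x\text{-tors}}/x \wequi E/x \vee \Sigma^{a,b}E/x$ for the appropriate shift $(a,b)$ determined by $|x|$ (this is the standard octahedron comparing $\cof(x)$ on $E$, $E[1/x]$, and $E_{x\text{-tors}}$, using $\cof(x\colon E[1/x]\to E[1/x]) \wequi 0$): indeed from the cofiber sequence $E_{x\text{-tors}} \to E \to E[1/x]$ we get $E_{x\text{-tors}}/x \to E/x \to E[1/x]/x \wequi 0$ and also the connecting term, yielding $E_{x\text{-tors}}/x$ as an extension of $E/x$ by a shift of $E/x$, which splits (or which one does not even need to split — being a finite extension of cellular spectra suffices). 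Hence $E_{x\text{-tors}}/x$ is cellular.

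It then remains to upgrade ``$E_{x\text{-tors}}/x$ cellular and $E_{x\text{-tors}}[1/x] \wequi 0$'' to ``$E_{x\text{-tors}}$ cellular''. For this I would write $E_{x\text{-tors}} \wequi \lim_n E_{x\text{-tors}}/x^n$ — this holds because $E_{x\text{-tors}}$ is $x$-complete, as its $x$-localization vanishes and it is a module-like object over $\1$ in the relevant sense; concretely, $\lim_n E_{x\text{-tors}}/x^n \wequi \cof(\lim_n \Sigma^{**} E_{x\text{-tors}} \xrightarrow{x^n} E_{x\text{-tors}}) $ and the limit over the pro-system $\{\Sigma E_{x\text{-tors}} \xleftarrow{x} \Sigma E_{x\text{-tors}} \leftarrow \cdots\}$ is $E_{x\text{-tors}}[1/x]$ desuspended $\wequi 0$. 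Wait — I should be careful: this requires $\lim$ rather than $\colim$, and the vanishing of a derived limit, which is the subtle point. Each $E_{x\text{-tors}}/x^n$ is cellular, being a finite extension of copies of $E_{x\text{-tors}}/x$ (filtering $x^n$ by powers of $x$), hence cellular. Since $\SH(S)^{\cell}$ is closed under limits? — no, it is generally closed only under colimits. This is the main obstacle, and the standard fix (as in \cite{dugger2005motivic}) is: $E_{x\text{-tors}} \wequi \lim_n E_{x\text{-tors}}/x^n$ exhibits $E_{x\text{-tors}}$ as the limit of a tower, but one reexpresses this limit as $\fib$ of a map $\prod_n E_{x\text{-tors}}/x^n \to \prod_n E_{x\text{-tors}}/x^n$ (the shift map) — and arbitrary products of cellular spectra need not be cellular either. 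The genuine argument avoids the completion and instead runs a different induction: since $E_{x\text{-tors}}$ has vanishing $x$-localization, it is a retract of / built from $x$-power-torsion pieces, and one proves cellularity by noting $E_{x\text{-tors}} \wequi \colim_n \Sigma^{-1}\cof(x^n\colon \Sigma^{**}E_{x\text{-tors}} \to \Sigma^{**}E_{x\text{-tors}})$ is actually a \emph{colimit} (of the Tate-type tower run the other way), each term a finite extension of shifts of $E_{x\text{-tors}}/x$, which is cellular — hence $E_{x\text{-tors}}$ is cellular. I expect the cleanest route is to cite the analogue of \cite[Lemma 7.4]{dugger2005motivic} or to phrase the whole lemma via: cellular spectra form a localizing subcategory, $E \to E[1/x]$ has fiber built as a colimit of (shifts of finite extensions of) $E/x$. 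The hard part will be getting the direction of the (co)limit right so that it is genuinely a colimit and thus visibly cellular.
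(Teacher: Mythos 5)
Your ``only if'' direction is correct and standard. Your ``if'' direction takes a genuinely different route from the paper's and, once stripped of its false starts, does work — but the write-up contains two concrete errors worth flagging, and the paper's proof is cleaner.

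The first error: writing $E_{x\text{-tors}} \wequi \lim_n E_{x\text{-tors}}/x^n$ is wrong, because $x$-\emph{acyclicity} ($E_{x\text{-tors}}[1/x] \wequi 0$) does not imply $x$-\emph{completeness}. (In $\SH$, the Moore spectrum of $\Q/\Z_{(p)}$ is $p$-acyclic but not $p$-complete.) You eventually back away from this, but you should drop it entirely; it is not a ``subtle point'' to be patched, it is false. The second error: $E_{x\text{-tors}}/x$ is not an extension $E/x \vee \Sigma^{?}E/x$. Applying $(-)/x$ to the cofiber sequence $E_{x\text{-tors}} \to E \to E[1/x]$ and using $E[1/x]/x \wequi 0$ gives $E_{x\text{-tors}}/x \wequi E/x$ outright. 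This is simpler than you feared, and it is all you need.

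What actually carries the argument is your last observation, which you state but then hedge about: since $E_{x\text{-tors}}[1/x]\wequi 0$, one has $E_{x\text{-tors}} \wequi \Sigma^{-1}\colim_n \Sigma^{-n|x|}(E/x^n)$, a genuine filtered \emph{colimit}. To see this and settle the ``direction'' worry, take the colimit along $n$ of the commuting squares
\[
\begin{CD}
E @>{x^n}>> \Sigma^{-n|x|}E \\
@| @VV{x}V \\
E @>{x^{n+1}}>> \Sigma^{-(n+1)|x|}E
\end{CD}
\]
(vertical maps $\id$ on the left, multiplication by $x$ on the right). The colimit of the rows is $E \to E[1/x]$, so the colimit of the horizontal cofibers $\Sigma^{-n|x|}(E/x^n)$ is $\cof(E \to E[1/x]) \wequi \Sigma\, E_{x\text{-tors}}$. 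Each $E/x^n$ is a finite iterated extension of shifts of $E/x$ (octahedron for $x^{n+1}=x\cdot x^n$), hence cellular, so $E_{x\text{-tors}}$ is cellular as a filtered colimit of cellular spectra; then $E$ is cellular as the total object of the cofiber sequence $E_{x\text{-tors}} \to E \to E[1/x]$ with cellular outer terms.

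The paper's proof is different and slicker: it observes that the cellular subcategory $\scr C \subset \SH(S)$ is closed under colimits and generated by compact objects, so the inclusion has a colimit-preserving, exact right adjoint $r$ (cellularization) which commutes with $\Sigma^{p,q}$ and with multiplication by $x$, hence with $(-)/x$ and $(-)[1/x]$. By Lemma \ref{lemm:inv-compl} the map $r(E)\to E$ is an equivalence iff it becomes one after $/x$ and after $[1/x]$, and these become $r(E/x)\to E/x$ and $r(E[1/x])\to E[1/x]$, which are equivalences because $E/x, E[1/x]$ are cellular by hypothesis. The paper's route avoids any explicit torsion/colimit identification at the cost of invoking the cellularization functor and the fracture lemma; yours is more hands-on and makes the $x$-torsion part visibly a colimit. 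Both are valid; the paper's is shorter.
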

\begin{proof}
Since cellular spectra are closed under colimits, necessity is clear.
We show sufficiency.
Let $\scr C \subset \SH(S)$ denote the subcategory of cellular spectra.
Since $\scr C$ is closed under colimits, the inclusion has a right adjoint $r$, since $\scr C$ is generated by a set of compact objects from $\SH(S)$, $r$ preserves colimits, and since $\scr C$ is stable under desuspension the functor $r$ is stable.
Moreover we have $r(\Sigma^{p,q}E) \wequi \Sigma^{p,q}r(E)$, and $r(E \xrightarrow{x} \Sigma^{**}E) \wequi (r(E) \xrightarrow{x} \Sigma^{**}r(E))$.
It follows $(*)$ that $r$ commutes with formation of $(\ph)/x$ and $(\ph)[1/x]$.

We seek to show that $r(E) \to E$ is an equivalence.
By Lemma \ref{lemm:inv-compl} it suffices to show that $r(E)/x \to E/x$ and $r(E)[1/x] \to E[1/x]$ are equivalences.
Since $r(E)/x \wequi r(E/x)$ and $r(E)[1/x] \wequi r(E[1/x])$ by $(*)$, the result follows (from the assumption that $E/x$ and $E[1/x]$ are cellular).
\end{proof}

\subsection{$\kw^*\kw_{(2)}$}
\begin{lemma} \label{lemm:phi-gen}
Write $\adamsphi^n$ for the $n$-fold iteration of $\adamsphi$.
The map \[ \kw \wedge \kw_{(2)} \xrightarrow{\id \wedge \adamsphi^\bullet} \prod_{n \ge 0} \kw \wedge \Sigma^{4n} \kw_{(2)} \wequi \bigoplus_{n \ge 0} \Sigma^{4n} \kw \wedge \kw_{(2)} \to \bigoplus_{n \ge 0} \Sigma^{4n} \kw_{(2)} \] is an equivalence of left $\kw$-modules (where the last map is multiplication).
\end{lemma}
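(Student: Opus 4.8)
The plan is to reduce the assertion to a computation of bigraded homotopy groups over an arbitrary field, and then feed in Proposition~\ref{prop:kw-HW-better} and Lemma~\ref{lemm:phi-MSL}. Write $\Phi\colon \kw\wedge\kw_{(2)}\to\bigoplus_{n\ge0}\Sigma^{4n}\kw_{(2)}$ for the composite in question. It is a map of left $\kw$-modules, and both its source and its target are stable under base change; since homotopy sheaves are unramified, it suffices to show that $\Phi$ induces an isomorphism on $\pi_{**}$ over every field $k$ of characteristic $\ne 2$.

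Next I would identify both homotopy modules. By Proposition~\ref{prop:kw-HW-better}(1), $\pi_{**}(\kw\wedge\kw_{(2)})$ is free over $\pi_{**}\kw_{(2)}\wequi\W(k)_{(2)}[\beta]$ on the classes $\tilde x_i=\alpha(e_{2i})$ in degree $4i$; one checks that it is \emph{also} free over $\W(k)_{(2)}[\beta]$ for the \emph{left} $\kw$-module structure, with the same $\tilde x_i$ as generators. This uses that $\pi_{**}(\kw\wedge\kw_{(2)})$ and $\pi_{**}(\HW\wedge\kw_{(2)})$ are concentrated in degrees divisible by $4$ (so the $\beta$-Bockstein for $\Sigma^4\kw\wedge\kw_{(2)}\xrightarrow{\beta}\kw\wedge\kw_{(2)}\to\HW\wedge\kw_{(2)}$ vanishes for degree reasons, whence $\pi_{**}(\kw\wedge\kw_{(2)})$ is $\beta$-torsion free with quotient $\pi_{**}(\HW\wedge\kw_{(2)})$), together with the equivalence $\HW\wedge\kw_{(2)}\wequi\kw\wedge\HW_{(2)}$ and Proposition~\ref{prop:kw-HW-better}(1) applied to the latter. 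The target is visibly free over $\W(k)_{(2)}[\beta]$ on generators $e_{(n)}$ in degree $4n$. Being left $\kw$-linear, $\Phi$ is $\beta$-linear, and for degree reasons $\Phi(\tilde x_i)$ only involves $e_{(n)}$ with $n\le i$; hence $\Phi$ is represented by an upper-triangular matrix over $\W(k)_{(2)}[\beta]$, and it is an equivalence if and only if each diagonal entry $d_i:=\mu_*(\id\wedge\adamsphi^i)_*(\tilde x_i)\in\pi_0(\kw_{(2)})=\W(k)_{(2)}$ is a unit.

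It remains to show that each $d_i$ is a unit, and this is the step I expect to be the main obstacle. Since $\W(k)_{(2)}$ is local with residue field $\F_2$ (Lemma~\ref{lemm:witt-units}) and everything in sight is compatible with base change, I may assume $\W(k)=\F_2$, in which case $\kw_{(2)}=\kw$, $\pi_0(\kw\wedge\kw)=\F_2\{\tilde x_0\}$ with $\tilde x_0=1$, and $d_i$ is just the constant term of $(\id\wedge\adamsphi^i)_*(\tilde x_i)$. The subtlety is that here $\adamsphi$ is applied to the \emph{right} tensor factor, whereas Lemma~\ref{lemm:phi-MSL}(2) computes the iterate of $\adamsphi$ acting on the \emph{left} ($\kw$-homology) factor of $\kw\wedge\MSL$, and it is the latter that is intertwined with $\alpha$ by naturality (giving $\alpha(\adamsphi^{\circ i}(e_{2i}))=\alpha(1)=1$). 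I would bridge the two by working modulo $\beta$: on $\kw_*\kw$ both the left and the right version of $\adamspsi^3$ are congruent to the identity modulo $\beta$ (from $\beta\adamsphi=\adamspsi^3-1$), so using the twisted Leibniz rule of Lemma~\ref{lemm:almost-derivation} for the right action and that $\alpha\colon\kw_*\MSL\to\kw_*\kw$ is a ring map sending $e_{2i}$ to $\tilde x_i$, the constant term of $(\id\wedge\adamsphi^i)_*(\tilde x_i)$ should agree with $\mu_*\alpha(\adamsphi^{\circ i}(e_{2i}))=1$. This would give $d_i=1$ and finish the proof.
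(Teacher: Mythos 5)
Your proof has the right overall architecture (reduce to a statement about bigraded homotopy over arbitrary fields, exhibit bases, show the matrix of $\Phi$ is triangular with unit diagonal, reduce modulo $\I$ to the case $\W(k)=\F_2$), and you have correctly flagged the crucial subtlety that the paper's terse proof glosses over: Lemma \ref{lemm:phi-MSL}(2) controls the iterate of $\adamsphi$ acting on the \emph{left} $\kw$-factor of $\kw\wedge\MSL$, while the map $\Phi$ applies $\adamsphi$ to the \emph{right} $\kw_{(2)}$-factor. However, your proposed resolution of this tension does not work as stated, and there is a related secondary gap.

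The bridge argument (``working modulo $\beta$'' together with the twisted Leibniz rule of Lemma \ref{lemm:almost-derivation}) is not a proof. The two operations $\adamsphi_L=\adamsphi\wedge\id$ and $\adamsphi_R=\id\wedge\adamsphi$ on $\kw\wedge\kw_{(2)}$ are intertwined only by the switch $\tau$ (i.e.\ $\tau\circ\adamsphi_L=\adamsphi_R\circ\tau$), and indeed $\mu_*\adamsphi_L^i\ne\mu_*\adamsphi_R^i$ in general: already on $\eta_L(\beta^m)\eta_R(\beta^n)$ these differ. Since $\alpha=\id_\kw\wedge o$ intertwines $\adamsphi_L$ but not $\adamsphi_R$, naturality gives you $\alpha(\adamsphi_L^{\circ i}(e_{2i}))=1$ but says nothing directly about $\adamsphi_R^{\circ i}(\tilde x_i)$. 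Your secondary gap is the claim that the $\tilde x_i=\alpha(e_{2i})$ are free \emph{left} $\kw_*$-module generators: the $\beta$-Bockstein argument identifies $\pi_{*}(\kw\wedge\kw_{(2)})/\beta_L$ with $\pi_{*}(\HW\wedge\kw_{(2)})$, but Proposition \ref{prop:kw-HW-better}(1) exhibits the generators $x_i$ as reductions of the $\tilde x_i$ modulo the \emph{right} Bott element $\beta_R$; these are not a priori the same residues, and the identification $\beta_L=\beta_R$ mod $\I$ that would reconcile them follows from Proposition \ref{prop:kw-coops}, whose proof uses the present lemma.

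The paper sidesteps both issues by choosing a different set of generators. As the proof indicates (``images of the $e_{2i}$ under $\MSL\wedge\kw\to\kw\wedge\kw$''), one should map $\MSL$ into the \emph{left} factor, i.e.\ use $\alpha'=o\wedge\id\colon\MSL\wedge\kw\to\kw\wedge\kw$ and the classes $z_i:=\alpha'(e_{2i})=\tau(\tilde x_i)$. Applying the switch to Proposition \ref{prop:kw-HW-better}(1) shows the $z_i$ are free \emph{left} $\kw_*$-module generators. And now $\alpha'$ \emph{does} intertwine $\adamsphi_R$ with the $\adamsphi$-action on the $\kw$-factor of $\MSL\wedge\kw$; transporting Lemma \ref{lemm:phi-MSL}(2) across the switch gives $\adamsphi_R^{\circ i}(z_i)=\alpha'(1)=1$ directly, with no ``modulo $\beta$'' bridging required. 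Replacing $\tilde x_i$ by $z_i=\tau(\tilde x_i)$ throughout your argument repairs both gaps.
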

\begin{proof}
We have $\prod \wequi \bigoplus$ for connectivity reasons.\NB{details?}
Since the base field is arbitrary and the map is canonical, it suffices to show that we have an isomorphism on $\pi_*$.
By Proposition \ref{prop:kw-HW-better}, this is a map of degreewise finite free left $\kw^*_{(2)}$-modules.
We may thus show that there is an isomorphism modulo $\I$, i.e. we may assume that $\W(k) \wequi \F_2$, and it suffices to show that the (left module) generators are preserved.
By Proposition \ref{prop:kw-HW-better}, generators of the source are obtained as the images of the $e_{2i}$ under $\MSL \wedge \kw \to \kw \wedge \kw$.
It hence suffices to show that $\adamsphi^n(e_{2n}) = 1$.
This is Lemma \ref{lemm:phi-MSL}(2).
\end{proof}

\begin{corollary} \label{cor:kw-ops}
We have \[ \kw^*_{(2)}\kw \wequi \kw^*_{(2)}\fpsr{'\adamsphi}, \] in the sense that the underlying $\kw^*_{(2)}$-module is $\kw^*_{(2)}\fpsr{\adamsphi}$ but the composition product is determined by $\adamsphi\beta = 9\beta\adamsphi + 8$.
\end{corollary}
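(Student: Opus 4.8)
The plan is to deduce the description of $\kw^*_{(2)}\kw$ from Lemma \ref{lemm:phi-gen} together with the commutation relation for $\adamsphi$ and $\beta$. First I would use Lemma \ref{lemm:phi-gen}: it identifies $\kw \wedge \kw_{(2)}$, as a left $\kw$-module, with $\bigoplus_{n\ge 0}\Sigma^{4n}\kw_{(2)}$ via the map $\id\wedge\adamsphi^\bullet$ followed by multiplication. Dualizing (i.e.\ taking $\kw^*_{(2)}\kw = [\kw\wedge\kw_{(2)}, \kw_{(2)}]_{\kw_{(2)}}$ in left $\kw_{(2)}$-modules, or just reading off the cohomology of the wedge), this shows that every element of $\kw^*_{(2)}\kw$ can be written uniquely as a ``power series'' $\sum_{n\ge 0} a_n \adamsphi^n$ with $a_n\in\kw^*_{(2)}$, where $\adamsphi^n$ denotes the $n$-fold composite; here one uses $\prod\wequi\bigoplus$ in the relevant connectivity range as in the proof of Lemma \ref{lemm:phi-gen}, so the completed direct sum $\kw^*_{(2)}\fpsr{\adamsphi}$ is the right object. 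This establishes that the underlying $\kw^*_{(2)}$-module is as claimed, and identifies $\adamsphi$ itself with the tautological operation.

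Next I would pin down the composition product. The ring structure on $\kw^*_{(2)}\kw$ is composition of stable operations, and it is determined once we know how to move scalars $a\in\kw^*_{(2)}$ past $\adamsphi$, i.e.\ the value of $\adamsphi\circ a$ rewritten in the form $\sum b_n \adamsphi^n$. Since $\kw^*_{(2)} = \W(k)_{(2)}[\beta]$ and $\adamsphi$ is $\W(k)_{(2)}$-linear (see the remark after \ref{cor:construct-phi}), it suffices to compute $\adamsphi\circ\beta$. By the defining property of $\adamsphi$ (Remark \ref{rmk:phi-defn}), for any $x$ we have $\beta\adamsphi(x) = \adamspsi^3(x)-x$; applying this with $x$ replaced by $\beta x$ and using that $\adamspsi^3$ is a ring map with $\adamspsi^3(\beta) = 9\beta$ (Theorem \ref{thm:ko-adams}(2)), I get
\[
\beta\adamsphi(\beta x) = \adamspsi^3(\beta x) - \beta x = 9\beta\adamspsi^3(x) - \beta x = 9\beta(\beta\adamsphi(x) + x) - \beta x = 9\beta^2\adamsphi(x) + 8\beta x,
\]
and cancelling the (non--zero--divisor) $\beta$ yields $\adamsphi(\beta x) = 9\beta\adamsphi(x) + 8x$, i.e.\ the operator identity $\adamsphi\beta = 9\beta\adamsphi + 8$ in $\kw^*_{(2)}\kw$. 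Combined with $\adamsphi\circ\beta^2 = \cdots$ obtained by iterating, and $\W(k)_{(2)}$-linearity, this determines the full composition product.

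The main obstacle, such as it is, is bookkeeping rather than anything deep: one must be careful that ``$\kw^*_{(2)}\fpsr{\adamsphi}$'' really is the completed module dual to $\bigoplus_{n}\Sigma^{4n}\kw_{(2)}$ and that the multiplication map $\kw\wedge\kw_{(2)}\to\bigoplus\Sigma^{4n}\kw_{(2)}$ dualizes correctly to give the claimed basis $\{\adamsphi^n\}$; this is exactly the kind of Milnor-exact-sequence/$\limone$-vanishing argument already invoked for Lemma \ref{lemm:phi-gen} (the relevant $\limone$ vanishes because the tower is of bounded-below spectra with the transition maps eventually killing each homotopy group in a fixed degree). I expect the whole argument to be about half a page: invoke \ref{lemm:phi-gen} for the module structure, do the three-line computation above for the relation, and note that $\W(k)_{(2)}$-linearity plus this relation suffice to reconstruct all products.
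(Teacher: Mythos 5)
Your proposal matches the paper's proof essentially step for step: both deduce the additive structure from Lemma \ref{lemm:phi-gen} via the adjunction $[\kw, \Sigma^*\kw_{(2)}] \wequi [\kw\wedge\kw_{(2)}, \Sigma^*\kw_{(2)}]_{\kw_{(2)}}$ and the identification of $\kw\wedge\kw_{(2)}$ with $\bigoplus_n\Sigma^{4n}\kw_{(2)}$, and both determine the composition product by reducing (via $\W(k)_{(2)}$-linearity and $\beta$-torsion-freeness) to the single identity $\adamsphi\beta = 9\beta\adamsphi + 8$, which you verify by the same manipulation of $\beta\adamsphi = \adamspsi^3 - 1$ and $\adamspsi^3(\beta) = 9\beta$. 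The only difference is cosmetic: you spell out the $\limone$ bookkeeping slightly more explicitly, while the paper simply records the answer as $\prod_n\kw^*_{(2)}\{q_n\}$.
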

\begin{proof}
By adjunction, we have \[ [\kw, \Sigma^* \kw_{(2)}] \wequi [\kw \wedge \kw_{(2)}, \Sigma^* \kw_{(2)}]_{\kw_{(2)}}, \] where $[\ph,\ph]_{\kw_{(2)}}$ denotes homotopy classes of maps of (strong) $\kw_{(2)}$-modules.
Now $\kw \wedge \kw_{(2)} \wequi \bigoplus_n \Sigma^{4n} \kw_{(2)}$ as $\kw_{(2)}$-modules, so that \[ [\kw \wedge \kw_{(2)}, \Sigma^* \kw_{(2)}]_{\kw_{(2)}} \wequi \prod_n \kw^*_{(2)}\{q_n\}, \] where $q_n: \kw \wedge \kw_{(2)} \to \Sigma^{4n}\kw_{(2)}$ is the projection.
By Lemma \ref{lemm:phi-gen}, $q_n$ is (or rather may be chosen to be) adjoint to $\adamsphi^n$.
The additive structure follows.

Since $\adamsphi$ commutes with multiplication by $\kw^0_{(2)}$, the multiplicative structure is determined once we know $\adamsphi\beta$.
Since $\kw^*\kw_{(2)}$ is $\beta$-torsion free, it suffices to know $\beta\adamsphi\beta$.
We compute\NB{beware this is an equation in endomorphisms of $\kw$, not $\kw_*$, so $\adamspsi\beta = \adamspsi(\beta)\adamspsi$, $\adamspsi$ being a ring map} \[ \beta\adamsphi\beta = (\adamspsi-1)\beta = 9\beta\adamspsi-\beta = 9\beta(\beta\adamsphi+1) - \beta = \beta\cdot(9\beta\adamsphi+8). \]
The result follows.
\end{proof}

\subsection{$\kw_*\kw_{(2)}$ and $\kw_*\HW$}\discuss{Suggestions?}
Let $A = \kw_{(2)}$.
Since $A_*A$ is flat as an $A_*$-module, it forms part of a Hopf algebroid $(A_*, A_*A)$.
We determine some of its structure.
Recall that $A_* = \W(k)_{(2)}[\beta]$.
\begin{proposition} \label{prop:kw-coops}
There exist elements $x_i \in A_{4i}A$ (with $x_0 = 1$) such that $A_*A$ is the free left $A_*$-module on the $x_i$, and such that the following hold.
\begin{enumerate}
\item The right unit $\eta_R: A_* \to A_*A$ is the $\W(k)_{(2)}$-algebra map with $\eta_R(\beta) = \beta + 8x_1$.
\item The comultiplication $\Delta: A_*A \to A_*A \otimes_{A_*} A_*A$ is determined by $\Delta(x_i) = \sum_{i=j+k} x_j \otimes x_k$.
\item The augmentation $\epsilon: A_*A \to A_*$ is determined by $\epsilon(x_i) = 0$ for $i > 0$.
\item The multiplication satisfies \[ x_i x_j = {i + j \choose i} x_{i+j} + O(\beta,8). \]
\end{enumerate}
\end{proposition}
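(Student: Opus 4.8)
The plan is to read off everything from the left $\kw$-module splitting $\kw\wedge\kw_{(2)}\simeq\bigoplus_n\Sigma^{4n}\kw_{(2)}$ of Lemma \ref{lemm:phi-gen} together with Kronecker duality. Since $\kw$ is cellular (Proposition \ref{prop:cellularity}) and $A_*A$ is free over $A_*$ (Proposition \ref{prop:kw-HW-better}(1), Lemma \ref{lemm:phi-gen}), the pairing $A_*A\otimes_{A_*}A^*A\to A_*$ is perfect (Corollary \ref{cor:duality}, Lemma \ref{lemm:perfect-pairing}), and by Corollary \ref{cor:kw-ops} the operations $\{\adamsphi^n\}_{n\ge0}$ form an $A^*_{(2)}$-basis of $A^*A$. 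I would define $x_n\in A_{4n}A$ to be the dual basis, $\langle x_n,\adamsphi^m\rangle=\delta_{nm}$; equivalently $x_n$ is the image of $1$ under the inclusion of the $n$-th summand of Lemma \ref{lemm:phi-gen}. As $\adamsphi^0=\id$, the associated projection is the multiplication $\mu$ and its section is the left unit $\eta_L$, so $x_0=1$. This gives the freeness statement.

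Parts (3) and (1) are then formal. The augmentation $\epsilon=\mu_*$ is precisely pairing with $\adamsphi^0$, so $\epsilon(x_n)=\langle x_n,\adamsphi^0\rangle=\delta_{n0}$, which is (3). For (1), note $\eta_R(\beta)\in A_4A=\W(k)_{(2)}\beta\{x_0\}\oplus\W(k)_{(2)}\{x_1\}$, so $\eta_R(\beta)=a\beta+bx_1$; applying $\epsilon$ forces $a=1$, and pairing with $\adamsphi^1$ computes $b=\adamsphi(\beta)=8$ by Example \ref{ex:phi-beta} (the $\beta x_0$-term contributes $\adamsphi(1)\in\pi_{-4}\kw=0$). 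In particular $x_1=(\eta_R(\beta)-\beta)/8$. For (2), I would again dualize: the comultiplication $\Delta$ on $A_*A$ and the composition product on $A^*A$ are adjoint under the Kronecker pairing, so $\Delta(x_n)=\sum_{j+k=n}x_j\otimes x_k$ is equivalent to the identity $\adamsphi^a\circ\adamsphi^b=\adamsphi^{a+b}$, which holds by definition of $\adamsphi^n$ as the $n$-fold iterate — note that only pure powers of $\adamsphi$ occur, so the twisted commutation $\adamsphi\beta=9\beta\adamsphi+8$ of Corollary \ref{cor:kw-ops} never intervenes. The only point requiring care is checking that the perfect pairing and the chosen basis are genuinely compatible with the (co)algebra structures, which is routine but should be written out.

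Part (4) is the computational heart, and its error term is the main obstacle. The strategy is to reduce modulo the ideal $(\beta,8)$: since $\eta_R(\beta)=\beta+8x_1$ this equals $(\eta_R(\beta),8)$, and because $A_*A$ is free as a \emph{right} $A_*$-module, $\eta_R(\beta)$ is a non-zero-divisor, whence $A_*A/(\eta_R\beta)\simeq\pi_*(\kw\wedge\HW_{(2)})$ and $A_*A/(\beta,8)$ is identified with the mod $8$ reduction of $\pi_*((\kw\wedge\HW)_2^\wedge)$, whose ring structure is supplied by Theorem \ref{thm:HW-kw-summary}: generators $t_0,t_1,\dots$ with $t_i^2=(2+r_i)t_{i+1}$, $r_i\in\I^2$, and the images of the $x_i$ equal to $\prod_n t_n^{\epsilon_n(i)}$ (binary expansion) up to units. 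Expanding $x_ix_j=\bigl(\prod t_n^{\epsilon_n(i)}\bigr)\bigl(\prod t_n^{\epsilon_n(j)}\bigr)$ and collapsing squares via $t_n^2=(2+r_n)t_{n+1}$ produces one factor of $(2+r_\bullet)$ for each binary carry in the addition $i+j$, and by Kummer's theorem $\nu_2\binom{i+j}{i}$ is exactly the number of such carries, which accounts for the leading term $\binom{i+j}{i}x_{i+j}$.

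The difficulty is that the corrections $r_n\in\I^2$ are not a priori contained in $(\beta,8)$, so a direct substitution does \emph{not} immediately yield $x_ix_j=\binom{i+j}{i}x_{i+j}+O(\beta,8)$. Resolving this is where the real work lies: one must make a compatible re-choice of the generators $x_i$ — using the rigidity forced by (1)–(3), which pins down $x_1$ and constrains the $x_i$ to behave like divided powers — so that the accumulated error genuinely lands in $(\beta,8)$; alternatively, I would try to identify $A_*A/(\beta,8)$ directly with a reduction of the divided power Hopf algebra $H_*(BU(1))$ via the $\BU$-comparison already exploited in Lemma \ref{lemm:comuputation} and in the proof of Theorem \ref{thm:cobordism-MSL-summary}, where $\beta_i\beta_j=\binom{i+j}{i}\beta_{i+j}$ is classical. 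Either way, pinning down the error term is the step I expect to be delicate.
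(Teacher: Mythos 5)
Your setup (defining $x_i$ as the dual basis to $\{\adamsphi^j\}$, deducing freeness, and deriving (1)--(3) by duality/applying $\epsilon$ and the $\adamsphi^i$) is essentially the paper's argument, including the useful formula $\eta_R(a)=\sum_i x_i\,\adamsphi^i(a)$.

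For (4), however, you take a genuinely different route and, as you yourself flag, leave a real gap. Reducing modulo $(\beta,8)$ and comparing with the generators $t_i$ of Theorem~\ref{thm:HW-kw-summary} runs into exactly the obstruction you name: the identity $t_i^2=(2+r_i)t_{i+1}$ has $r_i\in\I^2$, which is not contained in $(\beta,8)$, and Theorem~\ref{thm:HW-kw-summary}(3) only identifies $x_i$ with $\prod t_n^{\epsilon_n}$ \emph{up to a unit}, which is likewise insufficient to control the $\W(k)_{(2)}$-coefficient modulo $8$. Neither ``re-choosing generators using rigidity from (1)--(3)'' nor the $\BU(1)$ comparison is carried out, and neither is obviously available: (1)--(3) pin down $x_1$ but do not obviously force the higher $x_i$ into divided-power position, and the $\BU(1)$ comparison would require an explicit map identifying $A_*A/(\beta,8)$ with a reduction of $H_*(\BU(1))$, which you have not constructed.

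The missing idea is that (4) follows \emph{formally} from (1)--(3) plus freeness, without invoking Theorem~\ref{thm:HW-kw-summary} at all. By degree reasons $x_ix_j=a_{ij}x_{i+j}+O(\beta)$ for a unique $a_{ij}\in\W(k)_{(2)}$. Now use that $\Delta$ is a ring map and compute $\Delta(x_ix_j)$ in two ways. The left-hand side gives $a_{ij}\sum_{p+q=i+j}x_p\otimes x_q$ modulo $(\beta,8)$, where crucially $\eta_R(\beta)=\beta+8x_1\in(\beta,8)$ from (1) controls the error; the right-hand side gives $\sum_{p+q=i,\,r+s=j}a_{pr}a_{qs}\,x_{p+r}\otimes x_{q+s}$ modulo $(\beta,8)$. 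Comparing coefficients of $x_i\otimes x_j$ yields the recursion
\[ a_{ij}\equiv\sum_p a_{p,i-p}\,a_{i-p,\,j+p-i}\pmod 8, \]
with base case $a_{i0}=a_{0i}=1$. Inducting on $d=i+j$, the right-hand side involves only $a_{rs}$ with $r+s<d$ (once $i,j>0$), so by the inductive hypothesis it equals $\sum_p\binom{i}{p}\binom{j}{i-p}=\binom{i+j}{i}$ (Vandermonde). This closes (4) cleanly; the Kummer's-theorem/carries heuristic you invoke is morally what lies behind the identity but is not the mechanism of proof.
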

\begin{proof}
We know that $A_*A$ is the topological dual of $A^*A$, which was determined in Corollary \ref{cor:kw-ops}.
Define $x_i: A^*A \to A_*$ via $x_i(\adamsphi^j) = \delta_{ij}$.
\todo{details for the following?}
Then $x_i$ vanishes on sufficiently high powers of $\adamsphi$ and hence is continuous; thus $x_i$ defines an element of $A_*A$.
This is the dual of the topological basis $\{\adamsphi^i\}_{i \ge 0}$ of $A^*A$ and hence forms a left $A_*$-basis of $A_*A$.
Since $\adamsphi^i(1) = \delta_{i0}$ we have $x_0=1$.

(1) The right unit is given by $\eta_r(x) = \sum_i x_i \adamsphi^i(x)$, whence the claim follows from $\adamsphi^0(\beta) = \beta$, $\adamsphi(\beta) = 8$ and $\adamsphi^n(\beta) = 0$ for $n>1$.

(2) This follows by dualization from $\adamsphi^i \circ \adamsphi^j = \adamsphi^{i+j}$.

(3) This is just the dual of the unit map $A^* \to A^*A$. \todo{really?}

(4) For degree reasons we can write $x_i x_j = a_{ij} x_{i+j} + O(\beta)$, with $a_{ij} \in \W(k)_{(2)}$ uniquely determined.
Using that $\Delta$ is a ring map and computing $\Delta(x_i x_j)$ in two ways we find that \[ a_{ij} \sum_{i+j = p+q} x_p \otimes x_q \equiv \sum_{p+q = i, r+s=j} a_{pr}a_{qs} (x_{p+r} \otimes x_{q+s}) \pmod{8}; \] here we use that $\eta_R(\beta) \in O(\beta, 8)$.\todo{?!?}
Comparing the coefficient of $x_i \otimes x_j$ yields \begin{equation}\label{eq:ind-tech} a_{ij} \equiv \sum_p a_{p,i-p}a_{i-p,j+p-i} \pmod{8}. \end{equation}
Since $x_0 = 1$ we have \begin{equation}\label{eq:ind-triv} a_{i0} = 1 = a_{0i} \text{ for all } i \ge 0. \end{equation}
We now prove that $a_{ij} \equiv {i+j \choose i} \pmod{8}$, by induction on $d=i+j$.
The case $d=0$ follows from \eqref{eq:ind-triv}, as do the cases $i=0$ or $j=0$.
Consequently we may assume that $i>0$ and $j>0$.
In this case in \eqref{eq:ind-tech} the right hand side only involves $a_{rs}$ with $r+s < d$, and so (see e.g. \cite[Theorem 5.2.3]{moll2012numbers}) \[ a_{ij} \equiv \sum_p {i \choose p} {j \choose i-p} = {i+j \choose i}. \]
\end{proof}

\begin{remark}
The Hopf algebras $\ku_* \C\P^\infty \otimes_\Z \F_2$ and $\HZ_* \C\P^\infty \otimes_\Z \F_2[\beta]$ are isomorphic to $\kw_*\kw_\C$ as coalgebras, but are not equal (their multiplications encode the additive and multiplicative formal groups, respectively).
It follows that the multiplicative structure on $A_*A$ cannot be determined purely formally.\todo{work it out anyway?}
\end{remark}

\begin{proposition} \label{prop:typical-generators}
There exist generators $x_i \in \pi_{4i} (\kw \wedge \HW_{(2)})$ such that \[ x_m x_n = {m+n \choose n} x_{m+n}. \]
\end{proposition}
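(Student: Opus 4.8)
The plan is to reduce the statement about $\pi_{4*}(\kw \wedge \HW_{(2)})$ to the statement about $A_*A = \pi_{4*}(\kw \wedge \kw_{(2)})$ already established in Proposition \ref{prop:kw-coops}, via the base-change map $r\colon \kw \wedge \kw_{(2)} \to \kw \wedge \HW_{(2)}$ coming from the truncation $\kw_{(2)} \to \HW_{(2)} = \kw_{(2)}/\beta$. By Proposition \ref{prop:kw-HW-better}(1), $\kw \wedge \HW_{(2)} \wequi \bigvee_i \HW_{(2)}\{x_i\}$ with $x_i = r\tilde x_i$ the images of polynomial generators $e_{2i}$ of $\kw_*\MSL$, and $\kw \wedge \kw_{(2)} \wequi \bigvee_i \kw_{(2)}\{\tilde x_i\}$ compatibly; so $r$ is, on homotopy, the reduction $\W(k)_{(2)}[\beta]\{\tilde x_i\} \to \W(k)_{(2)}\{x_i\}$ killing $\beta$. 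Since $\HW_{(2)} = \kw_{(2)}/\beta$ and $\kw_*\HW_{(2)}$ is $\beta$-free in the relevant sense, $r$ is a ring map (it is $\pi_*$ of the $\scr E_\infty$-map $\kw_{(2)} \to \HW_{(2)}$ smashed with $\kw$), hence $r(\tilde x_m \tilde x_n) = x_m x_n$.

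First I would recall from Proposition \ref{prop:kw-coops}(4) that, for a suitable choice of the $\tilde x_i$, one has $\tilde x_m \tilde x_n = \binom{m+n}{n}\tilde x_{m+n} + (\text{terms in the ideal }(\beta, 8))$. The key point is that applying $r$ kills everything in the ideal $(\beta)$, and what remains of the $(8)$-part must also vanish: indeed $\pi_{4(m+n)}(\kw \wedge \HW_{(2)}) \wequi \W(k)_{(2)}$ is a free $\W(k)_{(2)}$-module of rank one on $x_{m+n}$ (Proposition \ref{prop:kw-HW-better}(1) together with Theorem \ref{thm:HW-kw-summary}), and the error term $O(\beta, 8)$ in degree $4(m+n)$, after setting $\beta = 0$, lands in $8\W(k)_{(2)}\{x_{m+n}\}$. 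But $8\W(k)_{(2)}$ is contained in $\I(k)_{(2)}$, so this would only give $x_m x_n \equiv \binom{m+n}{n} x_{m+n} \pmod{8}$, which is not yet the exact identity claimed. To get equality on the nose I would instead argue more carefully: the product structure on $A_*A$ is the topological dual of the composition product on $A^*A = \kw^*_{(2)}\fpsr{'\adamsphi}$ from Corollary \ref{cor:kw-ops}, and the analogous computation for $\kw^*\HW_{(2)}$ — that is, $\kw^*_{(2)}(\HW_{(2)})$ together with its pairing against $\kw_*\HW_{(2)}$ — should be computed directly. Over $\HW$ the subtlety is that $\beta = 0$, so the operation $\adamsphi$ on $\kw_*E$ for $E$ an $\HW_{(2)}$-module no longer satisfies $\adamsphi\beta = 9\beta\adamsphi + 8$ in a way that contributes: rather the relevant cooperations dualize a \emph{polynomial} (not twisted) algebra, and one recovers precisely the coalgebra $A_0 \C\P^\infty$-type structure whose dual multiplication is the binomial one $x_m x_n = \binom{m+n}{n} x_{m+n}$.

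Concretely, the cleanest route is: (i) show $\kw_*\HW_{(2)} \wequi r(\kw_*\kw_{(2)})/(\eta_R(\beta))$, using $\HW_{(2)} = \kw_{(2)}/\beta$ and the cofiber sequence $\Sigma^4\kw_{(2)} \xrightarrow{\beta} \kw_{(2)} \to \HW_{(2)}$ smashed with $\kw$ (which splits on $\pi_*$ since $\kw_*\kw_{(2)}$ is $\beta$-torsion free by Proposition \ref{prop:kw-HW-better}); (ii) observe that in $A_*A$ one has $\eta_R(\beta) = \beta + 8x_1$ (Proposition \ref{prop:kw-coops}(1)), so quotienting by $\eta_R(\beta)$ forces $\beta = -8x_1$ in $\kw_*\HW_{(2)}$ — but $\pi_4(\kw\wedge\HW_{(2)}) \wequi \W(k)_{(2)}\{x_1\}$ while $\beta$ has image $0$ there (it is $\beta$-multiplication, which is zero on an $\HW$-module), so in fact $8x_1 = 0$ in $\pi_4(\kw\wedge\HW_{(2)})$; this recovers the fact that $\pi_*(\kw\wedge\HW_{(2)})$ is $2$-locally a genuine $\W(k)_{(2)}$-module on which the relation $\binom{m+n}{n}x_{m+n} = x_mx_n$ now holds without the $O(8)$ ambiguity, because the $O(\beta,8)$ correction of Proposition \ref{prop:kw-coops}(4) lies in the ideal generated by $\beta$ and $8x_1$, hence by $\eta_R(\beta)$, hence is killed; (iii) finally re-choose the generators $x_i$ to be the images of the chosen $\tilde x_i$ and conclude. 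The main obstacle I anticipate is step (ii): making rigorous that the full error ideal $O(\beta, 8) \subset A_*A$ is contained in the ideal $(\eta_R(\beta))$ — equivalently that after killing $\eta_R(\beta)$ no residual $8$-torsion relation survives to perturb the binomial coefficients — which requires tracking the degree-by-degree structure of $A_*A$ as a free $A_*$-module and knowing the $8$-torsion in $\kw_*\HW_{(2)} \wequi \W(k)_{(2)}$ interacts with $\I(k)_{(2)}$ correctly; an alternative, possibly simpler, is to bypass $A_*A$ entirely and run the dualization argument of Proposition \ref{prop:kw-coops} directly with $\HW_{(2)}$ in place of the second $\kw_{(2)}$, computing $\kw^*\HW_{(2)}$ from scratch (it should be $\kw^*_{(2)}\fpsr{\adamsphi}$ with the \emph{untwisted} product since $\beta$ acts as $0$), and then the binomial formula falls out exactly as in Proposition \ref{prop:kw-coops}(4) but with the congruence $\pmod 8$ replaced by an honest equality because the obstruction term $\eta_R(\beta)$ now vanishes identically.
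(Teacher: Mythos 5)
Your initial reduction and diagnosis are correct: from Proposition \ref{prop:kw-coops}(4) the generators of $A_*A$ satisfy $\tilde x_m\tilde x_n = {m+n \choose n}\tilde x_{m+n} + O(\beta,8)$, and after passing to $\pi_*(\kw\wedge\HW_{(2)})$ the surviving error is $O(8)$, which a priori gives only a congruence $\pmod 8$. But the fix you propose in step (ii) is wrong, and the error is a conflation of the two Bott-element actions. In $\kw\wedge\HW_{(2)}$ the Bott element of the \emph{right} factor $\HW_{(2)} \wequi \kw_{(2)}/\beta$ does act as zero; however the $\beta$ appearing in $\eta_R(\beta) = \beta + 8x_1$ and in the Hopf algebroid identities is the one coming from the \emph{left} factor $\kw$, and its image in $\pi_4(\kw\wedge\HW_{(2)})$ is $8ux_1$ for some unit $u$ --- this is exactly Lemma \ref{lemm:hurewicz-beta} --- which is nonzero for general $k$ (e.g.\ $k=\R$, where $\W(\R)_{(2)} = \Z_{(2)}$). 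So the deduction ``$\beta$ has image $0$, hence $8x_1 = 0$'' is false; relatedly, the principal ideal $(\eta_R(\beta))$ cannot contain $(\beta,8)$, so the $O(\beta,8)$ error is not killed by passing to the quotient. Your ``alternative, simpler'' route has the same confusion: in the relation $\adamsphi\beta = 9\beta\adamsphi + 8$ of Corollary \ref{cor:kw-ops}, $\beta$ is the ambient Bott element in $\kw$, which does not become zero when the second $\kw_{(2)}$ is replaced by $\HW_{(2)}$, so the product does not become untwisted.

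The $O(8)$ error genuinely does not vanish; the paper's proof instead re-chooses the generators so that it is \emph{absorbed}. Starting from generators $x_i'$ with $x_i'x_j' = ({i+j \choose j}+8b_{ij})x_{i+j}'$, set $t_i = x_{2^i}'$. Since $\nu_2{2^{i+1} \choose 2^i} = 1$ by Kummer's theorem, $t_i^2 = 2w_it_{i+1}$ with $w_i$ a unit in $\W(k)_{(2)}$, and one inductively rescales to obtain $s_i$ satisfying the exact relation $s_i^2 = {2^{i+1} \choose 2^i}s_{i+1}$. Defining $x_n = \delta_n\prod_i s_i^{\epsilon_i}$ (binary expansion $n=\sum\epsilon_i2^i$, $\delta_n$ a $2$-adic unit by Legendre's formula), a purely combinatorial manipulation using the $s_i^2$-relation gives the divided power identity $x_mx_n = {m+n \choose n}x_{m+n}$ on the nose. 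The crucial point is precisely that the error is $8$-divisible --- not that it vanishes --- since this is what makes ${2^{i+1} \choose 2^i}+8b$ equal to $2$ times a unit and hence rescalable.
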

In other words the $x_i$ satisfy the identities of a divided power algebra.
\begin{proof}
Since $\HW = \kw/\beta$ we have $\pi_*(\HW \wedge \kw_{(2)}) \wequi A_*A/\beta$.
Thus by Proposition \ref{prop:kw-coops}(4) there exist generators $x_i'$ with $x_i' x_j' = \left({i + j \choose j} + 8b_{ij}\right)x_{i+j}'$.

Put $t_i = x_{2^i}'$.
Noting that $\nu_2({2^{i+1} \choose 2^i}) = 1$ (by Kummer's theorem \cite[Theorem 2.6.7]{moll2012numbers}) we can write \[ t_i^2 = \left({2^{i+1} \choose 2^i} + 8b_{2^i,2^i}\right) t_{i+1} = 2w_i t_{i+1}, \] where $w_i = {2^{i+1} \choose 2^i}/2 + 4b_{2^i,2^i}$ is a unit (since it is so modulo $I$; use Lemma \ref{lemm:witt-units}).
We can write $t_i^2 = b_i t_{i+1}$.

Now we inductively define $s_i$ generating $\pi_{4\cdot 2^i}$ such that \begin{equation} \label{eq:divided-power-special} s_i^2 = {2^{i+1} \choose 2^i}s_{i+1}. \end{equation}
Indeed we put $s_1=t_1$ and assuming that $s_n$ has been chosen we get $s_n = a_n t_n$ for some unit $a_n$, hence \[ s_n^2 = (a_nt_n)^2 = a_n^2 \cdot 2w_n t_{n+1} = 2a_n^2w_n t_{n+1}. \]
Noting that ${2^{n+1} \choose 2^n} = 2v_n$ where $v_n$ is odd by Kummer's theorem \cite[Theorem 2.6.7]{moll2012numbers}, we may put $s_{n+1} = (v_n^{-1}a_n^2w_n)t_{n+1}$.

Let $y_i$ generate $\pi_{4i}$.
For $n = \sum_i \epsilon_i 2^i$ put \[ \delta_n = 1/n! \prod_i (2^{\epsilon_i i}!); \] then $\nu_2(\delta_n)=0$ by Legendre's formula \cite[Theorem 2.6.4]{moll2012numbers}.
We can write \[ x_n := \delta_n \prod_i s_i^{\epsilon_i} = e_n y_n. \]
By Theorem \ref{thm:HW-kw-summary}(3) $e_n$ is a unit modulo $\I$, and hence a unit in $\W(k)_{(2)}$ by Lemma \ref{lemm:witt-units}.
Hence the $x_i$ are generators.

We can verify the divided power relations as follows.\NB{ref?}
Write $n= \sum_i \epsilon_i(n) 2^i$.
Then \[ x_n x_m = \frac{(n+m)!}{n!m!} \cdot \frac{\prod_i 2^{\epsilon_i(n) i}! 2^{\epsilon_i(m) i}! s_i^{\epsilon_i(n) + \epsilon_i(m)}}{(n+m)!}; \] these expressions make sense because we are working in a $\Z_{(2)}$-algebra.\footnote{To be precise, the denominator $(n+m)!$ has to be cancelled with the various factors of $2^r!$, which works by Kummer's theorem.}
The product on the right hand side consists of factors of the form $2^r! s_r$, possibly repeated.
We can simplify it by repeatedly applying the relation $(2^r!s_r)^2 = 2^{r+1}!s_{r+1}$ coming from \eqref{eq:divided-power-special}.
In the end we will thus have transformed it into a product of the same form, but with no repeated factors.
The $s_i$ occurring correspond to sum of the binary expansions of $n$ and $m$, i.e. the binary expansion of $n+m$.
Hence the right hand side is \[ {n+m \choose m} x_{n+m}, \] as desired.
\end{proof}

\subsection{$\HW \wedge \HW_{(2)}$}

\begin{lemma} \label{lemm:hurewicz-beta}
Let $h: \kw_{(2)} \to (\kw \wedge \HW)_{(2)}$ be the Hurewicz map.
Then $h(\beta) = 8ux_1$, for some unit $u \in \W(k)_{(2)}$.
\end{lemma}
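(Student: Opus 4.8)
The plan is to compute the image of $\beta \in \pi_4 \kw_{(2)}$ under the Hurewicz map $h$ by comparing with the $\eta$-periodized cobordism spectra, where everything is already understood. First I would recall that we have the $\scr E_\infty$-ring map $\alpha: \MSL[\eta^{-1}] \to \kw$ of Example~\ref{ex:SL-orientations}, and that by Lemma~\ref{lemm:MSL-kw-surj-pi4} the induced map $\pi_4(\MSL[\eta^{-1}]) \to \pi_4(\kw)$ is surjective; in particular (as in the proof of Corollary~\ref{cor:MSL-kw}) we may choose a generator $y_2 \in \pi_4 \MSL[\eta^{-1}]$ with $\alpha(y_2) = \beta$. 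The strategy is then to chase $y_2$ through the commutative square relating $\MSL$-homology of $\kw$ and $\HW$-homology of $\kw$, exploiting that we already control the action of $\adamsphi$ on $\kw_* \MSL$ via Proposition~\ref{prop:adams-cobordism-summary} and Lemma~\ref{lemm:phi-MSL}.

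The key steps: (1) Reduce to the case $\W(k) = \F_2$. Since $\W(k)_{(2)}$ is local with maximal ideal $\I$ (Lemma~\ref{lemm:witt-units}) and the generators $x_i$ of $\pi_*(\kw \wedge \HW_{(2)})$ are compatible with base change (Proposition~\ref{prop:kw-HW-better}(1), Theorem~\ref{thm:HW-kw-summary}(1)), it suffices to show $h(\beta) = 8 u x_1$ after base change to a quadratic closure, with the identity interpreted modulo the relevant filtration; the unit $u$ then lifts. (2) In the case $\W(k) = \F_2$, relate $h(\beta)$ to the element $\tilde x_1 = \alpha(e_2) \in \pi_4(\kw \wedge \kw_{(2)})$ and its image $x_1 = r\tilde x_1 \in \pi_4(\kw \wedge \HW_{(2)})$. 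Since $\alpha(y_2) = \beta$ and $\alpha$ is compatible with the Hurewicz maps, $h(\beta)$ is the image of the class of $y_2$ in $\pi_4(\kw \wedge \MSL[\eta^{-1}])$; by Proposition~\ref{prop:kw-HW-better}(1) (for $\HW$) and Theorem~\ref{thm:HW-kw-summary}, that group is free of rank one on $x_1$, so $h(\beta) = c x_1$ for some $c \in \W(k)_{(2)}$, and we must identify $c$ as $8 \times (\text{unit})$. (3) Pin down $c$ using $\adamsphi$. Apply $\adamsphi$ to $h(\beta)$: by Remark~\ref{rmk:phi-defn}, $\beta \adamsphi(h(\beta)) = \adamspsi^3(h(\beta)) - h(\beta)$, and since $h$ is a ring map and $\adamspsi^3$ is compatible with $h$, this equals $h(\adamspsi^3(\beta) - \beta) = h(8\beta) = 8 h(\beta)$ (using $\adamspsi^3(\beta) = 9\beta$ from Theorem~\ref{thm:ko-adams}(2)); hence $\adamsphi(h(\beta))$ is $8 h(\beta)/\beta$. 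On the other hand $\adamsphi^{\circ 1}(x_1) = 1$ generates $\pi_0(\kw \wedge \HW_{(2)}) = \F_2$ by Lemma~\ref{lemm:phi-MSL}(2) together with Proposition~\ref{prop:kw-HW-better}(2). Writing $h(\beta) = c x_1$ and comparing $\adamsphi$-images with the $\W(k)_{(2)}$-linearity of $\adamsphi$, one reads off that $c$ must have $2$-adic valuation exactly $3$, i.e. $c = 8u$ with $u$ a unit.

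The main obstacle I expect is keeping the bookkeeping straight between $\pi_*$ taken before and after inverting $\eta$ and completing, and making sure the ``generator'' $x_1$ appearing in Lemma~\ref{lemm:hurewicz-beta} is literally the same one used in Proposition~\ref{prop:typical-generators} and Proposition~\ref{prop:kw-HW-better} — otherwise the stated constant $8u$ could absorb a spurious unit or, worse, the claim could fail to be well-posed. A clean way around this is to observe that both sides of $h(\beta) = 8ux_1$ are well-defined only up to units anyway (since $x_1$ is only a generator), so the real content is the congruence $\nu_2(c) = 3$, which step (3) delivers directly; then the precise statement follows by absorbing everything else into $u$. A secondary technical point is justifying that $h$ respects $\adamspsi^3$ and hence $\adamsphi$: this is immediate since $\adamspsi^3$ on $\kw_{(2)}$ is a ring map (Remark~\ref{rmk:adams-kw}) and $h$ is the unit-induced map $\kw_{(2)} \to \kw_{(2)} \wedge \HW_{(2)}$, which is an $\scr E_\infty$-ring map, so the two Adams operations are intertwined by naturality of $\adamspsi^3$ applied to the smashing localization and connective-cover functors.
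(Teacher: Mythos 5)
Your step (3) contains the right idea --- it is essentially the paper's (three-line) proof: write $h(\beta)=cx_1$, note that $\adamsphi(x_1)$ is a unit $v$ by Proposition \ref{prop:kw-HW-better}(2), and compare $\adamsphi$-images using naturality of $\adamsphi$ along $h$. But as executed your argument has two genuine problems. First, the reduction in step (1) to $\W(k)=\F_2$ cannot identify the coefficient: modulo $\I$ the claim $h(\beta)=8ux_1$ degenerates to $h(\beta)\equiv 0$ (since $8\in\I$), so no amount of lifting recovers the assertion that $c$ is $8$ times a unit. Locality of $\W(k)_{(2)}$ lets you test \emph{whether an element is a unit} after base change, but here the whole point is that $c$ is \emph{not} a unit and has a specific non-unit value; that must be computed over $k$ itself. (Fortunately the reduction is also unnecessary --- drop it, together with the detour through $\MSL$ and $y_2$ in step (2): Proposition \ref{prop:kw-HW-better}(1) already gives $\pi_4(\kw\wedge\HW_{(2)})\cong\W(k)_{(2)}\{x_1\}$, so $h(\beta)=cx_1$ for free.)

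Second, in step (3) you apply Remark \ref{rmk:phi-defn} to $h(\beta)$ inside $\kw_*\HW_{(2)}$ and then ``divide by $\beta$''. But $\kw_*\HW_{(2)}$ is not $\beta$-torsion free for the left $\beta$-action --- indeed $\HW=\kw/\beta$, and the size of the $\beta$-action on $\kw_*\HW_{(2)}$ is exactly what Corollary \ref{cor:HW-HW} later extracts from this lemma. Without the division, the identity $\beta\cdot\adamsphi(h(\beta))=8h(\beta)$ unwinds (using $\beta\cdot x_0=h(\beta)=cx_1$) to $c^2v\,x_1=8c\,x_1$, i.e.\ $c^2v=8c$, which only yields $cv=8$ if $c$ is a non-zero-divisor --- circular, since that is close to what you are proving. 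The fix is to reverse the order of operations: compute $\adamsphi(\beta)=(9-1)\beta^{0}=8$ upstairs in $\pi_*\kw$, where $\beta$-torsion-freeness \emph{does} hold (Example \ref{ex:phi-beta}), and only then push forward: $8=h(\adamsphi(\beta))=\adamsphi(h(\beta))=\adamsphi(cx_1)=cv$, whence $c=8v^{-1}$. Your final remark that naturality of $\adamsphi$ along $h$ follows from $h$ being induced by the unit $\1\to\HW$ is correct and is all that is needed; also note that the conclusion should be phrased as $c=8\cdot(\text{unit})$ in $\W(k)_{(2)}$ rather than ``$\nu_2(c)=3$'', since $\W(k)_{(2)}$ is not in general a subring of $\Q_2$.
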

\begin{proof}
We have $h(\beta) = a x_1$ and $\adamsphi(\beta) = 8$.
By Proposition \ref{prop:kw-HW-better}(2), $\adamsphi(x_1) = v$ for some unit $v$.
Hence \[ 8 = h(\adamsphi(\beta)) = \adamsphi(h(\beta)) = \adamsphi(ax_1) = av. \]
The result follows.
\end{proof}

\begin{corollary} \label{cor:HW-HW}
We have \[ \HW \wedge \HW_{(2)} \wequi \bigvee_{n \ge 0} \Sigma^{4n} \HW_{(2)}/8n. \]
\end{corollary}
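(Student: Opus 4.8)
The plan is to deduce the splitting of $\HW \wedge \HW_{(2)}$ from the already-established structure of $\kw \wedge \HW_{(2)}$ as a $\kw_{(2)}$-module, by killing $\beta$. First I would recall from Proposition \ref{prop:kw-HW-better}(1) that $\kw \wedge \HW_{(2)} \wequi \bigvee_i \HW_{(2)}\{x_i\}$ as a right $\HW_{(2)}$-module, but this time I want to exploit the left $\kw_{(2)}$-module structure. Better: use Lemma \ref{lemm:phi-gen}, which identifies $\kw \wedge \kw_{(2)}$ as a left $\kw_{(2)}$-module with $\bigoplus_{n \ge 0} \Sigma^{4n}\kw_{(2)}$ via $\id \wedge \adamsphi^\bullet$. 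Tensoring over $\kw_{(2)}$ with $\HW_{(2)} = \kw_{(2)}/\beta$ gives an equivalence $\kw \wedge \HW_{(2)} \wequi \bigoplus_n \Sigma^{4n}\HW_{(2)}$ of left $\HW_{(2)}$-modules, compatibly with the Hurewicz map from $\kw_{(2)}$.

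The key step is then to understand how $\HW \wedge \HW_{(2)}$ sits inside $\kw \wedge \HW_{(2)}$, i.e.\ to compute the cofiber of $\beta: \Sigma^4(\kw \wedge \HW_{(2)}) \to \kw \wedge \HW_{(2)}$, where $\beta$ here means the image of $\beta \in \pi_4\kw_{(2)}$ under the (left) Hurewicz map $h: \kw_{(2)} \to (\kw \wedge \HW)_{(2)}$. By Lemma \ref{lemm:hurewicz-beta} we have $h(\beta) = 8ux_1$ for a unit $u \in \W(k)_{(2)}$; absorbing $u$ into $x_1$ we may assume $h(\beta) = 8x_1$. Under the divided-power presentation of Proposition \ref{prop:typical-generators}, multiplication by $x_1$ sends $x_n \mapsto (n+1)x_{n+1}$ (since $\binom{n+1}{1} = n+1$). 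Therefore multiplication by $h(\beta) = 8x_1$ on $\pi_{4*}(\kw \wedge \HW_{(2)}) = \bigoplus_n \W(k)_{(2)}\{x_n\}$ is the map $x_n \mapsto 8(n+1)x_{n+1}$. Taking the cofiber degree by degree, $\pi_{4n}(\HW \wedge \HW_{(2)})$ is the cokernel of multiplication by $8n$ on $\W(k)_{(2)}\{x_{n-1}\} \to \W(k)_{(2)}\{x_n\}$, i.e.\ $\W(k)_{(2)}/8n$, and all odd-degree and non-divisible-by-$4$ homotopy vanishes. This already gives the correct homotopy groups $\pi_*(\HW \wedge \HW_{(2)}) \wequi \bigoplus_{n \ge 0} \W(k)_{(2)}/8n$ concentrated in degrees $4n$.

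To upgrade this to a splitting of spectra, I would construct maps $\Sigma^{4n}\HW_{(2)}/8n \to \HW \wedge \HW_{(2)}$ realizing the generators. Concretely, the class $x_n \in \pi_{4n}(\HW \wedge \HW_{(2)})$ is annihilated by $8n$ (since $8x_1 x_{n-1} = 8n x_n$ is in the image of $h(\beta)$, hence zero in the $\beta$-cofiber), so it gives a map $\Sigma^{4n}\1/8n \to \HW \wedge \HW_{(2)}$; since $\HW \wedge \HW_{(2)}$ is an $\HW_{(2)}$-module this extends to $\Sigma^{4n}\HW_{(2)}/8n \to \HW \wedge \HW_{(2)}$. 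Wedging these over $n$ yields a map $\bigvee_n \Sigma^{4n}\HW_{(2)}/8n \to \HW \wedge \HW_{(2)}$, and one checks it is an equivalence by verifying it is an isomorphism on $\pi_{**}$: since the field is arbitrary and everything is compatible with base change, it suffices to know $\pi_*(\HW_{(2)}/8n) \wequi \W(k)_{(2)}/8n$ in degree $0$ together with $\ker$ in degree $-1$—but here one must be slightly careful, as $\W(k)_{(2)}$ may have $8n$-torsion (bounded by $2^{\vcd_2}$), so $\HW_{(2)}/8n$ also has a $\pi_{-1}$; however, this matches the fact that the long exact sequence for the $\beta$-cofiber contributes exactly the $8n$-torsion of $\W(k)_{(2)}\{x_{n-1}\}$ in degree $4n-1$, which is the $\pi_{-1}$ of the $n$-th wedge summand shifted to degree $4(n-1)$... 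I would need to check the bookkeeping of which summand absorbs which torsion term, matching $\ker(8n)$ on $\W(k)_{(2)}\{x_{n-1}\}$ with $\pi_{4(n-1)-1}$ of $\Sigma^{4(n-1)}\HW_{(2)}/8(n-1)$ — wait, the torsion indexing must be handled with the convention $8 \cdot 0 = 0$ so that the $n=0$ summand is just $\HW_{(2)}$. The main obstacle is precisely this torsion bookkeeping: confirming that the naive wedge of $\Sigma^{4n}\HW_{(2)}/8n$ has exactly the homotopy computed above (including the $\pi_{-1}$ contributions when $\W(k)$ has torsion), and that the constructed map is an equivalence rather than merely a $\pi_{4*}$-iso; I expect this is routine given that $\W(k)_{(2)}$ torsion is bounded (Lemma \ref{lemm:witt-torsion}), so $\W(k)_{(2)}/8n \wequi L_2^\comp$ issues don't arise, but it requires care.
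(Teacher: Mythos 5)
Your computation is correct and is the same argument as the paper's: write $\HW \wedge \HW_{(2)} \wequi (\kw \wedge \HW_{(2)})/h(\beta)$, invoke Lemma \ref{lemm:hurewicz-beta} for $h(\beta) = 8ux_1$, and use the divided-power relation $x_1 x_{n-1} = n x_n$ from Proposition \ref{prop:typical-generators} to see that $h(\beta)$ acts by $8n$ into the $n$-th summand.

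Your last paragraph, however, worries about something that is already taken care of. The map $h(\beta)\colon \Sigma^4(\kw \wedge \HW_{(2)}) \to \kw \wedge \HW_{(2)}$ is a map of (right) $\HW_{(2)}$-modules, and Proposition \ref{prop:kw-HW-better}(1) presents both source and target as wedges of free $\HW_{(2)}$-modules $\Sigma^{4n}\HW_{(2)}\{x_n\}$. Since $8x_1 x_{n-1} = 8n x_n$ lies purely in the summand $\Sigma^{4n}\HW_{(2)}\{x_n\}$ (there is nothing else in degree $4n$), the module map $h(\beta)$ is literally the wedge of the maps $\Sigma^{4n}\HW_{(2)}\{x_{n-1}\} \xrightarrow{8n} \Sigma^{4n}\HW_{(2)}\{x_n\}$, so its cofiber splits as $\bigvee_{n\ge 0}\Sigma^{4n}\HW_{(2)}/8n$ on the nose (with the $n=0$ summand contributing $\HW_{(2)}/0 = \HW_{(2)}$). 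There is no separate ``construct the map summand by summand and check it on $\pi_{**}$'' step, no hidden $\pi_{-1}$ bookkeeping, and no $L_2^\comp$ or $\vcd_2$ hypothesis needed: the cofiber of multiplication by $8n$ on $\HW_{(2)}$ is $\HW_{(2)}/8n$ by definition, torsion or no torsion.
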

\begin{proof}
Since $\HW \wequi \kw/\beta$ we have $\HW \wedge \HW_{(2)} \wequi (\kw \wedge \HW_{(2)})/h(\beta)$.
By Lemma \ref{lemm:hurewicz-beta}, up to a unit multiple we have $h(\beta) = 8x_1$.
Thus by Proposition \ref{prop:kw-HW-better}(1), our result will hold if we show that the map \[ \W(k)_{(2)} \wequi \pi_{4n-4} \kw \wedge \HW_{(2)} \xrightarrow{\times 8x_1} \pi_{4n} \kw \wedge \HW_{(2)} \wequi \W(k)_{(2)} \] is (up to a unit) given by multiplication by $8n$.
This follows from Proposition \ref{prop:typical-generators}.\NB{which says that $8x_1x_{n-1} = 8{n \choose 1}x_n$}
\end{proof}

\begin{remark}
Consider the commutative diagram
\begin{equation*}
\begin{CD}
\kw_{(2)} @>{\adamsphi}>> \Sigma^4 \kw_{(2)} @>{\beta}>> \kw_{(2)} \\
@VVV                         @VVV                         @VVV     \\
\kw \wedge \HW_{(2)} @>{\adamsphi}>> \Sigma^4 \kw \wedge \HW_{(2)} @>{\beta}>> \kw \wedge \HW_{(2)}.
\end{CD}
\end{equation*}
Theorem \ref{thm:main} implies that the lower map denoted $\adamsphi$ induces an isomorphism on positive homotopy groups.
Thus in order to determine the effect (up to unit) of the lower map $\beta$ on homotopy groups (in positive degrees), it suffices to determine the effect of the lower composite.
If $\W(k)$ has no torsion (e.g. $k=\R$), then it suffices to determine this effect rationally.
But rationally the vertical maps are isomorphisms, so it suffices to determine the effect of the top composite on homotopy.
This is given on $\pi_n$ by multiplication by $9^n-1$ (see Example \ref{ex:phi-beta}), which up to a $2$-adic unit is the same as $8n$ (see Lemma \ref{lemm:im-j-congruence}).
This provides an alternative proof of Corollary \ref{cor:HW-HW} over such fields.\NB{And in particular explains that the $8n$ is ``the same'' as in Theorem \ref{thm:stable-stems}.}
\end{remark}

\appendix
\section{The homotopy fixed point theorem} \label{sec:htpy-fixed}
We shall supply an alternative proof of the homotopy fixed point theorem (also known as homotopy limit problem) for hermitian $K$-theory of (certain) fields.
The original reference is \cite{hu2011homotopy} and uses a delicate analysis of some problems in equivariant motivic stable homotopy theory.
We shall instead use the improved version of Levine's slice converges theorem from \cite[\S5]{bachmann-bott} and the computation of the slice spectral sequence of $\KW$ \cite{rondigs2013slices} (see also Remark \ref{rmk:non-circular}).
We fix throughout a base field $k$.

Let $E \in \SH(k)$.
We denote by \[ E \to (f^\bullet E = \dots \to f^2 E \to f^1 E \to \dots) \] the \emph{slice tower}; here $f^n(E) := \cof(f_n E \to E)$.
We have $\colim_n f^\bullet E \wequi 0$ (see e.g. \cite[Lemma 3.1]{rso-solves}) and $\lim_n f^nE = \mathrm{sc} E$.
\begin{lemma} \label{lemm:KO-slice-complete} \NB{presumably could soup this up to say that $\ul{\pi}_{i,0}(f_\bullet(\dots)/(2,\rho))$ is locally nilpotent, whence $f_0(sc(\KO)/(2,\rho)) \wequi f_0(\KO/(2,\rho))$...}
Let $\chara(k) \ne 2$, $\vcd_2(k) < \infty$.
\begin{enumerate}
\item $\lim_n \map(\1, f^n \KW/2) \wequi \map(\1, \KW/2)$
\item $\lim_n \map(\1, f^n(\KO)/(2, \rho)) \wequi \map(\1, \KO/(2,\rho))$
\end{enumerate}
\end{lemma}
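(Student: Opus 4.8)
The goal is to show that both $\KW/2$ and $\KO/(2,\rho)$ are slice-complete after applying $\map(\1,-)$, under the hypotheses $\chara(k) \ne 2$ and $\vcd_2(k) < \infty$. The natural strategy is to realize these spectra (or a close relative) as very effective, or at least connective with controlled slices, and then invoke the improved slice-completeness results. More precisely, I would proceed as follows.

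For part (1), recall that $\KW = \KO[\eta^{-1}]$ and $\kw = \KW_{\ge 0}$. Since $\rho = 2$ on $\SH(k)[\eta^{-1}]$ by \eqref{eq:rho-eta-periodic}, we have $\KW/2 \wequi \KW/\rho$ on the $\eta$-periodic part, and similarly for the connective cover: $\kw/2 \wequi \kw/\rho$. Now $\kw \in \SH(k)_{\ge 0}$ and, more usefully, $\Sigma^{2,1}\ul{K}^W \in \SH(k)^\veff$ (used already in \S\ref{sec:HW-kw}, via \cite{bachmann-very-effective}); one expects an analogous effectivity statement for $\kw$, or at least that $\kw$ is a colimit of finite extensions of slices. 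The plan is to apply Proposition \ref{prop:slice-complete} with $t=2$ to the connective spectrum $\kw$ (noting $\vcd_2(k) < \infty$), concluding that $\kw/(2,\rho) \to \scomp(\kw)/(2,\rho)$ is an equivalence; equivalently $\map(\1, f^n \kw/(2,\rho))$ assembles to $\map(\1,\kw/(2,\rho))$ in the limit. One must then pass from $\kw$ to $\KW$: since $\KW \wequi \kw[\eta^{-1}]$ and $\eta$ is (essentially) invertible in the relevant range, and the slice functors $f^n$ commute with the relevant filtered colimit, this transfers the statement to $\KW/2$. The key input making the slice tower converge is exactly \cite[Theorem 5.3(1)]{bachmann-bott} together with \cite[Remark 5.12]{bachmann-bott}, as used in the proof of Proposition \ref{prop:slice-complete}.

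For part (2), I would deduce it from (1). Consider the Wood cofiber sequence $\Sigma^{1,1}\KO \xrightarrow{\eta} \KO \to \KGL$ \cite[Theorem 3.4]{rondigs2013slices}, and its connective analogue $\Sigma^{1,1}\ko \xrightarrow{\eta} \ko \to \kgl$ (with $\kgl = f_0\KGL$). Modding out by $(2,\rho)$, the functor $\map(\1,-)$ of the slice tower applied to each term is compatible with these cofiber sequences, so it suffices to establish slice-completeness (in the sense of part (1), mod $(2,\rho)$) for $\kgl$ as well, and then to bootstrap: $\ko$ sits in an inverse sequence whose associated graded (under $\eta$-multiplication, or rather via the cofiber sequences $\Sigma^{n,n}\ko/\eta \leftarrow \ldots$) is built from $\kgl$. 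Concretely, $\ko/(2,\rho)$ is the limit of the $\eta$-Postnikov-type tower with layers $\Sigma^{*,*}\kgl/(2,\rho)$, and since slice-completeness (after $\map(\1,-)$) is preserved under limits of such towers and under the finite extensions/shifts appearing, the result for $\ko/(2,\rho)$ follows from that for $\kgl/(2,\rho)$. The latter follows again from Proposition \ref{prop:slice-complete} applied to the connective, effective spectrum $\kgl = f_0\KGL$ (which is even very effective), with $t=2$. Finally, $\KO/(2,\rho)$ versus $\ko/(2,\rho)$: here I would argue that the map $f^n(\ko) \to f^n(\KO)$ becomes an equivalence after $\map(\1,-)$ in the relevant degrees, because the slices $s_n$ and the effective covers only see the connective part when testing against the sphere in nonnegative degrees, and the truncation $\KO \to \KO_{<0}$ contributes only $\ul{W}$-type pieces in negative weights that die after the relevant slice or after modding by $\rho$ — in fact \cite[\S5]{bachmann-bott} and Levine's theorem are stated so as to handle $\KO$ directly.

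\textbf{Main obstacle.} The delicate point is verifying that the slice tower of $\kw$ (equivalently $\KW$) and of $\ko$ (equivalently $\KO$), after base change to $(2,\rho)$-torsion coefficients, is \emph{locally nilpotent} in the sense of Definition \ref{def:locally-nilpotent} — i.e. that \cite[Theorem 5.3(1)]{bachmann-bott} genuinely applies. This requires knowing the slices $s_n(\KW)$ and $s_n(\KO)$ well enough (these are the Röndigs–Østvær computations \cite{rondigs2013slices} and \cite{ananyevskiy2017very}) and checking that the generic stalks of their homotopy sheaves have the finite-$\vcd_2$ vanishing needed to force the $\eta$- or $\rho$-Bockstein towers to be nilpotent. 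The passage between the connective and periodic versions, and between $\ko$ and $\KO$, is routine once this local nilpotence is in hand, but assembling the bookkeeping of which tower one is taking a limit of (slice tower vs. $\eta$-tower) and checking the two interact correctly is where care is needed.
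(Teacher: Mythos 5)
Your plan for part (1) — apply Proposition~\ref{prop:slice-complete} to the connective spectrum $\kw$ and then transfer to $\KW = \kw[\eta^{-1}]$ — has a genuine gap at the transfer step. Proposition~\ref{prop:slice-complete} does apply to $\kw$ (it only needs $\kw \in \SH(k)_{\ge 0}$, not very effectivity), giving $\lim_n\map(\1, f^n\kw/(2,\rho)) \wequi \map(\1,\kw/(2,\rho))$. But passing to $\KW$ is not just ``$f^n$ commutes with the filtered colimit.'' One has $f^n(\KW) \wequi \colim_m \Sigma^{-m,-m} f^{n+m}(\kw)$, and the desired statement is an interchange of $\lim_n$ (over the slice tower) with $\colim_m$ (over $\eta$-multiplications). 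That interchange does not hold in general, and you give no argument for why it holds here. This is exactly why the paper does \emph{not} reduce part (1) to $\kw$: instead it invokes the explicit slice spectral sequence computation of R\"ondigs--\O stv\ae r \cite[Theorem 6.12]{rondigs2013slices}, which shows directly that $\map(\1,\KW) \to \lim_n\map(\1,f^n\KW)$ is the $\I$-adic (equivalently, under $\vcd_2<\infty$, the $2$-adic) completion map on homotopy, hence an equivalence after smashing with $\1/2$. That explicit identification is precisely what cuts through the $\lim$--$\colim$ interchange problem.

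For part (2) the same issue recurs: you invoke the Wood sequence $\Sigma^{1,1}\KO \xrightarrow{\eta}\KO\to\KGL$ and its connective analogue, and then assert ``\cite[\S5]{bachmann-bott} and Levine's theorem are stated so as to handle $\KO$ directly.'' That last claim is false — Proposition~\ref{prop:slice-complete} requires $E\in\SH(k)_{\ge 0}$, and $\KO$ is not connective. (The Wood reduction to $\kgl$ is also circular as phrased: it tells you nothing new for $\ko$, since knowing the result for $\kgl$ and $\Sigma^{1,1}\ko$ is what gives $\ko$, but you don't know $\Sigma^{1,1}\ko$ independently.) The real content of part (2) is precisely the passage from the connective objects, where \cite[Corollary 5.13]{bachmann-bott} applies, to the non-connective $\KO$. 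The paper does this by considering the cofiber $E=\cof(\ko\to\KO)$, using part~(1) to handle $E[\eta^{-1}]=\cof(\kw\to\KW)$, observing that $E\to E[\eta^{-1}]$ is an isomorphism on $\ul\pi_i(-)_0$ for \emph{all} $i$ (because $\ul\pi_i(\KO)_0 = \ul\pi_i(\KW)_0$ for $i<0$), hence an equivalence on $f_0$, and finally proving a small commutativity claim — that $f_0F\to F$ induces equivalences on $\map(\Gmp q, f^n(-)/(2,\rho))$ for $q\ge 0$ — which lets one trade $E$ for $E[\eta^{-1}]$ inside the limit. Your proposal does not contain that mechanism, and it is the essential step.

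In short: your connective inputs are correct and your motivation is sound, but both parts hinge on a limit-versus-colimit interchange that you do not justify, and the paper's proof is built specifically to avoid having to justify it — part (1) via an explicit spectral sequence computation, part (2) via a cofiber-and-$f_0$ argument.
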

\begin{proof}
(1) By \cite[Theorem 6.12]{rondigs2013slices} the morphism $\map(\1, \KW) \to \lim_n \map(\1, f^n \KW)$ induces an isomorphism on $\pi_i$ for $i \not\equiv 0 \pmod{4}$ (both sides are zero in this case), and induces the $\I$-adic completion map $\W(k) \to \W(k)_\I^\comp$ for $i \equiv 0 \pmod{4}$.
Since $\vcd_2(k) < \infty$, this is thus a $2$-adic equivalence (see e.g. Lemmas \ref{lemm:witt-completion} (showing that $\W(k)_\I^\comp \wequi \W(k)_2^\comp$), \ref{lemm:witt-torsion} and \ref{lemm:derived-p-comp} (showing that $\W(k)_2^\comp \wequi L_2^\comp \W(k)$) and \ref{lemm:completion-homotopy} (allowing us to compute the homotopy groups of $\map(\1, \KW)_2^\comp$)), whence the claim.

(2) Consider the cofiber sequence $\ko \to \KO \to E$.\NB{Could just as well use $\KO_{\ge 0} \to \KO \to E' = \KO_{< 0}$.}
By \cite[Corollary 5.13]{bachmann-bott} we have \[ \lim_n \map(\1, f^n(\ko)/(2, \rho)) \wequi \map(\1, \ko/(2,\rho)); \] hence it suffices to prove the analogous claim about $E$.
By \cite[Corollary 5.13]{bachmann-bott} again we have \[ \lim_n \map(\1, f^n(\kw)/(2,\rho)) \wequi \map(\1, \kw/(2,\rho)). \]
Since $\rho = -2$ on $\eta$-periodic spectra, (1) implies that also \[ \lim_n \map(\1, f^n(\KW)/(2,\rho)) \wequi \map(\1, \KW/(2,\rho)), \] and hence $(*)$ the same holds for $E[\eta^{-1}]$ (note that $\kw = \ko[\eta^{-1}]$ and $\KW = \KO[\eta^{-1}]$).
We know the homotopy sheaves $\ul{\pi}_i(\KW)_0$ (given by $\ul{W}$ in degrees divisible by $4$, else $0$) and also $\ul{\pi}_i(\KO)_0$ for $i < 0$: namely they are the same (see e.g. \cite[Proposition 6.3]{schlichting2016hermitian}).
It follows that $(**)$ the map $E \to E[\eta^{-1}]$ induces an isomorphism on $\ul{\pi}_i(E)_0$ for all $i$, and hence on $f_0$.
It hence suffices to prove the following claim: \emph{for any spectrum $F \in \SH(k)$, the map $f_0 F \to F$ induces equivalences \[ \map(\Gmp{q}, f^n(f_0F)/(2,\rho)) \stackrel{(a)}{\wequi} \map(\Gmp{q}, f^n(F)/(2,\rho)) \] and \[ \map(\Gmp{q}, f_0(F)/(2,\rho)) \stackrel{(b)}{\wequi} \map(\Gmp{q}, F/(2,\rho)), \] for any $q \ge 0, n \in \Z$}.
Indeed then we find
\begin{align*} \lim_n \map(\1, f^n(E)/(2, \rho)) &\stackrel{(a)}{\wequi} \lim_n \map(\1, f^nf_0(E)/(2, \rho)) \\
           &\stackrel{(**)}{\wequi} \lim_n \map(\1, f^nf_0(E[\eta^{-1}])/(2, \rho)) \\
           &\stackrel{(a)}{\wequi} \lim_n \map(\1, f^n(E[\eta^{-1}])/(2, \rho)) \\
           &\stackrel{(*)}{\wequi} \map(\1, E[\eta^{-1}]/(2,\rho)) \\
           &\stackrel{(b)}{\wequi} \map(\1, f_0(E[\eta^{-1}])/(2,\rho)) \\
           &\stackrel{(**)}{\wequi} \map(\1, f_0(E)/(2,\rho)) \\
           &\stackrel{(b)}{\wequi} \map(\1, E/(2,\rho)).
\end{align*}

Since all functors in sight commute with taking the cofiber of $2$, we may replace $F$ by $F/2$ and ignore the modding out by $2$ part. (This is mainly for notational convenience.)
We have \[ \map(\Gmp{q}, G/\rho) \wequi \cof(\map(\Gmp{q}, \Gmp{-1} \wedge G) \xrightarrow{\rho} \map(\Gmp{q}, G)) \wequi \cof(\map(\Gmp{q+1}, G) \xrightarrow{\rho} \map(\Gmp{q}, G)). \]
Thus we may ignore taking the cofiber of $\rho$ as well.
Then $\Gmp{q} \in \SH(k)^\eff$ implies that the $(b)$ holds, and that $(a)$ holds for $n<0$ (since both sides are zero).
If $n \ge 0$ we additionally use that $f_0f^n \wequi f^nf_0$ (since $f_0 f_n \wequi f_n \wequi f_nf_0$).
\end{proof}
\begin{remark} \label{rmk:non-circular}
In the above proof, we have referred to the paper \cite{rondigs2013slices} for a certain spectral sequence computation.
That paper references the homotopy fixed point theorem, which may seem to lead to circular reasoning in the proof of Theorem \ref{thm:htpy-fixed} below.
However, the only reason why \cite{rondigs2013slices} uses the homotopy fixed point theorem is to determine the slices of $\KO$.
The paper \cite{ananyevskiy2017very} determines $s_*(\ko)$ independently, and from this we can deduce the slices of $s_*(\KO)$ via $s_*(\KO) \wequi s_*(\ko)[\beta^{-1}]$.
Thus there is no circularity.
\end{remark}

\begin{theorem}[homotopy fixed point theorem] \label{thm:htpy-fixed}
Let $\chara(k) \ne 2$, $\vcd_2(k) < \infty$.
The canonical map \[ \KO/2 \to \KGL^{hC_2}/2 \in \SH(k) \] induces an isomorphism on $\pi_{**}$.
\end{theorem}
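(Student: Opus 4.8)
The plan is to transport the entire question through the slice tower and then use the real-étale (or $\rho$-) local fracturing that is already the backbone of Section~\ref{sec:adams}. By Lemma~\ref{lemm:trade-rho}, for any $E \in \SH(k)$ the spectrum $E_2^\comp$ is $\eta$-complete iff $E/(2,\rho)$ is $\eta$-complete; more to the point, to show $\KO/2 \to \KGL^{hC_2}/2$ is a $\pi_{**}$-isomorphism it suffices to check it after inverting $\eta$ and after inverting $\rho$ on the cofiber, using the $2$-adic fracture square \eqref{eq:2-facture} together with the fact that everything in sight is $2$-complete (so $[1/2]^\pm$ decomposes it into an $\eta$-periodic part and a $\rho$-nilpotent part). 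So the first reduction I would carry out is: it is enough to prove (i) $\KW/2 \to (\KGL/2)^{hC_2}[\eta^{-1}]$ is an equivalence, equivalently (by Lemma~\ref{lemm:heard}) that $\KW/2 \wequi \KGL/2$ via $\eta$, which is automatic since $\eta$ acts nilpotently on $\KGL$ — wait, that is the wrong direction; more precisely Lemma~\ref{lemm:heard} says $(\KO/\eta \to \KGL/\eta)$ is a universal limit diagram, so after applying $(-)^{hC_2}$ one recovers $\KO/\eta \wequi \KGL^{hC_2}/\eta$ directly. Thus the $\eta$-inverted and $\eta$-complete pieces are handled by Lemma~\ref{lemm:heard}, and the real content is the statement modulo $(2,\rho)$.

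The core step is therefore to prove: \emph{$\KO/(2,\rho) \to \KGL^{hC_2}/(2,\rho)$ is a $\pi_{**}$-equivalence.} Here I would invoke Lemma~\ref{lemm:KO-slice-complete}(2): $\lim_n \map(\1, f^n(\KO)/(2,\rho)) \wequi \map(\1, \KO/(2,\rho))$, i.e. $\KO/(2,\rho)$ agrees with its slice completion after mapping out of $\1$ (and, running the same argument with $\Gmp{q}$ in place of $\1$ as in the proof of that lemma, after mapping out of any $\Gmp{q}$), hence on all of $\pi_{**}$. Likewise $\KGL^{hC_2}$: since $\KGL$ is slice-complete (its slices are known, $s_q\KGL \wequi \Sigma^{2q,q}\HZ$), and homotopy fixed points commute with the limit defining slice completion, $\KGL^{hC_2}/(2,\rho)$ is slice-complete in the same sense. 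So it suffices to prove the map is an equivalence on each slice $s_q$, or rather on $f^n$ for every $n$ and then pass to the limit. On slices everything becomes a computation with motivic cohomology: $s_*(\KGL) \wequi s_*(\ko)[\beta_\KGL^{-1}]$ and $s_*(\KO) \wequi s_*(\ko)[\beta^{-1}]$ are determined in \cite{ananyevskiy2017very} independently of the homotopy fixed point theorem (this is the point of Remark~\ref{rmk:non-circular}), and the $C_2$-action on $s_*(\KGL)$ is the evident one sending $\beta_\KGL \mapsto -\beta_\KGL$; taking homotopy fixed points of $\HZ$-modules with such an action, modulo $(2,\rho)$, is a tractable algebra problem since $\HZ/2 \wedge \HZ/2$ and the relevant $\rho$-quotients are explicit (Section~\ref{subsec:steenrod}).

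Concretely, the order of operations would be: (1) reduce to the $(2,\rho)$-statement via the fracture square and Lemma~\ref{lemm:heard}; (2) use Lemma~\ref{lemm:KO-slice-complete}(2) (and its $\Gmp{q}$-variant) to reduce both sides to their slice towers; (3) identify the map on each slice with a map of $\HZ/2_{**}$-modules determined by the known computation of $s_*(\ko)$ and the sign action of $C_2$ on $\beta$; (4) verify that after killing $\rho$ the homotopy-fixed-point spectral sequence for this $C_2$-action on the slices collapses to give exactly $s_*(\KO)/(2,\rho)$ — this is where the finiteness hypothesis $\vcd_2(k)<\infty$ enters, via the bounded $\I$-adic/$2$-adic comparison (Lemma~\ref{lemm:witt-completion}) and the convergence of the slice spectral sequence from \cite{bachmann-bott}; (5) pass to the limit over $n$ to conclude. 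The main obstacle I anticipate is step~(4): controlling the homotopy fixed points of the $C_2$-action on the (infinitely many) slices uniformly enough that the inverse limit behaves, i.e. showing the comparison map of slice towers is a pro-isomorphism rather than just an isomorphism on each $E_\infty$-page — this is precisely the kind of nilpotence-of-towers statement packaged in \S\ref{sec:completeness}, and I would lean on Corollary~\ref{cor:loc-nilpotent-convergent} applied to the cofiber tower to close the gap. Everything else is bookkeeping with already-established inputs.
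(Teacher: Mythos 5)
Your overall plan — reduce to a $(2,\rho)$-statement via Heard's lemma, then work through the slice tower — is headed in a reasonable direction, but it diverges from the paper's argument at the crucial step and there are real gaps.

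First, a point where the logic is confused rather than merely differently phrased. You write that "the $\eta$-inverted and $\eta$-complete pieces are handled by Lemma~\ref{lemm:heard}." Lemma~\ref{lemm:heard} only handles the $\eta$-\emph{complete} piece: it says $\KO/\eta \wequi \KGL^{hC_2}/\eta$, so the map $\KO \to \KGL^{hC_2}$ is an $\eta$-adic equivalence. The $\eta$-\emph{inverted} piece — equivalently, the question of whether the $\eta$-periodic cofiber $C := \cof(\KO \to \KGL^{hC_2})$ satisfies $\pi_{**}(C/2)=0$ — is precisely the content of the theorem; it is not given by Heard. Related to this, the appeal to the $2$-adic fracture square and the $[1/2]^\pm$ decomposition doesn't apply here: $C/2$ is a $2$-torsion object, so there is no interesting $[1/2]$ piece to decompose. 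What you actually want to use is simply that $C$ is $\eta$-periodic (by Heard), hence $\rho = -2\eta^{-1}$ acts as $0$ on $C/2$, so $C/(2,\rho) \wequi C/2 \oplus \Sigma^{1,1}C/2$ and the $(2,\rho)$-statement is indeed equivalent to the mod-$2$ statement. The reduction is salvageable but should be formulated this way, not via the fracture square.

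Second, and more seriously, your proposed slice-by-slice comparison does not close the argument. You would need (i) the slice tower of $\KGL^{hC_2}/(2,\rho)$ to converge, and (ii) a computation of the homotopy fixed point spectral sequence on each slice $(\Sigma^{2q,q}\HZ)^{hC_2}/(2,\rho)$ matching $s_q(\KO)/(2,\rho)$. For (i), Lemma~\ref{lemm:KO-slice-complete}(2) is specifically about $\KO$, established by a nontrivial reduction to $\ko$, $\kw$ and $\KW$; the analogous statement for $\KGL^{hC_2}$ is not in the paper, and $\KGL$ is not bounded below, so the slice convergence machinery of \S\ref{sec:completeness} (Proposition~\ref{prop:slice-complete}) does not apply to it directly. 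Corollary~\ref{cor:loc-nilpotent-convergent} says $\lim_i E_i \wequi 0$ for a locally nilpotent tower — to invoke it for the cofiber tower you would already need to know the cofiber at each stage is small in a controlled way, which is essentially what you are trying to prove. For (ii), the HFPSS for the $C_2$-action on $\Sigma^{2q,q}\HZ$ (sign action when $q$ is odd) mod $(2,\rho)$ is a real computation with real differentials, and you have not indicated how to carry it out; worse, it risks being logically close to the very statement you want.

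The paper's proof avoids all of this with a shift-of-connectivity trick that your plan misses entirely. After reducing (via Heard) to showing $\pi_{**}(F/(2,\rho))=0$ for $F = \fib(\KO \to \KO_\eta^\comp)$, the paper uses that $F$ is $\eta$-periodic (so weight-$0$ homotopy suffices) and that $\KO$ is $\beta$-periodic to write $F \wequi \lim_t \Sigma^{-4t}\KO$ along $\eta^4\beta^{-1}$. Then Lemma~\ref{lemm:KO-slice-complete}(2) lets one replace $\KO/(2,\rho)$ by its slice tower in weight $0$, and the point is that each $\map(\1, s_n(\KO)/(2,\rho))$ is bounded above in connectivity (because $k^M_*$ vanishes in negative degrees), so $\lim_t \Sigma^{-4t}\map(\1, s_n(\KO)/(2,\rho)) = 0$ for every $n$. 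No comparison of $\KGL^{hC_2}$'s slice tower, no HFPSS on slices, no convergence result for $\KGL^{hC_2}$ — the shift $\Sigma^{-4t}$ does all the work. This is the key idea your plan is missing.
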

\begin{proof}
By \cite[Corollary 3.9]{heard2017homotopy} (see also Lemma \ref{lemm:heard}), the map $\KO \to \KGL^{hC_2}$ is an $\eta$-equivalence.
Noting that $\KGL$ is $\eta$-complete and hence so is $\KGL^{hC_2}$, it hence suffices to show that $\pi_{**}(\KO/2) \wequi \pi_{**}(\KO_\eta^\comp/2)$.
Consider the fiber sequence $F \to \KO \to \KO_\eta^\comp$; we need to show that $\pi_{**}(F/2) = 0$.
Since $F$ is $\eta$-periodic, $\rho=-2$ on $F$ and we may as well show that $\pi_{**}(F/(2,\rho)) = 0$.
Again by $\eta$-periodicity, it suffices to show that $\map(\1, F/(2,\rho)) = 0$.
We have \[ F \wequi \lim \left[ \dots \xrightarrow{\eta} \Sigma^{2,2} \KO \xrightarrow{\eta} \Sigma^{1,1} \KO \xrightarrow{\eta} \KO \xrightarrow{\eta} \dots \right]. \]
Passing to the final subsystem of multiplication by $\eta^4$, and using the $\beta$-periodicity $\KO \wequi \Sigma^{8,4} \KO$, we can rewrite this as \[ F \wequi \lim \left[ \dots \xrightarrow{\eta^4\beta^{-1}} \Sigma^{-8} \KO \xrightarrow{\eta^4\beta^{-1}} \Sigma^{-4} \KO \xrightarrow{\eta^4\beta^{-1}} \KO \xrightarrow{\eta^4\beta^{-1}} \dots \right]. \]
We deduce that
\begin{align*} \map(\1, F/(2,\rho))
  \wequi &\lim_t \map(\1, \Sigma^{-4t} \KO/(2,\rho)) \\
  \stackrel{L.\ref{lemm:KO-slice-complete}(2)}{\wequi} &\lim_t \lim_n \map(\1, \Sigma^{-4t} f^n(\KO)/(2,\rho)) \\
  \wequi &\lim_n \lim_t \map(\1, \Sigma^{-4t} f^n(\KO)/(2,\rho)) \\
  =: &\lim_n F_n.
\end{align*}
Noting that $F_{-1} = 0$, it suffices to show that $S_n := \fib(F_{n} \to F_{n-1}) \wequi 0$ for all $n$.
Unwinding the definitions, we have \[ S_n = \lim_t \map(\1, \Sigma^{-4t} s_n(\KO)/(2,\rho)). \]
It follows from \cite[p.9]{ananyevskiy2017very} (see also \cite[Theorems 4.18 and 4.27]{rondigs2013slices}) that $s_n(\KO)/(2,\rho)$ is a sum of spectra of the form \[ \Sigma^{n-i}\Sigma^{n,n}\HZ/(2,\rho), \] for various $i \ge 0$.
Since (see e.g. \S\ref{subsub:HZ2}) \[ \pi_{*,0} \Sigma^{n-i}\Sigma^{n,n}\HZ/2 \wequi \begin{cases} k^M_{2n-i-*}(k) & *\ge n-i \\ 0 & \text{else} \end{cases} \] we deduce that $\map(\1, s_n(\KO)/(2,\rho)) \in \SH_{\le 2n+3}$.
This implies that \[ S_n = \lim_t \Sigma^{-4t} \map(\1, s_n(\KO)/(2,\rho)) \in \SH_{\le 2n+3-4t_0}, \] for any $t_0$, and thus $S_n = 0$.
This concludes the proof.
\end{proof}

\bibliographystyle{alpha}
\bibliography{bibliography}

\end{document}